\DeclareSymbolFontAlphabet{\mathbb}{AMSb} %to ensure that the meaning of \mathbb does not change
\DeclareSymbolFontAlphabet{\mathbbl}{bbold} 
\newcommand{\Prism}{{\mathlarger{\mathbbl{\Delta}}}}
\newcommand{\Z}{\mathbb{Z}}
\newcommand{\F}{\mathbb{F}}
\newcommand{\N}{\mathbb{N}}
\newcommand{\Q}{\mathbb{Q}}
\newcommand{\A}{\mathbb{A}}
\renewcommand{\L}{\mathbb{L}}
\newcommand{\G}{\mathbb{G}}
\newcommand{\mB}{\mathcal{B}}
\newcommand{\mC}{\mathcal{C}}
\newcommand{\mD}{\mathcal{D}}
\newcommand{\mF}{\mathcal{F}}
\newcommand{\mH}{\mathcal{H}}
\newcommand{\mM}{\mathcal{M}}
\newcommand{\mL}{\mathcal{L}}
\newcommand{\mO}{\mathcal{O}}
\newcommand{\mP}{\mathcal{P}}
\newcommand{\mS}{\mathcal{S}}
\newcommand{\fm}{\mathfrak{m}} 
\newcommand{\fU}{\mathfrak{U}}
\newcommand{\fX}{\mathfrak{X}}
\newcommand{\tu}{\textup}
\newcommand{\cl}{\overline}
\newcommand{\ul}{\underline}
\renewcommand{\iff}{\Leftrightarrow}
\newcommand{\ra}{\rightarrow}
\newcommand{\sq}{\widetilde}
\DeclareMathOperator{\Hom}{Hom}
\DeclareMathOperator{\Ainf}{\textup{A}_{\textup{inf}}}
\DeclareMathOperator{\Binf}{\textup{B}_{\textup{inf}}}
\DeclareMathOperator{\Ainfx}{\mathbb{A}_{\textup{inf}, X}}
\DeclareMathOperator{\Acris}{\text{A}_{\textup{cris}}} 
\DeclareMathOperator{\Bcrisp}{\text{B}_{\textup{cris}}^+}
\DeclareMathOperator{\Bcris}{\text{B}_{\textup{cris}}}
\DeclareMathOperator{\Bdrp}{\text{B}_{\textup{dR}}^+}
\DeclareMathOperator{\Bst}{\text{B}_{\textup{st}}}
\DeclareMathOperator{\logcrys}{logcrys}
\DeclareMathOperator{\crys}{crys}
\DeclareMathOperator{\psh}{psh}
\DeclareMathOperator{\qSyn}{qSyn}
\DeclareMathOperator{\QSyn}{QSyn}
\DeclareMathOperator{\qrsP}{qrsPerfd}
\DeclareMathOperator{\QRSP}{QRSPerfd} 
\DeclareMathOperator{\opp}{\textup{op}}
\DeclareMathOperator{\midoplus}{\mathlarger{\mathlarger{\mathlarger{\oplus}}}}
\DeclareMathOperator{\midotimes}{\mathlarger{\mathlarger{\mathlarger{\otimes}}}}
\newcommand{\injlra}{\lhook\joinrel\longrightarrow}
\newcommand{\surjlra}{\relbar\joinrel\twoheadrightarrow}
\DeclareMathOperator{\lra}{\: \longrightarrow \:}
\DeclareMathOperator{\OXplus}{\widehat \mO_X^+}
\DeclareMathOperator{\aeq}{\: \approx^a \:}
\DeclareMathOperator{\gp}{\textup{gp}}
\newcommand{\gr}[1]{\langle {#1} \rangle} 
\DeclareMathOperator{\spec}{Spec}
\DeclareMathOperator{\spf}{Spf}
\DeclareMathOperator{\spa}{Spa}
\DeclareMathOperator{\init}{init}
\DeclareMathOperator{\etale}{\textup{\'etale}}
\DeclareMathOperator{\ett}{\textup{\'et}}
\DeclareMathOperator{\ket}{\textup{k\'et}}
\DeclareMathOperator{\fket}{\textup{fk\'et}}
\DeclareMathOperator{\proket}{\textup{prok\'et}}
\DeclareMathOperator{\isom}{\;\xrightarrow{\: {}_{\sim} \:} \;}
\newcommand{\bi}{\begin{itemize}}
\newcommand{\ei}{\end{itemize}}
\newcommand{\bt}{\begin{theorem}}
\newcommand{\et}{\end{theorem}}
\newcommand{\bbt}{\begin{theorem*}}
\newcommand{\eet}{\end{theorem*}}
\newcommand{\bp}{\begin{proposition}}
\newcommand{\ep}{\end{proposition}}
\newcommand{\bl}{\begin{lemma}}
\newcommand{\el}{\end{lemma}}
\newcommand{\bbl}{\begin{lemma*}}
\newcommand{\eel}{\end{lemma*}}
\newcommand{\bc}{\begin{corollary}}
\newcommand{\ec}{\end{corollary}}
\newcommand{\beg}{\begin{example}}
\newcommand{\eeg}{\end{example}}
\newcommand{\br}{\begin{remark}}
\newcommand{\er}{\end{remark}}
\newcommand{\bbr}{\begin{remark*}}
\newcommand{\eer}{\end{remark*}}
\newcommand{\bd}{\begin{definition}}
\newcommand{\ed}{\end{definition}}
\newcommand{\be}{\begin{enumerate}}
\newcommand{\ee}{\end{enumerate}}
\newcommand{\bex}{\begin{exercise}}
\newcommand{\eex}{\end{exercise}}
\newcommand{\bproof}{\begin{proof}}
\newcommand{\eproof}{\end{proof}}
\newcommand{\nocontentsline}[4]{}
\newcommand{\tocless}[3]{\bgroup\let\addcontentsline=\nocontentsline#1{#2}\egroup}
\theoremstyle{theorem}
\newtheorem{theorem}{Theorem}[section]
\newtheorem{mainthm}{Theorem}[section]
\theoremstyle{definition}
\newtheorem{example}[theorem]{Example}
\newtheorem{convention}[theorem]{Convention}
\newtheorem{maindef}[mainthm]{Definition}
\newtheorem{definition}[theorem]{Definition}
\newtheorem{proposition}[theorem]{Proposition}
\newtheorem{maincor}[mainthm]{Corollary}
\newtheorem{mainprop}[mainthm]{Proposition}
\newtheorem{construction}[theorem]{Construction}
\newtheorem{lemma}[theorem]{Lemma}
\newtheorem{corollary}[theorem]{Corollary}
\newtheorem{notation}[theorem]{Notation}
\newtheorem{remark}[theorem]{Remark}
\newtheorem{mainrmk}[mainthm]{Remark}
\newtheorem{variant}[theorem]{Variant}
\newtheorem{outline}[theorem]{Outline}
\patchcmd{\section}{\scshape}{\bfseries}{}{}
\newcommand{\@secnumfont}{\bfseries}
\numberwithin{equation}{section}
\newcommand{\footremember}[2]{%
    \footnote{#2}
    \newcounter{#1}
    \setcounter{#1}{\value{footnote}}%
}
\title{Logarithmic $\Ainf$-cohomology}
\author{%
  Hansheng Diao \footremember{alley}{Yau Mathematical Sciences Center, Tsinghua University}%
  \and Zijian Yao \footremember{trailer}{Department of Mathematics, University of Chicago}%
\date{\empty}
  }
\begin{document}
\maketitle

\begin{abstract}
We extend the construction of $\Ainf$-cohomology by Bhatt--Morrow--Scholze to the context of log $p$-adic formal schemes over a log perfectoid base. In particular, using coordinates, we prove comparison theorems between log $\Ainf$-cohomology with other $p$-adic cohomology theories, including log de Rham, log (q-)crystalline, log prismatic, and Kummer \'etale cohomology, as well as the derived $\Ainf$-cohomology of certain infinite root stacks. 

Along the way, we define and give a combinatorial characterization of a new class of maps between saturated log schemes, called pseudo-saturated maps, which is of independent interest. They are related to (and slightly weaker than) the notion of quasi-saturated maps and maps of Cartier type studied by Tsuji.
\end{abstract}

\tableofcontents

\setlength{\parskip}{0.2em}
%\setlength{\parskip}{0.7em}

%\newpage 
 
\section{Introduction} \label{section:intro}

In this paper and its sequel \cite{log2} we extend recent results on integral $p$-adic cohomology theory  developed in \cite{BMS1, BMS2, CK_semistable, MT} to the context of certain smooth log $p$-adic formal schemes over an integral perfectoid base. 

Let us fix the following notation throughout the introduction. Let $C$ be a perfectoid field containing all $m$-th power roots of unity for all $m\in \mathbb{Z}_{\geq 1}$ and write $\mO = \mO_C$ for its ring of integers. Upon fixing a compatible choice of primitive roots of unity, we obtain elements $\epsilon = (1, \zeta_p, \zeta_{p^2}, ...) \in \mO^\flat$ and $\mu = [\epsilon] - 1$ in Fontaine's period ring $ \Ainf = W(\mO^\flat)$, where $\mO^\flat$ denotes the tilt of $\mO$. The kernel of Fontaine's map $\theta: \Ainf \twoheadrightarrow \mO$ is generated by $\xi := \mu /\varphi^{-1} (\mu)$ where $\varphi = \varphi_{\Ainf}$ denotes the Witt vector Frobenius (see Section \ref{section:review_Ainf} for more details). 

In \cite{BMS1, BMS2}, Bhatt--Morrow--Scholze develop an integral $p$-adic cohomology theory for a smooth formal scheme $\fX$ over $\mO$, which takes values in $\Ainf$-modules and ``interpolates'' essentially all known $p$-adic cohomology theories (notably de Rham, crystalline, and \'etale cohomologies). This theory has been generalized in several directions. In \cite{CK_semistable}, \v{C}esnavi\v{c}ius--Koshikawa extended this theory to the context of formal schemes with semistable reduction. More recently, Bhatt--Scholze give a powerful site theoretic construction in \cite{BS} which allows more general base by defining the notion of prisms and prismatic site. In \cite{MT}, Morrow--Tsuji develop $\Ainf$-cohomology with coefficients. While this paper is being written, a logarithmic version of the prismatic theory has also been developed in a series of papers \cite{Koshikawa, logprism} by Koshikawa and the second author.  

The primary goal of this paper is to generalize the constructions of  \cite{BMS1,CK_semistable} to the context of log formal schemes that are ``sufficiently log smooth'' over $\mO$, which allows more general log formal schemes than those of semistable reduction. Compared to \cite{logprism}, our approach makes the comparison isomorphisms more explicit and likely more useful in applications where computations with explicit coordinates are involved. Moreover, we prove a logarithmic analog of a certain primitive Hodge--Tate comparison (Theorem \ref{mainthm:primitive_HT}), which should be of independent interest. As a result, some of the definitions and proofs in this paper are highly combinatorial in flavor. % (for example, the notion of pseudo-saturated maps from Definition \ref{maindef:pseudo_sat}). 
We then compare our construction with other approaches -- one approach given by the logarithmic prismatic site \cite{Koshikawa, logprism} and another given by considering a certain infinite root stack which we describe later in this introduction. Along the way, we extend the theory of log-adic spaces from \cite{DLLZ} to include those with non-finitely generated charts. In the sequal \cite{log2} of this article, we study logarithmic $\Ainf$-cohomology with more general coefficient systems, parallel to the work of \cite{MT}. 

\subsection*{Pseudo-saturated maps}
First we briefly explain the notion of being ``sufficiently log smooth''. Suppose that $\mO$ is equipped with a monoid map $N_\infty \ra \mO$ where $N_\infty$ is uniquely $n$-divisible for every $n \in \Z_{> 0}$, which in particular makes $(\mO, N_\infty)$ a perfectoid pre-log ring.\footnote{In the paper we will make an additional simplifying assumption, namely we require the pre-log ring $(\mO, N_\infty)$ to be \emph{split} in the sense of Definition \ref{definition:definition_pre_log_rings}. For simplicity, we ignore this technical point in the rest of the introduction.}  Write 
\[\mS = (\spf \mO, N_\infty)^a\] for the base log $p$-adic formal scheme associated to the pre-log formal scheme $(\spf \mO, N_\infty)$ (see Definition \ref{def:log}). Let $\fX = (\fX, \mM_{\fX})$ be a  log $p$-adic formal scheme over $\mS$. The classical study of log schemes and their Kummer \'etale topology by Kato and Nakayama in \cite{Kato, Nakayama1, Nakayama2} typically requires all the relevant log structures to be fs (fine and saturated), in which case Kato has defined a reasonable notion of log smooth maps (see \cite[Section 3.3]{Kato}). However, the divisible monoid $N_\infty$ (resp. the log structure $\mM_{\fX}$) we consider in this article is typically not finitely generated (resp. not coherent), so we are forced to consider only saturated log formal schemes. One possible approach to define log smooth maps for general saturated log formal schemes is to define a notion of formally log smoothness using a lifting criterion and require a suitable notion of local finite presentation. Another approach, which is the approach we take in this article, is to require the morphism in picture to be the saturated base change of a log smooth map between fs log formal schemes.\footnote{The second approach is also used in \cite{Koshikawa, logprism}, where it is additionally required that the map is an integral map between log formal schemes. Such maps are simply called \emph{smooth} in \emph{loc.cit.}} \footnote{Not surprisingly, these two approaches give the same notion of log smooth maps under some mild conditions (on the existence of local charts). Some aspect of this is discussed in the forthcoming paper \cite{Dori_Yao}.} 

Now suppose that  $\fX$ is the saturated base change 
\begin{equation} \label{eq:diagram_defining_fX}
\begin{tikzcd} 
\fX \arrow[d] \arrow[r] & \fX_0 \arrow[d, swap, "\pi_0"]  
\\ 
\mS \arrow[r, "\kappa"]  & \mS_0  %\arrow[l, swap, "\kappa"]
\end{tikzcd}
\end{equation}
of an fs log $p$-adic formal scheme $\fX_0$ that is log smooth over $\mS_0 = (\spf \mO, N)^a$, where $\kappa$ is induced by a map $N \ra N_\infty$ of monoids where $N$ is fine and saturated. Following the strategy of \cite{BMS1}, the starting point of the construction of log $\Ainf$-cohomology is to define a certain period sheaf $\Ainfx$ on the pro-Kummer \'etale site of the generic fiber $X$ of $\fX_{\eta}$, which we want to view as a log adic space in the sense of \cite{DLLZ}. However, this generic fiber (even that of the underlying $p$-adic formal scheme) may not exist as a log adic space (resp. an adic space), essentially due to the saturation procedure in the base change. Thus we need further restrictions on the map $\pi_0$ to be able to formulate such a construction. For example, we may require the map $\pi_0$ to be \emph{saturated} in the sense of \cite{Tsuji}, which is closely related to the mod $p$ reduction of $\pi_0$ being of Cartier type. This is not a surprising condition, since our construction of logarithmic $\Ainf$-cohomology aims to provide a certain ``$p$-adic Cartier isomorphism" which lifts the usual Cartier isomorphism from characteristic $p$  (see also \cite{Morrow}). One slightly unsatisfying feature of saturated morphisms is that it excludes the standard Kummer maps of the form $\Z_{\ge 0} \xrightarrow{n} \Z_{\ge 0}$, which are building blocks of Kummer \'etale morphisms. However, we will see that, \textit{a posteriori}, inclusion of such maps does not increase the generality of our setup. 

In order to give a precise formulation, we define a new notion of maps between saturated monoids (and between saturated log schemes), called \textit{pseudo-saturated maps}, which relaxes the condition of being saturated and should be of independent interest in the study of log geometry on its own. In particular, this notion captures maps built out of saturated maps and standard Kummer maps via successive compositions. 

\begin{maindef} \label{maindef:pseudo_sat}
Let $u:P\rightarrow Q$ be a map of saturated monoids. We say that $u$ is \emph{pseudo-saturated} if there exists an integer $M\geq 1$ such that $u$ satisfies the following condition: 
  \begin{itemize}%[label={}]
     \item[] \!\textit{For any integer $n\geq 1$ and any ${x}\in P$, ${y}\in Q$ such that $u({x})\,|\, n{y}$, there exists \\ ${x}'\in P$ such that $M{x}\,|\, n{x}'$ and $u({x}')\,|\, M{y}$.}
    \end{itemize}
\end{maindef}

We refer the reader to the beginning of Subsection \ref{subsection: conventions on monoids} for notations used in the definition above. Let us remark that, in the slightly technical definition above,  quasi-saturated maps studied by Tsuji in \cite{Tsuji} are precisely the ones where $M$ can be chosen to be $1$.  
One of the key properties we prove is the following 
\begin{mainprop}
Let $u: P \ra Q$ be a pseudo-saturated homomorphism between saturated monoids. Then for any map $P \ra P'$ where $P'$ is a saturated and divisible monoid (which means that it is $n$-divisible for every positive integer $n$), the natural map 
\[Q \sqcup_P P' \lra Q \sqcup_P^{\tu{sat}} P' : = (Q \sqcup_P P')^{\tu{sat}}\]
from the naive pushout to the saturated pushout is of finite type. 
\end{mainprop}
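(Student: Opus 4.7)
Plan. I will show that $T^{\tu{sat}}$ is finitely generated as a $T$-module, where $T := Q \sqcup_P P'$ is the naive pushout, by exhibiting a finite subset $F \subset T^{\tu{sat}}$ with $T^{\tu{sat}} = T + F$. The approach has three ingredients: a reduction using the divisibility of $P'$, a bounded-denominator claim obtained from the pseudo-saturated hypothesis, and a Gordan-type finiteness argument relying on the implicit finite-type structure of $P$ and $Q$.

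First, I use the divisibility of $P'$ to normalize an arbitrary $z \in T^{\tu{sat}}$. Given $nz = [q, p'] \in T$, write $p' = n\tilde p'$ with $\tilde p' \in P'$ (possible since $P'$ is $n$-divisible). Then $z - \iota_{P'}(\tilde p') \in T^{\tu{sat}}$ has $n$-th multiple $\iota_Q(q)$ and differs from $z$ by the element $\iota_{P'}(\tilde p') \in T$. So, modulo $T$-translation, every $z$ may be assumed to satisfy $nz \in \iota_Q(Q)$.

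Second, I would establish the key bound $M \cdot T^{\tu{sat}} \subseteq T$, where $M$ is the pseudo-saturation constant. Represent $z \in T^{\gp}$ as $[a, b] \in Q^{\gp} \oplus P'^{\gp}$ modulo the pushout relation; the condition $nz = \iota_Q(q)$ gives $na = q + u^{\gp}(p)$ and $nb = -v^{\gp}(p)$ for some $p \in P^{\gp}$. Splitting $p = p_+ - p_-$ with $p_\pm \in P$, the equation $na + u(p_-) = q + u(p_+)$ holds inside $Q$, exhibiting $u(p_+)$ as a divisor of $na + u(p_-)$ with quotient $q$. Applying Definition \ref{maindef:pseudo_sat} (with $x = p_+$, $y = na + u(p_-)$, $n_{\mathrm{sat}} = 1$) produces $x' = Mp_+ + x'' \in P$ and $q^{**} \in Q$ satisfying $Mq = u(x'') + q^{**}$. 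Combining this decomposition with the divisibility of $P'$ (to obtain $v(x'')/n \in P'$) then realizes $Mz$ inside the image of $Q \oplus P'$ in $T^{\gp}$, proving $Mz \in T$.

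Once $Mz \in T$ for all $z \in T^{\tu{sat}}$, the finite generation follows by a Gordan-type argument: $T^{\tu{sat}}$ is covered by finitely many cosets $T + f$ indexed by the ``minimal'' elements of $T^{\tu{sat}}$ under the preorder $z \preceq z'$ iff $z' - z \in T$, and these are finite in number because the relevant lattice-theoretic data comes from the fs structure of $P$ and $Q$ (so $Q^{\gp}$ and $P^{\gp}$ are finitely generated, and the contributions from $P'^{\gp}$ are absorbed by divisibility). The main obstacle is the ``$Mz \in T$'' claim of the second step: the monoid identity $Mq = u(x'') + q^{**}$ produced by pseudo-saturation must be lifted to a genuine representative of $Mz$ in $Q \oplus P'$, and the delicate point is that $q^{**}$ is only known to lie in $Q$ (not in $nQ$). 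One handles this by absorbing $q^{**}$ via the pushout relation $[u(\pi), 0] = [0, v(\pi)]$ applied to $\pi = x''$, combined with the divisibility of $P'$ applied to $v(x'')$. The interplay between the monoid identity produced by Definition \ref{maindef:pseudo_sat} and the group-level geometry of the pushout is precisely what the scaling constant $M$ is engineered to enable.
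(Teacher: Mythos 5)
Your core observation that $M \cdot T^{\tu{sat}}$ lands in the image of $T$ in $T^{\gp}$ is true and provable, but your execution misfires, and—more importantly—it is not sufficient for the conclusion. On the execution: applying Definition~\ref{maindef:pseudo_sat} with $n_{\mathrm{sat}}=1$ to $y = na + u(p_-)$ only controls $Mn\cdot z$, not $M\cdot z$. Your identity $Mq = u(x'') + q^{**}$ concerns the first coordinate of $nz$, and the unwanted factor of $n$ does not cancel: you would need $q^{**}$ divisible by $n$ in $Q$, which fails in general. The correct move is to represent $z$ itself as $(q_0, r_0)$ with $q_0 \in Q$ and $p \in P$ satisfying $nq_0 = q' + u(p)$, $nr_0 + v^{\gp}(p) = r'$ (arranged via Lemma~\ref{lemma: Pgp and P}), and then apply Definition~\ref{maindef:pseudo_sat} with $n_{\mathrm{sat}}=n$, $x = p$, $y = q_0$; the conclusion $Mp \mid np'$ is precisely what cancels the $n$'s when verifying $Mr_0 + v^{\gp}(p') \in P'$, and the tool there is the $n$-saturatedness of $P'$, not its divisibility. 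In particular the bound $M T^{\tu{sat}} \subseteq T^{\tu{int}}$ holds with no divisibility assumption on $P'$ at all.

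The genuine gap is the final Gordan-type step. The counterexample in the remark following Lemma~\ref{lemma: tech lemma} (with $P=Q=\mathbb{Z}_{\geq 0}$, $u$ multiplication by $2$, $P'=\mathbb{Q}_{(2),\geq 0}$, so $M=2$) does satisfy $2\,T^{\tu{sat}} \subseteq T^{\tu{int}}$—one checks $2e_n = (0, \tfrac{1}{2n+1})\in T^{\tu{int}}$—yet $T^{\tu{int}}\to T^{\tu{sat}}$ is not of finite type. So the bound alone cannot yield finiteness. The specific failure in your scheme is that the preorder $z\preceq z'$ iff $z'-z\in T^{\tu{int}}$ need not be well-founded: in that example the $e_n$ form a strictly descending chain with no minimum, so $T^{\tu{sat}}$ is not covered by translates of minimal elements, and the Gordan argument does not get off the ground. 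The paper's proof of Lemma~\ref{lemma: tech lemma} avoids this entirely by constructing an explicit finitely generated monoid $U := \{(q,p)\in Q\oplus P : Mq - u^{\gp}(p)\in Q\}$ (finite generation follows from Gordan applied to an exact submonoid of $Q\oplus P$, which is where fs-ness of $P,Q$ enters—note the intro's phrasing ``saturated'' elides this), and a surjection $w:(S^{\tu{int}}/P'_{\tu{tors}})\oplus U \to S^{\tu{sat}}/P'_{\tu{tors}}$, $(s,(q,p))\mapsto s + (q, -\tfrac{1}{M}v(p))$. Divisibility of $P'$ is the ingredient that makes $-\tfrac{1}{M}v(p)$ a well-defined element of $P'$ modulo torsion, i.e.\ it is what makes the \emph{complement} $w(U)$ exist, not what normalizes $z$. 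In your plan divisibility is spent on normalization and on absorbing $q^{**}$, which is not the load-bearing application; you would still need to produce the finitely generated monoid that surjects onto the cokernel, and that is the content of the paper's argument.
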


This finiteness result eventually allows us to show that, for a log $p$-adic formal scheme $\fX$ as in the pullback diagram (\ref{eq:diagram_defining_fX}), its generic fiber $X$ is indeed a log adic space under the assumption that $\pi_0: \fX_0 \ra \mS_0$ is pseudo-saturated. In the case of fs monoids, we give a combinatorial criterion of pseudo-saturated maps in terms of polyhedral cones.  To fix notations, let us consider an injective map 
\[
u: P \hookrightarrow Q
\]
of torsion-free fs  monoids (we view $P$ as a submonoid of $Q$ via the inclusion $u$). This induces an inclusion $P_{\Q_{\ge 0}} \hookrightarrow Q_{\Q_{\ge 0}}$, where $P_{\Q \ge 0}$ is the monoid  $ \varinjlim_{n} \frac{1}{n} P$ (see Definition \ref{definition:perfectoid_monoids}), which we may view as a rational polyhedral cone in the $\Q$-vector space $Q_{\Q_{\ge 0}}^{\gp} \cong \Q^{\oplus d}$ for some $d$, where $(\:)^{\gp}$ stands for the group envelop. Let $y \in Q_{\Q_{\ge 0}}$ be an element, we say that a decomposition 
\begin{equation} \label{eq:min_decomposition_of_y} y=y'+x'
\end{equation}
with $y'\in Q_{\mathbb{Q}_{\geq 0}}$ and $x'\in P_{\mathbb{Q}_{\geq 0}}$ is a \emph{minimal decomposition} of $y$ (\emph{relative to the pair $(P, Q)$}) if for any $x \in P_{\mathbb{Q}_{\geq 0}}-\{0\}$, we have  $y'-x\not\in Q_{\mathbb{Q}_{\geq 0}}$. 
In other words, if $y'$ is 
``minimal'' in the sense that it cannot be further decomposed as in (\ref{eq:min_decomposition_of_y}) in a nontrivial fashion. We are now ready to state the following criterion of pseudo-saturated maps. 

%Let $P$ be a torsion-free fs  monoid and let $N$ be an fs submonoid. Let $p\in P_{\mathbb{Q}_{\geq 0}}$. A decomposition $p=p'+q'$ with $p'\in P_{\mathbb{Q}_{\geq 0}}$ and $q'\in N_{\mathbb{Q}_{\geq 0}}$ is called a \emph{minimal decomposition} of $p$ (\emph{relative to the pair $(N, P)$}) if $p'$ is ``minimal'' in the following sense: for any $q\in N_{\mathbb{Q}_{\geq 0}}-\{0\}$ we have  $p'-q\not\in P_{\mathbb{Q}_{\geq 0}}$.

\begin{mainthm} \label{mainthm:ps_criterion}
Let $Q$ be a torsion-free fs monoid and let $P$ be a toric submonoid of $Q$ such that $Q\cap (-P)= \{0\}$. Then the inclusion $u:P \hookrightarrow Q$ is pseudo-saturated if and only if every  $y \in Q_{\mathbb{Q}_{\geq 0}}$ admits a unique minimal decomposition  relative to  $(P, Q)$. 
\end{mainthm}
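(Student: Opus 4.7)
Set $V := Q^{\gp} \otimes \mathbb{Q}$, $W := P^{\gp} \otimes \mathbb{Q} \subset V$, and let $\sigma := Q_{\mathbb{Q}_{\geq 0}}$ and $\tau := P_{\mathbb{Q}_{\geq 0}} \subset W$ be the associated rational polyhedral cones. The assumption $Q \cap (-P) = \{0\}$ implies $\tau$ is pointed and $\sigma \cap (-\tau) = \{0\}$, so the ``decrement set'' $D(y) := (y - \tau) \cap \sigma \subset \tau$ is a bounded convex rational polytope for each $y \in \sigma$. Introducing the partial order $a \leq_{\tau} b \iff b - a \in \tau$, minimal decompositions of $y$ correspond bijectively to $\leq_{\tau}$-maximal elements of $D(y)$; thus the theorem reduces to proving that pseudo-saturation of $u$ is equivalent to the Pareto set $\mathcal{F}(y) \subset D(y)$ being a singleton for every $y$.

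\textbf{($\Leftarrow$)} Assume $\mathcal{F}(y) = \{\pi(y)\}$ for every $y \in \sigma$. A short argument using the pointedness of $\tau$ forces $\pi(y)$ to be a vertex of $D(y)$: if $\pi(y)$ lay in the relative interior of an edge with direction $d$, then for $s > 0$ small the points $\pi(y) \pm sd$ would each be dominated along a chain terminating at $\pi(y)$, forcing both $d$ and $-d$ into $\tau$, hence $d \in \tau \cap (-\tau) = \{0\}$, a contradiction. Since $D(y)$ is cut out by the integer linear inequalities $r_{j}(x) \geq 0$ (generators of $\tau^{\vee}$) and $s_{i}(x) \leq s_{i}(y)$ (generators of $\sigma^{\vee}$), drawn from finite sets depending only on $(P,Q)$, Cramer's rule applied to the $\dim W$ tight constraints at the vertex $\pi(y)$ gives a uniform bound $M$ on the denominators of all possible vertices; hence $M\pi(y) \in P^{\gp} \cap \tau = P$ by saturation of $P$. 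To conclude pseudo-saturation, given $y \in Q$, $x \in P$, $n \geq 1$ with $u(x) \mid ny$, set $x' := M\pi(y) \in P$; then $x/n \in D(y)$ yields $x/n \leq_{\tau} \pi(y)$, and the saturations of $P$ and $Q$ give $nx' - Mx = M(n\pi(y) - x) \in P$ and $My - u(x') = M(y - \pi(y)) \in Q$.

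\textbf{($\Rightarrow$)} Suppose pseudo-saturation holds with constant $M$, yet some $y \in \sigma$ (rescaled so $y \in Q$) admits two distinct minimal decompositions, giving incomparable $x_{1}' \neq x_{2}' \in \mathcal{F}(y)$. Pseudo-saturation applied to any $\tilde{x} \in \mathcal{F}(y)$ returns $\tilde{x}' \in D(y) \cap \tfrac{1}{M}P$ with $\tilde{x}' \geq_{\tau} \tilde{x}$; maximality of $\tilde{x}$ forces $\tilde{x}' = \tilde{x}$, so $\mathcal{F}(y) \subset \tfrac{1}{M}P$. The Pareto set $\mathcal{F}(y)$ is, however, a connected polyhedral subcomplex of $D(y)$ (a classical property of efficient frontiers of convex polytopes under pointed cone orderings, provable here via the normal-fan description of $D(y)$ intersected with the interior of $\tau^{\vee}$). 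Having two distinct points thus forces $\mathcal{F}(y)$ to contain a positive-dimensional face, hence a line segment with endpoints in $\tfrac{1}{M}P$; rational points on such a segment have arbitrarily large denominators, contradicting $\mathcal{F}(y) \subset \tfrac{1}{M}P$.

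The principal obstacle lies in the $(\Rightarrow)$ direction, specifically the structural claim that $\mathcal{F}(y)$ is connected (or at least positive-dimensional when it is not a singleton). An alternative sidestepping connectedness is a pigeonhole argument along the segment $[x_{1}', x_{2}']$: for rational $t \in (0,1)$, the pseudo-saturation upgrades of $\tilde{x}_t = (1-t)x_{1}' + tx_{2}'$ all land in the \emph{finite} set $D(y) \cap \tfrac{1}{M}P$, and a short polyhedral/linear-algebra analysis -- exploiting pointedness of $\tau$ together with the Pareto-maximality of $x_1', x_2'$ -- shows this finite set cannot be closed under the upgrade operation while containing two incomparable Pareto maxima.
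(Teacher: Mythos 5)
Your $(\Leftarrow)$ direction is essentially the paper's proof of (2) $\Rightarrow$ (1): both show the unique minimal decomposition is realized at a vertex of the bounded decrement polytope, bound that vertex's denominator uniformly by Cramer's rule against a \emph{fixed} finite set of integral linear forms (the right-hand sides being integers because $y\in Q$), and then read off the pseudo-saturation inequalities from $x':=M\pi(y)$. Your direct ``pointedness $\Rightarrow$ vertex'' argument substitutes for the paper's reuse of a strictly positive scalarizing functional $\lambda$, but the two are interchangeable. Two small repairs are needed: $D(y):=(y-\tau)\cap\sigma$ is contained in $\sigma$, not in $\tau$ as written — you want $\tau\cap(y-\sigma)$, after which the rest of your notation is consistent; and when you ``apply pseudo-saturation to $\tilde x\in\mathcal{F}(y)$'' you must first rescale so that $\tilde x\in P$ and $y\in Q$, since pseudo-saturation is stated for integral elements (this works, but should be said).

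The $(\Rightarrow)$ direction is where you take a genuinely different route, and it is where the gap sits — as you yourself flag. After deducing $\mathcal{F}(y)\subset\frac{1}{M}P$, you appeal to connectedness of the Pareto set of a polytope under a pointed cone order to derive a contradiction from two incomparable maxima. That connectedness statement is indeed a classical theorem in multi-objective linear programming (Yu--Zeleny, Naccache), but it is not obvious, and ``provable via the normal-fan description'' is a pointer rather than a proof; your pigeonhole fallback likewise defers the entire content to ``a short polyhedral/linear-algebra analysis.'' The paper never goes near efficient-set topology. Given two distinct minimal decompositions $y=y_1'+x_1'=y_2'+x_2'$ with $x_2'\neq 0$ (integralized by scaling), it forms the auxiliary element $\bar x:=x_1'+(m-1)x_2'$ for $m\gg M$ (where $M$ is the pseudo-saturation conductor), observes $\bar x\,|\,my$ in $Q$, and applies pseudo-saturation \emph{once} to get $x_0\in P$ with $M\bar x\,|\,mx_0$ and $x_0\,|\,My$; a short denominator analysis against a lattice basis of $P^{\mathrm{gp}}_{\mathbb{Q}}$ together with the minimality of $y_2'$ then forces $x_0=Mx_2'$, hence $x_1'\,|\,x_2'$ in $P$, contradicting minimality. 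That argument is fully self-contained and combinatorial. If you want to preserve your geometric route you would need to actually prove the connectedness lemma (e.g.\ via scalarization over $\mathrm{relint}\,\tau^{\vee}$); otherwise, the cleanest fix is to adopt the paper's $\bar x$ trick for the $(\Rightarrow)$ direction.
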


In particular, in the setup of Theorem \ref{mainthm:ps_criterion}, whether $P \hookrightarrow Q$ is pseudo-saturated or not only depends on the relative position of the rational polyhedral cones $P_{\Q_{\ge 0}} \hookrightarrow Q_{\Q_{\ge 0}}$.  We refer the reader to Section \ref{subsection: classification} (in particular Example \ref{example:min_decom1}, Example \ref{example:min_decom2} and Remark \ref{remark:geometric_intuition}) for a more geometric interpretation of the equivalence above. Somewhat surprisingly, 
this ``geometric'' characterization of pseudo-saturated maps turns out to be helpful when we make local computations for logarithmic $\Ainf$-cohomology using coordinates in Section \ref{section:etale_comparison} and Section \ref{section:HT_primitive}. On the other hand, it also implies the following result. 

\begin{mainprop}
Let $Q$ be a torsion-free fs  monoid and let $P$ be a toric submonoid of $Q$ such that $Q\cap (-P)= \{0\}$.  Suppose that the inclusion $u:P \hookrightarrow Q$ is pseudo-saturated and that the cokernel of induced map $u^{\mathrm{gp}}: P^{\mathrm{gp}} \ra Q^{\mathrm{gp}}$ on group envelops is torsion-free. Then there exists a positive integer $m$ such that the canonical homomorphism 
\[\frac{1}{m}P \lra 
Q\sqcup^{\mathrm{sat}}_P\frac{1}{m}P\] is quasi-saturated in the sense of \cite{Tsuji} defined by Tsuji.
\end{mainprop}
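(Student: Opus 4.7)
The strategy is to use the combinatorial characterization from Theorem \ref{mainthm:ps_criterion} to construct $m$ explicitly. By pseudo-saturatedness, every $y \in Q_{\Q_{\ge 0}}$ admits a unique minimal decomposition $y = y^{\min} + \sigma(y)$ with $y^{\min} \in Q_{\Q_{\ge 0}}$ minimal and $\sigma(y) \in P_{\Q_{\ge 0}}$. Uniqueness immediately yields two useful properties of the resulting map $\sigma: Q_{\Q_{\ge 0}} \to P_{\Q_{\ge 0}}$: it is $\Q_{\ge 0}$-homogeneous of degree one, and whenever $y,\, y - x \in Q_{\Q_{\ge 0}}$ with $x \in P_{\Q_{\ge 0}}$, one has $\sigma(y) - x \in P_{\Q_{\ge 0}}$ and $\sigma(y - x) = \sigma(y) - x$ (compare the minimal decomposition of $y - x$ shifted by $x$ with that of $y$). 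Using torsion-freeness of $Q^{\gp}/P^{\gp}$, fix a splitting $Q^{\gp} = P^{\gp} \oplus H$. Under this splitting, one has $\sigma(\tilde p + h) = \tilde p - L(h)$, where $L$ sends $h \in H_\Q$ to the unique $P_{\Q_{\ge 0}}$-minimal $p \in P_{\Q_{\ge 0}}$ with $p + h \in Q_{\Q_{\ge 0}}$. The finite face structure of the rational polyhedral cone $Q_{\Q_{\ge 0}}$ guarantees that $L$, and hence $\sigma$, is piecewise $\Q$-linear, in the spirit of the geometric picture around Remark \ref{remark:geometric_intuition}.

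Choose a positive integer $m$ clearing the denominators of all linear pieces of $L$ (in integer bases of $H$ and $P^{\gp}$). Set $\tilde P := \frac{1}{m}P$ and $\tilde Q := Q \sqcup_P^{\mathrm{sat}} \tilde P$. The torsion-freeness hypothesis gives $\tilde Q^{\gp} = \frac{1}{m}P^{\gp} \oplus H$ inside $Q^{\gp}_\Q$, and saturation gives $\tilde Q = \tilde Q^{\gp} \cap Q_{\Q_{\ge 0}}$. For any $\tilde y = \tilde p + h \in \tilde Q$ with $\tilde p \in \frac{1}{m}P^{\gp}$ and $h \in H$, the formula $\sigma(\tilde y) = \tilde p - L(h)$ lies in $\frac{1}{m}P^{\gp}$ by the choice of $m$, hence in $\tilde P$ by saturation. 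Consequently $\tilde y^{\min} = \tilde y - \sigma(\tilde y) \in \tilde Q$, so every $\tilde y \in \tilde Q$ has an \emph{integral} minimal decomposition relative to $(\tilde P, \tilde Q)$.

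To verify quasi-saturatedness of $\tilde u$, take $\tilde y \in \tilde Q$, $n \geq 1$ and $\tilde x \in \tilde P$ with $\tilde u(\tilde x)\,|\, n \tilde y$ in $\tilde Q$, and set $\tilde x' := \sigma(\tilde y) \in \tilde P$. First, $\tilde y - \tilde u(\tilde x') = \tilde y^{\min} \in \tilde Q$, so $\tilde u(\tilde x')\,|\, \tilde y$. Second, $n\tilde y - \tilde x \in \tilde Q$ by hypothesis; combining homogeneity with the second property of $\sigma$ gives $\sigma(n\tilde y - \tilde x) = n\sigma(\tilde y) - \tilde x = n \tilde x' - \tilde x \in P_{\Q_{\ge 0}}$, and applying the integrality established above to $n\tilde y - \tilde x \in \tilde Q$ yields $n\tilde x' - \tilde x \in \tilde P$. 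Hence $\tilde x\,|\, n \tilde x'$ in $\tilde P$, so $\tilde u$ is quasi-saturated.

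The main obstacle is establishing the piecewise $\Q$-linear structure of $\sigma$ (equivalently of $L$) with controlled denominators, via the rational polyhedral geometry of $P_{\Q_{\ge 0}} \hookrightarrow Q_{\Q_{\ge 0}}$; once this is available, the choice of $m$ and the subsequent verification reduce to formal manipulation of minimal decompositions using only the two properties of $\sigma$ recorded above and the torsion-freeness of the cokernel of $u^{\gp}$.
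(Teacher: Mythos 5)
Your proposal follows essentially the same strategy as the paper's proof of Corollary \ref{corollary: pseudo-sat becomes quasi-sat after finite base change}: bound the denominators appearing in the minimal decomposition, choose $m$ accordingly, and verify the quasi-saturated criterion using $\sigma(\tilde y)$ as the witness. The paper works directly with the statement from the Claim inside the proof of Theorem \ref{thm: classification of pseudo-saturated} (that $g_P(z) \in \frac{1}{M}P$ for integral $z \in Q$, established via a vertex-of-polytope and Cramer's-rule determinant bound), and then invokes this for $y \in Q \sqcup^{\mathrm{sat}}_P \frac{1}{M}P$ with the phrase ``by assumption.'' Your reformulation via the identity $\sigma(\tilde p + h) = \tilde p - L(h)$, with $L$ depending only on the $H$-component, is a genuine clarification: it makes transparent \emph{why} the denominator bound survives the passage from $Q$ to $\tilde Q$ (the $H$-coordinate stays integral, and $\tilde p$ contributes only $\frac{1}{m}$-denominators by construction), which is precisely the point the paper's proof skips over. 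In that sense your approach is slightly more robust than the paper's, not a detour.

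Two points to fix. First, your definition of $L$ is off: you describe $L(h)$ as ``the unique $P_{\Q_{\ge 0}}$-minimal $p \in P_{\Q_{\ge 0}}$ with $p + h \in Q_{\Q_{\ge 0}}$,'' which presumes $L(h) \in P_{\Q_{\ge 0}}$. This is false for a general choice of complement $H$. Concretely, with $Q = \Z_{\ge 0}^2$, $P = \langle(1,1)\rangle$, and $H = \Z(1,2)$, the minimal element of $(1,2)$ is $(0,1) = -(1,1) + (1,2)$, so the $P^{\gp}_\Q$-component of the minimal element is $-(1,1) \notin P_{\Q_{\ge 0}}$. What is true, and what your argument actually uses, is that $L(h)$ is the $P^{\gp}_\Q$-component of the unique minimal element whose $H$-component is $h$; this lives in $P^{\gp}_\Q$, not necessarily in $P_{\Q_{\ge 0}}$. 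The correction is harmless since your later steps only use $L(h) \in \frac{1}{m}P^{\gp}$ and $\sigma(\tilde y) \in P_{\Q_{\ge 0}}$, never $L(h) \in P_{\Q_{\ge 0}}$.

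Second, the ``main obstacle'' you flag — piecewise $\Q$-linearity of $L$ with controlled denominators — is already in the paper's toolbox and not genuinely open. It follows either from Lemma \ref{lemma: Q} (the set of minimal elements is a finite union of rational polyhedral faces of $Q_{\Q_{\ge 0}}$) combined with injectivity of the projection of that set to $H_\Q$ (itself a consequence of uniqueness of minimal decompositions: two minimal elements with the same $H$-component would produce, after adding a suitable element of $P_{\Q_{\ge 0}}$ to one, an element of $Q_{\Q_{\ge 0}}$ with two distinct minimal decompositions); or directly from the determinant bound in the Claim inside the proof of Theorem \ref{thm: classification of pseudo-saturated}, read through your formula $g_P(\tilde p + h) = \tilde p - L(h)$. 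Either way, once you cite that input, the rest of your verification — positivity and integrality of $\sigma(\tilde y)$, followed by homogeneity and the shift identity to get $n\tilde x' - \tilde x \in \tilde P$ — is correct and cleanly organized.
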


In particular, this lemma implies that, asking the log smooth map $\pi_0$  in diagram (\ref{eq:diagram_defining_fX}) to be pseudo-saturated is in fact equivalent to asking it to be quasi-saturated (namely, they give the same requirement on $\fX$). See \S \ref{section: admissible smoothness} for details. 

We now define the notion of being  ``sufficiently log smooth'' (in its crude form). 

\begin{maindef}
We say that $\fX$ is \emph{admissibly smooth} over $\mS$ if it is the saturated base change of $\pi_0: \fX_0 \ra \mS_0$ as in diagram (\ref{eq:diagram_defining_fX}), where $\kappa$ is induced by an injective map $N \hookrightarrow N_\infty$, and $\pi_0$ is an integral, pseudo-saturated log smooth (or equivalently, a saturated log smooth) map of fs log formal schemes. \end{maindef}

\subsection*{The comparison isomorphisms}
Now let $\fX$ be an admissibly smooth log $p$-adic formal scheme over $\mS$, then its generic fiber $X$ exists as a saturated log adic space by Proposition \ref{prop:adm_sm_fiber_product_exist}. These saturated log adic spaces are by definition   \emph{admissibly smooth} log adic spaces. For such $X$, we develop a well behaved theory of  Kummer \'etale site $X_{\ket}$ and pro-Kummer \'etale site $X_{\proket}$, extending results from \cite{DLLZ}. In particular, we define a suitable notion of log affinoid perfectoid objects in  $X_{\proket}$, which form a basis of the site. As in \cite{Scholze, DLLZ}, we have sheaves 
\[ \OXplus, \quad  \widehat{\mO}_X^{\flat+}= \lim_{x \mapsto x^p} \OXplus/p, \quad \Ainfx = W(\widehat{\mO}_X^{\flat+}), \quad \cdots 
\]
on $X_{\proket}$ (see Definition \ref{defn: structure sheaves} and Definition \ref{defn: AOmega}). Following the same recipe as in \cite{BMS1}, we are now able to define the following complexes of \'etale sheaves: 
\begin{align} \label{eq:def_of_AOmega_and_tilde}
 \qquad A \Omega^{\log}_{\fX} & : = L \eta_{(\mu)} R \nu_* \widehat{\Ainfx} \\
\nonumber \sq \Omega^{\log}_{\fX} & : = L \eta_{(\zeta_p - 1)} R \nu_* \OXplus. \end{align}
Here $\,\widehat{\empty}\,$ denotes the derived $p$-adic completion, $\nu$ denotes the natural projection of topoi $X_{\proket} \ra X_{\ket} \ra \fX_{\ett}$, and $L\eta$ denotes the d\'ecalage operator (see Section \ref{ss:decalage} for a brief review). Then we simply define the log $\Ainf$-cohomology of $\fX$ as
\[
R \Gamma_{\Ainf} (\fX) = R \Gamma_{\Ainf} (\fX/\mS) := R \Gamma (\fX_{\ett}, A \Omega_{\fX}^{\log}).
\]
This is an ($E_\infty$-$\Ainf$-algebra) object in $\mD(\Ainf)$ equipped with a $\varphi_{\Ainf}$-linear endomorphism $\varphi$, which induces a quasi-isomorphism 
\[
\varphi: R \Gamma_{\Ainf} (\fX) [\frac{1}{\xi}] \isom R \Gamma_{\Ainf} (\fX) [\frac{1}{\varphi(\xi)}]. 
\]

One of the main goals of this paper is to prove the following comparison theorems. %of the log $\Ainf$-cohomology. 

\begin{mainthm} \label{mainthm:comparison}
Fix a base log $p$-adic formal scheme  $\mS$ as before, and let $\fX$ be an admissibly smooth log $p$-adic formal scheme over $\mS$. Then we have the following comparison isomorphisms for $R \Gamma_{\Ainf}(\fX)$. 
\be
\item \'etale comparison: suppose that $C$ is algebraically closed and that the generic fiber $X$ is proper, then there is a functorial quasi-isomorphism 
\[
R \Gamma_{\Ainf}(\fX) \otimes^\L_{\Ainf} \Ainf[\frac{1}{\mu}] \isom R \Gamma_{\ket} (X, \Z_p) \otimes^\L_{\Z_p} \Ainf [\frac{1}{\mu}].
\]
\item de Rham comparison: there is a functorial quasi-isomorphism 
\[
R \Gamma_{\Ainf}(\fX) \otimes^\L_{\Ainf} \Ainf/\xi \isom R \Gamma_{\tu{logdR}} (\fX/\mS).
\]
\item prismatic comparison: the triple $(\Ainf, (\xi), N_\infty)$ naturally upgrades to a pre-log prism (see Subsection \ref{ss:log_prism}), and there is a functorial, $\varphi$-equivariant quasi-isomorphism 
\[
R \Gamma_{\Prism} (\fX/(\Ainf, N_\infty)) \widehat \otimes^\L_{\Ainf, \varphi}
 \Ainf \isom R \Gamma_{\Ainf}(\fX),
\]
where $R \Gamma_{\Prism} (\fX/(\Ainf, N_\infty))$ denotes the log prismatic cohomology of $\fX$ relative to the pre-log prism $(\Ainf, (\xi), N_\infty)$ recalled in Subsection \ref{ss:log_prism}. Here the completion is the derived $(p, \mu)$-adic completion. 
\item crystalline comparison: let $k$ denote the residue field of $\mO^\flat$, and let $\fX_{k}$ denote the special fiber of $\fX$ over $\ul k =(k,N_\infty)$, in other words, the base change of $\fX$ along $\spec k \hookrightarrow \spf \mO$, equipped with its pullback log structure from $\fX$. Then we have a functorial,  $\varphi$-equivariant quasi-isomorphism 
\[
R \Gamma_{\Ainf}(\fX) \widehat \otimes^\L_{\Ainf} 
W(k) \isom R \Gamma_{\tu{logcrys}}(\fX_k/W(\ul k)).
\]
Here we take derived $p$-completion on the left hand side.
\item absolute crystalline comparison: let $\fX_{\mO/p}$ denote the base change of $\fX$ along the closed embedding  $\spec \mO/p \hookrightarrow \spf \mO$, equipped with its pullback log structure. Then there is a functorial, $\varphi$-equivariant quasi-isomorphism 
\[
R \Gamma_{\Ainf}(\fX) \widehat \otimes^\L_{\Ainf} \Acris \isom R \Gamma_{\tu{logcrys}}(\fX_{\mO/p}/(\Acris, N_\infty)),
\]
where the completion is again derived $p$-adic. 
\ee 
\end{mainthm}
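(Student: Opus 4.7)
The plan is to follow the strategy of Bhatt--Morrow--Scholze \cite{BMS1}, exploiting the admissibly smooth hypothesis to work Zariski-locally with explicit toric coordinates furnished by pseudo-saturated log smooth charts. The backbone for all five comparisons is a local description of $A\Omega^{\log}_{\fX}$ which, after reduction modulo $\xi$, inverting $\mu$, or base change along the structure map $\Ainf \to W(k)$ or $\Ainf \to \Acris$, reproduces the corresponding target cohomology theory. A crucial technical ingredient, and the principal source of difficulty, is the logarithmic Hodge--Tate comparison, an avatar of which appears as Theorem \ref{mainthm:primitive_HT}; the combinatorial criterion of Theorem \ref{mainthm:ps_criterion} (unique minimal decompositions in $Q_{\Q_{\ge 0}}$ relative to $P$) is exactly what enables the requisite continuous group cohomology and Koszul complex computations to go through in the pseudo-saturated setting.

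\textbf{De Rham comparison (2).} Working Zariski-locally, one may assume $\fX_0 = (\spf \mO\gr{P})^a$ for an fs chart $N \to P$, so that $\fX = (\spf \mO\gr{Q})^a$ with $Q = (P \sqcup_N N_\infty)^{\mathrm{sat}}$. The generic fiber $X$ admits a pro-Kummer \'etale log affinoid perfectoid cover whose Galois group of the associated tower is a product of copies of $\Z_p(1)$. Computing $R\nu_* \OXplus$ via the corresponding continuous cochain complex, which splits as an external tensor product of rank-one Koszul factors along the toric coordinates, and applying $L\eta_{(\zeta_p-1)}$ produces $\sq \Omega^{\log}_{\fX}$ with cohomology sheaves $\Omega^{i,\log}_{\fX/\mS}\{-i\}$. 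Lifting this computation to the $\widehat{\Ainfx}$-level, applying $L\eta_{(\mu)}$, and reducing modulo $\xi$ yields the asserted de Rham quasi-isomorphism.

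\textbf{\'Etale comparison (1).} Inverting $\mu$ kills $L\eta_{(\mu)}$, so the left-hand side becomes $R\Gamma(X_{\proket}, \widehat{\Ainfx})[\tfrac{1}{\mu}]$. A primitive comparison theorem for Kummer \'etale cohomology on admissibly smooth log adic spaces (extending the corresponding result in \cite{DLLZ}, and using properness of $X$ together with algebraic closedness of $C$) then identifies this with $R\Gamma_{\ket}(X, \Z_p) \otimes^\L_{\Z_p} \Ainf[\tfrac{1}{\mu}]$.

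\textbf{Prismatic, crystalline and absolute crystalline comparisons (3)--(5).} For (3), one constructs a natural $\varphi$-equivariant map from $R\Gamma_{\Prism}(\fX/(\Ainf, N_\infty))$ to a Frobenius untwist of $R\Gamma_{\Ainf}(\fX)$ by exhibiting $\widehat{\Ainfx}$ locally, on log affinoid perfectoid opens, as a universal log prismatic envelope of a toric chart over the pre-log prism $(\Ainf, (\xi), N_\infty)$. Checking this map is a quasi-isomorphism reduces, via derived Nakayama after killing an appropriate ideal containing $\xi$, to comparing two log de Rham complexes: one produced by the Hodge--Tate computation of (2) and the other by the log prismatic--crystalline comparison of \cite{Koshikawa, logprism}. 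Comparisons (4) and (5) then follow formally from (3) by base changing the log prism along $\Ainf \to W(k)$ and $\Ainf \to \Acris$, combined once more with the log prismatic--crystalline comparison. The hardest step throughout is the local Hodge--Tate computation in the pseudo-saturated setting: one must carefully control the torsion arising from the saturated pushout and from possible non-injectivity of $u^{\gp}\colon P^{\gp} \to Q^{\gp}_{\infty}$ on the perfectoid cover, and this is precisely where the combinatorial machinery of Section \ref{subsection: classification} becomes indispensable.
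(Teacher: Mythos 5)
Your outline for (1) and (2) tracks the paper closely: the \'etale comparison does come from a primitive comparison theorem extended to the admissibly smooth log adic setting (Theorem \ref{mainthm:primitive_comparison}), and the de Rham comparison is indeed deduced from a local Hodge--Tate computation via continuous group cohomology and Koszul complexes in small toric coordinates (Sections \ref{section:HT_primitive} and \ref{section:Hodge_Tate_and_dR}), with the combinatorial machinery of Section \ref{subsection: classification} playing exactly the role you describe. Two small slips: the Galois group of the relevant pro-Kummer \'etale tower is $\Hom(P^{\gp}/N^{\gp},\widehat{\Z}(1))$, a product of copies of $\widehat{\Z}(1)$ rather than $\Z_p(1)$ (only the $p$-part matters after $L\eta$, but the statement should be made carefully); and in fact the torsion-freeness, not merely injectivity, of $\coker(u^{\gp})$ is what the standard finite model is engineered to provide.

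For (3)--(5) your proposal diverges from the paper in a way that is both genuinely different and, as written, gapped. You propose to build the comparison map site-theoretically by exhibiting $\widehat{\Ainfx}$ on log affinoid perfectoid opens as a universal log prismatic envelope, in the spirit of the BMS2 argument for $\varphi^*\Prism\simeq A\Omega$ via quasiregular semiperfectoid rings. In the logarithmic setting this runs into a concrete obstruction that you do not address: for a log quasiregular semiperfectoid pre-log ring $\ul{S}=(S,M)$, the initial object $\Prism^{\init}_{\ul S}$ in the log prismatic site is \emph{not} $(\Ainf(S^\flat),M^\flat)$; its construction requires first passing to the exactification $\sq M$ of $M^\flat\to M$ and then taking a prismatic envelope over $\Ainf(S^\flat)\,\widehat{\otimes}\,\Z_p\langle\sq M\rangle$ (see Construction \ref{construction:initial_logprism}). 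Since $\sq M$ is generally not $p$-divisible, $\widehat{\Ainfx}$ on log affinoid perfectoid opens does \emph{not} satisfy the universal property you invoke, and the naive analogue of the BMS2 site-theoretic map does not exist without further work. This is closely related to the phenomenon discussed in Subsection \ref{ss:remark_on_Acris}, namely that the natural map $\L R\Gamma_{\crys}(\ul S/\Z_p)\to\Acris(\ul S)$ fails to be an isomorphism. The paper deliberately circumvents this by never invoking a universal-envelope interpretation of $\widehat{\Ainfx}$: instead it introduces derived log $\Ainf$-cohomology by left Kan extension from the log-free algebras $\Sigma(S,\ul T)$ (Section \ref{section:derived}), proves the prismatic comparison for those algebras by identifying both sides with the log $q$-de Rham complex $q\Omega^{\bullet}_{\ul D_S}$ through Koshikawa's log $q$-crystalline comparison (Proposition \ref{prop:Ainf_comp_free}), and then Kan extends (Corollary \ref{cor:derived_Ainf_comparison}); the identification with the site-defined cohomology uses de Rham comparison and derived $(p,\xi)$-completeness (Corollary \ref{cor:AOmega_derived_complete}, Proposition \ref{prop:comparing_derived_with_non_derived_on_charts}). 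Your endgame--reduce to a comparison of de Rham complexes by derived Nakayama--is the right one, but you would need to first repair the construction of the map, and the paper's route via the derived theory and the log $q$-crystalline bridge is a clean way to do that. Parts (4) and (5) then do follow essentially as you say, though the logical order in the paper is $(3)\Rightarrow(5)\Rightarrow(4)$, with (4) additionally using the Hyodo--Kato isomorphism in the proof of freeness over $\Bcrisp$.
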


Using these comparison results and the Hyodo--Kato isomorphism on log crystalline cohomology, we deduce the following corollary in  Section \ref{ss:BKF}. 
\begin{maincor}
Suppose that $\fX$ is in addition proper, then $R \Gamma_{\Ainf} (\fX)$ is a perfect complex in $\mD(\Ainf)$. Moreover, its log $\Ainf$-cohomology group $H^i_{\Ainf} (\fX) := H^i (R \Gamma_{\Ainf}(\fX))$ comes equipped with the structure of a Breuil--Kisin--Fargues module (see Section \ref{ss:BKF} for the definition). 
\end{maincor}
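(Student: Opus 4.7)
The plan is to adapt the strategy of the proof of \cite[Theorem 14.5]{BMS1}, feeding into it the comparison isomorphisms of Theorem \ref{mainthm:comparison} together with classical finiteness properties of log de Rham and log crystalline cohomology. I first establish perfectness of $R \Gamma_{\Ainf}(\fX)$ as a complex of $\Ainf$-modules, and then verify the remaining axioms of a Breuil--Kisin--Fargues module on each cohomology group.

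For perfectness, observe that $R \Gamma_{\Ainf}(\fX)$ is derived $(p, \mu)$-adically complete: it arises as global sections on the proper formal scheme $\fX$ of the derived $p$-adically completed complex $A\Omega^{\log}_{\fX}$, which is built from the $\Ainf$-valued pro-Kummer \'etale sheaf $\widehat{\Ainfx}$. By a derived Nakayama argument in the style of \cite[Lemma 4.9]{BMS1}, perfectness of $R\Gamma_{\Ainf}(\fX)$ over $\Ainf$ reduces to perfectness of its derived reduction modulo $\xi$, which by the de Rham comparison (Theorem \ref{mainthm:comparison}(2)) is $R\Gamma_{\tu{logdR}}(\fX/\mS)$. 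Since $\fX$ is proper over $\mO$ and admissibly smooth, the log K\"ahler differentials $\Omega^{1,\log}_{\fX/\mS}$ and their exterior powers are finite locally free $\mO_\fX$-modules, obtained as saturated pullbacks of the corresponding finite locally free sheaves on the fs log formal scheme $\fX_0/\mS_0$ via Kato's structure theorem for log smooth morphisms. Thus the log de Rham complex is a bounded complex of finite locally free sheaves on the proper formal scheme $\fX$, its hypercohomology is perfect over $\mO$, and $R\Gamma_{\Ainf}(\fX)$ is perfect over $\Ainf$.

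With perfectness in hand, each $H^i_{\Ainf}(\fX)$ is a finitely presented $\Ainf$-module, using that $\Ainf$ is coherent (\cite[Proposition 3.24]{BMS1}). To upgrade each $H^i_{\Ainf}(\fX)$ to a Breuil--Kisin--Fargues module, I must further verify (i) that $H^i_{\Ainf}(\fX)[1/p]$ is finite projective over $\Ainf[1/p]$, and (ii) that the Frobenius induces an isomorphism after inverting $\varphi(\xi)$. Condition (ii) is intrinsic to the d\'ecalage construction: using $\varphi(\mu) = \mu \cdot \varphi(\xi)$ and the Frobenius isomorphism on $\widehat{\Ainfx}$, the general formalism of $L\eta_{(\mu)}$ yields the Frobenius isomorphism on $A\Omega^{\log}_{\fX}$ after inverting $\varphi(\xi)$, which is precisely the quasi-isomorphism already recorded in the introduction just before Theorem \ref{mainthm:comparison}. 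Condition (i) is then deduced by combining the \'etale comparison (Theorem \ref{mainthm:comparison}(1)), which gives finite freeness after further inverting $\mu$, with the de Rham comparison, which gives finite generation of the reduction modulo $\xi$; feeding these inputs into the structure theory of finitely presented $\Ainf$-modules from \cite[\S 4]{BMS1} forces finite projectivity of $H^i_{\Ainf}(\fX)[1/p]$.

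The main obstacle is the first reduction: ensuring that the derived Nakayama criterion of \cite[Lemma 4.9]{BMS1} for perfectness over $\Ainf$ transfers cleanly to the logarithmic setting, and that $R\Gamma_{\Ainf}(\fX) \otimes^{\L}_{\Ainf} \Ainf/\xi$ is genuinely identified with $R\Gamma_{\tu{logdR}}(\fX/\mS)$ without derived-completion subtleties. A secondary subtlety is verifying the finiteness and local freeness of $\Omega^{1,\log}_{\fX/\mS}$ on the nose: by the base-change description (\ref{eq:diagram_defining_fX}) and the assumption that $\pi_0 : \fX_0 \to \mS_0$ is integral saturated log smooth between fs log formal schemes, this follows from Kato's structure theory combined with saturated base change, provided that compatible local charts are chosen.
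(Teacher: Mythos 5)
Your argument for perfectness is essentially the paper's: derived $(p,\xi)$-adic completeness of $A\Omega^{\log}_{\fX}$ (Corollary \ref{cor:AOmega_derived_complete}) plus derived Nakayama reduce perfectness of $R\Gamma_{\Ainf}(\fX)$ over $\Ainf$ to perfectness of $R\Gamma_{\Ainf}(\fX) \otimes^\L_{\Ainf} \Ainf/\xi \cong R\Gamma_{\textup{logdR}}(\fX/\mS)$ over $\mO$, which follows from properness together with finite local freeness of the log differentials. The Frobenius discussion is also in line with \cite{BMS1}.

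The Breuil--Kisin--Fargues step, however, has a genuine gap. Writing $M = H^i_{\Ainf}(\fX)$, you propose to deduce finite projectivity of $M[1/p]$ over $\Ainf[1/p]$ from (a) the \'etale comparison, which controls $M[1/p\mu]$, (b) the de Rham comparison, which controls $M/\xi$, and (c) the structure theory of finitely presented $\Ainf$-modules from \cite[\S 4]{BMS1}. This is not enough. The prime ideal $\ker(\Ainf \to W(k))$ contains $\mu$ (since $\epsilon \equiv 1 \bmod \fm^\flat$) but does not contain $\xi$ or $p$; so the point of $\spec \Ainf[1/p]$ it cuts out is deleted upon inverting $\mu$ and is distinct from $V(\xi)$. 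Hence neither $M[1/p\mu]$ nor $M/\xi$ constrains $M$ near that point, and general structure theory cannot exclude excess torsion or rank localized there. This is exactly why \cite[Lemma 4.20]{BMS1} (restated as the proposition opening Section \ref{ss:BKF}) requires an \emph{additional} hypothesis beyond what you supply: either finite projectivity of $M \otimes_{\Ainf} \Bcrisp$ over $\Bcrisp$, or the equality of ranks $\textup{rk}_{\Ainf[1/p\mu]}(M[1/p\mu]) = \textup{rk}_{W(k)}(M \otimes_{\Ainf} W(k))$. The paper furnishes both in Proposition \ref{prop:free_over_B_cris}, whose proof goes through the absolute crystalline comparison (Theorem \ref{thm:Acris_comparison}), base change for log crystalline cohomology, and, crucially, the Hyodo--Kato isomorphism for $\fX_{0,k}$; the corollary then follows as stated in Corollary \ref{cor:valuation_in_BKF_modules}. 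The Hyodo--Kato isomorphism (or some substitute establishing the $\Bcrisp$-projectivity or rank-matching at the $W(k)$-point) is the ingredient your proposal omits, and without it the deduction of finite freeness of $H^i_{\Ainf}(\fX)[1/p]$ does not close.
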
  
%We refer the reader to  Section \ref{ss:BKF} for the definition of Breuil--Kisin--Fargues modules. 
This in turn allows us to extend results in \cite{BMS1, CK_semistable} on torsion discrepancies among various integral $p$-adic cohomology groups to the logarithmic setting. We refer the reader to Section \ref{ss:BKF} for more details.  

Next, let us briefly comment on the proof of these comparison theorems. The \'etale comparison is an immediate consequence of the construction, provided that we extend Scholze's primitive comparison theorem to our setting. In this direction, we give the following  generalization of \cite[Theorem 6.2.1]{DLLZ}. Let $X$ be the log adic generic fiber of $\fX$ constructed as above, and assume that it is proper (or more generally, let $X$ be a proper log adic space which is weakly admissibly smooth over $(\spa (C, \mO_C), N_\infty)^a$ in the sense of Definition \ref{defn: admissiblly log smooth}). 
\begin{mainthm}  \label{mainthm:primitive_comparison}
Suppose $C$ is algebraically closed. Let $\L$ be an $\F_p$-local system on $X_{\ket}$. Then we have the following:
\begin{enumerate}
\item $H^i (X_{\ket}, \L \otimes_{\F_p} (\mO_X^{+a} /p))$ is an almost finitely generated $\mO$-module for every $i\geq 0$, and is almost zero for $i\gg 0$.
\item There is a canonical almost isomorphism
\[H^i (X_{\ket}, \L) \otimes_{\F_p} (\mO^{a}/p) \xrightarrow[]{\sim} H^i (X_{\ket}, \L \otimes_{\F_p} (\mO_X^{+a} /p))\]
of almost $\mO$-modules, for every $i\geq 0$. Moreover, $H^i (X_{\ket}, \L)$ is a finite dimensional $\F_p$-vector space for every $i\geq 0$, and $H^i (X_{\ket}, \L)=0$ for $i\gg 0$.
%\item There is a canonical almost isomorphism\[H^i (X_{\ket}, \L) \otimes_{\F_p} \mO^{\flat a} \: \xrightarrow[]{\sim} H^i (X_{\ket}, \L \otimes_{\F_p} \widehat \mO_{X}^{+\flat a})\] of almost $\mO^{\flat}$-modules, for every $i\geq 0$.  
\end{enumerate} 
\end{mainthm}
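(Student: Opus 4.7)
The plan is to mirror Scholze's primitive comparison theorem, as extended to the fs log setting in \cite[Theorem~6.2.1]{DLLZ}, and adapt it to the weakly admissibly smooth context. The proof proceeds in three stages: (i) reduce to the constant local system $\L = \underline{\F_p}$; (ii) for $\L = \underline{\F_p}$, compute $H^i(X_{\ket}, \mO_X^{+a}/p)$ via a log affinoid perfectoid cover using almost vanishing; (iii) pass from $\mO_X^{+a}/p$ back to $\F_p$-coefficients through an Artin--Schreier argument.

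For stage (i), since $X$ is qcqs, any $\F_p$-local system $\L$ on $X_{\ket}$ is trivialized by a finite Kummer \'etale cover $f: Y \to X$; after replacing by a Galois closure one may assume $f$ is Galois with finite group $G$ and $f^\ast \L$ constant. The Hochschild--Serre spectral sequence, combined with an induction on $|G|$, reduces both assertions for $\L$ on $X$ to the corresponding assertions for $\underline{\F_p}$ on $Y$; and $Y$ is again proper and weakly admissibly smooth over the base, since finite Kummer \'etale covers preserve this class (to be verified from the chart description of admissibly smooth spaces given earlier in the paper).

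For stage (ii), I would work in $X_{\proket}$. The earlier sections of the paper extend the log affinoid perfectoid formalism of \cite{DLLZ} to the weakly admissibly smooth setting: such objects form a basis of $X_{\proket}$, and on any log affinoid perfectoid $U$ the almost vanishing $H^i(U, \mO_X^{+a}/p) \aeq 0$ holds for $i>0$. Choose a finite cover $X = \bigcup_j X_j$ together with pro-Kummer \'etale log affinoid perfectoid covers $\widetilde X_j \to X_j$. The \v{C}ech-to-derived-functor spectral sequence, combined with the almost vanishing, identifies $R\Gamma(X_{\ket}, \mO_X^{+a}/p)$ with the \v{C}ech complex built from $\mO_X^{+a}(\widetilde X_{j_0 \cdots j_k})/p$. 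A Cartan--Serre style argument exploiting properness of $X$ (approximating the infinite divisible monoid by its finite $\frac{1}{m}$-truncations) then yields almost finite generation over $\mO$ and almost vanishing in large degree, proving (1). For stage (iii), the Artin--Schreier sequence $0 \to \F_p \to \mO_X^{+a}/p \xrightarrow{\varphi-1} \mO_X^{+a}/p \to 0$ is exact in $X_{\proket}$ (using that $\mO_X^{+a}/p$ is almost Frobenius-surjective, which can be checked on log affinoid perfectoids via the tilt). Combining this with the almost finite generation from (1) and running the standard almost linear algebra argument as in \cite{Scholze} and \cite{DLLZ} produces the almost isomorphism of (2); finite dimensionality over $\F_p$ and vanishing in large degree then follow by reducing modulo a generator of $\fm \subset \mO^a$.

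The main obstacle I expect is not the outline above, which is essentially formal once the pieces are in place, but rather securing the two foundational facts used in stage (ii) in the weakly admissibly smooth (not merely fs) setting: that log affinoid perfectoid objects form a basis of $X_{\proket}$, and that almost vanishing $H^i(\mO_X^{+a}/p) \aeq 0$ holds on them. Both should be obtained by writing $X$ as a saturated limit of natural fs models and passing to the limit carefully, so that the arguments of \cite{DLLZ} survive; it is here that the pseudo-saturated machinery and the finiteness of $Q \sqcup_P P' \to Q \sqcup_P^{\mathrm{sat}} P'$ developed earlier in the paper do the essential work.
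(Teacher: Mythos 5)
Your outline correctly identifies that the proof should follow the template of Scholze and of \cite[Theorem 6.2.1]{DLLZ}: local almost finite generation over a log affinoid perfectoid tower, a Cartan--Serre style globalization on the proper space, and the Artin--Schreier trick to pass to $\F_p$-coefficients. What is off is the diagnosis of where the genuine difficulty lies.

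The two ``foundational facts'' you flag (log affinoid perfectoid objects form a basis of $X_{\proket}$; almost vanishing of $H^i(U,\mO_X^{+a}/p)$ on them) are \emph{not} the obstacle: both were already established in Section \ref{sec:kummer_etale} of the paper (Proposition \ref{prop: log affinoid perfectoid objects form a basis} and Theorem \ref{thm: vanishing of cohomology on pro-Kummer etale site}), precisely by a saturated-limit argument of the kind you describe. The real technical heart, which your proposal never reaches, is the analogue of \cite[Lemma 6.1.6]{DLLZ} in the non-fs setting. Concretely, for a standard finite model with small chart $u\colon N\hookrightarrow P$, one must show that the perfectoid ring
\[
R_\infty^{\square} \;=\; \mO_C\,\widehat{\otimes}_{\mO_C\langle N_{\Q_{\ge 0}}\rangle}\,\mO_C\langle P_{\Q_{\ge 0}}\rangle
\]
decomposes $\Gamma$-equivariantly as a completed direct sum of $\chi$-isotypic pieces indexed by finite-order characters $\chi$ of $\Gamma$, that the trivial piece is exactly $R_1^{\square}=\mO_C\widehat{\otimes}_{\mO_C\langle N\rangle}\mO_C\langle P\rangle$, and that each nontrivial piece is a \emph{finite} $R_1^{\square}$-module. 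Establishing this, and even making sense of the ring $R_1^{\square}$ as an integrally closed subring, is the content of Proposition \ref{prop: R^+_1} and Proposition \ref{lemma: Gamma action on R^+}, and it rests entirely on the combinatorial Lemma \ref{lemma: Q}: the set $Q$ of minimal components $f_N(x)$ is a finite union of faces of the cone $P_{\Q_{\ge 0}}$. This lemma uses the existence and \emph{uniqueness} of minimal decompositions coming from the pseudo-saturated criterion (Theorem \ref{thm: classification of pseudo-saturated}), not the finiteness of $Q\sqcup_P P'\to Q\sqcup_P^{\mathrm{sat}} P'$ from Lemma \ref{lemma: tech lemma} that you cite --- that result is used earlier only to ensure the adic generic fiber exists. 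Your proposal as written waves at a ``Cartan--Serre argument approximating by $\tfrac{1}{m}$-truncations'' to deliver almost finite generation, but without the isotypic decomposition there is no control on how the infinitely many intermediate covers contribute, so this step would not close.

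A smaller point: your Stage (i), reducing to the constant local system via a Galois cover and Hochschild--Serre, is a detour the paper does not take --- the argument in \cite{DLLZ} (and here) carries a general $\F_p$-local system $\L$ through the local analysis directly. Your reduction would also force you to re-verify that a finite Kummer \'etale cover of a weakly admissibly smooth space is again weakly admissibly smooth, which is true (it follows from Definition \ref{defn: kummer etale morphism over adm log smooth} and Lemma \ref{lemma: ket morphism from base change}) but adds an extra layer without simplifying the rest.
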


We remark that, while we still follow the strategy of \cite{Scholze}, our proof of this key technical result (Theorem \ref{mainthm:primitive_comparison}) uses some rather elaborate analysis of the combinatorics that are used to characterize pseudo-saturated morphisms (see Lemma \ref{lemma: Q}).  
Next, for the de Rham comparison, we first prove the following ``primitive Hodge--Tate comparison'' result.

\begin{mainthm}[The primitive Hodge--Tate comparison] \label{mainthm:primitive_HT}
Let $\sq \Omega^{\log}_{\fX}$ be the \'etale sheaf from (\ref{eq:def_of_AOmega_and_tilde}). There are functorial isomorphisms 
\[
\Omega_{\fX/\mS}^{\bullet} \isom \mH^\bullet (\sq \Omega^{\log}_{\fX}) \{\bullet\}
\]
of \'etale sheaves of differential graded algebras, where $\{i\}$ denotes the Breuil--Kisin--Fargues twist, and the differential on the right hand side is the Bockstein differential. 
\end{mainthm}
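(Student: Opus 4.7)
The plan is to follow the strategy of Bhatt--Morrow--Scholze for the Hodge--Tate comparison in the smooth case (\cite{BMS1}, Section 8), adapted to the logarithmic setting and making essential use of the admissible smoothness hypothesis. Both sides of the claimed isomorphism are \'etale sheaves on $\fX$, so it suffices to construct the identification \'etale-locally and to check naturality; globalization is then automatic. By admissible smoothness of $\pi_0$, we may work on an \'etale open $U \subset \fX$ over which $\fX \ra \mS$ is the saturated base change of an integral log smooth $\fX_0 \ra \mS_0$ along $N \hookrightarrow N_\infty$, so that locally $U$ is \'etale over $\spf \mO \langle Q \rangle$ with $Q = (P \sqcup_N N_\infty)^{\tu{sat}}$, where $P$ is an fs chart of $\pi_0$.

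Next, the pro-Kummer-\'etale cover $U_{\infty, \eta} \ra U_\eta$ obtained by extracting all $n$-th roots of elements of $Q$ (compatibly with those already present in $N_\infty$) produces a log affinoid perfectoid object in $X_{\proket}$, with Galois group $\Gamma \cong \Z_p(1)^r$ where $r = \tu{rk}\,\Omega^1_{\fX/\mS}|_U$, acting on coordinates $T \in Q$ by multiplication by roots of unity. On log affinoid perfectoid objects, $R\nu_* \OXplus$ is computed by continuous $\Gamma$-cohomology of $\OXplus(U_{\infty, \eta})$, and a Koszul calculation along the eigenspace decomposition of the perfectoid algebra gives that, up to $(\zeta_p - 1)$-torsion, the $i$-th cohomology is $\bigwedge^i \OXplus|_U \otimes \Omega^1_{\fX/\mS}|_U$ multiplied by a factor of $(\zeta_p - 1)^i$ that arises from the Koszul differentials $\zeta_{p^n}^{a} - 1$.

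Applying the d\'ecalage $L\eta_{(\zeta_p - 1)}$ then strips the $(\zeta_p - 1)^i$ factor and kills the $(\zeta_p - 1)$-torsion by standard properties of $L\eta$, yielding an identification $\mH^i(\sq \Omega^{\log}_\fX)|_U \cong \Omega^i_{\fX/\mS}|_U \otimes \{-i\}$, which after the BKF twist $\{i\}$ on the right-hand side of the theorem gives the desired additive comparison. Multiplicativity is then the usual matching of the cup product on group cohomology with the wedge product on differentials. For the differential, the Bockstein associated to
\[
0 \lra \sq \Omega^{\log}_\fX \xrightarrow{\;\zeta_p - 1\;} \sq \Omega^{\log}_\fX \lra \sq \Omega^{\log}_\fX / (\zeta_p - 1) \lra 0
\]
unwinds, via an explicit lift--divide--differentiate computation on the Koszul complex as in BMS1, into the log de Rham differential, with $d \log T$ for $T \in Q$ arising from the action of the corresponding generator of $\Gamma$.

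The main obstacle is the Koszul and eigenspace computation in the second paragraph: in the smooth case of \cite{BMS1} the monoid $Q$ is free abelian of finite rank and $\OXplus(U_{\infty, \eta})$ decomposes cleanly as a direct sum over characters $Q^{\gp}[\tfrac{1}{p}] / Q^{\gp}$. Here $Q$ is only saturated and arises from a pseudo-saturated pushout, so a priori the action of $\Gamma$ could mix coordinates coming from $N_\infty$ with those coming from $P$ in uncontrolled ways. The key input is Theorem \ref{mainthm:ps_criterion}: pseudo-saturatedness provides, for every $y \in Q_{\Q_{\ge 0}}$, a \emph{unique} minimal decomposition $y = y' + x'$ relative to $(N_\infty, Q)$, which yields a canonical direct-sum eigenspace decomposition of $\OXplus(U_{\infty, \eta})$ indexed by the $y'$-parts, with the $x' \in N_\infty$-parts acting trivially under $\Gamma$. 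Without this combinatorial input the eigenspaces would overlap and the Koszul computation would pick up spurious cross-term torsion; with it, the computation reduces to the toric case and the remainder of the argument is a careful but routine adaptation.
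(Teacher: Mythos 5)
Your strategy is broadly the one the paper follows, and you have correctly identified the combinatorial crux — the unique minimal decomposition coming from pseudo-saturatedness is indeed what makes the explicit description of $R^+_\infty$ and its $\Gamma$-eigenspace decomposition (Proposition \ref{prop: R^+_1}, Proposition \ref{lemma: Gamma action on R^+}) go through, which is what the Koszul computation needs. However, there is a real gap in how you propose to globalize. You write that since both sides are \'etale sheaves ``it suffices to construct the identification \'etale-locally and to check naturality; globalization is then automatic,'' but the Koszul/eigenspace identification you build depends on the choice of small chart $u\colon N\hookrightarrow P$ and the resulting Kummer pro-\'etale tower, so ``checking naturality'' is precisely the hard part you have skipped: you need to know that two different coordinate choices on an overlap produce the \emph{same} isomorphism before any gluing can happen. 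The paper's approach is different in exactly this spot: it first constructs a chart-independent map
\[
\widehat{\L}_{\ul R/\Z_p}[-1] \lra L\eta_{(\zeta_p-1)} R\Gamma(U_{\proket}, \widehat{\mO}_X^+)\{1\}
\]
via the Gabber--Olsson log cotangent complex (Constructions \ref{construction:HT_before_L_eta}, \ref{construction:map_iota}, Proposition \ref{prop:HT_map_factors_via_L_eta}) — this map is functorial in $\ul R$ by construction because $\widehat{\L}_{\ul R/\Z_p}$ is — and only then uses a local coordinate to verify the map is an isomorphism (Propositions \ref{prop:HT_factors_group}, \ref{prop:primitive_HT_local}). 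Without such a canonical map your argument would need either the ``all possible coordinates'' bookkeeping of BMS1 \S 9 or some other compatibility argument, and as written this step is missing.

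Two smaller points. First, the Galois group of the relevant pro-Kummer-\'etale cover is $\widehat{\Z}(1)^d$ with $d = \operatorname{rk}_{\Z}(P^{\gp}/N^{\gp})$, not $\Z_p(1)^r$; the prime-to-$p$ eigenspaces do get killed by units of the form $\zeta_m-1$, so the discrepancy is harmless for the conclusion, but it should be stated correctly since it affects the form of the Koszul complex. Second, your discussion of the Bockstein differential is actually more explicit than what appears in the paper's body proof of Theorem \ref{thm:primitive_HT}, which only addresses the graded isomorphism and relies on \cite[Prop.~6.12]{BMS1} to supply the differential structure in the de Rham comparison; spelling out the lift--divide--differentiate unwinding is a genuine addition, provided the earlier functoriality issue is resolved so that the identification you are differentiating is well defined globally.
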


Assuming Theorem \ref{mainthm:primitive_HT}, it is not difficult to deduce the de Rham comparison isomorphism from the following  
\begin{mainthm}[The Hodge--Tate comparison] \label{mainthm:HT}
There is a natural quasi-isomorphism %(the Hodge--Tate isomorphism)
\[
A \Omega_{\fX}^{\log} \otimes^\L_{\Ainf} \Ainf/\varphi(\xi) \isom \sq \Omega_{\fX}^{\log}.  
\]
\end{mainthm}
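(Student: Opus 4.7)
The strategy follows \cite[Theorem 8.17]{BMS1} and reduces the claim to a base-change compatibility of the d\'ecalage operator $L\eta$. The relevant basic facts are: the map $\tilde\theta := \theta \circ \varphi^{-1}: \Ainf \twoheadrightarrow \mO$ has kernel $(\varphi(\xi))$ and sends $\mu \mapsto \zeta_p - 1$; and on the log affinoid perfectoid basis of $X_\proket$, the sheaf $\widehat{\Ainfx}$ evaluates to $\Ainf(R^+)$ on which $\varphi(\xi)$ is a non-zero-divisor with quotient $R^+$. It follows that there is a natural isomorphism $\widehat{\Ainfx} \otimes^\L_\Ainf \Ainf/\varphi(\xi) \isom \OXplus$ of sheaves on $X_\proket$, and moreover $R\nu_*$ commutes with the derived reduction $-\otimes^\L_\Ainf \Ainf/\varphi(\xi)$ because $\varphi(\xi)$ acts as a non-zero-divisor on $\widehat{\Ainfx}$.

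Next, I would invoke the standard base change criterion for $L\eta$ (cf.\ \cite[Lemma 6.14]{BMS1}): if $K \in \mD(\Ainf)$ and $g \in \Ainf$ acts injectively on $\mH^i(K)/\mH^i(K)[f^\infty]$ for each $i$, then the canonical map
\[
L\eta_{(f)}(K) \otimes^\L_\Ainf \Ainf/g \: \longrightarrow \: L\eta_{(\bar f)}\bigl(K \otimes^\L_\Ainf \Ainf/g\bigr)
\]
is a quasi-isomorphism. Applying this with $K = R\nu_* \widehat{\Ainfx}$, $f = \mu$, $g = \varphi(\xi)$, and combining with the identifications above, yields the claimed
\[
A\Omega_\fX^{\log} \otimes^\L_\Ainf \Ainf/\varphi(\xi) \:\isom\: L\eta_{(\zeta_p - 1)} R\nu_* \OXplus \: = \: \sq\Omega_\fX^{\log}.
\]

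The crux is thus to verify that $\varphi(\xi)$ acts injectively on $\mH^i(R\nu_* \widehat{\Ainfx}) / \mH^i(R\nu_* \widehat{\Ainfx})[\mu^\infty]$ for every $i$. This is where Theorem \ref{mainthm:primitive_HT} enters as the essential input: it identifies $\mH^i(\sq\Omega_\fX^{\log})$ with the Breuil--Kisin--Fargues twisted log differentials $\Omega^i_{\fX/\mS}\{-i\}$, which are locally free $\mO_\fX$-modules on $\fX_\ett$. I would establish the required injectivity by passing to a log affinoid perfectoid cover coming from a toric chart (whose existence is guaranteed by the pseudo-saturated admissible smoothness hypothesis), on which $R\nu_* \widehat{\Ainfx}$ admits an explicit Koszul-complex description for a continuous $\Z_p^d$-action on an associated perfectoid algebra. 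The cohomology sheaves $\mH^i$ then become explicit $\Ainf$-modules whose $\mu$-torsion and $\varphi(\xi)$-divisibility structure can be analyzed directly; the main technical obstacle is managing the combinatorics of these Koszul complexes in the presence of the non-free toric monoid, for which the polyhedral-cone criterion of Theorem \ref{mainthm:ps_criterion} should provide the necessary bookkeeping to carry the computation through and then globalize by \'etale descent.
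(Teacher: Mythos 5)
Your outline shares the right building blocks with the paper's proof --- small coordinates, Koszul models of the $\Gamma$-cohomology, the identity $\varphi(\xi)\equiv p\ (\mathrm{mod}\ \mu)$ --- but the route is genuinely different and as stated contains a gap. The paper does not apply an $L\eta$-base-change criterion to the global \'etale sheaf $R\nu_*\widehat{\Ainfx}$; instead it passes to a presheaf variant $A\Omega^{\log,\text{psh}}_{\fX}$ on small affines, reduces via a coordinate $\square$ to continuous group cohomology $R\Gamma_{\textup{ct}}(\Gamma,\Ainf(R_\infty))$, proves the mod-$\varphi(\xi)$ comparison directly at that level by an explicit character decomposition of the Koszul complex (Lemma \ref{lemma:qi_group_Ainf_mod_xi}), and transfers back to sheaves using the fact that $L\eta_\mu$ commutes with the sheafification $j^{-1}$. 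The reason for this detour is exactly the issue your proposal does not engage: the Koszul model agrees with $R\Gamma(U_{\proket},\widehat{\Ainfx})$ only up to $W(\fm^\flat)$-torsion, so $\mH^i(R\nu_*\widehat{\Ainfx})$ is not literally the Koszul cohomology. The paper upgrades the almost quasi-isomorphism to an actual one only \emph{after} applying $L\eta_\mu$ (Proposition \ref{lemma:decalage_turns_almost_into_actual} together with Proposition \ref{prop:over_Ainf_no_almost_zero} and Corollary \ref{cor:qi_group_proket_Ainf}), whereas your injectivity hypothesis must be verified for $\mH^i(R\nu_*\widehat{\Ainfx})$ \emph{before} $L\eta$, and your sketch offers no independent way to obtain that control.

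Two smaller points. The criterion you attribute to \cite[Lemma 6.14]{BMS1} is misattributed (that lemma is about $L\eta$ commuting with stalks), and the hypothesis the paper actually invokes --- Lemma \ref{lemma:background_lemma_on_Ainf_gp_computation}(2), from \cite[Lemma 5.16]{Bhatt} --- is $p$-torsion-freeness of $H^i(K\otimes^\L_{\Ainf}\Ainf/\mu)$, which is not the same as $\varphi(\xi)$-injectivity on $H^i(K)/H^i(K)[\mu^\infty]$; the order in which one reduces matters. Also, Theorem \ref{mainthm:primitive_HT} is not an input here: it computes the \emph{target} $\mH^\bullet(\sq\Omega^{\log}_\fX)$, while the Hodge--Tate comparison is established by matching both sides against the same local Koszul model via Corollary \ref{cor:L_eta_turns_into_isom}; the two theorems are parallel consequences of the Section \ref{section:HT_primitive} analysis, not one the hypothesis of the other.
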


The strategy to prove Theorem \ref{mainthm:primitive_HT} and \ref{mainthm:HT} is similar to that in \cite{BMS1} --- we first construct   functorial maps using log cotangent complexes (studied by Gabber and Olsson  in \cite{Olsson_log}), and then show that it is a quasi-isomorphism by choosing nice coordinates  locally to work with. This local analysis gets somewhat technical due to the presence of log structures, and we spell out the details in Sections \ref{section:etale_comparison}, \ref{section:HT_primitive} and \ref{section:Hodge_Tate_and_dR}. 

\subsection*{Derived log $\Ainf$-cohomology}

On the other hand, slightly deviating from the original proof in \cite{BMS1}, our proof of the prismatic and crystalline comparisons relies on the theory of  derived log $\Ainf$-cohomology and \emph{uses} de Rham comparison. The starting point is  to   ``derive'' or ``animate'' (in the derived $p$-complete sense) the functor that sends the $p$-completed ``log free'' algebra 
\begin{equation} \label{eq:log_free_algebra}
\Sigma(S, \ul{T}) \coloneqq (\mO_C [\N^{S}, \N^{T}]^{\wedge}, N_{\infty} \oplus \N^T)
\end{equation}
(here $(\empty -)^\wedge$ denotes the $p$-adic completion) to its  log $\Ainf$-cohomology complex
\[ 
A \Omega^{\log}_{\Sigma(S, \ul{T})} \coloneqq
R \Gamma_{\Ainf} (\spf (\Sigma(S, \ul{T})^a)  \in \mD (\Ainf).
\]
This gives rise to a functor from the category of derived $p$-complete (simplicial) pre-log rings over $\mO$ to the derived category $ \mD (\Ainf)$ of $\Ainf$-modules, sending 
\[
 (R, P) \longmapsto A \Omega_{(R, P)}^{\L, \log},
\] 
where $A \Omega_{(R, P)}^{\L, \log}$ is called the \emph{derived log $\Ainf$-cohomology} of $(R, P)$. 

This construction can be naturally globalized to a sheaf $A \Omega_{\fX}^{\L, \log}$ of $E_\infty$-$\Ainf$-algebras on the \'etale site of $\fX$.  From  Theorem \ref{mainthm:primitive_HT} and Theorem \ref{mainthm:HT} we obtain a derived version of Hodge--Tate comparison (see Corollary \ref{cor:derived_HT}), which essentially allows us to control the derived log $\Ainf$-cohomology of $(R, P)$ by studying  log cotangent complexes. % It is in this context that we shall apply Theorem \ref{mainthm:flat_descent}. 
To proceed, we first prove a comparison between the derived log $\Ainf$-cohomology $ A \Omega_{(R, P)}^{\L, \log}$ and the derived log prismatic cohomology $ \Prism^{\L}_{(R, P)/\Ainf}$ (constructed in \cite{logprism}), using log q-crystalline cohomology studied in \cite{Koshikawa} as a bridge. This comparison is stated as part of Theorem \ref{mainthm:derived_comparisons}. In particular, derived log $\Ainf$-cohomology comes equipped with a Nygaard filtration, coming from the Nygaard filtration of  (derived) log prismatic cohomology constructed in \cite{logprism}. We then compare the derived and non-derived log $\Ainf$-(resp. log prismatic) cohomology in our setting using the de Rham comparison.

%Moreover, in \cite{logprism} the authors prove a ``flat descent'' result for log cotangent complexes. Recall from \emph{loc.cit.} that a map $(R, P) \ra (S, M)$ between pre-log rings is \textit{homologically log flat} (resp. an \textit{hlf} (= \textit{homologically log flat}) \textit{cover}) if $R \ra S$ is flat (resp. faithfully flat) and for any monoid map $P \ra Q$, the homotopy pushout $M \oplus^\L_{P} Q$ agrees with the naive pushout (see Definition \ref{definition:hom_flat}).  The category $\tu{Alg}^{\textup{prelog}}$ of pre-log rings forms a site with covering maps given by hlf covers. 
% We upgrade the category $\text{Alg}^{\textup{prelog}}$ of pre-log rings to a site with covering maps given by homologically log flat coverings and refer to this topology as the hlf (= homologically log flat)  topology. In this article we show that the log cotangent complex (recalled in Section \ref{sec:cotangent_complex}) introduced by Gabber satisfies homologically log faithfully flat descent. More precisely, 

%\begin{mainthm}[\cite{logprism}] \label{mainthm:flat_descent} Fix a base pre-log ring $(R, P)$. The functor $(S, M) \mapsto \L_{(S, M)/(R, P)}$ forms a sheaf on $\textup{Alg}^{\textup{prelog}}_{(R, P)/}$ equipped with the hlf topology, which takes values in $\mD (R)$.  \end{mainthm}

%We refer the reader to \emph{loc.cit.} for details. 
Moreover, in \cite{logprism}, Koshikawa and the second author  introduce a logarithmic version of the quasisyntomic site studied by \cite{BMS2} and prove that log cotangent complex (and thus derived log $\Ainf$-cohomology) satisfies ``log quasisyntomic descent''. This theory is briefly reviewed in Section \ref{section: log quasisyntomic site}. Similar to the (nonlog) quasisyntomic site, an important feature of the log quasisyntomic site is that it is locally log quasiregular semiperfectoid (see Definition \ref{definition:qrspd_log}),  for which the derived log $\Ainf$-cohomology turns out to be a discrete ring. For such pre-log rings, we prove the following analogue of \cite[Theorem 8.14]{BMS2} and \cite[Lemma 3.8]{Yao_Acris}. 

\begin{mainprop} \label{mainprop:derived_crystalline}
Let $(S, M)$ be a quasiregular semiperfectoid pre-log $\mO$-algebra, then there are functorial isomorphisms of (discrete) $\Ainf$-algebras 
\begin{equation} \label{eq:derived_crystalline_equivalent_complexes}
\widehat \L \Omega_{(S, M)/\Z_p} \isom \L R \Gamma_{\crys} ((S, M)/\Z_p) \isom A \Omega^{\L, \log}_{(S, M)} \widehat \otimes^\L_{\Ainf} \Acris,
\end{equation}
where $\widehat \L \Omega_{(-)/\Z_p}$ denotes the $p$-completed derived log de Rham cohomology, $ \L R \Gamma_{\crys} ((-)/\Z_p)$ denotes the derived (absolute) log crystalline cohomology, and $A \Omega^{\L, \log}_{(S, M)}$ is the derived log $\Ainf$-cohomology from above. 
\end{mainprop}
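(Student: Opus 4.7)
The strategy is to establish both isomorphisms by left Kan extending (i.e., animating) the corresponding non-derived comparisons from the full subcategory of log free $p$-complete algebras $\Sigma(S, \ul T) = (\mO[\N^S, \N^T]^\wedge, N_\infty \oplus \N^T)$. This subcategory generates the $\infty$-category of derived $p$-complete simplicial pre-log $\mO$-algebras under sifted colimits, and each functor appearing in the statement is (by construction, in the case of $A\Omega^{\L, \log}$, or by definition in the other two cases) the left Kan extension of its non-derived counterpart from this subcategory. It thus suffices to produce functorial comparisons on the algebras $\Sigma(S, \ul T)$ and check that each functor commutes with sifted colimits. The quasiregular semiperfectoid hypothesis on $(S, M)$ will finally force the animated complexes to be concentrated in degree zero, giving isomorphisms of discrete $\Ainf$-algebras as claimed.

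For the first isomorphism, I would invoke Kato's logarithmic de Rham--crystalline comparison applied to $\Sigma(S, \ul T)^a$: because this pre-log algebra is log smooth (after descending $N_\infty$ to a fine sub-monoid $N$ as in the discussion of admissible smoothness in \S\ref{section: admissible smoothness}), its $p$-completed log de Rham cohomology is naturally quasi-isomorphic to its log crystalline cohomology over $\Z_p$. Left Kan extending this non-derived quasi-isomorphism to derived $p$-complete simplicial pre-log $\mO$-algebras produces the desired identification $\widehat{\L}\Omega_{(S, M)/\Z_p} \isom \L R\Gamma_{\crys}((S, M)/\Z_p)$, exactly parallel to Bhatt's derivation of the derived Cartier isomorphism in the non-log setting.

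For the second isomorphism, I would apply the absolute crystalline comparison of Theorem~\ref{mainthm:comparison}(5) to the admissibly smooth log formal scheme $\spf(\Sigma(S, \ul T)^a)$. This yields a functorial, $\varphi$-equivariant quasi-isomorphism
\[A\Omega^{\log}_{\Sigma(S, \ul T)} \widehat\otimes^\L_{\Ainf} \Acris \isom R\Gamma_{\logcrys}(\Sigma(S, \ul T)_{\mO/p}/(\Acris, N_\infty)),\]
whose left Kan extension to $(S, M)$ reads
\[A\Omega^{\L, \log}_{(S, M)} \widehat\otimes^\L_{\Ainf} \Acris \isom \L R\Gamma_{\logcrys}((S, M)_{\mO/p}/(\Acris, N_\infty)).\]
To identify the right-hand side with $\L R\Gamma_{\crys}((S, M)/\Z_p)$ one uses that, since $N_\infty$ is uniquely $n$-divisible for every $n$, the pair $(\Acris, N_\infty) \twoheadrightarrow (\mO/p, N_\infty)$ is a universal log PD-thickening in the sense relevant to computing absolute log crystalline cohomology of $\mO$-algebras over $\Z_p$; this mirrors the argument of \cite[Lemma 3.8]{Yao_Acris} in the non-log setting.

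The main obstacle is bookkeeping for the animation step: one must verify that each of the three functors commutes with sifted colimits on derived $p$-complete simplicial pre-log $\mO$-algebras, and that the derived $p$-adically completed base change $-\widehat\otimes^\L_{\Ainf} \Acris$ is compatible with this animation. On the $\Ainf$-side this is by construction of $A\Omega^{\L, \log}$. On the log de Rham and log crystalline sides it follows from the standard properties of the log cotangent complex of Gabber--Olsson as developed in \cite{Olsson_log} and \cite{logprism}. The most delicate point is showing that all three complexes are discrete on a quasiregular semiperfectoid input, which reduces via the derived Hodge--Tate comparison (Theorem~\ref{mainthm:primitive_HT} and Theorem~\ref{mainthm:HT}) to showing that $\L_{(S, M)/\mO}[-1]$ is concentrated in degree zero and flat over $S$. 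This last fact is precisely the defining property of quasiregular semiperfectoid pre-log $\mO$-algebras, and it enters through the log quasisyntomic descent machinery recalled in Section~\ref{section: log quasisyntomic site}, in strict analogy with the role of quasisyntomic descent in the non-log proof of \cite[Theorem 8.14]{BMS2}.
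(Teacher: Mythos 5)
Your overall strategy — compare the three objects on log free algebras $\Sigma(S,\ul T)$ and then animate, with discreteness coming from the qrsp hypothesis via Hodge--Tate — matches the paper, and your second isomorphism and the discreteness argument are essentially correct. However, there is a genuine gap in how you justify the reduction to $\Sigma(S,\ul T)$ for the \emph{derived log de Rham} side.

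You write that each of the three functors is ``by construction, in the case of $A\Omega^{\L,\log}$, or by definition in the other two cases,'' the left Kan extension from the $\mO$-log-free algebras $\Sigma(S,\ul T)$. This is accurate for $A\Omega^{\L,\log}$ and for $\L R\Gamma_{\crys}$ (the paper literally defines the latter that way), but it is \emph{not} how $\widehat{\L}\Omega_{(-)/\Z_p}$ is defined. Gabber's derived log de Rham complex is built from projective resolutions in the category of simplicial pre-log rings, equivalently from the free generators $\Sigma_0(S,T)=(\Z_p\langle (X_s)_{s\in S},\N^T\rangle,\N^T)$ over $\Z_p$ — not from the $\Sigma(S,\ul T)$ over $\mO$. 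So after producing the comparison on $\Sigma(S,\ul T)$ (whether via Kato's comparison or via Bhatt's derived Cartier isomorphism, which is what the paper actually cites, \cite[Proposition 7.18, Theorem 7.22]{Bhatt_dR}), you cannot simply animate: you must check that the left Kan extension of $\ul S\mapsto \widehat{\L}\Omega_{\Sigma(S,\ul T)/\Z_p}$ from the $\Sigma(S,\ul T)$'s agrees with the a priori differently-defined $\widehat{\L}\Omega_{\ul S/\Z_p}$. The paper does this with an explicit cofinality argument: identifying the double colimit over diagrams $\Sigma_0\to\Sigma\to\ul S$ with the single colimit over diagrams $\Sigma_0\to\ul S$. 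This step is missing from your proposal, and without it the reduction to the $\Sigma(S,\ul T)$'s has no grounding on the log de Rham side.

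A smaller point: your invocation of ``Kato's logarithmic de Rham--crystalline comparison'' after descending $N_\infty$ to a fine submonoid is plausible in spirit but more involved than needed, since $\Sigma(S,\ul T)$ carries the non-fine monoid $N_\infty\oplus\N^T$; the paper sidesteps this by using Bhatt's derived comparison which requires no log smoothness and applies directly. Also, your discussion of $(\Acris,N_\infty)$ being a ``universal log PD-thickening'' is not wrong, but it is already built into the paper's definition of $\L R\Gamma_{\crys}$, which takes the isomorphism $R\Gamma_{\logcrys}(-/(\Acris,N_\infty))\cong R\Gamma_{\logcrys}(-/\Z_p)$ on $\Sigma(S,\ul T)$ as part of the definition; so no further argument is needed there.
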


\begin{mainrmk} 
There is a subtle difference compared with the non-log situation. Let $(S, M)$ be as in Proposition \ref{mainprop:derived_crystalline} and let $\Acris ((S, M))$ denote the $p$-completed logarithmic PD envelop of $(S/p, M)$. Unlike \cite[Theorem 8.14]{BMS2}, we no longer have a canonical isomorphism between $\Acris ((S, M))$ and 
$A \Omega_{(S, M)}^{\log} \widehat \otimes^\L_{\Ainf} \Acris$ (or, equivalently, any term in (\ref{eq:derived_crystalline_equivalent_complexes})).  This is caused by a certain exactification procedure and is related to the difficulty of constructing the correct Nygaard filtration on log prismatic cohomology. We refer the reader to Section \ref{ss:remark_on_Acris} and \cite[Section 5]{logprism} for more details. 
\end{mainrmk}

\subsection*{Derived log  $\Ainf$-cohomology via the infinite root stack}

The theory of animation also allows one to construct another functor from the category of derived $p$-complete simplicial pre-log $\mO$-algebras to $\mD(\Ainf)$, by ``deriving'' the prismatic cohomology of the \textit{infinite root stack} associated to the log free algebras $\Sigma(S, \ul{T})$ from (\ref{eq:log_free_algebra}) as a stack.\footnote{We learned some aspect of this construction from Mathew, based on the  joint work in progress of Bhatt--Clausen--Mathew.}

The infinite root stack $\sqrt[\infty]{\fX}$ of a log (formal) scheme $\fX = (\fX, \mM)$ is a certain non-algebraic stack associated to $\fX$ constructed in \cite{Talpo_Vistoli}, which functorially parametrizes certain ``$n^{th}$-roots'' of the log structure $\mM$ for all $n$. We refer the reader to Section \ref{section:infinite_root_stack} for the precise definition (in fact, we will consider a variant of this that only parametrizes $p$-power roots). Let us remark that, it can be regarded as an algebraic incarnation of the Kato--Nakayama space associated to log schemes over the complex numbers, and is designed to turn the additional logarithmic information into plain geometric information, at a cost of introducing non-algebraic stacks. However, in the case of toric log schemes, it can be written down rather explicitly (see Section \ref{section:infinite_root_stack} for the example of the log affine line and more generally see \cite[Section 3]{Talpo_Vistoli}).
Moreover, one can  resconstruct the log (formal) scheme $\fX$ from the infinite root stack $\sqrt[\infty]{\fX}$. To the infinite root stack $\sqrt[\infty]{\fX_{S, \ul T }}$ associated to $\spf (\Sigma(S, \ul{T}))^a$, one can attach the $\Ainf$ cohomology of $\sqrt[\infty]{\fX_{S, \ul T}}$ in a suitable sense using the derived prismatic/$\Ainf$-cohomology of simplicial rings (for example, from the previous subsection with the trivial log structure; see also the treatment in \cite{Kubrak}), and then further animate to obtain a functor from the category of derived $p$-complete simplicial pre-log rings over $\mO$ to the derived category $ \mD (\Ainf)$ of $\Ainf$-modules, sending 
\[
(R, P) \longmapsto \Prism^{\L, \infty}_{(R, P)/\Ainf}.
\]

In Section \ref{section:infinite_root_stack} we show that this construction agrees with the derived log prismatic cohomology considered in the previous subsection. 
\begin{mainthm} \label{mainthm:derived_comparisons} Let $(R, P)$ be a derived $p$-complete simplicial pre-log ring over $\mO$, then we have functorial,  $\varphi$-equivariant isomorphisms 
\[
  A \Omega^{\L, \log}_{(R, P)} \isom  \varphi_{\Ainf}^* \Prism^\L_{(R, P)/ \Ainf} \isom \varphi_{\Ainf}^* \Prism^{\L, \infty}_{(R, P)/\Ainf}
\]
compatible with the (derived) Hodge--Tate and de Rham comparisons. 
\end{mainthm}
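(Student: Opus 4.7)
By construction, each of the three functors
\[
(R, P) \longmapsto A\Omega^{\L, \log}_{(R,P)}, \quad
(R, P) \longmapsto \Prism^\L_{(R,P)/\Ainf}, \quad
(R, P) \longmapsto \Prism^{\L, \infty}_{(R,P)/\Ainf}
\]
is obtained as the (derived $p$-complete) left Kan extension to simplicial pre-log $\mO$-algebras of its restriction to the full subcategory of log free algebras $\Sigma(S, \ul T) = (\mO[\N^S, \N^T]^\wedge, N_\infty \oplus \N^T)$. Consequently, the plan is to produce the two $\varphi$-equivariant isomorphisms, together with their compatibility with the Hodge--Tate and de Rham comparisons, on this generating subcategory; the statement in general then follows by universality of left Kan extension.

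For the first isomorphism $A\Omega^{\L, \log}_{(-)} \isom \varphi_{\Ainf}^* \Prism^\L_{(-)/\Ainf}$, I would first observe that $\spf(\Sigma(S, \ul T))^a$ is admissibly smooth over $\mS$, since it arises as the saturated base change of a free log polynomial formal scheme, which is integral and log smooth over the fs base $\mS_0$. Hence the non-derived prismatic comparison, namely Theorem \ref{mainthm:comparison}(3) applied to this $\fX$, furnishes the desired $\varphi$-equivariant quasi-isomorphism on log free algebras, and animating yields the derived comparison in general. The compatibility with de Rham and Hodge--Tate reduces, in the same manner, to the non-derived compatibilities already contained in Theorem \ref{mainthm:comparison} and Theorem \ref{mainthm:HT}, animated over $\Sigma(S, \ul T)$.

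For the second isomorphism $\Prism^\L_{(-)/\Ainf} \isom \Prism^{\L,\infty}_{(-)/\Ainf}$, on a log free algebra $\Sigma(S, \ul T)$, I would analyze the infinite root stack $\sqrt[\infty]{\fX_{S, \ul T}}$ directly. Restricted to $p$-power roots, this is the quotient of the colimit of Kummer covers $\fX_{S, \ul T, 1/p^n} = \spf(\mO[\N^S, \tfrac{1}{p^n} \N^T]^\wedge)$ by the natural action of $(\mu_{p^n})^T$; passing to the limit amounts to replacing $\N^T$ with its $p$-power perfection, effectively trivializing the log structure in the $T$-direction at the level of the stack. The non-derived (trivial log) prismatic cohomology of $\sqrt[\infty]{\fX_{S, \ul T}}$ can then be computed as the homotopy limit of the prismatic cohomologies of the covers $\fX_{S, \ul T, 1/p^n}$, equipped with Galois descent for $(\mu_{p^\infty})^T$, producing a Koszul-type $q$-de Rham complex over $\Ainf$ built from the $\ul T$-coordinates. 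This complex is exactly the one computing $\Prism_{\Sigma(S, \ul T)/\Ainf}$ as described in \cite{logprism}, which gives the comparison on log free algebras; animation then extends the isomorphism to all $(R, P)$.

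The main obstacle will be the explicit identification in the second isomorphism: matching the group cohomology description of the infinite root stack prismatic cohomology (with the Galois action of $(\mu_{p^\infty})^T$) with the $q$-de Rham--style log prismatic Koszul complex of \cite{logprism} in the coordinates $\ul T$, in a way that respects the Frobenius. This parallels the local $\Ainf$-computations of Sections \ref{section:etale_comparison}--\ref{section:HT_primitive}, but must be carried out at the level of non-derived prismatic cohomology of a non-algebraic stack, so particular care is needed in defining the latter as a genuine limit and in tracking the Frobenius through the descent spectral sequence. Once this is settled, compatibility with Hodge--Tate and de Rham follows from the fact that those comparisons are themselves built by animation from their log free counterparts.
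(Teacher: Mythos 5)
Your proposal for the first isomorphism is circular. You propose to deduce the derived comparison $A\Omega^{\L,\log}_{(-)} \isom \varphi_{\Ainf}^* \Prism^\L_{(-)/\Ainf}$ on log free algebras $\Sigma(S,\ul T)$ by invoking the non-derived prismatic comparison (Theorem \ref{mainthm:comparison}(3), i.e.\ Theorem \ref{theorem:logprism_comparison}) and then animating. But in the paper the logical order is reversed: Theorem \ref{theorem:logprism_comparison} is \emph{deduced from} Corollary \ref{cor:derived_Ainf_comparison} (the derived comparison), which in turn follows by Kan extension from Proposition \ref{prop:Ainf_comp_free}, the log-free case. The latter is proved \emph{independently}, by taking log $q$-crystalline cohomology from \cite{Koshikawa} as a bridge: the isomorphism $\varphi^*\Prism_{\ul R_S} \cong q\Omega_{\ul R_S}$, the exactification $q\Omega_{\ul R_S} \cong q\Omega^\bullet_{\ul D_S}$ over the $q$-PD polynomial ring $\ul D_S = (\Ainf\gr{\N^S}, N_\infty\oplus\N^S)$, and then an explicit identification of the log $q$-de Rham complex with the Koszul complex $L\eta_\mu R\Gamma_{\mathrm{ct}}(\Gamma_S, D_S)$ computing $A\Omega^{\log}_{\ul R_S}$ (via Corollary \ref{cor:qi_group_proket_Ainf} and Proposition \ref{prop:presheaf_is_sheaf}). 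Your proposal has no independent input playing the role of the $q$-crystalline comparison, so as written it assumes what it sets out to prove.

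For the second isomorphism, your approach — directly matching the Galois-cohomology description of $\Prism(\sqrt[\infty]{\fX_{S,\ul T}})$ with the log $q$-de Rham Koszul complex of \cite{logprism} — could plausibly be made to work, and you correctly identify it as the hard point. The paper avoids this by a shorter route: once the first isomorphism is in place, both $\Prism^\L_{\Sigma/\Ainf}$ and $\Prism^{\L,\infty}_{\Sigma/\Ainf}$ satisfy a derived Hodge–Tate comparison, so the whole statement reduces to an isomorphism on first graded pieces, i.e.\ a statement about cotangent complexes. Künneth and Corollary \ref{cor:derived_multiplicative} reduce further to the log affine line, where the needed input is Lemma \ref{lemma:cohomology_of_infinite_root_stack} — an isomorphism $\widehat\Omega^1_{(\Z_p\gr{t},\N)/\Z_p} \isom R\Gamma(\sqrt[\infty]{\A^{1,\log}}, \widehat\L_{\sqrt[\infty]{\A^{1,\log}}/\Z_p})$ computed mod $p$ on $B\mu_{p^\infty}$. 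This is substantially lighter than tracking the Frobenius through a descent spectral sequence for a non-algebraic stack. If you pursue your Koszul matching instead, you would in effect be re-deriving (a variant of) Proposition \ref{prop:Ainf_comp_free} for the stack, duplicating work that the Hodge–Tate reduction renders unnecessary.
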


As a result, we deduce a logarithmic version of the Beilinson fiber square (on graded terms) obtained in \cite{Beilinson_fiber}. To state the result, we introduce the logarithmic $p$-adic Tate-twist $\Z_p(n)$ as in the non-log case, which is defined by
\[
\Z_p (n) (\ul S ):= \tu{Fib} \big(\tu{Fil}_{N}^n \Prism^{\L}_{\ul S} \{n\} \xrightarrow{\varphi - \tu{can}} \Prism^{\L}_{\ul S} \{n\}\big)
\]
on a (simplicial) pre-log $\mO_C$-algebra $\ul S$. 
Here $\{n\}$ denotes the Breuil--Kisin--Fargues twist, and $\tu{Fil}_N^\bullet$ denotes the Nygaard filtration on derived log prismatic cohomology, which comes equipped with a canonical map  \[\varphi:\tu{Fil}_{N}^n \Prism_{\ul S} \ra (\xi)^n \Ainf \otimes_{\Ainf} \Prism_{\ul S}\] (see \cite[Section 5.5]{logprism}). We then set $\Q_p (n) = \Z_p (n)[1/p].$

\begin{maincor}%[Log Beilinson fiber square] 
Let $\ul S = (S, M)$ be a pre-log ring that is quasisyntomic over $\Z_p$.
    \be   
      \item There exists a natural pullback square 
     \[ \begin{tikzcd}[row sep = 2em]
          \Q_p (n) (\ul S) \arrow[d] \arrow[r] & \Q_p (n) (\ul S/p) \arrow[d] \\ 
           \tu{Fil}_{H}^n \widehat \L \Omega_{\ul S/\Z_p} \{n\}_{\Q_p} \arrow[r]  &  \widehat \L \Omega_{\ul S/\Z_p} \{n\}_{\Q_p}.
       \end{tikzcd} \]
       in the derived $\infty$-category $\mD(\Q_p)$, where $  \tu{Fil}_{H}^n$ denotes the ($p$-completed) Hodge filtration on the derived log de Rham cohomology, and $()_{\Q_p}$ denotes the base change $\otimes_{\Z_p} \Q_p.$
      \item There is a functorial isomorphism
      \[ 
      \Q_p (n) (\ul S) \cong \text{Fib} \Big(
           \tu{Fil}_{H}^n \widehat \L \Omega_{\ul S/\Z_p} \{n\}  \xrightarrow{\varphi - p^n} 
           \widehat \L \Omega_{\ul S/\Z_p} \{n\} \Big)_{\Q_p}.
      \]
    \ee 
\end{maincor}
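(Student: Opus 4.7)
The plan is to adapt the proof of the non-log Beilinson fiber square from \cite{Beilinson_fiber} to the log setting, using Theorem \ref{mainthm:derived_comparisons} (identifying derived log $\Ainf$-cohomology with derived log prismatic cohomology) and Proposition \ref{mainprop:derived_crystalline} together with Theorem \ref{mainthm:comparison}(5) (which rationally identifies log prismatic cohomology of the special fiber with log de Rham cohomology via the crystalline/$\Acris$ bridge) as the key log-theoretic inputs.

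I would begin by unwinding the definition of $\Q_p(n)(\ul S)$ as the rationalized fiber of $\varphi - \tu{can}$ on $\tu{Fil}^n_N \Prism^{\L}_{\ul S}\{n\}$. Using the derived de Rham comparison coming from Theorem \ref{mainthm:comparison}(2), Theorem \ref{mainthm:derived_comparisons}, and Corollary \ref{cor:derived_HT}, one has $\Prism^{\L}_{\ul S}\{n\}_{\Q_p}/\xi \simeq \widehat{\L\Omega}_{\ul S/\Z_p}\{n\}_{\Q_p}$, and the Nygaard filtration maps into the Hodge filtration after the Frobenius division $\varphi: \tu{Fil}^n_N \to (\xi)^n \Prism^{\L}_{\ul S}\{n\}$; this yields the left vertical map and the bottom map in the square (1). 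Meanwhile, Proposition \ref{mainprop:derived_crystalline} combined with the log crystalline comparison identifies $\Prism^{\L}_{\ul S/p}\{n\}_{\Q_p}$ with $\widehat{\L\Omega}_{\ul S/\Z_p}\{n\}_{\Q_p}$ equipped with its crystalline Frobenius, and under this identification $\Q_p(n)(\ul S/p) \simeq \tu{Fib}(\widehat{\L\Omega}_{\ul S/\Z_p}\{n\} \xrightarrow{\varphi - p^n} \widehat{\L\Omega}_{\ul S/\Z_p}\{n\})_{\Q_p}$, giving the right vertical arrow in (1) as the canonical projection of the fiber onto its source.

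With these identifications, both parts of the corollary follow by chasing fibers. More precisely, the $(\varphi - \tu{can})$-fiber on $\tu{Fil}^n_N \Prism^{\L}_{\ul S}\{n\}_{\Q_p}$ is rationally identified with the $(\varphi - p^n)$-fiber on $\tu{Fil}^n_H \widehat{\L\Omega}_{\ul S/\Z_p}\{n\}_{\Q_p}$, which proves (2); rewriting this as a pullback against the identification of $\Q_p(n)(\ul S/p)$ above then gives (1). The key point that makes the argument go through is that, on the cofiber $\widehat{\L\Omega}_{\ul S/\Z_p}\{n\}_{\Q_p}/\tu{Fil}^n_H$, the operator $\varphi$ acts on $\tu{gr}^i_H$ by $p^{n-i}$ times a unit for $i < n$ (a consequence of the Hodge--Tate comparison in Theorem \ref{mainthm:primitive_HT} combined with the behavior of the Breuil--Kisin--Fargues twist), so $\varphi - p^n$ becomes invertible after inverting $p$ on this cofiber, which is exactly what forces the square to be a pullback.

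The main obstacle I expect is controlling the discrepancy between the Nygaard filtration on derived log prismatic cohomology (constructed via an exactification step in \cite{logprism}) and the Hodge filtration on derived log de Rham cohomology after rationalization. The remark following Proposition \ref{mainprop:derived_crystalline} already warns that $\Acris((\ul S))$ need not coincide with $A\Omega^{\L,\log}_{\ul S}\widehat{\otimes}^{\L}_{\Ainf}\Acris$ as filtered rings. One must verify that any such discrepancy is annihilated by a bounded power of $p$ on each graded piece, so that it disappears rationally; this should follow from the integrally bounded nature of the exactification procedure. Once this is verified, the remainder of the argument is a formal manipulation in the stable $\infty$-category following \cite{Beilinson_fiber}.
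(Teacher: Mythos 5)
Your proposal takes a genuinely different route from the paper, and it contains a gap that the paper's route is specifically designed to avoid.

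The paper's proof never attempts a direct comparison of the log Nygaard filtration with the log Hodge filtration. Instead it exploits the fact that all the objects in the square are left Kan extended from log free algebras, reduces to the log affine line, and then invokes Theorem \ref{thm:compare_with_infinite_root} to replace $\Prism^{\L}_{\ul S}$ with the derived prismatic cohomology $\Prism^{\L,\infty}_{\ul S}$ of the infinite root stack. The flat cover $\A^1_\infty \to \sqrt[\infty]{\A^{1,\log}}$ from Lemma \ref{lemma:cohomology_of_infinite_root_stack} then supplies a $p$-completed \v{C}ech nerve $\mC^\bullet$ whose terms are \emph{non-log} quasisyntomic $\mO_C$-algebras, and all four corners of the square are computed as totalizations over $\mC^\bullet$ of their non-log analogues. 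The corollary then reduces term by term to the non-log Beilinson fiber square of \cite[Theorems 6.17 and 6.22]{Beilinson_fiber}. The whole point of this strategy is that the delicate relation between the Nygaard and Hodge filtrations is only used in the non-log setting, where it is already established, and the log content enters only through the descent along the root stack cover and the comparison in Remark \ref{remark:Nygaard_on_Ainf}.

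Your approach, by contrast, tries to run the Beilinson argument directly on the log objects. The pivotal step you identify --- ``on the cofiber $\widehat{\L\Omega}_{\ul S/\Z_p}\{n\}_{\Q_p}/\tu{Fil}^n_H$, the operator $\varphi$ acts on $\tu{gr}^i_H$ by $p^{n-i}$ times a unit for $i<n$'' --- is exactly the point at which the log exactification step interferes, and you acknowledge this but dispose of it with the assertion that ``any such discrepancy is annihilated by a bounded power of $p$ on each graded piece.'' This is not established anywhere in the paper, is not an obvious consequence of the input results you cite, and is precisely the subtlety flagged by the remark after Proposition \ref{mainprop:derived_crystalline}: $\Acris(\ul S)$ and $A\Omega^{\L,\log}_{\ul S}\widehat\otimes^{\L}_{\Ainf}\Acris$ genuinely differ, and the log Nygaard filtration is constructed by a nontrivial procedure in \cite{logprism} exactly because the naive definition fails. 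Claiming a uniform $p$-power bound on the discrepancy across all graded pieces and all quasisyntomic $\ul S$ is a substantive claim that would need its own argument, and it is not clear that it is true in the form you state it. If you want to pursue your route, you would need to actually prove this boundedness (or find an alternative reason why $\varphi-p^n$ becomes invertible on the log cofiber after inverting $p$); otherwise the argument does not close. The paper's infinite root stack reduction is precisely what lets one avoid ever confronting this question in the log setting.
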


%{\color{red} explain the issue here, relation to Nygaard filtration, and strategy to identify $\Bdrp$-lattice.}

%{\color{red} explain the theory with coefficients}

%More generally, we consider $\Ainf$-cohomology with coefficients. 

%\begin{mainthm} \label{mainthm:coefficient}

%\end{mainthm}
 
\addtocontents{toc}{\protect\setcounter{tocdepth}{0}}
\subsection*{Relative log BKF modules (subsequent development in \cite{log2})}

We end the introduction with a brief description of some subsequent development that will appear in the sequel \cite{log2} of this paper. In \cite{log2}, we generalize the main theorems of this paper to allow more general coefficients, partially following ideas from \cite{MT}. More precisely, we construct a log $\Ainf$-cohomology theory with coefficients in so-called \emph{relative logarithmic Breuil-Kisin-Fargues modules} (or, \emph{relative log BKF modules} in short).

\begin{maindef}
Let $\mathfrak{X}$ be a $p$-adic formal log scheme which is admissibly smooth over $\mS$ as before and let $X$ be its adic generic fiber. Let $X_{\proket}$ denote the pro-Kummer \'etale site on $X$ and let $\nu: X_{\proket}\rightarrow \mathfrak{X}_{\ett}$ be the natural projection.
\begin{enumerate}
\item Let $\mathbb{M}$ be a sheaf of $\Ainfx$-modules on $X_{\proket}$. For every integer $n\geq 1$, write $\xi_n$ for the element $\frac{[\varepsilon]-1}{[\varepsilon^{1/p^n}]-1}\in \Ainf$. We say that $\mathbb{M}$ is \emph{trivial modulo} $\xi_n$ if the sheaf of $\nu_*\big(\Ainfx/\xi_n\big)$-module $\nu_*(\mathbb{M}/\xi_n)$ is locally finite free and the counit
\[\nu^{-1}\nu_*(\mathbb{M}/\xi_n)\otimes_{\nu^{-1}\nu_*(\Ainfx/\xi_n)}\Ainfx/\xi_n\rightarrow \mathbb{M}/\xi_n\]
is an isomorphism of sheaves on $X_{\proket}$. We say that $\mathbb{M}$ is \emph{trivial modulo} $<\mu$ if it is trivial modulo $\xi_n$ for all $n\geq 1$.

\item A \emph{relative logarithmic Breuil-Kisin-Fargues module} over $\fX$ is a pair $(\mathbb{M}, \varphi_{\mathbb{M}})$ where $\mathbb{M}$ is a locally finite free $\Ainfx$-module which is trivial modulo $<\mu$ and 
\[\varphi_{\mathbb{M}}:(\varphi^*\mathbb{M})[\frac{1}{\varphi(\xi)}]\rightarrow \mathbb{M}[\frac{1}{\varphi(\xi)}]\] is an isomorphism of sheaves of $\Ainfx[\frac{1}{\varphi(\xi)}]$-modules. The category of such objects is denoted by $\mathrm{BKF}^{\log}(\fX, \varphi)$.
\end{enumerate}
\end{maindef}

Just like in the case of trivial coefficients, for a relative log BKF module $\mathbb{M}\in \mathrm{BKF}^{\log}(\fX, \varphi)$, we can associate a complex of \'etale sheaves
\[A\Omega^{\log}_{\fX}(\mathbb{M}):=L\eta_{(\mu)} ({R\nu_*\mathbb{M}})^{\wedge}\in \mD(\Ainf)\]
where the completion is the derived $p$-adic completion. We also define
\[R \Gamma_{\Ainf}(\fX, \mathbb{M}):= R \Gamma (\fX_{\ett}, A \Omega_{\fX}^{\log}(\mathbb{M})).\]

Moreover, there exist natural specialization functors
\begin{align*} 
\sigma^*_{\ket}: 
&\mathrm{BKF}^{\log}(\fX, \varphi)\rightarrow \Big\{\begin{array}{l} \textrm{Kummer \'etale } \mathbb{Z}_p\textrm{-local}\\ \textrm{systems on } X = \fX_{\eta}\end{array}\Big\}   \\
\sigma^*_{\textrm{logdR}}: &\mathrm{BKF}^{\log}(\fX, \varphi)\rightarrow \Big\{\begin{array}{l} \textrm{vector bundles with integrable}\\ \textrm{log connections on }\fX \end{array} \Big\} \\ 
\sigma^*_{\textrm{logcrys}}:  &\mathrm{BKF}^{\log}(\fX, \varphi)\rightarrow \Big\{\begin{array}{l} \textrm{locally finite free $F$-crystals}\\ \textrm{on } (\fX_k/W(\underline{k}))_{\textrm{logcrys}} \end{array} \Big\}
\end{align*}

The following expected result in \cite{log2} is a logarithmic analogue of \cite[Theorem 6.2]{MT}.

\begin{mainthm}[\cite{log2}]
Let $\fX$ and $\mathbb{M}$ be as above. Then we have the following comparison isomorphisms for $R \Gamma_{\Ainf}(\fX, \mathbb{M})$. 
\be
\item \'etale comparison: suppose that $C$ is algebraically closed and that the generic fiber $X$ is proper, then there is a functorial quasi-isomorphism 
\[
R \Gamma_{\Ainf}(\fX, \mathbb{M}) \otimes^\L_{\Ainf} \Ainf[\frac{1}{\mu}] \isom R \Gamma_{\ket} (X, \sigma^*_{\ket}(\mathbb{M})) \otimes^\L_{\Z_p} \Ainf [\frac{1}{\mu}].
\]
\item de Rham comparison: there is a functorial quasi-isomorphism 
\[
R \Gamma_{\Ainf}(\fX, \mathbb{M}) \otimes^\L_{\Ainf} \Ainf/\xi \isom R \Gamma_{\tu{logdR}} (\fX/\mS, \sigma^*_{\textrm{logdR}}(\mathbb{M})).
\]
\item crystalline comparison: there is a functorial, $\varphi$-equivariant quasi-isomorphism 
\[
R \Gamma_{\Ainf}(\fX, \mathbb{M}) \widehat \otimes^\L_{\Ainf} 
W(k) \isom R \Gamma_{\tu{logcrys}}(\fX_k/W(\ul k), \sigma^*_{\textrm{logcrys}}(\mathbb{M}))
\]
where we take derived $p$-adic completion on the left hand side.
\ee 
\end{mainthm}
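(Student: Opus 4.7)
The proof closely follows the strategy of Morrow--Tsuji \cite{MT}, adapted to the logarithmic setting developed throughout the main body of this paper. I would organize it around first establishing a Hodge--Tate comparison with coefficients; the de~Rham and \'etale comparisons are then deduced from it, and the crystalline comparison is obtained by base-changing the derived log $\Ainf$-cohomology with coefficients along $\Ainf \to W(k)$.

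As a preliminary step, I would check that the three specialization functors $\sigma^*_{\ket}, \sigma^*_{\tu{logdR}}, \sigma^*_{\tu{logcrys}}$ are well defined on $\tu{BKF}^{\log}(\fX,\varphi)$. The Kummer \'etale local system $\sigma^*_{\ket}(\mathbb{M})$ is extracted as $(\mathbb{M}[\tfrac{1}{\mu}])^{\varphi = 1}$ using Frobenius descent; the trivial-mod-$<\mu$ condition together with the Frobenius isomorphism ensures this lands in $\Z_p$-local systems. The vector bundle $\sigma^*_{\tu{logdR}}(\mathbb{M}) := \nu_*(\mathbb{M}/\xi)$ is locally finite free by hypothesis, and acquires an integrable log connection via the Sen/Gauss--Manin construction built from $\mathbb{M}/\xi_1 \to \mathbb{M}/\xi_2$. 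Finally, $\sigma^*_{\tu{logcrys}}(\mathbb{M})$ is obtained by base change along $\Ainf \to W(k)$ together with its induced Frobenius structure, using the prismatic/crystalline comparison recalled in Subsection \ref{ss:log_prism}.

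The key technical input is the \emph{Hodge--Tate comparison with coefficients}:
\[  R \nu_* \mathbb{M} \otimes^{\L}_{\nu^{-1}\Ainf} \Ainfx/\xi \isom \bigoplus_i  \big(\sigma^*_{\tu{logdR}}(\mathbb{M}) \otimes_{\mO_{\fX}} \Omega^i_{\fX/\mS}\{-i\}\big)[-i].  \]
The plan is to prove this by a projection formula argument on $X_{\proket}$: the trivial-mod-$\xi_n$ hypothesis for all $n$ implies that, locally on $\fX_{\ett}$, one can write $\mathbb{M}/\xi_n \simeq \nu^{-1}\nu_*(\mathbb{M}/\xi_n) \otimes_{\nu^{-1}\nu_*(\Ainfx/\xi_n)} \Ainfx/\xi_n$, and so the computation of $R\nu_*(\mathbb{M}/\xi)$ decouples into $\sigma^*_{\tu{logdR}}(\mathbb{M}) \otimes_{\mO_\fX} R\nu_*(\Ainfx/\xi)$. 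Invoking Theorem \ref{mainthm:primitive_HT} then identifies the cohomology sheaves with the log K\"ahler differentials. After applying $L\eta_{(\mu)}$ and computing with the Bockstein differential, one checks that the resulting complex matches the log de~Rham complex of $\sigma^*_{\tu{logdR}}(\mathbb{M})$. The de~Rham comparison in (2) then follows by a standard filtration/devissage argument, exactly parallel to the deduction of Theorem \ref{mainthm:comparison}(2) from Theorem \ref{mainthm:HT} and Theorem \ref{mainthm:primitive_HT}.

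For the \'etale comparison (1), I would extend Theorem \ref{mainthm:primitive_comparison} to coefficients in an $\Ainfx[\tfrac{1}{\mu}]$-local system. Inverting $\mu$ trivializes the d\'ecalage operator $L\eta_{(\mu)}$, so $A\Omega^{\log}_{\fX}(\mathbb{M})[\tfrac{1}{\mu}] \simeq R\nu_*\mathbb{M}[\tfrac{1}{\mu}]$; the primitive comparison with coefficients, combined with the description of $\sigma^*_{\ket}(\mathbb{M})$ above, yields (1). Finally, for the crystalline comparison (3), I would animate the $F$-crystal $\sigma^*_{\tu{logcrys}}(\mathbb{M})$ into an $F$-crystal on the relevant log prismatic site and apply the comparison between log $\Ainf$- and log crystalline cohomology with coefficients, obtained via the derived log $\Ainf$-cohomology machinery of Section \ref{section:infinite_root_stack} (Theorem \ref{mainthm:derived_comparisons}); the proof reduces, after local trivialization of $\mathbb{M}$ modulo powers of $p$, to the coefficient-free Theorem \ref{mainthm:comparison}(4) via a projection formula.

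The main obstacle will be the Hodge--Tate comparison with coefficients described above: the interaction between the d\'ecalage operator $L\eta_{(\mu)}$, the pushforward $R\nu_*$, and the coefficient $\mathbb{M}$ is delicate, and a careful local analysis using the trivial-mod-$\xi_n$ hypothesis at each finite level $n$ (rather than only at $n=1$) is essential to pass from an isomorphism of the top cohomology sheaves to an actual quasi-isomorphism of complexes compatible with both the Bockstein differential on the left and the log de~Rham differential of $\sigma^*_{\tu{logdR}}(\mathbb{M})$ on the right.
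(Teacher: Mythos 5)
The paper does not prove this statement: it is explicitly attributed to the sequel \cite{log2}, and the preceding sentence calls it ``the following expected result in \cite{log2}.'' There is therefore no argument in the present paper to check your sketch against. That said, the plan you describe --- adapt Morrow--Tsuji \cite{MT} to the logarithmic setting, first establish a Hodge--Tate comparison with coefficients from the trivial-mod-$\xi_n$ hypothesis and a projection formula, then deduce de~Rham by devissage and the \'etale statement from an extension of the primitive comparison --- is the one the authors are visibly flagging, and the ingredients you cite (Theorems \ref{mainthm:primitive_HT}, \ref{mainthm:HT}, \ref{mainthm:primitive_comparison}) are the right tools in this paper.

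A few points in your sketch would need tightening before they could constitute a proof. Your ``Hodge--Tate comparison with coefficients'' display is written as a derived tensor over $\nu^{-1}\Ainf$ of $R\nu_*\mathbb{M}$ (a complex on $\fX_{\ett}$) with $\Ainfx/\xi$ (a sheaf on $X_{\proket}$); these live on different sites. What you presumably mean is something of the form $A\Omega^{\log}_{\fX}(\mathbb{M})\otimes^{\L}_{\Ainf}\Ainf/\varphi(\xi)$, identified via $L\eta$ and the Bockstein with the log de~Rham complex of $\sigma^*_{\tu{logdR}}(\mathbb{M})$ --- and the direct-sum decomposition as you state it is unlikely to hold at the level of complexes before applying $L\eta_\mu$; only the associated graded for the conjugate/Bockstein filtration decomposes. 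Relatedly, invoking Theorem \ref{mainthm:primitive_HT} ``to identify the cohomology sheaves with the log K\"ahler differentials'' applies to $\sq\Omega^{\log}_{\fX}=L\eta_{(\zeta_p-1)}R\nu_*\widehat\mO_X^{+}$, not to $R\nu_*(\mathbb{M}\otimes\Ainfx/\xi)$ directly; the comparison of $L\eta$ applied to a tensor product with the tensor of $L\eta$'s has to be justified (via the lax symmetric monoidal structure of $L\eta$ and a torsion-freeness input analogous to Proposition \ref{lemma:decalage_turns_almost_into_actual}). Finally, for the crystalline comparison (3), ``animating an $F$-crystal on the log prismatic site'' is not obviously the cleanest route and would require developing a coefficients-with-crystals version of the derived machinery that is not set up in this paper; the more direct adaptation of \cite{MT} is to construct a map $A\Omega^{\log}_{\fX}(\mathbb{M})\widehat\otimes_{\Ainf}W(k)\to R\Gamma_{\tu{logcrys}}(\fX_k/W(\ul k),\sigma^*_{\tu{logcrys}}\mathbb{M})$ from the coefficient-free comparison plus a projection formula modulo $p^n$, and check it is a quasi-isomorphism by devissage using the already-established de~Rham and absolute crystalline comparisons.
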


As an application, we prove the $C_{\mathrm{st}}$-conjecture for general coefficients, as well as over the complement of horizontal normal crossing divisors. For instance, we expect to prove the following result.  

\begin{mainthm}[\cite{log2}] 
Let $\mathfrak{X}$ be a proper log smooth $p$-adic formal log scheme over $\mS$ with semistable reduction (in particular, $\fX$ is admissibly smooth over $\mS$) and let $X$ be its adic generic fiber. Let $\mathbb{L}$ be a semistable $\mathbb{Z}_p$-local system on $X$ (which we make precise in \cite{log2}) and let $\mathcal{E}$ be the associated filtered $F$-isocrystal on $(\fX_k/W(\underline{k}))_{\textrm{logcrys}}$. Then there is a natural isomorphism of $\Bst$-modules
\[H^i_{\ett}(X, \mathbb{L})\otimes_{\mathbb{Z}_p} \Bst\simeq H^i_{\mathrm{logcrys}}(\fX_k/W(\underline{k}), \mathcal{E})\otimes_{W(k)}\Bst\]
for all $i\geq 0$, which is compatible with the natural Galois actions, Frobenius actions, filtrations, and monodromy operators on both sides.
\end{mainthm}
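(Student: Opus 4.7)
The plan is to deduce this $C_{\mathrm{st}}$-conjecture from the \'etale, de Rham, and crystalline comparisons of the preceding theorem, applied to a relative log BKF module $\mathbb{M}\in\mathrm{BKF}^{\log}(\fX,\varphi)$ attached to the semistable local system $\mathbb{L}$. The first task is to make precise the notion of a semistable $\Z_p$-local system on $X$ in this relative log setting and to produce an associated $\mathbb{M}\in\mathrm{BKF}^{\log}(\fX,\varphi)$ realizing both specialization functors, i.e.\ $\sigma^*_{\ket}(\mathbb{M})\cong\mathbb{L}$ and $\sigma^*_{\tu{logcrys}}(\mathbb{M})\cong\mathcal{E}$. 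This should be done via a log variant of the pro-\'etale Riemann--Hilbert formalism, parallel to the construction of \cite{MT} in the smooth case; the semistable hypothesis on $\mathbb{L}$ --- reflecting associated graded pieces being crystalline plus a monodromy operator --- should force $\mathbb{M}$ to be trivial modulo $<\mu$, placing it within the framework of the relative log BKF comparison theorem.

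Next, one combines the \'etale and crystalline comparisons of the previous theorem after base changing through $\Ainf\to\Bst$ (which factors through both $\Acris\to\Bcrisp\to\Bst$ and, via the structure map $\Ainf\to W(k)$ extended up, from the Witt vector side). This yields a chain
\[
R\Gamma_{\ket}(X,\mathbb{L})\otimes^{\L}_{\Z_p}\Bst
\isom
R\Gamma_{\Ainf}(\fX,\mathbb{M})\widehat\otimes^{\L}_{\Ainf}\Bst
\isom
R\Gamma_{\tu{logcrys}}(\fX_k/W(\ul k),\mathcal{E})\otimes^{\L}_{W(k)}\Bst.
\]
By properness of $X$ one expects $R\Gamma_{\Ainf}(\fX,\mathbb{M})$ to be a perfect complex of $\Ainf$-modules (extending the corollary after Theorem~\ref{mainthm:comparison}), so that the completed base change commutes with taking $H^i$, producing the desired $\Bst$-linear isomorphism in each degree. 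The compatibility with the Galois action and Frobenius is built into the $\varphi$-equivariance and functoriality of the comparisons. Compatibility with the Hodge filtration is obtained by further base changing the de Rham comparison through $\Bdrp\twoheadrightarrow C$ while tracking the Hodge filtration on $\sigma^*_{\tu{logdR}}(\mathbb{M})$.

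The monodromy compatibility requires a separate argument: on the crystalline side, monodromy arises from the log connection via the Hyodo--Kato construction on $R\Gamma_{\tu{logcrys}}(\fX_k/W(\ul k),\mathcal{E})$, while on the \'etale side it encodes the inertia action on $\mathbb{L}$ through the transcendental element $\log[\epsilon]\in\Bst$. To match them, the strategy is to reduce to a local computation on an explicit semistable chart $\mO\langle T_0,\ldots,T_r\rangle/(T_0\cdots T_r-\pi)$, where both monodromy operators admit an explicit presentation in terms of the standard log basis $d\log T_i$ and can be compared directly through the intermediate $R\Gamma_{\Ainf}(\fX,\mathbb{M})\widehat\otimes\Bst$.

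The main obstacle is twofold. First, constructing the correspondence $\mathbb{L}\mapsto(\mathbb{M},\mathcal{E})$ in the relative log setting is not formal; it requires a substantial logarithmic analogue of Faltings' and Scholze's theory of generalized representations, and will likely occupy a significant portion of \cite{log2}. Second, the monodromy compatibility does not follow from the three comparison isomorphisms as a black box: it is a genuinely structural statement about how the log structure contributes on both the \'etale and crystalline sides, and must be verified by hand on local charts, paying careful attention to the interaction between the $\log$-section $t\in\Bst$ and the log structure on $\fX$.
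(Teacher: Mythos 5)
This theorem is not actually proved in the present paper: it is explicitly announced as an \emph{expected result in the sequel} \cite{log2} (the paper says ``we expect to prove the following result''), so there is no proof here against which to compare your plan. The present paper only supplies the ingredients: the comparison theorems with trivial coefficients (Theorem \ref{mainthm:comparison}), the Hyodo--Kato input used in Proposition \ref{prop:free_over_B_cris}, and the \emph{statement} of the comparison theorem with coefficients in relative log BKF modules, which is itself also deferred to \cite{log2}.

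That said, your outline is consistent with what the paper signals it will do, and you have correctly located the two real bottlenecks: (i) producing the functor $\mathbb{L}\mapsto(\mathbb{M},\mathcal{E})$ in the log setting, and (ii) matching the two monodromy operators. A few things to keep in mind as you develop this further. First, the passage from $\Ainf$ to $\Bst$ is more delicate than a base change diagram suggests: $\Bst$ is not an $\Ainf$-algebra in the naive sense (it depends on a choice of $\log$ and lives naturally over $\Bcris$), so the middle term in your chain should really be compared to the Hyodo--Kato realization after inverting $p$, via the $\Bcrisp$-freeness result of Proposition \ref{prop:free_over_B_cris} and the Hyodo--Kato isomorphism appearing there. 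Second, the perfectness of $R\Gamma_{\Ainf}(\fX,\mathbb{M})$ used to commute $H^i$ with base change is not yet established with coefficients; you would need a coefficient analogue of Corollary \ref{cor:valuation_in_BKF_modules}, which depends on the \'etale comparison with coefficients together with $\Bcrisp$-freeness, and none of this is in the present paper. Third, the monodromy compatibility cannot be reduced to a single standard semistable chart $\mO\langle T_0,\ldots,T_r\rangle/(T_0\cdots T_r-\pi)$ in general, since the class of admissibly smooth $\fX$ considered here includes horizontal components and more general pseudo-saturated charts; the reduction has to be robust enough to handle any small chart $u\colon N\hookrightarrow P$ as in Definition \ref{definition:locally_small}.
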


\begin{comment}
\bi
\item coefficients in log relative BKF modules  
\item equivalence between quasi-nilpotent log connection and prismatic F-crystals  
\item comparison theorems with coefficients
\item semistable local systems  
\item from semistable local systems to perfect (derived) log relative BKF modules，
\item $C_{st}$ conjecture with coefficients
\item comparison with boundaries (compactly supported cohomology for open varieties)
\ei 
\end{comment}

%Part of this paper overlaps with the study of log prismatic cohomology in \cite{log, KY} by Koshikawa and the second author, however, since our study contains a primitive form of Hodge--Tate comparison (see Section ) and more general coefficient systems are treated, we hope that these additional results would be useful in applications in the future. 

%%%%%%%%%%%%%%%%%%
%%%%%%%%%%%%%%%%%%
%%%%%%%%%%%%%%%%%%
%%%%%%%%%%%%%%%%%%
%%%%%%%%%%%%%%%%%%
%%%%%%%%%%%%%%%%%%
%%%%%%%%%%%%%%%%%%
%%%%%%%%%%%%%%%%%%
%%%%%%%%%%%%%%%%%%
%%%%%%%%%%%%%%%%%%
%%%%%%%%%%%%%%%%%%
%%%%%%%%%%%%%%%%%%
%%%%%%%%%%%%%%%%%%
%%%%%%%%%%%%%%%%%%
%%%%%%%%%%%%%%%%%%
%%%%%%%%%%%%%%%%%%
%%%%%%%%%%%%%%%%%%
%%%%%%%%%%%%%%%%%%
%%%%%%%%%%%%%%%%%%
%%%%%%%%%%%%%%%%%%
%%%%%%%%%%%%%%%%%%
%%%%%%%%%%%%%%%%%%
%%%%%%%%%%%%%%%%%%
%%%%%%%%%%%%%%%%%%
%%%%%%%%%%%%%%%%%%
%%%%%%%%%%%%%%%%%%
%%%%%%%%%%%%%%%%%%
%%%%%%%%%%%%%%%%%%
%%%%%%%%%%%%%%%%%%
%%%%%%%%%%%%%%%%%%

\subsection*{Acknowledgement} 
 
We would like to thank Dori Bejleri, Kestutis \v{C}esnavi\v{c}ius, Yuchen Fu,  Ofer Gabber, Teruhisa Koshikawa, Akhil Mathew, and Takeshi Tsuji for helpful discussions in the preparation of this paper. Part of the project was completed during the  second author's visits to YMSC at Tsinghua university and CNRS at universit\'e Paris-Sud. He would like to thank the hospitality of these institutes. During the preparation of the article, the first author was partially supported by the National Key R{\&}D Program of China No. 2023YFA1009703 and No. 2021YFA1000704, and the second author was partially supported by ERS Grant Number 851146.

%%%%%%%%%%%%%%%%%%
%%%%%%%%%%%%%%%%%%
%%%%%%%%%%%%%%%%%%
%%%%%%%%%%%%%%%%%%
%%%%%%%%%%%%%%%%%%
%%%%%%%%%%%%%%%%%%
%%%%%%%%%%%%%%%%%%
%%%%%%%%%%%%%%%%%%
%%%%%%%%%%%%%%%%%%
%%%%%%%%%%%%%%%%%%
%%%%%%%%%%%%%%%%%%
%%%%%%%%%%%%%%%%%%
%%%%%%%%%%%%%%%%%%
%%%%%%%%%%%%%%%%%%   Section on Monoids
%%%%%%%%%%%%%%%%%%
%%%%%%%%%%%%%%%%%%
%%%%%%%%%%%%%%%%%%
%%%%%%%%%%%%%%%%%%
%%%%%%%%%%%%%%%%%%
%%%%%%%%%%%%%%%%%%
%%%%%%%%%%%%%%%%%%
%%%%%%%%%%%%%%%%%%
%%%%%%%%%%%%%%%%%%
%%%%%%%%%%%%%%%%%%
%%%%%%%%%%%%%%%%%%
%%%%%%%%%%%%%%%%%%
%%%%%%%%%%%%%%%%%%
%%%%%%%%%%%%%%%%%%
%%%%%%%%%%%%%%%%%%
%%%%%%%%%%%%%%%%%%
\newpage 

\addtocontents{toc}{\protect\setcounter{tocdepth}{2}}
\section{Preliminaries} \label{section:review_log}

In this section we collect some basic algebraic preliminaries. We first recall Fontaine's period ring $\Ainf$ and the d\'ecalage operator $L\eta$, mostly following \cite{BMS1}. We then recall definitions and fix conventions in log geometry in Section \ref{subsection: conventions on monoids}. Moreover, we give an explicit description of pushouts in the category of saturated monoids in Section \ref{ss:saturated_pushout}. 
%in this paper we are interested in log schemes and log adic spaces whose log structures are not necessarily finitely generated. 

%\subsection*{Conventions and algebraic preliminaries}  \label{section:review_Ainf} In this subsection we recall some algebraic background on materials from \cite{BMS1, BMS2}. 

\subsection{Fontaine's period ring $\Ainf$} 
 \label{section:review_Ainf}
 
Let us first recall Fontaine's period ring $\Ainf$. Throughout the paper, let $C$ be a perfectoid field containing all $p$-power roots of unity and let $\mO=\mO_C$ be its ring of integers. Define 
$$\Ainf := W(\mO_C^\flat) = W( \underset{x \mapsto x^p}{\varprojlim} \mO_C /p)$$
equipped with a natural Frobenius automorphism $\varphi = \varphi_{\Ainf}$. We fix a compatible system of primitive $p$-power roots of unity $\zeta_{p^m} \in \mO_C$. The system $(1, \zeta_p, \zeta_{p^2}, ...)$ defines an element $\epsilon \in \mO_C^\flat$. For all $n \ge 1$, let 
\begin{align} \label{eqn:elements_Ainf} 
\qquad & \mu := [\epsilon] - 1  \nonumber \\ 
&  \xi := \mu/\varphi^{-1}(\mu) \nonumber \\
&  \xi_n := \varphi^{-(n\text{-}1)}(\xi) \varphi^{-(n\text{-}2)} (\xi) \cdots \xi,  \nonumber \\
& \sq \xi_n := \varphi^n(\xi_n) = \varphi(\xi) \varphi^2 (\xi) \cdots \varphi^n (\xi)  \nonumber
\end{align}
be elements in $\Ainf$. Let $W_n (\mO_C)$ be the ($p$-typical) Witt vectors of $\mO_C$ of level $n$. From \cite[Section 3]{BMS1}, there are two classes of surjective maps 
$$\theta_n: \Ainf \surjlra  W_n(\mO_C), \qquad \sq \theta_n: \Ainf \surjlra W_n (\mO_C)$$ whose kernels are respectively generated by $\xi_n$ and $\sq \xi_n$  specified above. We denote $\theta_1$ (resp. $\sq \theta_1$) by $\theta$ (resp. $\sq \theta$). By \cite[Lemma 3.23]{BMS1}, the kernel of 
$$\theta_{\infty}: \Ainf \ra W(\mO_C) = \varprojlim W_n (\mO_C) $$
is generated by $\mu$. In particular, the ideal $(\mu)$ is independent of our choice of the roots of unity. Finally, recall the following notations. 
\bi
\item Let us write $\Binf := \Ainf[1/p]$; 
\item the discretely valued period ring $\Bdrp$ is defined as the $\xi$-adic completion of $\Binf$; 
\item  the crystalline period ring $\Acris$ is defined as the $p$-complete PD (=divided power) envelop of $\Ainf$ along $\theta$; 
\item define  $\Bcrisp : = \Acris[\frac{1}{p}] $ and $\Bcris:=\Acris[\frac{1}{\mu}] = \Bcrisp[\frac{1}{\mu}]$. 
\ei 
The Witt vector functoriality also supplies a map 
\begin{equation} \label{eqn:vartheta}
\vartheta: \Ainf = W(\mO_C^{\flat}) \lra W(k)
\end{equation} 
which is useful for our discussion on the comparison with log crystalline cohomology.  Let us also record the following result from \cite{BMS1}. 
\bp 
\be
\item For each $n \ge 1$, the ring $W_n (\mO_C)$ is coherent, i.e., every finitely generated ideal of $W_n (\mO_C)$ is finitely presented. 
\item For any finitely presented $W_n (\mO_C)$-module $M$, there are no non-zero
elements of $M$ that are killed by $W_m (\fm)$. 
\ee 
\ep 

\bproof 
This is \cite[Proposition 3.24 \& Corollary 3.29]{BMS1}. 
\eproof

\subsection{The d\'ecalage operator $L\eta$}   
\label{ss:decalage}
  
Now let us briefly recall the d\'ecalage operator (see \cite[Section 6]{BMS1}). Let $\mO_T$ be a ring (or more generally, let $(T, \mO_T)$ be a ringed topos) and $K(\mO_T)$ be the category of cochain complexes of $\mO_T$-modules. Let $f \in \mO_T$ be an element that generates an invertible ideal $(f)$.  

\bd 
Let $C^\bullet \in K(\mO_T)$ be an $f$-torsion free complex, in other words, assuming that the map $f: C^i \ra C^i$ is injective for every $i \in \Z$. The complex $\eta_f C^\bullet$ is defined as the subcomplex of $C^\bullet [\frac{1}{f}]$ given by 
$$ (\eta_f C)^i := \{x \in f^i C^i  \: |\:  dx \in f^{i+1} C^{i+1}\} $$
where the differential is induced from the differential on $C^\bullet [\frac{1}{f}]$. 
\ed

\br 
The complex $\eta_f C^\bullet$ depends only on the ideal $(f)$. In fact, slightly more generally, one may define $\eta_I C^\bullet$ for an invertible ideal sheaf $I \subset \mO_T$ (see \cite[Definition 6.2]{BMS1}). Moreover, 
Let $C^\bullet \in K(\mO_T)$ be an $f$-torsion-free complex, then there is a canonical isomorphism 
$$ H^i (\eta_f C^\bullet) \cong \big( H^i(C^\bullet)/ H^i(C^\bullet)[f]  \big) \otimes_{\mO_T} (f^i) $$
(see \cite[Lemma 6.4]{BMS1}).
In particular, $\eta_f$ sends quasi-isomorphisms between such complexes to quasi-isomorphisms, and induces a (non-exact!) functor (\emph{the d\'ecalage operator})  
\[
L \eta_f : D (\mO_T) \ra D(\mO_T)
\]on the derived category. In particular, $L \eta_f$ commutes with filtered colimits and canonical truncations. 
\er 
 
\bp Let us collect some basic properties of $L \eta$. 
\be
\item  There is a natural lax symmetric monoidal structure on $L \eta_f$:  
\begin{equation} \label{eq:lax_sym_mon}
L \eta_f C \otimes^\L_{\mO_T} L \eta_f D \lra  L \eta_f (C \otimes^\L_{\mO_T} D).
\end{equation}
Moreover, if $\mO_T$ is a valuation ring (for example if $\mO_T = \mO$ from the introduction), then (\ref{eq:lax_sym_mon}) is a quasi-isomorphism . 
\item Suppose that $C \in \mD^{\ge 0}(\mO_T)$ satisfies $H^0(C)[f] = 0$, then there is a canonical map \[L \eta_f C \lra C.\] 
\item For two non-zero divisors $f, g \in \mO_T$, we have a natural equivalence
\[L \eta_{f g} \isom L \eta_f \circ L \eta_g: \mD(\mO_T) \ra \mD (\mO_T).\]
\item The the d\'ecalage operator behaves well with completions. More precisely, suppose that $J \subset \mO_T$ is a locally finitely generated ideal, and $C \in \mD(\mO_T)$ is derived $J$-adically complete, then $L \eta_f C$ is derived $J$-adically complete. More generally, for $C \in \mD(\mO_T)$, there are natural quasi-isomorphisms 
\[
(L \eta_f C)^{\wedge} \isom L \eta_f (C^{\wedge}) \isom \tu{R}\!\varprojlim (C \otimes^\L_{\mO_T} \mO_T/f^n).
\]
Here $(-)^{\wedge}$ denotes the derived $J$-adic completion. 
\ee 
\ep 
\bproof 
These are respectively \cite[Proposition 6.7, Proposition 6.8, Lemma 6.10, Lemma 6.11, Lemma 6.19, and  Lemma 6.20]{BMS1}
\eproof

The following surprising observation of \cite{BMS1} plays an important role for us later on.

\bp \label{lemma:decalage_turns_almost_into_actual}
Let $f \in \mO_T$ be a nonzero divisor as above and let $I \subset \mO_T$ be an ideal containing $f$. Let $\beta: C \ra D$ be a map in $\mD (\mO_T)$ such that for each $i\in \Z$, both the kernel and the cokernel of $H^i (C) \ra H^i (D)$ are killed by $I$, and that $H^i (C)$ and $H^i(C)/f$ have no nonzero elements killed by $I$, then 
\[L \eta_f \beta: L \eta_f C \isom L \eta_g D\] is a quasi-isomorphism. 
\ep 

\bproof 
This is \cite[Lemma 8.11]{BMS1}
\eproof

\subsection{Conventions on monoids}\label{subsection: conventions on monoids}  
In this section we recall some basic definitions in log geometry, mostly following \cite{Kato, Ogus}. Let us first fix the following conventions.
\bi
\item By a monoid we mean a commutative monoid with identity. Note that the category of monoids is not abelian, but admits all limits and colimits. 
\item For a monoid $P$, we write $P^{\gp}$ for its group envelope and $P^\times$ for its subgroup of units. 
\item The quotient monoid $P/P^\times$ is denoted by $\overline{P}$. 
\item For ${x}, {x}'\in P$, we write ${x}\,|\,{x}'$ if there exists ${x}''\in P$ such that ${x}+{x}''={x}'$. 
\item We use $\mathbb{Z}_{\geq 0}$  %\Zijian{Why $\Z_{\ge 0}$ not just $\N$? Is there a reason you prefer this notation?} \Hansheng{``$N$'' is used frequently later for some toric monoid, which sort of conflicts with the notation ``$\N$''. Besides, $\Z_{\ge 0}$ explains itself.} Ok
(resp. $\mathbb{Q}_{\geq 0}$) to denote the set of non-negative integers (resp. non-negative rational numbers) equipped with the additive monoid structure.
\item 
For a monoid $P$ and a ring $R$, let $R[P]$ denote the corresponding monoid algebra.
\item   For a Huber pair $(R, R^+)$ with a ring of definition $R_0$ which is adic with respect to a finitely generated ideal of definition $I$, we have the Huber pair $(R[P], R^+[P])$ (resp. the completed Huber pair $(R\gr{P}, R^+ \gr{P})$) with topology given by the ring of definition $R_0 [P]$ and the basis $\{I^m R_0 [P]\}_{m \ge 0}$.
\ei

For the convenience of the reader, 
let us recall the following definitions. 
\begin{definition}\label{defn: monoids}
\begin{enumerate}
\item A monoid $P$ is \emph{finitely generated} if there exists a surjection \[\mathbb{Z}^r_{\geq 0} \surjlra P\] for some integer $r\geq 0$. 
\item A homomorphism $P\rightarrow Q$ is of \emph{finite type} if there exists a homomorphism $\mathbb{Z}^r_{\geq 0} \ra Q$  for some $r\geq 0$, such that the induced homomorphism 
\[P\oplus \mathbb{Z}^r_{\geq 0} \surjlra  Q\] is surjective.
\item A monoid $P$ is \emph{integral} if the natural homomorphism $P \rightarrow P^{\gp}$ is injective. \\ A monoid is \emph{fine} if it is both integral and finitely generated.
\item A monoid $P$ is \emph{saturated} if it is integral and, for every $a \in P^{\gp}$ such that $na \in P$ for some integer $n \geq 1$, we have $a \in P$. \\ A monoid is \emph{fs}  if it is both fine and saturated.
\item A monoid $P$ is \emph{sharp} if $P^{\times}=\{0\}$. \\
A monoid is \emph{toric} if it is both sharp and fs.
\item A monoid $P$ is \emph{$n$-divisible} (resp. \emph{uniquely $n$-divisible}; \emph{almost $n$-divisible}) for an integer $n \geq 1$ if the multiplication-by-$n$ map $[n]: P \rightarrow P$ is surjective (resp. bijective; of finite type). 
\item For a saturated torsion-free monoid $P$ and an integer $n\geq 1$, let $\frac{1}{n}P$ denote the saturated torsion-free monoid such that the inclusion $P\hookrightarrow \frac{1}{n}P$ is isomorphic to the multiplication-by-$n$ map $[n]: P \rightarrow P$. Let 
\[P_{\mathbb{Q}_{\geq 0}}:=\varinjlim_n (\frac{1}{n}P)\] where the colimit runs through all positive integers $n$.
\end{enumerate} 
\end{definition}

\begin{remark}\label{remark: int and sat}
\begin{enumerate}
\item Quotient of an integral monoid (resp. saturated monoid) by a submonoid remains integral (resp. saturated).
\item For any monoid $P$, let $P^{\mathrm{int}}$ denote the image of the canonical homomorphism 
\[P \lra P^{\mathrm{gp}}.\] Note that the functor $P\mapsto P^{\mathrm{int}}$ is the left adjoint to the inclusion from the category of integral monoids into the category of all monoids.
\item For any integral monoid $P$, define its \emph{saturation} to be the monoid \[P^{\mathrm{sat}}:=\{a\in P^{\mathrm{gp}}\,|\, na\in P, \textrm{ for some }n\geq 1\}.\] Then the functor $P\mapsto P^{\mathrm{sat}}$ is the left adjoint to the inclusion from the category of saturated monoids into the category of integral monoids. 
\item More generally, for any monoid $P$, let $P^{\mathrm{sat}}:=(P^{\mathrm{int}})^{\mathrm{sat}}$. The functor $P\mapsto P^{\mathrm{sat}}$ is also the left adjoint to the inclusion from the category of saturated monoids into the category of all monoids. 
\end{enumerate}
\end{remark}

The following lemma will be used later.  
\begin{lemma}\label{lemma: Pgp and P}
Let $P$ be an fs monoid and identify $P$ with a submonoid of $P^{\mathrm{gp}}$ via the canonical homomorphism $P\rightarrow P^{\mathrm{gp}}$. Then, for any ${x}\in P^{\mathrm{gp}}$ and any positive integer $n$, there exists ${x}'\in P$ such that ${x}+n{x}'\in P$.
\end{lemma}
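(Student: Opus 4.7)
The plan is to reduce the statement to the basic fact that, because $P$ is integral, the natural map $P\hookrightarrow P^{\mathrm{gp}}$ identifies $P^{\mathrm{gp}}$ with the Grothendieck group of $P$. Concretely, every element $x\in P^{\mathrm{gp}}$ admits a presentation as a difference
$$x \;=\; a - b, \qquad a, b \in P,$$
obtained by writing $x$ as a $\mathbb{Z}$-linear combination of elements of $P$ and collecting the positive and negative summands (this uses only that $P^{\mathrm{gp}}$ is generated as a group by the image of $P$). I will fix such a presentation at the outset.

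Given this, I simply take $x' := b \in P$ and compute
$$x + n x' \;=\; (a - b) + n b \;=\; a + (n-1)\, b.$$
Since $n \geq 1$, we have $(n-1)\,b \in P$ (noting that $P$ is a monoid, hence closed under non-negative integer multiples and under addition), and $a \in P$ by construction, so $x + n x' \in P$, as desired.

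I do not anticipate any obstacle; in particular, the saturation hypothesis plays no role, and the statement in fact holds for any integral monoid. The finite generation and saturation of $P$ are therefore stated only because this is the level of generality in which the lemma will be invoked later in the paper.
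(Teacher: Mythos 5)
Your proof is correct, and it is genuinely different from — and substantially simpler than — the argument in the paper. The paper first reduces to the toric case by splitting off the units $P^\times$ (citing a result from \cite{DLLZ}), then invokes the polyhedral cone description of $P_{\mathbb{Q}\geq 0}$ from \cite{KKMSD} to find a suitable $x'$, and finally uses saturatedness to conclude that $P^{\mathrm{gp}}\cap P_{\mathbb{Q}\geq 0}=P$. Your argument bypasses all of this: writing $x = a - b$ with $a,b\in P$ (which is available for any commutative monoid, since $P^{\mathrm{gp}}$ consists precisely of formal differences), taking $x' = b$ gives $x + nx' = a + (n-1)b \in P$ outright. As you correctly observe, neither finite generation nor saturation is used, so your proof establishes the stronger statement that the lemma holds for an arbitrary integral monoid (and indeed the difference representation $x = a - b$ does not even need integrality; integrality is only invoked in the lemma's statement to identify $P$ as a submonoid of $P^{\mathrm{gp}}$). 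Your version is the one a reader would want to see; the paper's route through cone geometry buys nothing here, though it is consistent in style with the heavier polyhedral arguments used elsewhere in that section.
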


\begin{proof}
By \cite[Lemma 2.1.10]{DLLZ}, the natural surjection $P\rightarrow P/P^{\times}=\overline{P}$ admits a splitting. Hence, we reduce to the case when $P$ is toric. By \cite[pp. 6-7]{KKMSD}, there exist elements $x_1, \ldots, x_d$ which generate the $\mathbb{Q}$-vector space $P^{\mathrm{gp}}\otimes_{\mathbb{Z}}\mathbb{Q}$ such that $P_{\mathbb{Q}\geq 0}$ is identified with the subset \[\big\{\sum_{i=1}^d\lambda_ix_i\,|\, \lambda_i\in \mathbb{Q}_{\geq 0}\big\}\] of $P^{\mathrm{gp}}\otimes_{\mathbb{Z}}\mathbb{Q}$. Choosing ${x}'=\sum_{i=1}^d\lambda_ix_i\in P$ for sufficiently large $\lambda_i$'s, we can guarantee that ${x}+n{x}'\in P_{\mathbb{Q}\geq 0}$. Finally, using the saturated-ness of $P$, we must have \[{x}+n{x}'\in P^{\mathrm{gp}}\cap P_{\mathbb{Q}\geq 0}=P.\]
\end{proof}

\begin{definition}
A monoid homomorphism $u:P\rightarrow Q$ is \emph{exact} if the induced homomorphism \[P\rightarrow P^{\mathrm{gp}}\times_{Q^{\mathrm{gp}}}Q\] is an isomorphism. In particular, if $P$ and $Q$ are integral monoids, then $u$ is exact if and only if $(u^{\mathrm{gp}})^{-1}(Q)=P$, where $u^{\mathrm{gp}}:P^{\mathrm{gp}}\rightarrow Q^{\mathrm{gp}}$ denotes the induced homomorphism on the group envelopes. 
\end{definition}

Let us also recall the notion of perfectoid monoids from \cite{logprism}. 
\begin{definition}\label{definition:perfectoid_monoids}
\begin{enumerate} 
\item 
A monoid $P$ is \emph{perfectoid} if the canonical homomorphism 
$$P^\flat /(P^\flat)^\times \lra P/P^\times$$
is an isomorphism, where 
$$P^\flat := \varprojlim_{x\mapsto px} P$$ is the \emph{tilt} of the monoid $P$. 
%A pre-log ring $\ul R = (R, M)$ is perfectoid if both $R$ and $M$ are perfectoid.  
\item A monoid $P$ is \emph{divisible} if it is uniquely $n$-divisible for all $n \ge 1$. 
\end{enumerate}
\end{definition}

\br 
\be 
\item Note that $P^\flat$, and hence $P^\flat /(P^\flat)^\times$,  is uniquely $p$-divisible. 
\item The condition of being perfectoid is stronger than requiring $P/P^\times$ to be uniquely $p$-divisible. For example\footnote{We learned this example from Teruhisa Koshikawa.}, let $P$ be the monoid 
\[P=\gr{x_0, x_1, x_2, \dots, \pm y_1, \pm y_2, \dots}/\gr{p x_1=(x_0+ y_1), p x_2=(x_1+y_2), \dots}
\]
Then $P^\flat$ is trivial, while $P/P^\times \cong \mathbb{N}[1/p]$. 
 \item For a saturated torsion-free monoid $P$,  $P_{\mathbb{Q}_{\geq 0}}$ is divisible.  %uniquely $n$-divisible for all $n\geq 1$.
\ee 
\er 

\subsection{Pushouts of saturated monoids} \label{ss:saturated_pushout}
We end this section with explicit descriptions of pushouts of saturated monoids. These explicit descriptions will be used throughout the paper. 

\begin{construction}\label{construction: explicit descriptions of pushouts}
Suppose that $u:P\rightarrow Q$ and $v: P\rightarrow P'$ are homomorphisms of saturated monoids. One can consider the pushouts 
\[Q\sqcup_P P', \: Q\sqcup^{\mathrm{int}}_P P', \: Q\sqcup^{\mathrm{sat}}_P P'\] in the category of monoids, category of integral monoids, and category of saturated monoids, respectively.

By \cite[Proposition I.1.1.5]{Ogus}, $Q\sqcup_P P'$ can be identified with the quotient of $Q\oplus P'$ by the congruence relation consisting of pairs 
\[((q,r), (q',r'))\in (Q\oplus P')\times (Q\oplus P') 
\footnote{Here we use the notation $\times$ to simply denote an ordered pair of elements in $Q \oplus P'$.} \] such that there exists a sequence $(q_0, r_0), (q_1, r_1), \ldots, (q_n, r_n)\in Q\oplus P'$ and a sequence $p_0, p_1, \ldots, p_{n-1}\in P$ satisfying
\begin{itemize}
\item $(q_0, r_0)=(q,r)$,
\item $(q_n, r_n)=(q', r')$,
\item $q_i+u(p_i)=q_{i+1}$, $r_i=r_{i+1}+v(p_i)$, if $i$ is even, and
\item $q_i=q_{i+1}+u(p_i)$, $r_i+v(p_i)=r_{i+1}$, if $i$ is odd.
\end{itemize}

By \cite[Proposition I.1.3.4]{Ogus}, $Q\sqcup^{\mathrm{int}}_P P'$ can be identified with the image of the canonical homomorphism 
\[Q\sqcup_P P'\rightarrow Q^{\mathrm{gp}}\oplus_{P^{\mathrm{gp}}}(P')^{\mathrm{gp}}.\] Using the explicit description of $Q\sqcup_P P'$ above, we know that $Q\sqcup^{\mathrm{int}}_P P'$ can be identified with the quotient of the monoid
\[
T:=\Bigl\{ (q,r)\in Q^{\mathrm{gp}}\times (P')^{\mathrm{gp}}\,\Big|\, \begin{array}{l}\textrm{there exists }q'\in Q, r'\in P', \textrm{ and }p\in P^{\mathrm{gp}}\\ \textrm{such that }q=q'+u^{\mathrm{gp}}(p), r+v^{\mathrm{gp}}(p)=r'\end{array}\Bigr\}
\]
by the congruence relation consisting of pairs $((q,r), (q',r'))\in T\times T$ such that there exists $p\in P^{\mathrm{gp}}$ satisfying $q=q'+u^{\mathrm{gp}}(p)$ and $r+v^{\mathrm{gp}}(p)=r'$. 

In particular, every element in $Q\sqcup^{\mathrm{int}}_P P'$ admits a representative $(q,r)$ with $q\in Q$ and $r\in P'$. Moreover, the homomorphism $Q\rightarrow Q\sqcup^{\mathrm{int}}_P P'$ sends $q\in Q$ to $(q,0)$. Similarly, the homomorphism $P'\rightarrow Q\sqcup^{\mathrm{int}}_P P'$ sends $r\in P'$ to $(0,r)$.

Finally, the saturated pushout $Q\sqcup^{\mathrm{sat}}_P P'$ consists of 
\[(q,r)\in (Q\sqcup^{\mathrm{int}}_P P')^{\mathrm{gp}} \cong  Q^{\mathrm{gp}}\oplus_{P^{\mathrm{gp}}}(P')^{\mathrm{gp}}\] such that there exists an integer $m\geq 1$ and $q'\in Q$, $r'\in P'$, $p\in P^{\mathrm{gp}}$ satisfying 
\[mq=q'+u^{\mathrm{gp}}(p)  \quad \tu{and} \quad  mr+v^{\mathrm{gp}}(p)=r'.\] 
\end{construction}

\newpage
\addtocontents{toc}{\protect\setcounter{tocdepth}{2}}
\section{Pseudo-saturated maps}
In this section, we introduce a new notion on maps between monoids, called \emph{pseudo-saturated-ness}, which generalizes the notion of (quasi-)saturated-ness defined by \cite{Tsuji}. This will be helpful for our development of the Kummer \'etale site for non-fs log schemes.  

\subsection{(Quasi-)saturated and pseudo-saturated homomorphisms}

\noindent 
Let us first recall the notion of integral and saturated homomorphisms of monoids from \cite{Kato} (also see \cite{Tsuji}). We start with integral homomorphisms.

\begin{definition}
Let $u:P\rightarrow Q$ be a homomorphism of integral monoids. We say that $u$ is \emph{integral} if for any homomorphism $v:P\rightarrow P'$ of integral monoids, the pushout $Q \sqcup_P P'$ in the category of monoids is integral.
\end{definition}

\begin{proposition}[\cite{Tsuji}, Proposition I.2.3]
\begin{enumerate}
\item If $u:P\rightarrow Q$ and $v: Q\rightarrow R$ are integral homomorphisms of integral monoids, so is $v\circ u$.
\item Let $u:P\rightarrow Q$ and $v:P\rightarrow P'$ be homomorphisms of integral monoids and suppose $u$ is integral. Let $Q'$ be the pushout $Q \sqcup_P P'$ in the category of monoids. Then the canonical homomorphism $P'\rightarrow Q'$ is integral.
\end{enumerate}
\end{proposition}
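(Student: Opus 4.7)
The plan is to deduce both parts directly from the definition of integrality by exploiting the associativity of pushouts in the category of monoids, noting that all maps involved are among integral monoids.

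For part (1), I take an arbitrary homomorphism $w:P\to P'$ with $P'$ integral and must show that the pushout $R\sqcup_P P'$ (computed in the category of monoids) is integral. The plan is to factor this pushout as
\[
R\sqcup_P P' \;\cong\; R\sqcup_Q (Q\sqcup_P P'),
\]
which holds by the universal property of pushouts. Since $u:P\to Q$ is integral, the inner pushout $Q':=Q\sqcup_P P'$ is integral. Then the natural structure map $Q\to Q'$ gives a homomorphism of integral monoids, and since $v:Q\to R$ is integral, the outer pushout $R\sqcup_Q Q'$ is integral as well. Hence $R\sqcup_P P'$ is integral, proving that $v\circ u$ is integral.

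For part (2), I take an arbitrary homomorphism $w:P'\to P''$ with $P''$ integral and must show that $Q'\sqcup_{P'} P''$ is integral, where $Q'=Q\sqcup_P P'$. Again using associativity of pushouts, I rewrite
\[
Q'\sqcup_{P'} P'' \;=\; (Q\sqcup_P P')\sqcup_{P'} P'' \;\cong\; Q\sqcup_P P'',
\]
where the composite structure map $P\to P''$ is just $w\circ v$. Since $P''$ is integral and $u:P\to Q$ is integral by hypothesis, the pushout $Q\sqcup_P P''$ is integral. Therefore $Q'\sqcup_{P'} P''$ is integral, so $P'\to Q'$ is integral.

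I do not expect any serious obstacle here, since the whole argument is formal once one has the associativity isomorphism for pushouts in the category of monoids. The only small care needed is to verify that the intermediate monoids appearing in the composite pushouts are indeed integral, which in each case is exactly what we get from the assumed integrality of $u$ (and for part (1), also of $v$). Everything else is bookkeeping with universal properties.
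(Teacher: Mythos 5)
Your proof is correct. Both parts follow cleanly from the pasting law for pushouts (which holds in the category of monoids since it is cocomplete), and you have taken care of the one real checkpoint in each case: in part (1), that the intermediate pushout $Q'=Q\sqcup_P P'$ is integral so that $Q\to Q'$ is a map of integral monoids and the integrality of $v$ may be applied; in part (2), that the composite $w\circ v:P\to P''$ is a map of integral monoids so that the integrality of $u$ may be applied. The paper itself does not give a proof but refers to Tsuji's Proposition I.2.3; the argument you give is the standard formal one and is essentially the argument one finds there.
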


\begin{proposition}[\cite{Kato}, Proposition 4.1]\label{prop:integral_Kato}
Let $u: P \rightarrow Q$ be a homomorphism of integral monoids. Then the following conditions are equivalent
\begin{enumerate}
\item $u$ is integral and injective. 
\item The induced ring homomorphism $\Z[P] \rightarrow \Z[Q]$ is flat. 
\item For any field $k$, the induced ring homomorphism $k[P] \rightarrow k[Q] $ is flat. 
\item $u$ is injective and has the following property: suppose $a_1, a_2 \in P$, $b_1, b_2 \in Q$ are elements satisfying $u(a_1)+b_1  = u(a_2)+b_2$, then there exist $a_3, a_4 \in P$ and $b \in Q$ satisfying $b_1 = u(a_3)+b$, $b_2 = u(a_4)+b$, and $a_1+a_3 = a_2+a_4$. 
\end{enumerate}
\end{proposition}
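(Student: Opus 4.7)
My plan is to prove the four conditions equivalent by first establishing the combinatorial equivalence $(1)\Leftrightarrow (4)$, and then linking these to the flatness conditions $(2)$ and $(3)$ via the monoid-algebra functor. Intuitively, the whole proposition is saying that $u$ being integral means that $Q$ is, as a $P$-set, a filtered union of free orbits, which is precisely what condition $(4)$ encodes combinatorially and what flatness of monoid rings detects algebraically.

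For $(1)\Rightarrow (4)$, I would feed the integrality hypothesis into a carefully chosen test pushout. Given a relation $u(a_1)+b_1=u(a_2)+b_2$ in $Q$, I form an auxiliary integral monoid $P'$ together with a map $v: P\to P'$ designed so that under $v$ the difference $a_1-a_2$ becomes ``visible'' (for instance, by pushing out $P\xleftarrow{(a_1,a_2)}\mathbb{N}^2\to \mathbb{N}$ along the addition map and then integralizing). The induced equality in $Q\sqcup_P P'$, combined with the assumption that this pushout is integral and that $u$ is injective, forces the existence of witnesses $a_3, a_4\in P$ and $b\in Q$ as required. Conversely, for $(4)\Rightarrow (1)$, given any $v:P\to P'$ with $P'$ integral, I would use the explicit description of $Q\sqcup_P P'$ from Construction \ref{construction: explicit descriptions of pushouts}. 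Condition $(4)$ is exactly the combinatorial input needed to collapse the defining chain-equivalence relation (with $n$ intermediate steps) into a single-step relation: any two representatives $(q,r)$ and $(q',r')$ in $Q\oplus P'$ differ via at most one element of $P$. From this collapsed form it is then routine to check that the quotient injects into the group envelope $Q^{\mathrm{gp}}\oplus_{P^{\mathrm{gp}}}(P')^{\mathrm{gp}}$, proving integrality.

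For the flatness equivalences, $(2)\Rightarrow (3)$ is a trivial base change along $\mathbb{Z}\to k$. For $(1)\Rightarrow (2)$, having established condition $(4)$, I would write $\mathbb{Z}[Q]$ as a filtered colimit of free $\mathbb{Z}[P]$-modules: the orbits of the $P$-action on $Q$ are free by injectivity, and $(4)$ ensures that the poset of finitely-generated $P$-subsets of $Q$ is filtered, so that one obtains a presentation as $\varinjlim \mathbb{Z}[P]^{\oplus I_\alpha}$, whence flatness. For $(3)\Rightarrow (1)$ (equivalently $(3)\Rightarrow (4)$), I would invoke the equational criterion for flatness: any syzygy in the target module among elements coming from the base lifts to a syzygy in the base. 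Writing $u(a_1)+b_1=u(a_2)+b_2$ as the vanishing in $k[Q]$ of $[u(a_1)]\cdot [b_1]-[u(a_2)]\cdot [b_2]$, the criterion produces a combination of relations in $k[P]$ that realizes this vanishing; careful bookkeeping of the monomials (using that all structure maps send monomials to monomials) yields integer witnesses $a_3, a_4\in P$ and $b\in Q$ satisfying the Ore-like condition of $(4)$, and likewise forces $u$ to be injective.

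The main obstacle I anticipate is the implication $(3)\Rightarrow (4)$: extracting concrete monoid-theoretic witnesses from the abstract flatness of a ring map requires tracking monomials through the equational criterion with care, since \emph{a priori} one obtains relations with arbitrary $k$-linear combinations rather than single-term identities. The resolution is that the relevant syzygies in $k[Q]$ between pairs of monomials admit only trivial coefficients once one restricts to homogeneous components under the $Q^{\mathrm{gp}}$-grading on $k[Q]$, which reduces the flatness relation to precisely the combinatorial statement in $(4)$. The remaining implications are mostly formal once this crux is in place.
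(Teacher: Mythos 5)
The paper cites Kato's Proposition 4.1 with no internal proof, so there is no in-paper argument to compare against; what you offer is a reconstruction of the standard proof, and the overall architecture — the combinatorial equivalence $(1)\Leftrightarrow (4)$, a Lazard-type filtered-colimit argument for $(4)\Rightarrow(2)$, base change for $(2)\Rightarrow(3)$, and the equational criterion for $(3)\Rightarrow(4)$ — is the right one. That said, there are a few places where the sketch glosses over genuine work. In $(1)\Rightarrow(4)$, the proposed test pushout is underspecified and the obvious candidates do not obviously produce condition $(4)$: for instance taking $P'=Q$ with $v=u$, the single-step equivalence in the integral pushout only returns an element of $P^{\mathrm{gp}}$, not a pair $a_3,a_4\in P$, so one must be more clever about the choice of $P'$ (or prove $(1)\Rightarrow(4)$ indirectly through the flatness conditions). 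In $(4)\Rightarrow(2)$ the phrase ``the orbits of the $P$-action on $Q$ are free'' is slightly off: cyclic $P$-subsets $u(P)+q$ are indeed free by injectivity plus cancellation in $Q$, but they do not partition $Q$. What the Ore condition $(4)$ actually supplies is that any two such cyclic subsets which meet are jointly contained in a third, making the poset of \emph{finite disjoint unions of free cyclic $P$-subsets} filtered with union $Q$; this is the precise input for Lazard's theorem and should be stated as such.

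The most substantial omission is the extraction of injectivity of $u$ from conditions $(2)$ or $(3)$, which you dispatch with a single clause. This is not automatic: taking $P=\mathbb{Z}/2\oplus\mathbb{N}$, $Q=\mathbb{N}$, $u(\epsilon,n)=n$, the map $k[P]\to k[Q]$ is flat whenever $\mathrm{char}\,k\neq 2$ even though $u$ is not injective; the non-injectivity is only detected over $\mathbb{F}_2$ (or integrally, by a nonzero $\mathrm{Tor}_1$ with $2$-torsion). So the argument must genuinely exploit flatness over $\mathbb{Z}$, or over \emph{all} fields $k$, and specialize to a residue characteristic dividing the torsion in $\ker(u^{\mathrm{gp}})$. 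Relatedly, in the $(3)\Rightarrow(4)$ step the grading you invoke does not by itself collapse the equational-criterion relation to a single-monomial identity, because the required witnesses $a_3,a_4,b$ may a priori appear in different summands of the presentation; the cleaner route is to apply flatness to the intersection of two monomial ideals $(T^{a_1})\cap(T^{a_2})\subseteq k[P]$ and note that the resulting ideal of $k[Q]$ is again monomial, from which the witnesses can be read off directly.
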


In particular, integrality of homomorphisms between integral monoids can be viewed as a certain form of flatness. For later use, we also consider a slightly more general notion of flatness, which also works for non-integral monoids.

\begin{definition}[\cite{Bhatt_dR}, Definition 4.8]\label{definition:flat}
A monoid homomorphism $u: P \rightarrow Q$ is \emph{flat} if for any homomorphism $v: P \rightarrow P'$ of monoids, the homotopy pushout $Q \sqcup_P^{\L} P'$ agrees with the naive pushout $Q \sqcup_P P'$ in the category of monoids.
\end{definition}

Next, we recall the notion of (quasi-)saturated homomorphisms.

\begin{definition}
\begin{enumerate}
\item Let $p$ be a prime number and let $u:P\rightarrow Q$ be a homomorphism of integral monoids. Let 
\[Q' = Q \sqcup_{P, [p]} P\] be the pushout of $u: P \ra Q$ along the multiplication-by-$p$ map  $[p]:P\rightarrow P$. Let $w:Q'\rightarrow Q$ be the unique homomorphism such that $w\circ v=[p]$ and $w\circ u'=u$. We say that $u$ is \emph{$p$-quasi-saturated} if $w$ is exact.
\item A homomorphism of integral monoids is \emph{quasi-saturated} if it is $p$-quasi-saturated for every prime number $p$.
\item A homomorphism of integral monoids is \emph{saturated} if it is both integral and quasi-saturated.
\end{enumerate}
\end{definition}

\begin{proposition}\label{prop: saturated morphisms}
\begin{enumerate}
\item If $u:P\rightarrow Q$ and $v:Q\rightarrow R$ are quasi-saturated (resp. saturated) homomorphisms of integral monoids, so is $v\circ u$.
\item Let $u:P\rightarrow Q$ and $v:P\rightarrow P'$ be homomorphisms of integral monoids and suppose $u$ is quasi-saturated (resp. saturated). Let $Q'$ be the pushout $Q \sqcup_P P'$ in the category of monoids. Then the canonical homomorphism $P'\rightarrow Q'$ is quasi-saturated (resp. saturated).
\item Let $u:P\rightarrow Q$ be a homomorphism of saturated monoids. Then $u$ is quasi-saturated if and only if for any homomorphism $v:P\rightarrow P'$ of saturated monoids, the pushout $Q \sqcup_P P'$ in the category of integral monoids is saturated.
\item Let $u:P\rightarrow Q$ be an integral homomorphism of saturated monoids. Then $u$ is saturated if and only if for any homomorphism $v:P\rightarrow P'$ of saturated monoids, the pushout $Q \sqcup_P P'$ in the category of monoids is saturated.
\end{enumerate}
\end{proposition}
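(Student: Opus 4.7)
The strategy is to follow Tsuji's approach in \cite{Tsuji}, relying on the elementwise descriptions of pushouts recalled in Construction \ref{construction: explicit descriptions of pushouts}. The portions of (1) and (2) concerning integral maps are covered by \cite[Proposition I.2.3]{Tsuji}, already cited in the excerpt; what remains is the analogous assertions for quasi-saturated maps, together with parts (3) and (4).

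For part (1), given $P \xrightarrow{u} Q \xrightarrow{v} R$, pushout associativity yields a natural isomorphism
\[R \sqcup_{P, [p]} P \;\cong\; R \sqcup_Q \bigl( Q \sqcup_{P, [p]} P \bigr),\]
and under this identification the canonical exactness map $w: R \sqcup_{P, [p]} P \to R$ associated to $v \circ u$ factors as
\[R \sqcup_Q \bigl( Q \sqcup_{P, [p]} P \bigr) \;\xrightarrow{\mathrm{id}_R \sqcup w_Q}\; R \sqcup_{Q, [p]_Q} Q \;\xrightarrow{w_v}\; R,\]
where $w_Q$ and $w_v$ denote the exactness maps associated to $u$ and $v$ respectively. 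The second factor is exact by hypothesis, and the first is exact because pushouts of exact maps between integral monoids remain exact (an elementwise check using Construction \ref{construction: explicit descriptions of pushouts}). Hence $w$ is exact, so $v \circ u$ is $p$-quasi-saturated. Part (2) is treated by the analogous manipulation: the exactness map for $P' \to Q \sqcup_P P'$ is identified as the pushout of $w_Q$ along $v: P \to P'$, and is therefore exact. In both (1) and (2), the \emph{saturated} case follows by additionally invoking the integrality portion of \cite[Proposition I.2.3]{Tsuji}.

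For (3) and (4), the ``$\Leftarrow$'' direction specializes to $v = [p]: P \to P$. The hypothesis then gives that $Q' := Q \sqcup_{P, [p]}^{\mathrm{int}} P$ is saturated (and in (4) this coincides with the pushout in all monoids, by integrality of $u$). Representing an element of $(Q')^{\mathrm{gp}}$ as a class of a pair $(q, x) \in Q^{\mathrm{gp}} \oplus P^{\mathrm{gp}}$ and using the defining relation $u(x') \sim p \cdot x'$ of the pushout, one checks that for $\alpha = [(q, x)] \in (Q')^{\mathrm{gp}}$ with $w^{\mathrm{gp}}(\alpha) = pq + u^{\mathrm{gp}}(x) \in Q$, the element $p\alpha$ equals the image of $w^{\mathrm{gp}}(\alpha)$ under the canonical inclusion $i: Q \hookrightarrow Q'$, hence lies in $Q'$. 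Saturated-ness of $Q'$ then forces $\alpha \in Q'$, so $w$ is exact and $u$ is quasi-saturated. The ``$\Rightarrow$'' directions use part (2) to obtain that $P' \to Q \sqcup_P^{\mathrm{int}} P'$ is quasi-saturated, and then invoke the sublemma that the target of a quasi-saturated map from a saturated monoid is itself saturated; this is proved by a parallel elementwise analysis via Construction \ref{construction: explicit descriptions of pushouts} combined with the exactness of the associated $w$-map.

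The main obstacle is the elementwise bookkeeping underlying two key sublemmas: (i) pushouts of exact maps between integral monoids remain exact (used in parts (1) and (2)), and (ii) the target of a quasi-saturated map from a saturated monoid is itself saturated (used in the ``$\Rightarrow$'' directions of (3) and (4)). Both are essentially combinatorial, reducing to an unwinding of the congruence relations of Construction \ref{construction: explicit descriptions of pushouts}, but must be carried out with care to match the relations correctly.
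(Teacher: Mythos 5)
The paper's ``proof'' of Proposition~\ref{prop: saturated morphisms} is a one-line citation to Tsuji's Proposition~I.3.6~(1), Proposition~I.3.6~(2), Corollary~I.3.11, and Proposition~I.3.14, so there is no argument in the paper to compare your sketch against directly. Your reconstruction goes in a plausible direction: identifying the exactness map for $v\circ u$ as the composite $R\sqcup_Q Q' \to R\sqcup_{Q,[p]}Q\to R$ via pushout associativity is a sound reduction, and the special case $v=[p]$ together with the relation $p\alpha=i(w^{\mathrm{gp}}(\alpha))$ does carry the ``$\Leftarrow$'' directions of (3) and (4). You also correctly flag the two load-bearing sublemmas as unproven; for the record, sublemma~(i) does hold and is not hard once one uses the fact (from Construction~\ref{construction: explicit descriptions of pushouts}) that every element of the integral pushout $D\sqcup_B^{\mathrm{int}}C$ has a representative $(d',c')$ with $d'\in D$, $c'\in C$: if $[(d,0)]=[(d',c')]$ then $(d-d',-c')=(f^{\mathrm{gp}}(b),-w^{\mathrm{gp}}(b))$ for some $b\in B^{\mathrm{gp}}$, exactness of $w$ forces $b\in B$, and then $d=d'+f(b)\in D$. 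So the sketch is incomplete in the places you say it is, but not wrong.

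There is, however, one point left implicit that you should address explicitly: the definition of quasi-saturated is made in terms of the pushout in the category of \emph{all} monoids, while part~(3) concerns the \emph{integral} pushout, and your proof of (3) moves freely between the two. In the ``$\Leftarrow$'' direction you establish exactness of $w^{\mathrm{int}}\colon Q\sqcup_{P,[p]}^{\mathrm{int}}P\to Q$, but quasi-saturatedness asks for exactness of the map out of the all-monoid pushout; one needs to observe that exactness forces the all-monoid pushout to be integral (so the two coincide), or else recast the definition. Similarly, in the ``$\Rightarrow$'' direction you invoke part~(2), whose conclusion that $P'\to Q\sqcup_P P'$ (all-monoid pushout) is quasi-saturated tacitly requires that pushout to be integral --- which is automatic when $u$ is integral (part~(4)), but needs justification when $u$ is only assumed quasi-saturated (part~(3)). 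Since Tsuji's Corollary~I.3.11 is precisely where this is sorted out, it's a genuine step, not just bookkeeping.
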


\begin{proof}
The statements follow from Proposition I.3.6 (1), Proposition I.3.6 (2), Corollary I.3.11, and Proposition I.3.14 in \cite{Tsuji}, respectively.
\end{proof}

We have the following explicit criterion of quasi-saturated-ness.

\begin{proposition}\label{prop: criterion quasi-saturated}
Let $u:P\rightarrow Q$ be a homomorphism of saturated monoids. Then $u$ is quasi-saturated if and only if it satisfies the following condition: for any integer $n\geq 1$ and any ${x}\in P$, ${y}\in Q$ such that $u({x})\,|\, n {y}$, there exists ${x}'\in P$ such that ${x}\,|\, n{x}'$ and $u({x}')\,|\, {y}$.
\end{proposition}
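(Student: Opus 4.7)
The plan is to reduce both implications to the prime case $n = p$, where the element-level condition becomes equivalent to the definition of $p$-quasi-saturated-ness. For this reduction, the key input is the explicit description of the integral pushout $Q' := Q\sqcup_{P,[p]}^{\mathrm{int}} P$ from Construction \ref{construction: explicit descriptions of pushouts}: its group envelope is $Q'^{\mathrm{gp}} = (Q^{\mathrm{gp}}\oplus P^{\mathrm{gp}})/\langle (u(t),-pt):t\in P^{\mathrm{gp}}\rangle$, a class $[(q,r)]$ lies in $Q'$ iff some translate $(q - u(t), r + pt)$ has first entry in $Q$ and second entry in $P$, and the structural map $w: Q'\to Q$ acts on group envelopes by $[(q,r)]\mapsto pq + u(r)$. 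Thus $u$ being $p$-quasi-saturated amounts to: whenever $(q,r)\in Q^{\mathrm{gp}}\oplus P^{\mathrm{gp}}$ satisfies $pq + u(r)\in Q$, there exists $t\in P^{\mathrm{gp}}$ with $q - u(t)\in Q$ and $r + pt\in P$.

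For $(\Rightarrow)$ at $n = p$, the argument is direct: given $u(x)\,|\,py$, the element $[(y,-x)]\in Q'^{\mathrm{gp}}$ has $w^{\mathrm{gp}}$-image $py - u(x)\in Q$, so exactness of $w$ produces $t\in P^{\mathrm{gp}}$ with $y - u(t)\in Q$ and $-x + pt\in P$; saturated-ness of $P$ then forces $t\in P$ (since $pt = x + (pt - x)$ is a sum of two elements of $P$), so that $x' := t$ satisfies $x\,|\,px'$ and $u(x')\,|\,y$. For $(\Leftarrow)$ at $n = p$, one starts from $[(q,r)]\in Q'^{\mathrm{gp}}$ with $pq + u(r)\in Q$, reduces to the case $r\in P$ using Lemma \ref{lemma: Pgp and P} (translating $r$ by $ps$ for a suitable $s\in P$), and then iteratively applies the element-level condition to progressively improve the representative until the desired $t$ appears. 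We invoke \cite[Proposition I.3.7]{Tsuji} for the details here, as this is the most technically intricate point of the proof.

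With the prime case in hand, the proposition follows quickly. The $(\Leftarrow)$ direction applies the hypothesized condition with $n = p$ to each prime $p$, concluding $p$-quasi-saturated-ness for every prime and thus quasi-saturated-ness. The $(\Rightarrow)$ direction extends the element-level condition from primes to all $n\geq 1$ by induction on the number of prime factors of $n$ counted with multiplicity: the base case $n = 1$ is trivial with $x' = x$, and if $n = p\cdot m$, then a hypothesis $u(x)\,|\,p(my)$ yields, via the condition at $p$, some $x_1\in P$ with $x\,|\,px_1$ and $u(x_1)\,|\,my$, and then, via the inductive condition at $m$, some $x_2\in P$ with $x_1\,|\,mx_2$ and $u(x_2)\,|\,y$; combining these gives $x\,|\,px_1\,|\,pmx_2 = nx_2$, as required. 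The principal obstacle is therefore the $(\Leftarrow)$ direction at $n = p$, which is the only step where genuinely nontrivial manipulation of the pushout is needed; everything else is either an immediate computation or a formal induction.
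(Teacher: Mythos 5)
The paper proves this by simply citing \cite[Proposition I.4.1]{Tsuji}, and your proposal essentially reconstructs Tsuji's argument: the explicit $(\Rightarrow)$ direction at $n=p$ (exactness of $w$ applied to $[(y,-x)]$, together with saturatedness of $P$ giving $t\in P$ from $pt\in P$) and the multiplicative induction from primes to general $n$ are both sound, while the converse at $n=p$ is still deferred to Tsuji. Do double-check that \cite[Proposition I.3.7]{Tsuji} (rather than I.4.1, which the paper cites for the whole statement) is the right reference for the converse at $n=p$; if not, your sketch of the iteration — which is the genuinely delicate step — would need to be filled in completely, since the reduction to $r\in P$ alone does not immediately let one apply the element-level condition (the natural candidate $u(r)\mid pz$ with $z=pq+u(r)$ fails because $pz - u(r)$ need not lie in $Q$).
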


\begin{proof}
This follows from \cite[Proposition I.4.1]{Tsuji}.
\end{proof}

For our purpose, we introduce a variant of quasi-saturated-ness.

\begin{definition}\label{defn: pseudo-saturated}
Let $u:P\rightarrow Q$ be a homomorphism of saturated monoids. We say that $u$ is \emph{pseudo-saturated} if there exists an integer $M\geq 1$ such that $u$ satisfies the following condition: for any integer $n\geq 1$ and any ${x}\in P$, ${y}\in Q$ such that $u({x})\,|\, n{y}$, there exists ${x}'\in P$ such that $M{x}\,|\, n{x}'$ and $u({x}')\,|\, M{y}$. In this case, we say that $u$ is \emph{pseudo-saturated with conductor $M$}.
\end{definition}

\begin{example}\label{example: pseudo-saturated}
\begin{enumerate}
\item By definition (and Proposition \ref{prop: criterion quasi-saturated}), quasi-saturated homomorphisms of saturated monoids are pseudo-saturated with conductor 1.
\item If a homomorphism $u:P\rightarrow Q$ of fs monoids is Kummer (i.e., $u$ is injective and for every ${y}\in Q$, there exists an integer $n\geq 1$ such that $n{y}\in u(P)$), then $u$ is pseudo-saturated.
\item Consider the homomorphism $u:\mathbb{Z}_{\geq 0}\rightarrow \mathbb{Z}_{\geq 0}^r$ sending 1 to $(n_1, \ldots, n_r)$ for some integers $n_1, \ldots, n_r\in \mathbb{Z}_{\geq 0}$ and assume that at least one of the $n_i$'s is non-zero. Then $u$ is pseudo-saturated with conductor $M$ where $M$ is the least common multiple of the nonzero $n_i$'s. 
\item In particular, the homomorphism $u: \Z_{\ge 0} \ra \Z_{\ge 0}^2$ sending $1 \mapsto (1, 2)$ is pseudo-saturated but not quasi-saturated. \footnote{Moreover, $u$ even satisfies the condition that the cokernel of $u^{\textup{gp}}: \Z \ra \Z^2$ is torsion-free (this condition appears in Proposition \ref{lemma: choose X_0 to be X} part (3)).} 
\end{enumerate}
\end{example}

\begin{lemma}\label{lemma: pseudo_satured_modulo_units}
Let $u:P \rightarrow Q$ be a homomorphism of saturated monoids.
\begin{enumerate}
\item Let $G$ and $H$ be subgroups of $P$ and $Q$, respectively, such that $u(G)\subset H$. Let 
\[u': P/G\rightarrow Q/H\] be the induced homomorphism. Then $u$ is quasi-saturated (resp. pseudo-saturated) if and only if $u'$ is. As an immediate corollary, $u:P\rightarrow Q$ is quasi-saturated (resp. pseudo-saturated) if and only if the corresponding homomorphism $\overline{P}\rightarrow \overline{Q}$ is.
\item Let $S$ and $T$ be submonoids of $P$ and $Q$, respectively, such that $u(S)\subset T$. Let 
\[u'': S^{-1}P\rightarrow T^{-1}Q\] be the induced homomorphism on the corresponding localizations. Then $u$ is quasi-saturated (resp. pseudo-saturated) if and only if $u''$ is.
\end{enumerate}
\end{lemma}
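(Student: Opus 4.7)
The plan is to verify both parts directly from the explicit divisibility criterion of Definition \ref{defn: pseudo-saturated} (and Proposition \ref{prop: criterion quasi-saturated} for the quasi-saturated variant), translating the divisibility relations between the source/target monoids of $u$ and those of $u'$ or $u''$ via the canonical quotient or localization maps. The same conductor $M$ will (essentially) do the job in both directions, modulo the technical points noted below.

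For part (1), the crucial observation is that a subgroup of a monoid lies inside its unit subgroup, so $G \subseteq P^{\times}$ and $H \subseteq Q^{\times}$. Consequently, any equation modulo $G$ in $P$ lifts to a genuine equation in $P$ by absorbing the $G$-contribution via its inverse, and likewise for $H$. Concretely, given the hypothesis $u'(\bar{x}) \mid n \bar{y}$ in $Q/H$ for lifts $x \in P$, $y \in Q$, one obtains $u(x) + z = ny + h$ in $Q$ for some $z \in Q$ and $h \in H$; since $-h \in Q^{\times} \subseteq Q$, we may rewrite as $u(x) + (z-h) = ny$, giving $u(x) \mid ny$ in $Q$. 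The pseudo-saturated condition for $u$ then yields $x' \in P$ with $Mx \mid nx'$ in $P$ and $u(x') \mid My$ in $Q$, and these divisibilities descend immediately to $P/G$ and $Q/H$. The converse is entirely analogous: lift the proposed witness from $P/G$ to $P$, and use units in $G$ and $H$ to convert approximate divisibilities into genuine ones in $P$ and $Q$. The quasi-saturated case follows by specializing to $M = 1$, and the corollary for $\overline{P} \to \overline{Q}$ by taking $G = P^{\times}$, $H = Q^{\times}$.

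For part (2), the strategy is similar but more delicate since localization introduces denominators from $S$ and $T$. For the forward direction, given $\tilde{x} = x - s \in S^{-1}P$, $\tilde{y} = y - t \in T^{-1}Q$ with $u''(\tilde{x}) \mid n \tilde{y}$, one clears denominators in $T^{-1}Q$ to obtain an identity $u(x) + q + nt = u(s) + ny + t'$ in $Q$; adding $(n-1)t$ to both sides transforms this into $u(x) \mid n(y + t)$ in $Q$. Applying pseudo-saturation of $u$ to the triple $(x,\, y+t,\, n)$ produces $x' \in P$ with $Mx \mid n x'$ in $P$ and $u(x') \mid M(y+t)$ in $Q$; passing back to the localizations and absorbing $Ms \in S$ and $Mt \in T$ (which become invertible) gives the desired witness for $u''$. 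For the converse, given $u(x) \mid ny$ in $Q$, viewing this in $T^{-1}Q$ and applying pseudo-saturation of $u''$ produces $\tilde{x}' = x_1 - s \in S^{-1}P$ with the corresponding divisibilities; setting $x'' := x_1 + s_0$ (where $s_0 \in S$ arises from clearing denominators in $S^{-1}P$) yields $Mx \mid n x''$ in $P$, while translating the divisibility in $T^{-1}Q$ gives $u(x'') \mid My + \tau$ in $Q$ for some $\tau \in T$.

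The main obstacle is the residual term $\tau \in T$ appearing in the converse direction of part (2): unlike the group-valued case of part (1), elements of $T$ are not invertible in $Q$, so the witness for $u$ is not immediately obtained from the witness for $u''$ without further work. Resolving this requires either a careful choice of lifts $x_1$, $s$ so that $\tau$ can be absorbed into the $M$-multiple on the right (at the cost of possibly enlarging the conductor), or an iteration of the pseudo-saturation condition applied to $u''$. This is essentially an exactification step, and is the technical heart of part (2); the quasi-saturated case ($M = 1$) proceeds along the same lines with simpler bookkeeping.
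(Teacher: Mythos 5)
Part (1) of your proof is correct, and the key observation — that a subgroup of a monoid lies inside the units, so quotienting by $G$ and $H$ does not change which divisibility relations hold — is exactly what the paper has in mind when it calls this part ``straightforward.'' Both directions of (1) go through by lifting an equation modulo $G$ (resp.~$H$) to a genuine equation in $P$ (resp.~$Q$) by subtracting the appropriate unit.

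For the forward direction of part (2) your strategy matches the paper's (choose denominators $s\in S$, $t\in T$ so that the hypothesis $u''(\tilde x)\mid n\tilde y$ in $T^{-1}Q$ descends to $u(x+s)\mid n(y+t)$ in $Q$, apply pseudo-saturation of $u$, then push the witness back into the localizations). However, the specific manipulation you write — ``adding $(n-1)t$ to both sides transforms this into $u(x)\mid n(y+t)$'' — does not produce the claimed divisibility: starting from $u(x)+q+nt=u(s)+ny+t'$, adding $(n-1)t$ to both sides gives $u(x)+q+(2n-1)t = u(s)+ny+t'+(n-1)t$, which is not of the form $u(x)+(\text{something in }Q)=n(y+t)$. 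The correct move is to enlarge the chosen denominator of $\tilde y$: with $t_1$ the original denominator of $\tilde y$ and $\tilde q = q_0-t_0$ the witness in $T^{-1}Q$, replace $t_1$ by $t_1+t_0$ so that the witness becomes $q_0 + (n-1)t_0 + n t_1 \in Q$, exactly as the paper does. This is a fixable slip.

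The real issue is the converse of part (2), which you correctly flag as unresolved but then suggest can be salvaged by ``a careful choice of lifts $x_1$, $s$'' or an ``iteration.'' No choice of lifts can work, because the converse direction of part (2) is in fact \emph{false} as stated. Take $u:P\hookrightarrow Q$ to be the inclusion of $P=\{(a,b,c)\in\mathbb{Z}_{\geq 0}^3\,|\,a+b=c\}$ into $Q=\mathbb{Z}_{\geq 0}^3$ from Remark~\ref{remark: exact but not pseudo-saturated}, which the paper shows is \emph{not} pseudo-saturated. Choose $S=\{0\}\subset P$ and $T=Q\subset Q$; then $u(S)\subset T$, $S^{-1}P=P$, and $T^{-1}Q=Q^{\mathrm{gp}}=\mathbb{Z}^3$ is a group. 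In a group every divisibility holds trivially, so the pseudo-saturation condition for $u'':P\to\mathbb{Z}^3$ reduces to: for every $x\in P$ and $n\geq 1$, there is $x'\in P$ with $x\mid nx'$ in $P$. Taking $x'=x$ (so $nx-x=(n-1)x\in P$) shows $u''$ is even quasi-saturated, yet $u$ is not pseudo-saturated. So the obstruction you identified — the residual term $\tau\in T$ not being invertible in $Q$ — is genuine and cannot be pushed through. It is worth noting that the paper's own proof of part (2) also only proves the direction $u$ pseudo-saturated $\Rightarrow$ $u''$ pseudo-saturated (and that is the only direction used in the applications of the lemma, e.g.~in Proposition~\ref{prop: pseudo-sat chart}); the stated ``if and only if'' for part (2) appears to be an overstatement in the paper. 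Your uneasiness about the converse was therefore well-founded, but the conclusion you should draw is that the converse needs to be dropped, not patched.
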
 

\begin{proof}
\begin{enumerate}
\item This is straightforward from Proposition \ref{prop: criterion quasi-saturated} (resp. Definition \ref{defn: pseudo-saturated}).
\item We only prove the case of pseudo-saturated-ness. The proof for quasi-saturated-ness is the same. Identify $P$ and $Q$ as submonoids of $S^{-1}P$ and $T^{-1}Q$, respectively. Let $x\in S^{-1}P$ and $y\in T^{-1}Q$ such that $x\,|\,ny$ in $T^{-1}Q$ for some positive integer $n$. Pick $s\in S$ and $t\in T$ such that $x+s\in P$, $y+t\in Q$, and $x+s\,|\, n(y+t)$ in $Q$. By assumption, there exists $x'\in P$ such that $M(x+s)\,|\, nx'$ in $P$ and $x'\,|\,M(y+t)$ in $Q$. It follows that $Mx\,|\,nx'$ in $S^{-1}P$ and $x'\,|\,My$ in $T^{-1}Q$, as desired.
\end{enumerate}
\end{proof}

\begin{proposition}\label{prop: pseudo-saturated}
If $u:P\rightarrow Q$ and $v:Q\rightarrow R$ are pseudo-saturated homomorphisms of integral monoids with conductors $M$ and $N$, respectively, then $v\circ u$ is pseudo-saturated of conductor $MN$.
\end{proposition}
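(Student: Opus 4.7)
The proposal is that the claim follows directly by chaining the two pseudo-saturation conditions, with the conductors multiplying naturally. The plan is to unravel Definition \ref{defn: pseudo-saturated} for the composition $v \circ u: P \to R$ and verify the condition with conductor $MN$.

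Concretely, I would start with an integer $n \geq 1$ and elements $x \in P$, $z \in R$ satisfying $(v \circ u)(x) \,|\, nz$ in $R$. The first step is to apply the pseudo-saturation of $v$ (with conductor $N$) to the pair $(u(x), z) \in Q \times R$ and the integer $n$. This produces an element $y' \in Q$ such that
\[ N u(x) \,|\, n y' \quad \text{in } Q, \qquad v(y') \,|\, N z \quad \text{in } R. \]

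Next, since $N u(x) = u(Nx)$, the first divisibility reads $u(Nx) \,|\, n y'$ in $Q$. I would apply the pseudo-saturation of $u$ (with conductor $M$) to the pair $(Nx, y') \in P \times Q$ and the same integer $n$. This yields an element $x' \in P$ such that
\[ M \cdot Nx \,|\, n x' \quad \text{in } P, \qquad u(x') \,|\, M y' \quad \text{in } Q. \]
Combining $u(x') \,|\, M y'$ with $v(y') \,|\, Nz$ by applying $v$ to the first relation gives
\[ (v \circ u)(x') \,|\, M v(y') \,|\, MN z \quad \text{in } R, \]
while the divisibility $MNx \,|\, n x'$ in $P$ is already in the desired form. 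Hence $v \circ u$ is pseudo-saturated with conductor $MN$.

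I do not anticipate any real obstacle: this is essentially a bookkeeping argument that uses the explicit criterion of Definition \ref{defn: pseudo-saturated} twice. The only point worth emphasizing in the write-up is why the conductors multiply rather than, say, being replaced by $\mathrm{lcm}(M,N)$ — namely, the conductor $N$ from $v$ appears on the input side when we feed $Nx$ into the pseudo-saturation of $u$, and the conductor $M$ then multiplies $y'$ on the output side, which after pushing through $v$ multiplies $Nz$ by $M$.
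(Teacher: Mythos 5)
Your proof is correct and is exactly the bookkeeping argument the paper has in mind; the paper's own proof simply says "This is clear from the definition," and your write-up fills in those details in the natural order (apply the criterion for $v$ first to produce $y'$, then feed $Nx$ and $y'$ into the criterion for $u$).
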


\begin{proof}
This is clear from the definition.
\end{proof}

The following lemma tells us that pseudo-saturated-ness is stable under (saturated) base change.

\begin{lemma}\label{lemma: pseudo-saturatedness stable under base change}
Let $u:P\rightarrow Q$ and $v: P\rightarrow P'$ be homomorphisms of fs monoids and suppose $u$ is pseudo-saturated. Let $Q' = Q\sqcup^{\textup{sat}}_P P'$  be the pushout in the category of saturated monoids. Then the canonical homomorphism $P'\rightarrow Q'$ is also pseudo-saturated.
\end{lemma}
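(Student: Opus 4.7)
The plan is to prove the lemma via an explicit element-level diagram chase using the description of the saturated pushout in Construction \ref{construction: explicit descriptions of pushouts}. Write $M$ for the conductor of $u$, and let $u':P'\to Q'$ and $v':Q\to Q'$ denote the canonical maps. Since $Q'$ is the saturation of the fine monoid $Q\sqcup_P^{\textup{int}}P'$, there exists a universal integer $N\geq 1$ such that $NQ'\subseteq Q\sqcup_P^{\textup{int}}P'$; equivalently, every $y\in Q'$ admits a representation $Ny=v'(q_y)+u'(r_y)$ with $q_y\in Q$ and $r_y\in P'$. I claim that $u'$ is pseudo-saturated with conductor $M':=MN$.

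Given $x\in P'$, $y\in Q'$, $n\geq 1$ with $u'(x)\mid ny$ in $Q'$, first I would set $z:=ny-u'(x)\in Q'$ and lift both $y$ and $z$ as above. Unwinding the identity $Nz=n\cdot Ny-Nu'(x)$ via Construction \ref{construction: explicit descriptions of pushouts} produces an auxiliary element $p\in P^{\gp}$ satisfying
\[nq_y=q_z+u^{\gp}(p),\qquad nr_y+v^{\gp}(p)=Nx+r_z.\]
Using Lemma \ref{lemma: Pgp and P} with multiplier $n$, pick $p_0\in P$ with $p_+:=p+np_0\in P$; the first of these relations rewrites as $u(p_+)\mid ny_1$ in $Q$ for $y_1:=q_y+u(p_0)\in Q$. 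Applying the pseudo-saturated condition on $u$ to this divisibility furnishes $x_1\in P$, $p_1^*\in P$, and $\tilde q_1\in Q$ with $nx_1=Mp_++p_1^*$ and $u(x_1)+\tilde q_1=Mq_y+Mu(p_0)$.

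The candidate is $x':=v(x_1)+Mr_y\in P'$. A direct substitution using $v(p_+)=v^{\gp}(p)+nv(p_0)=(Nx+r_z-nr_y)+nv(p_0)$ yields
\[nx'=MNx+Mr_z+Mnv(p_0)+v(p_1^*)\in P',\]
so $MNx\mid nx'$ (hence $M'x\mid nx'$) in $P'$. Computing $M'y-u'(x')$ in $(Q')^{\gp}$ yields $v'(Mq_y-u(x_1))=v'(\tilde q_1-Mu(p_0))$, and it remains to show that this element lies in $Q'$.

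This last verification is the main obstacle: although $v'(\tilde q_1)$ and $Mv'(u(p_0))$ individually lie in $Q'$, their difference only lies in $(Q')^{\gp}$ a priori. I plan to resolve this by adjusting the candidate $x'$ with a correction term of the form $Cv(p_0)$ (for a suitable $C\in\Z_{\geq 0}$ coordinated with $\tilde q_1$ and $p_0$), thereby reshuffling the decomposition so that the negative contributions in $(Q')^{\gp}$ are absorbed by positive contributions arising from $Ny=v'(q_y)+u'(r_y)\in Q\sqcup_P^{\textup{int}}P'$. Alternatively, one can exhibit an explicit integer $L\geq 1$ such that $L\cdot v'(\tilde q_1-Mu(p_0))$ admits a representative in $Q\oplus P'$ using the saturation of $Q'$. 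These manipulations mirror the analogous (simpler) argument for quasi-saturated maps in Proposition \ref{prop: saturated morphisms}(2), complicated only by the bookkeeping surrounding the conductor factor $M$ (which trivializes when $M=1$).
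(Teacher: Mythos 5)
Your proposal, as written, has a genuine gap — and you've correctly identified where it is, but you haven't closed it, and I don't think the ``correction term'' fix will be easy. The culprit is your choice to work directly inside $Q'=Q\sqcup_P^{\mathrm{sat}}P'$. To get your hands on honest elements of $Q\oplus P'$ you multiply by the universal integer $N$ pulling $Q'$ back into $S^{\mathrm{int}}:=Q\sqcup_P^{\mathrm{int}}P'$, and this extra multiplication is exactly what breaks the final divisibility check: you end up needing to show $v'(\tilde q_1-Mu(p_0))\in Q'$, where $\tilde q_1-Mu(p_0)$ lies only in $Q^{\mathrm{gp}}$. Unlike the first coordinate in the clean case, there is no natural candidate for a $p\in P^{\mathrm{gp}}$ to shift both coordinates back into $Q\times P'$, because $-Mv(p_0)$ generally lies outside $P'$. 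The issue is not bookkeeping; the $N$-scaling genuinely destroys the structure that would make the last step go through.

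The paper avoids this entirely by a preliminary reduction you are missing. Since $Q$ and $P'$ are fs, the integral pushout $S^{\mathrm{int}}$ is fine, and $S^{\mathrm{int}}\to S^{\mathrm{sat}}=Q'$ is therefore a Kummer homomorphism of fs monoids, hence pseudo-saturated (Example \ref{example: pseudo-saturated}(2)). By Proposition \ref{prop: pseudo-saturated}, pseudo-saturatedness is stable under composition, so it suffices to show $w:P'\to S^{\mathrm{int}}$ is pseudo-saturated. Once you work in $S^{\mathrm{int}}$ instead of $Q'$, you can pick a representative $s=(q,r)$ of the target element with $q\in Q$ and $r\in (P')^{\mathrm{gp}}$ directly (no $N$ needed), apply Lemma \ref{lemma: Pgp and P} to adjust $p\in P^{\mathrm{gp}}$ into $P$, and the final verification collapses to the observation $(Mq,-v^{\mathrm{gp}}(p'))=(u^{\mathrm{gp}}(p')+q_0,-v^{\mathrm{gp}}(p'))\sim(q_0,0)$ with $q_0\in Q$, which is manifestly in $S^{\mathrm{int}}$. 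Your element-level diagram chase is otherwise of the right shape — the issue is that it needs to be run in $S^{\mathrm{int}}$, and the passage to $S^{\mathrm{sat}}$ should be dispatched separately via the Kummer-plus-composition argument.
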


\begin{proof}
Let $S$ be the pushout $Q\sqcup_P P'$ in the category of monoids. Then $Q'=S^{\mathrm{sat}}$ and the homomorphism $P'\rightarrow Q'= S^{\mathrm{sat}}$ decomposes as \[P'\xrightarrow[]{w}S^{\mathrm{int}}\rightarrow S^{\mathrm{sat}}.\] Since the homomorphism $S^{\mathrm{int}}\rightarrow S^{\mathrm{sat}}$ is a Kummer homomorphism of fs monoids, it is pseudo-saturated. It remains to show that $w: P'\rightarrow S^{\mathrm{int}}$ is pseudo-saturated.

Recall from Construction \ref{construction: explicit descriptions of pushouts} that $S^{\mathrm{int}}$ can be identified with the quotient of the monoid
\[
T:=\Bigl\{ (q,r)\in Q^{\mathrm{gp}}\times (P')^{\mathrm{gp}}\,\Big|\, \begin{array}{l}\textrm{there exists }q'\in Q, r'\in P', \textrm{ and }p\in P^{\mathrm{gp}}\\ \textrm{such that }q=q'+u^{\mathrm{gp}}(p), r+v^{\mathrm{gp}}(p)=r'\end{array}\Bigr\}
\]
by the congruence relation consisting of pairs $((q,r), (q',r'))\in T\times T$ such that there exists $p\in P^{\mathrm{gp}}$ satisfying $q=q'+u^{\mathrm{gp}}(p)$ and $r+v^{\mathrm{gp}}(p)=r'$. In particular, every $s\in S^{\mathrm{int}}$ admits a representative $s=(q,r)$ with $q\in Q$ and $r\in P'$. Moreover, the homomorphism $w:P'\rightarrow S^{\mathrm{int}}$ sends $r\in P'$ to $(0,r)$.

Suppose $u:P\rightarrow Q$ is pseudo-saturated with conductor $M$. We show that $w:P'\rightarrow S^{\mathrm{int}}$ is also pseudo-saturated with conductor $M$. That is, for any $r_0\in P'$ and $s\in S^{\mathrm{int}}$ such that $w(r_0)\,|\, ns$ for some integer $n\geq 1$, we have to show that there exists $r'\in P'$ such that $Mr_0\,|\,nr'$ and $w(r')\,|\, Ms$.

By assumption, there exists $s''\in S^{\mathrm{int}}$ such that $w(r_0)+s''=ns$. Pick a representative $s''=(q'', r'')$ with $q''\in Q$ and $r''\in P'$. Also choose a representative $s=(q,r)$ with $q\in Q$ and $r\in (P')^{\mathrm{gp}}$ (but we do not require that $r\in P'$). Using the explicit description of $S^{\mathrm{int}}$, the equality $w(r_0)+s''=ns$ means that there exists $p\in P^{\mathrm{gp}}$ such that 
\[q''+u^{\mathrm{gp}}(p)=nq, \tu{ and } r_0+r''=v^{\mathrm{gp}}(p)+nr.\] By Lemma \ref{lemma: Pgp and P}, there exists $p''\in P$ such that $p+np''\in P$ and we can replace $q$ with $q+u^{\mathrm{gp}}(p'')$, replace $p$ with $p+np''$, and replace $r$ with $r-v^{\mathrm{gp}}(p'')$. Hence, we may actually assume that $p\in P$. In particular, we have  $u(p)\,|\,nq$.

By the assumption that $u:P\rightarrow Q$ is pseudo-saturated with conductor $M$, we know that there exists $p'\in P$ such that $Mp\,|\, np'$ and $u(p')\,|\, Mq$. Namely, there exists $p_0\in P$ and $q_0\in Q$ such that 
\[np'=Mp+p_0, \tu{ and }  Mq=u(p')+q_0.\]
Let $r':=v(p')+Mr$. We have to check that 
\be
\item  $r'\in P'$; 
\item $Mr_0\,|\, nr'$; and 
\item $w(r')\,|\,Ms$. 
\ee For (1), clearly, $r'\in R^{\mathrm{gp}}$. Notice that 
\begin{align*}
nr' & =nv(p')+Mnr \\ & =nv(p')+Mr_0+Mr''-Mv(p)\\
&=Mr_0+Mr''+v(np'-Mp) \\ &  =Mr_0+Mr''+v(p_0)
\end{align*} which lives in $P'$. 
It follows from the saturated-ness of $P'$ that $r'\in P'$. (2) is also clear as \[ nr'=Mr_0+Mr''+v(p_0).\] Finally, for (3), since 
\[w(r')+(Mq, -v^{\mathrm{gp}}(p'))=Ms\] and 
\[(Mq, -v^{\mathrm{gp}}(p'))=(u^{\mathrm{gp}}(p')+q_0, -v^{\mathrm{gp}}(p'))=(q_0, 0)\in S^{\mathrm{int}},\]
we have $w(r')\,|\,Ms$, as desired. This completes the proof.
\end{proof}

For the rest of the section, we prove the following key technical lemma. 

\begin{lemma}\label{lemma: tech lemma}
Let $P$ and $Q$ be fs monoids and let $P'$ be a saturated monoid which is divisible. %  $n$-divisible for all $n\geq 1$. 
Let $u:P\rightarrow Q$ be a pseudo-saturated homomorphism and let $v:P\rightarrow P'$ be any monoid homomorphism. Let $S = Q\sqcup_P P'$  be the pushout in the category of monoids. Then the natural homomorphism \[S \lra  S^{\mathrm{sat}}\] is of finite type. If moreover $P'$ is torsion-free, then $S^{\mathrm{sat}}$ is almost $n$-divisible, for all $n\geq 1$.
\end{lemma}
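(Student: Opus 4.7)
The plan is to establish the following strong form of the finite type assertion: after fixing the conductor $M$ of $u$, there is a fixed integer $N$ (depending only on $M$ and on a finite presentation of $P \to Q$) such that $N \cdot S^{\mathrm{sat}} \subseteq \mathrm{image}(S)$. From this the finite type statement follows formally, since then $S^{\mathrm{sat}}$ is contained in the ``fractional shift'' $\frac{1}{N} \cdot \mathrm{image}(S)$ inside $(S^{\mathrm{int}})^{\mathrm{gp}}$, whose overlap with $S^{\mathrm{sat}}$ is controlled by $N$-torsion modulo $\mathrm{image}(S)$; this gives a finite set of ``coset representatives'' that together with $S$ generates $S^{\mathrm{sat}}$.

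First, given $s = [q_0, r_0] \in S^{\mathrm{sat}}$ with $ns \in S^{\mathrm{int}}$ witnessed by a pair $(q,r) \in Q \times P'$ and some $p \in P^{\mathrm{gp}}$ satisfying $nq_0 + u^{\mathrm{gp}}(p) = q$, $nr_0 - v^{\mathrm{gp}}(p) = r$, I would first normalize the representative: apply Lemma \ref{lemma: Pgp and P} to $-p$ and $n$, replacing the representative $(q_0,r_0)$ by the equivalent $(q_0 - u(p_1), r_0 + v(p_1))$ for an appropriate $p_1 \in P$. One checks that after this change one can take $p \in P$, and the resulting $nr_0 = r + v(p)$ lies in $P'$; by divisibility of $P'$ the element $r_0$ itself then lies in $P'$. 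This reduces the problem to controlling the $Q$-side: the only obstruction to $Ms \in S^{\mathrm{int}}$ is to find $\pi \in P^{\mathrm{gp}}$ with $Mq_0 + u^{\mathrm{gp}}(\pi) \in Q$ and $Mr_0 - v^{\mathrm{gp}}(\pi) \in P'$.

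Next, I would invoke pseudo-saturatedness of $u$. Writing $q_0 = q_1 - q_2$ with $q_i \in Q$ (possible since $Q^{\mathrm{gp}} = Q - Q$), the relation $q + nq_2 = u(p) + nq_1$ holds inside $Q$, so in particular $u(p) \mid (q+nq_2)$ in $Q$. The pseudo-saturated condition with conductor $M$ then produces $x' \in P$ with $Mp + p'' = nx'$ in $P$ and $u(x') + q'' = M(q+nq_2)$ in $Q$, for some $p'' \in P$ and $q'' \in Q$. Chasing these relations in $(S^{\mathrm{int}})^{\mathrm{gp}}$ and selecting $\pi$ as a suitable $\mathbb{Z}$-linear combination of $p$, $x'$, and $p''$ (scaled to absorb factors of $n$ via the divisibility of $P'$) produces an explicit witness showing that $Mn \cdot s \in S^{\mathrm{int}}$ can already be ``condensed'' to $M \cdot s \in S^{\mathrm{int}}$. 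The key computation is that $Mq_0 = (u(x') + q'' - Mnq_2)/\text{(factor)}$ can be rewritten modulo the image of $u^{\mathrm{gp}}$ as an element of $Q$ once one is allowed to shift by $u^{\mathrm{gp}}(\pi)$, and the $P'$-side then automatically lands in $P'$ because $P'$ is divisible (so dividing by $n$ keeps us in $P'$).

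Finally, for the almost $n$-divisibility when $P'$ is torsion-free, I would use that $(P')^{\mathrm{gp}}$ is a $\mathbb{Q}$-vector space: given any $s \in S^{\mathrm{sat}}$, we may represent $s = [q_0, r_0]$ with $r_0 \in P'$ as above, and then $r_0 = n \cdot (r_0/n)$ with $r_0/n \in P'$. On the $Q$-side, one applies the same pseudo-saturated analysis to the multiplication-by-$n$ map and shows that there exists $s' \in S^{\mathrm{sat}}$ with $Ms = n \cdot (Ms')$; the conductor $M$ absorbed into the definition of ``almost $n$-divisible'' gives the finite-type witness. The hardest step will be the combinatorial bookkeeping in the middle paragraph, carefully tracking signs and the interplay between elements of $Q^{\mathrm{gp}}$ (which need not lie in $Q$), elements of $P^{\mathrm{gp}}$ (which can be shifted into $P$ by Lemma \ref{lemma: Pgp and P}), and the divisible structure of $P'$.
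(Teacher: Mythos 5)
Your high-level strategy is on the right track and agrees with what is implicitly happening in the paper: one does establish, via pseudo-saturatedness, that multiplication by the conductor $M$ takes $S^{\mathrm{sat}}$ into $S^{\mathrm{int}}$ (up to the torsion of $P'$). The middle paragraph, where you apply the pseudo-saturated condition after reducing to $p \in P$ via Lemma \ref{lemma: Pgp and P} and use divisibility of $P'$ to absorb the $\frac{1}{Mn}$-factor, is the same core computation the paper performs.

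The genuine gap is in the passage from ``$M \cdot S^{\mathrm{sat}} \subseteq S^{\mathrm{int}}$'' to ``$S^{\mathrm{int}} \to S^{\mathrm{sat}}$ is of finite type.'' You assert this follows because $S^{\mathrm{sat}}$ sits inside $\frac{1}{M}S^{\mathrm{int}}$ and the overlap is controlled by ``$M$-torsion modulo $\mathrm{image}(S)$,'' giving ``a finite set of coset representatives.'' But this inference is false in general: $S^{\mathrm{int}}$ is \emph{not} finitely generated (it contains a copy of the divisible monoid $P'$), so there is no a priori reason for the set of needed coset representatives to be finite. For example, for an infinite-rank free monoid $T$, the inclusion $T \hookrightarrow \frac{1}{M}T$ is never of finite type even though it has conductor $M$. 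What actually saves the argument is that the fractional defect is concentrated in the \emph{finitely generated} $Q$-direction: the paper isolates this by introducing the submonoid $U \subset Q \oplus P$ of pairs $(q,p)$ with $Mq - u^{\mathrm{gp}}(p) \in Q$, proves $U$ is a finitely generated (fs) monoid using exactness of $U' \hookrightarrow Q\oplus P$ and Gordan's lemma, and then exhibits a \emph{surjection} $(S^{\mathrm{int}}/P'_{\mathrm{tors}}) \oplus U \to S^{\mathrm{sat}}/P'_{\mathrm{tors}}$ sending $(s,(q,p)) \mapsto s + (q,-\tfrac{1}{M}v(p))$. Without an analogue of this finitely generated $U$, your plan cannot conclude. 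Finally, your sketch of the almost-$n$-divisibility step shows a surjectivity-type statement with a conductor rather than the actual requirement that $[n]:S^{\mathrm{sat}}\to S^{\mathrm{sat}}$ be of finite type; the clean route is to reduce to almost $n$-divisibility of $S^{\mathrm{int}}$ via the finite-type surjection $w$ just described, and then read it off from the explicit description of $S^{\mathrm{int}}$ and divisibility of $P'$.
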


\begin{proof}
By definition, $S\rightarrow S^{\mathrm{int}}$ is surjective. It remains to show that $S^{\mathrm{int}}\rightarrow S^{\mathrm{sat}}$ is of finite type.

Firstly, we have seen from Construction \ref{construction: explicit descriptions of pushouts} that $S^{\mathrm{int}}$ can be identified with the quotient of the monoid
\[
T:=\Bigl\{ (q,r)\in Q^{\mathrm{gp}}\times (P')^{\mathrm{gp}}\,\Big|\, \begin{array}{l}\textrm{there exists }q'\in Q, r'\in P', \textrm{ and }p\in P^{\mathrm{gp}}\\ \textrm{such that }q=q'+u^{\mathrm{gp}}(p), r+v^{\mathrm{gp}}(p)=r'\end{array}\Bigr\}
\]
by the congruence relation consisting of pairs $((q,r), (q',r'))\in T\times T$ such that there exists $p\in P^{\mathrm{gp}}$ satisfying $q=q'+u^{\mathrm{gp}}(p)$ and $r+v^{\mathrm{gp}}(p)=r'$. Also from Construction \ref{construction: explicit descriptions of pushouts}, we know that $S^{\mathrm{sat}}$ consists of $(q,r)\in (S^{\mathrm{int}})^{\mathrm{gp}}=Q^{\mathrm{gp}}\oplus_{P^{\mathrm{gp}}}(P')^{\mathrm{gp}}$ such that there exists an integer $m\geq 1$ and $q'\in Q$, $r'\in P'$, $p\in P^{\mathrm{gp}}$ satisfying $mq=q'+u^{\mathrm{gp}}(p)$ and $mr+v^{\mathrm{gp}}(p)=r'$. 

Secondly, the canonical injection $P'\rightarrow S^{\mathrm{int}}$ sending $r\mapsto (0,r)$ identifies the torsion subgroup $P'_{\mathrm{tors}}$ of $P'$ as a submonoid of $S^{\mathrm{int}}$. To show that the injection $S^{\mathrm{int}}\rightarrow S^{\mathrm{sat}}$ is of finite type, it suffices to show that the induced homomorphism \[S^{\mathrm{int}}/P'_{\mathrm{tors}}\rightarrow S^{\mathrm{sat}}/P'_{\mathrm{tors}}\] is so. In fact, we will construct a homomorphism 
\[w:(S^{\mathrm{int}}/P'_{\mathrm{tors}})\oplus U\rightarrow S^{\mathrm{sat}}/P'_{\mathrm{tors}}\] for some finitely generated monoid $U$. Later we will show that $w$ is surjective.

Suppose the pseudo-saturated homomorphism $u:P\rightarrow Q$ has conductor $M$. Consider the submonoid $U\subset Q\oplus P$ consisting of $(q,p)\in Q\oplus P$ such that 
\[Mq-u^{\mathrm{gp}}(p)\in Q.\] We claim that $U$ is finitely generated. For this, consider another submonoid $U'\subset Q\oplus P$ consisting of $(q,p)\in Q\oplus P$ such that $q+u(p)=Mq'$ for some $q'\in Q$. Also consider the submonoid $U_0\subset U$ consisting of $(q,0)$ such that $q\in Q$ and $Mq=0$. Then $U_0$ is a finitely generated torsion group and the monoid homomorphism $U\rightarrow U'$ sending $(q,p)\mapsto (Mq-p, p)$ identifies $U'$ as the quotient monoid $U/U_0$. It remains to prove that $U'$ is finitely generated. Notice that $U'$ is an exact submonoid of $Q\oplus P$ (i.e., the inclusion $U'\hookrightarrow Q\oplus P$ is exact). Indeed, by \cite[Proposition I.2.1.16 (5)]{Ogus}, it suffices to check that $(Q\oplus P)\backslash U'$ is stable under the action of $U'$. This follows from the saturated-ness of $Q$. By \cite[Theorem I.2.1.17 (2)]{Ogus}, $U'$ is fs, as desired.

Now, we consider the homomorphism $w:(S^{\mathrm{int}}/P'_{\mathrm{tors}})\oplus U\rightarrow S^{\mathrm{sat}}/P'_{\mathrm{tors}}$ defined by sending 
\[(s, (q,p)) \longmapsto s+\bigl(q, -\frac{1}{M}v(p)\bigr).\] Here ``$\frac{1}{M}v(p)$'' stands for any element $r\in P'$ such that $Mr=v(p)$. (Notice that such an element is well defined up to an element in $P'_{\mathrm{tors}}$.) To check that $w$ is well-defined, it remains to show that the element
\[\bigl(q, -\frac{1}{M}v(p)\bigr)\in Q^{\mathrm{gp}}\oplus_{P^{\mathrm{gp}}}(P')^{\mathrm{gp}}\] in fact lies in $S^{\mathrm{sat}}$. Indeed, we have
\[
M\bigl(q, -\frac{1}{M}v(p)\bigr)=\bigl(Mq, -v(p)\bigr)=\bigl(Mq-u(p),0\bigr)
\]
and $Mq-u(p)\in Q$ by assumption. Hence $M\bigl(q, -\frac{1}{M}v(p)\bigr)\in S^{\mathrm{int}}$ as desired.

To show that $S^{\mathrm{int}}\rightarrow S^{\mathrm{sat}}$ is of finite type, it remains to show that $w$ is surjective. Let $(q,r)\in S^{\mathrm{sat}}$ with $q\in Q^{\mathrm{gp}}$ and $r\in (P')^{\mathrm{gp}}$. By the explicit description given above, there exists an integer $m\geq 1$ and $q'\in Q$, $r'\in P'$, $p\in P^{\mathrm{gp}}$ satisfying \[mq=q'+u^{\mathrm{gp}}(p) \quad\tu{ and  } \quad  mr+v^{\mathrm{gp}}(p)=r'.
\] 
Using Lemma \ref{lemma: Pgp and P}, we can choose $p'\in P$ such that $p+p'\in P$ and then replace $q$ by $q+u^{\mathrm{gp}}(p')$, $r$ by $r-v^{\mathrm{gp}}(p')$, and $p$ by $p+p'$. Hence, we may actually assume $p\in P$. Consequently, $q\in Q$ by the saturated-ness of $Q$. 

Since $u(p)\,|\, mq$, there exists $p'\in P$ such that $Mp\,|\, mp'$ and $u(p')\,|\, Mq$. We have an identity
\[
(q, r)=\bigl(q, -\frac{1}{M}v(p')\bigr)+\Bigl(0, \frac{1}{Mm}\bigl(v(mp'-Mp)+Mr'\bigr)\Bigr)
\]
in $\big(Q^{\mathrm{gp}}\oplus_{P^{\mathrm{gp}}}(P')^{\mathrm{gp}}\big)/P'_{\mathrm{tors}}$. (Notice that both $\frac{1}{M}v(p')$ and $\frac{1}{Mm}\bigl(v(mp'-Mp)+Mr'\bigr)$ are well defined elements in $P'$ up to elements in $P'_{\mathrm{tors}}$.) Since $(q, p')\in U$, we conclude that $(q,r)$ lies in the image of $w$, as desired.

Finally, we show that, if $P'$ is torsion-free, then $S^{\mathrm{sat}}$ is almost $n$-divisible for all $n\geq 1$. Using the surjection $w: S^{\mathrm{int}}\oplus U\rightarrow S^{\mathrm{sat}}$, it suffices to show that $S^{\mathrm{int}}$ is almost $n$-divisible for all $n\geq 1$. This is clear using the explicit description of $S^{\mathrm{int}}$ and $n$-divisibility of $P'$.
\end{proof}

\begin{remark} Let us remark that the condition in Lemma \ref{lemma: tech lemma} on $n$-divisibility for all $n$ is necessary. Here is a counterexample otherwise: Let $P=Q=\mathbb{Z}_{\geq 0}$ and let $u: P\rightarrow Q$ be the multiplication-by-2 map. Let \[P'=\mathbb{Q}_{(2), \geq 0}:=\mathbb{Q}_{(2)}\cap \mathbb{Q}_{\geq 0}\] and let  $v:P\rightarrow P'$ the natural inclusion $\mathbb{Z}_{\geq 0}\hookrightarrow \mathbb{Q}_{(2), \geq 0}$. Then $S^{\mathrm{sat}}$ is generated over $S^{\mathrm{int}}$ by elements \[e_n:=(1, -\frac{n}{2n+1}),\] for all $n\in \mathbb{Z}_{\geq 1}$. But one cannot find a finite set of generators.
\end{remark}

\subsection{A combinatorial criterion for pseudo-saturated homomorphisms}\label{subsection: classification} \noindent 

\noindent The main goal of this subsection is to prove Theorem \ref{thm: classification of pseudo-saturated}, which provides a combinatorial criterion for pseudo-saturated homomorphisms in terms of the geometry of polyhedral cones. To state this criterion, we first define the notion of a minimal decomposition. 

\begin{definition}\label{defn: minimal decomposition}
Let $P$ be a torsion-free fs  monoid and let $N$ be an fs submonoid. Let $y \in P_{\mathbb{Q}_{\geq 0}}$. A decomposition $y=y'+x'$ with $y'\in P_{\mathbb{Q}_{\geq 0}}$ and $x'\in N_{\mathbb{Q}_{\geq 0}}$ is called a \emph{minimal decomposition} of $y$ (\emph{relative to the pair $(N, P)$}) if $y'$ is ``minimal'' in the following sense: for any $x\in N_{\mathbb{Q}_{\geq 0}}-\{0\}$, we have  $y'-x\not\in P_{\mathbb{Q}_{\geq 0}}$.
\end{definition}

Let us give some examples of minimal decompositions. 

\beg \label{example:min_decom1}
In this example let $P = \Z_{\ge 0}^{2}$ and let $N \subset P$ be the submonoid generated by $(1, 1)$. Then the decomposition 
\[
(3,1) = (2,0) + (1,1)
\]
is a minimal decomposition of $(3,1)$ relative to $(N, P)$. 
\begin{figure}[h]
\[
\includegraphics[scale=0.3]{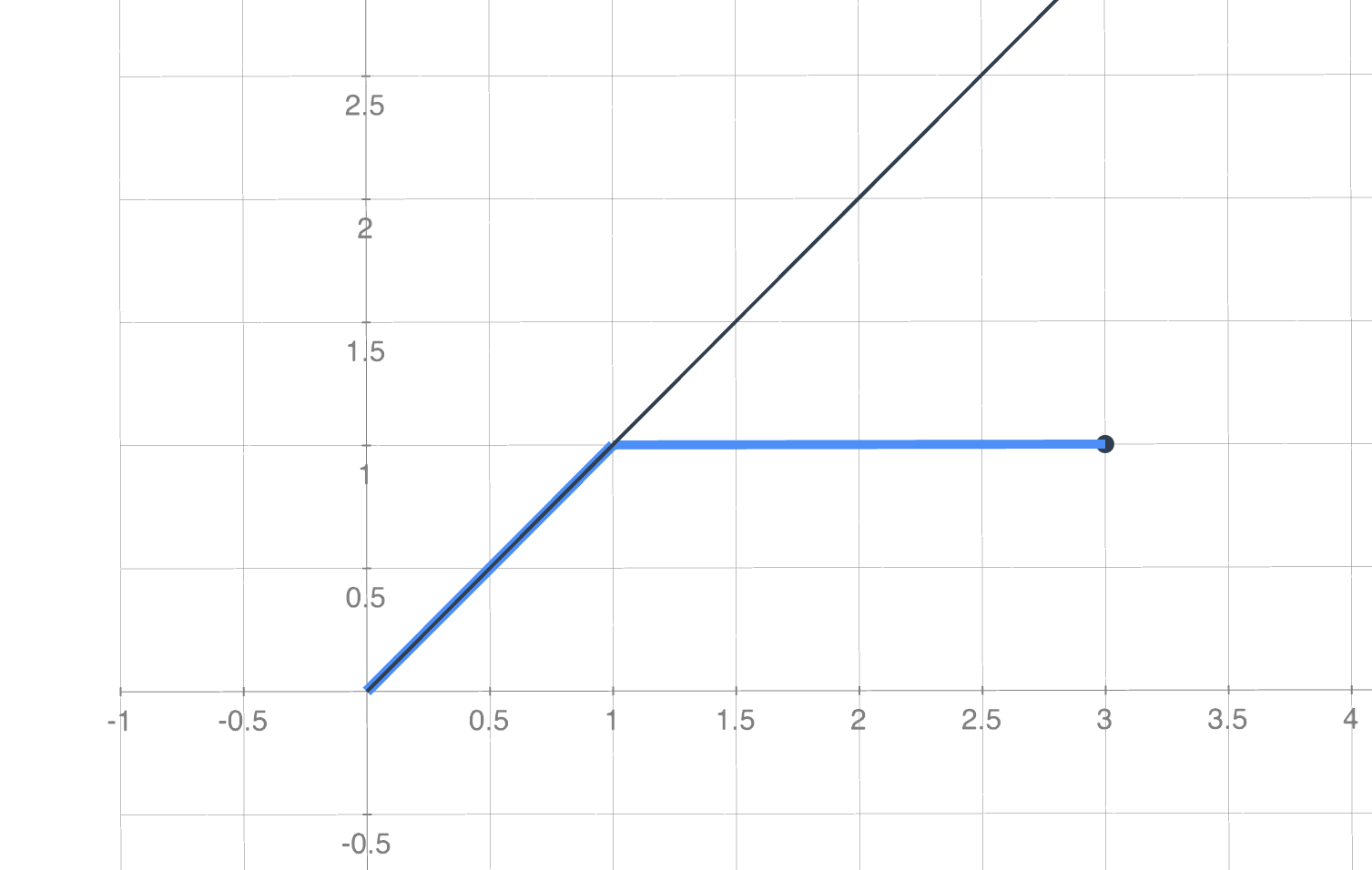}   
\]
\caption{minimal decomposition of $(3,1)$}
\label{figure:min_decom1}
\end{figure}
Note that minimal decompositions might not be unique. For example let $M \subset P =  \Z_{\ge 0}^{2}$ be the submonoid generated by $(1,2), (2, 1)$. Then 
\[
(3,1) = (5/2,0) + (1/2,1) = (1, 0) + (2, 1)
\]
give two such minimal decompositions of $(3,1)$ relative to $(M, P)$.  These examples can be represented geometrically as in Figure \ref{figure:min_decom1} and Figure \ref{figure:min_decom1_2}.

\begin{figure}[h]
\[
\includegraphics[scale=0.3]{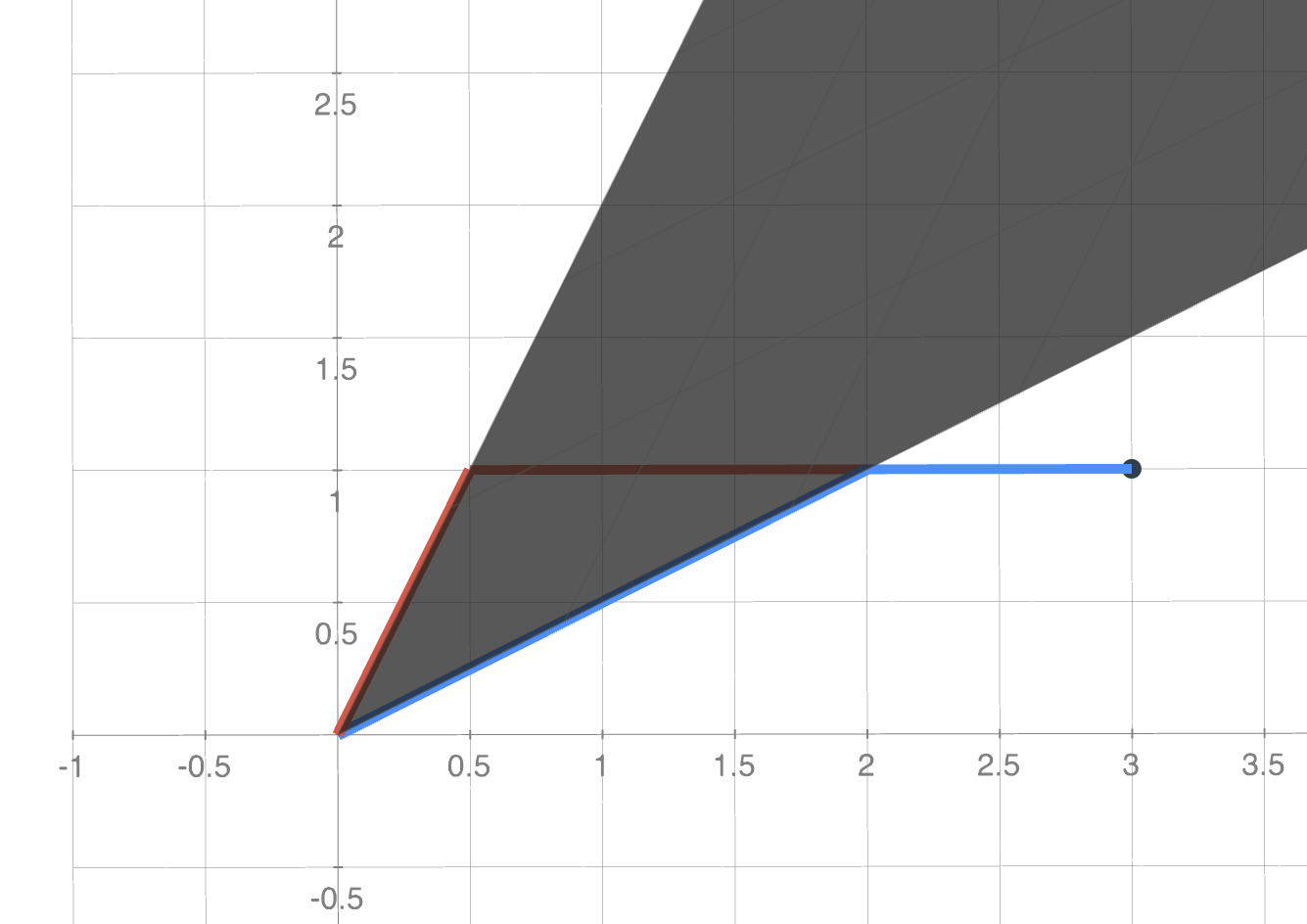}  
\]
\caption{two minimal decompositions of $(3,1)$}
\label{figure:min_decom1_2}
\end{figure}
\eeg 

\beg \label{example:min_decom2}
For another example, let $P = \Z_{\ge 0}^{3}$. Let $N \subset P$ be the submonoid generated by $(1,0,0), (0,1,1)$ and let $M \subset P$ be the submonoid given by
\[
M := \{(a,b,c) \in P | c = a+b\}. 
\]
Then 
\[
(2,2,1) = (0, 1, 0)+ (2, 1, 1)
\]
is a minimal decomposition of $(2, 2, 1)$ relative to $(N, P)$. 
\begin{figure}[h]
\[
\includegraphics[scale=0.35]{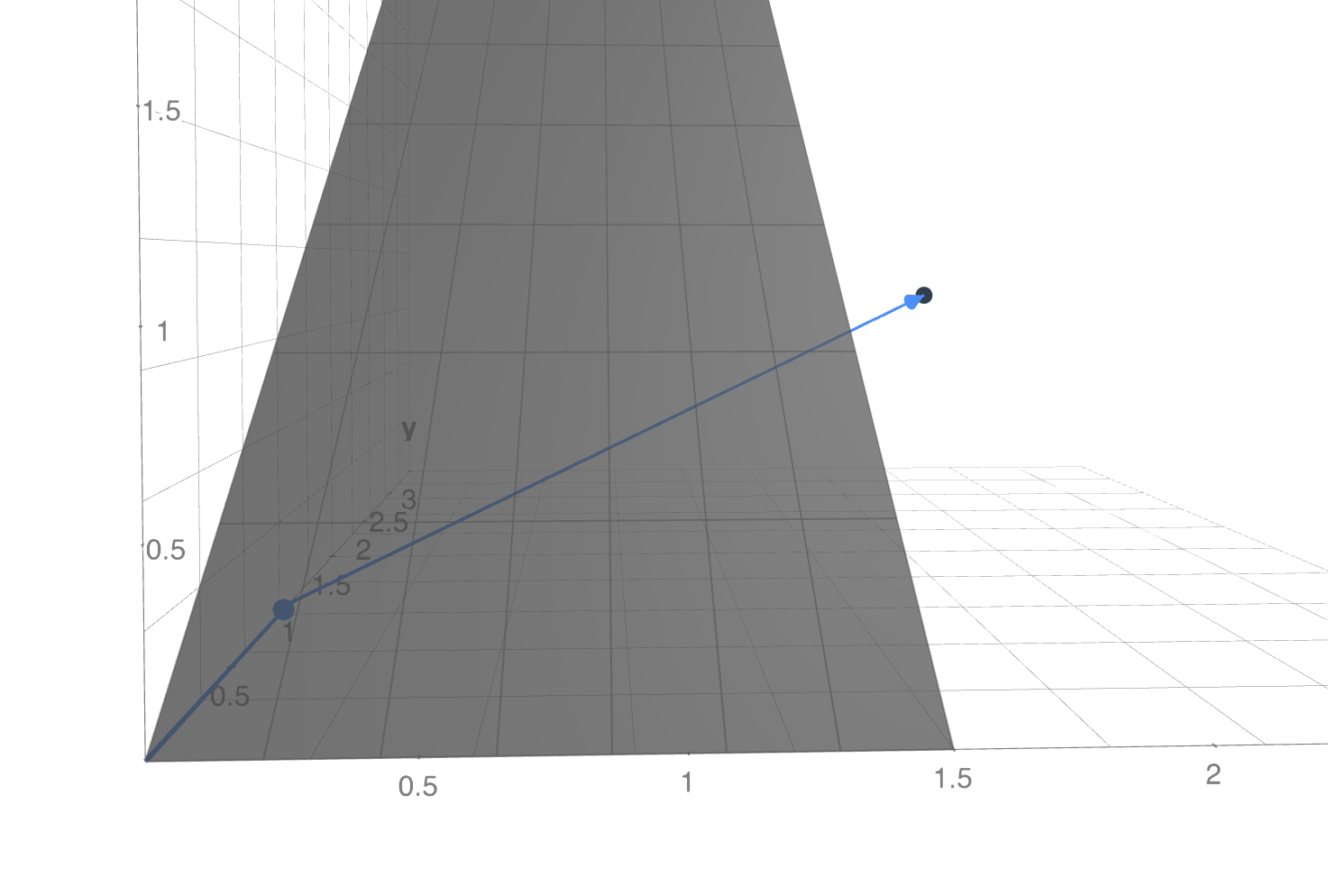}
\:\:\quad 
\]
\caption{minimal decomposition of $(2,2,1)$}
\label{figure:min_decom2_1}
\end{figure}

\noindent On the other hand, 
\begin{align*}
    (1, 1, 1) & = (0, 1, 0) + (1, 0, 1) \\
    & = (1, 0, 0) + (0, 1, 1)
\end{align*} 
give two minimal decompositions relative to $(M, P)$.  These examples are illustrated in  Figure \ref{figure:min_decom2_1} and Figure \ref{figure:min_decom2}, where $N_{\Q \ge 0}$ (resp. $M_{\Q \ge 0}$) are represented by the shaded ``rational cone'' inside the $\Q$-vector defined by $y=x$ on the left (resp. $z= x+y$ on the right).  
\begin{figure}[h]
\[
\includegraphics[scale=0.35]{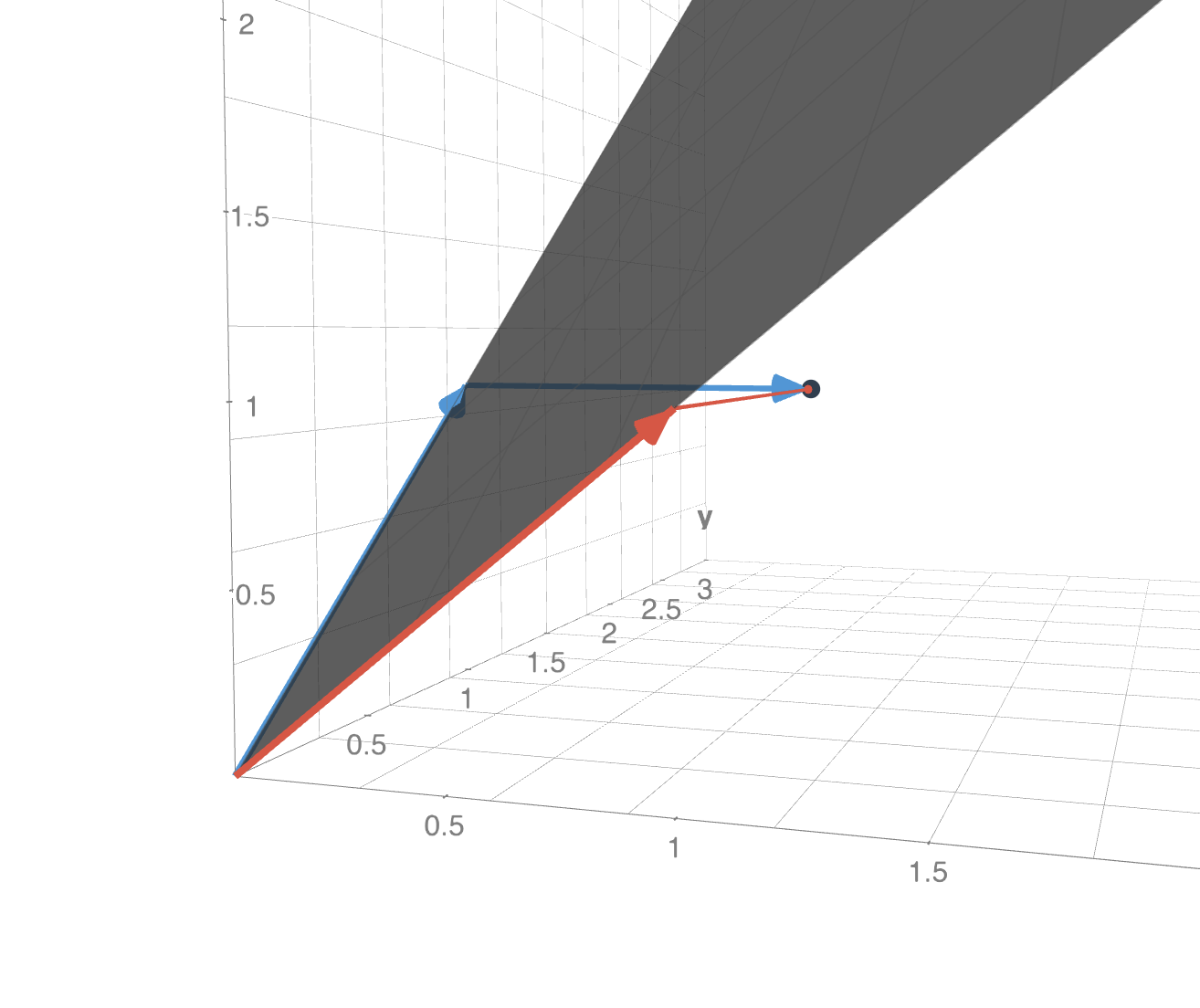} 
\:\:\quad 
\]
\caption{two minimal decompositions of $(1, 1, 1)$}
\label{figure:min_decom2}
\end{figure}
\eeg 

We are now ready to state the combinatorial criterion for pseudo-sataurated-ness.

\begin{theorem}\label{thm: classification of pseudo-saturated}
Let $P$ be a torsion-free fs  monoid and let $N$ be a toric submonoid such that $P\cap (-N)= \{0\}$. The following are equivalent:
\begin{enumerate}
\item The inclusion $u:N\hookrightarrow P$ is pseudo-saturated;
\item Every $x\in P_{\mathbb{Q}_{\geq 0}}$ admits a unique minimal decomposition \[x=g_N(x)+f_N(x)\] relative to $(N, P)$, where $g_N(x) \in N_{\Q \ge 0}$ and $f_N(x) \in P_{\Q \ge 0}$. 
\end{enumerate}
\end{theorem}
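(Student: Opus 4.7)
The plan is to work inside the $\Q$-vector space $V := P^{\gp} \otimes_\Z \Q$, where $P_{\Q_{\ge 0}}$ and $N_{\Q_{\ge 0}}$ are rational polyhedral cones with $P_{\Q_{\ge 0}} \cap (-N_{\Q_{\ge 0}}) = \{0\}$ (inherited from $P \cap (-N) = \{0\}$ via saturation). For $y \in P_{\Q_{\ge 0}}$, set
$$S_y := \{x \in N_{\Q_{\ge 0}} : y - x \in P_{\Q_{\ge 0}}\};$$
after tensoring with $\R$, any unbounded sequence in $S_y$ would rescale to a nonzero element of $N_\R \cap (-P_\R) = \{0\}$, so $S_y$ is compact in $N_\R$. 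The $\le_N$-maximal elements of $S_y$ (where $x \le_N x'$ means $x' - x \in N_{\Q_{\ge 0}}$) are exactly the $N$-parts of minimal decompositions of $y$; hence minimal decompositions always exist, and condition (2) is equivalent to saying every $S_y$ has a unique $\le_N$-maximum.

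For $(2) \Rightarrow (1)$, uniqueness provides well-defined maps $g_N : P_{\Q_{\ge 0}} \to N_{\Q_{\ge 0}}$ and $f_N(y) := y - g_N(y)$. A face-by-face analysis on $P_{\Q_{\ge 0}}$ shows that $g_N$ is positively homogeneous and piecewise $\Q$-linear over a finite rational polyhedral subdivision, the pieces being determined by which face of $P_{\Q_{\ge 0}}$ the minimal part $f_N(y)$ lies on. Choosing $M \ge 1$ to clear all denominators appearing on the finitely many pieces (so that $M g_N(P) \subseteq N$ and $M f_N(P) \subseteq P$), the pseudo-saturation criterion of Definition \ref{defn: pseudo-saturated} is verified directly: given $x \in N$, $y \in P$, $n \ge 1$ with $u(x) \mid ny$, the element $x/n$ lies in $S_y$, so $g_N(y) - x/n \in N_{\Q_{\ge 0}}$ by maximality of $g_N(y)$; setting $x' := M g_N(y) \in N$ yields $nx' - Mx = M(n g_N(y) - x) \in N$ and $My - x' = M f_N(y) \in P$.

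For $(1) \Rightarrow (2)$, only uniqueness requires argument, and I would proceed contrapositively. Suppose $y \in P_{\Q_{\ge 0}}$ has two distinct minimal decompositions with $N$-parts $x_1 \ne x_2$; rescaling, assume $y \in P$ and $n x_i \in N$ for a common $n$. The face of $\le_N$-maxima of $S_y$ has positive dimension and contains the segment $[x_1, x_2]$. Pick an extreme maximum $x^* \in \{x_1, x_2\}$ and construct, for each $k \ge 1$, integer data $x^{(k)} \in N$ and $n_k \ge 1$ with $u(x^{(k)}) \mid n_k y$ such that $x^{(k)}/n_k \in S_y$ approaches $x^*$ along a rational ray interior to $S_y$ but transverse to the max-face. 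The model is the non-uniqueness case of Example \ref{example:min_decom1} with the submonoid $\langle (1,2),(2,1)\rangle \hookrightarrow \Z_{\ge 0}^2$ and $y = (3,1)$, where the family $x^{(k)} = (1,2) + k(2,1)$, $n_k = k+2$ forces the conductor to be divisible by $k+1$. Applying Definition \ref{defn: pseudo-saturated} with a purported fixed conductor $M$, the constraints $M x^{(k)} \mid n_k x'$ and $u(x') \mid My$ pin $x'/M$ to a rational point approaching $x^*$ with denominator growing linearly in $k$, so the integrality $x' \in N$ obliges $(k+1) \mid M$, a contradiction as $k \to \infty$. The main obstacle is formalizing this transverse approximation in general: it rests on the combinatorics of the extreme rays of $N_\R$ relative to the supporting face of $P_\R$ at $x^*$, which can be streamlined through Lemma \ref{lemma: tech lemma} applied with $P' := N_{\Q_{\ge 0}}$, translating the failure of uniqueness into an obstruction to the finite-type property stated there.
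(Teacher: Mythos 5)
Your $(2) \Rightarrow (1)$ direction is in the same spirit as the paper's: both arguments amount to bounding the denominators of $g_N(x)$ for $x \in P$ by a uniform $M$, and then verifying Definition \ref{defn: pseudo-saturated} by taking $x' = Mg_N(y)$. The paper does this by observing that the unique $\lambda$-maximizer must be a vertex of a rational polytope defined by hyperplanes with bounded integer data (so a determinant bounds the denominators), whereas you assert $g_N$ is piecewise $\Q$-linear over a finite subdivision; these are close in spirit, though your piecewise-linearity claim is stated, not proved. The existence half of $(1) \Rightarrow (2)$ also matches the paper's compactness argument. (One small point there: having a \emph{unique} $\le_N$-maximal element of $S_y$ does not by itself make it a $\le_N$-maximum; you implicitly need a Zorn-type chain argument, valid here by compactness, to pass from ``unique maximal'' to ``dominates every element of $S_y$.'')

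The real gap is in the uniqueness half of $(1) \Rightarrow (2)$. You claim that if there are two distinct $\le_N$-maximal elements $x_1 \neq x_2$ of $S_y$, then the segment $[x_1,x_2]$ consists of $\le_N$-maxima and forms a ``face'' of positive dimension. That is not justified: the Pareto-maximal set of a polytope under a polyhedral partial order is a union of faces but need not be convex (two maximal faces can meet only at a vertex). Beyond this, the core of your argument --- producing sequences $x^{(k)} \in N$, $n_k \ge 1$ with $u(x^{(k)}) \mid n_k y$ that force any putative conductor $M$ to be divisible by unbounded integers --- is carried out only in the single $2$-dimensional example from Example \ref{example:min_decom1}; you explicitly acknowledge the ``transverse approximation'' step is unformalized, and it is precisely the content of the proof. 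The closing appeal to Lemma \ref{lemma: tech lemma} with $P' = N_{\Q_{\ge 0}}$ does not close the gap either: that lemma asserts that pseudo-saturation \emph{implies} the finite-type conclusion, which is the wrong direction for deriving a contradiction from non-uniqueness.

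For comparison, the paper's uniqueness argument is entirely algebraic and sidesteps any structure on the set of maxima. Given two distinct minimal decompositions $x = p' + q' = \tilde p' + \tilde q'$ (rescaled to lie in $P$ and $N$), with $\tilde q' \neq 0$, it fixes a basis $e_1,\dots,e_d$ with $N \subset \bigoplus \Z_{\ge 0} e_i$, takes $m \gg M$ with $\epsilon = M/m$ small relative to the $e_i$-coordinates of $\tilde q'$, and applies the conductor-$M$ condition to $\bar q := q' + (m-1)\tilde q'$, which divides $mx$. An integrality argument in the $e_i$-coordinates forces the resulting $q_0$ to equal $M\tilde q'$, from which $q' \mid \tilde q'$ follows, contradicting minimality of $\tilde q'$ (resp.\ of $p'$). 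You would either need to prove your geometric lemma about the maximal face and the transverse rational rays, or replace that half of the argument with something like the paper's $\epsilon$-coordinate argument.
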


The theorem roughly says that, under mild assumptions, an injective monoid homomorphism $u:N\hookrightarrow P$ is pseudo-saturated if and only if there is a reasonable way to talk about the ``$N$-component'' (namely, $g_N(x)$) of every element $x\in P_{\mathbb{Q}_{\geq 0}}$. In particular, whether an injection $N\hookrightarrow P$ is pseudo-saturated only depends on the inclusion $N_{\mathbb{Q}_{\geq 0}}\hookrightarrow P_{\mathbb{Q}_{\geq 0}}$.

\br \label{remark:geometric_intuition} 
In Example \ref{example:min_decom1} (and similarly in Example \ref{example:min_decom2}), the map $N \ra P$ is pseudo-saturated while the map $M \ra P$ is not. Geometrically, Condition (2) in Theorem \ref{thm: classification of pseudo-saturated} means the following: if we ``parallel transport'' the rational cone $N_{\Q \ge 0}$ inside $\Q$-vector space $P_{\Q \ge 0}^{\tu{gp}}$ so that the transported cone contains the point $x$, then this cone should intersect with the edges of $P_{\Q \ge 0}$ only once. 
\er

To prove Theorem \ref{thm: classification of pseudo-saturated},  we need the following  result on rational polyhedral cones. In what follows, we will use the language of polyhedra and polytopes. Recall that a \emph{rational polyhedron} in a finite dimensional $\mathbb{Q}$-vector space is the intersection of finitely many closed half spaces. A \emph{rational polytope} is a bounded rational polyhedron. A \emph{(rational polyhedral) cone} is the intersection of finitely many closed affine half spaces. For a systematic treatment, see \cite[Section I.1]{BG}.

\begin{lemma}\label{lemma: polyhedral cone}
Let $X$ be a rational polyhedral cone and let $\mathrm{aff}(X)$ be the $\mathbb{Q}$-vector space spanned by $X$. Suppose $x\in X$ and $x'\in \mathrm{aff}(X)$ such that $x+\alpha x'\in X$ for all $\alpha\in \mathbb{Q}_{\geq 0}$. Then we must have $x'\in X$.
\end{lemma}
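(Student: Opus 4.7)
The plan is to attack the lemma through the half-space description of rational polyhedral cones, which converts the hypothesis into a family of scalar inequalities that can be pushed to the limit $\alpha\to\infty$. Since $X$ is a rational polyhedral cone containing the origin (being a cone), inside its affine hull $V:=\mathrm{aff}(X)$ one can write
\[
X \;=\; \bigl\{\,y\in V \;:\; \ell_i(y)\ge 0 \text{ for } i=1,\dots,r\,\bigr\}
\]
for some finite collection of $\mathbb{Q}$-linear functionals $\ell_1,\dots,\ell_r\in V^{\vee}$. This is a standard dual description of a finitely generated cone; none of the half-spaces need an affine constant because $0\in X$.

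With this setup, the hypothesis $x+\alpha x' \in X$ for all $\alpha\in\mathbb{Q}_{\ge 0}$ becomes the family of inequalities
\[
\ell_i(x) + \alpha\,\ell_i(x') \;\ge\; 0 \qquad \text{for all } i=1,\dots,r \text{ and all } \alpha\in\mathbb{Q}_{\ge 0}.
\]
For each fixed $i$, the left-hand side is an affine function of $\alpha\in\mathbb{Q}_{\ge 0}$; if $\ell_i(x')<0$ then it would eventually become negative for $\alpha$ large enough in $\mathbb{Q}_{\ge 0}$, contradicting the inequality. Hence $\ell_i(x')\ge 0$ for every $i$, and since $x'\in V$ by assumption, this forces $x'\in X$.

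The only point that needs a brief justification is the choice of defining functionals $\ell_i$: one must observe that, because $X$ is a cone and not merely a polyhedron, the bounding hyperplanes pass through the origin, so the defining inequalities are honestly linear rather than affine. This is immediate from the fact that $X$ is closed under multiplication by $\mathbb{Q}_{\ge 0}$, and it is what makes the limiting step above work uniformly. There is no genuine obstacle in the argument; the content of the lemma is really just the familiar fact that a ray $\{x+\alpha x':\alpha\ge 0\}$ can only stay inside a pointed polyhedral cone if its direction $x'$ itself lies in the recession cone, which for a cone $X$ coincides with $X$ itself.
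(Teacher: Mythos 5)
Your proof is correct and is essentially the same argument as the paper's: express $X$ inside $\mathrm{aff}(X)$ as a finite intersection of linear half-spaces, apply each defining functional to $x+\alpha x'$, and let $\alpha\to\infty$ to conclude that every functional is nonnegative on $x'$. The extra remark you make about the bounding hyperplanes passing through the origin is sound but implicit in the paper, which simply takes the linear (not affine) half-space description of a rational polyhedral cone as given.
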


\begin{proof}
Write $X=H_{\lambda_1}\cap \cdots H_{\lambda_n}$ where each $\lambda_i: \mathrm{aff}(X)\rightarrow \mathbb{Q}$ is a linear form and \[H_{\lambda_i}=\{v\in \mathrm{aff}(X)\,|\, \lambda_i(v)\geq 0\}\] is a closed half space. By assumption, we have $\lambda_i(x+\alpha x')=\lambda_i(x)+\alpha\lambda_i(x')\geq 0$ for all $i$ and all $\alpha\in\mathbb{Q}_{\geq 0}$. This forces $\lambda_i(x')\geq 0$ for all $i$, which precisely means $x'\in X$.
\end{proof}

In this rest of this section we prove Theorem \ref{thm: classification of pseudo-saturated}. 
\begin{proof}[Proof of (1) $\Rightarrow$ (2) of Theorem \ref{thm: classification of pseudo-saturated}]
Firstly, for every $x \in P_{\mathbb{Q}_{\geq 0}}$, we show that such a minimal decomposition exists for $x$. Notice that $P_{\mathbb{Q}_{\geq 0}}$ is a rational polyhedral cone. Let $x-N_{\mathbb{Q}_{\geq 0}}$ denote the rational polyhedron 
\begin{equation} \label{eq:poly_Y_0}
x-N_{\mathbb{Q}_{\geq 0}} := \{x-q\,|\, q\in N_{\mathbb{Q}_{\geq 0}}\}
\end{equation} inside the $\mathbb{Q}$-vector space $P_{\mathbb{Q}}^{\mathrm{gp}}$. Consider the polyhedron 
\begin{equation} \label{eq:poly_Y} 
Y:= P_{\mathbb{Q}_{\geq 0}}\cap (x-N_{\mathbb{Q}_{\geq 0}}).
\end{equation} We claim that $Y$ is a polytope (i.e., a bounded polyhedron). Otherwise, $Y$ must contain a ray; namely, there exists $y\in Y$ and $y'\in P_{\mathbb{Q}}^{\mathrm{gp}}-\{0\}$ such that $y+\alpha y'\in Y$ for all $\alpha\in \mathbb{Q}_{\geq 0}$. By Lemma \ref{lemma: polyhedral cone}, we must have $y'\in P_{\mathbb{Q}_{\geq 0}}$. Since any two elements in $Y$ differ by an element in $N^{\mathrm{gp}}_{\mathbb{Q}}$, we must have $y'\in N^{\mathrm{gp}}_{\mathbb{Q}}$. Write $y=x-q$ for some $q\in N_{\mathbb{Q}_{\geq 0}}$. We know that $q-\alpha y'\in N_{\mathbb{Q}_{\geq 0}}$ for all $\alpha\in\mathbb{Q}_{\geq 0}$. By Lemma \ref{lemma: polyhedral cone} once again, we must have $-y'\in N_{\mathbb{Q}_{\geq 0}}$. Therefore, $y'\in P_{\mathbb{Q}_{\geq 0}}\cap (-N_{\mathbb{Q}_{\geq 0}})$. However, this means $my'\in P\cap (-N)$ for some positive integer $m$, a contradiction. Therefore, $Y$ is indeed a polytope. 

Now, there exists a linear form $\lambda: N^{\mathrm{gp}}_{\mathbb{Q}}\rightarrow \mathbb{Q}$ such that $\lambda(q)\geq 0$ for all $q\in N_{\mathbb{Q}_{\geq 0}}$ and \[\{q\in N_{\mathbb{Q}_{\geq 0}}\,|\,\lambda(q)=0\}=\{0\}.\]
By \cite[Corollary I.2.2.7]{Ogus}, there exists a basis $e_1, \ldots, e_d$ of the $\mathbb{Q}$-vector space $N^{\mathrm{gp}}_{\mathbb{Q}}$ such that 
\[N\subset \bigoplus_{i=1}^d\mathbb{Z}_{\geq 0}e_i.\] Using this basis, we can simply take the linear form $\lambda$ defined by $\lambda(e_i)=1$ for all $i$. Consider the polytope 
\begin{equation} \label{eq:poly_Y'} Y'=x-Y:=\{x-y\,|\, y\in Y\}\subset N_{\mathbb{Q}_{\geq 0}}.
\end{equation} Since $Y'$ is bounded, $\lambda$ takes maximal value at some $q_0\in Y'$. We first check that 
\[x=(x-q_0)+q_0
\]is a valid minimal decomposition of $x$. Indeed, suppose that $x-q_0-q\in P_{\mathbb{Q}_{\geq 0}}$ for some $q\in N_{\mathbb{Q}_{\geq 0}}-\{0\}$, then $q_0+q\in Y'$ and $\lambda(q_0+q)=\lambda(q_0)+\lambda(q)>\lambda(q_0)$, contradicting the maximality of $\lambda(q_0)$. This finishes the proof of existence.

It remains to prove uniqueness. Suppose that 
\[x=p'+q'=\tilde{p}'+\tilde{q}'\] are two different minimal decompositions. Without loss of generality, we assume $\tilde{q}'\neq 0$. By scalar multiplications, we may assume that $x, p', \tilde{p}' \in P$ and $q', \tilde{q}'\in N$. Let $M$ be the conductor of the pseudo-saturated homomorphism $u:N\rightarrow P$. Pick $m\gg M$ (to be determined later) and write $\epsilon:=M/m\in \mathbb{Q}_{>0}$. Next consider 
\begin{equation} \label{eq:def_q_bar}
\bar{q}:=q'+(m-1)\tilde{q}'.
\end{equation} Then clearly $\bar{q}\,|\,mx$ in $P$. By pseudo-saturated-ness, there exists $q_0\in N$ such that 
\begin{align} \label{eq:M_q_bar_divides} 
M\bar{q}\, &|\,mq_0   \quad \; \tu{in } N, \:\:  \tu{ and } \\
q_0\,& |\,Mx  \quad \tu{ in } P.  \label{eq:def_q_0_divides}
\end{align} 
By (\ref{eq:M_q_bar_divides}) and (\ref{eq:def_q_bar}), we know that $M (m-1) \sq q' | m q_0$. Now, by the construction of $\epsilon$, we have 
\begin{equation} \label{eq:epsilon_def}
M(m-1)\tilde{q}'=m(M-\epsilon)\tilde{q}'.
\end{equation} 
There, we obtain that \[m(M-\epsilon)\tilde{q}'\,|\,mq_0\] in $N$, which in turn implies that 
\[(M-\epsilon)\tilde{q}'\,|\, q_0\] in $N_{\mathbb{Q}\geq 0}$. Now, using the basis $e_1, ..., e_d$ of $N_{\Q}^{\tu{gp}}$ constructed above, we can write 
\[\tilde{q}'=a_1e_1+\cdots+a_de_d\] for some $a_1, \ldots, a_d\in \mathbb{Z}_{\geq 0}$. Choose a sufficiently large $m$ such that $\frac{1}{\epsilon}>a_i$ for all $i$. Since $N\subset \oplus_{i=1}^d\mathbb{Z}_{\geq 0}e_i$, we must have $M\tilde{q}'\,|\, q_0$.

Write $q_0=M\tilde{q}'+q_1$ for some $q_1\in N_{\mathbb{Q}_{\geq 0}}$. The condition $q_0\,|\,Mx$ implies that $(\tilde{q}'+\frac{1}{M}q_1)\,|\,x$ in $P_{\mathbb{Q}_{\geq 0}}$; namely, we have $\frac{1}{M}q_1\,|\,\tilde{p}'$. By assumption, $\tilde{p}'-q\not\in P_{\mathbb{Q}_{\geq 0}}$ for all $q\in N_{\mathbb{Q}_{\geq 0}}-\{0\}$. This forces $q_1=0$; i.e., $q_0=M\tilde{q}'$. However, we have 
\[M(q'+(m-1)\tilde{q}')\,| \,mq_0=Mm\tilde{q}'.\] This implies that $Mq'\,|\,M\tilde{q}'$ in $N$. Namely, $q'\,|\,\tilde{q}'$ in $N$, a contradiction.
\end{proof}

\begin{proof}[Proof of (2) $\Rightarrow$ (1) of Theorem \ref{thm: classification of pseudo-saturated}]
For every $x, x'\in P_{\mathbb{Q}_{\geq 0}}$, it follows from the definition that \[g_N(x)+g_N(x')\,|\,g_N(x+x')
\] in $N_{\mathbb{Q}_{\geq 0}}$. In particular, if $x_1\,|\, x_2$ in $P_{\mathbb{Q}_{\geq 0}}$, then $g_N(x_1)\,|\, g_N(x_2)$ in $N_{\mathbb{Q}_{\geq 0}}$. It is also clear from the definition that $f_N(\alpha x)=\alpha f_N(x)$ and $g_N(\alpha x)=\alpha g_N(x)$ for all $x\in P_{\mathbb{Q}_{\geq 0}}$ and $\alpha\in \mathbb{Q}_{\geq 0}$.
The key claim is the following. 

\vspace*{0.2cm}
\noindent \textbf{Claim}. There exists integer $M\geq 1$ such that $g_N(x)\in \frac{1}{M}N$ for all $x\in P$.
\vspace*{0.2cm}

Now let us prove the claim. 
Choose a basis $h_1, \ldots, h_m$ of the $\mathbb{Q}$-vector space $P_{\mathbb{Q}}^{\mathrm{gp}}$ such that $P\subset \oplus_{j=1}^m\mathbb{Z}h_j$. We can associate every element $x\in P_{\mathbb{Q}_{\geq 0}}$ with a coordinate $(a_1, \ldots, a_m)$ in $\mathbb{Q}^m$ so that $x=\sum_{j=1}^m a_jh_j$. Notice that elements in $P$ has integral coordinates.
Consider the boundary $\partial P_{\mathbb{Q}_{\geq 0}}$ of the rational polyhedral cone $P_{\mathbb{Q}_{\geq 0}}$. It can be uniquely written as a finite union of facets of $P_{\mathbb{Q}_{\geq 0}}$:
\[\partial P_{\mathbb{Q}_{\geq 0}}=\cup_{i\in I} P_i\]
where each $P_i$ is a rational polyhedral cone of dimension equal to $\mathrm{rk}_{\mathbb{Z}}(P^{\mathrm{gp}})-1=m-1$. By the definition of facet, we have $P_i=H_i\cap P_{\mathbb{Q}_{\geq 0}}$ for some hyperplane $H_i$. On the other hand, $N_{\mathbb{Q}}^{\mathrm{gp}}$ is an affine subspace of $P^{\mathrm{gp}}_{\mathbb{Q}}$. Hence, $N_{\mathbb{Q}}^{\mathrm{gp}}$ is the intersection of $m'$ hyperplanes $K_1, \ldots, K_{m'}$ where $m'=m-\mathrm{rk}_{\mathbb{Z}}(N^{\mathrm{gp}})$. If we express the boundary of $N_{\mathbb{Q}_{\geq 0}}$ as the finite union of its facets: 
\[\partial N_{\mathbb{Q}_{\geq 0}}=\cup_{l\in L} N_l,\]
then each $N_l$ can be identified with $J_l\cap N_{\mathbb{Q}_{\geq 0}}$ where $J_l$ is a hyperplane in $P_{\mathbb{Q}}^{\mathrm{gp}}$. 

We fix these hyperplanes $\{H_i\}_{i\in I}$, $\{K_j\}_{j=1}^{m'}$, $\{J_l\}_{l\in L}$ from now on. We relabel these planes as $\Lambda_1, \ldots, \Lambda_s$ with $s=|I|+m'+|L|$. Using the coordinates system we chose earlier, we can write \[\Lambda_i=\{(b_1, \ldots, b_m)\in \mathbb{Q}^m\,|\,c_{i,1}b_1+\ldots+c_{i,m}b_m=0\}\] with $c_{i,j}\in \mathbb{Z}$.

Now, suppose that $x\in P$. Consider the polytopes \[Y=P_{\mathbb{Q}_{\geq 0}}\cap (x-N_{\mathbb{Q}_{\geq 0}}) \quad \tu{and } \:\: \: Y'=x-Y
\] from (\ref{eq:poly_Y}) and (\ref{eq:poly_Y'}) in the proof of (1) $\Rightarrow$ (2) above. In the proof, we constructed a linear form $\lambda: N_{\mathbb{Q}}^{\mathrm{gp}}\rightarrow \mathbb{Q}$ such that every point in $Y'$ which achieves maximal value of $\lambda$ is a valid candidate for $g_N(x)$. By assumption, there is only one such point. In other words, $\lambda$ achieves maximal value at a unique point in $Y'$. This implies that $g_N(x)$ is a vertex of the polytope $Y'$.

Let $(a_1, \ldots, a_m)\in \mathbb{Z}^m$ be the coordinate of $x$ in the coordinate system we chose above. A vertex of $Y'$ must be the intersection of $m$ mutually non-parallel hyperplanes among $\{x+H_i\}_{i\in I}$, $\{K_j\}_{j=1}^{m'}$, $\{J_l\}_{l\in L}$. Using the fact that all $a_i$'s are integers, it follows from basic linear algebra that the denominator of the coordinate of such a vertex is bounded by the maximal value of $|\det T|$ where $T$ runs through all possible $m$-by-$m$ matrices whose entries are all selected from the set \[\{c_{i,j}\,|\, i=1, \ldots, s, j=1, \ldots, m\}.\] The claim thus follows.

Back to the proof. Suppose $s \in N$ and $t \in P$ such that $s\,|\,nt$ for some integer $n\geq 1$. We have $g_N(s)=s$ and $g_N(s)\,|\,ng_N(t)$. Take $s':=Mg_N(t)$. By the discussion above, we know that $s'\in N$. It is clear that $Ms\,| ns'$ in $N$ and $s'\,|\, Mt$ in $P$, as desired.
\end{proof}

\begin{remark}\label{remark: f_N}
Keep the notation as in Theorem \ref{thm: classification of pseudo-saturated}. It follows from the definition that, for every $x\in P_{\mathbb{Q}_{\geq 0}}$ and $q\in N_{\mathbb{Q}_{\geq 0}}$, we must have $f_N(x)=f_N(x+q)$.
\end{remark}

\begin{corollary}\label{cor: pseudo-saturated implies exact}
Let $u:N\hookrightarrow P$ be an injective homomorphism where $N$ is toric, $P$ is fs and torsion-free such that $P\cap (-N)= \{0\}$. If $u$ is pseudo-saturated, then $u$ must be exact.
\end{corollary}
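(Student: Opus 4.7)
The plan is to show that $u$ is exact, which for an injective homomorphism of integral monoids amounts to proving $N^{\mathrm{gp}}\cap P=N$ inside $P^{\mathrm{gp}}$. So fix $x\in N^{\mathrm{gp}}\cap P$; the goal is to conclude $x\in N$. Since $x\in P\subset P_{\mathbb{Q}_{\geq 0}}$, Theorem \ref{thm: classification of pseudo-saturated} produces the unique minimal decomposition $x=g_N(x)+f_N(x)$ with $g_N(x)\in N_{\mathbb{Q}_{\geq 0}}$ and $f_N(x)\in P_{\mathbb{Q}_{\geq 0}}$. If one shows $f_N(x)=0$, then $x=g_N(x)\in N^{\mathrm{gp}}\cap N_{\mathbb{Q}_{\geq 0}}$, which equals $N$ by saturated-ness.

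The key step -- and what I expect to be the main obstacle -- is to upgrade $f_N(x)\in P_{\mathbb{Q}_{\geq 0}}$ to $f_N(x)\in N_{\mathbb{Q}_{\geq 0}}$. First, $f_N(x)=x-g_N(x)\in N^{\mathrm{gp}}_{\mathbb{Q}}$ since both $x$ and $g_N(x)$ lie in $N^{\mathrm{gp}}_{\mathbb{Q}}$. Next, choose an integer $m\geq 1$ with $mf_N(x)\in N^{\mathrm{gp}}$ and apply Lemma \ref{lemma: Pgp and P} to the fs monoid $N$: there exists $x'\in N$ with $mf_N(x)+x'\in N$. Setting $q:=x'/m\in N_{\mathbb{Q}_{\geq 0}}$, we have $f_N(x)+q\in \frac{1}{m}N\subset N_{\mathbb{Q}_{\geq 0}}$. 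By Remark \ref{remark: f_N} applied to $f_N(x)\in P_{\mathbb{Q}_{\geq 0}}$ and $q\in N_{\mathbb{Q}_{\geq 0}}$, we get $f_N(f_N(x))=f_N(f_N(x)+q)$. The right-hand side equals $0$: for any $y\in N_{\mathbb{Q}_{\geq 0}}$ the decomposition $y=y+0$ is minimal, since minimality $0-z\notin P_{\mathbb{Q}_{\geq 0}}$ for $z\in N_{\mathbb{Q}_{\geq 0}}-\{0\}$ reduces to $P_{\mathbb{Q}_{\geq 0}}\cap (-N_{\mathbb{Q}_{\geq 0}})=\{0\}$, which follows from $P\cap(-N)=\{0\}$ and torsion-freeness of $P^{\mathrm{gp}}$. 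Hence $f_N(f_N(x))=0$, meaning $f_N(x)\in N_{\mathbb{Q}_{\geq 0}}$.

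Finally, suppose for contradiction that $f_N(x)\neq 0$. Then $f_N(x)\in N_{\mathbb{Q}_{\geq 0}}-\{0\}$, and substituting $z:=f_N(x)$ into the minimality condition of the original decomposition gives $f_N(x)-z=0\in P_{\mathbb{Q}_{\geq 0}}$, a contradiction. Therefore $f_N(x)=0$ and the conclusion $x\in N$ follows from the first paragraph. The technical heart of the argument is the second paragraph, where the combination of Lemma \ref{lemma: Pgp and P} (exploiting that $N$ is fs) and Remark \ref{remark: f_N} (which encodes pseudo-saturated-ness through the existence and uniqueness of minimal decompositions) is used to trap $f_N(x)$ inside $N_{\mathbb{Q}_{\geq 0}}$; everything else is essentially formal.
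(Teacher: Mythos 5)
Your proof is correct, but it takes a more elaborate route than the paper's. The paper argues by contradiction: picking $x\in P\cap N^{\mathrm{gp}}$ with $x\not\in N$, saturated-ness of $N$ gives $x\not\in N_{\mathbb{Q}_{\geq 0}}$, so $f_N(x)\neq 0$; since $x\in N^{\mathrm{gp}}$ one writes $x=q_1-q_2$ with $q_1,q_2\in N$, so $x':=x+q_2=q_1\in N$, and Remark~\ref{remark: f_N} gives $f_N(x')=f_N(x)\neq 0$ while $x'\in N$ forces $f_N(x')=0$. Both proofs rest on the same two pillars -- Remark~\ref{remark: f_N} (translation invariance of $f_N$ under $N_{\mathbb{Q}_{\geq 0}}$) and the vanishing of $f_N$ on $N_{\mathbb{Q}_{\geq 0}}$ -- but the paper's ``translate $x$ itself into $N$'' move is shorter: the decomposition $x=q_1-q_2$ hands you the translating element for free. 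Your detour through Lemma~\ref{lemma: Pgp and P} to trap $f_N(x)$ in $N_{\mathbb{Q}_{\geq 0}}$ is sound, but unnecessary: since $f_N(x)\in N^{\mathrm{gp}}_{\mathbb{Q}}=(N_{\mathbb{Q}_{\geq 0}})^{\mathrm{gp}}$ you can directly write $f_N(x)=a-b$ with $a,b\in N_{\mathbb{Q}_{\geq 0}}$ and take $q=b$. That said, your intermediate step that $f_N(f_N(x))=0$ (hence $f_N(x)\in N_{\mathbb{Q}_{\geq 0}}$, hence $f_N(x)=0$ by minimality) is a nice direct argument, and it makes explicit the fact -- used without comment in the paper -- that $f_N$ vanishes on $N_{\mathbb{Q}_{\geq 0}}$, which in turn uses $P_{\mathbb{Q}_{\geq 0}}\cap(-N_{\mathbb{Q}_{\geq 0}})=\{0\}$.
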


\begin{proof}
Suppose $u$ is not exact; namely, $N^{\mathrm{gp}}\cap P\neq N$. Pick $x\in P\cap N^{\mathrm{gp}}$ such that $x\not\in N$. By the saturated-ness of $N$, we also have $x \not\in N_{\mathbb{Q}_{\geq 0}}$. Write $x =f_N(x)+g_N(x)$ as in Theorem \ref{thm: classification of pseudo-saturated}. Notice that $f_N(x)\neq 0$. 

On the other hand, since $x\in N^{\mathrm{gp}}$, we can write $x=q_1-q_2$ for some $q_1, q_2\in N$. Consider $x':=x+q_2$. By Remark \ref{remark: f_N}, we have $f_N(x')=f_N(x)\neq 0$. However, $x'=q_1\in N$ which implies that $f_N(x')=0$, a contradiction.
\end{proof}

\begin{remark}\label{remark: exact but not pseudo-saturated}
The converse of Corollary \ref{cor: pseudo-saturated implies exact} is false. For example, let $P=\mathbb{Z}_{\geq 0}^3$ and $M=\{(a,b,c)\in \mathbb{Z}_{\geq 0}^3\,|\, a+b=c\}$. Then the inclusion $u: M\hookrightarrow P$ is exact. But $u$ is not pseudo-saturated as $(1,1,1)=(1,0,0)+(0,1,1)=(0,1,0)+(1,0,1)$ are two different minimal decompositions (see Example \ref{example:min_decom2}). 
\end{remark}

As an application, we obtain the following corollary which roughly says that a pseudo-saturated homomorphism (with some extra conditions) becomes quasi-saturated after a suitable finite base change.

\begin{corollary}\label{corollary: pseudo-sat becomes quasi-sat after finite base change}
Let $P$ be a torsion-free fs  monoid and let $N$ be a toric submonoid such that $P\cap (-N)= \{0\}$. Suppose the inclusion $u:N\hookrightarrow P$ is pseudo-saturated. Moreover, we assume that the cokernel of $u^{\mathrm{gp}}$ is torsion-free. Then there exists a positive integer $m$ such that the canonical homomorphism $\frac{1}{m}N\rightarrow P\sqcup^{\mathrm{sat}}_N \frac{1}{m}N$ is quasi-saturated.
\end{corollary}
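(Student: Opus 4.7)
The plan is to reduce the desired quasi-saturated-ness to a denominator bound on the piecewise-$\Q$-linear map $g_N$ furnished by Theorem \ref{thm: classification of pseudo-saturated}, and then to secure this bound using a splitting made available by the torsion-free cokernel hypothesis. Write $N' = \frac{1}{m}N$ and $P' = P \sqcup^{\mathrm{sat}}_N N'$. By Lemma \ref{lemma: pseudo-saturatedness stable under base change}, $N' \hookrightarrow P'$ is pseudo-saturated for every $m \geq 1$, and $P'$ remains a torsion-free fs monoid with $P' \cap (-N') = \{0\}$ (using torsion-freeness of $\mathrm{coker}(u^{\mathrm{gp}})$, the explicit polyhedral description of saturated pushouts from Construction \ref{construction: explicit descriptions of pushouts}, and Gordan's lemma applied inside $P^{\mathrm{gp}}_{\Q}$). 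Hence Theorem \ref{thm: classification of pseudo-saturated} applies to $N' \hookrightarrow P'$, and since $N_{\Q_{\geq 0}} = N'_{\Q_{\geq 0}}$, the minimal decomposition of any $y$ relative to $(N',P')$ coincides with its minimal decomposition relative to $(N,P)$, so $g_{N'} = g_N$. Rerunning the (2) $\Rightarrow$ (1) direction of the proof of Theorem \ref{thm: classification of pseudo-saturated} with conductor equal to $1$, namely setting $s' := g_N(t)$ for $t \in P'$, produces the element required by the criterion of Proposition \ref{prop: criterion quasi-saturated} provided $g_N(t) \in N'$ for every $t \in P'$. The whole problem thus reduces to choosing $m$ so that $g_N(P') \subseteq \frac{1}{m}N$.

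Using torsion-freeness of $\mathrm{coker}(u^{\mathrm{gp}})$, fix a $\Z$-splitting $P^{\mathrm{gp}} = N^{\mathrm{gp}} \oplus K$ with $K$ a free abelian group, and set
\[ L := \tfrac{1}{m}N^{\mathrm{gp}} \oplus K \;\subset\; P^{\mathrm{gp}} \otimes_{\Z} \Q. \]
Construction \ref{construction: explicit descriptions of pushouts} then identifies $P' = P_{\Q_{\geq 0}} \cap L$, so that each $y \in P'$ admits a presentation $y = (a/m,\, v)$ with $a \in N^{\mathrm{gp}}$ and $v \in K$. The uniqueness of minimal decompositions in Theorem \ref{thm: classification of pseudo-saturated} gives the translation-invariance
\[ g_N(y+n) \;=\; g_N(y) + n \quad \text{for every } n \in N_{\Q_{\geq 0}}, \]
so the maximal subsets of $P_{\Q_{\geq 0}}$ on which $g_N$ is $\Q$-linear -- the ``chambers'' -- are closed under the $N_{\Q_{\geq 0}}$-action. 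These chambers are parametrized by the combinatorial type of the polytope $Y = P_{\Q_{\geq 0}} \cap (y - N_{\Q_{\geq 0}})$ that appears in the proof of (1) $\Rightarrow$ (2) of Theorem \ref{thm: classification of pseudo-saturated}, so there are only finitely many, say $\mathcal{C}_1, \ldots, \mathcal{C}_r$. On each $\mathcal{C}_j$, the translation-invariance in the $N^{\mathrm{gp}}$-direction combined with $\Q$-linearity forces the normal form
\[ g_N\bigl((a,v)\bigr) \;=\; a + \ell_j(v) \]
for a unique $\Q$-linear map $\ell_j : K \otimes_{\Z} \Q \to N^{\mathrm{gp}} \otimes_{\Z} \Q$.

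Each $\ell_j(K)$ is a finitely generated subgroup of $N^{\mathrm{gp}} \otimes_{\Z} \Q$, hence lies in $\frac{1}{m_j}N^{\mathrm{gp}}$ for some $m_j \geq 1$. Set $m := \mathrm{lcm}(m_1, \ldots, m_r)$. Then for any $y = (a/m, v) \in P'$ lying on $\mathcal{C}_j$,
\[ g_N(y) \;=\; \frac{a}{m} + \ell_j(v) \;\in\; \tfrac{1}{m}N^{\mathrm{gp}} + \tfrac{1}{m_j}N^{\mathrm{gp}} \;\subseteq\; \tfrac{1}{m}N^{\mathrm{gp}}. \]
Combined with $g_N(y) \in N_{\Q_{\geq 0}}$ and the saturation identity $N^{\mathrm{gp}} \cap N_{\Q_{\geq 0}} = N$, this yields $m\,g_N(y) \in N$, i.e.\ $g_N(y) \in \frac{1}{m}N = N'$. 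Together with the reduction of the first paragraph, this establishes the corollary. The principal technical obstacle is producing the finite chamber decomposition of $P_{\Q_{\geq 0}}$ together with the normal form $(a,v) \mapsto a + \ell_j(v)$ for $g_N$ on each chamber; once that piecewise-$\Q$-linear structure is in place, the denominator bound is an elementary lcm.
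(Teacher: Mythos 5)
Your overall route coincides with the paper's: both reduce the Corollary to a uniform denominator bound of the form $g_N(P') \subseteq \frac{1}{m}N$ and then apply Proposition \ref{prop: criterion quasi-saturated} with $x' := g_N(t)$. Note that this last application also requires $f_N(t) = t - g_N(t) \in P'$ (so that $x' \,|\, t$ holds in $P'$, not merely in $P_{\mathbb{Q}_{\geq 0}}$); the paper secures this via the identity $P' = P_{\mathbb{Q}_{\geq 0}} \cap (P')^{\mathrm{gp}}$, which is exactly where the torsion-free cokernel hypothesis enters, and you should include that check. Where you diverge from the paper is in how the denominator bound itself is obtained: you propose a finite chamber decomposition of $P_{\mathbb{Q}_{\geq 0}}$ into cones of $\mathbb{Q}$-linearity for $g_N$, closed under $N_{\mathbb{Q}_{\geq 0}}$-translation, with the normal form $g_N(a,v) = a + \ell_j(v)$ extracted from the splitting $P^{\mathrm{gp}} = N^{\mathrm{gp}} \oplus K$. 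As you note, the existence and finiteness of that chamber structure is the unproved step.

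The paper gets the denominator bound more directly and, as a byproduct, also produces the polyhedral structure you are after. In the Claim inside the $(2)\Rightarrow(1)$ direction of the proof of Theorem \ref{thm: classification of pseudo-saturated}, $g_N(y)$ is identified as the unique $\lambda$-maximizing vertex of the polytope $y - \bigl(P_{\mathbb{Q}_{\geq 0}} \cap (y - N_{\mathbb{Q}_{\geq 0}})\bigr)$, whose facet hyperplanes have integer normal vectors $c_{i,j}$ that do not depend on $y$ (only the translates $y + H_i$ move). Cramer's rule then bounds the denominator of that vertex by $\max|\det T|$ over $m\times m$ integer matrices $T$ with entries among the $c_{i,j}$, a quantity depending only on $(N,P)$ and the chosen defining hyperplanes. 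This simultaneously yields the finite "chamber" decomposition (one chamber per $m$-tuple of non-parallel defining hyperplanes, on which the optimal vertex is a $\mathbb{Q}$-linear function of $y$) \emph{and} the denominator bound, bypassing the parametric-LP considerations your version leans on. So if you want to make your normal-form argument rigorous, the cleanest path is to derive the chamber decomposition from that same vertex analysis rather than asserting it from general principles. One small point in your favor: the paper's Claim is stated only for $y \in P$ but is then applied to $y \in P'$ whose coordinates are no longer integral; your device of enlarging $m$ by the least common multiple of the chamber denominators $m_j$ deals with this cleanly, whereas the paper leaves the extension implicit.
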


\begin{proof}
Since $u$ is pseudo-saturated, by the proof of Theorem \ref{thm: classification of pseudo-saturated}, there exists integer $M\geq 1$ such that $g_N(z)\in \frac{1}{M}N$ for all $z \in P$. We claim that the inclusion \[\frac{1}{M}N\rightarrow P\sqcup^{\mathrm{sat}}_N \frac{1}{M}N\] is quasi-saturated.

Let $x\in \frac{1}{M}N$ and $y\in P\sqcup^{\mathrm{sat}}_N \frac{1}{M}N$ such that $x\,|\, my$ for some positive integer $m$. Using the assumption that the cokernel of $u^{\mathrm{gp}}$ is torsion-free, one checks that the canonical homomorphism $P\sqcup^{\mathrm{sat}}_N \frac{1}{M}N\rightarrow P_{\mathbb{Q}_{\geq 0}}$ is injective. (Indeed, since all monoids here are integral, it suffices to prove the injectivity after passing to group envelopes. Namely, the homomorphism $P^{\mathrm{gp}}\oplus_{N^{\mathrm{gp}}}\frac{1}{M}N^{\mathrm{gp}}\rightarrow P^{\mathrm{gp}}_{\mathbb{Q}}$ is injective. But this is clear as $P^{\mathrm{gp}}\cong N^{\mathrm{gp}}\oplus \mathbb{Z}^r$ for some $r\geq 0$.) Express $y=f_N(y)+g_N(y)$ with $f_N(y)\in P_{\mathbb{Q}_{\geq 0}}$ and $g_N(y)\in N_{\mathbb{Q}_{\geq 0}}$ as in Theorem \ref{thm: classification of pseudo-saturated}. By assumption $g_N(y)\in \frac{1}{M}N$. Take $x'=g_N(y)$. It follows from the definition of minimal expression that $x\,|\,mx'$ in $N_{\mathbb{Q}_{\geq 0}}$, hence also in $\frac{1}{M}N$. On the other hand, \[f_N(y)=y-g_N(y)\in P_{\mathbb{Q}_{\geq 0}}\cap (P\sqcup^{\mathrm{sat}}_N \frac{1}{M}N)^{\mathrm{gp}}=P_{\mathbb{Q}_{\geq 0}}\cap \big(P^{\mathrm{gp}}\oplus_{N^{\mathrm{gp}}}\frac{1}{M}N^{\mathrm{gp}}\big)\] 
where the intersections are taken inside $P^{\mathrm{gp}}_{\mathbb{Q}}$. We claim that 
\[P_{\mathbb{Q}_{\geq 0}}\cap \big(P^{\mathrm{gp}}\oplus_{N^{\mathrm{gp}}}\frac{1}{M}N^{\mathrm{gp}}\big)=P\sqcup^{\mathrm{sat}}_N \frac{1}{M}N.\]
Indeed, it is clear that the right hand side is contained in the left hand side. For the other direction, suppose $(q,r)\in P^{\mathrm{gp}}\oplus_{N^{\mathrm{gp}}}\frac{1}{M}N^{\mathrm{gp}}$ is an element satisfying $q+r\in P_{\mathbb{Q}_{\geq 0}}$. Then 
\[z:=Mq+Mr\in P_{\mathbb{Q}_{\geq 0}}\cap P^{\mathrm{gp}}=P. \] 
This means that $M(q,r)=(Mq, Mr)=(z,0)\in P\sqcup^{\mathrm{int}}_N\frac{1}{M}N$, and hence $(q,r)\in P\sqcup^{sat}_N\frac{1}{M}N$. This finishes the proof of the claim.

Consequently, $f_N(y)\in P\sqcup^{sat}_N\frac{1}{M}N$ and thus $x'\,|\, y$ in $P\sqcup^{\mathrm{sat}}_N \frac{1}{M}N$, as desired.
\end{proof}

Let us demonstrate the phenomenon in Corollary \ref{corollary: pseudo-sat becomes quasi-sat after finite base change} with an example. 

\begin{example}
Let $N=\mathbb{Z}_{\geq 0}$ and $P=\mathbb{Z}_{\geq 0}^2$ and consider the injective homomorphism $u:N\rightarrow P$ sending $x$ to $(x, 2x)$. One checks that $u$ is pseudo-saturated with conductor 2. We can verify that the canonical homomorphism \[v:\frac{1}{2}N\rightarrow P\sqcup^{\mathrm{sat}}_N \frac{1}{2}N\] is quasi-saturated.
Using the explicit description at the end of Section \ref{subsection: conventions on monoids}, we have
\[P\sqcup_N^{\mathrm{int}} \frac{1}{2}N=\{(a,b,\frac{c}{2})\,|\, a,b,c\in \mathbb{Z}_{\geq 0}\}/\sim\]
where $(a,b,\frac{c}{2})\sim (a', b', \frac{c'}{2})$ if there exists $d\in \mathbb{Z}$ such that $a+d=a'$, $b+2d=b'$, and $c=c'+2d$. Choosing a representative from each equivalence class, we can express $P\sqcup_N^{\mathrm{int}} \frac{1}{2}N$ as
\[\{(a,0,\frac{c}{2})\,|\, a,c\in \mathbb{Z}_{\geq 0}\} \cup  \{(a,1,\frac{c}{2})\,|\, a\in \mathbb{Z}_{\geq 1}, c\in \mathbb{Z}_{\geq 0}\} \cup \{(0,b,\frac{c}{2})\,|\, b\in \mathbb{Z}_{\geq 1}, c\in \mathbb{Z}_{\geq 0}\}.\]
Taking saturation, we can express $P\sqcup_N^{\mathrm{sat}} \frac{1}{2}N$
\[\{(a,0,\frac{c}{2})\,|\, a,c\in \mathbb{Z}_{\geq 0}\} \cup  \{(a,1,\frac{c}{2})\,|\, a\in \mathbb{Z}_{\geq 1}, c\in \mathbb{Z}_{\geq -1}\} \cup \{(0,b,\frac{c}{2})\,|\, b\in \mathbb{Z}_{\geq 1}, c\in \mathbb{Z}_{\geq 0}\}.\]
Notice that the homomorphism $v:\frac{1}{2}N\rightarrow P\sqcup^{\mathrm{sat}}_N \frac{1}{2}N$ sends $\frac{c}{2}\mapsto (0,0,\frac{c}{2})$. Let $x=\frac{t}{2}\in \frac{1}{2}N$ and $y\in P\sqcup^{\mathrm{sat}}_N \frac{1}{2}N$ such that $v(x)\,|\,my$ for some $m\in \mathbb{Z}_{\geq 1}$. We have to show that there exists $x'\in \frac{1}{2}N$ such that $x\,|\, mx'$ in $\frac{1}{2}N$ and $v(x')\,|\, y$ in $P\sqcup^{\mathrm{sat}}_N\frac{1}{2}N$.
\be
\item[Case 1:] $y=(a,0,\frac{c}{2})$ with $a,c\in \mathbb{Z}_{\geq 0}$.\\
By assumption, $(0,0,\frac{t}{2})\,|\,(ma, 0, \frac{mc}{2})$. This implies $t\leq mc$. Take $x'=\frac{c}{2}$. One checks that \[v(x')=(0,0,\frac{c}{2})\,|\, y=(a,0,\frac{c}{2}).\]
\item[Case 2:] $y=(a,1,\frac{c}{2})$ with $a\in \mathbb{Z}_{\geq 1}$ and $c\in \mathbb{Z}_{\geq -1}$.\\ 
By assumption, $(0,0, \frac{t}{2})\,|\, (ma, m, \frac{mc}{2})$. We split into two more cases:
\be 
\item 
%Case 2A: 
$m$ is even. In this case, we have $(ma, m, \frac{mc}{2})\sim(ma-\frac{m}{2}, 0, \frac{mc}{2}+\frac{m}{2})$. Hence, $\frac{t}{2}\leq \frac{mc}{2}+\frac{m}{2}$. Take $x'=\frac{c+1}{2}$. One checks that 
\[v(x')=(0,0,\frac{c+1}{2})\,|\, y=(a,1,\frac{c}{2})\] as $(a,1,-\frac{1}{2})\in P\sqcup^{\mathrm{sat}}_N \frac{1}{2}N$.
\item 
%Case 2B: 
$m$ is odd. In this case, we have $(ma, m, \frac{mc}{2})\sim(ma-\frac{m-1}{2}, 1, \frac{mc}{2}+\frac{m-1}{2})$. This further implies that  $\frac{t}{2}\leq \frac{mc}{2}+\frac{m-1}{2}+\frac{1}{2}$; namely, $t\leq m(c+1)$. Take $x'=\frac{c+1}{2}$. One checks that \[v(x')=(0,0,\frac{c+1}{2})\,|\, y=(a,1,\frac{c}{2})\] as $(a,1,-\frac{1}{2})\in P\sqcup^{\mathrm{sat}}_N \frac{1}{2}N$.
\ee 
\item[Case 3:] $y=(0,b,\frac{c}{2})$ with $b\in \mathbb{Z}_{\geq 1}$ and $c\in \mathbb{Z}_{\geq 0}$. \\
By assumption, we have  $(0,0,\frac{t}{2})\,|\,(0,mb,\frac{mc}{2})$. This implies that $t\leq mc$. Take $x'=\frac{c}{2}$. One checks that 
\[v(x')=(0,0,\frac{c}{2})\,|\, y=(0,b,\frac{c}{2}).\]
\ee 
\end{example}

\newpage
\section{Log adic spaces}  \noindent

In this section, we  recall the notion of log adic spaces, mostly following \cite{DLLZ}. For an adic space $X$, we let $X_{\ett}$ denote the category of adic spaces \'etale over $X$. We say that $X$ is \emph{\'etale sheafy} if fiber products exist in $X_{\ett}$ (in particular, $X_{\ett}$ is a site) and the \'etale structure presheaf 
\[\mO_{X_{\ett}}: U\mapsto \mO_U(U)\] is a sheaf. \'Etale-sheafiness is known for locally noetherian adic spaces and perfectoid spaces. Everything in this section works equally in the context of schemes, formal schemes, or \'etale sheafy adic spaces. For convenience, we focus on the case of \'etale sheafy adic spaces.

\begin{convention}
All Huber pairs $(R, R^+)$ in the rest of this paper are assumed to be complete.
\end{convention}

\noindent Recall the following definitions from \cite{DLLZ} (see also \cite{Kato, Ogus}).
\begin{definition} \label{def:log}
Let $X$ be an \'etale sheafy adic space.
\begin{enumerate}
 \item A \emph{pre-log structure} on $X$ is a pair $(\mM_X, \alpha)$, where $\mM_X$ is a sheaf of monoids on $X_{\ett}$ and $\alpha: \mM_X \rightarrow \mO_{X_{\ett}}$ is a morphism of sheaves of monoids.  Notice that here $\mO_{X_{\ett}}$ is equipped with the multiplicative monoid structure.
\item A pre-log structure $(\mM_X, \alpha)$ on $X$ is called a \emph{log structure} if $\alpha^{-1}(\mO_{X_{\ett}}^\times) \rightarrow \mO_{X_{\ett}}^\times$ is an isomorphism.  If, moreover, $\alpha$ induces an isomorphism $\mM_X\cong \mO_{X_{\ett}}^{\times}$, we say that the log structure is \emph{trivial}.
\item For a log structure $(\mM_X, \alpha)$ on $X$, we write 
\[\overline{\mM}_X := \mM_X / \alpha^{-1}(\mO_{X_{\ett}}^\times).\]
 \item For a pre-log structure $(\mM_X, \alpha)$ on $X$, we have the \emph{associated log structure} 
 \[ (\mM_X, \alpha)^a = (\mM_X^a, \alpha^a)\] where $\mM_X^a$ is the pushout of \[\mO_{X_{\ett}}^\times \leftarrow \alpha^{-1}(\mO_{X_{\ett}}^\times) \rightarrow \mM_X\] in the category of sheaves of monoids on $X_{\ett}$, and $ \alpha^a:  \mM_X^a \rightarrow \mO_{X_{\ett}}$ is the morphism canonically induced by the natural map $\mO_{X_{\ett}}^\times \rightarrow \mO_{X_{\ett}}$ and the structure morphism $\alpha: \mM_X \rightarrow \mO_{X_{\ett}}$.  
 \item A \emph{pre-log adic space} (resp. \emph{log adic space}) is an \'etale sheafy adic space $X$ equipped with a pre-log structure (resp. log structure), denoted by the triple $(X, \mM_X, \alpha)$. When the context is clear, we simply write $(X, \mM_X)$ or $X$.
 \item Let $f: (Y, \mM_Y, \alpha_Y) \rightarrow (X, \mM_X, \alpha_X)$ be a morphism of log adic spaces. The \emph{pullback log structure} $f^*(\mM_X)$ on $Y$ is the log structure associated with the pre-log structure \[f^{-1}(\mM_X) \rightarrow f^{-1}(\mO_{X_{\ett}}) \rightarrow \mO_{Y_{\ett}}.\]  The morphism $f$ is called \emph{strict} if the induced morphism $f^*(\mM_X) \rightarrow \mM_Y$ is an isomorphism. The morphism is called \emph{exact} if, at each geometric point $\bar{y}$ of $Y$, the induced monoid homomorphism \[\bigl(f^*(\mM_X)\bigr)_{\bar{y}} \rightarrow \mM_{Y, {\bar{y}}}\] is exact.
\item If a strict morphism of log adic spaces is \'etale (resp. finite \'etale) on the underlying adic spaces, we say it is \emph{strictly \'etale} (resp. \emph{strictly finite \'etale}); or just \emph{\'etale} (resp. \emph{finite \'etale}) for simplicity.
\item A morphism of locally noetherian log adic spaces is \emph{locally of finite type} if the underlying morphism of adic spaces is locally of finite type in the sense of \cite[Definition 1.2.1]{Huber}.
\item A sheaf of monoids on $X_{\ett}$ is \emph{integral} (resp. \emph{saturated}) if it is a sheaf of integral (resp. saturated) monoids. We say that a pre-log adic space (resp. log adic space) $(X, \mM_X, \alpha)$ is \emph{integral} (resp. \emph{saturated}) if $\mM_X$ is.
\end{enumerate}
\end{definition}

\begin{proposition}\label{prop: integrality and saturatedness on stalks}
Let $(X, \mM_X, \alpha)$ be a log adic space. The following are equivalent.
\begin{enumerate}
\item $(X, \mM_X, \alpha)$ is integral (resp. saturated);
\item $\mM_{X, \bar{x}}$ is integral (resp. saturated) for every geometric point $\bar{x}$ of $X$;
\item $\overline{\mM}_{X, \bar{x}}$ is integral (resp. saturated) for every geometric point $\bar{x}$ of $X$.
\end{enumerate}
\end{proposition}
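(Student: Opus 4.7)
My plan is to prove the equivalences via the cycle (1) $\Leftrightarrow$ (2) $\Leftrightarrow$ (3). The first equivalence is sheaf-theoretic, while the second reduces to an algebraic statement about monoids and their unit groups.

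For (1) $\Leftrightarrow$ (2), I would use that integrality and saturation of monoids are preserved both under filtered colimits and under passage to submonoids. The stalk $\mM_{X, \bar x}$ is the filtered colimit of the sections $\mM_X(U)$ over étale neighborhoods $(U, \bar x)$ of $\bar x$, which handles (1) $\Rightarrow$ (2). For (2) $\Rightarrow$ (1), the sheaf property yields an injection $\mM_X(U) \hookrightarrow \prod_{\bar x \to U} \mM_{X, \bar x}$ for every étale $U \to X$ (using that the étale site has enough points), so integrality (resp. saturation) of all stalks descends to all sections via the observation that a submonoid of a product of integral (resp. saturated) monoids inherits the property.

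For (2) $\Leftrightarrow$ (3), I would first observe that the log structure axiom $\alpha^{-1}(\mO_X^{\times}) \cong \mO_X^{\times}$ implies, at each geometric point, that $\alpha^{-1}(\mO_{X, \bar x}^\times) = \mM_{X, \bar x}^\times$ (any $m \in \mM_{X, \bar x}$ with $\alpha(m) \in \mO_{X, \bar x}^\times$ is automatically a unit in $\mM_{X, \bar x}$), so $\overline{\mM}_{X, \bar x} = \mM_{X, \bar x}/\mM_{X, \bar x}^\times$. The implication (2) $\Rightarrow$ (3) is then an immediate consequence of Remark \ref{remark: int and sat}(1). The converse (3) $\Rightarrow$ (2) reduces to the algebraic claim that for any commutative monoid $M$, $M$ is integral (resp. saturated) if and only if $M/M^\times$ is. I would prove this by comparing group envelopes: since $M^\times$ is a genuine subgroup of $M$, one has $(M/M^\times)^{\gp} \cong M^{\gp}/M^\times$, so integrality (characterized by $M \hookrightarrow M^{\gp}$) and saturation (characterized by $M = M^{\gp} \cap \{x : nx \in M\}$) can be transferred between $M$ and $M/M^\times$. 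Concretely, for integrality, given $a + b = a + c$ in $M$, passing to $\overline M$ and using its integrality yields $b = c + u$ for some $u \in M^\times$, and substituting back gives $(a + c) + u = a + c$, from which $u = 0$ follows using that $M^\times$ is a group and interacts with $M^{\gp}$ appropriately.

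The main obstacle is the last step of (3) $\Rightarrow$ (2): showing that $d + u = d$ in $M$ with $u \in M^\times$ forces $u = 0$. In a general monoid this may fail, but the full log structure data pins down enough information: applying $\alpha$ yields the identity $(\alpha(u) - 1)\alpha(d) = 0$ in $\mO_{X, \bar x}$, and the axiom $\alpha^{-1}(1) = \{0\}$ (which follows from the defining isomorphism $\alpha: \mM_X^\times \isom \mO_X^\times$) combined with careful handling of the unit group inside $M^{\gp}$ closes the argument. The saturated case follows the same template, applied to the characterization of saturation in terms of the group envelope.
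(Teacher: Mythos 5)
Your proof has two genuine gaps, one in each half of the argument.

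In the step (2) $\Rightarrow$ (1), you embed $\mM_X(U)$ into the product of its stalks and claim that a submonoid of a product of saturated monoids is saturated. This is false: for instance $\{0,2,3,4,\ldots\}\subset\Z_{\geq 0}$ is a submonoid of a saturated monoid whose group envelope is $\Z$, yet it contains $2=2\cdot 1$ without containing $1$. (For \emph{integrality} the embedding argument is fine, since cancellation does pass to submonoids.) To recover the saturated case you would instead need the sheaf structure in a more serious way: given $s\in\mM_X(U)^{\gp}$ with $ns\in\mM_X(U)$, saturation of the stalks gives local lifts of $s$ to $\mM_X$, and one must then glue these using integrality of $\mM_X$ to get cancellability. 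The paper avoids all of this by simply citing \cite[Lemma 2.2.4]{DLLZ}.

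In the step (3) $\Rightarrow$ (2), you correctly isolate the crux: one must show that $d+u=d$ in $M=\mM_{X,\bar x}$ with $u\in M^{\times}$ forces $u=0$. But your proposed resolution does not work. Applying $\alpha$ gives $(\alpha(u)-1)\alpha(d)=0$ in $\mO_{X,\bar x}$, and from this one cannot conclude $\alpha(u)=1$: if $d\notin M^{\times}$ then $\alpha(d)\notin\mO_{X,\bar x}^{\times}$, so $\alpha(d)$ lies in the maximal ideal and may well be a zero-divisor (or zero). The phrase ``careful handling of the unit group inside $M^{\gp}$'' does not identify a workable mechanism, and no amount of care resolves the zero-divisor issue by this route. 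The paper instead reduces the claim to the purely monoid-theoretic statements of \cite[Propositions I.1.3.4, I.1.3.5]{Ogus}, which handle the passage between $M$ and $M/G$ for a subgroup $G\subset M$ under the appropriate hypotheses; your argument does not establish (or invoke) those hypotheses, and this is where the real content of the proposition lies.

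More broadly, the paper's proof is essentially a pair of citations (DLLZ for (1) $\Leftrightarrow$ (2), Ogus for (2) $\Leftrightarrow$ (3)), whereas you attempt a self-contained argument. That is a legitimate goal, but as written both halves of your argument break down precisely at the nontrivial points.
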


\begin{proof}
(1) $\iff$ (2) is \cite[Lemma 2.2.4]{DLLZ}. (2) $\iff$ (3) follows from Remark \ref{remark: int and sat} (1), \cite[Proposition I.1.3.4, Proposition I.1.3.5]{Ogus}, and the fact that $\mM_{X, \bar{x}}/\alpha^{-1}(\mO^{\times}_{X_{\ett}, \bar{x}})\cong \overline{\mM}_{X, \bar{x}}$.
\end{proof}

For our purpose, we need the notion of pseudo-saturated morphisms between log adic spaces. 

\begin{definition}
A morphism $f: (Y, \mM_Y, \alpha_Y) \rightarrow (X, \mM_X, \alpha_X)$ of saturated log adic spaces is \emph{pseudo-saturated} (resp. \emph{quasi-saturated}) if for every geometric point $\bar{y}\in Y$ and $\bar{x}=f(\bar{y})\in X$, the induces homomorphism 
\[\overline{\mM}_{X, \bar{x}}\rightarrow \overline{\mM}_{Y, \bar{y}}\] is pseudo-saturated (resp. quasi-saturated).
\end{definition}

Let us also recall the important notion of charts. 

\begin{definition} 
\begin{enumerate} 
\item Let $(X, \mM_X, \alpha)$ be a log adic space.  Let $P$ be a monoid and let $P_X$ denote the associated constant sheaf of monoids on $X_{\ett}$.  A \emph{chart} of $X$ \emph{modeled on} $P$ is a morphism of sheaves of monoids $\theta: P_X \to \mM_X$ such that $\alpha\bigl(\theta(P_X)\bigr) \subset \mO_{X_{\ett}}^+$ and such that the log structure associated with the pre-log structure $\alpha\circ\theta: P_X \rightarrow \mO_{X_{\ett}}$ is canonically isomorphic to $\mM_X$.  
\item A chart $\theta: P_X\rightarrow \mM_X$ of a log adic space $(X, \mM_X, \alpha)$ is \emph{exact} if, for every geometric point $\bar{x}$ of $X$, the induced monoid homomorphism $P\rightarrow \mM_{X,\bar{x}}$ is exact. This is equivalent to requiring the induced morphism $\cl{P}_X\rightarrow \cl \mM_X$ to be an isomorphism.
\item Let $f: (Y, \mM_Y, \alpha_Y) \to (X, \mM_X, \alpha_X)$ be a morphism of log adic spaces.  A \emph{chart} of $f$ consists of charts $\theta_X: P_X \rightarrow \mM_X$ and $\theta_Y: Q_Y \rightarrow \mM_Y$ and a monoid homomorphism $u: P \to Q$ such that the diagram        
\[ 
\begin{tikzcd} 
P_Y \arrow[d, "f^{-1}\theta_X"] \arrow[r, "u"] &  Q_Y \arrow[d, "\theta_Y"] 
\\ 
f^{-1}(\mM_X)\arrow[r] & \mM_Y
\end{tikzcd}
\]
commutes.
\end{enumerate}
\end{definition}

\begin{definition}
\begin{enumerate}
\item A log adic space $(X, \mM_X, \alpha)$ is \emph{quasi-coherent} (resp. \emph{coherent}) if, \'etale locally on $X$, it admits charts (resp. admits charts modeled on finitely generated monoids). 
\item A log adic space $(X, \mM_X, \alpha)$ is \emph{quasi-fine} (resp. \emph{quasi-fs}) if, \'etale locally on $X$, it admits charts modeled on integral monoids (resp. modeled on saturated monoids). 
\item A log adic space $(X, \mM_X, \alpha)$ is \emph{fine} (resp. \emph{fs}) if, \'etale locally on $X$, it admits charts modeled on fine monoids (resp. modeled on fs monoids). 
\end{enumerate}
\end{definition} 

\begin{proposition}
\begin{enumerate}
\item A quasi-fine log adic space is integral.
\item A quasi-fs log adic space is saturated.
\end{enumerate}
\end{proposition}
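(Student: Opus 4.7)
The plan is to reduce both statements to the level of stalks via Proposition \ref{prop: integrality and saturatedness on stalks}, and then to analyze those stalks using a local chart. Both claims are \'etale-local on $X$, so after passing to a sufficiently small \'etale neighborhood I may assume there is a chart $\theta: P_X \to \mM_X$ with $P$ integral (resp. saturated). By the equivalence (1) $\Leftrightarrow$ (3) of that proposition, it then suffices to show that $\overline{\mM}_{X, \bar{x}}$ is integral (resp. saturated) for every geometric point $\bar{x}$.

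Next, I would unwind the construction of the associated log structure at a stalk. By the universal property of the log structure associated to a pre-log structure, the stalk $\mM_{X, \bar{x}}$ is the pushout
\[
\mM_{X, \bar{x}} \;\cong\; P \sqcup_{F} \mO^\times_{X, \bar{x}}
\]
in the category of monoids, where $F := \alpha^{-1}(\mO^\times_{X, \bar{x}}) \subset P$, the left arrow is the inclusion $F \hookrightarrow P$, and the right arrow is induced by $\alpha$. Using that $\mO^\times_{X, \bar{x}}$ is a group and that the log structure condition identifies $\mM_{X, \bar{x}}^\times$ with (the image of) $\mO^\times_{X, \bar{x}}$, the composite $P \to \mM_{X, \bar{x}} \to \overline{\mM}_{X, \bar{x}}$ kills $F$, and an explicit analysis of the pushout (in the spirit of Construction \ref{construction: explicit descriptions of pushouts}, but now in the category of all monoids) yields a canonical isomorphism
\[
\overline{\mM}_{X, \bar{x}} \;\cong\; P/F.
\]

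Granted this identification, the conclusion is immediate from Remark \ref{remark: int and sat} (1): the quotient of an integral (resp. saturated) monoid by any submonoid is again integral (resp. saturated). The only step requiring any real verification is the computation $\overline{\mM}_{X, \bar{x}} \cong P/F$, which, while routine, is the only nontrivial ingredient and will constitute the main part of the write-up.
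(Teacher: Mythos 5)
Your proposal is correct and follows essentially the same path as the paper: reduce to stalks via the equivalence with $\overline{\mM}_{X,\bar{x}}$, identify $\overline{\mM}_{X,\bar{x}} \cong P/(\alpha\circ\theta)^{-1}(\mO^\times_{X_{\ett},\bar{x}})$ using the chart, and invoke the fact that integrality (resp.\ saturatedness) passes to quotients by submonoids. The paper cites Ogus Prop.\ I.1.3.4/I.1.3.5 where you cite Remark~\ref{remark: int and sat}~(1), but these are the same statement, so there is no real difference in the argument.
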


\begin{proof}
Suppose $(X, \mM_X, \alpha)$ admits a chart modeled on an integral (resp. saturated) monoid $P$. For every geometric point $\bar{x}\in X$, we have an isomorphism 
\[P/(\alpha\circ \theta)^{-1}(\mO^{\times}_{X_{\ett}, \bar{x}})\cong \overline{\mM}_{X, \bar{x}}.\] The desired statement then follows from Proposition \ref{prop: integrality and saturatedness on stalks} and \cite[Proposition I.1.3.4]{Ogus} (resp. \cite[Proposition I.1.3.5]{Ogus}).
\end{proof}

\begin{proposition}\label{prop: fs log adic space admits exact sharp fs chart}
Let $(X, \mM_X, \alpha)$ be an fs log adic space. Then, \'etale locally at every geometric point $\bar{x}$, $X$ admits a chart modeled on $\overline{\mM}_{X, \bar{x}}$. In particular, an fs log adic space, \'etale locally, always admits an exact chart modeled on a toric monoid.
\end{proposition}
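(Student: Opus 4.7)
The plan is to construct, on an \'etale neighborhood $U$ of $\bar{x}$, a monoid-theoretic section $s: \overline{\mM}_{X, \bar{x}} \to \mM_X(U)$ of the natural projection, and to verify that the resulting pre-log structure $(\overline{\mM}_{X, \bar{x}})_U \xrightarrow{\sigma} \mM_U \xrightarrow{\alpha} \mO_{U_{\ett}}$, where $\sigma$ is the sheaf-theoretic morphism induced by $s$, is a chart. The first observation is that $\overline{\mM}_{X, \bar{x}}$ is toric: by Proposition~\ref{prop: integrality and saturatedness on stalks} the stalk $\mM_{X, \bar{x}}$ is fs, and the quotient by its units is sharp fs by \cite[Proposition I.1.3.5]{Ogus}, hence finitely generated by, say, $q_1, \ldots, q_r$. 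A monoid-theoretic section $s$ of the projection $\pi: \mM_{X, \bar{x}} \twoheadrightarrow \overline{\mM}_{X, \bar{x}}$ is then provided by \cite[Lemma 2.1.10]{DLLZ}.

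Next, I would extend $s$ to a sheaf map on a neighborhood. Each $s(q_i)$ is a germ represented by a section of $\mM_X$ on some \'etale neighborhood $U_i$ of $\bar{x}$; after taking the fiber product $U := U_1 \times_X \cdots \times_X U_r$ and shrinking further so that the finitely many monoid relations satisfied by $s(q_1), \ldots, s(q_r)$ at the stalk continue to hold as equalities of sections on $U$, these lifts assemble into a morphism $\sigma: (\overline{\mM}_{X, \bar{x}})_U \to \mM_U$ of sheaves of monoids. It remains to verify (a) that $\alpha \circ \sigma$ lands in $\mO_{U_{\ett}}^{+}$, and (b) that the associated log structure of $\alpha \circ \sigma$ is canonically $\mM_U$. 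For (a), I would pick an auxiliary fs chart $\theta: P_V \to \mM_V$ on some smaller \'etale neighborhood $V \subseteq U$; \'etale locally on $V$, each section $\sigma(q_i)$ can be written as $\theta(p_i) \cdot u_i$ with $p_i \in P$ mapping to $q_i$ under the induced map $P \to \overline{\mM}_{X, \bar{x}}$, and $u_i \in \alpha^{-1}(\mO^{\times}_{V_{\ett}})$. One then uses the freedom in choosing $s$ (it is only determined up to units of $\mM_{X, \bar{x}}$) to replace $\sigma(q_i)$ by $\theta(p_i)$, whose $\alpha$-image lies in $\mO^+$ by the chart axiom.

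For (b), the verification reduces to stalks. Since $s$ is a section of $\pi$ and $\pi^{-1}(0) = \mM^{\times}_{X, \bar{x}}$, the preimage $(\alpha \circ \sigma)^{-1}(\mO^{\times})_{\bar{x}}$ is $\{0\} \subset \overline{\mM}_{X, \bar{x}}$, so the associated log structure at the stalk is the trivial pushout $\mO^{\times}_{X, \bar{x}} \oplus \overline{\mM}_{X, \bar{x}}$, and the canonical map $(u, q) \mapsto u \cdot s(q)$ into $\mM_{X, \bar{x}}$ is a bijection (surjectivity: any $m \in \mM_{X, \bar{x}}$ differs from $s(\pi(m))$ by an element of $\mM^{\times}$; injectivity: apply $\pi$). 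By coherence of the sheaves involved, this stalk isomorphism propagates \'etale locally. The ``in particular'' assertion then follows because $\overline{\mM}_{X, \bar{x}}$ is toric and the chart is exact by construction: on stalks at $\bar{x}$ the induced homomorphism is the identity. The main obstacle is step (a): the section $s$ exists as a monoid section but a priori only maps into $\mO$, and one must use the auxiliary fs chart $\theta$ and track the unit ambiguity carefully to ensure the replacement still yields a monoid homomorphism rather than a mere set-theoretic lift.
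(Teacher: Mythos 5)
Your overall plan---split the projection $\pi\colon \mM_{X, \bar x} \twoheadrightarrow \overline{\mM}_{X, \bar x}$, spread the section to an \'etale neighborhood, and verify the chart axioms---is the right one, and is essentially [DLLZ, Proposition 2.3.13], which the paper cites without reproducing the argument. The issue you flag as the main obstacle in step (a), however, is a real gap: replacing $\sigma(q_i)$ by $\theta(p_i)$ on a generating set need not produce a monoid homomorphism, since arbitrary lifts $p_i \in P$ of the $q_i$ will in general satisfy the defining relations of $\overline{\mM}_{X, \bar x}$ only up to elements of the face $F := \theta^{-1}(\mM_{X, \bar x}^{\times}) \subset P$. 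Equivalently, the replacement amounts to twisting $s$ by the rule $q_i \mapsto u_i^{-1}$, and nothing in your argument guarantees this extends to a group homomorphism $\overline{\mM}_{X, \bar x}^{\gp} \to \mM_{X, \bar x}^{\times}$, which is what a legitimate change of splitting would require. (There is also a small citation slip earlier: Proposition~\ref{prop: integrality and saturatedness on stalks} gives saturatedness but not finite generation of $\overline{\mM}_{X, \bar x}$; the latter comes from the existence of an fs chart $P$, which surjects onto $\overline{\mM}_{X, \bar x}$.)

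More importantly, the auxiliary chart $\theta$ is not actually needed for (a): the $\mO^+$-condition is automatic. For $0 \neq q \in \overline{\mM}_{X, \bar x}$, the element $s(q)$ is not a unit of $\mM_{X, \bar x}$ (since $\pi(s(q)) = q \neq 0$ and $\pi^{-1}(0) = \mM_{X, \bar x}^{\times}$), and the log-structure axiom $\alpha^{-1}(\mO^{\times}) \cong \mO^{\times}$ identifies $\mM_{X, \bar x}^{\times}$ with $\alpha^{-1}(\mO_{X, \bar x}^{\times})$; hence $\alpha(s(q))$ is a non-unit of the local ring $\mO_{X, \bar x}$ and lies in its maximal ideal. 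Any such germ $f$ vanishes at $\bar x$, so $\bar x$ lies in the rational open $\{\,|f| \leq 1\,\}$ of a suitable affinoid \'etale neighborhood, on which $f$ is already a section of $\mO^+$; thus $f \in \mO^+_{X, \bar x}$. Together with $\alpha(s(0)) = 1 \in \mO^+$ and the finite generation of $\overline{\mM}_{X, \bar x}$, this gives $\alpha\circ\sigma\big(\overline{\mM}_{X, \bar x}\big) \subset \mO^+_U$ after shrinking $U$, with no reference to $\theta$. The auxiliary chart (or a propagation lemma for fine log structures) is only needed to globalize the stalk isomorphism in step (b), which you handle correctly.
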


\bproof 
This is \cite[Proposition 2.3.13]{DLLZ}.
\eproof 

\begin{proposition}\label{prop: pseudo-sat chart}
Let $f: (Y, \mM_Y, \alpha_Y) \to (X, \mM_X, \alpha_X)$ be a morphism of fs log adic spaces. Suppose $X$ admits an exact chart modeled on a toric monoid $P$. (According to Proposition \ref{prop: fs log adic space admits exact sharp fs chart}, such charts always exist \'etale locally on an fs log adic space.) The following are equivalent:
\begin{enumerate}
\item $f$ is integral (resp. quasi-saturated; resp. pseudo-saturated).
\item \'Etale locally on $Y$, $f$ admits a chart modeled on an integral (resp. quasi-saturated; resp. pseudo-saturated) homomorphism $u:P\rightarrow Q$ of fs monoids.
\end{enumerate}
\end{proposition}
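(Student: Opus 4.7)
The argument splits into two directions.

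Direction (2) $\Rightarrow$ (1) amounts to a stalkwise verification. Given a chart of $f$ modeled on $u: P \to Q$ of the desired type and a geometric point $\bar y$ of $Y$ with $\bar x := f(\bar y)$, the preimages of units
\[
F := (\alpha_X \theta_X)^{-1}(\mO_{X, \bar x}^\times) \subset P, \qquad G := (\alpha_Y \theta_Y)^{-1}(\mO_{Y, \bar y}^\times) \subset Q
\]
are faces, and commutativity of the chart diagram forces $u(F) \subset G$. Exactness of the chart of $X$ together with $P$ being sharp gives $\cl \mM_{X, \bar x} \cong P/F$, and $\cl \mM_{Y, \bar y} \cong Q/G$ in general. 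Writing $P/F = F^{-1}P / F^{\gp}$ and $Q/G = G^{-1}Q / G^{\gp}$ (which is possible since $F$ and $G$ are faces of integral monoids), two applications of Lemma \ref{lemma: pseudo_satured_modulo_units} — first part (2) with $S=F$, $T=G$, then part (1) with the subgroups $F^{\gp} \subset (F^{-1}P)^{\times}$ and $G^{\gp} \subset (G^{-1}Q)^{\times}$ — transfer pseudo-saturated-ness (resp.\ quasi-saturated-ness) from $u$ to the induced map $\cl u: \cl \mM_{X, \bar x} \to \cl \mM_{Y, \bar y}$. The integral case is handled similarly, using the classical stability of integral monoid homomorphisms under localization and quotient by subgroups of units (which can be deduced from the flatness criterion in Proposition \ref{prop:integral_Kato}).

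For direction (1) $\Rightarrow$ (2), my strategy is to construct the chart of $f$ from the exact chart of $X$ together with a toric exact chart of $Y$ provided by Proposition \ref{prop: fs log adic space admits exact sharp fs chart}. At a geometric point $\bar y$ of $Y$ with image $\bar x = f(\bar y)$, pick an \'etale neighborhood $V \to Y$ of $\bar y$ where $Y$ admits an exact chart $\theta_Y: Q_V \to \mM_V$ modeled on the toric monoid $Q := \cl \mM_{Y, \bar y}$. The reduced stalk map gives a homomorphism of toric (hence fs) monoids $u_0: P = \cl \mM_{X, \bar x} \to \cl \mM_{Y, \bar y} = Q$, which is of the desired type by the hypothesis on $f$. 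It remains to promote $u_0$ to an honest chart of $f$. The two candidate morphisms $P_V \to \mM_V$ — namely, the pullback along $f$ of the chart of $X$, and the composite $P_V \xrightarrow{u_0} Q_V \xrightarrow{\theta_Y} \mM_V$ — agree on reduced stalks at $\bar y$ by construction, so they differ at $\bar y$ by a monoid map $\phi: P \to \mO_{Y, \bar y}^\times$. Because $P$ is finitely generated, $\phi$ lifts to a map $P \to \mO_V^\times$ after further shrinking $V$; absorbing this unit-valued discrepancy by the standard chart-amalgamation construction (cf.\ \cite[\S 2.3]{DLLZ} and \cite[Chapter II]{Ogus}) yields a chart of $f$ modeled on $u_0$.

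The main obstacle is the chart-amalgamation step in (1) $\Rightarrow$ (2): one needs to arrange for the chart diagram to commute on the nose rather than merely modulo units. This is a well-known technicality in log geometry, and the key ingredients are that the obstruction takes values in $\mO_Y^{\times}$ (so can be lifted from a stalk to an \'etale neighborhood since $P$ is finitely generated and $\mO^{\times}$ is an \'etale sheaf of groups), and that twisting a chart by such a unit-valued cocycle preserves both the toric, fs nature of $Q$ and the combinatorial type (integral, quasi-saturated, or pseudo-saturated) of $u_0$. Once this technicality is handled, the combinatorial properties of $u_0$ at the level of sharp quotients, established in the first paragraph, are exactly what is needed to match the required type in the chart.
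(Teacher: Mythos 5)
Your direction (2) $\Rightarrow$ (1) follows the same route as the paper: you record that $F=\{0\}$ by exactness of the chart on $X$ (this simplification should be stated, but it is correct), then localize $Q$ at $G$ and quotient by $G^{\gp}$ to transfer the combinatorial property of $u$ down to the sharp stalk map, using Lemma \ref{lemma: pseudo_satured_modulo_units} exactly as the paper does.

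Your direction (1) $\Rightarrow$ (2), however, has a genuine gap at the ``absorption'' step. You choose an exact chart $\theta_Y$ of $Y$ modeled on the toric monoid $Q:=\cl\mM_{Y,\bar y}$ and attempt to produce a chart of $f$ modeled on $u_0\colon P\to Q$ by removing the unit discrepancy $\phi\colon P\to\mO_V^\times$. But making the chart diagram commute with this particular $Q$ requires writing $\phi$ as $\psi\circ u_0^{\gp}$ (up to twisting $\theta_X$ by $f^{\#}\chi$) for some $\psi\colon Q^{\gp}\to\mO_V^\times$, $\chi\colon P^{\gp}\to\mO_X^\times$. This is not automatic: $u_0^{\gp}$ is not injective in general, and even when it is, the inclusion $P^{\gp}\hookrightarrow Q^{\gp}$ need not split, so the lifting problem involves $\mathrm{Ext}^1$ into $\mO^\times$, which is not an injective abelian group. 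Your appeal to the ``standard chart-amalgamation construction'' does not supply the missing step: that construction produces a chart of $f$ modeled on a homomorphism $P\to Q'$ where $Q'$ is built from a finitely generated submonoid of $(P\oplus Q_0)^{\gp}$; it is in general strictly larger than $\cl\mM_{Y,\bar y}$ and in particular not sharp, so it is not $u_0$ itself. The paper sidesteps this entirely by \emph{citing} \cite[Prop.~2.3.21 and the proof of Prop.~2.3.22]{DLLZ} to obtain some fs chart $u\colon P\to Q$ of $f$ (this is where the nontrivial amalgamation technicality lives) and then localizing $Q$ at the face $T:=\theta_Y^{-1}(\mM_{Y,\bar y}^\times)$, so that $T^{-1}Q$ is an fs (non-sharp) monoid with $\overline{T^{-1}Q}\cong\cl\mM_{Y,\bar y}$; the Lemma then reduces the combinatorial type of $u$ to that of $u_0$. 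Your computation that $u_0$ has the right type is correct and is exactly the last step, but you still need the existence of \emph{some} fs chart of $f$ before you can reduce to it, and that is what the DLLZ citation provides.
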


\begin{proof}
We first prove (1) $\Rightarrow$ (2). By \cite[Proposition 2.3.21]{DLLZ} and the proof of \cite[Proposition 2.3.22]{DLLZ}, \'etale locally at every geometric point $\bar{y}\in Y$, the morphism $f$ is modeled on an fs chart $u:P\rightarrow Q$ for some fs monoid $Q$. By taking localization of $Q$ with respect to the kernel of the surjection $Q\rightarrow \overline{\mM}_{Y, \bar{y}}$, we may assume that $\overline{Q}=\overline{\mM}_{Y, \bar{y}}$ \'etale locally on $Y$. Also notice that $P=\overline{\mM}_{X, \bar{x}}$ for $\bar{x}=f(\bar{y})$. By \cite[Proposition I.2.5]{Tsuji} (resp. Lemma \ref{lemma: pseudo_satured_modulo_units} (1)), $P\rightarrow Q$ is integral (resp. quasi-saturated; resp. pseudo-saturated) if and only if $P\rightarrow \overline{Q}=\overline{\mM}_{Y, \bar{y}}$ is. But the last homomorphism is indeed integral (resp. quasi-saturated; resp. pseudo-saturated) by assumption.

Now we prove (2) $\Rightarrow$ (1). Suppose $u:P\rightarrow Q$ is an integral (resp. quasi-saturated; resp. pseudo-saturated) chart \'etale locally around $\bar{y}\in Y$. Let $\bar{x}=f(\bar{y})$. Taking the same type of localization as above and apply \cite[Proposition I.2.7]{Tsuji} (resp. Lemma \ref{lemma: pseudo_satured_modulo_units} (2)), we may assume that $\overline{Q}=\overline{\mM}_{Y, \bar{y}}$. In this case $P=\overline{\mM}_{X, \bar{x}}\rightarrow \overline{\mM}_{Y, \bar{y}}=\overline{Q}$ must be integral (resp. quasi-saturated; resp. pseudo-saturated) by applying \cite[Proposition I.2.5]{Tsuji} (resp. Lemma \ref{lemma: pseudo_satured_modulo_units} (1)) once again.
\end{proof}

Finally, let us recall the notion of log smooth and Kummer \'etale morphisms between (noetherian) fs log adic spaces. 

\begin{definition}[\cite{DLLZ}, Definition 3.1.1, Definition 4.1.2]\label{definition: Kummer etale morphism fs case}
Let $f: Y\rightarrow X$ be a morphism between locally noetherian fs log adic spaces.  
\begin{enumerate}
\item We say that $f$ is \emph{log smooth} if, \'etale locally on $Y$ and $X$, the morphism $f$ admits an fs chart $u: P \rightarrow Q$ such that
\begin{itemize}
\item the kernel and the torsion part of the cokernel of $u^{\mathrm{gp}}: P^{\mathrm{gp}} \rightarrow Q^{\mathrm{gp}}$ are finite groups of order invertible in $\mO_X$; and
\item the morphism \[Y \rightarrow X \times_{X\langle P\rangle} X\langle Q\rangle\] of log adic spaces (induced by $f$ and $u$) is strictly \'etale.
\end{itemize}
\item  A morphism (resp. finite morphism) $f: Y \rightarrow X$ of locally noetherian fs log adic spaces is called \emph{Kummer \'etale} (resp. \emph{finite Kummer \'etale}) if it admits, \'etale locally on $X$ and $Y$ (resp. \'etale locally on $X$), an fs chart modeled on an injective homomorphism $u: P \rightarrow Q$ such that 
\begin{itemize}
\item for any $q\in Q$, there exists some integer $n\geq 1$ such that $nq\in u(P)$; 
\item the order of the finite group $Q^{\mathrm{gp}} / u^{\mathrm{gp}} (P^{\mathrm{gp}})$ is invertible in $\mO_X$; and
\item the morphism 
\[Y \rightarrow X \times_{X\langle P\rangle} X\langle Q\rangle\] of log adic spaces, induced by $f$ and $u$, is strictly \'etale (resp. strictly finite \'etale).
\end{itemize}
\item A morphism $f: Y \rightarrow X$ of locally noetherian fs log adic spaces is called \emph{standard Kummer \'etale} if $X$ admits a chart modeled on an fs monoid $P$ and \[Y=X\times_{X\langle P\rangle}X\langle Q\rangle\] for some injective homomorphism $u:P\rightarrow Q$ satisfying the conditions in (2).
\end{enumerate}
\end{definition}

We end this section with some examples of log adic spaces.   

\begin{definition}\label{example: log points}
\begin{enumerate}
\item A \emph{log point} is a log adic space whose underlying adic space is $\spa(l,l^+)$ where $l$ is a non-archimedean local field. An \emph{fs log point} is a log point which is also an fs log adic space.
\item A \emph{split log point} is a log point of the form 
\[(X, \mM_X)\cong (\spa(l,l^+), \mO_{X_{\ett}}^{\times}\oplus P_X)\] for some (necessarily) sharp monoid $P$ and such that $P$ is a chart for $(X, \mM_X)$. Sometimes, we simply write $\spa(l,l^+)_P$ for such a split log point. 
\end{enumerate}
\end{definition}

\br \label{example: log points fs}
By Proposition \ref{prop: fs log adic space admits exact sharp fs chart}, every fs log point is necessarily of the form $\spa(l,l^+)_P$ where $P=\overline{M}$ is a toric monoid.
\er 

\br If $l$ is in addition separably closed, then the \'etale topos of the point  $\spa(l,l^+)$ is equivalent to the category of sets. 
In this case, giving a log structure $\mM$ on $\spa(l,l^+)$ is equivalent to giving a homomorphism of monoids $\alpha: M\rightarrow l$ inducing an isomorphism $\alpha^{-1}(l^{\times})\xrightarrow[]{\sim} l^{\times}$ where $M=\Gamma(\spa(l,l^+), \mM)$. We often write $(\spa(l,l^+), M)$ instead of $(\spa(l,l^+), \mM)$.
\er 

%%% This is not completely true
\iffalse
\br \label{example: log points algebraically closed}
If $l$ is algebraically closed, by \cite[Remark III.1.5.3]{Ogus}, the homomorphism $M\rightarrow \overline{M}=M/\alpha^{-1}(l^{\times})$ splits for any log point $(\spa(l,l^+), M)$. Consequently, every such log point must be a split log point, modeled on the chart $P=\overline{M}$.
\er 
\fi
%%%%

\begin{example}\label{example: chart}
\begin{enumerate}
\item Let $(R, R^+)$ be a Huber pair and let $P$ be a monoid. If 
\[X=\spa(R\langle P\rangle, R^+\langle P\rangle)\] is \'etale sheafy, then the natural monoid homomorphism $P\rightarrow R\langle P\rangle$ induces a pre-log structure $P_X\rightarrow \mO_{X_{\ett}}$ on $X$, whose associated log structure is denoted by $P^{\log}$. 
\item Let $(X, \mM_X)$ be a log adic space over some affinoid \'etale sheafy adic space $\spa(R, R^+)$ and suppose that $\spa(R\langle P\rangle, R^+\langle P\rangle)$ is \'etale sheafy for some monoid $P$. Then a morphism $P_X\rightarrow \mM_X$ of sheaves of monoids is a chart if and only if the induced morphism 
\[(X, \mM_X)\rightarrow (\spa(R\langle P\rangle, R^+\langle P\rangle), P^{\log})\] is strict. 
\end{enumerate}
\end{example}

\begin{example}\label{example: X<P>}
Let $X$ be a locally noetherian adic space equipped with the trivial log structure and let $P$ be a finitely generated monoid. For every affinoid open $\spa(R, R^+)$ of $X$, the Huber pair $(R\langle P\rangle, R^+\langle P\rangle)$ is \'etale sheafy. Glueing the morphisms \[(\spa(R\langle P\rangle, R^+\langle P\rangle), P^{\log})\rightarrow \spa(R, R^+),\] we obtain a morphism $Y\rightarrow X$ of log adic spaces, which we shall denote by 
\[X\langle P\rangle\rightarrow X.\]
\end{example}

%%%%%%%%%%%%%%%%%%
%%%%%%%%%%%%%%%%%%
%%%%%%%%%%%%%%%%%%
%%%%%%%%%%%%%%%%%%
%%%%%%%%%%%%%%%%%%
%%%%%%%%%%%%%%%%%%
%%%%%%%%%%%%%%%%%%
%%%%%%%%%%%%%%%%%%
%%%%%%%%%%%%%%%%%%
%%%%%%%%%%%%%%%%%%
%%%%%%%%%%%%%%%%%%
%%%%%%%%%%%%%%%%%%
%%%%%%%%%%%%%%%%%%
%%%%%%%%%%%%%%%%%%    Log Cotangent complex
%%%%%%%%%%%%%%%%%%
%%%%%%%%%%%%%%%%%%
%%%%%%%%%%%%%%%%%%
%%%%%%%%%%%%%%%%%%
%%%%%%%%%%%%%%%%%%
%%%%%%%%%%%%%%%%%%
%%%%%%%%%%%%%%%%%%
%%%%%%%%%%%%%%%%%%
%%%%%%%%%%%%%%%%%%
%%%%%%%%%%%%%%%%%%
%%%%%%%%%%%%%%%%%%
%%%%%%%%%%%%%%%%%%
%%%%%%%%%%%%%%%%%%
%%%%%%%%%%%%%%%%%%
%%%%%%%%%%%%%%%%%%
%%%%%%%%%%%%%%%%%%

\newpage

\section{Log cotangent complexes} \label{section: log quasisyntomic site}

In this section, we review some relevant properties of log cotangent complexes, following \cite{Bhatt_dR, Olsson_log} and \cite[Section 2]{logprism}. In particular, we recall that log cotangent complexes satisfy a certain form of ``flat descent'' (cf. Subsection \ref{ss:flat_descent}). 
We also review the logarithmic analogue of the quasisyntomic site developed in \cite{logprism}. In particular, we compute log cotangent complexes of certain perfectoid and ``quasiregular semiperfectoid'' pre-log rings, the latter form a basis of the log quasisyntomic site. These computations will be helpful for the setup of the Hodge--Tate comparison in Section \ref{section:HT_primitive}, and for the study of the derived version of log $\Ainf$-cohomology in Section \ref{section:derived}.

\subsection{Pre-log rings} \noindent 
\label{ss:cotangent_complex}

\begin{definition} \label{definition:definition_pre_log_rings}
\begin{enumerate}
\item A \emph{pre-log ring} is a triple $(R, M, \alpha)$ consisting of a ring $R$, a monoid $M$ together with a monoid homomorphism $\alpha: M \rightarrow R$ from $M$ to the underlying multiplicative monoid of $R$. When the context is clear, we simply write $(R, M)$ or just $\underline{R}$ for the pre-log ring. 
\item A pre-log ring $(R, M, \alpha)$ is a \emph{log ring} if the induced homomorphism \[\alpha^{-1} (R^\times) \ra R^\times\] is an isomorphism. For a pre-log ring $(R, M, \alpha)$, we write $(R, M, \alpha)^a$ for the associated log ring. 
\item A pre-log ring $(R, M, \alpha)$ is \emph{integral} (resp., \emph{saturated}) if $M$ is integral (resp., saturated). 
\item Let $R$ be a ring and let $P$ be a monoid. We write $R [\ul P] $ for the pre-log ring $(P, R[P], e)$ where the monoid homomorphism $e: P\rightarrow R[P]$ is the canonical map. Similarly, if $(R, R^+)$ is a Huber pair, we write $R \gr{\ul P}$ for the pre-log ring $(P, R \gr{P}, e)$. % where the monoid homomorphism $e: P \ra R \gr{P}$ is the natural one. 
\item A pre-log ring $(R, M)$ is \emph{perfectoid} (resp. \emph{divisible perfectoid}) if both $R$ and $M$ are perfectoid (resp. if $R$ is perfectoid and $M$ is divisible).  
\item A pre-log ring $(R, M)$ is \emph{split} if the  log structure on $X = \spec R$ associated to the constant pre-log struture $M_X \ra \mO_{X_{\ett}}$ satisfies \[\mM_X \cong \mO_{X_{\ett}}^\times \oplus M_X.\] 
\end{enumerate}
\end{definition}

\begin{example}\label{example: pre-log rings}
Let $(R, R^+)$ be an \'etale sheafy Huber pair and let $X=\spa(R, R^+)$. Let $(R^+, P, \alpha)$ be a pre-log ring. Then there is a log structure on $X$ associated with the pre-log structure $P_X\rightarrow \mO_{X_{\ett}}$ induced from $P\xrightarrow[]{\alpha} R^+\subset R$. In particular, $P$ is a chart for this log structure. 
\end{example}

\begin{notation}\label{notation:forget_to_monoid_and_algebra}
 
\begin{enumerate}
\item  Let $\textup{Alg}^\textup{prelog}$ denote the category of pre-log rings with the obvious morphisms, and write 
\[
\textup{Forget}^{\textup{Alg}^\textup{prelog}}_{\textup{Set}\times \textup{Set}}:\textup{Alg}^\textup{prelog} \lra \textup{Set}\times \textup{Set}\]
and
\[
\textup{Forget}^{\textup{Alg}^\textup{prelog}}_{\textup{Mon}\times \textup{Alg}}:\textup{Alg}^\textup{prelog} \lra  \textup{Mon}\times \textup{Alg}  
\]
for the forgetful functors sending $(R, M, \alpha) \longmapsto (M,R)$. 
\item  For a map of pre-log rings 
\[\underline{R}=(R, M_R, \alpha_R)\to \underline{S}=(S, M_S, \alpha_S)\] in $ \textup{Alg}^\textup{prelog}$, we use $\Omega^{\mathrm{log}}_{\ul S/\ul R}$ to denote the $S$-module of \emph{log differentials} (cf. \cite[Section IV.1.1]{Ogus}). There is an identification
\[ \Omega^{\mathrm{log}}_{\underline{S}/\underline{R}} \cong  (\Omega^1_{S/R}\oplus (\textup{coker}(u^{\gp})\otimes_{\mathbb{Z}} S)/\big( (d\alpha_S(m),0)-(0,m\otimes \alpha_S(m)) \big),\] where $u: M_R \ra M_S$ denotes the map on monoids. 
\item For any $i\in \Z_{\geq 0}$, we write \[\Omega^{\mathrm{log}, i}_{\underline{S}/\underline{R}}:=\wedge^i_S\, \Omega^{\mathrm{log}}_{\underline{S}/\underline{R}}.\]  We  use $\Omega^{\mathrm{log}, \bullet}_{\underline{S}/\underline{R}}$ to denote the corresponding \emph{log de Rham complex}
%\[0\rightarrow S\rightarrow \Omega^{\mathrm{log}, 1}_{\underline{S}/\underline{R}}\rightarrow \Omega^{\mathrm{log}, 2}_{\underline{S}/\underline{R}}\rightarrow \cdots \]
which is equipped with a multiplication and a descending Hodge filtration. We often drop the supscript and write $\Omega_{\ul S/\ul R}^{(\bullet)}$ for $\Omega^{\log, (\bullet)}_{\ul S/\ul R}$ when the context is clear. 
\end{enumerate}
\end{notation}

We recall a particularly convenient class of morphisms of pre-log rings called \emph{homologically log flat morphisms} introduced in \cite{logprism} (also see  \cite{Bhatt_dR}). This notion will be used in the definition of the log quasisyntomic site in Section \ref{ss: log quasisyntomic site} and the descent results within the site.

\begin{definition}  \label{definition:hom_flat}
\be 
\item A morphism $\ul R \ra \ul S$ of pre-log rings is \emph{homologically log flat} if for every morphism $\underline{R}\to \underline{S'}$ of pre-log rings, the canonical morphism
\[ \underline{S'}\sqcup^{\L}_{\underline{R}} \underline{S} \lra \underline{S'}\sqcup_{\underline{R}} \underline{S}\]
from the homotopy pushout to the naive pushout is an isomorphism.  
\item A map $\underline{R}\to\underline{S}$ of pre-log rings is \emph{homologically log faithfully flat} if it is homologically log flat and the underlying map of rings $R\to S$ is faithfully flat.
\ee 
\end{definition}

\br[{\cite[Remark 2.41]{logprism}}]
The notion of homologically log flat is different from Kato's notion of log flatness from \cite[Definition 1.10]{Kato2} (thus the unfortunate choice of terminology). For example, a map $(k[P], P) \ra (k[Q], Q)$ induced from a map of monoids $P \ra Q$ that is injective but non-integral is log flat but not homologically log flat. In particular, we may take $P \subset Q = \N^2$ to be the submonoid generated by $(2, 0), (0, 2), (1, 1)$. On the other hand, let $(k, \N)$ (resp. $(k, \N^2)$) be the pre-log ring where all nonzero elements in the monoid map to $0 \in k$. Then the map $(k, \N) \ra (k, \N^2)$ induced by the diagonal map 
\[\N \xrightarrow{1 \mapsto (1, 1)} \N^2\] is homologically log flat but not log flat.
\er 
 
\br 
The functor $\textup{Forget}^{s\textup{Alg}^\textup{prelog}}_{s\textup{Mon}\times s\textup{Alg}}$ is in fact a left Quillen functor and commutes with homotopy colimits (see \cite[Proposition 5.5]{Bhatt_dR}). Therefore, a map 
\[\underline{R} = (R, M_R) \lra \underline{S} = (S, M_S)\]  in $\tu{Alg}^{\tu{prelog}}$ is homologically log flat if and only if $R\to S$ is a flat map of rings and $M_R\to M_S$ is a flat morphism of monoids (in the sense of Definition \ref{definition:flat}).
\er 

\bl 
An integral morphism between integral monoids is flat.  In particular, if $R\to S$ is flat and $M_R\to M_S$ is an integral morphism of integral monoids, then $\underline{R}\to\underline{S}$ is homologically log flat. 
\el 
\bproof 
This follows from  \cite[Proposition 4.1]{Kato} and \cite[Proposition 4.9 \& Example 4.10]{Bhatt_dR}. 
\eproof

\subsection{Log cotangent complexes}
\noindent 

\noindent 
The log differential ${\Omega}_{\underline{S}/\underline{R}}$ and log de Rham complex ${\Omega}^\bullet_{\underline{S}/\underline{R}}$ both have derived versions, which we now recall. 

%\begin{proposition} The category $s\textup{Alg}^\textup{prelog}$ of simplicial pre-log rings admits a simplicial model structure, whose (trivial) fibrations are those maps which induce (trivial) fibrations after applying the forgetful functor $\textup{Forget}^{s\textup{Alg}^\textup{prelog}}_{s\textup{Set}\times s\textup{Set}}$. Under this model structure, both forgetful functors $\textup{Forget}^{s\textup{Alg}^\textup{prelog}}_{s\textup{Set}\times s\textup{Set}}$ and $\textup{Forget}^{s\textup{Alg}^\textup{prelog}}_{s\textup{Mon}\times s\textup{Alg}}$ are right Quillen functors with left adjoints $\textup{Free}_{s\textup{Alg}^\textup{prelog}}^{s\textup{Set}\times s\textup{Set}}$ and $\textup{Free}_{s\textup{Alg}^\textup{prelog}}^{s\textup{Mon}\times s\textup{Alg}}$ respectively.  \end{proposition} 
%\begin{proof} 
%This is \cite[Proposlition 5.3]{Bhatt_dR}. 
%\end{proof} 

%\begin{proposition}  \label{prop:left_Quillen}
%The functor $\textup{Forget}^{s\textup{Alg}^\textup{prelog}}_{s\textup{Mon}\times s\textup{Alg}}$ is also a \emph{left} Quillen functor. In particular, it commutes with homotopy colimits.
%\end{proposition} 
 
%This is \cite[Proposlition 5.5]{Bhatt_dR}. 

Let $\underline{R}\to \underline{S}$ be a map in $\textup{Alg}^\textup{prelog}$. By a \emph{projective resolution} of $\underline{S}$ as an $\underline{R}$-algebra, we mean a trivial fibration $\underline{P}_\bullet\to \underline{S}$ with $\underline{P}_\bullet$ cofibrant in $s\tu{Alg}^\tu{log}_{\underline{R}/}$. Note that such resolutions always exist. For example, the adjunction 
$$
(\tu{Free}_{s\tu{Alg}^\tu{log}_{\underline{R}/}}^{s\tu{Set}\times s\tu{Set}},\tu{Forget}^{s\tu{Alg}^\tu{log}_{\underline{R}/}}_{s\tu{Set}\times s\tu{Set}})
$$ 
provides a projective resolution, which is called the canonical free resolution.
Moreover, any two such projective resolutions are homotopy equivalent.

\bd[Gabber]
For a projective resolution $\underline{P}_\bullet\to \underline{S}$ as above, consider the simplicial $P_\bullet$-module ${\Omega}_{\underline{P}_\bullet/\underline{R}}$. The \emph{log cotangent complex} $\mathbb{L}_{\underline{S}/\underline{R}}$ is defined as 
$$ {\Omega}_{\underline{P}_\bullet/\underline{R}}\otimes_{ P_\bullet } S,$$ viewed as an object in the derived $\infty$-category $D(S)$ of $S$-modules via the Dold-Kan correspondence.  The resulting object does not depend on the choice of the projective resolution.

\ed

\bd[Bhatt]
Similarly, the \emph{derived log de Rham complex} $\mathbb{L}{\Omega}_{\underline{S}/\underline{R}}$ is defined to be the simple complex (in other words,  homotopy colimit) associated to the simplicial complex $$ n \mapsto {\Omega}^\bullet_{ \underline{P}_n/\underline{R} }. $$ Equivalently, one may obtain a bicomplex from ${\Omega}^\bullet_{ \underline{P}_\bullet/\underline{R} }$ via the Dold-Kan correspondence, and take $\mathbb{L}{\Omega}_{\underline{S}/\underline{R}}$ to be the associated total complex. 
\ed 

\br 
By construction, $\L {\Omega}_{\ul S/\ul R}$ is an $E_\infty$-$R$-algebra equipped with a separated decreasing Hodge filtration whose graded piece is given by $\mathbb{L}{\Omega}^i_{\underline{S}/\underline{R}}:=\wedge^i_{S} \mathbb{L}_{\underline{S}/\underline{R}}$. We write $\widehat{\L} {\Omega}^{(h)}_{\underline{S}/\underline{R}} $  for the completion (in other words,  homotopy limit) of this Hodge filtration.
\er

\begin{remark} The above constructions generalize to the case when $\underline{S}$ is replaced by an object $\underline{S}_\bullet$ in $s\textup{Alg}^\textup{prelog}_{\underline{R}/}$. Namely, we form the simplicial complex $n\mapsto \mathbb L {\Omega}_{\underline{S}_n/\underline{R}}$ and then take homotopy colimit.
\end{remark}

\begin{variant}[$p$-completed log cotangent complex and $p$-completed derived log de Rham complex]
For a map $\ul R \ra \ul S$  in $\tu{Alg}^{\log}$,  we denote the derived $p$-adic completion of the log cotangent complex (resp. derived log de Rham complex) by $\widehat \L_{\ul S/\ul R}$ (resp. $\widehat \L {\Omega}_{\ul S/\ul R}$). 
\end{variant}

\br 
Gabber's log cotangent complexes as defined above enjoy similar functoriality properties as the usual cotangent complexes (see below). However, even in the log smooth case it may fail to be discrete. %There is another version of log cotangent complexes for morphisms between log schemes, constructed by Olsson in \cite{Olsson_log}, which behaves better for log smooth maps of pre-log rings, but in general it differs from (the quasi-coherent complex on $\spec S$ associated to) Gabber's construction $\L_{\ul S/\ul R}$, except in small degrees. Also see \cite[Remark 2.9]{logprism}.  
\er 

Nevertheless, we have the following 
\bl \label{lemma:compare_Gabber_with_Olsson}
Let $\ul R \ra \ul S$ be a log-smooth integral morphism of integral pre-log rings, then 
$$ \L_{\ul S/\ul R} \cong {\Omega}_{\ul S/\ul R}. $$
\el 

\bproof 
%For integral morphisms $\L_{\ul S/\ul R}$ agrees with Olsson's construction of log cotangent complexes by \cite[Theorem 8.27]{Olsson_log}, which agrees with the log differentials for log smooth maps. 
This is \cite[Lemma 2.10]{logprism}.
\eproof

\addtocontents{toc}{\protect\setcounter{tocdepth}{1}}
\subsection*{Functorial properties} %\label{ss:log_cotangent_functoriality} \noindent 

\noindent %Let us first compare log cotangent complexes with the usual cotangent complexes. 
Let $\ul R \ra \ul S$ be a map in $\tu{Alg}^{\log}$.  We have a morphism $\L_{S/R} \ra \L_{\ul S/\ul R}$ 
(resp. $\mathbb{L}\Omega_{S/R} \to  \mathbb L {\Omega}_{\underline{S}/\underline{R}}$) from the usual cotangent complex (resp. derived de Rham complex) to the log version induced by the obvious map $\Omega_{S/R}^1\to{\Omega}^1_{\underline{S}/\underline{R}}$.

\bl \label{lemma:no_additional_log}
The maps  
$$\L_{S/R} \ra \L_{\ul S/\ul R} \quad \text{ and } \quad \mathbb{L}\Omega_{S/R} \to  \mathbb L {\Omega}_{\underline{S}/\underline{R}}$$
are isomorphisms if $M_R\to M_S$ is an isomorphism.
\el 
\bproof This is \cite[Lemma 8.22]{Olsson_log} (also see \cite[Proposition 6.5]{Bhatt_dR}). 
\eproof

\bp[Invariance under passing to log structures] \label{prop:invariance_log}
Let $f: \ul R \ra \ul S $ be a map of pre-log rings and let $f^a: \ul R^a \ra \ul S^a$ be the induced map on the associated log rings. 
\be
\item The natural map $\L_{\ul S/\ul R} \isom \L_{\ul S^a/\ul R^a}$ is an equivalence.  
\item Suppose $\ul R, \ul S \in \textup{Alg}^{\log}_{\F_p /}$ and $M_R, M_S$ are integral, then the natural map 
$$\L{\Omega}_{\ul S/\ul R} \isom \L {\Omega}_{\ul S^a/\ul R^a}$$
is an equivalence. 
\ee
\ep

\bproof 
The first claim is \cite[Theorem 8.20]{Olsson_log}, the second is \cite[Corollary 7.5]{Bhatt_dR}. 
\eproof 

\br 
The second claim fails in characteristic $0$. For a counterexample one may take $\ul R = (\Q, 0)$ and $\ul S = (\Q[t, t^{-1}], 0)$. See \cite[Example 6.15]{Bhatt_dR} for details of this computation. 
\er

The log cotangent complex (resp. derived log de Rham complex) shares similar functoriality properties as the usual one, among which the following two are crucial for us. 

\begin{theorem} \label{thm:functorial_cotangent_complex}
\noindent 
% The log cotangent complex satisfies the following functorial  properties. 
\begin{itemize} 
	\item (transitivity triangle) For a sequence $\underline{R}\to \underline{S}_1 \to \underline{S}_2$ in $\textup{Alg}^\textup{prelog}$, the canonical sequence 
	$$\mathbb{L}_{\underline{S}_1/\underline{R}}\otimes^\L_{S_1} S_2 
	\lra \mathbb{L}_{\underline{S}_2/\underline{R}} 
	\lra  \mathbb{L}_{\underline{S}_2/\underline{S}_1}$$ is a homotopy fiber sequence.

	\item (base-change) Let $\underline{R}\to \underline{S}_i$ for $i=1,2$ be two objects in $\textup{Alg}^\textup{prelog}_{\underline{R}/}$, and let $\underline{R}\to \underline{S}$ be their homotopy coproduct, which is an object in $s\textup{Alg}^\textup{prelog}_{\underline{R}/}$. Then the natural map $\mathbb{L}{\Omega}_{\underline{S}_1/\underline{R}} \ra \mathbb{L}{\Omega}_{\underline{S}/\underline{S}_2}$ induces isomorphisms 
	$$\mathbb{L}{\Omega}_{\underline{S}_1/\underline{R}}\otimes^\L_{R} S_2 \simeq \mathbb{L}{\Omega}_{\underline{S}_1/\underline{R}}\otimes^\L_{S_1} S \simeq \mathbb{L}{\Omega}_{\underline{S}/\underline{S}_2}$$ between filtered $E_\infty$-$S$-algebras.
\end{itemize}
\end{theorem}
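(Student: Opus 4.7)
Both statements follow the classical strategy from the non-log setting: I would reduce to the case of free pre-log algebras via simplicial resolutions, and then verify those cases by direct inspection of the explicit formula for log differentials recalled in Notation \ref{notation:forget_to_monoid_and_algebra}.

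For the transitivity triangle, my plan is to choose compatible cofibrant resolutions --- first a cofibrant replacement $\ul P_\bullet \to \ul S_1$ in $s\tu{Alg}^\tu{prelog}_{\ul R/}$, then a cofibrant replacement $\ul Q_\bullet \to \ul S_2$ in $s\tu{Alg}^\tu{prelog}_{\ul P_\bullet/}$. Since cofibrations are stable under composition, $\ul Q_\bullet$ is cofibrant over $\ul R$, and since $\ul P_\bullet \to \ul S_1$ is a weak equivalence the base change $\ul Q_\bullet \otimes^{\L}_{\ul P_\bullet} \ul S_1$ gives a cofibrant resolution of $\ul S_2$ over $\ul S_1$. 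The three terms in the stated triangle can then be computed as the derived tensor product with $S_2$ of $\Omega_{\ul P_\bullet/\ul R} \otimes_{P_\bullet} Q_\bullet$, $\Omega_{\ul Q_\bullet/\ul R}$, and $\Omega_{\ul Q_\bullet/\ul P_\bullet}$ respectively. It then suffices to exhibit, at each simplicial level $n$, a short exact sequence
$$0 \lra \Omega_{\ul P_n/\ul R} \otimes_{P_n} Q_n \lra \Omega_{\ul Q_n/\ul R} \lra \Omega_{\ul Q_n/\ul P_n} \lra 0.$$
Since $\ul P_n \to \ul Q_n$ is a retract of a free extension, we may write $\ul Q_n \cong \ul P_n[\ul T_n]$ for some pair of sets $\ul T_n$, and the explicit formula for log differentials yields a split short exact sequence as required, with splitting provided by the decomposition of generators into those coming from $\ul P_n$ and those introduced by $\ul T_n$.

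For the base change statement, I would pick cofibrant resolutions $\ul P_\bullet^i \to \ul S_i$ over $\ul R$ for $i = 1, 2$, so that the homotopy coproduct $\ul S$ is modelled levelwise by $\ul P_\bullet^1 \sqcup_{\ul R} \ul P_\bullet^2$. For a free pre-log $\ul R$-algebra $\ul P^1 = \ul R[\ul T]$ and any $\ul R$-algebra $\ul P^2$, the pushout $\ul P^1 \sqcup_{\ul R} \ul P^2 \cong \ul P^2[\ul T]$ is free over $\ul P^2$, and its log de Rham complex is identified on the nose with $\Omega^\bullet_{\ul P^1/\ul R} \otimes_R P^2$ as a filtered $E_\infty$-algebra, again by inspection of the formula. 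Applying this degreewise and taking totalizations yields the required equivalences; compatibility with the Hodge filtration follows because each identification respects the grading on the de Rham complex.

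The main technical point in both parts is the free-algebra computation. This amounts to unpacking the formula for $\Omega^{\log}_{\ul S/\ul R}$ in Notation \ref{notation:forget_to_monoid_and_algebra} and carefully tracking the contribution of $\tu{coker}(u^\tu{gp})$ on the monoid side; aside from this bookkeeping, the argument proceeds entirely in parallel with the non-log case.
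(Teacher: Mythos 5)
The paper does not actually prove this theorem: it simply cites Olsson's \emph{loc.\ cit.}\ Theorem~8.18 for the transitivity triangle and Bhatt's \emph{loc.\ cit.}\ Proposition~6.12 for the base-change statement. Your proposal reconstructs the standard proof from scratch, which is essentially what those references do. Substantively the argument is sound, but a few technical points deserve tightening. First, in the transitivity step you pass from the cofibrant replacement $\ul Q_\bullet \to \ul S_2$ over $\ul P_\bullet$ to $\ul Q_\bullet \sqcup_{\ul P_\bullet} \ul S_1$ as a cofibrant resolution of $\ul S_2$ over $\ul S_1$; that this pushout still maps by a weak equivalence to $\ul S_2$ needs left properness (or the analogous flatness-of-cofibrations argument in the transferred model structure on simplicial pre-log rings), and this should be said explicitly. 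Second, the phrase ``since $\ul P_n \to \ul Q_n$ is a retract of a free extension, we may write $\ul Q_n \cong \ul P_n[\ul T_n]$'' is a non sequitur: a retract of a free extension is not itself free. Either take the canonical free resolution so that each level really is of the form $\ul P_n[\ul T_n]$, or note that $\Omega^{\log}$ preserves retracts and deduce the split exactness of the Jacobi--Zariski sequence for retracts from the free case. Third, in the base-change part you compute $\L\Omega_{\ul S/\ul S_2}$ using the bisimplicial object $\Omega^\bullet_{(\ul P^1_\bullet \sqcup_{\ul R} \ul P^2_\bullet)/\ul P^2_\bullet}$; since the reference algebra here is $\ul P^2_\bullet$ rather than $\ul S_2$, you should invoke invariance of $\L\Omega$ under the weak equivalence $\ul P^2_\bullet \to \ul S_2$ in the base. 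None of these affects the overall correctness, but making them explicit would bring the argument up to the level of detail in the cited sources.
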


\begin{proof} 
The first claim is \cite[Theorem 8.18]{Olsson_log}, the second is \cite[Proposition 6.12]{Bhatt_dR}.  
\end{proof}

\addtocontents{toc}{\protect\setcounter{tocdepth}{2}}
\subsection{Homologically log flat descent}  \label{ss:flat_descent} 
\noindent 

\noindent In this subsection we recall a result from \cite{logprism} on ``log fpqc descent'' of log cotangent complexes.   
Recall from Definition \ref{definition:hom_flat} that a map $\underline{R}\to\underline{S}$ is homologically log flat if for all maps $\underline{R}\to \underline{S'}$, the homotopy pushout 
$ \underline{S'}\sqcup^{\L}_{\underline{R}} \underline{S} $
agrees with the naive pushout. In particular, homologically log flat maps are closed under base change and compositions. Therefore $\textup{Alg}^\textup{prelog}$ can be upgraded to a site with covering maps given by homologically log faithfully flat maps. We refer this Grothendieck topology as the $\text{hlf}$ (= homologically log flat) topology.

\bp \label{lem-descent-lfpqc-hodge-graded-piece} Fix a base log ring $\underline{R}$. For each $i\ge 0$, the functor $\underline{S}\mapsto \wedge^i_S \mathbb{L}_{\underline{S}/\underline{R}}$ is an $\text{hlf}$ sheaf on $\textup{Alg}^\textup{prelog}_{\underline{R}/}$ with values in $D(R)$. In other words,  if $\underline{S}^{-1}\to \underline{S}^0$ is a log faithfully flat map of $\underline{R}$-log-algebras, then we have a natural isomorphism
$$\wedge^i_{S^{-1}} \mathbb{L}_{\underline{S}^{-1}/\underline{R}} \simeq \tu{Tot}( \wedge^i_{S^\bullet} \mathbb{L}_{\underline{S}^\bullet/\underline{R}} )$$ where $\underline{S}^\bullet$ is the homotopy Cech nerve (which coincides with the naive Cech nerve by the log flatness assumption) of $\underline{S}^{-1}\to \underline{S}^0$.
\ep

\bproof 
This is \cite[Proposition 2.44 \& Corollary 2.45]{logprism}. 
\eproof

\subsection{The log quasisyntomic site} \label{ss: log quasisyntomic site}
\noindent 
%\subsection{Log quasisyntomic rings and log quasisyntomic maps} \noindent 

\noindent  We freely use notations from Subsection \ref{ss:cotangent_complex}, in particular from Notation \ref{notation:forget_to_monoid_and_algebra} and their simplicial variants. 

\bd 
We say that a map of pre-log rings $\ul A \ra \ul B$ is \emph{$p$-completely homologically log flat} (resp. \emph{faithfully flat}) if 
\be
\item $ B \otimes^\L_A A/p \cong B/p$ is concentrated in degree $0$, and 
\item the map $\ul A/p \lra \ul B/p$ is homologically log flat (resp. faithfully flat). 
\ee
\ed 

\bd 
A pre-log ring $\ul A = (A, M_A)$ is \emph{log quasisyntomic} if 
\be 
\item $A$ is $p$-complete and has bounded $p^{\infty}$-torsion, and
\item the log cotangent complex $\L_{\ul A/\Z_p} \in D (A)$ has $p$-complete Tor amplitude in $[-1, 0]$, i.e., its mod $p$ reduction $\L_{\ul A/\Z_p} \otimes^\L_A A/p$  has Tor amplitude in $[-1, 0]$. 
\ee

Let $\QSyn^{\log}$ denote the category of log quasisyntomic pre-log rings. 
\ed

\bd 
Let $\ul A$ and $\ul B$ be $p$-complete pre-log rings with bounded $p^{\infty}$-torsion. A map $\ul A \ra \ul B$ is \emph{log quasisyntomic} (resp. \emph{log quasisyntomic cover}) if 
\be 
\item $\ul A \ra \ul B $ is $p$-completely homologically log flat (resp. faithfully flat), and 
\item $\L_{\ul B /\ul A} \otimes^\L_{B} B/p$ has Tor amplitude in $[-1, 0]$. 
\ee
\ed

 Let us give some examples of  quasisyntomic pre-log rings and  quasisyntomic maps between them. We again refer the reader to \cite{logprism} for details. 

\begin{example} \label{example:log_qsyn}
\be
\item For rings equipped with the trivial pre-log structures, log quasisyntomic objects and maps are the usual quasisyntomic rings and maps defined in \cite{BMS2}. In particular, $p$-adic completions of smooth algebras over $\Z_p$ or $\mO_C$ (equipped with trivial pre-log structures) are quasisyntomic; quasiregular semiperfectoid and perfectoid rings such as
\[ \mO_C/p, \quad \mO_C\gr{T^{1/p^\infty}}, \quad \mO_C\gr{T^{1/p^\infty}}/(T) \]
are quasisyntomic. 
\item If $R$ is a $p$-complete noetherian $\Z_p$-algebra, $M$ is an fs monoid and $\Z_p \ra \ul R = (R, M)$ is log smooth (on the associated log $p$-adic formal schemes), then $\ul R \in \QSyn^{\log}$. More generally, if $\ul{R}$ is a log complete intersection over $\Z_p$ where $R$ is $p$-complete\footnote{This means that $\Z_p \ra \ul R$ factors as $\Z_p \xrightarrow{f} (\sq R, \sq M) \xrightarrow{g} \ul R$ where $g$ is an exact surjection of fs pre-log rings, $\ker (\sq R \ra R)$ is ($p$-completely) regular, and $f$ is log smooth (on the associated log $p$-adic formal schemes).}, then $\ul R \in \QSyn^{\log}$ by Lemma \ref{lemma:compare_Gabber_with_Olsson} (see \cite[Subsection 3.1]{logprism}).  %{\color{red} is there an analog of the theorem of Avramov?} 
\item The pre-log ring 
\[\Big( \alpha:  \N[\frac{1}{p}]\oplus_\N \N[\frac{1}{p}] \lra  \mO_C \gr{X^{1/p^\infty}, Y^{1/p^\infty}}/(X-Y)\Big)\] 
(equipped with the pre-log structure $\alpha: (a, 0) \mapsto X^{a}, (0, b) \mapsto Y^{b}$) is quasisyntomic. 
\ee 
\end{example}

%Retain notation from the previous example. The map  $$ \mO_C \lra \Big(\mO_C \gr{T^{1/p^\infty}}/(T), \N[\frac{1}{p}]/\N \Big)$$ is log quasisyntomic. However, the maps  
%\begin{align*} \mO_C & \lra \big(\mO_C/p, \N[\frac{1}{p}]/\N \big) \qquad \qquad \:\: \quad  \\  \mO_C/p & \lra \big(\mO_C/p, \N[\frac{1}{p}]/\N \big)  \end{align*} 
%are not log quasisyntomic, as the first map is not $p$-completely homologically log flat, while the second map fails the Tor amplitude requirement. 

\bp \label{lemma:composition_pushout_QSyn} Assume that the underlying rings of all the relevant pre-log rings are $p$-complete with bounded $p^{\infty}$-torsion. 
\be
\item Let $\ul A \ra \ul B$ be a quasisyntomic cover of pre-log rings. %where $A$ and $B$ are $p$-complete with bounded $p^{\infty}$-torsion. 
Then $\ul A \in \QSyn^{\log}$ if and only if $\ul B \in \QSyn^{\log}$.  
\item The composition $\ul A \ra \ul B \ra \ul C $ of two quasisyntomic maps is again  quasisyntomic. 
\item Let $\ul A \ra \ul B$ be a  quasisyntomic map and $\ul A \ra \ul C$ be any map of pre-log rings, then the $p$-completed pushout $\ul D = \ul B \widehat \otimes_{\ul A}^\L \ul C$ is a (discrete) pre-log ring where $D$ has bounded $p^{\infty}$-torsion, and the map 
$\ul C \lra \ul D$
is quasisyntomic. 
\ee
\ep
 
\bproof 
This is \cite[Lemma 3.5 and Lemma 3.6]{logprism}. 
\eproof

\bc 
$\QSyn^{\log, \opp}$ forms a site with the topology given by quasisyntomic covers. 
\ec 

\bproof 
This is an immediate consequence of Proposition \ref{lemma:composition_pushout_QSyn}. 
\eproof 

We call $\QSyn^{\log, \opp}$ the \emph{log quasisyntomic site}.

Let $\ul R = (R, M)$ be a perfectoid pre-log ring. We define $\QSyn^{\log, \opp}_{\ul R}$ to be the slice category of $\QSyn^{\log, \opp}$ over $\ul R$ with the quasisyntomic topology. For any object $\ul S = (S, N)$ in $\QSyn^{\log, \opp}_{\ul R}$, the derived $p$-completed complex of 
\[\widehat \L^i_{\ul S/\ul R}[-i] = (\wedge^i \L_{\ul S/\ul R}[-i])^{\wedge}\] lives in $D^{\geq 0}(S)$.

\br
Let $\ul R $ be a $p$-complete pre-log ring with bounded $p^\infty$-torsion. The small quasisyntomic site $\qSyn^{\log, \opp}_{\ul R}$ consists of quasisyntomic maps $\ul R \ra \ul S$ and is endowed with the quasisyntomic topology. For any object $\ul S \in \qSyn_{\ul R}^{\log, \opp}$, the complex $ \widehat \L^i_{\ul S/\ul R}[-i]$ lives in $D^{\geq 0}(S)$. If $\ul R$ is moreover quasisyntomic, then the objects of $\qSyn_{\ul R}^{\log, \opp}$ are quasisyntomic. 
\er

%%%%%%%%%%%%%%%%%%
%%%%%%%%%%%%%%%%%%
%%%%%%%%%%%%%%%%%%
%%%%%%%%%%%%%%%%%%   QRSP
%%%%%%%%%%%%%%%%%%
%%%%%%%%%%%%%%%%%%
%%%%%%%%%%%%%%%%%%
%%%%%%%%%%%%%%%%%%
%%%%%%%%%%%%%%%%%%
%%%%%%%%%%%%%%%%%%
%%%%%%%%%%%%%%%%%%
%%%%%%%%%%%%%%%%%%
%%%%%%%%%%%%%%%%%%
%%%%%%%%%%%%%%%%%%
%%%%%%%%%%%%%%%%%%
%%%%%%%%%%%%%%%%%%
%%%%%%%%%%%%%%%%%%

\subsection{Quasiregular semiperfectoid pre-log rings} \noindent 

\noindent 
Now we introduce a version of quasiregular semiperfectoid pre-log rings -- roughly these are given by a pair $(S, M)$ where $S$ is a quotient of perfectoid rings by a quasiregular ideal and $M$ is a quotient of a uniquely $p$-divisible monoid by a ``quasiregular submonoid".

\bd \label{definition:qrspd_log} 
A pre-log ring $\ul S = (S, M)$ is \emph{quasiregular semiperfectoid} if 
\be
%\item $S$ is $p$-complete;
\item $\ul S \in \QSyn^{\log}$ is quasisyntomic (in particular, $S$ is $p$-complete);
\item There exists a map $R \ra S$ from a perfectoid ring to $S$;
\item  $S/p$ is semiperfect (meaning that the map $\textup{Frob}: S/p \lra S/p$ is surjective); 
\item  the natural map  
\begin{align} \label{eq:semiperfect_monoid}
  %\textup{Frob}: S/p \lra S/p \qquad \quad   
  M^\flat \lra M/M^\times 
\end{align}
is surjective. 
\ee
The category of quasiregular semiperfectoid pre-log rings is denoted by $\QRSP^{\log}$. (We use $\QRSP$ to denote the usual category of quasiregular semiperfectoid rings.) 
\ed 

\br
Pre-log rings satisfying conditions (3) and (4) in Definition \ref{definition:qrspd_log} are called \emph{log-semiperfectoid}. They imply that
\[
\L_{\ul S/ R}\otimes^\L_S {S/p}\in D^{\leq -1}(S/p)
\]
for any map $R\to S$. 
In particular, if $\ul S = (S, M)$ is quasiregular semiperfectoid, then $\widehat \L_{\ul S/ \Z_p}[-1]$ is $p$-completely flat.
We refer the reader to \cite{logprism} for variants of this notion (especially variants of Condition (4)). 
\er

\begin{example}
\be
\item Let $\ul R = (R, M)$ with $R \in \QRSP$ and $M$ is uniquely $p$-divisible, then $\ul R \in \QRSP^{\log}$. 
\item  The pre-log ring from Example \ref{example:log_qsyn} (3)
\[
\Big( \N[\frac{1}{p}]\oplus_\N \N[\frac{1}{p}] \lra  \mO_C \gr{X^{1/p^\infty}, Y^{1/p^\infty}}/(X-Y)\Big)
\]
is quasiregular semiperfectoid. 
\ee
\end{example}

The following results play an important role for our study of derived log  $\Ainf$-cohomology, as well as the comparison with log crystalline cohomology over $\Acris$ and the comparison with (a Frobenius twisted version of) log prismatic cohomology.

\bl \label{lemma:log_BMS_4.25}
Let $\ul S$ be a pre-log ring and assume $S$ is $p$-complete with bounded $p^{\infty}$-torsion and $\ul S/p$ is log-semiperfect. Then $\ul S \in \QRSP^{\log}$ if and only if there exists a map $R \ra \ul S$ from a perfectoid $R$ (equipped with the trivial pre-log structure) such that $\L_{\ul{S}/R} \otimes^\L_{S} S/p \in D(S/p)$ has Tor amplitude concentrated in degree $[-1]$. In this case, the latter condition holds for any choice of $R \ra \ul S$ with $R$ being perfectoid. 
\el  
 
\bproof 
This is \cite[Lemma 3.15]{logprism}.
\eproof

Quasiregular semiperfectoid pre-log rings form a basis for the log quasisyntomic site. More precisely, we have 

\bl
Let $\ul A \ra \ul B$ and $\ul A \ra \ul C$ be maps in $\QRSP^{\log}$,  where $\ul A \ra \ul B$ is a quasisyntomic cover.  Then their pushout exists in $\QRSP^{\log}$ and is a quasisyntomic cover of $\ul C$. In particular, the category $\QRSP^{\log, \opp}$ forms a site with the topology given by quasisyntomic covers. 
\el

\bproof 
This is \cite[Lemma 3.16]{logprism}.
\eproof

\bp  \label{prop:QRSP_forms_basis_for_QSyn}
Let $\mC$ be any presentable $\infty$-category, then the canonical restriction of sites $\QRSP^{\log, \opp} \ra \QSyn^{\log, \opp} $ induces an equivalence of sheaves valued in $\mC$
$$ \textup{Shv}_{\mC} (\QSyn^{\log, \opp}) \isom \textup{Shv}_{\mC} (\QRSP^{\log, \opp}).$$ 
As in the previous subsection, for a perfectoid pre-log ring $\ul R$, write $\QRSP^{\log, \opp}_{\ul R}$ for the slice category of $\QRSP^{\log, \opp}$ over $\ul R$. Then we have an equivalence 
$$ \textup{Shv}_{\mC} (\QSyn^{\log, \opp}_{\ul R}) \isom \textup{Shv}_{\mC} (\QRSP^{\log, \opp}_{\ul R}).$$ 
\ep 
 
\bproof 
This is \cite[Corollary 3.19]{logprism}.
\eproof

\subsection{Log cotangent complexes for (semi)perfectoid pre-log rings}  \noindent 

\noindent Fix a perfectoid pre-log base ring $\ul R$, the functor sending $\ul S  \in \QSyn^{\log, \opp}_{\ul R}$ to the derived $p$-complete complex $\widehat \L^i_{\ul S/\ul R} = (\wedge^i \L_{\ul S/\ul R})^{\wedge}   \in D (R)$ forms a sheaf in the quasisyntomic topology by Proposition \ref{lem-descent-lfpqc-hodge-graded-piece}. By Proposition \ref{prop:QRSP_forms_basis_for_QSyn}, to understand this functor it suffices, to some extent, to understand it on $\ul S \in \QRSP^{\log, \opp}_{\ul R}$. This often turns out to be convenient since for each $\ul S \in \QRSP^{\log, \opp}_{\ul R}$, $\widehat \L^i_{\ul S/\ul R}[-i]$ is a discrete (derived $p$-complete) object in $D(R)$ that lives in degree 0. In this subsection, we collect some relevant examples.

To start, let us recall the notion of Frobenius for pre-log rings. 
\be
\item 
Suppose that $\ul R = (M \ra R)$ is a pre-log ring over $\F_p$, then we have the \emph{(absolute) Frobenius map} $F_{\ul R}: \ul R \ra \ul R $ given by multiplication by  $p$ on $M$ and the usual $p$-power Frobenius on $R$.
\item For a map $f: \ul R \ra \ul S$ in $\textup{Alg}^{\log}_{\F_p/}$, denote  by $\ul S^{(1)}$  the homotopy pushout of $f$ along the Frobenius  $F_{\ul R}$ and by 
$$F =  F_{\ul S/\ul R}: \ul S^{(1)} \lra \ul S$$
the relative Frobenius. 
\item  A map $f$ in $\text{Alg}^{\log}_{\F_p/}$ as above is \emph{relatively perfect} if the relative Frobenius $F_{\ul S/\ul R}$ is an isomorphism.  A map $f$ in $\text{Alg}^{\log}_{\Z_p/}$ (or $\text{Alg}^{\log}_{\Z/p^n/}$) is \emph{relatively perfect mod $p$} if $f \otimes^\L \F_p$ is relatively perfect.  In particular, if $M_R$ and $M_S$ are uniquely $p$-divisible and $R, S$ are perfect $\F_p$-algebras, then $f: \ul R \ra \ul S$ is relatively perfect. 
\ee

\bl  \label{lemma:cotangent_for_perfect_maps}
If $f: \ul R \ra \ul S$ a relatively perfect map of pre-log $\F_p$-algebras, then $\L_{\ul S/\ul R} = 0 $ and the derived log de Rham complex is simply given by $\L {\Omega}_{\ul S/\ul R} \cong S$.   
\el 

\bproof 
This is \cite[Corollary 7.11]{Bhatt_dR}. 
\eproof 

The following observations will be useful for us. 

\bc \label{cor:cotangent_for_perfectoid}
Let $f: \ul R \ra \ul S$ be a map of pre-log rings. Suppose that $M_R$ and $M_S$ are uniquely $p$-divisible; $R$ and $S$ are (integral) perfectoid.  Then the $p$-completed log cotangent complex $ \widehat \L_{\ul S/\ul R} = 0 $. More generally, if both $\ul R$ and $\ul S$ are perfectoid pre-log rings, then  $ \widehat \L_{\ul S/\ul R} = 0 $. 
\ec 

\bproof 
This is \cite[Corollary 2.25]{logprism} 
\eproof

\bc \label{lemma:perfectoid_monoid}
 Let $\ul R = (P \ra R)$ be a perfectoid pre-log ring and let $\Z_p = (0 \ra \Z_p)$ be the trivial pre-log ring. Then the natural map 
$$\widehat \L_{R/\Z_p} \isom \widehat \L_{\ul R/\Z_p}$$
is an isomorphism. In particular, we have $\widehat \L_{\ul R/\Z_p}[-1] \{-1\} \cong R$. 
\ec 

\bproof 
This follows from Corollary \ref{cor:cotangent_for_perfectoid} by considering the exact triangle for $\Z_p \ra R \ra \ul R$. 
\eproof

The following examples are computed in \cite{logprism}. 

\bl \label{example:log_cotangent_complex}
\be
\item If we write $\ul \mO_C = (\Q_{\ge 0} \xrightarrow{\alpha \mapsto p^{\alpha}} \mO_C)$, then  
$$\widehat \L_{\ul \mO_C/\Z_p} [-1] \cong \mO_C \{1\} \cong \xi \Ainf/\xi^2 \Ainf. $$
\begin{comment}\item  Let $\ul R \ra \ul S$ be a log complete intersection in the following sense: it factors through 
$$
 \ul R = (R, M_R) \lra  \ul{\sq S} = (\sq S, M_S) \lra \ul S = (S, M_S), 
 $$
 where $\sq S \twoheadrightarrow S$ is surjective, $I = \ker (\sq S \ra S)$ is a regular ideal, and $\ul R \ra \ul{\sq S}$ is log smooth.  Then 
$$
 \L_{\ul S/\ul R} \cong \big(I/I^2 \xrightarrow{\: - d \:} {\Omega}^1_{\ul{\sq S}/\ul R} \otimes_{\sq S} S \big) 
$$
 where $I/I^2$ is placed in degree $-1$, and the differential $d$ is induced from $d: \sq S \ra {\Omega}^1_{\ul{\sq S}/\ul R}$. 
 \end{comment}
 \item 
Let $\ul R \ra \ul S$ be natural inclusion of pre-log rings where 
\bi 
\item  $\ul S = (\mO_C \gr{t^{1/p^\infty}}, \N[\frac{1}{p}])$  with pre-log structure $\frac{a}{p^b} \mapsto T^{\frac{a}{p^b}}$,  and 
\item  $\ul R = (\N \xrightarrow{1 \mapsto T} \mO_C \gr{T})$.
\ei 
The $p$-completed log cotangent complex $\widehat \L_{\ul S/\ul R}$ is 
$$
\widehat \L_{\ul S/\ul R} \cong \mO_C \gr{T^{1/p^\infty}} [1].  
$$
In fact, $\widehat \L_{\ul S/\ul R}$ has Tor amplitude concentrated in $[-1]$. 
\footnote{
Note that although $\widehat \L_{\ul S/\ul R}$ is isomorphic to the usual cotangent complex $\widehat \L_{S/R}$ as a complex over $\mO_C \gr{T^{1/p^\infty}}$, the natural map $\widehat \L_{S/R} \ra \widehat \L_{\ul S/\ul R}$ is not an isomorphism. In fact this map is the base change of $\widehat \L_{R/\mO_C}[1] \ra \widehat \L_{\ul R/\mO_C}[1]$, given by $\mO_C \gr{T} dT \hookrightarrow \mO_C \gr{T} \textup{ dlog} T$ after a degree shift.  
}
\item Let $M^{(1)}:= \N[\frac{1}{p}] \oplus_{\N} \N[\frac{1}{p}]$ be the pushout of the natural inclusion $\N \ra \N[\frac{1}{p}]$ along itself. Let  
 \[S = \mO_C \gr{M^{(1)}} \cong \mO_C \gr{X^{1/p^\infty}, Y^{1/p^\infty}}/(X - Y)
 \]
 and $\ul S = (S, M^{(1)})$, where the pre-log structure is given by $(\alpha,\beta) \mapsto X^\alpha Y^\beta$. Let $\ul R = (\mO_C \gr{T}, \N)$ be as in the previous example and regard $\ul S$ as pre-log $\ul R$-algebra by the natural map $\N \ra M^{(1)}$.  Then the $p$-completed log cotangent complex $\widehat \L_{\ul S/\mO_C}$ is given by 
$\widehat \L_{\ul S/\mO_C} \cong  S [1],$ and both $\widehat \L_{\ul S/\Z_p}$ and $\widehat \L_{\ul S/\ul R}$ are concentrated in degree $-1$.
%\item Similarly, let $\ul{\mO_C/p} = (\mO_C/p, \N[\frac{1}{p}]/\N) $ with the pre-log structure $\alpha \ra p^\alpha \in \mO_C$, given by our choice of $p^{\Q_{\ge 0}} \hookrightarrow \mO_C$. Then we have $$ \widehat \L_{\ul{\mO_C/p}/\mO_C} \cong \mO_C/p [1]$$  and  $\widehat \L_{\ul{\mO_C/p}/\Z_p}$ is concentrated in degree $-1$.  
\ee 
\el

\bproof 
See \cite[Section 2]{logprism}. 
\eproof

%%%%%%%%%%%%%%%%%%
%%%%%%%%%%%%%%%%%%
%%%%%%%%%%%%%%%%%%
%%%%%%%%%%%%%%%%%%
%%%%%%%%%%%%%%%%%%
%%%%%%%%%%%%%%%%%%
%%%%%%%%%%%%%%%%%%
%%%%%%%%%%%%%%%%%%
%%%%%%%%%%%%%%%%%%
%%%%%%%%%%%%%%%%%%
%%%%%%%%%%%%%%%%%%
%%%%%%%%%%%%%%%%%%.  Kummer etale 
%%%%%%%%%%%%%%%%%%
%%%%%%%%%%%%%%%%%%
%%%%%%%%%%%%%%%%%%
%%%%%%%%%%%%%%%%%%
%%%%%%%%%%%%%%%%%%
%%%%%%%%%%%%%%%%%%
%%%%%%%%%%%%%%%%%%
%%%%%%%%%%%%%%%%%%
%%%%%%%%%%%%%%%%%%
%%%%%%%%%%%%%%%%%%
%%%%%%%%%%%%%%%%%%
%%%%%%%%%%%%%%%%%%
%%%%%%%%%%%%%%%%%%
%%%%%%%%%%%%%%%%%%
%%%%%%%%%%%%%%%%%%
%%%%%%%%%%%%%%%%%%
%%%%%%%%%%%%%%%%%%
%%%%%%%%%%%%%%%%%%
%%%%%%%%%%%%%%%%%%
%%%%%%%%%%%%%%%%%%

\newpage

\section{Admissible smoothness}\label{section: admissible smoothness}

Our main goal in this section is to introduce a notion of ``sufficiently smoothness'' for certain saturated log adic spaces (resp. saturated log $p$-adic formal schemes) which admit non-finitely generated charts. 

%%%%
%\subsection{admissibly smooth log adic spaces over a divisible log point} 
 
\subsection{Divisible log point} We start with the notion of a divisible log point. 

\begin{definition}
A \emph{divisible log point} is a split log point of the form \[\spa(l,l^+)_{N_{\infty}}\] (cf. Definition \ref{example: log points} (2))  where $l$ is a perfectoid field and $N_{\infty}$ is a divisible and saturated (necessarily sharp) monoid. 
\end{definition}

\br The terminology reflects the requirement that $N_\infty$ is uniquely $n$-divisible for all $n\geq 1$ but not the split condition. A slightly better choice of terminology is probably split divisible log point. We decide to stick to the shorter version since we are only interested in the split case. 
\er 

%%%% not completely true
\iffalse
\begin{remark} 
If $l$ is of characteristic zero and separably closed, by Remark \ref{example: log points algebraically closed}, every log point $(\spa(l,l^+), M)$ necessarily splits. In particular, a \emph{log geometric point} in the sense of \cite[Definition 4.4.2 (1)]{DLLZ} (also see Definition \ref{defn: log geometric points})  a divisible log point. 
\end{remark}
\fi
%%%%

For the rest of the section, we will fix a divisible log point $\spa(l,l^+)_{N_{\infty}}$.

\subsection{Admissibly smooth log adic spaces} 
We would like to consider locally noetherian quasi-fs log adic spaces $X$ that are, in a reasonable sense, log smooth over $\spa(l,l^+)_{N_{\infty}}$, and then study the Kummer \'etale topology on $X$.  It is tempting\footnote{For log schemes instead of log adic spaces, it is possible to define log smoothness by the usual lifting criterion (``formally log smooth'') and some finiteness condition called locally fts (=locally finitely presented up to saturation), see \cite{Dori_Yao} for details. In \textit{loc.cit.}, it is shown that this notion of log smoothness agrees with the requirement that it comes from the saturated base change of a log smooth map of fs log schemes. 
}
to simply require $X\rightarrow \spa(l,l^+)_{N_{\infty}}$ to be the base change from a log smooth morphism of locally noetherian fs log adic space; in other words, $X$ fits into a Cartesian diagram 
\[ 
\begin{tikzcd} 
X \arrow[d] \arrow[r] & X_0 \arrow[d] 
\\ 
\spa(l, l^+)_{N_{\infty}}\arrow[r] & \spa(l, l^+)_N
\end{tikzcd}
\]
in the category of saturated log adic spaces, where
\begin{itemize} 
\item $\spa(l, l^+)_N$ is a split log point modeled on a toric monoid $N$;
\item $X_0\rightarrow  \spa(l,l^+)_N$ is a log smooth morphism of locally noetherian fs log adic spaces; and 
\item the morphism $\spa(l, l^+)_{N_{\infty}}\rightarrow \spa(l, l^+)_N$ is modeled on a chart $N\rightarrow N_{\infty}$.
\end{itemize} 
However, if we start with such morphisms $X_0\rightarrow \spa(l,l^+)_N$ and $\spa(l,l^+)_{N_{\infty}}\rightarrow \spa(l,l^+)_N$, we do not know whether the resulting fiber product \[X_0\times_{\spa(l,l^+)_N}\spa(l,l^+)_{N_{\infty}}\] exists in general (the issue is that due to the saturation step, the resulting pre-adic space might not be sheafy). 
% \Hansheng{Due to the saturation step, (on the ring level) the resulting Huber pair might not be sheafy. Moreover, one might end up with a situation that a smooth/\'etale morphism is not locally of finite type.}
Thanks to Lemma \ref{lemma: tech lemma}, such a fiber product always exists if we further require $X_0\rightarrow \spa(l,l^+)_N$ to be pseudo-saturated.

\begin{proposition}\label{prop:adm_sm_fiber_product_exist}
Let $\spa(l, l^+)_N$ be a split log point modeled on a toric monoid $N$ and let $X_0\rightarrow  \spa(l, l^+)_N$ be a pseudo-saturated morphism of locally noetherian fs log adic spaces. Let $\spa(l,l^+)_{N_{\infty}}\rightarrow \spa(l,l^+)_N$ be a morphism of log points which is identity on the underlying adic space and which admits a chart $N\rightarrow N_{\infty}$. Then the fiber product \[ X:=X_0\times_{\spa(l,l^+)_N}\spa(l,l^+)_{N_{\infty}}\] exists in the category of saturated log adic spaces. Moreover, $X$ is locally noetherian and quasi-fs. If $X_0\rightarrow  \spa(l, l^+)_N$ is locally of finite type, then so is $X\rightarrow \spa(l, l^+)_{N_{\infty}}$.
\end{proposition}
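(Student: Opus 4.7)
The plan is to construct the fiber product explicitly on étale-local charts and then glue. Since the statement is étale local on $X_0$, I will first reduce, using Proposition \ref{prop: fs log adic space admits exact sharp fs chart} and Proposition \ref{prop: pseudo-sat chart}, to the affine case where $X_0 = \spa(A_0, A_0^+)$ admits an fs chart modeled on a pseudo-saturated homomorphism $u: N \to Q$ with $Q$ toric.

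On the chart side, forming the fiber product in saturated log adic spaces corresponds to replacing $Q$ with the saturated pushout $Q \sqcup_N^{\mathrm{sat}} N_\infty$. Here Lemma \ref{lemma: tech lemma} is the crucial input: since $u$ is pseudo-saturated, $Q$ is fs, and $N_\infty$ is saturated and divisible, the naive pushout $S := Q \sqcup_N N_\infty$ admits a finite-type map $S \to S^{\mathrm{sat}} = Q \sqcup_N^{\mathrm{sat}} N_\infty$. I will then fix finitely many elements $x_1, \dots, x_r \in S^{\mathrm{sat}}$ generating it over $S$, together with integers $n_i \geq 1$ and representatives $(q_i, \alpha_i) \in Q \oplus N_\infty$ of $n_i x_i$ as provided by Construction \ref{construction: explicit descriptions of pushouts}.

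On the adic side, under the composite pre-log structure $S \to A_0^+$ (using the chart $Q \to A_0^+$ of $X_0$ and the map $N_\infty \to l^+$, which sends non-identity elements to $0$ since the target is a split log point), each $n_i x_i$ maps to a concrete element $g_i \in A_0^+$, equal to the image of $q_i$ when $\alpha_i = 0$ and to $0$ otherwise. I will then define $X$ locally as $\spa(B_0, B_0^+)$ where
\[ B_0 := A_0 \langle t_1, \dots, t_r\rangle / (t_i^{n_i} - g_i)_{i=1, \dots, r} \]
and $B_0^+$ is the corresponding ring of integral elements, equipped with the log structure associated to the tautological chart $S^{\mathrm{sat}} \to B_0^+$ sending $x_i \mapsto t_i$. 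This Huber pair is finite, hence finitely presented, over $(A_0, A_0^+)$, so it is locally noetherian and sheafy; it is quasi-fs by construction; and the local-finite-type claim is inherited from $A_0$ under the hypothesis.

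The main obstacle will be verifying that $(B_0, B_0^+)$ together with its $S^{\mathrm{sat}}$-chart genuinely corepresents the fiber product in saturated log adic spaces (i.e., satisfies the correct universal property) and that the construction glues across different choices of local charts on $X_0$. For the universal property, the delicate point is that the relations in $S^{\mathrm{sat}}$ beyond $n_i x_i = (q_i, \alpha_i)$, which are explicitly described via the congruence relations in Construction \ref{construction: explicit descriptions of pushouts}, must be automatically satisfied in $B_0$; this should reduce to the observation that any compatible map from $N_\infty$ into a pre-log structure factoring our chart sends non-identity elements to $0$, so that the products of the $g_i$'s realize all required coincidences. Gluing across charts, and compatibility with strictly étale base change, should then follow from the universality of saturated pushouts.
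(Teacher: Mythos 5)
Your overall strategy matches the paper's: reduce to an affine chart on $X_0$ modeled on a pseudo-saturated homomorphism, form the monoid pushout $S := Q \sqcup_N N_\infty$, and invoke Lemma \ref{lemma: tech lemma} to conclude that $S \to S^{\mathrm{sat}}$ is of finite type. The gap is the explicit ring $B_0$. The relations $t_i^{n_i} = g_i$ account only for the ``Kummer'' part of the congruence presenting $S^{\mathrm{sat}}$ as a quotient of $S \oplus \Z_{\geq 0}^r$, and the remaining relations are \emph{not} automatic. Here is a concrete counterexample. Take $N = \Z_{\geq 0}$, $Q = \Z_{\geq 0}^2$, $u(1) = (2,3)$ (pseudo-saturated by Example \ref{example: pseudo-saturated} (3), with conductor $6$), and $N_\infty = \Q_{\geq 0}$. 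Then $x_1 = (\tfrac13, 0)$ and $x_2 = (0, \tfrac12)$ lie in $S^{\mathrm{sat}} \setminus S^{\mathrm{int}}$ (and can be taken among your generators), with $n_1 = 3$, $3x_1 = (1,0) \in Q$, $n_2 = 2$, $2x_2 = (0,1) \in Q$, so $g_1, g_2$ are nonzero images of coordinate monomials. But $x_1 + x_2 = (\tfrac13, \tfrac12) = \tfrac16\, u(1)$ already lies in $S^{\mathrm{int}}$, represented by $(0, \tfrac16) \in Q \oplus N_\infty$ per Construction \ref{construction: explicit descriptions of pushouts}, and its image in $A_0^+$ is $\alpha(\tfrac16) = 0$. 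So the correct ring must satisfy $t_1 t_2 = 0$. Your $B_0$ only forces $(t_1 t_2)^6 = g_1^2 g_2^3 = 0$ (because $(2,3) = u(1) \mapsto 0$), not $t_1 t_2 = 0$; $B_0$ has extraneous nilpotents, the proposed chart $S^{\mathrm{sat}} \to B_0^+$, $x_i \mapsto t_i$, is not even well defined, and the universal property fails.

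The paper sidesteps this entirely by taking the completed tensor product of Tate Huber pairs
\[
(R', R'^+):=(R, R^+)\widehat{\otimes}_{(l\langle S\rangle, l^+\langle S\rangle)}(l\langle S^{\mathrm{sat}}\rangle, l^+\langle S^{\mathrm{sat}}\rangle),
\]
which by construction imposes \emph{all} the monoid relations of $S^{\mathrm{sat}}$. Lemma \ref{lemma: tech lemma} is then used only to know that this is topologically of finite type over $(R, R^+)$, hence \'etale-sheafy and locally noetherian; it is not used to produce a presentation by Kummer relations alone. If you insist on an explicit presentation, you must carry along the full congruence defining $S \oplus \Z_{\geq 0}^r \twoheadrightarrow S^{\mathrm{sat}}$ from Construction \ref{construction: explicit descriptions of pushouts}, not just the relations $t_i^{n_i} = g_i$.
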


\begin{proof}
It suffices to construct the fiber product $X_0\times_{\spa(l,l^+)_N}\spa(l,l^+)_{N_{\infty}}$ \'etale locally on $X_0$. 
%\Zijian{Why is this true? The fiber product might involve saturation, so how do you glue these local constructions?}
By Proposition \ref{prop: pseudo-sat chart}, we may assume that $X_0=\spa(R, R^+)$ is affinoid and that the map $X_0\rightarrow \spa(l,l^+)_N$ admits a chart modeled on a pseudo-saturated homomorphism $u:N\rightarrow P$ of fs monoids. Let $S$ be the pushout $P\sqcup_N N_{\infty}$ in the category of monoids. Then the (unsaturated) fiber product \[X'_0:=X_0\times_{\spa(l,l^+)_N}\spa(l,l^+)_{N_{\infty}}\] exists in the category of log adic spaces and has underlying adic space $X_0$. It is equipped with the log structure associated with the pre-log structure 
\[S=P\sqcup_N N_{\infty}\rightarrow \mO_{X_{0,\ett}}\] induced by $P\rightarrow\mO_{X_{0,\ett}}$ and $N_{\infty}\rightarrow l\rightarrow \mO_{X_{0,\ett}}$. 

Now consider the completed tensor product of Tate Huber pairs (see \cite[Proposition 5.1.5 (2)]{SW})
\[
(R', R'^+):=(R, R^+)\widehat{\otimes}_{(l\langle S\rangle, l^+\langle S\rangle)}(l\langle S^{\mathrm{sat}}\rangle, l^+\langle S^{\mathrm{sat}}\rangle).
\]
Since $S\rightarrow S^{\mathrm{sat}}$ is of finite type by Lemma \ref{lemma: tech lemma}, the Huber pair $(R', R'^+)$ is topologically of finite type over $(R, R^+)$, and hence \'etale sheafy. Let $X=\spa(R', R'^+)$ and equip it with the log structure associated with the natural pre-log structure $S^{\mathrm{sat}}\rightarrow R'$. One checks that $X$ is indeed the fiber product \[X_0\times_{\spa(l,l^+)_N}\spa(l,l^+)_{N_{\infty}}\] in the category of saturated log adic spaces.

Clearly, $X$ is locally noetherian and it is quasi-fs because it admits a chart modeled on a saturated monoid $S^{\mathrm{sat}}$. If $X_0\rightarrow \spa(l, l^+)_N$ is locally of finite type, then so is the composition  $X\rightarrow \spa(l, l^+)_{N_{\infty}}$ as $X\rightarrow X_0$ is locally of finite type.
\end{proof}

\br\label{remark:finite_type_fiber_product_formal} 
The corresponding assertions of Proposition \ref{prop:adm_sm_fiber_product_exist} hold true for log $p$-adic formal schemes over the map $\spf (l^+, N_\infty)^a \ra \spf (l^+, N)^a$ where $N$ is a sharp fs monoid and $N_\infty$ is perfect. The claim on the existence of fiber product is clear (since there is no issue on \'etale-sheafiness). The last assertion on (local) finite type follows from the same proof, again using Lemma \ref{lemma: tech lemma}. 
\er 

Proposition \ref{prop:adm_sm_fiber_product_exist} inspires us to consider only those ``log smooth'' $X$ of the following type.

\begin{definition}\label{defn: admissiblly log smooth}
\begin{enumerate}
\item A locally noetherian saturated log adic space $X$ is \emph{weakly admissibly smooth} over $\spa(l, l^+)_{N_{\infty}}$ if it fits into a Cartesian diagram
\[ 
\begin{tikzcd} 
X \arrow[d] \arrow[r] & X_0 \arrow[d] 
\\ 
\spa(l, l^+)_{N_{\infty}}\arrow[r] & \spa(l, l^+)_N
\end{tikzcd}
\]
in the category of locally noetherian saturated log adic spaces, where 
\begin{itemize}
\item $\spa(l,l^+)_N$ is a split log point modeled on a toric monoid $N$;
\item $X_0\rightarrow \spa(l, l^+)_N$ is a log smooth and pseudo-saturated morphism of locally noetherian fs log adic spaces;
\item the morphism $\spa(l, l^+)_{N_{\infty}}\rightarrow \spa(l, l^+)_N$ is modeled on a chart $N\rightarrow N_{\infty}$ and is identity on the underlying adic spaces.
\end{itemize}
We refer to $X_0\rightarrow \spa(l, l^+)_N$ as a \emph{weak finite model} of $X\rightarrow \spa(l, l^+)_{N_{\infty}}$.
\item A locally noetherian saturated log adic space $X$ is \emph{admissibly smooth} over $\spa(l, l^+)_{N_{\infty}}$ if it admits a weak finite model as above and, in addition, the map 
\[X\rightarrow \spa(l,l^+)_{N_{\infty}}\] is integral. We refer to $X_0\rightarrow \spa(l, l^+)_N$ as a \emph{finite model} of $X\rightarrow \spa(l, l^+)_{N_{\infty}}$.
\end{enumerate}
\end{definition}

We will also record an integral version of the definition above. 

\bd \label{definition:admissible_smooth_formal}
Let $\ul{l^+} = (l^+, N_\infty)$ be a split  pre-log ring with $l^+$ being $p$-complete and $N_\infty$ being divisible and saturated (cf. Definition \ref{definition:definition_pre_log_rings}).  
\begin{enumerate}
\item A saturated log $p$-adic formal scheme $\fX$ is \emph{weakly admissibly smooth} over $\ul{l^+}$ %$= (l^+, N_\infty)$ 
if it fits into a Cartesian diagram
\[ 
\begin{tikzcd} 
\fX \arrow[d] \arrow[r] & \fX_0 \arrow[d] 
\\ 
\spf (l^+, N_\infty)^a \arrow[r] & \spf(l^+, N)^a
\end{tikzcd}
\]
in the category of saturated log $p$-adic formal schemes, where 
\begin{itemize}
\item The map $\spf(l^+, N_\infty)^a \ra \spf(l^+, N)^a$ is modeled on a chart $N \ra N_\infty$ and is identity on the underlying formal schemes, where $\alpha_0: N \ra l^+$ is a split toric pre-log structure;
\item $X_0\rightarrow \spf(l^+, N)^a$ is a ($p$-completely) log smooth and pseudo-saturated morphism of fs log formal schemes.
\end{itemize}
In this case, the map $\fX_0\rightarrow \spf(l^+, N)^a$ is again referred to as a \emph{weak finite model} of $\fX \rightarrow \spf(l^+, N_\infty)^a$.
\item A saturated log $p$-adic formal scheme $\fX$ is \emph{admissibly smooth} over $\ul{l^+} = (l^+, N_\infty)$ if it admits a weak finite model as above and, in addition, the map \[\fX_0\rightarrow \spf(l^+, N)^a\] is integral. In this case,  the map  \[\fX_0\rightarrow \spf(l^+, N)^a\] is again referred to as a \emph{finite model} of $\fX \rightarrow \spf(l^+, N_\infty)^a$.
\end{enumerate}
\ed 

\subsection{Standard finite models}
The following crucial lemma indicates that, \'etale locally, we can always find a ``nice'' finite model for an admissibly smooth log adic space  $X\rightarrow \spa(l,l^+)_{N_{\infty}}$.

\bp \label{lemma: choose X_0 to be X}
Suppose $X$ is admissibly smooth over $\spa(l, l^+)_{N_{\infty}}$. Then, \'etale locally on $X$, there exists a finite model $X_0\rightarrow \spa(l, l^+)_N$ such that 
\begin{enumerate}
\item $N\rightarrow N_{\infty}$ is injective;
\item $X_0\rightarrow \spa(l,l^+)_N$ admits a chart modeled on an injective homomorphism of fs monoids $u:N\rightarrow P$ satisfying
\begin{enumerate}
\item $P$ is torsion-free and $P\cap (-N)=\{0\}$;
\item $u$ is quasi-saturated and integral (i.e., saturated);
\item the cokernel of $u^{\mathrm{gp}}: N^{\mathrm{gp}}\rightarrow P^{\mathrm{gp}}$ is torsion-free;
\item the induced morphism 
\[ X_0\rightarrow\spa(l,l^+)_N\times_{\spa(l\langle N\rangle, l^+\langle N\rangle)}\spa(l\langle P\rangle, l^+\langle P\rangle)\] is a composition of rational localizations and strictly finite \'etale morphisms.
\end{enumerate}
\end{enumerate}
In this case, $X_0\rightarrow \spa(l,l^+)_N$ coincides with $X\rightarrow \spa(l,l^+)_{N_{\infty}}$ on the underlying adic spaces.
\ep

\begin{proof}
We start with condition (1). If we start with an arbitrary finite model \[X_0\rightarrow \spa(l,l^+)_N,\] the image of $N\rightarrow N_{\infty}$ is a finitely generated, sharp, and integral submonoid of $N_{\infty}$. Since $N_{\infty}$ is saturated, the saturation $N'$ of the image of $N\rightarrow N_{\infty}$ is also a submonoid of $N_{\infty}$. In particular, $N'$ is toric and $N'\rightarrow N_{\infty}$ is injective. Consider the split log point associated with the composition $N'\hookrightarrow N_{\infty}\rightarrow l$ and consider the fiber product \[ X'_0:=X_0\times_{\spa(l,l^+)_N}\spa(l,l^+)_{N'}\] in the category of locally noetherian fs log adic spaces. Clearly, $X'_0\rightarrow \spa(l,l^+)_{N'}$ is log smooth by \cite[Proposition 3.1.3]{DLLZ}.
We claim that $X'_0\rightarrow \spa(l,l^+)_{N'}$ is pseudo-saturated. Indeed, if $S$ is the pushout of $P\sqcup_N N'$ in the category of monoids, then
\[
X'_0=X_0\times_{\spa(l\langle S\rangle,l^+\langle S\rangle)}\spa(l\langle S^{\mathrm{sat}}\rangle,l^+\langle S^{\mathrm{sat}}\rangle).
\]
In particular, $X'_0\rightarrow \spa(l,l^+)_{N'}$ admits a chart $N'\rightarrow S^{\mathrm{sat}}$ which is pseudo-saturated by Lemma \ref{lemma: pseudo-saturatedness stable under base change}. Hence, $X'_0\rightarrow \spa(l,l^+)_{N'}$ is pseudo-saturated by Proposition \ref{prop: pseudo-sat chart}, as desired. We point out that, along the same arguments, we have the freedom to further replace $N'$ by any fs (necessarily toric) submonoid of $N_{\infty}$ containing $N'$. Below, we always assume $N\rightarrow N_{\infty}$ is injective.

%Secondly, we achieve condition (2). For every fs submonoid $N'$ of $N_{\infty}$ containing $N$, consider $X'_0:=X_0\times_{\spa(l,l^+)_N}\spa(l,l^+)_{N'}$. Then $X'_0\rightarrow \spa(l,l^+)_{N'}$ is also a finite model of $X\rightarrow \spa(l,l^+)_{N_{\infty}}$. Since such $N'$ form a filtered system with colimit $N_{\infty}$, the associated $X'_0$ form a cofiltered system with limit $X$. By the proof of Proposition \ref{prop:adm_sm_fiber_product_exist}, we know that $X\rightarrow X'_0$ is a morphism of locally noetherian log adic spaces which is locally of finite type (and hence locally of finite presentation) on the underlying adic spaces. Consequently, there exists an $N'_0$ such that for every $N'$ containing $N'_0$ the underlying adic space of $X'_0$ coincides with $X$. 

Secondly, using \cite[Proposition 3.1.4]{DLLZ}, the map $X_0\rightarrow \spa(l,l^+)_N$ admits a chart modeled on an injective homomorphism of fs monoids $u:N\rightarrow P$ such that $P$ is torsion-free, the torsion-part of the cokernel of $u^{\mathrm{gp}}:N^{\mathrm{gp}}\rightarrow P^{\mathrm{gp}}$ is a finite group whose order is invertible in $l$, and the induced morphism \[X_0\rightarrow\spa(l,l^+)_N\times_{\spa(l\langle N\rangle, l^+\langle N\rangle)}\spa(l\langle P\rangle, l^+\langle P\rangle)\] is a composition of rational localizations and strictly \'etale morphisms. We must have \[P\cap (-N)=\{0\},\]  for otherwise, we would have $l\widehat{\otimes}_{l\langle N\rangle}l\langle P\rangle=0$ which is impossible. In fact, we can further require $u$ to be pseudo-saturated and integral using the same argument as in the proof of Proposition \ref{prop: pseudo-sat chart}. More precisely, one can take a localization of $P$ so that $\overline{P}=\mM_{X_0, \bar{x}}$ at a geometric point $\bar{x}\in X_0$. This forces $u:N\rightarrow P$ to be pseudo-saturated and integral. Notice that all of the conditions above are preserved under such a localization. So far, the missing conditions are the following: (i) the cokernel of $u^{\mathrm{gp}}$ is torsion-free; (ii) pseudo-saturated-ness should be upgraded to quasi-saturated-ness. We will fix these below.

To achieve the torsion-free-ness of the cokernel of $u^{\mathrm{gp}}$, consider the monoid
\[N_P:=\{p\in P^{\mathrm{gp}}\,|\, np\in u(N) \textrm{ for some integer }n\geq 1\}\]
which is a toric monoid by \cite[Section I.1, Lemma 2]{KKMSD}. Notice that elements in $N_P$ necessarily lives in $P$ by the saturated-ness of $P$. Also notice that the injection $N\hookrightarrow N_{\infty}$ factors as 
\[N\hookrightarrow N_P\hookrightarrow N_{\infty}.\] By \cite[Lemma 3.3]{Illusie02}, we have
\[
P\sqcup_N^{\textup{sat}} N_P=P\sqcup^{\textup{sat}}_{N_P}(N_P\sqcup_N^{\textup{sat}} N_P)\cong P\sqcup^{\textup{sat}}_{N_P}(N_P\oplus G)=P\oplus G
\]
where $G\cong(N_P)^{\mathrm{gp}}/N^{\mathrm{gp}}$ is a finite group and, as notation indicates, all the pushouts are taken in the category of saturated monoids. Let \[X'_0:=X_0\times_{\spa(l,l^+)_N}\spa(l,l^+)_{N_P}.\] 
Then the map $X'_0\rightarrow \spa(l,l^+)_{N_P}$ is also a finite model of $X\rightarrow \spa(l,l^+)_{N_{\infty}}$ and it admits a chart 
\[N_P\rightarrow P\oplus G\] 
whose composition with the projection $P\oplus G\rightarrow P$ coincides with the inclusion $N_P\hookrightarrow P$. By the argument in the last paragraph of the proof of \cite[Proposition 3.1.4]{DLLZ}, we know that $N_P\rightarrow P$ is also a chart of $X'_0\rightarrow \spa(l,l^+)_{N_P}$. This chart satisfies (a) and (d) and it is pseudo-saturated by Theorem \ref{thm: classification of pseudo-saturated}. We claim that the cokernel of $u^{\mathrm{gp}}$ is indeed torsion-free. To see this, suppose $np=q$ for some $p\in P^{\mathrm{gp}}$, $q\in N_P^{\mathrm{gp}}$, and some integer $n\geq 1$. By Lemma \ref{lemma: Pgp and P}, there exists $q'\in N_P$ such that $q+nq'\in N_P$. Hence, $n(p+q')\in N_P$ which implies $p+q'\in N_P$. Consequently, $p\in N_P$, as desired.

Now, we upgrade pseudo-saturated-ness to quasi-saturated-ness. In fact, by Corollary \ref{corollary: pseudo-sat becomes quasi-sat after finite base change}, there exists an integer $m\geq 1$ such that the map \[\frac{1}{m}N\rightarrow P\sqcup^{\mathrm{sat}}_N\frac{1}{m}N\] is quasi-saturated. Consider the fiber product \[X'_0:=X_0\times_{\spa(l,l^+)_N}\spa(l,l^+)_{\frac{1}{m}N}\] in the category of locally noetherian fs log adic spaces. Then $X'_0\rightarrow \spa(l,l^+)_{\frac{1}{m}N}$ is another finite model and admits a chart modeled on the above quasi-saturated homomorphism $\frac{1}{m}N\rightarrow P\sqcup^{\mathrm{sat}}_N\frac{1}{m}N$. In particular, $X'_0\rightarrow \spa(l,l^+)_{\frac{1}{m}N}$ is quasi-saturated. Repeating the argument in the previous two paragraphs, one can find the desired finite model equipped with a chart satisfying all conditions in (1) and (2).

Finally, we verify that $X_0\rightarrow \spa(l,l^+)_N$ coincides with $X\rightarrow \spa(l,l^+)_{N_{\infty}}$ on the underlying adic spaces. Indeed, since $N\rightarrow P$ is saturated, we have 
\[P\sqcup_N N_{\infty}=P\sqcup^{\mathrm{int}}_NN_{\infty}=P\sqcup^{\mathrm{sat}}_NN_{\infty}\] by Proposition \ref{prop: saturated morphisms} (4). Hence, we have \[X=X_0\times_{\spa(l,l^+)_N}\spa(l,l^+)_{N_{\infty}}\]
where the fiber product could be taken in the category of log adic spaces, integral log adic spaces, or saturated log adic spaces. However, it is clear that $X$ and $X_0$ have the same underlying adic spaces if we take the fiber product in the category of log adic spaces.
\end{proof}

\begin{remark}
The same assertion holds for weakly admissibly smooth case except that we can no longer guarantee that $X_0\rightarrow \spa(l,l^+)_N$ coincides with $X\rightarrow \spa(l,l^+)_{N_{\infty}}$ on the underlying adic spaces.
\end{remark}

\br \label{remark:nice_finite_model_formal}
By the same proof (see Remark \ref{remark:finite_type_fiber_product_formal}), the same assertion holds for admissibly smooth log $p$-adic formal schemes over $(l^+, N_\infty)$. More precisely, if $\fX$ is admissibly smooth over $\spf (l^+, N_{\infty})^a$, then \'etale locally on $\fX$, there exists a finite model $\fX_0\rightarrow \spf(l^+, N)^a$ such that 
\begin{enumerate}
\item $N\rightarrow N_{\infty}$ is injective (in particular $N$ is toric);
\item $\fX_0\rightarrow \spf(l^+,N)$ admits a chart modeled on an injective saturated homomorphism of fs monoids $u:N\rightarrow P$ such that $P$ is torsion-free, $P\cap (-N)=\{0\}$, the cokernel of $u^{\mathrm{gp}}: N^{\mathrm{gp}}\rightarrow P^{\mathrm{gp}}$ is torsion-free, and the induced morphism 
\[ \fX_0\rightarrow\spf(l^+,N)^a\times_{\spf(l^+\gr{N}, N)^a}\spf (l^+ \gr{P}, P)^a \] is a composition of rational localizations and strictly finite \'etale morphisms.
\end{enumerate}
In this case, we have $\fX_0\rightarrow \spf(l^+, N)^a$ coincides with $\fX\rightarrow \spf(l^+, N_{\infty})^a$ on the underlying $p$-adic formal schemes;
\er 

\br \label{remark:induced_Galois_cover_dominant} 
Suppose an injective homomorphism $u: N \ra P$ of fs monoids satisfies the condition (a), (b), and (c) in Proposition \ref{lemma: choose X_0 to be X}. Then the natural map 
\[
P \sqcup_{N} (\frac{1}{n} N)=P \sqcup_{N}^{\textup{int}} (\frac{1}{n} N) = P \sqcup_{N}^{\mathrm{sat}} (\frac{1}{n} N) \lra \frac{1}{n} P
\]
must be injective. It suffices to check this after passing to the group envelops as both sides are integral monoids. But $P^{\gp} \oplus_{N^{\gp}} (\frac{1}{n} N^{\gp}) \hookrightarrow \frac{1}{n} P^{\gp}$ is indeed injective since $P^{\gp} \cong N^{\gp} \oplus \Z^{r}$ for some $r \ge 0$ by the condition that $\text{coker} (u^{\gp})$ is torsion-free. Passing to the limit, we see that the induced map 
\[P\sqcup_{N} {N_{\Q_{\ge 0}}}=P\sqcup_{N}^{\mathrm{int}} {N_{\Q_{\ge 0}}} = P\sqcup_{N}^{\textup{sat}} {N_{\Q_{\ge 0}}} \lra P_{\Q_{\ge 0}} 
\]
is again injective by a similar argument. 
\er 

\begin{lemma}\label{lemma: torsion-free pushout}
Suppose $X$ is admissibly smooth over $\spa(l, l^+)_{N_{\infty}}$. Let \[X_0\rightarrow \spa(l, l^+)_N\] be a finite model of $X$ equipped with a chart $u:N\rightarrow P$ satisfying the conditions in Proposition \ref{lemma: choose X_0 to be X}. 
\begin{enumerate}
\item For every fs submonoid $N'$ of $N_{\infty}$ containing $N$, the pushout \[P'=P\sqcup_NN'=P\sqcup^{\textup{int}}_NN'=P\sqcup^{\textup{sat}}_NN'\] is torsions-free. 
\item $X$ admits a chart modeled on a saturated monoid \[P_{\infty}= P\sqcup_NN_{\infty}=P\sqcup^{\textup{int}}_NN_{\infty}=P\sqcup^{\textup{sat}}_NN_{\infty}.\] Moreover, $P_{\infty}$ is torsion-free and almost $n$-divisible for all $n\geq 1$, and $X\rightarrow X_0$ admits a chart modeled on an injective homomorphism $P\rightarrow P_{\infty}$.
\end{enumerate}
\end{lemma}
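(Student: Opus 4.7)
The plan is to prove (1) by a direct algebraic argument at the level of group envelopes, and then deduce (2) by passing to the filtered colimit $N_\infty=\varinjlim N'$ over the fs submonoids of $N_\infty$ containing $N$.

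For (1), the hypothesis (b) says $u:N\to P$ is both integral and quasi-saturated, i.e.\ saturated. By Proposition \ref{prop: saturated morphisms}, integrality gives $P\sqcup_N N'=P\sqcup^{\tu{int}}_N N'$, and saturated-ness together with the saturated-ness of $N'$ yields $P\sqcup^{\tu{int}}_N N'=P\sqcup^{\tu{sat}}_N N'=:P'$. To check $P'$ is torsion-free, it suffices (since $P'$ is integral) to verify torsion-freeness of $(P')^{\tu{gp}}=P^{\tu{gp}}\oplus_{N^{\tu{gp}}}(N')^{\tu{gp}}$. Hypothesis (c) on torsion-freeness of $\tu{coker}(u^{\tu{gp}})$ combined with finite generation lets us choose a splitting $P^{\tu{gp}}\cong N^{\tu{gp}}\oplus \mathbb{Z}^r$, whence $(P')^{\tu{gp}}\cong (N')^{\tu{gp}}\oplus\mathbb{Z}^r$. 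The monoid $N_\infty$ is torsion-free (divisibility makes $[n]:N_\infty\to N_\infty$ bijective, so $nx=0\Rightarrow x=0$; a similar argument handles $N_\infty^{\tu{gp}}$), and hence any fs submonoid $N'$ has torsion-free group envelope, giving the conclusion.

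For (2), since pushouts in monoids commute with filtered colimits and $N_\infty=\varinjlim_{N'}N'$ over the filtered system of fs submonoids containing $N$, we obtain $P_\infty=P\sqcup_N N_\infty=\varinjlim_{N'} P'$, and the identifications with the integral and saturated pushouts follow term-by-term from (1). Torsion-freeness of $P_\infty$ follows since a filtered colimit of torsion-free monoids is torsion-free. Almost $n$-divisibility is a direct application of Lemma \ref{lemma: tech lemma} to the saturated (in particular pseudo-saturated) morphism $u:N\to P$ and the divisible torsion-free saturated monoid $N_\infty$.

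It remains to show that $P_\infty$ gives a chart for $X$ and that $P\to P_\infty$ is injective. The chart statement follows by inspecting the explicit construction of the fiber product in Proposition \ref{prop:adm_sm_fiber_product_exist}: $X$ is built by equipping a certain affinoid with the log structure associated to $S^{\tu{sat}}\to R'$ where $S=P\sqcup_N N_\infty$, and since $u$ is saturated we already have $S=S^{\tu{sat}}=P_\infty$. For the injectivity of $P\to P_\infty$, we factor this map through $P\hookrightarrow P^{\tu{gp}}\to (P_\infty)^{\tu{gp}}=P^{\tu{gp}}\oplus_{N^{\tu{gp}}}N_\infty^{\tu{gp}}$: the first inclusion uses integrality of $P$, and the second map is injective because pushouts of abelian groups preserve injections and $N^{\tu{gp}}\to N_\infty^{\tu{gp}}$ is injective (the injectivity of $N\to N_\infty$ from condition (1) of Proposition \ref{lemma: choose X_0 to be X}, plus integrality of both monoids, passes to the group envelopes). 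The main potential subtlety is the compatibility of the saturated pushout with the filtered colimit and the explicit chart from the construction of $X$, but these are all handled cleanly because the saturated-ness of $u$ trivializes the saturation step throughout.
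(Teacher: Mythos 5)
Your proof is correct and follows essentially the same approach as the paper: part (1) reduces torsion-freeness to the group envelope $P^{\mathrm{gp}}\oplus_{N^{\mathrm{gp}}}(N')^{\mathrm{gp}}$ using torsion-freeness of $\mathrm{coker}(u^{\mathrm{gp}})$, and part (2) derives torsion-freeness of $P_\infty$ as a filtered colimit, almost $n$-divisibility from Lemma \ref{lemma: tech lemma}, and injectivity of $P\to P_\infty$ from injectivity at the level of group envelopes. You fill in a few details the paper leaves implicit (the equality of the three pushouts via Proposition \ref{prop: saturated morphisms}, the reason $(N')^{\mathrm{gp}}$ is torsion-free, and the chart statement via the fiber-product construction), but the core argument is the same.
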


\begin{proof}
For (1), notice that $P\sqcup^{\mathrm{sat}}_NN'$ is a submonoid of $P^{\mathrm{gp}}\oplus_{N^{\mathrm{gp}}}N'^{\mathrm{gp}}$. One sees that $P^{\mathrm{gp}}\oplus_{N^{\mathrm{gp}}}N'^{\mathrm{gp}}$ is torsion-free as the cokernel of $u^{\mathrm{gp}}: N^{\mathrm{gp}}\rightarrow P^{\mathrm{gp}}$ is torsion-free and $(N')^{\mathrm{gp}}$ is torsion-free.

For (2), it follows from Lemma \ref{lemma: tech lemma} that $P_{\infty}$ is necessarily almost $n$-divisible for all $n\geq 1$. The injectivity of $P\rightarrow P_{\infty}$ is clear because the composition \[P\rightarrow P_{\infty}\hookrightarrow (P_{\infty})^{\mathrm{gp}}=P^{\mathrm{gp}}\oplus_{N^{\mathrm{gp}}} (N_{\infty})^{\mathrm{gp}}\]
sending $p\mapsto (p,0)$ is injective. Torsion-freeness is also clear as $P_{\infty}$ is the colimit of the submonoids $P\sqcup^{\mathrm{sat}}_NN'$ which are torsion-free, where $N'$ runs through all fs submonoids of $N_{\infty}$ containing $N$.
\end{proof}

\begin{definition}\label{defn: standard finite model}
Suppose $X$ is admissibly smooth over $\spa(l,l^+)_{N_{\infty}}$ and assume that $X$ is affinoid. A \emph{standard finite model} of \[X\rightarrow \spa(l,l^+)_{N_{\infty}}\] is a finite model \[X_0\rightarrow \spa(l,l^+)_N,\] together with a chart $u:N\rightarrow P$ satisfying the conditions in Proposition \ref{lemma: choose X_0 to be X}. We also say that $X$ is modeled on a \emph{standard saturated chart} $P_{\infty}$. A standard finite model of an admissibly smooth map of log $p$-adic formal schemes $\fX \ra \spf (l^+, N_\infty)^a$ is defined similarly. 
\end{definition}

\begin{remark}
By Proposition \ref{lemma: choose X_0 to be X}, every $X$ (resp. $\fX$) that is admissibly smooth over $\spa(l,l^+)_{N_{\infty}}$ (resp. over $\spf (l^+, N_\infty)^a$) \'etale locally admits a standard finite model.
\end{remark}

\newpage 
\section{The Kummer \'etale site} \label{sec:kummer_etale} 
In this (somewhat technical) section, we develop the theory of the Kummer \'etale site of admissibly smooth log adic spaces over a divisible log point $\spa(l, l^+)_{N_{\infty}}$.

\subsection{Kummer \'etale maps}\label{subsection: the kummer etale morphisms} 

Our first goal is to generalize the notion of Kummer \'etale morphisms. Suppose that $X$ is admissibly smooth over $\spa(l, l^+)_{N_{\infty}}$. It is tempting to say that a morphism $Y\rightarrow X$ of saturated log adic spaces is Kummer \'etale if it is the base change from a Kummer \'etale morphism of locally noetherian fs log adic spaces. The following lemma justifies this intuition.

\begin{lemma}\label{lemma: ket morphism from base change}
Suppose that $X$ is admissibly smooth over $\spa(l, l^+)_{N_{\infty}}$. Let $Y_1\rightarrow X_1$ be a Kummer \'etale morphism of locally noetherian fs log adic spaces and let $f:X\rightarrow X_1$ be any morphism of log adic spaces. Then the fiber product 
\[Y:=Y_1\times_{X_1}X\] exists in the category of saturated log adic spaces. Moreover, $Y$ is also admissibly smooth over $\spa(l, l^+)_{N_{\infty}}$ and the morphism $Y\rightarrow X$ is locally of finite type. Furthermore, if $Y_1\rightarrow X_1$ is finite Kummer \'etale, then $Y\rightarrow X$ is finite. 
\end{lemma}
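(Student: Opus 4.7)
The plan is to work \'etale locally and construct $Y$ in two stages: first as a Kummer \'etale base change in the fs category, yielding a finite model $Y_0$ of $Y$, and then as a saturated base change of $Y_0$ along $\spa(l,l^+)_{N_\infty}$ via Proposition \ref{prop:adm_sm_fiber_product_exist}. By Definition \ref{definition: Kummer etale morphism fs case}, \'etale locally on $Y_1$ and $X_1$ I may assume $Y_1\to X_1$ is standard Kummer \'etale modeled on an fs chart $v:P_1\to Q_1$ with $|Q_1^{\gp}/v^{\gp}(P_1^{\gp})|$ invertible in $l$, and by Proposition \ref{lemma: choose X_0 to be X} \'etale locally on $X$ I may assume $X$ carries a standard finite model $X_0 \to \spa(l,l^+)_N$ with saturated fs chart $u: N\to P$, so that $X$ has chart $P_\infty := P\sqcup^{\mathrm{sat}}_N N_\infty$ and shares its underlying adic space with $X_0$.

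The key combinatorial step is a coordinate refinement. The map $f$ induces, after a further \'etale refinement of $X$, a chart homomorphism $P_1\to P_\infty$. Since $P_1$ is finitely generated and $P_\infty = \varinjlim_{N'}(P\sqcup^{\mathrm{sat}}_N N')$ as $N'$ ranges over fs submonoids of $N_\infty$ containing $N$, this map factors through $P_1\to P' := P\sqcup^{\mathrm{sat}}_N N'$ for some such $N'$. Replacing $N$ by $N'$, I obtain a new finite model $X_0' := X_0 \times^{\mathrm{fs}}_{\spa(l,l^+)_N}\spa(l,l^+)_{N'} \to \spa(l,l^+)_{N'}$ with chart $u':N'\to P'$. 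Being obtained by base change from $X_0$, this $X_0'$ remains saturated, log smooth, and pseudo-saturated (using Proposition \ref{prop: saturated morphisms} and Lemma \ref{lemma: pseudo-saturatedness stable under base change}), and it shares its underlying adic space with $X_0$ and with $X$ by the argument in the final paragraph of the proof of Proposition \ref{lemma: choose X_0 to be X}. Crucially, the chart map $P_1\to P_\infty$ of interest now factors as $P_1 \to P' \to P_\infty$.

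I then form $Y_0 := Y_1 \times_{X_1} X_0'$ in the category of locally noetherian fs log adic spaces; this exists by \cite{DLLZ} and is (finite) Kummer \'etale over $X_0'$ with fs chart given by $P' \to Q_1 \sqcup^{\mathrm{sat(fs)}}_{P_1} P'$. The composite $Y_0 \to \spa(l,l^+)_{N'}$ is log smooth (composition of Kummer \'etale with log smooth), integral (Kummer homomorphisms of fs monoids are integral, and integrality composes), and pseudo-saturated (by Proposition \ref{prop: pseudo-saturated}, since Kummer maps are pseudo-saturated per Example \ref{example: pseudo-saturated}(2)). Hence $Y_0 \to \spa(l,l^+)_{N'}$ is a finite model in the sense of Definition \ref{defn: admissiblly log smooth}, and Proposition \ref{prop:adm_sm_fiber_product_exist} applied along $\spa(l,l^+)_{N_\infty} \to \spa(l,l^+)_{N'}$ produces
\[ Y := Y_0 \times^{\mathrm{sat}}_{\spa(l,l^+)_{N'}} \spa(l,l^+)_{N_\infty}, \]
which is locally noetherian, quasi-fs, and admissibly smooth over $\spa(l,l^+)_{N_\infty}$. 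These local constructions then glue, since the formation of saturated fiber products is compatible with \'etale localization.

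It remains to verify that $Y$ represents $Y_1\times_{X_1}X$ in the category of saturated log adic spaces, that $Y\to X$ is locally of finite type, and that $Y\to X$ is finite when $Y_1\to X_1$ is finite Kummer \'etale. The universal property is a diagram chase using the factorizations $X\to X_0' \to X_1$ (as saturated log adic spaces) and $Y\to Y_0 \to Y_1$, together with the universal properties of the saturated/fs fiber products at each stage. Local finiteness and finiteness pass from the fs level $Y_0\to X_0'$ to its saturated base change $Y\to X$ because $X\to X_0'$ is an isomorphism on underlying adic spaces, so on coordinate rings the passage from $Y_0\to X_0'$ to $Y\to X$ amounts to the topologically finite (resp.\ finite) base change $(-) \widehat{\otimes}_{l^+\langle P_1\rangle}l^+\langle Q_1\rangle$. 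The main technical obstacle I anticipate is precisely this bookkeeping at the interface between the fs and saturated categories --- in particular, keeping track of the saturated pushouts of monoids along $N'\to N_\infty$ so that they glue to a globally sheafy adic space --- but this is exactly what the finite type conclusion of Lemma \ref{lemma: tech lemma} (applied inside Proposition \ref{prop:adm_sm_fiber_product_exist}) delivers.
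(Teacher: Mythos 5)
Your overall approach --- reduce to a chart situation, factor $f\colon X\to X_1$ through a finite model $X_0'$ after enlarging $N$, form the Kummer \'etale base change $Y_0 := Y_1\times_{X_1} X_0'$ in the fs category, and then apply Proposition \ref{prop:adm_sm_fiber_product_exist} to produce the saturated base change $Y$ --- matches the paper's proof in its essential structure, and the existence, admissible smoothness, and local finite type conclusions are sound. (The paper's first reduction step, replacing $X_1$ by the log adic space with underlying space $X$ and pullback log structure, would make the existence of the map $X_0'\to X_1$ cleaner in your writeup, but this is expositional.)

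The finiteness step, however, has a genuine gap. You claim that finiteness of $Y_0\to X_0'$ passes to $Y\to X$ ``because $X\to X_0'$ is an isomorphism on underlying adic spaces,'' and you invoke the finite-type conclusion of Lemma \ref{lemma: tech lemma} to handle the saturation. But the saturation step does change the underlying ring: writing $Q_0$ for the chart of $Y_0$ over $\spa(l,l^+)_{N'}$ and $S := Q_0\sqcup_{N'} N_\infty$, the ring of $Y$ is $A_{Y_0}\widehat\otimes_{l\langle S\rangle} l\langle S^{\mathrm{sat}}\rangle$. The homomorphism $N'\to Q_0$ is integral and pseudo-saturated, but it is \emph{not} in general saturated --- the Kummer piece $P'\to Q_0$ contains standard maps like $\Z_{\ge 0}\xrightarrow{n}\Z_{\ge 0}$, which are not quasi-saturated --- so $S\to S^{\mathrm{sat}}$ need not be an isomorphism. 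Lemma \ref{lemma: tech lemma} only tells you that $S\to S^{\mathrm{sat}}$ is of \emph{finite type}, which makes $A_{Y_0}\to A_Y$ topologically of finite type, not finite. This suffices for local finite type of $Y\to X$ but not for finiteness. The paper closes this gap with one extra step you have omitted: after forming $Y_0$, it further enlarges $N'$ (using Corollary \ref{corollary: pseudo-sat becomes quasi-sat after finite base change}, exactly as in the proof of Proposition \ref{lemma: choose X_0 to be X}) so that the chart $N'\to Q_0$ becomes saturated. Then Proposition \ref{prop: saturated morphisms}(4) shows the naive and saturated pushouts of $Q_0$ along $N'\to N_\infty$ coincide, so $Y$ and $Y_0$ share the same underlying adic space, and finiteness of $Y_0\to X_0'$ transfers directly to $Y\to X$.
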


\begin{proof}
Firstly, we may assume that the underlying adic space of $X_1$ coincides with $X$. Indeed, we can replace $X_1$ by $X'_1$ whose underlying adic space is $X$ equipped with the log structure $\mM_{X'_1}:=f^*\mM_{X_1}$, and replace $Y_1$ by \[Y'_1:=Y_1\times_{X_1}X'_1.\]

Secondly, by Proposition \ref{lemma: choose X_0 to be X}, we may assume that, \'etale locally on $X$, the morphism 
\[ X\rightarrow \spa(l,l^+)_{N_{\infty}}\] admits a finite model $X_0\rightarrow \spa(l,l^+)_N$ such that $N\rightarrow N_{\infty}$ is injective and the underlying adic space of $X_0$ coincides with $X$. If necessary, we have the freedom to further replace $N$ by any fs submonoid $N'$ of $N_{\infty}$ containing $N$ and replace $X_0$ by \[X'_0:=X_0\times_{\spa(l,l^+)_N}\spa(l,l^+)_{N'}.\] Such $X'_0$ form a cofiltered system whose limit is $X$.

\'Etale locally, $X_1$ admits a chart modeled on a toric monoid $P$. Since $P$ is finitely generated, the morphism $P_X\rightarrow \mM_X$ factors through $h^*\mM_{X'_0}$ for some $X'_0$ as above, where $h:X\rightarrow X'_0$ is the natural morphism. Consequently, $X\rightarrow X_1$ factors as 
\[X\rightarrow X'_0\rightarrow X_1.
\] Furthermore, we may replace $X_1$ by $X'_0$ and replace $Y_1$ by $X'_0\times_{X_1} Y_1$. 

To sum up, we may assume $X\rightarrow \spa(l,l^+)_{N_{\infty}}$ admits a finite model $X_0\rightarrow \spa(l,l^+)_N$ such that $N\rightarrow N_{\infty}$ is injective and the underlying adic space of $X_0$ is $X$, and also assume that $X_1=X_0$ as log adic spaces. Since a Kummer \'etale morphism is pseudo-saturated, the composition \[Y_1\rightarrow X_1=X_0\rightarrow \spa(l,l^+)_N\] is also pseudo-saturated. By Proposition \ref{prop:adm_sm_fiber_product_exist}, the fiber product \[Y:=Y_1\times_{\spa(l,l^+)_N}\spa(l,l^+)_{N_{\infty}}\]exists and $Y$ is admissibly smooth over $\spa(l,l^+)_{N_{\infty}}$. One checks that $Y$ is indeed the desired fiber product $Y_1\times_{X_1}X$. Since $Y\rightarrow \spa(l,l^+)_{N_{\infty}}$ is locally of finite type, so is $Y\rightarrow X$.

Finally, if $Y_1\rightarrow X_1$ is finite, we show that $Y\rightarrow X$ is finite. Notice that all of the arguments above are \'etale local on $X$. Arguing as in the proof of Proposition \ref{lemma: choose X_0 to be X}, by replacing $N$ with an fs submonoid of $N_{\infty}$ containing $N$, we can choose $Y_1$ so that $Y_1\rightarrow X_1$ coincides with $Y\rightarrow X$ on the underlying adic spaces (\'etale locally on $X$). Since $Y_1\rightarrow X_1$ is finite (as it is the base change from a finite Kummer \'etale morphism), so is $Y\rightarrow X$.
\end{proof}

Lemma \ref{lemma: ket morphism from base change} inspires the following definition of Kummer \'etale morphisms.

\begin{definition}\label{defn: kummer etale morphism over adm log smooth}
Suppose $X$ is admissibly smooth over $\spa(l, l^+)_{N_{\infty}}$ and let $f:Y\rightarrow X$ be a morphism between saturated log adic spaces. 
\begin{enumerate}
\item We say that $f$ is \emph{Kummer \'etale} if \'etale locally on $X$ and $Y$, there exists a Kummer \'etale morphism $Y_1\rightarrow X_1$ of locally noetherian fs log adic spaces and a morphism $X\rightarrow X_1$ such that \[Y=X\times_{X_1}Y_1,\] where the fiber product is taken in the category of saturated log adic spaces. The morphism $Y_1\rightarrow X_1$ is referred to as a \emph{finite model} of $Y\rightarrow X$.
\item We say that $f$ is a \emph{Kummer \'etale cover} if it is Kummer \'etale and surjective.
\item We say that $f$ is \emph{finite Kummer \'etale} if it is Kummer \'etale and finite.
\end{enumerate}
\end{definition}

%\begin{remark}\label{remark: standard ket underlying adic space}
%Let $f:Y\rightarrow X$ be a standard Kummer \'etale morphism as above. Using the same type of filtered colimit argument as in the proof of Proposition \ref{lemma: choose X_0 to be X}, one can always choose the corresponding finite model $Y_1\rightarrow X_1$ so that $Y_1\rightarrow X_1$ coincides with $Y\rightarrow X$ on the underlying adic spaces.
%\end{remark}

\bp \label{lemma: choose X_1 Y_1 to be X Y}
Suppose $X$ is admissibly smooth over $\spa(l,l^+)_{N_{\infty}}$ and let $Y\rightarrow X$ be a Kummer \'etale morphism. Then, \'etale locally on $X$ and $Y$, the morphism $Y\rightarrow X$ admits a finite model $Y_1\rightarrow X_1$ fitting into the commutative diagram
\[ 
\begin{tikzcd} 
Y \arrow[d] \arrow[r] & Y_1 \arrow[d] 
\\ 
X \arrow[d] \arrow[r] & X_1 \arrow[d] 
\\ 
\spa(l, l^+)_{N_{\infty}}\arrow[r] & \spa(l, l^+)_N
\end{tikzcd}
\]
such that
\begin{enumerate}
\item $X_1\rightarrow \spa(l,l^+)_N$ is a standard finite model of $X\rightarrow \spa(l,l^+)_{N_{\infty}}$ modeled on a chart $N\rightarrow P$ as in Definition \ref{defn: standard finite model}. In particular, $P$ is torsion-free and the cokernel of $N^{\mathrm{gp}}\rightarrow P^{\mathrm{gp}}$ is torsion-free;
\item $Y_1\rightarrow \spa(l,l^+)_N$ is a standard finite model of $Y\rightarrow \spa(l,l^+)_{N_{\infty}}$ modeled on a chart $N\rightarrow Q$ as in Definition \ref{defn: standard finite model}. In particular, $Q$ is torsion-free and the cokernel of $N^{\mathrm{gp}}\rightarrow Q^{\mathrm{gp}}$ is torsion-free;
\item $Y_1\rightarrow X_1$ admits an fs chart $P\rightarrow Q$ satisfying the conditions in Definition \ref{definition: Kummer etale morphism fs case} (2);
\item $Y_1\rightarrow X_1$ coincides with $Y\rightarrow X$ on the underlying adic spaces.
\end{enumerate}
\ep

\begin{proof}
Following the proof of Lemma \ref{lemma: ket morphism from base change}, the condition (1) and (4) can be achieved. Using \cite[Lemma 4.1.10]{DLLZ}, condition (3) can also be achieved with $Q$ torsion-free. Let $N\rightarrow Q$ be the composition of the injections $N\rightarrow P$ and $P\rightarrow Q$. We must have 
\[Q\cap (-N)=\{0\}.\] Indeed, if $x\in Q\cap (-N)$, then $nx\in P\cap (-N)$ for some integer $n\geq 1$ which forces $x=0$ as $Q$ is torsion-free. The only conditions missing are the quasi-saturated-ness of $N\rightarrow Q$ and the torsion-free-ness of the cokernel of $N^{\mathrm{gp}}\rightarrow Q^{\mathrm{gp}}$.

In fact, the idea of the proof of Proposition \ref{lemma: choose X_0 to be X} remains valid here. To achieve the torsion-free-ness of the cokernel of $N^{\mathrm{gp}}\rightarrow Q^{\mathrm{gp}}$, consider the monoid 
\[N_Q:=\{q\in Q^{\mathrm{gp}}\,|\, nq\in N \textrm{ for some integer }n\geq 1\}.\]
Notice that elements of $N_Q$ necessarily lie in $Q$ by the saturated-ness of $Q$. Consider 
\[X'_1:=X_1\times_{\spa(l,l^+)_N}\spa(l,l^+)_{N_Q}\] and \[Y'_1:=Y_1\times_{\spa(l,l^+)_N}\spa(l,l^+)_{N_Q}.\] The same computation as in the proof of Proposition \ref{lemma: choose X_0 to be X} shows that 
\[Q\sqcup^{\mathrm{sat}}_N N_Q=Q\oplus G\]
where $G\cong (N_Q)^{\mathrm{gp}}/N^{\mathrm{gp}}$ is a finite group, that $Y'_1\rightarrow \spa(l,l^+)_{N_Q}$ admits a chart modeled on the injection $N_Q\rightarrow Q$, and that the cokernel of $(N_Q)^{\mathrm{gp}}\rightarrow Q^{\mathrm{gp}}$ is torsion-free. Notice that the map  $X'_1\rightarrow \spa(l,l^+)_{N_Q}$ admits a chart modeled on $N_Q\rightarrow P\sqcup^{\mathrm{sat}}_NN_Q$ where $P\sqcup^{\mathrm{sat}}_NN_Q$ is torsion-free by Lemma \ref{lemma: torsion-free pushout}. We need to show that the homomorphism $N_Q\rightarrow Q$ factors as the composition of two injections \[N_Q\rightarrow P\sqcup^{\mathrm{sat}}_N N_Q\rightarrow Q.\] The factorization is clear. To see the injectivity of $P\sqcup^{\mathrm{sat}}_NN_Q\rightarrow Q$, it suffices to pass to the group envelopes as both monoids are integral; namely, the canonical homomorphism \[P^{\mathrm{gp}}\oplus_{N^{\mathrm{gp}}} (N_Q)^{\mathrm{gp}}\rightarrow Q^{\mathrm{gp}}\] is injective. Choosing the splitting $P^{\mathrm{gp}}=N^{\mathrm{gp}}\oplus H$ where $H\cong \mathbb{Z}^r$ for some integer $r\geq 0$. We have to show that the map \[H\oplus (N_Q)^{\mathrm{gp}}\rightarrow Q^{\mathrm{gp}}\] is injective. This is equivalent to the condition $H\cap (N_Q)^{\mathrm{gp}}=\{0\}$, which is clear because for any $x\in H\cap (N_Q)^{\mathrm{gp}}$, there exists $n\geq 1$ such that $nx\in H\cap N^{\mathrm{gp}}=\{0\}$ and $Q^{\mathrm{gp}}$ is torsion-free. 

From now on, we assume the torsion-free-ness of the cokernel of $N^{\mathrm{gp}}\rightarrow Q^{\mathrm{gp}}$. By Lemma \ref{corollary: pseudo-sat becomes quasi-sat after finite base change}, there exists a positive integer $m$ such that the map \[\frac{1}{m}N\rightarrow Q\sqcup_N\frac{1}{m}N\] is quasi-saturated. Let 
\[X'_1:=X_1\times_{\spa(l,l^+)_N}\spa(l,l^+)_{\frac{1}{m}N}\] and \[Y'_1:=Y_1\times_{\spa(l,l^+)_N}\spa(l,l^+)_{\frac{1}{m}N}.\] Then the map 
\[ 
X'_1\rightarrow \spa(l,l^+)_{\frac{1}{m}N} \quad (\tu{resp }   Y'_1\rightarrow \spa(l,l^+)_{\frac{1}{m}N} )\] 
are modeled on the charts \[ \frac{1}{m}N\rightarrow P\sqcup_N\frac{1}{m}N  \quad \Big(\tu{resp. } \frac{1}{m}N\rightarrow Q\sqcup_N\frac{1}{m}N.\Big)\]  Notice that both $P\sqcup_N\frac{1}{m}N$ and $Q\sqcup_N\frac{1}{m}N$ are torsion-free by (the proof of) Lemma \ref{lemma: torsion-free pushout}. Consequently, $Y'_1\rightarrow X'_1$ is a finite model of $Y\rightarrow X$ satisfying all desired conditions.
\end{proof}

\begin{remark}\label{remark: n-fold fiber product of finite model}
Suppose $Y_1\rightarrow X_1$ is a finite model of a Kummer \'etale morphism $Y\rightarrow X$ such that the two morphisms coincide on the underlying adic spaces. In this case, we have \[Y_1\times_{X_1}Y_1\times_{X_1}\cdots\times_{X_1}Y_1=Y\times_XY\times_X\cdots\times_XY\]
on the underlying adic spaces. To see this, as every Kummer \'etale morphism is, \'etale locally, a composition of a standard Kummer \'etale morphism and strictly \'etale morphisms, it suffices to assume $Y_1\rightarrow X_1$ is standard Kummer \'etale and admits a chart modeled on $u: P\rightarrow Q$ satisfying the condition as in Definition \ref{definition: Kummer etale morphism fs case} (3). In this case, by \cite[Proposition 4.1.6]{DLLZ},
\begin{align*}
&Y\times_XY\times_X\cdots\times_XY\cong X\times_{X_1}(Y_1\times_{X_1}\cdots\times_{X_1}Y_1)\\
\cong\,\,& X\times_{X_1}(Y_1\times_{X_1}G_{X_1}^D\times_{X_1}\cdots\times_{X_1}G_{X_1}^D)\cong Y\times_X G_X^D\times_X\cdots\times_X G_X^D
\end{align*}
where $G=Q^{\mathrm{gp}}/u^{\mathrm{gp}}(P^{\mathrm{gp}})$ and $G_X^D$ is as in \emph{loc. cit.}. It has the same underlying adic space as \[Y_1\times_{X_1}\cdots\times_{X_1}Y_1\cong Y_1\times_{X_1}G_{X_1}^D\times_{X_1}\cdots\times_{X_1}G_{X_1}^D\] because $G_X^D=G_{X_1}^D$ on the underlying adic spaces and both $G_{X_1}^D\rightarrow X_1$ and $G_X^D\rightarrow X$ are \'etale.
\end{remark}

\begin{definition}\label{definition: standard Kummer etale}
Suppose $X$ is admissibly smooth over $\spa(l, l^+)_{N_{\infty}}$ and let $f:Y\rightarrow X$ be a Kummer \'etale morphism. We say that $f$ is \emph{standard Kummer \'etale} if $f$ is admits a (global) finite model $Y_1\rightarrow X_1$ satisfying all conditions in Proposition \ref{lemma: choose X_1 Y_1 to be X Y} and such that $Y_1\rightarrow X_1$ is standard Kummer \'etale modeled on the chart $u:P\rightarrow Q$ as in Definition \ref{definition: Kummer etale morphism fs case} (3).
\end{definition}

\begin{corollary}\label{cor: ket morphisms are open}
\begin{enumerate}
\item \'Etale locally, Kummer \'etale morphisms are compositions of standard Kummer \'etale morphisms and strictly \'etale morphisms.
\item Kummer \'etale morphisms are open.
\end{enumerate}
\end{corollary}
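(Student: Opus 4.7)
For part (1), the plan is to start from Proposition~\ref{lemma: choose X_1 Y_1 to be X Y}, which provides, \'etale locally on $X$ and $Y$, a finite model $Y_1\to X_1$ of $Y\to X$ coinciding with $Y\to X$ on underlying adic spaces and fitting in the Cartesian picture with a standard finite model $X_1\to \spa(l,l^+)_N$ modeled on a chart $N\to P$ and a chart $P\to Q$ for $Y_1\to X_1$ as in Definition~\ref{definition: Kummer etale morphism fs case}(2). Then I would invoke the fs version of the local structure theorem for Kummer \'etale morphisms (Definition~\ref{definition: Kummer etale morphism fs case}(2), together with \cite[Proposition~4.1.4]{DLLZ}) to factor $Y_1\to X_1$, after possibly further \'etale localization, as a composition of a standard Kummer \'etale morphism $Z_1\to X_1$ modeled on the chart $P\to Q$ and a strictly \'etale morphism $Y_1\to Z_1$.

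Next, I would base change this factorization to the divisible log point via $\spa(l,l^+)_{N_\infty}\to \spa(l,l^+)_N$ in the category of saturated log adic spaces, producing a factorization $Y\to Z\to X$ where $Z:=Z_1\times_{X_1} X$. Strict \'etaleness is preserved by base change, so $Y\to Z$ is strictly \'etale. For $Z\to X$, the chart $P\to Q$ inherited from $Z_1\to X_1$ together with the data of Proposition~\ref{lemma: choose X_1 Y_1 to be X Y}(1)--(4) (in particular the torsion-free-ness properties, which are preserved since they only involve $P,Q$ and $N$) shows that $Z_1\to X_1$ serves as a global finite model of $Z\to X$ satisfying Definition~\ref{definition: standard Kummer etale}, so $Z\to X$ is standard Kummer \'etale. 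The main subtlety here is to ensure that the factorization $Y_1\to Z_1\to X_1$ survives base change in the saturated category, which follows because $Y_1\to Z_1$ is strict \'etale (so no saturation issue) and $Z_1\to X_1$ already provides the required finite model (whose underlying adic space coincides with that of $Z\to X$ by Proposition~\ref{lemma: choose X_1 Y_1 to be X Y}(4) and the construction of base change in Proposition~\ref{prop:adm_sm_fiber_product_exist}).

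For part (2), by (1) and the fact that openness is local on source and target and stable under composition, it suffices to show separately that strictly \'etale morphisms and standard Kummer \'etale morphisms are open. Strictly \'etale morphisms are open by Definition~\ref{def:log}(7), since they are \'etale on underlying adic spaces. For a standard Kummer \'etale morphism $Z\to X$, Definition~\ref{definition: standard Kummer etale} provides a finite model $Z_1\to X_1$ that coincides with $Z\to X$ on underlying adic spaces (by Proposition~\ref{lemma: choose X_1 Y_1 to be X Y}(4)). Then I would invoke the classical fact that Kummer \'etale morphisms between locally noetherian fs log adic spaces are open on underlying adic spaces (see \cite[Proposition~4.1.5]{DLLZ}) to conclude openness of $Z\to X$.

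The main obstacle is the bookkeeping in paragraph~2: verifying that the base change $Z=Z_1\times_{X_1} X$ is well-defined as a saturated log adic space (existence of the fibre product is already guaranteed by Lemma~\ref{lemma: ket morphism from base change}) and genuinely has $Z_1\to X_1$ as a finite model in the sense of Definition~\ref{definition: standard Kummer etale}, i.e.\ that the torsion-freeness of the cokernels of $N^{\gp}\to P^{\gp}$ and $N^{\gp}\to Q^{\gp}$, the injectivity of $N\to N_\infty$, and the coincidence of underlying adic spaces all transfer to $Z\to X$ and its chart $P\to Q$ at the level of the standard finite model. All these conditions are ensured by the hypotheses of Proposition~\ref{lemma: choose X_1 Y_1 to be X Y}, so the verification is essentially a diagram chase combined with Remark~\ref{remark: n-fold fiber product of finite model}.
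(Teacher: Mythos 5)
Your proposal is correct and follows essentially the same route as the paper: part~(1) is read off from Proposition~\ref{lemma: choose X_1 Y_1 to be X Y} (the chart $P\to Q$ in condition (3) there already supplies the factorization of the finite model into a standard Kummer \'etale piece and a strictly \'etale piece, and this passes through the saturated base change exactly as you describe), and part~(2) reduces via (1) to openness of strictly \'etale and of standard Kummer \'etale morphisms, the latter by coincidence of underlying adic spaces with the fs finite model and the DLLZ openness result. The paper's proof is simply a terser version of the same argument, citing \cite[Corollary~4.1.9]{DLLZ} where you cite \cite[Proposition~4.1.5]{DLLZ}.
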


\begin{proof}
(1) follows from Proposition \ref{lemma: choose X_1 Y_1 to be X Y} and (2) follows from (1) and \cite[Corollary 4.1.9]{DLLZ}.
\end{proof}

Proposition \ref{proposition: stable under composition} and Proposition \ref{prop: base change exists} below show that Kummer \'etale morphisms are compatible with compositions and base change.

\begin{proposition}\label{proposition: stable under composition}
Suppose $X$ is admissibly smooth over $\spa(l, l^+)_{N_{\infty}}$ and let $f:Y\rightarrow X$ and $g:Z\rightarrow Y$ be Kummer \'etale morphisms. Then the composition 
\[g\circ f: Z\rightarrow X\] is also Kummer \'etale.
\end{proposition}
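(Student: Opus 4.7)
The plan is to reduce the problem to the classical fact that compositions of Kummer \'etale morphisms in the fs category are Kummer \'etale, and then recover the statement by base change along the natural map $X\to X_1$ for an appropriately chosen finite model $X_1$. The question is \'etale local on $X$, $Y$, and $Z$, so we may work \'etale locally throughout.

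First, applying Proposition \ref{lemma: choose X_1 Y_1 to be X Y} to $f: Y\to X$, we choose a standard finite model $Y_1\to X_1$ over $\spa(l,l^+)_N$, where $X_1$ (resp.\ $Y_1$) has the same underlying adic space as $X$ (resp.\ $Y$), and $Y_1\to X_1$ admits an fs chart $P\to Q$ of Kummer \'etale type, with $N$ a fixed fs submonoid of $N_\infty$. Applying the same proposition to $g: Z\to Y$ (using that $Y$ is itself admissibly smooth over $\spa(l,l^+)_{N_\infty}$ by Lemma \ref{lemma: ket morphism from base change}), we choose a standard finite model $Z_1'\to Y_1'$ over $\spa(l,l^+)_{N'}$ with $N'\subset N_\infty$ fs, with the underlying adic spaces of $Z_1'$ and $Y_1'$ identified with those of $Z$ and $Y$.

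The main step is to align these two independently chosen finite models of $Y$. By enlarging both $N$ and $N'$ to a common fs submonoid of $N_\infty$ (and replacing $X_1,Y_1,Y_1',Z_1'$ by the corresponding saturated base changes, which remain locally of finite type by Lemma \ref{lemma: tech lemma}), we may assume $N=N'$. After this reduction, the two fs log structures on the underlying adic space of $Y$ given by $Y_1$ and $Y_1'$ become comparable: the chart $Q''$ of $Y_1'$, being finitely generated and exact at each geometric point, factors \'etale locally through the log structure coming from $Y_1$ after further enlarging $N$ inside $N_\infty$ (using that $N_\infty$ is the filtered colimit of its fs submonoids, and that finitely many chart relations involve only finitely many elements of $N_\infty$). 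By a symmetric argument we may, after shrinking \'etale locally on $Y$ and further enlarging $N$, replace $Y_1'$ by $Y_1$ (or a strictly \'etale localization thereof), obtaining a finite model $Z_1 \to Y_1$ of $g$ sitting over $X_1$.

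Once this compatible situation is arranged, $Z_1\to Y_1$ and $Y_1\to X_1$ are Kummer \'etale morphisms of locally noetherian fs log adic spaces, so their composition $Z_1\to X_1$ is Kummer \'etale by the classical theory of Kummer \'etale morphisms in the fs case (\cite[Proposition 4.1.8]{DLLZ}). Finally, combining the two Cartesian squares
\[
Z \cong Z_1\times_{Y_1} Y \cong Z_1\times_{Y_1} (Y_1\times_{X_1} X) \cong Z_1\times_{X_1} X
\]
in the category of saturated log adic spaces exhibits $Z_1\to X_1$ as a finite model of $g\circ f: Z\to X$, and so $g\circ f$ is Kummer \'etale. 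The main obstacle in this plan is the middle step of aligning the two finite models of $Y$; it requires careful monoid-theoretic manipulations, using finite generation of the intermediate charts and the description of $N_\infty$ as a colimit of fs submonoids, combined with Lemma \ref{lemma: pseudo-saturatedness stable under base change} and Lemma \ref{lemma: tech lemma} to guarantee that the saturated base changes remain well-behaved.
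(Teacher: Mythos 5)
Your proposal is correct and follows essentially the same approach as the paper: work \'etale locally, use the cofiltered-limit description of $Y$ in terms of its fs finite models over fs submonoids of $N_\infty$ (via the finitely-generated-chart factoring argument from Lemma \ref{lemma: ket morphism from base change}) to produce a finite model of $g$ sitting over an enlargement of $Y_1$, and then compose in the fs category. The ``alignment'' step is phrased more elaborately than necessary (one does not need to literally replace $Y_1'$ by $Y_1$ via a symmetric argument; it suffices to factor $Y\to Y_1'$ through some $Y_1^{(N'')}$ and pull $Z_1'\to Y_1'$ back along $Y_1^{(N'')}\to Y_1'$), but this is a matter of exposition, not a gap.
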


\begin{proof}
By working \'etale locally, we may assume that $Y\rightarrow X$ and $X\rightarrow \spa(l,l^+)_{N_{\infty}}$ admit finite models $Y_1\rightarrow X_1$ and $X_1\rightarrow \spa(l,l^+)_N$ as in Proposition \ref{lemma: choose X_1 Y_1 to be X Y}. Repeating the same argument as in the proof of Lemma \ref{lemma: ket morphism from base change}, by enlarging $N$, we may assume that $Z\rightarrow Y$ admits a finite model $Z_1\rightarrow Y_1$ fitting into the following commutative diagram
\[ 
\begin{tikzcd} 
Z \arrow[d] \arrow[r] & Z_1 \arrow[d] 
\\ 
Y \arrow[d] \arrow[r] & Y_1 \arrow[d] 
\\ 
X \arrow[d] \arrow[r] & X_1 \arrow[d] 
\\ 
\spa(l, l^+)_{N_{\infty}}\arrow[r] & \spa(l, l^+)_N
\end{tikzcd}
\]
where all squares are Cartesian in the category of saturated log adic spaces. In particular, $Z\rightarrow X$ is the base change from a Kummer \'etale morphism $Z_1\rightarrow X_1$ of locally noetherian fs log adic spaces, as desired.
\end{proof}

\begin{lemma}\label{lemma: morphism admits finite model}
Suppose $X$ and $Y$ are both admissibly smooth over $\spa(l,l^+)_{N_{\infty}}$ and let $f:Y\rightarrow X$ be any morphism of log adic spaces. Then, \'etale locally on $X$ and $Y$, $f$ is the base change from a morphism $f_0:Y_0\rightarrow X_0$ of locally noetherian fs log adic spaces. Moreover, one can choose $f_0$ so that it coincides with $f$ on the underlying adic spaces.
\end{lemma}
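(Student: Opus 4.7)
The plan is to reduce to constructing a morphism at the chart level, exploiting that the charts of the finite models are fs while $N_\infty$ is a filtered union of fs submonoids.

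First, I would apply Proposition \ref{lemma: choose X_0 to be X} to replace $X$ and $Y$ by affinoid \'etale neighborhoods admitting standard finite models $X_0 \to \spa(l,l^+)_{N_X}$ and $Y_0 \to \spa(l,l^+)_{N_Y}$, with fs charts $u_X: N_X \to P$ and $u_Y: N_Y \to Q$, such that the underlying adic spaces of $X_0, X$ (resp.\ $Y_0, Y$) coincide. Replacing both $N_X, N_Y$ by the saturation $N$ of the fs submonoid of $N_\infty$ they jointly generate (and base changing $X_0, Y_0$ along $\spa(l,l^+)_N$, which preserves the standard finite model property by Lemma \ref{lemma: torsion-free pushout} together with the proof of Proposition \ref{lemma: choose X_0 to be X}), we may assume $N_X = N_Y = N$ is a common fs base.

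Next I would analyze the log-structural part of $f$. In this setup, $X$ admits the (non-fs) chart $P_\infty = P \sqcup_N^{\text{sat}} N_\infty$ and $Y$ admits $Q_\infty = Q \sqcup_N^{\text{sat}} N_\infty$, and $f$ induces a monoid homomorphism $P_\infty \to \mM_Y(Y)$ whose restriction yields $\varphi: P \to \mM_Y(Y)$. Since $P$ is finitely generated and $Q_\infty$ is a chart for $Y$ (so \'etale locally on $Y$, sections of $\mM_Y$ have the form $q \cdot u$ with $q \in Q_\infty$, $u \in \mO_Y^\times$, by the construction of the associated log structure), after passing to an \'etale cover of $Y$ one may lift each of the finitely many generators $p_i$ of $P$ to $\tilde\varphi(p_i) = (q_i, u_i) \in Q_\infty \oplus \mO_Y^\times$. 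Using the filtered presentation $Q_\infty = \varinjlim_{N''} Q_{N''}$ with $Q_{N''} := Q \sqcup_N^{\text{sat}} N''$ and $N''$ running over fs submonoids of $N_\infty$ containing $N$, finiteness of $\{q_i\}$ yields a single $N''$ containing all of them. The finitely many relations among $\{p_i\}$ in $P$ translate, via $\tilde\varphi$, into relations in $Q_\infty \oplus \mO_Y^\times$; further enlarging $N''$ makes these relations hold already at the level of $Q_{N''}$.

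After replacing $N$ by $N''$ and base changing to new standard finite models $X_0' \to \spa(l,l^+)_{N''}$ and $Y_0' \to \spa(l,l^+)_{N''}$ with fs charts $P_{N''} = P \sqcup_N^{\text{sat}} N''$ and $Q_{N''}$, the data $\tilde\varphi$ assembles into a morphism of fs pre-log rings (at the level of the associated log structure, i.e., a map $P_{N''} \to Q_{N''} \oplus \mO_{Y_0'}^\times$) compatible with the inclusions from $N''$. Together with the fixed morphism of underlying adic spaces, which agrees with a map $Y_0' \to X_0'$ since those spaces are unchanged, this defines the desired $f_0: Y_0' \to X_0'$ of locally noetherian fs log adic spaces. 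Its saturated base change along $\spa(l,l^+)_{N_\infty} \to \spa(l,l^+)_{N''}$ recovers $f$: the underlying adic space maps agree by construction, and the log-structural maps agree on the chart $P_\infty$ (hence on all of $\mM_X$ by the universal property of the associated log structure). The main obstacle will be the chart-level descent in the second step: one must control the $\mO_Y^\times$-indeterminacy in the lifts $\tilde\varphi(p_i) = (q_i, u_i)$ so that the resulting map gives an honest morphism of pre-log rings at a finite level, which is where the monoid-theoretic filtered colimit structure and the finite presentation of fs monoids intervene.
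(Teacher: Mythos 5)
Your reduction strategy is the same as the paper's: after Proposition~\ref{lemma: choose X_0 to be X}, work over a common (or nested) fs base $N$, then use the finite generation of the fs chart $P$ of $X_0$ against the filtered presentation of the log structure on $Y$ to find a finite stage $N''$ at which the log-structural part of $f$ descends. The final assembly---base change $X_0$ to $X_0'$ over $\spa(l,l^+)_{N''}$, obtain $f_0\colon Y_0'\to X_0'$, and recover $f$ as the saturated base change---also matches the paper.

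The gap, which you yourself flag at the end, lies in how you execute the descent step. You lift $\varphi(p_i)\in\mM_Y$ to $\tilde\varphi(p_i)=(q_i,u_i)\in Q_\infty\oplus\mO_Y^\times$ and then claim that ``further enlarging $N''$ makes [the] relations hold already at the level of $Q_{N''}$.'' This is not justified: the relations of $P$ are respected by $\varphi$ only as a map into the log structure sheaf $\mM_Y$, and there is no reason the chosen lifts respect them in $Q_\infty\oplus\mO_Y^\times$. The failure occurs inside the pushout congruence defining the associated log structure, and is not cured by passing from $N''$ to a larger fs submonoid of $N_\infty$. The paper sidesteps this entirely by never lifting to a chart on the target side. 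Since $X_0'$ has chart $P$, a morphism $Y_0'\to X_0'$ of log adic spaces is determined precisely by a map of sheaves $P_{Y_0'}\to\mM_{Y_0'}$ over $\mO_{Y_0'}$, so it suffices to factor the sheaf map $P_Y\to\mM_Y$ through $h^*\mM_{Y_0'}$ for some finite $N''$, where $h\colon Y\to Y_0'$ is the projection. This factoring follows because $\mM_Y=\varinjlim_{N''}h^*\mM_{Y_0'}$ with injective transition maps, $Y$ is quasi-compact, and $P$ is finitely generated, so the monoid homomorphism $P\to\Gamma(Y,\mM_Y)=\varinjlim_{N''}\Gamma(Y,h^*\mM_{Y_0'})$ factors through a finite stage. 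Replacing the intended target $Q_{N''}\oplus\mO_{Y_0'}^\times$ of your $\tilde\varphi$ by the log structure sheaf $\mM_{Y_0'}$ itself repairs your argument.
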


\begin{proof}
By Proposition \ref{lemma: choose X_0 to be X}, we may assume that $X\rightarrow \spa(l,l^+)_{N_{\infty}}$ admits a finite model 
\[X_0\rightarrow \spa(l,l^+)_N\] which coincides with $X\rightarrow \spa(l,l^+)_{N_{\infty}}$ on the underlying adic spaces and $N\rightarrow N_{\infty}$ is injective. Similarly, we assume that $Y\rightarrow \spa(l,l^+)_{N_{\infty}}$ admits a finite model 
\[Y_0\rightarrow \spa(l,l^+)_{N'}\] with similar properties. By enlarging $N'$, we may assume that $N'$ contains $N$. 

We may further assume that the morphism $Y\rightarrow X\rightarrow X_0$ factors through $Y\rightarrow Y_0$. Indeed, we may replace $N'$ by any fs submonoid $N''$ of $N_{\infty}$ containing $N'$ and replace $Y_0$ by 
\[Y'_0:=Y_0\times_{\spa(l,l^+)_{N'}}\spa(l,l^+)_{N''}.\] Such $Y'_0$ form a cofiltered system with limit $Y$. Suppose $X$ is \'etale locally modeled on an fs chart $P$. Since $P$ is finitely generated, the morphism $P_Y\rightarrow \mM_Y$ factors through $h^*\mM_{Y'_0}\rightarrow \mM_Y$ for some $N''$, where $h: Y\rightarrow Y'_0$ denotes the natural morphism. This means $Y\rightarrow X_0$ factors as $Y\rightarrow Y'_0\rightarrow X_0$, as desired.

Finally, we take \[X'_0:=X_0\times_{\spa(l,l^+)_N}\spa(l,l^+)_{N'}.\]The induced morphism $f_0: Y_0\rightarrow X'_0$ satisfies the requirement.
\end{proof}

\begin{proposition}\label{prop: base change exists}
Suppose $X$, $Y$, and $Z$ are admissibly smooth over $\spa(l,l^+)_{N_{\infty}}$. Let $Y\rightarrow X$ be a Kummer \'etale (resp. finite Kummer \'etale) morphism and let $Z\rightarrow X$ be a morphism locally of finite type. Then the fiber product $W=Y\times_X Z$ exists in the category of saturated log adic spaces and the induced morphism $W\rightarrow Z$ is Kummer \'etale (resp. finite Kummer \'etale).
\end{proposition}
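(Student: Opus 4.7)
The plan is to reduce everything, étale locally, to a base change of a Kummer étale morphism between fs log adic spaces, and then apply Lemma \ref{lemma: ket morphism from base change} to transfer the result back to the admissibly smooth setting.

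First I would work étale locally on $X$, $Y$, and $Z$. Using Proposition \ref{lemma: choose X_1 Y_1 to be X Y}, I can arrange that the Kummer étale morphism $Y\rightarrow X$ admits a finite model $Y_1\rightarrow X_1$ where $X_1 \rightarrow \spa(l,l^+)_N$ is a standard finite model of $X$ (for some fs submonoid $N\hookrightarrow N_\infty$) and $Y_1\rightarrow X_1$ is Kummer étale between locally noetherian fs log adic spaces. Simultaneously, by Lemma \ref{lemma: morphism admits finite model} applied to $Z\rightarrow X$, after possibly enlarging $N$ to an fs submonoid $N'\subset N_\infty$ containing the original $N$ and pulling back $X_1$ and $Y_1$ along $\spa(l,l^+)_{N'}\rightarrow \spa(l,l^+)_N$, I can assume that the morphism $Z\rightarrow X$ comes, étale locally, as the base change of some morphism $Z_1\rightarrow X_1$ of locally noetherian fs log adic spaces sharing the same $X_1$. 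This is the bookkeeping step where one needs to carefully unify two different finite models into a single one; the argument is the same one used in Lemma \ref{lemma: ket morphism from base change} and Lemma \ref{lemma: morphism admits finite model}, exploiting that every fs chart is finitely generated and thus factors through some intermediate $X_1$.

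Next, I would form the fiber product $W_1 := Y_1 \times_{X_1} Z_1$ in the category of locally noetherian fs log adic spaces. Since $Y_1\rightarrow X_1$ is Kummer étale, classical theory (e.g.\ \cite[Proposition 4.1.6]{DLLZ}) ensures that $W_1$ exists and that $W_1 \rightarrow Z_1$ is Kummer étale; similarly, if $Y_1\rightarrow X_1$ is finite Kummer étale, then $W_1\rightarrow Z_1$ is also finite Kummer étale. Using the identifications $Y = Y_1\times_{X_1} X$ and $Z = Z_1\times_{X_1} X$, a straightforward manipulation of fiber products yields
\[
W = Y \times_X Z \;\cong\; W_1 \times_{Z_1} Z,
\]
so the desired fiber product over $X$ reduces to a base change of the fs Kummer étale morphism $W_1 \rightarrow Z_1$ along the structure map $Z \rightarrow Z_1$.

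Finally, since $Z$ is admissibly smooth over $\spa(l,l^+)_{N_\infty}$, Lemma \ref{lemma: ket morphism from base change} applies directly to the Kummer étale morphism $W_1\rightarrow Z_1$ and the morphism $Z\rightarrow Z_1$: it guarantees that the fiber product $W = W_1\times_{Z_1} Z$ exists in the category of saturated log adic spaces, that $W$ is admissibly smooth over $\spa(l,l^+)_{N_\infty}$, and that $W\rightarrow Z$ is locally of finite type. By Definition \ref{defn: kummer etale morphism over adm log smooth}, $W\rightarrow Z$ is Kummer étale, since étale locally it is obtained from the fs Kummer étale morphism $W_1\rightarrow Z_1$ by the saturated base change along $Z\rightarrow Z_1$. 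For the finite Kummer étale case, the same lemma also provides that $W\rightarrow Z$ is finite whenever $W_1\rightarrow Z_1$ is, giving the finite Kummer étale conclusion.

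I expect the main obstacle to be the first paragraph, namely arranging both $Y\rightarrow X$ and $Z\rightarrow X$ to descend simultaneously to morphisms over a common finite model $X_1 \rightarrow \spa(l,l^+)_N$; once this is in place, the remaining steps are formal consequences of classical fs log geometry and of Lemma \ref{lemma: ket morphism from base change}. The compatibility of pulling back along $\spa(l,l^+)_{N'}\rightarrow \spa(l,l^+)_N$ with pseudo-saturatedness (ensured by Lemma \ref{lemma: pseudo-saturatedness stable under base change}) is what makes this bookkeeping permissible without leaving the admissibly smooth category.
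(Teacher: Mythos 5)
Your proof is correct and follows essentially the same route as the paper: you reduce étale locally to a common finite model $X_1\rightarrow\spa(l,l^+)_N$ for both $Y\rightarrow X$ (via Proposition \ref{lemma: choose X_1 Y_1 to be X Y}) and $Z\rightarrow X$ (via Lemma \ref{lemma: morphism admits finite model}), form the fs fiber product $Y_1\times_{X_1}Z_1$, and then invoke Lemma \ref{lemma: ket morphism from base change} for the final base change along $Z\rightarrow Z_1$. The unification step in your first paragraph and the final identification $Y\times_X Z\cong W_1\times_{Z_1}Z$ match the paper's argument.
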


\begin{proof}
By Proposition \ref{lemma: choose X_1 Y_1 to be X Y} and Lemma \ref{lemma: morphism admits finite model}, \'etale locally on $X$, $Y$, and $Z$ (resp. \'etale locally on $X$ and $Z$), $Y\rightarrow X$ and $Z\rightarrow X$ fit into Cartesian diagrams
\[ 
\begin{tikzcd} 
Y \arrow[d] \arrow[r] & Y_1 \arrow[d] 
\\ 
X \arrow[d] \arrow[r] & X_1 \arrow[d] 
\\ 
\spa(l, l^+)_{N_{\infty}}\arrow[r] & \spa(l, l^+)_N
\end{tikzcd}
\,\,\,\,
\begin{tikzcd} 
Z \arrow[d] \arrow[r] & Z_0 \arrow[d] 
\\ 
X \arrow[d] \arrow[r] & X_0 \arrow[d] 
\\ 
\spa(l, l^+)_{N_{\infty}}\arrow[r] & \spa(l, l^+)_{N'}
\end{tikzcd}
\]
such that 
\begin{itemize}
\item $N$ and $N'$ are fs submonoids of $N_{\infty}$;
\item $X_1$, $Y_1$, $X_0$, $Z_0$ are locally noetherian fs log adic spaces and $Y_1\rightarrow X_1$ is Kummer \'etale (resp. finite Kummer \'etale);
\item the underlying adic spaces of $X_1$, $X_0$, $Y_1$, $Z_0$ coincide with $X$, $X$, $Y$, $Z$, respectively. In particular, $Z_0\rightarrow X_0$ is locally of finite type.
\end{itemize}

Moreover, we may assume that $X_0=X_1$. Indeed, by enlarging $N'$ and using the same argument as in the proof of Lemma \ref{lemma: morphism admits finite model}, we may assume that $N'$ contains $N$ and $X\rightarrow X_1$ factors through $X\rightarrow X_0$. Then we can replace $X_1$ and $Y_1$ by $X_0$ and $Y_1\times_{X_1}X_0$, respectively.

Now, let $W_0$ be the fiber product $Y_1\times_{X_0} Z_0$ in the category of locally noetherian fs log adic spaces. Since $Z_0\rightarrow X_0$ is locally of finite type, the fiber product exists. Moreover, the induced morphism $W_0\rightarrow Z_0$ is Kummer \'etale (resp. finite Kummer \'etale). Finally, the fiber product $W:=W_0\times_{Z_0}Z$ exists by Lemma \ref{lemma: ket morphism from base change}. One checks that $W$ is the desired fiber product $Y\times_XZ$.
\end{proof}

We wrap up this subsection with a technical lemma. This is an analogue of \cite[Lemma 4.2.5]{DLLZ}. To state the result, we need to introduce an analogue of ``$X^{\frac{1}{n}}$'' in \cite[Definition 4.1.5 (2)]{DLLZ}.

\begin{construction}\label{construction: X^{1/n}}
Suppose $X\rightarrow \spa(l,l^+)_{N_{\infty}}$ admits a standard finite model and let $X_0$, $N$, $P$, $P_{\infty}$ be as in Definition \ref{defn: standard finite model}. (In particular, $X\cong \spa(R, R^+)$ is affinoid.) For any positive integer $n$, let 
\[X_0^{(n)}:=X_0\times_{\spa(l,l^+)_N}\spa(l,l^+)_{\frac{1}{n}N}\]
where the fiber product is taken in the category of saturated log adic spaces (or, equivalently, in the category of log adic spaces). It admits a chart modeled on a torsion-free monoid \[P\sqcup_N\frac{1}{n}N=P\sqcup^{\mathrm{int}}_N\frac{1}{n}N=P\sqcup^{\mathrm{sat}}_N\frac{1}{n}N\] (cf. Lemma \ref{lemma: torsion-free pushout}). Let 
\[X_0^{(n)'}:=X_0^{(n)}\times_{\spa(l\langle P\sqcup_N\frac{1}{n}N \rangle, l^+\langle P\sqcup_N\frac{1}{n}N\rangle)}\spa(l\langle \frac{1}{n}P\rangle, l^+\langle \frac{1}{n}P\rangle)\]
and
\[X^{\frac{1}{n}}:=X_0^{(n)'}\times_{X_0^{(n)}}X, \]
where all fiber products here are taken in the category of saturated log adic spaces. These log adic spaces fit into the following commutative diagram of locally noetherian saturated log adic spaces 
\[ 
\begin{tikzcd} 
X^{\frac{1}{n}} \arrow[d] \arrow[r] & X_0^{(n)'} \arrow[d] &
\\ 
X \arrow[d] \arrow[r] & X_0^{(n)} \arrow[d] \arrow[r] & X_0 \arrow[d]
\\ 
\spa(l, l^+)_{N_{\infty}}\arrow[r] & \spa(l, l^+)_{\frac{1}{n}N} \arrow[r] & \spa(l, l^+)_N
\end{tikzcd}
\]
where all the squares are Cartesian. A priori, the construction of $X^{\frac{1}{n}}$ depends on the finite model and the chart $u:N\rightarrow P$. 

Note that the morphism \[X_0^{(n)'}\rightarrow X_0^{(n)}\] is standard Kummer \'etale modeled on the chart \[P\sqcup_N\frac{1}{n}N\hookrightarrow \frac{1}{n}P\] (cf. Remark \ref{remark:induced_Galois_cover_dominant}). Also note that the map \[X^{\frac{1}{n}}\rightarrow \spa(l,l^+)_{N_{\infty}}\] is Kummer \'etale with a standard finite model \[X_0^{(n)'}\rightarrow \spa(l,l^+)_{\frac{1}{n}N}\] equipped with chart $\frac{1}{n}N\hookrightarrow \frac{1}{n}P$. Consequently, $X^{\frac{1}{n}}\rightarrow X$ is standard Kummer \'etale (cf. Definition \ref{definition: standard Kummer etale}). In particular, the morphisms $X_0^{(n)'}\rightarrow X_0^{(n)}$ and $X^{\frac{1}{n}}\rightarrow X$ coincide on the underlying adic spaces.
\end{construction}

\bp\label{lemma: Lemma 4.2.5 DLLZ}
Suppose that $X\rightarrow \spa(l,l^+)_{N_{\infty}}$ admits a standard finite model 
\[X_0\rightarrow \spa(l,l^+)_N\] equipped with a chart $u:N\rightarrow P$ so that one can define $X_0^{(n)}$, $X_0^{(n)'}$, and $X^{\frac{1}{n}}$ as in Construction \ref{construction: X^{1/n}}.
\begin{enumerate}
\item The morphism $X^{\frac{1}{n}}\rightarrow X$ only depends on the standard saturated chart $P_{\infty}$ (see Definition \ref{defn: standard finite model}); it does not depend on the choice of $P$ and $N$ as along as \[P\sqcup_NN_{\infty}=P_{\infty}.\]
\item Let $Y\rightarrow X$ be a Kummer \'etale (resp. finite Kummer \'etale) morphism. Then there exists a positive integer $n$ invertible in $l$ such that the natural map \[Y\times_X X^{\frac{1}{n}}\rightarrow X^{\frac{1}{n}}\] is strictly \'etale (resp. strictly finite \'etale).
\end{enumerate}
\ep 

\begin{proof}
For (1), consider the tensor product of Tate Huber pairs
\[
(R', R'^+):=(R, R^+)\widehat{\otimes}_{(l\langle P_{\infty}\rangle, l^+\langle P_{\infty}\rangle)}(l\langle \frac{1}{n}P_{\infty}\rangle, l^+\langle \frac{1}{n}P_{\infty}\rangle).
\]
Since $P_{\infty}$ is almost $n$-divisible, $(R, R^+)\rightarrow (R', R'^+)$ is topologically of finite type, and hence $(R', R'^+)$ is \'etale sheafy. We claim that $X^{\frac{1}{n}}=\spa(R', R'^+)$ equipped with a chart modeled on $\frac{1}{n}P_{\infty}$. Indeed, we have
\[P_{\infty}=P\sqcup_NN_{\infty}=(P\sqcup_N \frac{1}{n}N)\sqcup_{\frac{1}{n}N} N_{\infty}\]
and hence
\[P_{\infty}\sqcup_{P\sqcup_N \frac{1}{n}N}\frac{1}{n}P=\frac{1}{n}P\sqcup_{\frac{1}{n}N}N_{\infty}=\frac{1}{n}P_{\infty}.\]
(There is no difference whether these pushouts are taken in the category of monoids or saturated monoids because $N\rightarrow P$ is saturated.) This yields
\begin{align*}
(R', R'^+)&\,=(R, R^+)\widehat{\otimes}_{(l\langle P_{\infty}\rangle, l^+\langle P_{\infty}\rangle)}(l\langle \frac{1}{n}P_{\infty}\rangle, l^+\langle \frac{1}{n}P_{\infty}\rangle)\\
&\,=(R, R^+)\widehat{\otimes}_{(l\langle P\sqcup_N\frac{1}{n}N\rangle, l^+\langle P\sqcup_N\frac{1}{n}N\rangle)}(l\langle \frac{1}{n}P\rangle, l^+\langle \frac{1}{n}P\rangle)
\end{align*}
which means that 
\[\spa(R', R'^+)=X\times_{X_0^{(n)}}X_0^{(n)'},\] as desired.

As an immediate corollary of (1), we have the freedom to replace $N$ by any fs submonoid $N'$ of $N_{\infty}$ containing $N$ and replace $P$ by the pushout $P\sqcup_NN'$, which is still torsion-free by Proposition \ref{lemma: choose X_0 to be X}.

To prove (2), it suffices to work \'etale locally on $X$. Using Proposition \ref{lemma: choose X_1 Y_1 to be X Y}, we may assume that there is an \'etale covering $\{Y_i\rightarrow Y\}_{i\in I}$ indexed by a finite set $I$ such that each morphism $Y_i\rightarrow X$ admits a finite model $Y_{i,0}\rightarrow X_0$ fitting into the commutative diagram
\[ 
\begin{tikzcd} 
Y_i \arrow[d] \arrow[r] & Y_{i,0} \arrow[d] 
\\ 
X \arrow[d] \arrow[r] & X_0 \arrow[d] 
\\ 
\spa(l, l^+)_{N_{\infty}}\arrow[r] & \spa(l, l^+)_N
\end{tikzcd}
\]
such that
\begin{itemize}
\item $X_0\rightarrow \spa(l,l^+)_N$ is a standard finite model of $X\rightarrow \spa(l,l^+)_{N_{\infty}}$ modeled on a chart $N\rightarrow P$ as in Definition \ref{defn: standard finite model}. In particular, $P$ is torsion-free and the cokernel of $N^{\mathrm{gp}}\rightarrow P^{\mathrm{gp}}$ is torsion-free;
\item $Y_{i,0}\rightarrow \spa(l,l^+)_N$ is a standard finite model of $Y\rightarrow \spa(l,l^+)_{N_{\infty}}$ modeled on a chart $N\rightarrow Q_i$ as in Definition \ref{defn: standard finite model}. In particular, $Q_i$ is torsion-free and the cokernel of $N^{\mathrm{gp}}\rightarrow Q_i^{\mathrm{gp}}$ is torsion-free;
\item $Y_1\rightarrow X_1$ admits an fs chart $P\rightarrow Q_i$ satisfying the conditions in Definition \ref{definition: Kummer etale morphism fs case} (2).
\end{itemize}

We claim that, there exists a positive integer $n$ invertible in $l$ such that the inclusion 
\[P\sqcup_N\frac{1}{n}N\hookrightarrow \frac{1}{n}P\] from Remark \ref{remark:induced_Galois_cover_dominant} factors as 
\[P\sqcup_N\frac{1}{n}N\hookrightarrow Q_i\sqcup_N\frac{1}{n}N\hookrightarrow \frac{1}{n}P\]
for all $i\in I$. Indeed, pick any $n$ invertible in $l$ such that $Q_i\subset \frac{1}{n}P$ for all $i$. To check that the canonical map $Q_i\sqcup_N\frac{1}{n}N\rightarrow \frac{1}{n}P$ is injective, it suffices to check this after passing to the group envelopes as all monoids here are integral. But 
\[Q_i^{\mathrm{gp}}\oplus_{N^{\mathrm{gp}}}\frac{1}{n}N^{\mathrm{gp}}\rightarrow \frac{1}{n}P^{\mathrm{gp}}\] is indeed injective because $Q_i^{\mathrm{gp}}\cap \frac{1}{n}N^{\mathrm{gp}}=N^{\mathrm{gp}}$ by our assumption on $N\rightarrow Q_i$.

Given such $n$, we show that the base change \[Y_i\times_X X^{\frac{1}{n}}\rightarrow X^{\frac{1}{n}}\] of $Y_i\rightarrow X$ is strictly \'etale, for all $i\in I$. Let \[X_{0,i}^{(n)}:=X_0\times_{\spa(l\langle P\sqcup_N \frac{1}{n}N\rangle, l^+\langle P\sqcup_N \frac{1}{n}N\rangle)}\spa(l\langle Q_i\sqcup_N \frac{1}{n}N\rangle, l^+\langle Q_i\sqcup_N \frac{1}{n}N\rangle).\]
Then $X_0^{(n)'}\rightarrow X_0^{(n)}$ factors as $X_0^{(n)'}\rightarrow X_{0,i}^{(n)}\rightarrow X_0^{(n)}$ where the map $X_{0,i}^{(n)}\rightarrow X_0^{(n)}$ is standard Kummer \'etale with Galois group $G\cong Q_i^{\mathrm{gp}}/P^{\mathrm{gp}}$. By assumption, $Y_i$ is strictly \'etale over 
\begin{align*}
&X\times_{\spa(l\langle P\rangle, l^+\langle P\rangle)}\spa(l\langle Q_i\rangle, l^+\langle Q_i\rangle)\\=\,&X\times_{\spa(l\langle P\sqcup_N \frac{1}{n}N\rangle, l^+\langle P\sqcup_N \frac{1}{n}N\rangle)}\spa(l\langle Q_i\sqcup_N \frac{1}{n}N\rangle, l^+\langle Q_i\sqcup_N \frac{1}{n}N\rangle)\\
=\,& X\times_{X_0^{(n)}} X_{0,i}^{(n)}.
\end{align*} Hence, $Y_i\times_XX^{\frac{1}{n}}=Y_i\times_{X_0^{(n)}}X_0^{(n)'}$ is strictly \'etale over
\begin{align*}
& \Big(X\times_{X_0^{(n)}} X_{0,i}^{(n)}\Big)\times_{X_0^{(n)}} X_0^{(n)'}\\
=\,&\Big(\big(X\times_{X_0^{(n)}} X_{0,i}^{(n)}\big)\times_{X_0^{(n)}} X_{0,i}^{(n)}\Big)\times_{X_{0,i}^{(n)}} X_0^{(n)'}\\
\cong\,&\Big(\big(X\times_{X_0^{(n)}} X_{0,i}^{(n)}\big)\times G^D\Big)\times_{X_{0,i}^{(n)}} X_0^{(n)'}\\
=\,&(X\times_{X_0^{(n)}} X_0^{(n)'})\times G^D= X^{\frac{1}{n}}\times G^D
\end{align*} 
where $G^D$ is the diagonalizable group scheme that is Cartier dual to the constant group scheme $G$ (cf. \cite[Proposition 4.1.6]{DLLZ}). This implies that \[Y_i\times_X X^{\frac{1}{n}}\rightarrow X^{\frac{1}{n}}\] is indeed strictly \'etale. 
Finally, by \'etale descent, \[Y\times_XX^{\frac{1}{n}}\rightarrow X^{\frac{1}{n}}\] is \'etale (resp. finite \'etale).
\end{proof}

\subsection{The Kummer \'etale site}\label{subsection: the Kummer etale site}
Now we are ready to study the Kummer \'etale site of $X$. Throughout this subsection, let $X$ be an  admissibly smooth log adic space over a divisible log point $\spa(l, l^+)_{N_{\infty}}$. Let $X_{\ket}$ (resp. $X_{\fket}$) be the category of log adic spaces that are Kummer \'etale (resp. finite Kummer \'etale) over $X$. By Proposition \ref{prop: base change exists} and Proposition \ref{proposition: stable under composition}, fiber products exist in $X_{\ket}$ (resp. $X_{\fket}$). Moreover, since Kummer \'etale morphisms are open by Corollary \ref{cor: ket morphisms are open}, we can equip $X_{\ket}$ (resp. $X_{\fket}$) with the topological coverings to make it into a site, which we call the \emph{Kummer \'etale site} (resp. \emph{finite Kummer \'etale site}). By Proposition \ref{lemma: choose X_1 Y_1 to be X Y}, the Kummer \'etale topology is generated by the standard Kummer \'etale morphisms and strictly \'etale morphisms.

By Proposition \ref{prop: base change exists}, if $f: Y\rightarrow X$ is a morphism of log adic spaces that are admissibly smooth over $\spa(l, l^+)_{N_{\infty}}$ and $f$ is locally of finite type, then $f$ induces a morphism of sites $Y_{\ket}\rightarrow X_{\ket}$ (resp. $Y_{\fket}\rightarrow X_{\fket}$).

The following lemma shows that morphisms between objects in $X_{\ket}$ are necessarily Kummer \'etale.

\begin{lemma}\label{lemma: morphisms in Xket are Kummer etale}
Suppose $X$ is admissibly smooth over $\spa(l, l^+)_{N_{\infty}}$ and let $g:Z\rightarrow Y$ and $f:Y\rightarrow X$ be morphisms of locally noetherian saturated log adic spaces. If both $f$ and $g\circ f$ are Kummer \'etale, so is $g$.
\end{lemma}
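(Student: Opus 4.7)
The plan is to reduce the statement to the corresponding result for locally noetherian fs log adic spaces, where the two-out-of-three property for Kummer \'etale morphisms is known (cf. \cite[Proposition 4.1.8]{DLLZ}), and then recover the lemma by taking the saturated base change along $\spa(l,l^+)_{N_\infty}\to\spa(l,l^+)_N$. The work lies in arranging compatible finite models for $f$ and $g\circ f$ and in descending $g$ itself to a morphism between these finite models.

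First I would work \'etale locally on $X$, $Y$, and $Z$. Since both $f:Y\to X$ and $h:=g\circ f:Z\to X$ are Kummer \'etale, I can apply Proposition \ref{lemma: choose X_1 Y_1 to be X Y} to each of them separately, obtaining finite models $Y_1\to X_1$ for $f$ and $Z_1'\to X_1'$ for $h$, with $X_1\to \spa(l,l^+)_N$ and $X_1'\to \spa(l,l^+)_{N'}$ both standard finite models of $X\to \spa(l,l^+)_{N_\infty}$. Arguing as in the proof of Lemma \ref{lemma: ket morphism from base change}, by passing to a common fs submonoid $N''\subset N_\infty$ containing $N$ and $N'$ and replacing each of $X_1,Y_1,Z_1'$ by its saturated base change along $\spa(l,l^+)_{N''}\to\spa(l,l^+)_N$ (resp.\ $\spa(l,l^+)_{N'}$), I may further assume that the two common bases coincide, yielding a single standard finite model $X_1\to\spa(l,l^+)_{N''}$ of $X$ together with Kummer \'etale finite models $Y_1\to X_1$ and $Z_1\to X_1$ whose base changes along $\spa(l,l^+)_{N_\infty}\to\spa(l,l^+)_{N''}$ return $f$ and $h$ respectively (and agree with them on the underlying adic spaces).

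The core step will be to show that, after possibly enlarging $N''$ once more, the morphism $g:Z\to Y$ descends to a morphism $g_1:Z_1\to Y_1$ over $X_1$. On the level of underlying adic spaces this is automatic, since $Z$ and $Z_1$ (resp.\ $Y$ and $Y_1$) have the same underlying adic space by construction. To descend the log part, I would use the fact that the chart on $Y_1$ is modeled on a finitely generated (fs) monoid $Q$: the pre-log structure $Q_Z\to\mM_Z$ pulled back via $g$ factors through $k^*\mM_{Y_1''}$ for some sufficiently large fs submonoid $N''\subset N'''\subset N_\infty$, where $Y_1''$ is the saturated base change of $Y_1$ to $\spa(l,l^+)_{N'''}$ and $k:Z\to Y_1''$ is the natural morphism. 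This is the same cofiltered-colimit argument that appears in the proof of Lemma \ref{lemma: morphism admits finite model}, and it is compatible with the induced map to $X_1$. Once $g_1$ is constructed, the fs two-out-of-three statement for Kummer \'etale morphisms implies that $g_1$ is Kummer \'etale, and saturated base change along $\spa(l,l^+)_{N_\infty}\to\spa(l,l^+)_{N'''}$ then exhibits $g$ as the base change of a Kummer \'etale morphism of locally noetherian fs log adic spaces. Hence $g$ is Kummer \'etale in the sense of Definition \ref{defn: kummer etale morphism over adm log smooth}.

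I expect the main obstacle to be the descent of $g$ to a morphism of fs models: one must check that the finitely generated chart on $Y_1$ is compatible, via $g$, with the chart on $Z_1$ after enlarging $N''$, and that this compatibility can be arranged simultaneously with the structure morphism over $X_1$. The verification is essentially bookkeeping with charts in the spirit of Section \ref{section: admissible smoothness}, but care is required because the saturated base change is not exact in the category of arbitrary monoids; the key input is that under the hypotheses of Proposition \ref{lemma: choose X_1 Y_1 to be X Y} (in particular the saturated-ness of the chart $N\to P$ and of $N\to Q$), the relevant pushouts remain saturated, so the colimit argument goes through unobstructed.
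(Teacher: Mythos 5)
Your proposal is correct and follows essentially the same route as the paper, which compresses the argument into a single sentence: choose compatible finite models $Z_0\rightarrow Y_0$ (for $g$) and $Y_0\rightarrow X_0$ (for $f$) via the cofinal trick of Lemma \ref{lemma: ket morphism from base change}, then invoke the cancellation property for Kummer \'etale morphisms of fs log adic spaces. You unwind the same reduction in more detail, but with the construction ordered slightly differently: you begin with Kummer \'etale finite models of $f$ and of $g\circ f$ over a common base and then descend $g$ to a morphism $g_1:Z_1\to Y_1$ via the finitely-generated-chart/cofiltered-colimit argument, whereas the paper constructs a (not a priori Kummer \'etale) finite model $Z_0\to Y_0$ for $g$ directly and relies on the composite $Z_0\to X_0$ becoming Kummer \'etale after a further refinement of $N$. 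These two orderings are symmetric and face the same chart bookkeeping; your version has the small advantage of making the descent of $g$ explicit rather than leaving the Kummer \'etale-ness of the composite implicit. The correct identification of where the technical care is needed --- compatibility of the finitely generated chart on $Y_1$ with the cofiltered system of models for $Z$, and the fact that saturated pushouts behave under the hypotheses of Proposition \ref{lemma: choose X_1 Y_1 to be X Y} --- matches the paper's reasoning. One small discrepancy: the paper cites \cite[Proposition 4.1.15]{DLLZ} for the cancellation property of Kummer \'etale morphisms of fs log adic spaces, while you cite Proposition 4.1.8; you should double-check that the result you invoke is indeed the cancellation (two-out-of-three) statement and not just composition or base change.
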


\begin{proof}
By choosing finite models $Z_0\rightarrow Y_0$ and $Y_0\rightarrow X_0$ using the same trick as in the proof of Lemma \ref{lemma: ket morphism from base change}, we reduce to \cite[Proposition 4.1.15]{DLLZ}.
\end{proof}

\begin{remark} 
By definition and using the same trick as in the proof of Lemma \ref{lemma: ket morphism from base change}, we know that $X_{\ket}$ coincides with the colimit of sites $X_{0,\ket}$ where $X_0$ runs through all locally noetherian fs log adic spaces with an arrow $X\rightarrow X_0$. In \cite{Dori_Yao}, a similar claim is proven for saturated log schemes (where $X$ is no longer required to be log smooth over a log point). 
\end{remark}

The goal of the rest of the section is to verify that the properties of the Kummer \'etale site studied in \cite[\S4]{DLLZ} remain valid in our new setting.

Firstly, on the Kummer \'etale site $X_{\ket}$, one can define the structure pre-sheaves $\mO_{X_{\ket}}$ and $\mO^+_{X_{\ket}}$ given by $U\mapsto \mO_U(U)$ and $U\mapsto \mO^+_U(U)$, respectively. One can also define the presheaf $\mM_{X_{\ket}}$ given by $U\mapsto \mM_U(U)$.

\begin{proposition}\label{prop: structure pre-sheaves are sheaves}
Suppose $X$ is admissibly smooth over $\spa(l,l^+)_{N_{\infty}}$.
\begin{enumerate}
\item The presheaves $\mO_{X_{\ket}}$, $\mO^+_{X_{\ket}}$ are sheaves.
\item The presheaf $\mM_{X_{\ket}}$ is a sheaf.
\item If $X$ is affinoid, then $H^i(X_{\ket}, \mO_{X_{\ket}})=0$ for all $i\geq 1$.
\end{enumerate}
\end{proposition}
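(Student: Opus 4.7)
The strategy is to reduce all three assertions to their counterparts for locally Noetherian fs log adic spaces, proved in \cite[Section 4.2]{DLLZ}. The pivotal tool is Remark \ref{remark: n-fold fiber product of finite model}: if $Y \to X$ is Kummer \'etale with a finite model $Y_1 \to X_1$ that coincides with it on underlying adic spaces, then the iterated fiber products $Y \times_X \cdots \times_X Y$ and $Y_1 \times_{X_1} \cdots \times_{X_1} Y_1$ agree as adic spaces for all $n \geq 1$. Since the presheaves $\mO_{X_{\ket}}$ and $\mO_{X_{\ket}}^+$ depend only on the underlying adic spaces, the \v Cech complexes for $\{Y \to X\}$ and $\{Y_1 \to X_1\}$ with coefficients in $\mO$ or $\mO^+$ are literally identical.

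For parts (1) and (2), which are local in nature, we may assume $X$ is affinoid and admits a standard finite model as in Definition \ref{defn: standard finite model}. By Corollary \ref{cor: ket morphisms are open}, any Kummer \'etale cover can be refined \'etale locally to one consisting of compositions of standard Kummer \'etale morphisms and strictly \'etale morphisms. The strictly \'etale case follows immediately from the \'etale-sheafiness of the underlying adic space $X$. For the standard Kummer \'etale case, Proposition \ref{lemma: choose X_1 Y_1 to be X Y} supplies global finite models whose underlying adic spaces coincide with those of the original cover; combined with the \v Cech-identification above, the sheaf property for $\mO_{X_{\ket}}$ and $\mO_{X_{\ket}}^+$ reduces to the corresponding fs statement in \cite[Proposition 4.2.3]{DLLZ}. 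For $\mM_{X_{\ket}}$, the log structure on any $U \in X_{\ket}$ with finite model $U_1$ over $X_1$ modeled on $P \to Q$ is the associated log structure of the chart $P_\infty \sqcup_P Q$ (exploiting Lemma \ref{lemma: torsion-free pushout}); expressing $\mM_U$ in terms of this chart together with $\mO_U^\times \subset \mO_U$ and invoking the already-established sheaf property for $\mO$ (and its counterpart for $\mM$ in the fs case) yields the sheafiness of $\mM_{X_{\ket}}$.

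For part (3), by a standard \v Cech-to-derived-functor spectral sequence argument, it suffices to show that any Kummer \'etale cover $\mathfrak U = \{U_i \to X\}_{i \in I}$ admits a refinement $\mathfrak V$ with $\check{H}^j(\mathfrak V, \mO_{X_{\ket}}) = 0$ for $j \geq 1$. By Proposition \ref{lemma: Lemma 4.2.5 DLLZ}, there exists an integer $n \geq 1$ invertible in $l$ such that each $U_i \times_X X^{\frac{1}{n}} \to X^{\frac{1}{n}}$ is strictly \'etale. Consider the composite refinement $\mathfrak V = \{U_i \times_X X^{\frac{1}{n}} \to X^{\frac{1}{n}} \to X\}$. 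Its \v Cech cohomology can be analyzed in two stages: the strict \'etale cover of the affinoid $X^{\frac{1}{n}}$ has vanishing higher \v Cech cohomology with values in $\mO$ by Tate acyclicity, while the single standard Kummer \'etale cover $X^{\frac{1}{n}} \to X$ has vanishing higher \v Cech cohomology by reduction to the fs case applied to the finite model $X_0^{(n)'} \to X_0$ (again via the underlying-adic-space identification). A Leray-type spectral sequence argument for the composite cover then yields the desired vanishing.

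The chief technical obstacle, particularly in part (3), is ensuring that all the fiber products entering the \v Cech complex can be simultaneously realized as coinciding, on underlying adic spaces, with fiber products of chosen finite models; this is guaranteed by Remark \ref{remark: n-fold fiber product of finite model} but requires care when distinct Kummer \'etale morphisms are composed or base changed, since one must coherently enlarge the toric monoid $N$ so that a single finite model accommodates every morphism in the \v Cech complex at once. A secondary subtlety appears in part (2), where one must verify that the saturated pushout $P_\infty \sqcup_P Q$ controlling the log structure on Kummer \'etale opens is itself compatible with passage to finite models.
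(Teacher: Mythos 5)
Your treatment of parts (1) and (3) matches the paper's strategy in substance: both reduce to a \v Cech computation for a standard Kummer \'etale cover, identify it with the \v Cech complex of the finite model via the underlying-adic-space coincidence of Remark~\ref{remark: n-fold fiber product of finite model}, and invoke the fs-case results of \cite{DLLZ}; for (3) you both pass to $X^{\frac{1}{n}}$ via Proposition~\ref{lemma: Lemma 4.2.5 DLLZ} and combine Tate acyclicity with the standard Kummer \'etale piece, the paper packaging the argument through \cite[Proposition A.10]{DLLZ} rather than a Leray-type spectral sequence, which is an organizational difference only.

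Part (2), however, is where your proposal has a genuine gap. You claim that writing $\mM_U$ as the log structure associated to the chart $P_\infty \sqcup_P Q$, ``together with $\mO_U^\times \subset \mO_U$,'' and citing the sheaf property for $\mO$ plus the fs statement for $\mM$, yields sheafiness. This does not close: the associated log structure is a \emph{sheafified} pushout $\mO_{U_\ett}^\times \oplus_{\alpha^{-1}(\mO_{U_\ett}^\times)} (P_\infty\sqcup_P Q)_U$, so $\mM_U(U)$ is not directly expressible as global sections of the chart modulo units, and in any case the chart $P_\infty\sqcup_P Q$ is not fs, so there is no fs statement to invoke directly. The paper resolves this with a filtered colimit argument that your proposal does not reproduce: for each fs submonoid $N\subset N'\subset N_\infty$ one forms the finite models $X_1',Y_1'$ over $\spa(l,l^+)_{N'}$; one checks that $\mM_X(X)=\varinjlim \mM_{X_1'}(X_1')$ and similarly for $Y$, and (via Remark~\ref{remark: n-fold fiber product of finite model}) for $Y\times_X Y$; one applies \cite[Proposition 4.3.4]{DLLZ} to each fs triple; and then one uses that filtered colimits commute with finite limits to pass to the colimit of these equalizer diagrams. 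Without this mechanism, ``expressing $\mM_U$ in terms of the chart'' does not reduce the non-fs sheafiness statement to anything previously established. You should restructure part (2) around the colimit of fs finite models rather than the chart presentation.
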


We need the following lemma.

\begin{lemma}\label{lemma: analogue of Thm 4.3.1 DLLZ}
Suppose $X$ is admissibly smooth over $\spa(l,l^+)_{N_{\infty}}$ and suppose $Y\rightarrow X$ is a standard Kummer \'etale cover (cf. Definition \ref{definition: standard Kummer etale}). Then the \v{C}ech complex 
\begin{multline*}
  \quad   C^{\bullet}(Y/X): \:\: 0\rightarrow \mO_{X_{\ket}}(X)\rightarrow\mO_{X_{\ket}}(Y) \rightarrow \mO_{X_{\ket}}(Y\times_XY)  \\  \rightarrow\mO_{X_{\ket}}(Y\times_XY\times_XY)\rightarrow\cdots \qquad 
\end{multline*}
is exact.
\end{lemma}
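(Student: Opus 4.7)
The plan is to reduce the claim to the corresponding assertion for standard Kummer \'etale covers of locally noetherian fs log adic spaces, which is \cite[Theorem 4.3.1]{DLLZ}. By Definition \ref{definition: standard Kummer etale}, the standard Kummer \'etale cover $Y \to X$ admits a \emph{global} finite model $Y_1 \to X_1$ satisfying the conditions in Proposition \ref{lemma: choose X_1 Y_1 to be X Y}, where $Y_1\rightarrow X_1$ is a standard Kummer \'etale cover of locally noetherian fs log adic spaces modeled on a chart $u:P\rightarrow Q$ as in Definition \ref{definition: Kummer etale morphism fs case} (3). By Proposition \ref{lemma: choose X_1 Y_1 to be X Y} (4), the morphisms $Y_1\rightarrow X_1$ and $Y\rightarrow X$ coincide on the underlying adic spaces; in particular $Y_1 \to X_1$ is surjective.

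The key input is Remark \ref{remark: n-fold fiber product of finite model}: for every integer $n \ge 1$, the $n$-fold fiber product $Y \times_X \cdots \times_X Y$, taken in the category of saturated log adic spaces, has the same underlying adic space as the $n$-fold fiber product $Y_1 \times_{X_1} \cdots \times_{X_1} Y_1$ taken in the category of locally noetherian fs log adic spaces. Since $\mO_{X_{\ket}}$ is obtained by pulling back the adic structure sheaf along the forgetful functor from saturated log adic spaces to adic spaces, and this identification is functorial with respect to the various projections involved, we obtain a natural identification of cochain complexes
\[
C^{\bullet}(Y/X) \;=\; C^{\bullet}(Y_1/X_1).
\]

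It remains to invoke the known exactness of the right-hand side for standard Kummer \'etale covers of locally noetherian fs log adic spaces, namely \cite[Theorem 4.3.1]{DLLZ} applied to $Y_1 \to X_1$. Since all the nontrivial work (the identification of iterated fiber products) has already been packaged into Remark \ref{remark: n-fold fiber product of finite model}, the only step that requires some care is verifying that this identification is compatible with the face maps of the \v{C}ech complex; this is routine once one checks that the diagonals and projections in the two categories agree on underlying adic spaces, which follows by the same argument as in Remark \ref{remark: n-fold fiber product of finite model} (via the explicit isomorphism with $Y_1 \times_{X_1} G^D_{X_1} \times_{X_1}\cdots\times_{X_1} G^D_{X_1}$ etc.). I therefore expect the principal conceptual content of the proof to be this translation into the fs setting; no new combinatorial input beyond what has already been developed is needed.
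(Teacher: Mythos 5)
Your proposal is correct and follows the same route as the paper's proof: choose the global finite model $Y_1 \to X_1$ provided by the definition of standard Kummer \'etale cover, identify the \v{C}ech complexes $C^{\bullet}(Y/X) = C^{\bullet}(Y_1/X_1)$ via Remark \ref{remark: n-fold fiber product of finite model}, and conclude by \cite[Theorem 4.3.1]{DLLZ}. The paper states this more tersely but the content is identical.
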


\begin{proof}
Let $Y_1\rightarrow X_1$ be a finite model of $Y\rightarrow X$ as in Definition \ref{definition: standard Kummer etale}. In particular, $Y_1\rightarrow X_1$ coincides with $Y\rightarrow X$ on the underlying adic spaces. By Remark \ref{remark: n-fold fiber product of finite model}, $C^{\bullet}(Y/X)$ coincides with $C^{\bullet}(Y_1/X_1)$, later of which is exact by \cite[Theorem 4.3.1 (2)]{DLLZ}.
\end{proof}

\begin{proof}[Proof of Proposition \ref{prop: structure pre-sheaves are sheaves}]
\begin{enumerate}
\item It suffices to prove the sheafiness of $\mO_{X_{\ket}}$ as the sheafiness of $\mO^+_{X_{\ket}}$ follows from that of $\mO_{X_{\ket}}$. Recall that the Kummer \'etale topology is generated by standard Kummer \'etale covers and strictly \'etale morphisms. Since the sheafiness of the structure sheaf for the \'etale topology is well-known, it suffices to show the exactness of the \v{C}ech complex \[0\rightarrow \mO_{X_{\ket}}(X)\rightarrow\mO_{X_{\ket}}(Y)\rightarrow \mO_{X_{\ket}}(Y\times_XY)\] 
where $Y\rightarrow X$ is a standard Kummer \'etale cover. This follows from Lemma \ref{lemma: analogue of Thm 4.3.1 DLLZ}.
\item Since $\mM_X$ is a sheaf with respect to the \'etale topology, it suffices to show the exactness of 
\[
0\rightarrow \mM_X(X)\rightarrow \mM_Y(Y)\rightrightarrows \mM_{Y\times_XY}(Y\times_XY)
\]
where $Y\rightarrow X$ is a Kummer \'etale cover such that it admits a finite model $Y_1\rightarrow X_1$ satisfying the conditions in Proposition \ref{lemma: choose X_1 Y_1 to be X Y}. In particular, $X_1\rightarrow \spa(l,l^+)_N$ (resp. $Y_1\rightarrow \spa(l,l^+)_N$) is a finite model of $X\rightarrow \spa(l,l^+)_{N_{\infty}}$ (resp. $Y\rightarrow \spa(l,l^+)_{N_{\infty}}$) and $Y_1\rightarrow X_1$ coincides with $Y\rightarrow X$ on the underlying adic spaces.

For every fs submonoid $N'$ of $N_{\infty}$ containing $N$, let \[X'_1:=X_1\times_{\spa(l,l^+)_N}\spa(l,l^+)_{N'}\] and 
\[Y'_1:=Y_1\times_{\spa(l,l^+)_N}\spa(l,l^+)_{N'}.\]
Then $Y'_1\rightarrow X'_1$ is also a finite model of $Y\rightarrow X$. Such $N'$ form a filtered system and the corresponding filtered colimit of $\mM_{X'_1}$ (resp. $\mM_{Y'_1}$) is equal to $\mM_X$ (resp. $\mM_Y$). By Remark \ref{remark: n-fold fiber product of finite model}, $Y'_1\times_{X'_1}Y'_1$ coincides with $Y\times_XY$ on the underlying adic spaces and hence the filtered colimit of $\mM_{Y'_1\times_{X'_1}Y'_1}$ also coincides with $\mM_{Y\times_XY}$. By \cite[Proposition 4.3.4]{DLLZ},
\[
0\rightarrow \mM_{X'_1}(X'_1)\rightarrow \mM_{Y'_1}(Y'_1)\rightrightarrows \mM_{Y'_1\times_{X'_1}Y'_1}(Y'_1\times_{X'_1}Y'_1)
\]
is exact. Taking the filtered colimit, we obtain the desired exactness.
\item %\Zijian{This proof uses $X^{1/n}$. Though here the argument seems quite ``soft''. I think for this proof it is okay to use $X^{1/n} = X \times_{X_0} X_0^{1/n}$ and prove the corresponding version of 5.21(3)}
By \cite[Proposition A.10]{DLLZ}, we may assume that $X$ is affinoid and $X\rightarrow \spa(l,l^+)_{N_{\infty}}$ admits a standard finite model. Consider the \v{C}ech complex associated with a Kummer \'etale covering $\{U_i\rightarrow X\}_{i\in I}$ indexed by a finite set $I$. By Proposition \ref{lemma: Lemma 4.2.5 DLLZ}, this covering admits a refinement $\{V_j\rightarrow X\}_{j\in J}$ such that $\{V_j\times_X X^{\frac{1}{n}}\rightarrow X^{\frac{1}{n}}\}_{j\in J}$ is a strictly \'etale covering, for some integer $n\geq 1$ invertible in $l$, and such that \[V_j\times_XX^{\frac{1}{n}}\rightarrow V_j\] is a composition of \'etale morphisms and standard Kummer \'etale covers. It follows from Lemma \ref{lemma: analogue of Thm 4.3.1 DLLZ} and \cite[Proposition A.10]{DLLZ} that the \v{C}ech complex
\[0\rightarrow \mO_X(X)\rightarrow \oplus_i \mO_{U_i}(U_i)\rightarrow\oplus_{i,j}\mO_{U_i\times_{X}U_j}(U_i\times_{X}U_j)\rightarrow\cdots\]
is exact. This finishes the proof.
\end{enumerate}
\end{proof}

\begin{corollary}
Suppose $X$ is admissibly smooth over $\spa(l,l^+)_{N_{\infty}}$. Let $\varepsilon_{\mathrm{an}}: X_{\ket}\rightarrow X_{\mathrm{an}}$ and $\varepsilon_{\ett}: X_{\ket}\rightarrow X_{\ett}$ be the natural projection of sites. Then the canonical morphisms \[ \mO_{X_{\mathrm{an}}}\rightarrow R\varepsilon_{\mathrm{an},*}\mO_{X_{\ket}} \:\: \text{and }  \mO_{X_{\ett}}\rightarrow R\varepsilon_{\ett,*}\mO_{X_{\ket}}
\]
are isomorphisms. Consequently, the pullback functor from the category of vector bundles on $X_{\mathrm{an}}$ (resp. $X_{\ett}$) to the category of $\mO_{X_{\ket}}$-modules is fully faithful. %\Zijian{Is this true because $N_\infty$ is assumed to be perfectoid? Or should the statement still hold if $N_\infty$ is not perfectoid? } \Hansheng{This follows from the previous proposition which uses everything we developed so far. So, the fact that $N_{\infty}$ is perfectoid is certainly used.}
%\Zijian{Another related question: I guess in this case, the description of $R^1 \epsilon_* \Lambda$ where $\Lambda = \Z/n\Z$ for some n is similar as before? Can we write this maybe in the subsection on fundamental groups? } \Hansheng{We can certainly try to do that. But is such a result used anywhere in the paper?}
\end{corollary}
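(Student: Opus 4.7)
The plan is to deduce the corollary directly from Proposition \ref{prop: structure pre-sheaves are sheaves} just established. Writing $\varepsilon$ for either $\varepsilon_{\mathrm{an}}$ or $\varepsilon_{\ett}$, the identification $\mO_X \isom \varepsilon_* \mO_{X_{\ket}}$ in degree zero is immediate from the sheafiness clause Proposition \ref{prop: structure pre-sheaves are sheaves} (1), so the corollary reduces to the higher vanishing
\[
R^i \varepsilon_* \mO_{X_{\ket}} = 0 \quad \text{for all } i \ge 1.
\]

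By definition, $R^i \varepsilon_* \mO_{X_{\ket}}$ is the sheafification of the presheaf $U \mapsto H^i(U_{\ket}, \mO_{X_{\ket}}|_{U_{\ket}})$ on $X_{\mathrm{an}}$ (resp. $X_{\ett}$). The next point is to observe that any affinoid open of $X$ (resp. any affinoid object strictly \'etale over $X$) is itself admissibly smooth over $\spa(l, l^+)_{N_\infty}$: indeed, Proposition \ref{lemma: choose X_0 to be X} produces, \'etale locally, a finite model $X_0 \to \spa(l, l^+)_N$ for $X$, and base-changing this model along $U \to X$ yields a finite model for $U$. Hence such $U$ form a basis for the analytic (resp. \'etale) topology, and Proposition \ref{prop: structure pre-sheaves are sheaves} (3) shows that the presheaf vanishes on them. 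The sheafification is therefore zero.

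For the full faithfulness statement, let $\mE, \mF$ be vector bundles on $X_{\mathrm{an}}$ (resp. $X_{\ett}$). Because $\mE$ is locally free of finite rank, the internal hom $\mG := \mathcal{H}om_{\mO_X}(\mE, \mF)$ is again a vector bundle on $X$, and there is a canonical isomorphism $\varepsilon^* \mG \cong \mathcal{H}om_{\mO_{X_{\ket}}}(\varepsilon^* \mE, \varepsilon^* \mF)$. Passing to global sections reduces the claim to the bijection $H^0(X, \mG) \isom H^0(X_{\ket}, \varepsilon^* \mG)$. Invoking the first part together with the projection formula (valid since $\mG$ is locally free of finite rank), we obtain
\[
R\varepsilon_*(\varepsilon^* \mG) \isom \mG \otimes^\L_{\mO_X} R\varepsilon_* \mO_{X_{\ket}} \isom \mG,
\]
and taking $H^0$ finishes the argument. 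The only substantial input is Proposition \ref{prop: structure pre-sheaves are sheaves} (3); no further obstacle should arise in this corollary.
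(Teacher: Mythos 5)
Your approach is the right one, and since the paper states this corollary without a separate proof (it is an immediate consequence of Proposition~\ref{prop: structure pre-sheaves are sheaves}), you have essentially supplied the intended argument: sheafiness gives the degree-zero isomorphism, higher vanishing on a basis of affinoids kills $R^{i}\varepsilon_{*}$ for $i\ge 1$, and full faithfulness follows by passing to $\mathcal{H}om$ and invoking the projection formula for a finite locally free module.

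One wording should be tightened. You assert that \emph{any} affinoid open (resp.\ any affinoid strictly \'etale object) of $X$ is itself admissibly smooth. That is not immediate from Definition~\ref{defn: admissiblly log smooth}: admissible smoothness requires a \emph{global} Cartesian square with an fs finite model $X_{0}\to\spa(l,l^{+})_{N}$, and the map $X\to X_{0}$ need not be a homeomorphism on underlying adic spaces — the saturation step in the proof of Proposition~\ref{prop:adm_sm_fiber_product_exist} can change the ring, so an arbitrary open $U\subset X$ need not arise as a pullback of an open of $X_{0}$. What Proposition~\ref{lemma: choose X_0 to be X} genuinely gives is that, \'etale locally on $X$, a \emph{standard} finite model exists with the \emph{same} underlying adic space as $X$; hence \emph{sufficiently small} affinoid opens (and affinoid strictly \'etale neighborhoods) are admissibly smooth. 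This suffices, because $R^{i}\varepsilon_{*}\mO_{X_{\ket}}$ is the sheafification of $U\mapsto H^{i}(U_{\ket},\mO_{X_{\ket}})$, and admissibly smooth affinoids form a basis of $X_{\mathrm{an}}$ (resp.\ $X_{\ett}$) on which that presheaf vanishes by Proposition~\ref{prop: structure pre-sheaves are sheaves}(3). With this adjustment — basis of small admissibly smooth affinoids rather than all affinoids — your proof is complete.
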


More generally, we also have the following higher vanishing of cohomology of coherent sheaves.

\begin{definition}\label{defn: coherent sheaf}
Let $X$ be admissibly smooth over $\spa(l,l^+)_{N_{\infty}}$.
\begin{enumerate}
\item An $\mO_{X_{\ket}}$-module $\mathcal{F}$ is called an \emph{analytic coherent sheaf} if it is isomorphic to the inverse image of a coherent sheaf on the analytic site of $X$.
\item An $\mO_{X_{\ket}}$-module $\mathcal{F}$ is called a \emph{coherent sheaf} if there exists a Kummer \'etale covering $U_i\rightarrow X$ such that each $\mathcal{F}|_{U_i}$ is an analytic coherent sheaf.
\end{enumerate}
\end{definition}

\begin{proposition}\label{prop: vanishing of cohomology of coherent sheaves}
Let $X$ be admissibly smooth over $\spa(l,l^+)_{N_{\infty}}$. If $X$ is affinoid and $\mathcal{F}$ is a coherent sheaf on $X_{\ket}$, then $H^i(X_{\ket}, \mathcal{F})=0$ for all $i\geq 1$.
\end{proposition}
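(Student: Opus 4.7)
The plan is to mirror the strategy used for Proposition \ref{prop: structure pre-sheaves are sheaves}(3), but with the structure sheaf replaced by our coherent sheaf $\mathcal{F}$, reducing Čech acyclicity to Kiehl's theorem on the affinoid adic space $X^{\frac{1}{n}}$ from Construction \ref{construction: X^{1/n}}.

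First, by \cite[Proposition A.10]{DLLZ} and quasi-compactness, it suffices to verify Čech acyclicity of $\mathcal{F}$ with respect to a cofinal system of finite Kummer \'etale coverings on which $\mathcal{F}$ is analytic coherent. Start with any such covering $\{U_i \to X\}_{i \in I}$ (finite) witnessing coherence, as provided by Definition \ref{defn: coherent sheaf}. Apply Proposition \ref{lemma: Lemma 4.2.5 DLLZ}(2) to obtain an integer $n \geq 1$ invertible in $l$ such that each induced morphism $U_i \times_X X^{\frac{1}{n}} \to X^{\frac{1}{n}}$ is strictly \'etale. After further refining by an affinoid analytic covering of $X^{\frac{1}{n}}$ and taking the induced cover of $X$, we may assume that every object in our covering factors as a composition of an \'etale morphism from an affinoid and the standard Kummer \'etale cover $Y := X^{\frac{1}{n}} \to X$, and that $\mathcal{F}|_Y$ is analytic coherent.

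Second, consider the Čech-to-derived functor spectral sequence for $Y \to X$,
\[
E_2^{p,q} = \check H^p(Y/X, \underline{H}^q(\mathcal{F})) \Longrightarrow H^{p+q}(X_{\ket}, \mathcal{F}).
\]
By Remark \ref{remark: n-fold fiber product of finite model} the iterated fibre products $Y^{\times_X k}$ agree, on underlying adic spaces, with $Y \times G^D \times \cdots \times G^D$, a finite disjoint union of affinoid copies of $Y$; hence $\mathcal{F}|_{Y^{\times_X k}}$ is analytic coherent on each piece. Kiehl's theorem on the analytic site, together with the comparison $\mO_{X_{\ett}} \cong R\varepsilon_{\tu{an},*} \mO_{X_{\ket}}$ extended to coherent coefficients (a consequence of Proposition \ref{prop: structure pre-sheaves are sheaves}), gives $H^q(Y^{\times_X k}, \mathcal{F}) = 0$ for all $q \geq 1$ and $k \geq 1$. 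The spectral sequence thus degenerates, yielding $H^i(X_{\ket}, \mathcal{F}) \cong \check H^i(Y/X, \mathcal{F})$. The latter Čech complex is then identified with the Čech complex for the finite model $X_0^{(n)'} \to X_0^{(n)}$ of Construction \ref{construction: X^{1/n}} (whose underlying adic spaces, together with all iterated fibre products, agree with those of $Y \to X$), and its exactness in positive degrees reduces to the known vanishing for analytic coherent sheaves under a standard Kummer \'etale cover of fs log adic spaces, as in \cite[Theorem 4.3.8]{DLLZ}.

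The main obstacle lies in this final identification step: although the underlying affinoid Huber pairs of $Y \to X$ and of $X_0^{(n)'} \to X_0^{(n)}$ coincide, the coefficient sheaf $\mathcal{F}|_Y$ need not descend to the fs finite model, so one must check directly that the Čech differentials (and the vanishing argument on positive degrees) depend only on the underlying adic data. The groundwork for this is already laid in Remark \ref{remark: n-fold fiber product of finite model} and in the proof of Proposition \ref{lemma: Lemma 4.2.5 DLLZ}; what remains is a careful bookkeeping that the analytic coherent sheaf structure is preserved under the identifications $Y^{\times_X k} \cong Y \times G^D \times \cdots \times G^D$ used to transfer the vanishing from the fs setting.
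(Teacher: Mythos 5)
Your proposal correctly identifies the strategic parallel to Proposition~\ref{prop: structure pre-sheaves are sheaves}(3) and the key role of Proposition~\ref{lemma: Lemma 4.2.5 DLLZ}, Construction~\ref{construction: X^{1/n}}, and Remark~\ref{remark: n-fold fiber product of finite model}, but there is a genuine circularity at the heart of the argument. In your second step you assert that $H^q_{\ket}(Y^{\times_X k},\mathcal{F})=0$ for $q\geq 1$ because $\mathcal{F}|_{Y^{\times_X k}}$ is analytic coherent, citing ``Kiehl's theorem together with the comparison $\mO_{X_{\ett}}\cong R\varepsilon_{\tu{an},*}\mO_{X_{\ket}}$ extended to coherent coefficients, a consequence of Proposition~\ref{prop: structure pre-sheaves are sheaves}.'' But the vanishing of higher Kummer \'etale cohomology of an analytic coherent sheaf on an affinoid is precisely the special case of the proposition you are trying to prove, and it does not follow from Proposition~\ref{prop: structure pre-sheaves are sheaves}: that proposition only treats the structure sheaf, and passing from $R\varepsilon_{\tu{an},*}\mO_{X_{\ket}}\cong\mO_{X_{\tu{an}}}$ to general analytic coherent coefficients via a projection formula would produce a derived tensor product with possible higher Tor terms. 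So you are using the analytic coherent case before you have established it.

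The paper's proof is more economical and avoids this. It first handles the analytic coherent case directly (no spectral sequence): by \cite[Proposition A.10]{DLLZ} one reduces to the exactness of $C^\bullet_{\mathcal{F}}(Y/X)$ for a standard Kummer \'etale cover $Y\to X$, and Remark~\ref{remark: n-fold fiber product of finite model} identifies this Čech complex \emph{term by term} with $C^\bullet_{\mathcal{F}}(Y_1/X_1)$ for a finite model $Y_1\to X_1$ that coincides with $Y\to X$ on the underlying adic spaces; exactness is then \cite[Theorem 4.3.7]{DLLZ}. Notice this disposes of the worry you flag at the end of your write-up: the identification is purely on underlying adic spaces and Huber pairs, so the analytic coherent structure transports without having to descend $\mathcal{F}$ to the fs model. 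With the analytic coherent case in hand, the general coherent case then follows the pattern of \cite[Theorem 4.3.7(2)]{DLLZ}, substituting the refinement argument from the proof of Proposition~\ref{prop: structure pre-sheaves are sheaves}(3) (which uses Proposition~\ref{lemma: Lemma 4.2.5 DLLZ} to find $n$ so that the covering becomes strictly \'etale over $X^{\frac{1}{n}}$). You should restructure: prove the analytic coherent base case first via the Čech identification, then run the refinement; the Čech-to-derived spectral sequence is not needed.
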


\begin{proof}
We first prove the statement when $\mathcal{F}$ is an analytic coherent sheaf (i.e., an analogue of \cite[Theorem 4.3.7 (1)]{DLLZ}). By \cite[Proposition A.10]{DLLZ}, it suffices to prove the exactness of the \v{C}ech complex \[C_{\mathcal{F}}^{\bullet}(Y/X): 0\rightarrow \mathcal{F}(X)\rightarrow\mathcal{F}(Y)\rightarrow \mathcal{F}(Y\times_XY)\rightarrow\mathcal{F}(Y\times_XY\times_XY)\rightarrow\cdots\] 
where $Y\rightarrow X$ is a standard Kummer \'etale cover with a finite model $Y_1\rightarrow X_1$ which coincides with $Y\rightarrow X$ on the underlying adic spaces. By Remark \ref{remark: n-fold fiber product of finite model}, $C_{\mathcal{F}}^{\bullet}(Y/X)$ coincides with $C_{\mathcal{F}}^{\bullet}(Y_1/X_1)$,  which is exact by \cite[Theorem 4.3.7]{DLLZ}.

Given this, the proof for a general coherent sheaf is similar to the proof of \cite[Theorem 4.3.7 (2)]{DLLZ}. It suffices to use the arguments in the proof of Proposition \ref{prop: structure pre-sheaves are sheaves} (3) to replace the corresponding argument in \emph{loc. cit.}
\end{proof}

\begin{proposition}\label{prop: representable presheaves are sheaves}
Let $f:Y\rightarrow X$ be a morphism between log adic spaces that are admissibly smooth over $\spa(l, l^+)_{N_{\infty}}$. Then the presheaf $\mathrm{Mor}_X(\cdot, Y)$ is sheaf on $X_{\ket}$.
\end{proposition}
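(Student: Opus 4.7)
The plan is to reduce the sheaf property to a sufficient generating class of covers. By Corollary \ref{cor: ket morphisms are open}(1), every Kummer \'etale cover of $X$ is, \'etale locally, a composition of a strictly \'etale cover and a standard Kummer \'etale cover. Therefore it will suffice to verify the sheaf condition for $\mathrm{Mor}_X(\cdot, Y)$ against (i) strictly \'etale covers and (ii) standard Kummer \'etale covers.

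Case (i) is classical: the underlying morphism of adic spaces descends along strictly \'etale covers, while the accompanying morphism of log structures descends via the \'etale sheafiness of $\mM_X$, which is part of Proposition \ref{prop: structure pre-sheaves are sheaves}(2). For case (ii), let $U \to X$ be a standard Kummer \'etale cover, and choose a finite model $U_1 \to X_1$ as in Definition \ref{definition: standard Kummer etale}. By construction $U_1 \to X_1$ agrees with $U \to X$ on the underlying adic spaces, and by Remark \ref{remark: n-fold fiber product of finite model} the same is true for every iterated fibre product $U \times_X \cdots \times_X U$ and $U_1 \times_{X_1} \cdots \times_{X_1} U_1$. In particular, the underlying morphism of adic spaces $U \to X$ is finite \'etale and surjective.

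Now suppose we are given $g_U \colon U \to Y$ over $X$ whose two pullbacks to $U \times_X U$ coincide. I would glue this to a morphism $g \colon X \to Y$ in two steps. First, the underlying morphism of adic spaces $U \to Y$ descends uniquely to a morphism $X \to Y$ of adic spaces by classical finite \'etale descent of morphisms of adic spaces applied to $U \to X$. Second, the morphism of sheaves of monoids $g_U^{-1} \mM_Y \to \mM_U$, together with its compatibility on $U \times_X U$, assembles into a morphism $g^{-1} \mM_Y \to \mM_X$ of sheaves on $X_{\ket}$ by the sheaf property in Proposition \ref{prop: structure pre-sheaves are sheaves}(2). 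Compatibility with the structure maps $\mM \to \mO$ descends automatically, since both sides are assembled from the same descent data.

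The main technical point is the first step above, where one has to invoke classical finite \'etale descent of morphisms of adic spaces in a setting where the cover $U \to X$ is only finite \'etale on the underlying adic space (and not on the log adic spaces themselves), and then verify that the pair (underlying adic morphism, log-structure morphism) genuinely defines a morphism of log adic spaces. Once this compatibility is in place, uniqueness of $g$ follows from the separation of $\mO_X$ and $\mM_X$ on $X_{\ket}$ established in Proposition \ref{prop: structure pre-sheaves are sheaves}.
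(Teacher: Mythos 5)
Your proposal contains a genuine gap at a crucial step. You write: \emph{``In particular, the underlying morphism of adic spaces $U \to X$ is finite \'etale and surjective''} and then invoke \emph{``classical finite \'etale descent of morphisms of adic spaces.''} This is incorrect: the underlying morphism of adic spaces of a (standard) Kummer \'etale cover is finite, flat, and surjective, but it is \emph{ramified} and not \'etale in general. For instance, in the situation of Example \ref{example: log free algebras} with $\ul{R^{\square}} = (\mO_C\langle T\rangle, \Q_{\geq 0}\oplus\N)$, the cover $\ul{R^{\square}_m}$ has underlying adic space map $\spa(\mO_C\langle T^{1/m}\rangle[\frac{1}{p}],\ldots)\to\spa(\mO_C\langle T\rangle[\frac{1}{p}],\ldots)$, which is ramified at $T=0$; the same is true for the covers $\mathbb{E}_n\to\mathbb{E}_1$ in Construction \ref{construction:E_infty_cover}, whose underlying adic maps coincide with the finite model $\mathbb{E}_{n0}\to\mathbb{E}_{10}^{(n)}$ and are not \'etale on the underlying spaces. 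Consequently one cannot descend the underlying adic-space morphism $U\to Y$ to $X\to Y$ by classical finite \'etale descent; that machinery simply does not apply.

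Any salvage of your direct approach would have to replace ``classical finite \'etale descent'' with a descent statement tailored to Kummer \'etale covers, and such a statement already requires the sheaf property of $\mO_{X_{\ket}}$ (and $\mM_{X_{\ket}}$) established in Proposition \ref{prop: structure pre-sheaves are sheaves} — i.e., the very content one is trying to leverage — together with a glueing argument for the adic-space structure, not merely for functions. The paper avoids this difficulty by a different route: after reducing to a single standard Kummer \'etale cover $W\to U$, it chooses compatible finite models for the \emph{entire diagram} (including $W\times_U W\rightrightarrows W$ and the target $Y$), which places everything inside the world of locally noetherian fs log adic spaces, and then cites the known descent result \cite[Proposition 4.3.5]{DLLZ}. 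That result does the genuine work of descending a morphism along a Kummer \'etale cover in the fs setting; your proof as written has no analogue of it. Your second step (descending the log-structure homomorphism via the sheaf property of $\mM_{X_{\ket}}$, and verifying compatibility with the structure maps by separatedness) is reasonable once the underlying morphism has been constructed, but it does not rescue the first step.
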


\begin{proof}
Let $\mathcal{F}$ denote the presheaf $\mathrm{Mor}_X(\cdot, Y)$. By \'etale descent, it suffices to show the exactness of the \v{C}ech complex
\[
C^{\bullet}_{\mathcal{F}}(W/U):0\rightarrow \mathcal{F}(U)\rightarrow \mathcal{F}(W)\rightrightarrows \mathcal{F}(W\times_UW)
\]
where $U\in X_{\ket}$ and $W\rightarrow U$ is a standard Kummer \'etale cover. The injectivity of the map $\mathcal{F}(U)\rightarrow \mathcal{F}(W)$ follows from Proposition \ref{prop: structure pre-sheaves are sheaves} (1) \& (2). 

It remains to show that if $f:W\rightarrow Y$ satisfies $\mathrm{pr}_1 \circ f=\mathrm{pr}_2\circ f$ where \[ \mathrm{pr}_1, \mathrm{pr}_2 :W\times_UW\rightarrow W\] are the two natural projections, then $f$ must live in the image of $\mathcal{F}(U)\rightarrow \mathcal{F}(W)$. By the same trick as in the proof of Lemma \ref{lemma: ket morphism from base change}, we may assume that the entire commutative diagram
\[ 
\begin{tikzcd} 
W\times_UW \arrow[d, shift left,"\mathrm{pr}_1"] \arrow[d, shift right, "\mathrm{pr}_2"']\arrow[ddr, shift left] & 
\\ 
W \arrow[d] \arrow[dr, "f"'] & 
\\ 
U \arrow[d] & Y \arrow[dl] 
\\ 
X & 
\end{tikzcd}
\]
is obtained from base change of a diagram of locally noetherian fs log adic spaces
\[ 
\begin{tikzcd} 
W_1\times_{U_1}W_1 \arrow[d, shift left,"\mathrm{pr}_1"] \arrow[d, shift right, "\mathrm{pr}_2"']\arrow[ddr, shift left] & 
\\ 
W_1 \arrow[d] \arrow[dr, "f_1"'] & 
\\ 
U_1 \arrow[d] & Y_1 \arrow[dl] 
\\ 
X_1 & 
\end{tikzcd}
\]
along a morphism $X\rightarrow X_1$, and such that the two diagram coincide on the underlying adic spaces. By \cite[Proposition 4.3.5]{DLLZ}, such an $f_1: W_1\rightarrow Y_1$ satisfying $\mathrm{pr}_1\circ f_1=\mathrm{pr}_2\circ f_1$ must live in the image of the map 
\[\mathrm{Mor}_{X_1}(U_1, Y_1)\rightarrow \mathrm{Mor}_{X_1}(W_1, Y_1).\] Base change along $X\rightarrow X_1$, we conclude that $f$ lives in the image of $\mathcal{F}(U)\rightarrow \mathcal{F}(W)$, as desired.
\end{proof}

\subsection{Kummer \'etale descent} The goal of this subsection is to show that the descent results studied in \cite[Section 4.4]{DLLZ} generalizes to our setup. Recall the following definition of log geometric points.

\begin{definition}\label{defn: log geometric points}
\begin{enumerate}
\item A \emph{log geometric point} is a log point 
\[\zeta=(\spa(k,k^+), M)\] (see Definition \ref{example: log points}) where $k$ is separably closed and $\overline{M}=M/k^{\times}$ is uniquely $n$-divisible for all positive integers $n$ invertible in $k$.
\item Suppose $X$ is admissibly smooth over $\spa(l,l^+)_{N_{\infty}}$. A \emph{log geometric point} of $X$ is a morphism of log adic spaces 
\[\eta: \zeta\rightarrow X\] where $\zeta$ is a log geometric point as in (1).
\item Suppose $X$ is admissibly smooth over $\spa(l,l^+)_{N_{\infty}}$. A \emph{Kummer \'etale neighborhood} of a log geometric point $\eta: \zeta\rightarrow X$ is a factorization of $\eta$ into a composition \[\zeta\rightarrow U\xrightarrow[]{\phi} X\] where $\phi$ is Kummer \'etale.
\end{enumerate}
\end{definition}

\begin{lemma}\label{lemma: log geometric point always exists}
Suppose $X$ is admissibly smooth over $\spa(l,l^+)_{N_{\infty}}$. For every geometric point \[\xi=\spa(k,k^+)\rightarrow X,\] there always exists a log geometric point $\widetilde{\xi}$ above it (i.e., the morphism $\widetilde{\xi}\rightarrow X$ factors through $\xi\rightarrow X$ on the underlying adic spaces).
\end{lemma}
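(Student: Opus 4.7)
The plan is to construct $\widetilde{\xi}$ by passing to a suitable ``$n$-divisible hull'' of a local chart of $X$ at the image of $\xi$. Let $\bar{x}$ denote the image of $\xi$ in $X$. Using admissible smoothness, by Proposition~\ref{lemma: choose X_0 to be X} combined with Lemma~\ref{lemma: torsion-free pushout}, we may assume (after passing to an \'etale neighborhood of $\bar{x}$) that $X$ admits a chart $\alpha\colon P_\infty \to \mO_X$ modeled on a saturated, torsion-free monoid $P_\infty = P \sqcup_N N_\infty$ which is almost $n$-divisible for every $n \ge 1$. Pulling back along $\xi \to X$ yields a pre-log structure $\alpha\colon P_\infty \to k$, whose associated log structure is $\xi^*\mM_X$.

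Next, we form the divisible hull. Let $D := \{n \in \Z_{\ge 1} : n \text{ is invertible in } k\}$ and set
\[
P_\infty^{\mathrm{div}} := \varinjlim_{n \in D} \tfrac{1}{n} P_\infty,
\]
which is well-defined since $P_\infty$ is torsion-free and is uniquely $n$-divisible for every $n \in D$. We extend $\alpha$ to a monoid homomorphism $\widetilde{\alpha}\colon P_\infty^{\mathrm{div}} \to k$ as follows: on elements $p/n$ with $\alpha(p) \notin k^\times$, we are forced to set $\widetilde{\alpha}(p/n) = 0$ since $k$ is an integral domain; on the ``unit part'' $\alpha^{-1}(k^\times) \subset P_\infty$, the problem reduces to extending the group homomorphism $(\alpha^{-1}(k^\times))^{\gp} \to k^\times$ to its divisible hull, which exists because $k$ is separably closed and each $n \in D$ is invertible in $k$, so $k^\times$ is $n$-divisible. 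Choosing any such extension, we let $\mM_{\widetilde{\xi}}$ be the log structure on $\spa(k,k^+)$ associated to $\widetilde{\alpha}$. The chart inclusion $P_\infty \hookrightarrow P_\infty^{\mathrm{div}}$ induces a morphism of pre-log structures, and hence a morphism of log structures $\xi^*\mM_X \to \mM_{\widetilde{\xi}}$, giving the desired morphism $\widetilde{\xi} := (\spa(k,k^+), \mM_{\widetilde{\xi}}) \to X$ whose underlying morphism of adic spaces is $\xi \to X$.

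Finally, it remains to verify that $\widetilde{\xi}$ is a log geometric point, i.e., that $\overline{\mM}_{\widetilde{\xi}}$ is uniquely $n$-divisible for every $n \in D$. One computes directly that $\overline{\mM}_{\widetilde{\xi}} \cong P_\infty^{\mathrm{div}}/L$, where $L := \widetilde{\alpha}^{-1}(k^\times)$, and observes that $L$ is identified with the divisible hull of $\alpha^{-1}(k^\times) \subset P_\infty$, hence is itself uniquely $n$-divisible for every $n \in D$. Since the quotient of a uniquely $n$-divisible monoid by a uniquely $n$-divisible submonoid remains uniquely $n$-divisible (as $[n]$ is bijective on both), the verification is complete. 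The main subtlety in the argument is the extension of $\alpha$ to $\widetilde{\alpha}$ in the second paragraph; this is precisely where the hypothesis that $k$ is separably closed enters, guaranteeing the existence of the $n$-th roots required for $n \in D$.
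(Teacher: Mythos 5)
Your proof is correct and reaches the same log geometric point (with chart $\varinjlim_{n\in D}\frac{1}{n}P_\infty$), but it takes a genuinely different route from the paper. The paper's proof (an analogue of {\cite[Construction 4.4.3]{DLLZ}}) is geometric: it forms, for each $n$ invertible in $k$, the Kummer cover $X^{1/n}$ from Construction~\ref{construction: X^{1/n}}, sets $\xi^{(1/n)}:=(\xi\times_X X^{1/n})_{\mathrm{red}}$ (reducedness forcing the underlying adic space to stay $\spa(k,k^+)$), and takes $\widetilde\xi:=\varprojlim_n \xi^{(1/n)}$, inheriting both the chart and the morphism $\widetilde\xi\to X$ automatically from the tower. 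You instead stay entirely at the level of pre-log structures: you form the $D$-divisible hull $P_\infty^{\mathrm{div}}$ of the chart, extend $\alpha$ to $\widetilde\alpha$ by choosing compatible roots of units (using $n$-divisibility of $k^\times$ for $n\in D$, as $k$ is separably closed) and sending the complement of the face $\widetilde\alpha^{-1}(k^\times)$ to $0$, and then take the associated log structure. This is more elementary and self-contained — it avoids invoking the $X^{1/n}$ machinery and the ``one checks'' about the inverse limit of adic spaces — at the modest cost of having to spell out (i) that $\widetilde\alpha$ is a well-defined monoid homomorphism (which works because $\widetilde\alpha^{-1}(k^\times)$ is a face, so the ``zero part'' is absorbing), and (ii) that the group-theoretic extension over the divisible hull actually exists (a Baer-criterion-style argument, since $k^\times$ is $n$-divisible for all $n\in D$ and the cokernel of the inclusion is $D$-torsion — this is not quite the standard injectivity of divisible groups since $k^\times$ need not be a $\Z[D^{-1}]$-module, but the transfinite extension still goes through cyclic step by cyclic step). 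Your final divisibility check — quotient of a uniquely $n$-divisible integral monoid by a uniquely $n$-divisible face — is also a touch more careful than the paper's one-line ``quotient of a uniquely $n$-divisible monoid''. Both proofs are valid; yours trades geometric machinery for monoid combinatorics.
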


\begin{proof}
This is an analogue of \cite[Construction 4.4.3]{DLLZ}. We may assume that $X$ admits a standard saturated chart $P_{\infty}$ in the sense of Definition \ref{defn: standard finite model}. Equip $\xi$ with the pullback log structure from $X$ so that $\xi$ also admits a chart modeled on $P_{\infty}$. For every positive integer $n$ invertible in $k$, consider $X^{\frac{1}{n}}$ constructed in Construction \ref{construction: X^{1/n}} and consider
\[\xi^{(\frac{1}{n})}:=\big(\xi\times_{X}X^{\frac{1}{n}}\big)_{\mathrm{red}}\]
equipped with the log structure modeled on the chart $\frac{1}{n}P_{\infty}$. Notice that, since we take the reduced subspace, the underlying adic space of $\xi^{(\frac{1}{n})}$ is still isomorphic to $\spa(k,k^+)$. Now consider
\[\tilde{\xi}:=\varprojlim_n \xi^{(\frac{1}{n})}\]
where the inverse limit runs through all positive integers $n$ invertible in $k$. One checks that the underlying adic space of $\tilde{\xi}$ is isomorphic to $\spa(k,k^+)$, and it is equipped with the log structure modeled on the chart $\varinjlim_n \frac{1}{n}P_{\infty}$ which is uniquely $n$-divisible for all $n$ invertible on $k$. Consequently, $\overline{\mM}_{\tilde{\xi}}$ is also uniquely $n$-divisible for all $n$ invertible on $k$ as it is a quotient of $\varinjlim_n \frac{1}{n}P_{\infty}$.
\end{proof}

\begin{lemma}\label{lemma: fiber functor}
Suppose $X$ is admissibly smooth over $\spa(l,l^+)_{N_{\infty}}$ and let $\zeta\rightarrow X$ be a log geometric point. Then the functor $\mathrm{Sh}(X_{\ket})\rightarrow \underline{\mathrm{Sets}}$ sending $\mathcal{F}$ to \[\mathcal{F}_{\zeta}:=\varinjlim \mathcal{F}(U),\] where the colimit runs through all Kummer \'etale neighborhoods $U$ of $\zeta$, is a fiber functor (i.e., a point in the topos $\mathrm{Sh}(X_{\ket})$). Moreover, such fiber functors defined by log geometric points form a conservative system.
\end{lemma}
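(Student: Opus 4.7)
The plan is to follow the strategy of \cite[Proposition 4.4.4]{DLLZ}, adapting it to our admissibly smooth setting by leveraging the Kummer \'etale structural results established earlier in this section. The proof splits naturally into two parts: (a) verifying that $\mathcal{F} \mapsto \mathcal{F}_\zeta$ is a fiber functor, and (b) establishing conservativity.

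First, I would verify that the category of Kummer \'etale neighborhoods of $\zeta$ is cofiltered. Given two neighborhoods $\zeta \to U \to X$ and $\zeta \to U' \to X$, one forms the fiber product $U \times_X U'$ in the category of saturated log adic spaces, which exists and is Kummer \'etale over $X$ by Proposition \ref{prop: base change exists} and Proposition \ref{proposition: stable under composition}. The compatibility of the two maps from $\zeta$ gives a map $\zeta \to U \times_X U'$ via the representability established in Proposition \ref{prop: representable presheaves are sheaves}. Equalizers are handled analogously using Lemma \ref{lemma: morphisms in Xket are Kummer etale} (which ensures that morphisms in $X_{\ket}$ between neighborhoods are themselves Kummer \'etale, so equalizers within $X_{\ket}$ can be formed as open immersions by Corollary \ref{cor: ket morphisms are open}). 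Since $\mathcal{F}_\zeta$ is defined as a filtered colimit of sets, it automatically commutes with finite limits and arbitrary colimits; hence it is a point of the topos.

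For conservativity, I would first reduce the problem as follows. Let $\mathcal{F} \to \mathcal{G}$ be a morphism in $\mathrm{Sh}(X_{\ket})$ whose stalk at every log geometric point is an isomorphism; we must show this is an isomorphism of sheaves. By standard arguments it suffices to show that for any $U \in X_{\ket}$ and any $s \in \mathcal{F}(U)$ whose image is zero in $\mathcal{G}(U)$, there exists a Kummer \'etale cover $\{V_i \to U\}$ on which $s|_{V_i} = 0$; and similarly that local sections of $\mathcal{G}$ lift Kummer \'etale locally to sections of $\mathcal{F}$. Both reduce to showing that for a section $s \in \mathcal{F}(U)$ with $s_\zeta = 0$ for all log geometric points $\zeta \to U$, there is a Kummer \'etale cover trivializing $s$. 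The key input here is Lemma \ref{lemma: log geometric point always exists}, which guarantees that every (ordinary) geometric point of the underlying adic space admits a lift to a log geometric point: this will let us promote vanishing at log geometric points to vanishing on some Kummer \'etale neighborhood of each such point.

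The technical heart of the argument will be the following covering statement: given a log geometric point $\zeta \to U$ over which $s$ vanishes, there exists a Kummer \'etale morphism $V \to U$ whose image contains the image of $\zeta$ and on which $s|_V = 0$. To produce such $V$, I would use the explicit construction of $U^{1/n}$ from Construction \ref{construction: X^{1/n}} and Proposition \ref{lemma: Lemma 4.2.5 DLLZ}: after passing to a standard saturated chart on $U$, any Kummer \'etale neighborhood of $\zeta$ becomes strictly \'etale after base change to $U^{1/n}$ for some $n$ invertible in $l$, and the vanishing of $s$ at $\zeta$ means it vanishes on an \'etale neighborhood in this pulled-back space. Descending this \'etale neighborhood back to $U$ along the Kummer \'etale cover $U^{1/n} \to U$ yields the desired $V$. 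Finally, once one shows that the $V$'s obtained from ranging $\zeta$ over all log geometric points cover $U$ (which uses Lemma \ref{lemma: log geometric point always exists} to ensure every point of $|U|$ is hit), the sheaf axiom for $\mathcal{F}$ from Proposition \ref{prop: structure pre-sheaves are sheaves} implies $s = 0$. The main obstacle will likely be the careful bookkeeping in this covering argument, particularly keeping track of how Kummer \'etale neighborhoods of log geometric points behave under the standard model descent and the $U^{1/n}$ construction; the combinatorial structure of pseudo-saturated charts developed in Section \ref{section: admissible smoothness} should provide enough flexibility to push through.
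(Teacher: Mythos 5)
Your overall plan is sound and reaches the correct conclusion, but it diverges from the paper's argument in both parts, with a couple of misattributions worth flagging.

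For cofilteredness, the paper does not appeal to an abstract universal property of the fiber product in the category of saturated log adic spaces; instead it reduces to finite models (the same trick used in the proof of Lemma~\ref{lemma: ket morphism from base change}), forms the product on the fs level, and then invokes \cite[Proposition~2.3.32 \& Remark~4.1.4]{DLLZ} to lift $\zeta$ through the fs fiber product. Your route—citing Proposition~\ref{prop: base change exists}/Proposition~\ref{proposition: stable under composition} for existence and then asserting the map $\zeta\to U\times_X U'$ ``via the representability established in Proposition~\ref{prop: representable presheaves are sheaves}''—is aiming at the right fact, but the reference is wrong: Proposition~\ref{prop: representable presheaves are sheaves} concerns sheafiness of $\mathrm{Mor}_X(\cdot,Y)$ on $X_{\ket}$, not the universal property of the fiber product in the ambient category, which is what you actually need. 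Likewise, the equalizer sketch via Corollary~\ref{cor: ket morphisms are open} conflates ``Kummer \'etale morphisms are open maps'' with ``the diagonal of a Kummer \'etale morphism is an open immersion''—a nontrivial distinction here because Kummer \'etale morphisms can ramify over the log locus. The paper sidesteps all of this by working through the DLLZ fs framework where these lifting questions are already settled.

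For conservativity, your argument is correct but substantially over-engineered. The existence of the trivializing Kummer \'etale neighborhood $V\to U$ is \emph{immediate} from the definition of the stalk: if $s_\zeta=0$, then by definition of the filtered colimit some neighborhood $V$ of $\zeta$ already has $s|_V=0$—you do not need the $U^{1/n}$ construction or Proposition~\ref{lemma: Lemma 4.2.5 DLLZ} to produce it. The paper compresses the whole second half into one sentence, using only Lemma~\ref{lemma: log geometric point always exists} to ensure every geometric point lifts, together with the observation that Kummer \'etale maps are open. Also, your invocation of Proposition~\ref{prop: structure pre-sheaves are sheaves} for ``the sheaf axiom for $\mathcal{F}$'' is another misattribution—that proposition is specifically about the structure (pre)sheaf; the sheaf axiom for an arbitrary $\mathcal{F}\in\mathrm{Sh}(X_{\ket})$ is just the definition. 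None of this constitutes a fatal gap, but if you clean up the references and drop the $U^{1/n}$ detour, the argument will be both correct and considerably shorter.
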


\begin{proof}
We first claim that the category of Kummer \'etale neighborhoods of $\zeta$ is filtered. Indeed, suppose $U\rightarrow X$ and $U'\rightarrow X$ are two Kummer \'etale neighborhoods of $\zeta$. By the same trick as in the proof of Lemma \ref{lemma: ket morphism from base change}, we may assume that both morphisms admit finite models $U_1\rightarrow X_1$ and $U'_1\rightarrow X_1$, respectively, such that
\begin{itemize}
\item $U_1\times_{X_1}U'_1\rightarrow X_1$ is a finite model of the Kummer \'etale morphism $U\times_XU'\rightarrow X$; and
\item the morphisms 
\[U_1\rightarrow X_1, U'_1\rightarrow X_1, U_1\times_{X_1}U'_1\rightarrow X_1\] coincide with \[ U\rightarrow X, U'\rightarrow X, U\times_XU'\rightarrow X\] on the underlying adic spaces.
\end{itemize}
Using \cite[Proposition 2.3.32 \& Remark 4.1.4]{DLLZ}, we conclude that $U\times_XU'$ is also a Kummer \'etale neighborhood of $\zeta$. This proves the first statement.

The second statement is clear as every point of $X$ admits some log geometric point above it by Lemma \ref{lemma: log geometric point always exists}, and every object in $X_{\ket}$ is covered by liftings of log geometric points of $X$.
\end{proof}

We have the following descent result which is an analogue of \cite[Theorem 4.4.12]{DLLZ}.

\begin{theorem}\label{thm: ket descent of finite ket morphism}
Suppose that $X$ is admissibly smooth over $\spa(l, l^+)_{N_{\infty}}$ and let $f:Y\rightarrow X$ be a Kummer \'etale cover. Let 
\[\mathrm{pr}_1, \mathrm{pr}_2:Y\times_XY\rightarrow Y\] be the two projections. Suppose that  $Y'\in Y_{\fket}$ and suppose that there exists an isomorphism 
\[\mathrm{pr}^{-1}_1(Y')\xrightarrow[]{\sim} \mathrm{pr}^{-1}_2(Y')\] satisfying the usual cocycle condition. Then there exists a unique $X'\in X_{\ket}$ (up to isomorphism) such that 
\[Y'\cong X'\times_X Y.\]
\end{theorem}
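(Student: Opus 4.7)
The plan is to reduce the descent problem to the fs-case result of \cite[Theorem 4.4.12]{DLLZ} by finding compatible finite models for all the data involved. Since the conclusion is local on $X$ for the Kummer \'etale topology and the assertion about uniqueness follows from Proposition \ref{prop: representable presheaves are sheaves}, I would work \'etale locally on $X$. Using Proposition \ref{lemma: choose X_1 Y_1 to be X Y}, I can assume the Kummer \'etale cover $f: Y \to X$ admits a finite model $f_1: Y_1 \to X_1$ (with $X_1 \to \spa(l,l^+)_N$ a standard finite model of $X \to \spa(l,l^+)_{N_{\infty}}$) which coincides with $f$ on the underlying adic spaces. By Remark \ref{remark: n-fold fiber product of finite model}, the same then holds for the fiber powers $Y \times_X Y$ and $Y \times_X Y \times_X Y$ versus their counterparts $Y_1 \times_{X_1} Y_1$ and the triple fiber product.

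Next I would descend $Y' \to Y$ to a finite model. Since $Y' \to Y$ is finite Kummer \'etale, by Definition \ref{defn: kummer etale morphism over adm log smooth} it arises \'etale locally from a finite Kummer \'etale morphism of locally noetherian fs log adic spaces. By arguing as in the proof of Lemma \ref{lemma: ket morphism from base change} and Lemma \ref{lemma: morphism admits finite model}, after possibly enlarging $N$ to a larger fs submonoid $N'$ of $N_{\infty}$ (and replacing $X_1, Y_1$ by their saturated base changes along $\spa(l,l^+)_{N'} \to \spa(l,l^+)_N$), I can arrange that $Y' \to Y$ admits a finite model $Y'_1 \to Y_1$ which agrees with $Y' \to Y$ on the underlying adic spaces, with $Y'_1 \in (Y_1)_{\fket}$. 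The key point here is that finite Kummer \'etale morphisms are characterized by finitely many combinatorial data (the chart $P \to Q$ together with strictly finite \'etale data), which only involve finitely generated monoids and therefore factor through some sufficiently large fs submonoid of $N_{\infty}$.

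The hard part will be descending the cocycle isomorphism. After possibly enlarging $N$ once more, I want to produce an isomorphism $\mathrm{pr}_1^{-1}(Y'_1) \xrightarrow{\sim} \mathrm{pr}_2^{-1}(Y'_1)$ over $Y_1 \times_{X_1} Y_1$ which restricts to the given isomorphism after base change to $Y \times_X Y$, and satisfies the cocycle condition on $Y_1 \times_{X_1} Y_1 \times_{X_1} Y_1$. Since $\mathrm{pr}_i^{-1}(Y'_1)$ is finite Kummer \'etale over $Y_1 \times_{X_1} Y_1$ and its saturated base change to $Y \times_X Y$ is $\mathrm{pr}_i^{-1}(Y')$ by Remark \ref{remark: n-fold fiber product of finite model}, an isomorphism between the two such finite Kummer \'etale objects is specified by a finite amount of data. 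Writing $Y \times_X Y$ as the cofiltered limit of its finite models (as $N'$ varies over fs submonoids of $N_{\infty}$ containing $N$), I would use a standard spreading-out/noetherian approximation argument (applied to the representable sheaf $\mathrm{Isom}$, whose sheaf property follows from Proposition \ref{prop: representable presheaves are sheaves}) to descend the isomorphism to some finite stage. The cocycle condition is a closed condition given by equality of two morphisms, so again by Proposition \ref{prop: representable presheaves are sheaves} it can be arranged to hold at a sufficiently large finite stage.

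Having arranged all data at the finite level, I invoke \cite[Theorem 4.4.12]{DLLZ} applied to the Kummer \'etale cover $Y_1 \to X_1$ of locally noetherian fs log adic spaces together with the descent datum $(Y'_1, \varphi_1)$. This produces a unique $X'_1 \in (X_1)_{\ket}$ with $Y'_1 \cong X'_1 \times_{X_1} Y_1$. Setting $X' := X'_1 \times_{X_1} X$ (saturated fiber product, which exists and gives a Kummer \'etale object of $X_{\ket}$ by Lemma \ref{lemma: ket morphism from base change}), I verify that $Y' \cong X' \times_X Y$ by comparing both sides to $Y'_1 \times_{Y_1} Y \cong (X'_1 \times_{X_1} Y_1) \times_{Y_1} Y$. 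Uniqueness of $X'$ \'etale locally follows from uniqueness of $X'_1$ together with the faithfulness of base change from finite models; global uniqueness follows by gluing via Proposition \ref{prop: representable presheaves are sheaves}, since the representable presheaf $\mathrm{Isom}_X(-, X')$ is a Kummer \'etale sheaf.
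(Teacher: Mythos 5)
Your proposal is correct and takes essentially the same approach as the paper: both reduce the statement to the fs-case descent theorem \cite[Theorem 4.4.12]{DLLZ} by finding compatible finite models for the entire descent diagram, using the enlargement-of-$N$ trick from Lemma \ref{lemma: ket morphism from base change}, and then base-changing the result. The paper's proof is simply more terse, bundling the descent of $Y'$ and of the cocycle isomorphism into the phrase "by the same trick we may assume that every arrow in the commutative diagram admits a finite model," whereas you spell out these spreading-out steps explicitly.
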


\begin{proof}
By the same trick as in the proof of Lemma \ref{lemma: ket morphism from base change}, we may assume that every arrow in the commutative diagram
\[ 
\begin{tikzcd} 
\mathrm{pr}_1^{-1}(Y')\arrow[d] \arrow[dr] \arrow[r, "\sim"]& \mathrm{pr}_2^{-1}(Y')\arrow[ld] \arrow[d]
\\ 
Y\times_XY \arrow[d, shift left,"\mathrm{pr}_1"] \arrow[d, shift right, "\mathrm{pr}_2"'] & Y' \arrow[ld] \\
Y \arrow[d] & \\
X & 
\end{tikzcd}
\]
admits a finite model. In particular, the entire diagram is the base change of a commutative diagram of locally noetherian fs log adic spaces
\[ 
\begin{tikzcd} 
\mathrm{pr}_1^{-1}(Y'_1)\arrow[d] \arrow[dr] \arrow[r, "\sim"]& \mathrm{pr}_2^{-1}(Y'_1)\arrow[ld] \arrow[d]
\\ 
Y_1\times_{X_1}Y_1 \arrow[d, shift left,"\mathrm{pr}_1"] \arrow[d, shift right, "\mathrm{pr}_2"'] & Y'_1 \arrow[ld] \\
Y_1 \arrow[d] & \\
X_1 & 
\end{tikzcd}
\]
along a morphism $X\rightarrow X_1$. Then the theorem follows from \cite[Theorem 4.4.12]{DLLZ}.
\end{proof}

\begin{theorem}\label{thm: equivalence local system}
Let $X$ be admissibly smooth over $\spa(l, l^+)_{N_{\infty}}$. Let $\mathrm{Loc}(X_{\ket})$ denote the category of locally constant sheaves of finite sets on $X_{\ket}$. Then the functor \[\phi:X_{\fket}\rightarrow \mathrm{Loc}(X_{\ket})\] sending $Y\mapsto \mathrm{Mor}_X(\cdot, Y)$ is an equivalence of categories. Moreover
\begin{enumerate}
\item Fiber products exist in both $X_{\fket}$ and $\mathrm{Loc}(X_{\ket})$ and $\phi$ preserves fiber products.
\item Categorical quotients by finite groups exist in both categories, and $\phi$ preserves such quotients.
\end{enumerate}
\end{theorem}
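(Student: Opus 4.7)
The plan is to reduce the statement to the analogous assertion for locally noetherian fs log adic spaces established in \cite[Theorem 4.4.2]{DLLZ}, exploiting the existence of finite models guaranteed by Proposition \ref{lemma: choose X_1 Y_1 to be X Y} and the trivialization mechanism of Proposition \ref{lemma: Lemma 4.2.5 DLLZ}.

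First, I would check that $\phi$ actually takes values in $\mathrm{Loc}(X_{\ket})$. The presheaf $\mathrm{Mor}_X(-,Y)$ is a sheaf by Proposition \ref{prop: representable presheaves are sheaves}. To verify local constancy, I would work \'etale locally on $X$ so that $Y \to X$ admits a standard finite model $Y_1 \to X_1$. By Proposition \ref{lemma: Lemma 4.2.5 DLLZ}(2), there exists $n \ge 1$ invertible in $l$ such that $Y \times_X X^{\frac{1}{n}} \to X^{\frac{1}{n}}$ is strictly finite \'etale. On the \'etale site of $X^{\frac{1}{n}}$, the sheaf $\mathrm{Mor}_{X^{\frac{1}{n}}}(-, Y \times_X X^{\frac{1}{n}})$ is therefore locally constant, and since $X^{\frac{1}{n}} \to X$ is a Kummer \'etale cover, the pullback $\phi(Y)|_{X^{\frac{1}{n}}}$ is locally constant, hence so is $\phi(Y)$.

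Next, I would prove that $\phi$ is fully faithful. Faithfulness follows immediately from Proposition \ref{prop: representable presheaves are sheaves}: a morphism $Y \to Y'$ in $X_{\fket}$ is determined by its image $\mathrm{id}_Y \mapsto (Y \to Y')$ inside $\mathrm{Mor}_X(Y, Y')$. For fullness, given a morphism of sheaves $\psi: \mathrm{Mor}_X(-,Y) \to \mathrm{Mor}_X(-,Y')$, I would evaluate at $\mathrm{id}_Y$ to produce a morphism $Y \to Y'$, then use the Yoneda-type argument (valid here because both sheaves are representable by log adic spaces, Proposition \ref{prop: representable presheaves are sheaves}) to verify that the induced natural transformation agrees with $\psi$.

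For essential surjectivity, given $\mathcal{L} \in \mathrm{Loc}(X_{\ket})$, I would choose a Kummer \'etale cover $f: Y \to X$ trivializing $\mathcal{L}$, so that $f^{-1}\mathcal{L}$ is constant with value a finite set $S$. The trivialized sheaf is represented by the finite Kummer \'etale object $Y_S := \bigsqcup_{s \in S} Y \to Y$. The descent datum on $\mathcal{L}$ supplies an isomorphism $\mathrm{pr}_1^{-1}(Y_S) \cong \mathrm{pr}_2^{-1}(Y_S)$ on $Y \times_X Y$ satisfying the cocycle condition, so by Theorem \ref{thm: ket descent of finite ket morphism} there exists a unique $X' \in X_{\ket}$ with $X' \times_X Y \cong Y_S$. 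I would then check $X' \in X_{\fket}$: the property of being finite is Kummer \'etale local on the base, which one verifies by choosing a refinement where $Y \to X$ becomes strictly finite \'etale after base change to some $X^{\frac{1}{n}}$ (as above) and invoking the classical fact that finiteness descends along faithfully flat strict covers together with Kummer \'etale descent. This gives $\phi(X') \cong \mathcal{L}$.

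Finally, for the moreover part: fiber products in $X_{\fket}$ exist by Proposition \ref{prop: base change exists} and are finite Kummer \'etale over $X$; fiber products in $\mathrm{Loc}(X_{\ket})$ are the usual sheaf-theoretic ones, and $\phi$ preserves them because $\mathrm{Mor}_X(Z, Y \times_X Y') = \mathrm{Mor}_X(Z, Y) \times \mathrm{Mor}_X(Z, Y')$. For categorical quotients by a finite group $G$ acting on $Y \in X_{\fket}$, I would construct $Y/G \in X_{\fket}$ locally on $X$ (where $Y \to X$ becomes strictly finite \'etale after passing to $X^{\frac{1}{n}}$, so classical quotients exist) and glue using Theorem \ref{thm: ket descent of finite ket morphism}; the preservation under $\phi$ then follows from the fact that quotients in $\mathrm{Loc}(X_{\ket})$ are computed stalkwise and $\phi$ is compatible with stalks by Lemma \ref{lemma: fiber functor}.

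The principal obstacle is the essential surjectivity step, specifically verifying that the object $X'$ descended via Theorem \ref{thm: ket descent of finite ket morphism} is not merely Kummer \'etale but actually finite over $X$. This requires combining the Kummer \'etale descent with the strict finite \'etale trivialization over $X^{\frac{1}{n}}$ provided by Proposition \ref{lemma: Lemma 4.2.5 DLLZ}(2), and checking that finiteness descends in the appropriate sense.
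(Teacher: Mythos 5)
Your proposal is correct and follows essentially the same architecture as the paper's proof — well-definedness via Proposition \ref{prop: representable presheaves are sheaves}, essential surjectivity via Theorem \ref{thm: ket descent of finite ket morphism}, and quotients via finite models — but diverges in two places.

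For local constancy, you pass through $X^{\frac{1}{n}}$ and Proposition \ref{lemma: Lemma 4.2.5 DLLZ}, whereas the paper more directly observes that, after finding a finite model and invoking \cite[Proposition 4.1.6 \& Remark 4.1.18]{DLLZ}, every object of $X_{\fket}$ becomes Kummer \'etale locally a finite disjoint union of copies of $X$; this sharper local form (not merely strictly finite \'etale, but split) is then reused to prove the ``moreover'' parts cheaply by reducing $\phi$ to the tautological equivalence between finite disjoint unions of $X$ and constant finite sheaves. Your stalkwise argument for quotients via Lemma \ref{lemma: fiber functor} is valid but longer.

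The ``principal obstacle'' you flag — finiteness of the descended object $X'$ — is in fact already resolved by the paper's machinery, and you do not need the auxiliary trivialization over $X^{\frac{1}{n}}$ plus descent of finiteness. Theorem \ref{thm: ket descent of finite ket morphism} is stated with target $X_{\ket}$, but its proof reduces to a finite model $X_1' \to X_1$ via \cite[Theorem 4.4.12]{DLLZ} (which lands in $X_{1,\fket}$) and then base changes; Lemma \ref{lemma: ket morphism from base change} explicitly states that if $Y_1 \to X_1$ is finite Kummer \'etale, so is the base change. So the descended $X'$ is automatically in $X_{\fket}$, and no separate descent-of-finiteness argument is required. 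Your alternative route is sound (noting that $X^{\frac{1}{n}}$ and $X$ share the same underlying adic space, so finiteness is detected there), but the paper's route is more economical.
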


\begin{proof}
This is an analogue of \cite[Theorem 4.4.15]{DLLZ}. Firstly, by Proposition \ref{prop: representable presheaves are sheaves}, $\mathrm{Mor}_X(\cdot, Y)$ is a sheaf. To see that $\mathrm{Mor}_X(\cdot, Y)$ is a locally constant sheaf, it suffices to show that every $Y\in X_{\fket}$ is, Kummer \'etale locally on $X$, a disjoint union of finitely many copies of $X$. This is clear by finding a finite model of $Y\rightarrow X$ and apply \cite[Proposition 4.1.6 \& Remark 4.1.18]{DLLZ}. Hence, the functor $\phi$ is well-defined.

The fully faithfulness of $\phi$ is clear. For the essential surjectivity, notice that every locally constant sheaf in $\mathrm{Loc}(X_{\ket})$ is Kummer \'etale locally represented by objects in $X_{\fket}$ and these objects glue to a global object in $X_{\fket}$ by Theorem \ref{thm: ket descent of finite ket morphism}.

Finally, we prove (1) and (2). We show that categorical quotients by finite groups exists in $X_{\fket}$. Indeed, suppose $Y\in X_{\fket}$ is equipped with an action of a finite group $\Gamma$. Then, $Y\rightarrow X$ admits a finite model $Y_1\rightarrow X_1$ such that the $\Gamma$-action also descends to $Y_1$. By \cite[Corollary 4.4.13]{DLLZ}, the quotient $Y_1/\Gamma$ exists and hence $Y/\Gamma$ also exists and equals to \[(Y_1/\Gamma)\times_{X_1}X.\]

It remains to prove that $\phi$ preserves fiber products and categorical quotients by finite groups. By the Kummer \'etale descent again, we may replace the source and target of $\phi$ with the category of finite disjoint unions of copies of $X$ and the category of constant sheaves of finite sets, respectively. Then the desired statements become trivial.
\end{proof}

\addtocontents{toc}{\protect\setcounter{tocdepth}{1}}
\subsection*{The Kummer \'etale fundamental group}  
Let $\underline{\mathrm{FSets}}$ denote the category of finite sets. To define the Kummer \'etale fundamental group, we need the following analogue of \cite[Lemma 4.4.16]{DLLZ}. 

\begin{lemma}\label{lemma: Galois category}
Let $X$ be admissibly smooth over $\spa(l, l^+)_{N_{\infty}}$ and assume that $X$ is connected. Let $\zeta\rightarrow X$ be a log geometric point. Then $X_{\ket}$ together with the fiber functor 
\[F: X_{\fket}\rightarrow \underline{\mathrm{FSets}}\] sending $Y\mapsto Y_{\zeta}:=\mathrm{Mor}_X(\zeta, Y)$ forms a Galois category. 
\end{lemma}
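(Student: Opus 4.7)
The plan is to verify the axioms of a Galois category in the sense of SGA 1 for the pair $(X_{\fket}, F)$. First I would establish the basic categorical structure on $X_{\fket}$: the terminal object is $X$ itself; fiber products exist by Proposition \ref{prop: base change exists} (finite Kummer \'etale morphisms are stable under base change by morphisms locally of finite type); finite disjoint unions clearly exist; and categorical quotients by finite group actions exist by Theorem \ref{thm: equivalence local system}(2). For the SGA 1 factorization axiom (every morphism factors as a strict epimorphism followed by an inclusion onto a direct summand), I would transport the question along the equivalence $\phi: X_{\fket} \simeq \mathrm{Loc}(X_{\ket})$ of Theorem \ref{thm: equivalence local system}, where the factorization is automatic in the ambient topos $\mathrm{Sh}(X_{\ket})$ (the strict epi / mono factorization of a map of locally constant sheaves of finite sets stays within $\mathrm{Loc}(X_{\ket})$), and then transport back.

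Next I would verify that $F$ takes values in finite sets and is exact. Finiteness of $F(Y) = \mathrm{Mor}_X(\zeta, Y)$ follows because any $Y \in X_{\fket}$ is, Kummer \'etale locally on $X$, a disjoint union of finitely many copies of $X$ (as already used in the proof of Theorem \ref{thm: equivalence local system}), so after choosing a Kummer \'etale neighborhood of $\zeta$ trivializing $Y$, the fiber $Y_\zeta$ is evidently finite. Exactness of $F$, i.e.\ preservation of fiber products, finite coproducts, and quotients by finite groups, follows from the fact that under $\phi$, the functor $F$ becomes the stalk functor $\mathcal{F} \mapsto \mathcal{F}_\zeta$ on $\mathrm{Sh}(X_{\ket})$, which is a fiber functor by Lemma \ref{lemma: fiber functor} and therefore exact.

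Finally, the conservativity of $F$: suppose $f: Y' \to Y$ in $X_{\fket}$ induces a bijection $F(Y') \xrightarrow{\sim} F(Y)$. Translating through $\phi$, the corresponding morphism of locally constant sheaves of finite sets on $X_{\ket}$ is an isomorphism on the stalk at $\zeta$. Since $X$ is assumed connected and these are locally constant sheaves, it suffices to show that a morphism of locally constant sheaves on $X_{\ket}$ that is a stalk isomorphism at one log geometric point is an isomorphism globally. This follows from the fact that the locus where a map of locally constant sheaves is an isomorphism is open and closed in $X_{\ket}$ combined with the conservativity of the family of all log geometric points (Lemma \ref{lemma: fiber functor}) and the connectedness of $X$. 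By the equivalence of Theorem \ref{thm: equivalence local system}, $f$ itself is then an isomorphism in $X_{\fket}$.

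The main obstacle I anticipate is the last step: namely, the precise verification that a morphism of locally constant sheaves on $X_{\ket}$ which induces an isomorphism on the stalk at a single log geometric point is globally an isomorphism, given that $X$ is connected. While the analogous statement in the fs setting is \cite[Lemma 4.4.16]{DLLZ}, here one must check that connectedness of the underlying adic space $X$ implies connectedness of the Kummer \'etale topos. This should follow from the finite-model techniques of Section \ref{subsection: the kummer etale morphisms}: any idempotent in $\Gamma(X_{\ket}, \mathbb{Z}/2\mathbb{Z})$ is, after passing to a Kummer \'etale cover, represented on some finite model $X_0$, and connectedness of $X$ forces triviality there. With this in hand, the remainder of the verification is essentially formal and parallels \cite[Lemma 4.4.16]{DLLZ} verbatim.
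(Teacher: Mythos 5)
Your proposal is correct and takes essentially the same approach as the paper: the paper's proof simply states that, given Theorem~\ref{thm: equivalence local system}, the argument of \cite[Lemma~4.4.16]{DLLZ} applies verbatim, and your write-up spells out exactly that verification of the SGA~1 axioms by transporting through the equivalence $\phi: X_{\fket}\simeq \mathrm{Loc}(X_{\ket})$ and using Lemma~\ref{lemma: fiber functor} to identify $F$ with the stalk functor at $\zeta$.
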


\begin{proof}
Given Theorem \ref{thm: equivalence local system}, the proof of \cite[Lemma 4.4.16]{DLLZ} applies here verbatim.
\end{proof}

\begin{corollary}\label{cor: ket fundamental group}
Let $X$, $\zeta$, and $F$ be as in Lemma \ref{lemma: Galois category} and let $\pi_1^{\ket}(X, \zeta)$ be the automorphism group of $F$. Then $F$ induces an equivalence of categories
\[
\mathrm{Loc}(X_{\ket})\xrightarrow[]{\sim} \pi_1^{\ket}(X, \zeta)-\underline{\mathrm{FSets}}
\]
sending $\mathcal{F}\mapsto \mathcal{F}_{\zeta}$. Composing with the equivalence of categories $\phi$ in Theorem \ref{thm: equivalence local system}, we obtain an equivalence of categories
\[X_{\fket}\xrightarrow[]{\sim} \pi_1^{\ket}(X, \zeta)-\underline{\mathrm{FSets}}.\]
\end{corollary}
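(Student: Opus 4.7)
The plan is to invoke the general formalism of Galois categories together with the equivalence from Theorem \ref{thm: equivalence local system}. By Lemma \ref{lemma: Galois category}, the pair $(X_{\fket}, F)$ forms a Galois category in the sense of Grothendieck, with fiber functor $F(Y) = Y_\zeta = \mathrm{Mor}_X(\zeta, Y)$. Grothendieck's fundamental theorem for Galois categories then produces a canonical equivalence
\[X_{\fket} \xrightarrow{\sim} \pi_1^{\ket}(X, \zeta)-\underline{\mathrm{FSets}},\]
where by definition $\pi_1^{\ket}(X, \zeta) := \mathrm{Aut}(F)$ is a profinite group acting continuously on each object of the target. This directly yields the second of the two stated equivalences.

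For the first equivalence, I would compose the inverse of the functor $\phi: X_{\fket} \xrightarrow{\sim} \mathrm{Loc}(X_{\ket})$ from Theorem \ref{thm: equivalence local system} with the equivalence above. The only thing remaining to verify is that this composition agrees with the functor $\mathcal{F} \mapsto \mathcal{F}_\zeta$ appearing in the statement; equivalently, that under $\phi$ the fiber functor $F$ on $X_{\fket}$ corresponds to the stalk-at-$\zeta$ functor on $\mathrm{Loc}(X_{\ket})$. For $Y \in X_{\fket}$ with associated sheaf $\phi(Y) = \mathrm{Mor}_X(-, Y)$, the stalk $\phi(Y)_\zeta$ is, by Lemma \ref{lemma: fiber functor}, the filtered colimit
\[\varinjlim_{U} \mathrm{Mor}_X(U, Y)\]
over Kummer \'etale neighborhoods $U$ of $\zeta$, and the compatible system of factorizations $\zeta \to U$ canonically identifies this colimit with $\mathrm{Mor}_X(\zeta, Y) = F(Y)$.

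The corollary is primarily a formal consequence of results already established; no new geometric input is needed. The only technical point to verify is the identification of the two fiber functors described above, which ultimately rests on the fact that every finite Kummer \'etale cover $Y \to X$ becomes, Kummer \'etale locally near $\zeta$, a disjoint union of finitely many copies of the base---a property that already underlies the local constancy of $\phi(Y)$ in Theorem \ref{thm: equivalence local system}. Accordingly, I would expect no serious obstacle beyond this bookkeeping.
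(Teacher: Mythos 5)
The corollary is stated in the paper without proof, and your argument supplies the standard one: Lemma \ref{lemma: Galois category} gives the Galois category structure, Grothendieck's formalism provides the equivalence with $\pi_1^{\ket}(X,\zeta)$-finite sets, and composing with $\phi^{\pm1}$ from Theorem \ref{thm: equivalence local system} converts between $X_{\fket}$ and $\mathrm{Loc}(X_{\ket})$. Your check that under $\phi$ the fiber functor $F(Y)=\mathrm{Mor}_X(\zeta,Y)$ agrees with the stalk functor $\mathcal F\mapsto\mathcal F_\zeta$ of Lemma \ref{lemma: fiber functor} is exactly the compatibility needed, and the justification via local triviality of finite Kummer \'etale covers near $\zeta$ is correct. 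This matches the implicit approach the paper intends, so nothing further is required.
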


\begin{remark}
Since stalk functors at any two log geometric points $\zeta$, $\zeta'$ are isomorphic, the resulting fundamental groups $\pi_1^{\ket}(X, \zeta)$ and $\pi_1^{\ket}(X, \zeta')$ are isomorphic. Hence, we will omit ``$\zeta$'' whenever the context is clear.
\end{remark}

\begin{example}\label{example: Kummer etale fundamental group of a point}
Suppose $X=\spa(l,l^+)_{N_{\infty}}$. Let \[\widehat{\mathbb{Z}}'(1)(l):=\varprojlim_m \mu_m(l)\] where $\mu_m(l)$ denotes the group of $m$-th roots of unity in $l$ and the limit runs through all positive integers $m$ invertible in $l$. From the construction, we have
\begin{align*}
\pi_1^{\ket}(\spa(l,l^+)_{N_{\infty}}) & \cong \varprojlim \pi_1^{\ket}(\spa(l,l^+)_N) \\ & \cong \varprojlim \mathrm{Hom}(N^{\mathrm{gp}}, \widehat{\mathbb{Z}}'(1)(l)) \\ &  \cong \mathrm{Hom}(N_{\infty}^{\mathrm{gp}}, \widehat{\mathbb{Z}}'(1)(l))
\end{align*}
where the second isomorphism follows from \cite[Corollary 4.4.22]{DLLZ}.
\end{example}

\addtocontents{toc}{\protect\setcounter{tocdepth}{2}}
\subsection{The pro-Kummer \'etale site}  Next we construct the Kummer pro-\'etale site of $X$. Throughout this section, $X$ is assumed to be admissibly smooth over a divisible log point $\spa(l,l^+)_{N_{\infty}}$. We further assume that $l$ is an extension of $\mathbb{Q}_p$.

For any category $\mathcal{C}$, let pro-$\mathcal{C}$ denote the category introduced in \cite[Proposition 3.2]{Scholze}. In particular, pro-$X_{\ket}$ consists of objects of the form $U=\varprojlim_{i\in I} U_i$, where each $U_i\rightarrow X$ is Kummer \'etale. The underlying topological space of such an object is $|U| := \varprojlim_i |U_i|$. We have the following straightforward generalizations of the pro-Kummer \'etale site introduced in \cite[Section 5]{DLLZ}.

\begin{definition}\label{defn: pro-ket morphism}
 \begin{enumerate}
        \item A morphism $U \rightarrow V$ in pro-$X_{\ket}$ is \emph{Kummer \'etale} (resp. \emph{finite Kummer \'etale}, resp. \emph{\'etale}, resp. \emph{finite \'etale}) if it is the pullback under some morphism $V \rightarrow V_0$ in pro-$X_{\ket}$ of some Kummer \'etale (resp. finite Kummer \'etale, resp. strictly \'etale, resp. strictly finite \'etale) morphism $U_0 \rightarrow V_0$ in $X_{\ket}$.

        \item A morphism $U \rightarrow V$ in pro-$X_{\ket}$ is \emph{pro-Kummer \'etale} if it can be written as a cofiltered inverse limit $U = \varprojlim_{i \in I} U_i$ of objects $U_i \rightarrow V$ Kummer \'etale over $V$ such that $U_j \rightarrow U_i$ is finite Kummer \'etale and surjective for all sufficiently large $i$ (i.e., all $i \geq i_0$, for some $i_0 \in I$).  Such a presentation $U = \varprojlim_i U_i \rightarrow V$ is called a \emph{pro-Kummer \'etale presentation}.

        \item  A morphism $U \rightarrow V$ as in (ii) is \emph{pro-finite Kummer \'etale} if all $U_i \rightarrow V$ are finite Kummer \'etale.
    \end{enumerate}
\end{definition}

\begin{definition}\label{defn: pro-ket site}
    The \emph{pro-Kummer \'etale site} $X_{\proket}$ has as underlying category the full subcategory of pro-$X_{\ket}$ consisting of objects that are pro-Kummer \'etale over $X$, and each covering of an object $U \in X_{\proket}$ is given by a family of pro-Kummer \'etale morphisms $\{ f_i: U_i \rightarrow U \}_{i \in I}$ such that $|U| = \cup_{i \in I} f_i(|U_i|)$ and such that each $f_i: U_i \rightarrow U$ can be written as an inverse limit $U_i = \varprojlim_{\mu < \lambda} U_\mu \rightarrow U$ satisfying the following conditions:
    \begin{enumerate}
        \item Each $U_\mu \in X_{\proket}$, and $U = U_0$ is an initial object in the limit.

        \item The limit runs through the set of ordinals $\mu$ less than some ordinal $\lambda$.

        \item For each $\mu < \lambda$, the morphism $U_\mu \rightarrow U_{<\mu} := \varprojlim_{\mu' < \mu} U_{\mu'}$ is the pullback of a Kummer \'etale morphism in $X_{\ket}$, and is the pullback of a surjective finite Kummer \'etale morphism in $X_{\ket}$ for all sufficiently large $\mu$.
    \end{enumerate}
\end{definition}

One checks that \cite[Lemma 5.1.4, Proposition 5.1.5]{DLLZ} remains valid in this generalization. 

We also introduce the notion of log affinoid perfectoid objects in the pro-Kummer \'etale site (cf. \cite[Definition 5.3.1]{DLLZ}).

\begin{definition}\label{defn: log affinoid perfectoid object}
An object $U$ in $X_{\proket}$ is called \emph{log affinoid perfectoid} if it admits a pro-Kummer \'etale presentation
    \[
        U = \varprojlim_{i \in I} U_i = \varprojlim_{i \in I} (\mathrm{Spa}(R_i, R_i^+), \mathcal{M}_i) \rightarrow X
    \]
    satisfying the following conditions:
    \begin{enumerate}
        \item There is an initial object $0 \in I$.

        \item Each $U_i$ admits a standard finite model and a (global) standard saturated chart $P_i$ in the sense of Definition \ref{defn: standard finite model}, and such that each transition morphism $U_j \rightarrow U_i$ is modeled on an injective chart $P_i \rightarrow P_j$ of finite type.

        \item The affinoid algebra 
        \[ (R, R^+) := \big(\varinjlim_{i \in I} \, (R_i, R_i^+)\big)^\wedge,\] where the completion is the $p$-adic completion, is a perfectoid affinoid algebra.

        \item The monoid 
        \[P := \varinjlim_{i \in I} P_i\] is divisible. In other words, $P$ is uniquely $n$-divisible for all $n \geq 1$.
    \end{enumerate}
    In this situation, we say that $U = \varprojlim_{i\in I} U_i$ is a \emph{perfectoid presentation} of the object $U$. The perfectoid space \[\widehat{U}:=\spa(R, R^+)\] is called the \emph{associated affinoid perfectoid space}. We can also equip $\widehat{U}$ with the log structure induced by the chart $P\rightarrow R$. The resulting log adic space is called the \emph{associated log affinoid perfectoid space}, still denoted by $\widehat{U}$ by slight abusing of notation.
\end{definition}

\begin{proposition}\label{prop: ket over log affinoid perfectoid is et}
Let $U=\varprojlim_{i\in I} U_i$ be a log affinoid perfectoid object in $X_{\proket}$ with associated affinoid perfectoid space $\widehat{U}$. Suppose $V\rightarrow U$ is a Kummer \'etale morphism (resp. finite Kummer \'etale morphism) in $X_{\proket}$ that is the pullback of a Kummer \'etale morphism (resp. finite Kummer \'etale morphism) $V_0\rightarrow U_0$ of affinoid log adic spaces in $X_{\ket}$. Then $V\rightarrow U$ is \'etale (resp. finite \'etale) and $V$ is a log affinoid perfectoid object. Moreover, the induced morphism $\widehat{V}\rightarrow \widehat{U}$ is \'etale (resp. finite \'etale).
\end{proposition}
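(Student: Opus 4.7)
The plan is to adapt the proof of the analogous statement \cite[Proposition 5.3.5]{DLLZ} (for the fs setting) to our non-finitely generated setting, using the machinery developed in Subsection \ref{subsection: the kummer etale morphisms}. First I would reduce to the case where $V_0\to U_0$ is standard Kummer \'etale. Indeed, by Corollary \ref{cor: ket morphisms are open}(1), $V_0\to U_0$ is \'etale locally on $U_0$ a composition of strictly \'etale morphisms and standard Kummer \'etale morphisms; strictly \'etale morphisms pull back to strictly \'etale morphisms of affinoid perfectoid spaces (which are again affinoid perfectoid by \cite[Proposition 7.10]{Scholze}), and the strict pullback log structure preserves the divisibility condition on the chart. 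So I may assume $V_0 \to U_0$ is modeled on an injection $u: P_0 \hookrightarrow Q_0$ of fs monoids with $G^* := Q_0^{\mathrm{gp}}/u^{\mathrm{gp}}(P_0^{\mathrm{gp}})$ a finite group of order $n$ invertible in $\mO_{U_0}$.

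Next, since the chart $P := \varinjlim_i P_i$ of $U$ is divisible (in particular uniquely $n$-divisible), the Kummer inclusion $Q_0 \hookrightarrow \frac{1}{n}P_0$ composed with the canonical map $\frac{1}{n}P_0 \to P$ supplies a compatible homomorphism $Q_0 \to P$. Following the computation in the proof of Proposition \ref{lemma: choose X_1 Y_1 to be X Y} via \cite[Lemma 3.3]{Illusie02}, I would establish
\[Q_0 \sqcup_{P_0}^{\mathrm{sat}} P \;\cong\; P \oplus G^*.\]
Combined with \cite[Proposition 4.1.6]{DLLZ}, this identification exhibits the pullback $V \to U$ as a $(G^*)^D$-torsor, in particular strictly \'etale (resp. strictly finite \'etale, in the finite Kummer \'etale case). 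Consequently $\widehat{V} \to \widehat{U}$ is \'etale (resp. finite \'etale), and perfectoid-ness of $\widehat{V}$ follows from \cite[Proposition 7.10]{Scholze} in the \'etale case and from the almost purity theorem \cite[Theorem 7.9]{Scholze} in the finite \'etale case.

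Finally, to present $V$ itself as a log affinoid perfectoid object per Definition \ref{defn: log affinoid perfectoid object}, I would build a cofinal subsystem by pulling back, for each positive integer $m$ invertible in $l$, the standard Kummer cover $V_0^{(m)} \to V_0$ of Construction \ref{construction: X^{1/n}} along $U_i \to U_0$, obtaining a double-indexed inverse system $V_{i,m} := V_0^{(m)} \times_{U_0} U_i$. The colimit of the corresponding charts is of the form $Q_0^{\mathrm{div}} \sqcup_{P_0}^{\mathrm{sat}} P$ with $Q_0^{\mathrm{div}} := \varinjlim_m \tfrac{1}{m}Q_0$ divisible, and I expect this limit monoid to satisfy the divisibility condition in Definition \ref{defn: log affinoid perfectoid object}(4). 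The main obstacle will be precisely this last step: the finite torsion subgroup $G^*$ obstructs a naive verification of divisibility, and the resolution must exploit the fact that $G^*$ maps to roots of unity of order invertible in $l$, so that after the $m$-Kummer refinement the contribution of $G^*$ is absorbed into the uniquely divisible part of the chart. This will require an analogue of the argument in \cite[Lemma 5.3.7]{DLLZ}, adapted to accommodate the non-finitely generated chart $P$.
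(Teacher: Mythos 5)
Your pushout computation $Q_0\sqcup_{P_0}^{\mathrm{sat}}P\cong P\oplus G^*$ at the infinite level is correct (and is, in effect, the computation that underlies Proposition \ref{lemma: Lemma 4.2.5 DLLZ}, which the paper cites as a black box instead), but the argument as written does not produce what the statement actually requires, and this is the source of the difficulty you flag in the final step. By Definition \ref{defn: pro-ket morphism}(1), ``$V\to U$ is \'etale'' in $X_{\proket}$ means $V\to U$ is pulled back from a \emph{strictly} \'etale morphism in $X_{\ket}$, i.e.\ at a \emph{finite} stage of the inverse system; and by Definition \ref{defn: log affinoid perfectoid object}, exhibiting $V$ as log affinoid perfectoid requires a pro-Kummer \'etale presentation by finite-stage objects with standard saturated charts whose colimit is divisible. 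Showing that the chart at the infinite level is $P\oplus G^*$ and that $V\to U$ is a $(G^*)^D$-torsor of pre-adic spaces supplies neither of these directly.

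The paper's proof addresses this by working at a finite stage throughout. Proposition \ref{lemma: Lemma 4.2.5 DLLZ}(2) yields an integer $n$ invertible in $l$ such that $V_0\times_{U_0}U_0^{1/n}\to U_0^{1/n}$ is strictly \'etale. Since $P=\varinjlim_i P_i$ is uniquely $n$-divisible, the chart map $P_0\to P$ factors through $\tfrac{1}{n}P_0$; and since $P_0$ is \emph{almost} $n$-divisible (a built-in feature of the standard saturated chart, cf.\ Lemma \ref{lemma: torsion-free pushout}), the monoid $\tfrac{1}{n}P_0$ is of finite type over $P_0$, so this factorization descends to some $P_i$. Hence $V_j:=V_0\times_{U_0}U_j\to U_j$ is strictly \'etale for all $j\ge i$, each $V_j$ carries the \emph{same} standard saturated chart $P_j$ as $U_j$, and $V=\varprojlim_{j\ge i}V_j$ is automatically a log affinoid perfectoid object with the same divisible chart $P$ as $U$, with $\widehat V\to\widehat U$ \'etale. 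Once this finite-level strictly \'etale model is in hand, your proposed double-indexed Kummer tower $V_{i,m}$ and the worry about $G^*$ obstructing divisibility become unnecessary: the chart of $V$ is simply pulled back from $U$, and there is nothing left to absorb.
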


\begin{proof}
Suppose $U_i$'s are modeled on the charts $P_i$'s as in Definition \ref{defn: log affinoid perfectoid object} and $P=\varinjlim_{i\in I}P_i$. We may assume that $0\in I$. By assumption, $U_0$ is affinoid and admits a standard finite model. In particular, $U_0$ admits a standard saturated chart $P_0$. For every $n\geq 1$, let $U_0^{\frac{1}{n}}$ be as in Construction \ref{construction: X^{1/n}}. It is modeled on the chart $\frac{1}{n}P_0$. By Proposition \ref{lemma: Lemma 4.2.5 DLLZ}, there exists $n$ such that \[V_0\times_{U_0}U_0^{\frac{1}{n}}\rightarrow U_0^{\frac{1}{n}}\] is \'etale (resp. finite \'etale).  %\Zijian{You cite Lemma 5.21 here, which is probably not necessary? could just use DLLZ right? Since $P_0$ is an fs monoid. }\Hansheng{No. $P_0$ is not fs.}
Since $P$ is uniquely $n$-divisible, the homomorphism $P_0\rightarrow P$ factors as $P_0\rightarrow\frac{1}{n}P_0\rightarrow P$. Using the fact that $P_0$ is almost $n$-divisible, there exists $i\in I$ such that $P_0\rightarrow P_i$ factors as $P_0\rightarrow\frac{1}{n}P_0\rightarrow P_i$. Hence, $V_0\times_{U_0} U_i\rightarrow U_i$ is \'etale (resp. finite \'etale). Consequently, if we replace $I$ with the cofinal full subcategory of objects that receive morphisms from $i$, we see that \[V=(V_0\times_{U_0}U_i)\times_{U_i}U\rightarrow U\] is \'etale (rsep., finite \'etale). This also implies that $\widehat{V}\rightarrow \widehat{U}$ is \'etale (resp. finite \'etale).
\end{proof}

\begin{corollary}\label{corollary: proket over log affinoid perfectoid is log affinoid perfectoid}
Let $U=\varprojlim_{i\in I} U_i$ be an object in $X_{\proket}$ such that $U_i\rightarrow X$ is a composition of rational localizations and finite Kummer \'etale morphisms, for all sufficiently large $i$. Then, for every log affinoid perfectoid object $V$ in $X_{\proket}$, the fiber product $U\times_XV$ is a log affinoid perfectoid object as well.
\end{corollary}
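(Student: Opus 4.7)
The plan is to construct an explicit perfectoid presentation of $U\times_X V$ by taking the product of the two given pro-systems, and to then verify the four conditions of Definition \ref{defn: log affinoid perfectoid object} in turn.

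\textbf{Setup and construction of the candidate presentation.} Write $U = \varprojlim_{i\in I} U_i$ with $U_i \to X$ a composition of rational localizations and finite Kummer \'etale morphisms for all $i \geq i_0$, and let
\[V = \varprojlim_{j \in J} V_j = \varprojlim_{j \in J} (\spa(R_j, R_j^+), Q_j)\]
be a perfectoid presentation of $V$, so that $(R, R^+) := (\varinjlim_j (R_j, R_j^+))^\wedge$ is a perfectoid affinoid algebra and $Q := \varinjlim_j Q_j$ is divisible. First I would form the pro-system $\{U_i \times_X V_j\}_{(i,j) \in I \times J}$; each fiber product exists as a saturated log adic space by Proposition \ref{prop: base change exists} (since each $U_i \to X$ is a composition of rational localizations and finite Kummer \'etale maps for $i \geq i_0$) and is affinoid, with a standard finite model and standard saturated chart by functoriality of the constructions in Section \ref{section: admissible smoothness} and \S\ref{subsection: the kummer etale morphisms}.

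\textbf{Key step: upgrading to strict \'etale base changes.} The crucial step is to use Proposition \ref{prop: ket over log affinoid perfectoid is et} to promote each finite Kummer \'etale piece in $U_i \to X$ to a strictly finite \'etale morphism after a suitable base change on $V$. More precisely, for each $i \geq i_0$, write $U_i \to X$ as a finite composition alternating rational localizations and finite Kummer \'etale morphisms. Applying Proposition \ref{prop: ket over log affinoid perfectoid is et} finitely many times (once for each finite Kummer \'etale step), there is some $j(i) \in J$ such that for all $j \geq j(i)$ the base change $U_i \times_X V_j \to V_j$ becomes a composition of rational localizations and strictly finite \'etale morphisms. In particular, this base change is \emph{strict}, so the chart of $U_i \times_X V_j$ is just $Q_j$ (pulled back), and the underlying Huber pair $(R_{i,j}, R_{i,j}^+)$ is obtained from $(R_j, R_j^+)$ by rational localizations and finite \'etale extensions.

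\textbf{Verifying the perfectoid presentation.} After refining the index set to the cofiltered set $K := \{(i,j) : i \geq i_0,\ j \geq j(i)\}$, I would verify the four conditions of Definition \ref{defn: log affinoid perfectoid object} for the presentation $U \times_X V = \varprojlim_{(i,j) \in K} (U_i \times_X V_j)$. Conditions (1) and (2) are immediate: each $U_i \times_X V_j$ has $Q_j$ as a standard saturated chart, and the transition maps are modeled on the charts $Q_j \to Q_{j'}$ (pulled back unchanged along the strictly \'etale steps coming from the $i$-direction), which are of finite type inherited from $V$'s presentation. For condition (4), the colimit $\varinjlim_{(i,j) \in K} Q_j = \varinjlim_j Q_j = Q$ is divisible by hypothesis on $V$. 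For condition (3), I would observe that each $(R_{i,j}, R_{i,j}^+)$ is obtained from $(R_j, R_j^+)$ by rational localization and finite \'etale extension, so
\[(R^{U \times_X V}, R^{U \times_X V,+}) := \bigl(\varinjlim_{(i,j) \in K} (R_{i,j}, R_{i,j}^+)\bigr)^\wedge\]
is a filtered colimit (completed) of perfectoid affinoid algebras --- each obtained from the perfectoid $(R, R^+)$ by rational localizations and finite \'etale extensions --- and hence is itself perfectoid by the stability of perfectoid affinoids under such operations and completed filtered colimits.

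\textbf{Main obstacle.} The principal technical obstacle is the careful handling of the ``composition of rational localizations and finite Kummer \'etale morphisms'' in the base change to $V_j$: one must iterate Proposition \ref{prop: ket over log affinoid perfectoid is et} along the composition and confirm that each finite Kummer \'etale step can be strictified by passing further along the tower $\{V_j\}$. Once this is established, the presentation $\varprojlim_{(i,j) \in K}(U_i \times_X V_j)$ works out cleanly, with the divisibility of the limit chart coming for free from $V$'s divisibility and the strictness of the $i$-direction.
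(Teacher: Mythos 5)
Your construction is essentially the argument that the paper invokes (the paper's proof reads ``Given Proposition~\ref{prop: ket over log affinoid perfectoid is et}, the statement follows from the proof of [DLLZ, Cor.~5.3.9] verbatim''), so you have in effect reconstructed that cited DLLZ proof: form the double-indexed system $\{U_i\times_X V_j\}$, use Proposition~\ref{prop: ket over log affinoid perfectoid is et} to strictify, and then verify the four conditions of Definition~\ref{defn: log affinoid perfectoid object}. Two small remarks. First, the ``iterate finitely many times'' phrasing is unnecessary and slightly misleading: $U_i\to X$ is already Kummer \'etale as a single morphism (rational localizations are strictly \'etale and Proposition~\ref{proposition: stable under composition} gives stability under composition), so a single application of Proposition~\ref{prop: ket over log affinoid perfectoid is et} to $U_i\times_X V\to V$ suffices; if you do prefer to iterate step-by-step you must also observe that the intermediate targets $Y_k\times_X V$ are themselves log affinoid perfectoid before the proposition applies to the next step. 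Second, the finite-level conclusion you want (that $U_i\times_X V_j\to V_j$ is a composition of rational localizations and strictly finite \'etale maps for all $j\geq j(i)$) is extracted from the \emph{proof} of Proposition~\ref{prop: ket over log affinoid perfectoid is et} rather than directly from its statement, which only asserts that $U_i\times_X V\to V$ is \'etale at the pro level; this is exactly what the proof of that proposition establishes, but it is worth flagging that you are using the proof and not merely the conclusion. With those clarifications the verification of conditions (1)--(4), including cofinality of the index set $K$ and perfectoid-ness of the completed colimit via Scholze's stability result, is correct.
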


\begin{proof}
Given Proposition \ref{prop: ket over log affinoid perfectoid is et}, the statement follows from the proof of \cite[Corollary 5.3.9]{DLLZ} verbatim.
\end{proof}

\iffalse
\begin{corollary}\label{corollary: log affinoid perfectoid same charts}
Let $U$ and $V$ both be log affinoid perfectoid objects in $X_{\proket}$ with a morphism $V\rightarrow U$. Suppose $U=\varprojlim_{i\in I} U_i$ is a perfectoid presentation with each $U_i$ modeled on $P_i$ as in Definition \ref{defn: log affinoid perfectoid object}. Then $V$ admits a perfectoid presentation $V=\varprojlim_{i\in I} V_i$ with each $V_i$ modeled on $P_i$.
\end{corollary}

\begin{proof}
Given Proposition \ref{prop: ket over log affinoid perfectoid is et}, the statement follows from the proof of \cite[Lemma 5.3.10]{DLLZ} verbatim.
\end{proof}
\fi

\begin{corollary}\label{corollary: fiber product of log affinoid perfectoid objects}
The subcategory of log affinoid perfectoid objects in $X_{\proket}$ is stable under fiber products.
\end{corollary}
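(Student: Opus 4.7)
The plan is to produce a perfectoid presentation of $T := U \times_W V$ by reindexing the three given presentations onto a common cofiltered index set, forming the fiber products termwise in saturated log adic spaces, and then verifying the perfectoid and divisibility conditions in the colimit.

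First, using Lemma \ref{lemma: morphisms in Xket are Kummer etale} and a standard pro-object manipulation, I would reindex the given presentations to obtain $U = \varprojlim_{i\in I} U_i$, $V = \varprojlim_{i\in I} V_i$, $W = \varprojlim_{i\in I} W_i$ over a common cofiltered index set $I$, so that the morphisms $U \to W$ and $V \to W$ are induced by compatible Kummer \'etale morphisms $U_i \to W_i$ and $V_i \to W_i$ in $X_{\ket}$. By passing to a cofinal subset I may further assume that each $U_i, V_i, W_i$ admits a standard saturated chart $P_i, Q_i, R_i$ (in the sense of Definition \ref{defn: standard finite model}), with chart maps $R_i \to P_i$ and $R_i \to Q_i$ modeling the morphisms, and such that the colimit monoids $P := \varinjlim P_i$, $Q := \varinjlim Q_i$, $R := \varinjlim R_i$ are divisible.

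Next, for each $i$, set $T_i := U_i \times_{W_i} V_i$, the fiber product in the category of saturated log adic spaces, which exists by Proposition \ref{prop: base change exists}. The projection $T_i \to W_i$ is then Kummer \'etale, and a standard pro-category argument identifies $T = \varprojlim T_i$ with the fiber product $U \times_W V$ in $X_{\proket}$. From the explicit construction of saturated fiber products (cf. the proof of Proposition \ref{prop:adm_sm_fiber_product_exist}), $T_i$ admits a chart $S_i := P_i \sqcup^{\mathrm{sat}}_{R_i} Q_i$, and its underlying Huber pair is obtained from those of $U_i$, $V_i$, $W_i$ by completed tensor product followed by extension of scalars along $P_i \sqcup_{R_i} Q_i \to S_i$.

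Finally, I would verify the two conditions in Definition \ref{defn: log affinoid perfectoid object} for the colimit presentation $T = \varprojlim_i T_i$. For the chart, since saturated pushouts commute with filtered colimits, $S := \varinjlim_i S_i = P \sqcup^{\mathrm{sat}}_R Q$; divisibility of $P$ and $Q$ implies that any class $[(p,q)] \in P \sqcup^{\mathrm{int}}_R Q$ admits the unique $n$-th root $[(p/n, q/n)]$, so $P \sqcup^{\mathrm{int}}_R Q$ is already divisible (hence saturated) and coincides with $S$. For the ring, the Huber pair underlying $T$ is the $p$-adic completion of the filtered colimit of those of the $T_i$, which agrees with the completed tensor product $\widehat{A} \,\widehat{\otimes}_{\widehat{C}}\, \widehat{B}$ of the perfectoid affinoid algebras associated to $U$, $V$, $W$, and is therefore perfectoid by Scholze's theory. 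The main obstacle will be the careful bookkeeping in Step 2: verifying that the termwise saturated fiber product really represents the pro-\'etale fiber product, and that the (possibly finite-type) extensions introduced at each finite stage by the passage from $P_i \sqcup_{R_i} Q_i$ to $S_i$ trivialize in the perfectoid sense after taking the colimit and $p$-adic completion. Once this compatibility is in place, the remaining perfectoid and divisibility claims are automatic.
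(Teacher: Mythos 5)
Your high-level strategy---reindex over a common cofiltered set, form termwise fiber products, and pass to the limit---matches the approach of [DLLZ, Proposition 5.3.11], which the paper's proof cites verbatim. However, there is a genuine gap at the chart-compatibility step. You assert that after reindexing, the morphisms $U_i\to W_i$ and $V_i\to W_i$ can be taken to be ``modeled on chart maps $R_i\to P_i$ and $R_i\to Q_i$.'' This is not automatic: the perfectoid presentations of $U$, $V$, $W$ each carry their own standard saturated charts, and reindexing to a common index set produces level-by-level Kummer \'etale morphisms $U_i\to W_i$ (by Lemma \ref{lemma: morphisms in Xket are Kummer etale}), but there is no reason these should be modeled on monoid homomorphisms between the given charts. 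Arranging this is exactly what the analogue of [DLLZ, Lemma 5.3.10] (realizing a morphism of log affinoid perfectoid objects by a perfectoid presentation with \emph{matching} charts) accomplishes, and that lemma in turn relies on Proposition \ref{prop: ket over log affinoid perfectoid is et}, the key fact that a Kummer \'etale morphism over a log affinoid perfectoid object becomes strictly \'etale.

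Your proposal bypasses Proposition \ref{prop: ket over log affinoid perfectoid is et} entirely, but this is precisely the ingredient that lets one reduce the fiber product computation to \'etale morphisms of affinoid perfectoid spaces (where Scholze's theory applies). Without it, your claim that $T_i$ is modeled on $S_i = P_i \sqcup^{\mathrm{sat}}_{R_i} Q_i$ is unjustified, and the later divisibility computation has nothing to apply to; neither do you verify that the transition maps $S_i \to S_j$ are injective and of finite type, as required by Definition \ref{defn: log affinoid perfectoid object}(2). Your own caveat at the end correctly flags Step 2 as the sore spot, but the issue is a missing argument (the chart-matching lemma plus the \'etaleness result), not merely bookkeeping. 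The divisibility and saturation computation you sketch for $P \sqcup^{\mathrm{sat}}_R Q$ is fine once the chart compatibility is in place.
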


\begin{proof}
Given Proposition \ref{prop: ket over log affinoid perfectoid is et}, the statement follows from the proof of \cite[Proposition 5.3.11]{DLLZ} verbatim.
\end{proof}

We have the following important analogue of \cite[Proposition 5.3.12]{DLLZ}.
%%%% up till here
\begin{proposition}\label{prop: log affinoid perfectoid objects form a basis}
Suppose $X$ is admissibly smooth over $\spa(l,l^+)_{N_{\infty}}$. Then the subcategory of log affinoid perfectoid objects in $X_{\proket}$ form a basis.
\end{proposition}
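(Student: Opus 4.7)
The plan is to adapt the strategy of \cite[Proposition 5.3.12]{DLLZ} to our setting, where the ``infinite'' chart is now built into the base log structure. First I would reduce, working étale locally on $X$, to the case where $X$ is affinoid and $X \to \spa(l, l^+)_{N_\infty}$ admits a standard finite model $X_0 \to \spa(l, l^+)_N$ with an fs chart $u: N \to P$ as in Proposition \ref{lemma: choose X_0 to be X}, so that $X$ carries the standard saturated chart $P_\infty = P \sqcup_N N_\infty$. The goal is then to exhibit, for each $V \in X_{\proket}$, a pro-Kummer étale cover of $V$ by log affinoid perfectoid objects.

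The main construction is the pro-Kummer étale tower $\widetilde X := \varprojlim_n X^{1/n}$ over $X$, where $X^{1/n}$ is the standard Kummer étale cover from Construction \ref{construction: X^{1/n}} and the limit runs over positive integers ordered by divisibility. By Proposition \ref{lemma: Lemma 4.2.5 DLLZ}(1) and Construction \ref{construction: X^{1/n}}, each $X^{1/n}$ is standard Kummer étale over $X$ with standard saturated chart $\frac{1}{n} P_\infty$, and the transition maps $X^{1/mn} \to X^{1/n}$ are modeled on the injective chart morphisms $\frac{1}{n} P_\infty \hookrightarrow \frac{1}{mn} P_\infty$. These injective chart morphisms are of finite type because $P_\infty$ is almost $m$-divisible for every $m$ by Lemma \ref{lemma: torsion-free pushout}(2). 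The colimit $\varinjlim_n \frac{1}{n} P_\infty = (P_\infty)_{\Q_{\geq 0}}$ is divisible, verifying the combinatorial conditions (1), (2), (4) of Definition \ref{defn: log affinoid perfectoid object}.

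The key verification is condition (3): the $p$-adic completion of the filtered colimit of the affinoid algebras underlying $X^{1/n}$ must be perfectoid. For this, use that $l$ is a perfectoid field, so that $l \langle (P_\infty)_{\Q_{\geq 0}} \rangle$ is a perfectoid Tate algebra. Since $X$ is obtained from $\spa(l, l^+)_{N_\infty} \times_{\spa(l \langle N \rangle, l^+ \langle N \rangle)} \spa(l \langle P \rangle, l^+ \langle P \rangle)$ by a composition of rational localizations and strictly finite étale morphisms (coming from the finite model $X_0$), and these operations are preserved under base change and limits, $\widetilde X$ is underlyingly a perfectoid affinoid space, hence log affinoid perfectoid. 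Finally, for an arbitrary $V \in X_{\proket}$, refine its pro-Kummer étale presentation so that each stage is a composition of rational localizations and finite Kummer étale morphisms (after suitable cofinality reductions), and then form $V \times_X \widetilde X$; this is log affinoid perfectoid by Corollary \ref{corollary: proket over log affinoid perfectoid is log affinoid perfectoid} and surjects onto $V$ since $\widetilde X \to X$ is surjective.

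The main obstacle I anticipate is the careful verification of perfectoidness of $\widetilde X$ in the limit. Concretely, one must show that the rational localization and strictly finite étale structure of $X_0$ over the chart base extends coherently to the divisibly completed base $\spa(l \langle (P_\infty)_{\Q_{\geq 0}} \rangle, l^+ \langle (P_\infty)_{\Q_{\geq 0}} \rangle)$ and that this is compatible with the perfectoid tower. Here the finite-type transition conditions supplied by Lemma \ref{lemma: tech lemma} and Lemma \ref{lemma: torsion-free pushout}(2), together with the split assumption on $(l^+, N_\infty)$, are essential for reducing to the standard perfectoid tower argument over a chart ring.
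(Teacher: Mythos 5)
Your overall strategy matches the paper's: work étale-locally with a standard finite model, build the tower $\widetilde X = \varprojlim_n X^{1/n}$, show it is log affinoid perfectoid, and then produce $V\times_X \widetilde X$ for a general $V$ by Corollary~\ref{corollary: proket over log affinoid perfectoid is log affinoid perfectoid}. The combinatorial verifications (transition charts injective of finite type by almost $n$-divisibility, divisibility of the colimit monoid) are the same.

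The step you flag as the ``main obstacle'' is exactly where your write-up is not yet complete, and there is also a small misidentification there. The ring at the base of the tower is \emph{not} $l\langle (P_\infty)_{\Q_{\geq 0}}\rangle$: remembering the specialization $N_\infty \to l$ built into the log point, the completed filtered colimit of the $R_n$ is
\[
R \;=\; l\,\widehat\otimes_{l\langle N_{\Q_{\geq 0}}\rangle}\, l\langle P_{\Q_{\geq 0}}\rangle,
\]
a nontrivial quotient of $l\langle P_{\Q_{\geq 0}}\rangle$. The paper handles this by first going one step up to the ``model space'' $\mathbb{E}_1 = \spa(l,l^+)_{N_\infty} \times_{\spa(l\langle N\rangle, l^+\langle N\rangle)} \spa(l\langle P\rangle, l^+\langle P\rangle)$, forming $\widetilde{\mathbb{E}} := \varprojlim_n \mathbb{E}_n$ there, and computing its ring to be exactly the completed tensor product above; perfectoidness then follows because all three factors $l$, $l\langle N_{\Q_{\geq 0}}\rangle$, $l\langle P_{\Q_{\geq 0}}\rangle$ are affinoid perfectoid and a completed tensor product of affinoid perfectoid algebras over an affinoid perfectoid algebra is affinoid perfectoid (\cite[Proposition 6.18]{Scholze12}, via \cite[Lemma 2.2.15]{DLLZ}). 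Once $\widetilde{\mathbb{E}}$ is known to be a log affinoid perfectoid object of $(\mathbb{E}_1)_{\proket}$, both $\widetilde X = X\times_{\mathbb{E}_1} \widetilde{\mathbb{E}}$ and $V\times_X \widetilde X$ are handled by the \emph{same} corollary you invoke at the end — there is no need for a separate ``rational localizations and finite étale maps pass to limits'' argument, and in fact that sentence as you wrote it is asserting the conclusion rather than proving it. I would replace that sentence with the explicit tensor-product computation of the ring of $\widetilde{\mathbb{E}}$ and the citation to Scholze; with that, your argument closes up and coincides with the paper's.
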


\begin{proof}
We have to show that, for every $U=\varprojlim_{i\in I} U_i$ in $X_{\proket}$, \'etale locally on $U$ and $X$, there exists a pro-Kummer \'etale cover of $U$ by log affinoid perfectoid objects. We reduce to the case where 
\begin{itemize}
\item $X$ is affinoid and $X\rightarrow \spa(l,l^+)_{N_{\infty}}$ admits a standard finite model as in Definition \ref{defn: standard finite model};
\item $U_i\rightarrow X$ is a composition of rational localizations and finite Kummer \'etale morphisms for all sufficiently large $i$.
\end{itemize}
By assumption, $X\rightarrow \spa(l,l^+)_{N_{\infty}}$ admits a finite model $X_0\rightarrow \spa(l,l^+)_N$ equipped with a chart $u:N\rightarrow P$ satisfying the conditions in Proposition \ref{lemma: choose X_0 to be X} and such that $X$ admits a chart modeled on \[P_{\infty}=P\sqcup_NN_{\infty}=P\sqcup^{\mathrm{sat}}_NN_{\infty}.\] In particular, the map 
\[X_0 \lra  \mathbb{E}_{10}:=\spa(l,l^+)_N\times_{\spa(l\langle N\rangle, l^+\langle N\rangle)} \spa(l\langle P\rangle, l^+\langle P\rangle)\]
is a composition of rational localizations and strictly \'etale morphisms. The induced morphism
\[X \lra  \mathbb{E}_1:=\spa(l,l^+)_{N_{\infty}}\times_{\spa(l\langle N\rangle, l^+\langle N\rangle)} \spa(l\langle P\rangle, l^+\langle P\rangle)\]
is also a composition of rational localizations and strictly \'etale morphisms. One sees that $\mathbb{E}_1$ is also admissibly smooth over $\spa(l,l^+)_{N_{\infty}}$ and that $\mathbb{E}_{10}\rightarrow \spa(l,l^+)_N$ is a standard finite model of $\mathbb{E}_1\rightarrow \spa(l,l^+)_{N_{\infty}}$. For every integer $n\geq 1$, consider
\[\mathbb{E}_n:=\spa(l,l^+)_{N_{\infty}}\times_{\spa(l\langle \frac{1}{n}N\rangle, l^+\langle \frac{1}{n}N\rangle)} \spa(l\langle \frac{1}{n}P\rangle, l^+\langle \frac{1}{n}P\rangle).\]
Then $\mathbb{E}_n$ is just $(\mathbb{E}_1)^{\frac{1}{n}}$ in the sense of Construction \ref{construction: X^{1/n}}. In particular, $\mathbb{E}_n\rightarrow \mathbb{E}_1$ is a standard Kummer \'etale cover modeled on the chart $P_{\infty}\hookrightarrow \frac{1}{n}P_{\infty}$. %\Zijian{The proof here seems problematic... I don't think you want to use this version of $X^{1/n}$. This proof needs to be rewritten probably. In fact, I don't think etale locally one should consider $P_\infty \ra \frac{1}{n}P_\infty$. It's more likely to be something like $P_\infty \ra \Z/n\Z \oplus P_\infty$? I haven't thought through this carefully. }\Hansheng{Even though the old construction of $X^{\frac{1}{n}}$ is completely wrong, the claim that $X^{\frac{1}{n}}\rightarrow X$ admits a chart $P_{\infty}\rightarrow \frac{1}{n}P_{\infty}$ is actually true.} 
It is also clear that \[X^{\frac{1}{n}}=X\times_{\mathbb{E}_1}\mathbb{E}_n.\]

We claim that \[\widetilde{\mathbb{E}}:=\varprojlim_n \mathbb{E}_n\in (\mathbb{E}_1)_{\proket}\] is a log affinoid perfectoid object. We know that the transition map $\mathbb{E}_m\rightarrow \mathbb{E}_n$ has a chart 
\[\frac{1}{n}P_{\infty}\rightarrow \frac{1}{m}P_{\infty}\] that is injective and of finite type because $P_{\infty}$ is almost $n$-divisible. It is also clear that $\frac{1}{n}P_{\infty}\rightarrow \frac{1}{m}P_{\infty}$ is uniquely $n$-divisible for all $n\geq 1$. Write $\mathbb{E}_n=\spa(R_n, R^+_n)$, then 
\[R_n=l\widehat{\otimes}_{l\langle \frac{1}{n}N\rangle}l\langle \frac{1}{n}P\rangle=l\widehat{\otimes}_{l\langle N_{\mathbb{Q}_{\geq 0}}\rangle}l\langle \frac{1}{n}P\sqcup_{\frac{1}{n}N}N_{\mathbb{Q}_{\geq 0}}\rangle\]
and the $p$-adic completion $R$ of $\varinjlim_n R_n$ is \[R = (\varinjlim_n R_n)^{\wedge} = 
l\widehat{\otimes}_{l\langle N_{\mathbb{Q}_{\geq 0}}\rangle}l\langle P_{\mathbb{Q}_{\geq 0}}\rangle.
\] 
Since $l$, $l\langle N_{\mathbb{Q}_{\geq 0}}\rangle$ and $l\langle P_{\mathbb{Q}_{\geq 0}}\rangle$ are all affinoid perfectoid algebras (cf. \cite[Lemma 2.2.15]{DLLZ}), we conclude that $R$ is also an affinoid perfectoid algebra by \cite[Proposition 6.18]{Scholze12}.
Now, let 
\[\widetilde{X}:=X\times_{\mathbb{E}_1}\widetilde{\mathbb{E}}\in X_{\proket}.
\] 
Then $\widetilde{X}$ is a log affinoid perfectoid object in $X_{\proket}$. Consequently, 
\[\widetilde{U}:=U\times_X \widetilde{X}\in X_{\proket}\] is a desired pro-Kummer \'etale cover of $U$ by a log affinoid perfectoid object in $X_{\proket}$ by Corollary \ref{corollary: proket over log affinoid perfectoid is log affinoid perfectoid}.
\end{proof}

\begin{proposition}\label{proposition: topos is replete}
Suppose $X$ is admissibly smooth over $\spa(l,l^+)_{N_{\infty}}$. Then the topos $\mathrm{Sh}(X_{\proket})$ is replete.
\end{proposition}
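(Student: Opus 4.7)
The plan is to follow the same strategy as \cite[Proposition 5.1.6]{DLLZ} (which in turn follows \cite{Scholze} and the general framework of Bhatt--Scholze for the pro-\'etale topology). The key point is that repleteness of a topos can be verified by producing, for every tower of epimorphisms $\cdots \twoheadrightarrow F_n \twoheadrightarrow F_{n-1} \twoheadrightarrow \cdots \twoheadrightarrow F_0$ in $\mathrm{Sh}(X_{\proket})$, enough local sections of the inverse limit; and that this in turn reduces to a purely site-theoretic statement: for every $U\in X_{\proket}$ and every tower of pro-Kummer \'etale covers $\cdots \twoheadrightarrow V_n \twoheadrightarrow V_{n-1} \twoheadrightarrow \cdots \twoheadrightarrow V_0=U$ in $X_{\proket}$, the inverse limit $V_\infty := \varprojlim_n V_n$ exists in $X_{\proket}$ and the induced map $V_\infty \to U$ is again a pro-Kummer \'etale cover.

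First I would check that the category $X_{\proket}$ is closed under countable (indeed, arbitrary small) cofiltered inverse limits of objects with qcqs underlying spaces and surjective transition maps. Concretely, given a tower $\cdots \twoheadrightarrow V_n \twoheadrightarrow V_{n-1} \twoheadrightarrow \cdots \twoheadrightarrow V_0=U$ as above, each $V_n$ comes with a pro-Kummer \'etale presentation $V_n = \varprojlim_{j\in J_n} V_{n,j}$ over $V_{n-1}$, which by Definition \ref{defn: pro-ket site} is a transfinite composition (indexed by some ordinal $\lambda_n$) of pullbacks of Kummer \'etale morphisms in $X_{\ket}$, eventually surjective finite Kummer \'etale. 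Concatenating all these well-ordered index systems for $n=0,1,2,\dots$ produces a single transfinite composition of Kummer \'etale pullbacks whose limit is $V_\infty$, witnessing that $V_\infty \to U$ is itself a pro-Kummer \'etale cover in the sense of Definition \ref{defn: pro-ket site}. The ingredients needed to make sense of these limits within pro-$X_{\ket}$ are the existence of fiber products in $X_{\ket}$ (Proposition \ref{prop: base change exists}) together with the fact that Kummer \'etale morphisms are open (Corollary \ref{cor: ket morphisms are open}), which guarantees that surjectivity on underlying topological spaces is preserved under inverse limits.

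Given the above stability, the repleteness of $\mathrm{Sh}(X_{\proket})$ follows from a purely formal argument: given a tower of epimorphisms of sheaves $\cdots \twoheadrightarrow F_n \twoheadrightarrow F_{n-1} \twoheadrightarrow \cdots \twoheadrightarrow F_0$, to show $\varprojlim_n F_n \twoheadrightarrow F_0$ one picks, for any $U\in X_{\proket}$ and any section $s_0\in F_0(U)$, a tower of pro-Kummer \'etale covers $V_n \to V_{n-1}$ together with compatible lifts $s_n\in F_n(V_n)$. The limit $V_\infty:=\varprojlim_n V_n$ is an object of $X_{\proket}$ covering $U$ by the previous paragraph, and the compatible family $(s_n)$ defines a section of $\varprojlim_n F_n$ over $V_\infty$ lifting $s_0$. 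This is precisely the pattern of \cite[Proposition 5.1.6]{DLLZ}, and once the site-theoretic closure under countable towers is in hand the topos-theoretic deduction goes through unchanged.

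The only substantive obstacle, which is essentially the same obstacle as in \cite{DLLZ}, is to verify that the concatenated transfinite tower above genuinely defines an object of $X_{\proket}$ rather than merely of pro-$X_{\ket}$; namely, one must check that each intermediate stage in the concatenated presentation is pro-Kummer \'etale over $U$ and that sufficiently late transition maps are surjective finite Kummer \'etale, as demanded by Definition \ref{defn: pro-ket site}. This is handled by replacing each $V_{n,j}\to V_{n,j-1}$ by a cofinal subsystem of surjective finite Kummer \'etale maps (using Proposition \ref{prop: ket over log affinoid perfectoid is et} and the admissibly smooth structure, exactly as in \cite{DLLZ}), after which the combinatorics of ordinals provides the required single transfinite tower. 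Apart from this bookkeeping, all steps reduce to facts already established in this section.
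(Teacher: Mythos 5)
Your proposal takes a genuinely different route from the paper. You attempt to verify repleteness directly by arguing that the site is closed under countable towers of single covering morphisms and then lifting sections step by step. The paper instead proves the \emph{stronger} property that $X_{\proket}$ is locally weakly contractible and then invokes \cite[Proposition~3.2.3]{BS}. Concretely, the paper reduces to $U=\varprojlim U_i$ with $U_i\to X$ finite Kummer \'etale surjective and $X$ affinoid with a standard finite model, builds the log affinoid perfectoid object $\widetilde X$ as in the proof of Proposition \ref{prop: log affinoid perfectoid objects form a basis}, enlarges it to a weakly contractible $\widetilde X'$ using \cite[Lemma~2.4.9]{BS}, notes $\widetilde X'$ is again log affinoid perfectoid by Corollary \ref{corollary: proket over log affinoid perfectoid is log affinoid perfectoid}, and finally takes $\widetilde U:=U\times_X\widetilde X'$, which is coherent, log affinoid perfectoid, and weakly contractible. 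Besides being cleaner, the paper's method produces a basis of weakly contractible coherent objects, which is an invariant of independent use.

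Beyond the choice of route, your proposal has two real gaps. First, an epimorphism of sheaves gives a lift over a covering \emph{family}, not a single covering morphism; to pass to the single pro-Kummer \'etale cover $V_n\to V_{n-1}$ you implicitly use coherence of $U$ together with finite subcovers and disjoint unions, neither of which you address. Second, and more substantively, your concatenated transfinite tower is not automatically a pro-Kummer \'etale cover in the sense of Definition \ref{defn: pro-ket site}: that definition requires the transition maps to be pullbacks of surjective \emph{finite} Kummer \'etale morphisms for all sufficiently late ordinals, whereas in your concatenation each new block of the presentation begins with general Kummer \'etale transitions before becoming finite surjective. You acknowledge this obstacle but cite Proposition \ref{prop: ket over log affinoid perfectoid is et} to resolve it; that result only says Kummer \'etale maps over a log affinoid perfectoid base are strictly \'etale and does not supply the needed refinement of an arbitrary pro-Kummer \'etale cover to an all-finite one. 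Producing that refinement is essentially what the paper's use of \cite[Lemma~2.4.9]{BS} and the passage to a weakly contractible $\widetilde X'$ achieves, so the weak-contractibility argument is not merely an alternative but precisely the tool that closes the gap your sketch leaves open.
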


\begin{proof}
By \cite[Proposition 3.2.3]{BS}, it suffices to show that $X_{\proket}$ is locally weakly contractible; namely, every $U\in X_{\proket}$ admits a surjection $\cup_{j\in J} V_i\rightarrow U$ with $V_j$ coherent and weakly contractible in the sense of \cite[Definition 3.2.1]{BS}. To see this, we reduce to the case where 
\begin{itemize}
\item $X=\spa(A, A^+)$ is affinoid and $X\rightarrow \spa(l,l^+)_{N_{\infty}}$ admits a standard finite model as in Definition \ref{defn: standard finite model}; and
\item $U=\varprojlim_{i\in I} U_i$ with $U_i\rightarrow X$ finite Kummer \'etale surjective for all $i$.
\end{itemize}
We can construct the log affinoid perfectoid object $\widetilde{X}$ as in the proof of Proposition \ref{prop: log affinoid perfectoid objects form a basis}. Using \cite[Lemma 2.4.9]{BS}, one can construct an object $\widetilde{X}'\in X_{\proket}$ which is pro-finite-\'etale surjective over $\widetilde{X}$ such that $\widetilde{X}'$ is weakly contractible. By Corollary \ref{corollary: proket over log affinoid perfectoid is log affinoid perfectoid}, $\widetilde{X}'$ is also a log affinoid perfectoid object. Finally, take \[\widetilde{U}:=U\times_X\widetilde{X}'\in X_{\proket}.\] Then $\widetilde{U}$ is once again a log affinoid perfectoid object which is coherent by (an analogue of) \cite[Proposition 5.1.5 (4)]{DLLZ}. On the other hand, since $\widetilde{U}$ is pro-finite-\'etale surjective over a weakly contractible object $\widetilde{X}'$, it must be weakly contractible as well. Therefore, $\widetilde{U}$ is a desired coherent weakly contractible cover for $U$.
\end{proof}

We can define the structure sheaves on the pro-Kummer \'etale site (cf. \cite[Definition 5.4.1]{DLLZ}).

\begin{definition}\label{defn: structure sheaves}
Let $\upsilon: X_{\proket}\rightarrow X_{\ket}$ be the natural projection. We define the following sheaves on $X_{\proket}$.
\begin{enumerate}
\item The \emph{integral structure sheaf} is defined to be 
\[\mathcal{O}_{X_{\proket}}^+ := \upsilon^{-1}(\mathcal{O}_{X_{\ket}}^+),\] and the \emph{structure sheaf} is defined to be \[\mathcal{O}_{X_{\proket}} := \upsilon^{-1}(\mathcal{O}_{X_{\ket}}).\]
\item The \emph{completed integral structure sheaf} is defined to be \[\widehat{\mathcal{O}}_{X_{\proket}}^+ := \varprojlim_n \big(\mathcal{O}_{X_{\proket}}^+ / p^n\big),\] and the \emph{completed structure sheaf} is defined to be \[\widehat{\mathcal{O}}_{X_{\proket}} := \widehat{\mathcal{O}}_{X_{\proket}}^+[1/p].\] When the context is clear, we shall simply write $(\widehat{\mathcal{O}}^+_X, \widehat{\mathcal{O}}_X)$ instead of $(\widehat{\mathcal{O}}_{X_{\proket}}^{+}, \widehat{\mathcal{O}}_{X_{\proket}})$.
\item The \emph{tilted structure sheaves} are defined to be \[\widehat{\mathcal{O}}_{X_{\proket}}^{\flat+} := \varprojlim_\Phi \widehat{\mathcal{O}}_{X_{\proket}}^+ \cong \varprojlim_\Phi \big(\mathcal{O}_{X_{\proket}}^+ / p\big)\] and $\widehat{\mathcal{O}}_{X_{\proket}}^\flat := \varprojlim_\Phi \widehat{\mathcal{O}}_{X_{\proket}}$, where the transition morphisms $\Phi$ are the Frobenius maps given by $x \mapsto x^p$.  When the context is clear, we simply write $(\widehat{\mathcal{O}}^{\flat+}_X, \widehat{\mathcal{O}}^{\flat}_X)$ instead of $(\widehat{\mathcal{O}}_{X_{\proket}}^{\flat+}, \widehat{\mathcal{O}}_{X_{\proket}}^\flat)$.
\item We have 
\[\alpha: \mathcal{M}_{X_{\proket}} := \upsilon^{-1}(\mathcal{M}_{X_{\ket}}) \lra \mathcal{O}_{X_{\proket}}\]
and \[
\alpha^\flat: \mathcal{M}_{X_{\proket}}^\flat := \varprojlim_{a \mapsto a^p} \mathcal{M}_{X_{\proket}} \lra \widehat{\mathcal{O}}_{X_{\proket}}^\flat. \]  When the context is clear, we shall simply write $\mathcal{M}_X$ and $\mathcal{M}_X^\flat$ instead of $\mathcal{M}_{X_{\proket}}$ and $\mathcal{M}_{X_{\proket}}^\flat$, respectively. Moreover, we sometimes use $\underline{\widehat{\mathcal{O}_X}}$ to denote the structure sheaf $\widehat  {\mathcal{O}}_X$ together with the natural map $\mathcal{M}_X\rightarrow \widehat{\mathcal{O}}_X$.
\end{enumerate}
\end{definition}

The following analogue of \cite[Theorem 5.3.4]{DLLZ} remains true.

\begin{theorem}\label{thm: vanishing of cohomology on pro-Kummer etale site}
 Let $U \in X_{\proket}$ be a log affinoid perfectoid object, with associated perfectoid space $\widehat{U} = \mathrm{Spa}(R, R^+)$.  Let $(R^\flat, R^{\flat+})$ be the tilt of $(R, R^+)$.
    \begin{enumerate}
        \item  For each $n \geq 1$, we have \[\mathcal{O}_{X_{\proket}}^+(U) / p^n \cong R^+ / p^n,\] and it is canonically almost isomorphic to $(\mathcal{O}_{X_{\proket}}^+ / p^n)(U)$.

        \item  For each $n \geq 1$, we have \[H^i(U, \mathcal{O}_{X_{\proket}}^+ / p^n)^a = 0,\] for all $i > 0$.  Consequently, 
        \[H^i(U, \widehat{\mathcal{O}}_{X_{\proket}}^+)^a = 0,\] for all $i > 0$.

        \item  We have \[ \widehat{\mathcal{O}}_{X_{\proket}}^+(U) \cong R^+, \qquad \widehat{\mathcal{O}}_{X_{\proket}}(U) \cong R.\] Moreover, the ring $\widehat{\mathcal{O}}_{X_{\proket}}^+(U)$ is canonically isomorphic to the $p$-adic completion of $\mathcal{O}_{X_{\proket}}^+(U)$.

        \item We have \[ \widehat{\mathcal{O}}_{X_{\proket}}^{\flat+}(U) \cong R^{\flat+}, \qquad \widehat{\mathcal{O}}_{X_{\proket}}^\flat(U) \cong R^\flat.\]

        \item We have \[ H^i(U, \widehat{\mathcal{O}}_{X_{\proket}}^{\flat+})^a = 0,\] for all $i > 0$.
    \end{enumerate}
\end{theorem}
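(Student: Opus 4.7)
The plan is to follow closely the strategy of \cite[Theorem 5.3.4]{DLLZ} (which is itself the Kummer \'etale analogue of \cite[Theorem 4.10]{Scholze}), with modifications to accommodate our admissibly smooth setting and the presence of potentially non-finitely generated charts. Throughout, the key enabling facts are that log affinoid perfectoid objects form a basis of $X_{\proket}$ (Proposition \ref{prop: log affinoid perfectoid objects form a basis}) and that they are stable under fiber products (Corollary \ref{corollary: fiber product of log affinoid perfectoid objects}), so that \v{C}ech cohomology with respect to covers by log affinoid perfectoid objects computes the derived functor cohomology on such objects.

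For part (1), since $\mathcal{O}^+_{X_{\proket}} = \upsilon^{-1} \mathcal{O}^+_{X_{\ket}}$ and $\mathcal{O}^+_{X_{\ket}}$ is a sheaf on $X_{\ket}$ (Proposition \ref{prop: structure pre-sheaves are sheaves}), the standard identification of sections on a cofiltered limit gives $\mathcal{O}^+_{X_{\proket}}(U) = \varinjlim_i R_i^+$. The definition of a perfectoid presentation (Definition \ref{defn: log affinoid perfectoid object}) identifies $R^+$ as the $p$-adic completion of this colimit, so $\mathcal{O}^+_{X_{\proket}}(U)/p^n \cong R^+/p^n$ for every $n$. The almost isomorphism with $(\mathcal{O}^+_{X_{\proket}}/p^n)(U)$ follows once we establish part (2), via the \v{C}ech-to-derived-functor spectral sequence.

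The central step is part (2), which I would prove by induction on $n$. For $n=1$, the strategy is to reduce the computation from the Kummer \'etale site to the \'etale site of the associated affinoid perfectoid space $\widehat{U}$. Concretely, using Proposition \ref{lemma: Lemma 4.2.5 DLLZ}, after passing to an $X^{1/m}$ for suitable $m$ (which becomes trivial in the limit because $P$ is divisible in Definition \ref{defn: log affinoid perfectoid object}), any Kummer \'etale cover of $U$ becomes an \'etale cover, so that the \v{C}ech complex of $\mathcal{O}^+/p$ on $U$ is identified up to passage to a cofinal pro-system with the \v{C}ech complex on $\widehat{U}$ computed in the (strictly) \'etale site; these can be handled by Scholze's almost purity / almost vanishing results \cite[Theorem 4.10]{Scholze} exactly as in the proof of \cite[Theorem 5.3.4]{DLLZ}. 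Here Proposition \ref{prop: ket over log affinoid perfectoid is et} is what allows us to replace Kummer \'etale by \'etale morphisms on log affinoid perfectoid objects, and Corollary \ref{corollary: proket over log affinoid perfectoid is log affinoid perfectoid} ensures that fiber products in such covers remain log affinoid perfectoid. For the inductive step $n \Rightarrow n+1$, I would use the short exact sequence
\[
0 \lra \mathcal{O}^+_{X_{\proket}}/p \lra \mathcal{O}^+_{X_{\proket}}/p^{n+1} \lra \mathcal{O}^+_{X_{\proket}}/p^n \lra 0
\]
and the fact that almost zero modules form a Serre subcategory. Finally, part (3) then follows by taking the inverse limit $\varprojlim_n$ and invoking the Mittag-Leffler vanishing of $R^1\varprojlim$ of almost isomorphisms (using repleteness of $X_{\proket}$, Proposition \ref{proposition: topos is replete}), while parts (4) and (5) follow from tilting, using that $\widehat{\mathcal{O}}_X^{\flat+} = \varprojlim_{\Phi} \mathcal{O}^+/p$ together with the perfectoid identification $\widehat{U}^{\flat} = \spa(R^\flat, R^{\flat+})$.

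The main obstacle is the reduction from Kummer \'etale to \'etale covers of the underlying perfectoid space, which requires carefully exploiting the standard saturated chart $P_\infty$ on $U_i$ (Definition \ref{defn: standard finite model}) and the divisibility of $P = \varinjlim P_i$. In particular, one must verify that the \v{C}ech covers that appear in the Kummer \'etale topology can be refined by \'etale covers of the associated perfectoid space in a manner compatible with the perfectoid presentation; this is precisely the content of Proposition \ref{lemma: Lemma 4.2.5 DLLZ} combined with Proposition \ref{prop: ket over log affinoid perfectoid is et}. Once this reduction is achieved, the remaining almost vanishing is formal and follows from the corresponding perfectoid statements. I do not anticipate any additional difficulty coming from the non-finiteness of the global chart $P_\infty$, since the arguments work in the colimit after one has reduced each step to a finite model.
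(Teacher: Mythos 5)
Your proposal is essentially correct and follows the same approach as the paper, which simply refers the reader to \cite[Theorem 5.3.4]{DLLZ} with the substitutions of Proposition \ref{prop: log affinoid perfectoid objects form a basis} for \cite[Proposition 5.3.12]{DLLZ} and Proposition \ref{prop: ket over log affinoid perfectoid is et} for \cite[Lemma 5.3.8]{DLLZ}. Your detour through Proposition \ref{lemma: Lemma 4.2.5 DLLZ} and the $X^{1/m}$ construction is slightly indirect (that argument is already packaged into Proposition \ref{prop: ket over log affinoid perfectoid is et}, which you also cite), but the strategy is the right one and the rest of the steps — the \v{C}ech-to-derived spectral sequence over log affinoid perfectoid covers, Scholze's almost acyclicity on the perfectoid \'etale site, induction on $n$, and the passage to limits for parts (3)–(5) — match the DLLZ blueprint that the paper invokes.
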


\begin{proof}
The proof of \cite[Theorem 5.3.4]{DLLZ} applies here verbatim as long as we replace \cite[Proposition 5.3.12, Lemma 5.3.8]{DLLZ} by Proposition \ref{prop: log affinoid perfectoid objects form a basis} and Proposition \ref{prop: ket over log affinoid perfectoid is et}, respectively.
\end{proof}

%%%% don't need
\iffalse
The pro-Kummer \'etale site satisfies the assumptions of the following lemma. 

\begin{lemma}\label{lemma: analogue of 3.18}
Let $\{\mF_i\}_{i \in \N}$ be an inverse system of abelian sheaves on a site $T$ and let $\mF := \varprojlim \mF_i$. Assume that $T$ admits a basis  $\mB$ such that for any $B \in \mB$ 
\bi
\item The higher inverse limit $R^1 \lim \mF_i (B) =  0$ vanishes (resp. $\aeq 0$ almost vanishes); 
\item The higher cohomology $H^j (B, \mF_i) = 0$ vanishes (resp. $\aeq 0$ almost vanishes) for all $j > 0 $. 
\ei
Then we have 
\be
\item $H^j (B, \mF) = 0$ (resp. $H^j (B, \mF) \aeq 0$) for all $B \in \mB$ and $j > 0$. Moreover, 
\item $R^j \lim \mF_i = 0$ for all $j > 0$ (resp. $R^j \lim \mF_i \aeq 0$), and 
\item $\mF(U) \cong \varprojlim (\mF_i (U))$  (resp. $\mF(U) \aeq \varprojlim (\mF_i (U))$) for all $U \in T$. 
\ee 
\end{lemma}
\fi
%%%%

\begin{remark}
We remark that most of the results in \cite[Section 5]{DLLZ} remains true in our new context. Through out the paper, we will directly refer to \emph{loc. cit.} whenever we use such a result.
\end{remark}

%%%%%%%%%%%%%%%%%%
%%%%%%%%%%%%%%%%%%
%%%%%%%%%%%%%%%%%%
%%%%%%%%%%%%%%%%%%
%%%%%%%%%%%%%%%%%%
%%%%%%%%%%%%%%%%%%
%%%%%%%%%%%%%%%%%%
%%%%%%%%%%%%%%%%%%
%%%%%%%%%%%%%%%%%%
%%%%%%%%%%%%%%%%%%
%%%%%%%%%%%%%%%%%%
%%%%%%%%%%%%%%%%%%
%%%%%%%%%%%%%%%%%%
%%%%%%%%%%%%%%%%%%
%%%%%%%%%%%%%%%%%%   Naive Definition + Etale
%%%%%%%%%%%%%%%%%%
%%%%%%%%%%%%%%%%%%
%%%%%%%%%%%%%%%%%%
%%%%%%%%%%%%%%%%%%
%%%%%%%%%%%%%%%%%%
%%%%%%%%%%%%%%%%%%
%%%%%%%%%%%%%%%%%%
%%%%%%%%%%%%%%%%%%
%%%%%%%%%%%%%%%%%%
%%%%%%%%%%%%%%%%%%
%%%%%%%%%%%%%%%%%%
%%%%%%%%%%%%%%%%%%
%%%%%%%%%%%%%%%%%%
%%%%%%%%%%%%%%%%%%
%%%%%%%%%%%%%%%%%% 

\newpage 

\section{The complex $A \Omega_{\fX}^{\log}$ and the \'etale comparison}  \label{section:etale_comparison}

%\subsection{$A \Omega_{\fX}^{\log}$} \noindent 

In this section, we define the log $\Ainf$-cohomology for admissibly smooth log $p$-adic formal schemes. We then prove a primitive comparison theorem (Theorem \ref{thm:primitive_comparison}) generalizing results from \cite{Scholze, DLLZ}, from which we deduce the \'etale comparison  for log $\Ainf$-cohomology  (Theorem \ref{thm:etale_comparison}).  
 
\subsection{The logarithmic $\Ainf$-cohomology} %of admissibly smooth log formal schemes} 
\label{ss:def_Ainf_sheaf}

We first fix the geometric setup. As in the introduction section, let $C$ be a perfectoid field containing all $m$-th power roots of unity for all $m\in \mathbb{Z}_{\geq 1}$ and let $\mO_C$ be its ring of integers. We fix a compatible choice of primitive roots of unity. Let $\ul{\mO_C} = (\alpha: N_\infty \ra \mO_C)$ be a divisible perfectoid pre-log ring over $\mO_C$. Recall that this means that $N_\infty$ is uniquely $n$-divisible for each $n \in \Z_{\ge 1}$ (cf. Definition \ref{definition:definition_pre_log_rings}). In the rest of the paper, we also assume that this pre-log structure on $\mO_C$ is split. In other words, the associated log structure on $\spf \mO_C$ satisfies 
\[N_\infty^a \cong \mO_{\spf \mO_C, \ett}^{\times} \oplus N_\infty\] (cf. Definition \ref{definition:definition_pre_log_rings}). Important examples of such split divisible perfectoid pre-log rings include 
\bi
\item  $\mO_C$ with the pre-log structure $\alpha: \Q_{\ge 0} \ra \mO_C$ sending $x \mapsto 0 $ for all  $x \ne 0 \in \Q_{\ge 0}$.  
\item $\mO_C$ with the pre-log structure $\alpha: \Q_{\ge 0} \ra \mO_C$ sending $a \mapsto p^a$ for all $a \in \Q_{\ge 0}$. (In particular, we need to fix a choice of $p^{\Q_{\ge 0}} \hookrightarrow \mO_C$.) 
\ei 
Let $\fX$ be a saturated log $p$-adic formal scheme that is admissibly smooth over $\ul{\mO_C}$ (cf. Definition \ref{definition:admissible_smooth_formal}). Its log adic generic fiber $X = \fX_\eta$ over the divisible log point $\eta = \spa (C, \mO_C)_{N_\infty}$ exists by Proposition \ref{prop:adm_sm_fiber_product_exist} and is a locally noetherian log adic space that is  admissibly smooth over $\spa (C, \mO_C)_{N_\infty}$ in the sense of Definition \ref{defn: admissiblly log smooth}. In Section \ref{sec:kummer_etale} we have constructed the Kummer pro-\'etale site of $X$. 
Now we are ready to define the logarithmic version of the complex $A \Omega_{\fX}$ considered in \cite{BMS1}. Let 
\begin{equation} \label{eq:the_site_projection}
\nu: X_{\proket} \lra \fX_{\ett}
\end{equation}
be the natural projection from the  Kummer pro-\'etale site of $X$ to the \'etale site of $\fX$ (this is equivalent as the composition of $\upsilon:   X_{\proket} \ra X_{\ket}$ and $X_{\ket} \ra \fX_{\ett}$).  
%{\color{red} maybe we have to change to $\fX_{k\ett}$ or maybe it does not matter} \\

\begin{definition}\label{defn: AOmega}
Let $\fX$ be an admissibly smooth log $p$-adic formal scheme over $\ul{\mO_C}$. We define $\Ainfx$ to be the period sheaf $W(\widehat{\mO}_X^{\flat+})$ on $X_{\proket}$ and define $\widehat{\Ainfx}$ to be its derived $p$-adic completion; namely, 
\[\widehat{\Ainfx}:=R \varprojlim_n \Big(W(\widehat{\mO}_X^{\flat+})\otimes^\L_{\mathbb{Z}_p}\mathbb{Z}/p^n\mathbb{Z}\Big). \] 
%=R \varprojlim_n W_n(\widehat{\mO}_X^{\flat+}).\] %\Zijian{I don't think this later expression is true a priori}
Then we define the following complex of $\Ainf$-modules on $\fX_{\ett}$
$$ A \Omega_{\fX}^{\log} : = L \eta_{\mu} R \nu_* \widehat{\Ainfx},$$ 
and define the logarithmic $\Ainf$-cohomology of $\fX$ as the complex 
\[R \Gamma_{\Ainf} (\fX) :=  R \Gamma (\fX_{\ett}, A \Omega_{\fX}^{\log}) \in \mD(\Ainf). \]  
\end{definition}

\br[Change of perfectoid (pre-)log structure] \label{remark:base_change_N_Q_to_N_infty}
Suppose that $\fX$ is admissibly smooth over $\ul{\mO_C} = (\mO_C, N_\infty)$ and suppose $\fX_0$ is a finite model of $\fX$ over $(\mO_C, N)$, with its log adic generic fiber $X_0$ over $\spa ((C, \mO_C), N)^a$. Let 
\[\fX' = \fX_0 \times_{\spf (\mO_C, N)^a} \spf (\mO_C, N_{\Q \ge 0})^a
\] 
be the base change in the category of saturated log $p$-adic formal schemes, with its log adic generic fiber $X'$ over $\eta' = \spa (C,\mO_C)_{\Q \ge 0}$. Then $\fX'$ and $\fX$ (resp. $X'$ and $X$) have the same underlying formal schemes (resp. adic spaces) by Proposition \ref{lemma: choose X_0 to be X} and its proof. Moreover, base changing along \[\eta = \spa (C, \mO_C)_{N_\infty} \ra \eta' = \spa (C, \mO_C)_{N_{\Q \ge 0}}\] induces an isomorphism  
\[R \Gamma_{\Ainf} (\fX') \isom R\Gamma_{\Ainf} (\fX).\] To see this, we may choose a covering $\{V_{\alpha}'\}$ of $X'_{\proket}$ by log affinoid perfectoid objects (cf. Definition \ref{defn: log affinoid perfectoid object}) and observe that, under the saturated base change along $\eta \ra \eta'$, we get a covering $\{V_\alpha\}$ of $X_{\proket}$ by log affinoid perfectoid objects with the same underlying affinoid adic spaces.   
\er

\subsection{The primitive comparison theorem for log adic spaces} We begin to address the \'etale comparison. 
%Throughout the subsection, we consider the following (slightly more general) setup:  let $l$ be an extension of $\mathbb{Q}_p$ and fix a divisible log point $\spa(l,l^+)_{N_{\infty}}$. 
%\Zijian{I think you need to assume $l$ is perfectoid for this (otherwise almost doesn't even make sense). I will save a copy of the current write up in dropbox and go ahead to change $l, l^+$ to $(C, \mO_C)$ in this subsection}
The key ingredient is the following generalization of \cite[Theorem 6.2.1]{DLLZ}. For the rest of the section, we assume that $C$ is in addition algebraically closed. We also write $\spa \ul{C} := \spa (C, \mO_C)_{N_\infty}$  and $\mO = \mO_C$ to ease notation. 

\begin{theorem}[Primitive Comparison]\label{thm:primitive_comparison} 
Suppose that $X$ is a proper log adic space which is admissibly smooth over $\spa \ul{C}$. Let $\L$ be an $\F_p$-local system on $X_{\ket}$. Then we have the following:
\begin{enumerate}
\item $H^i (X_{\ket}, \L \otimes_{\F_p} (\mO_X^{+a} /p))$ is an almost finitely generated $\mO$-module for every $i\geq 0$, and is almost zero for $i\gg 0$.
\item There is a canonical almost isomorphism
\[H^i (X_{\ket}, \L) \otimes_{\F_p} (\mO^{a}/p) \xrightarrow[]{\sim} H^i (X_{\ket}, \L \otimes_{\F_p} (\mO_X^{+a} /p))\]
of almost $\mO$-modules, for every $i\geq 0$. Consequently, $H^i (X_{\ket}, \L)$ is a finite dimensional $\F_p$-vector space for every $i\geq 0$, and $H^i (X_{\ket}, \L)=0$ for $i\gg 0$.
\item There is a canonical almost isomorphism
\[H^i (X_{\ket}, \L) \otimes_{\F_p} \mO^{\flat a} \: \xrightarrow[]{\sim} H^i (X_{\ket}, \L \otimes_{\F_p} \widehat \mO_{X}^{+\flat a})\]
of almost $\mO^{\flat}$-modules, for every $i\geq 0$. %\Hansheng{ (Is this part used anywhere?)} \Zijian{yes this is the first step towards etale comparison?}
\end{enumerate} 
\end{theorem}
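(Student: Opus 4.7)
The plan is to adapt the strategy of Scholze's primitive comparison theorem, as extended to the Kummer \'etale setting in \cite[Theorem 6.2.1]{DLLZ}. Part (3) will be deduced from part (2) by tilting, using the canonical identification $\widehat{\mO}_X^{\flat+}/\pi \cong \mO_X^+/p$ for a suitable pseudo-uniformizer $\pi \in \mO_C^\flat$, together with the standard almost-isomorphism package relating $\widehat{\mO}_X^{\flat+}$ to $\varprojlim_{\Phi} \mO_X^+/p$. So the bulk of the work is in proving (1) and (2) simultaneously by bounding the cohomology on each degree and using properness to extract finiteness.

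Since $X$ is proper and admissibly smooth, a standard Cartan--Leray argument reduces the statement to the case of a finite cover by affinoid opens $U_i$ (and their intersections), each of which admits a standard finite model $U_{i,0} \ra \spa(C, \mO_C)_N$ with a standard saturated chart $P_{i,\infty} = P_i \sqcup_N^{\mathrm{sat}} N_\infty$ (cf. Definition \ref{defn: standard finite model}). By Proposition \ref{lemma: Lemma 4.2.5 DLLZ}, after replacing $U_i$ by a standard Kummer \'etale cover of the form $U_i^{1/n}$ I may assume that $\L|_{U_i^{1/n}}$ is trivial, so the problem locally reduces to $\L = \F_p$ up to a uniformly finite Galois contribution. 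On each such affinoid, I construct the pro-Kummer \'etale log affinoid perfectoid cover $\widetilde{U}_i \to U_i$ obtained from the tower of $U_i^{1/n}$ as in the proof of Proposition \ref{prop: log affinoid perfectoid objects form a basis}. Theorem \ref{thm: vanishing of cohomology on pro-Kummer etale site} then gives the almost vanishing of higher cohomology of $\widehat{\mO}_X^+/p$ on $\widetilde{U}_i$, so the cohomology of $U_i$ is computed by the Cartan--Leray spectral sequence for $\widetilde{U}_i \to U_i$, whose $E_2$-terms are continuous Galois cohomology of $\Hom(P_{i,\infty}^{\gp}/P_i^{\gp}, \widehat{\Z}'(1)(C))$ acting on $(\mO_X^+/p)(\widetilde{U}_i)$.

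The hard part will be this Galois cohomology computation. Since $P_{i,\infty}$ is not finitely generated, the Galois group is of infinite rank and the naive reduction to the fs setting used in \cite{DLLZ} does not apply directly. The approach is to exploit the combinatorial characterization of pseudo-saturated maps (Theorem \ref{thm: classification of pseudo-saturated}) to decompose the Koszul-type complex computing the cohomology: using the unique minimal decomposition $y = g_N(y) + f_N(y)$ relative to $(N, P_i)$, I will split $\mO_C^+\gr{P_{i,\infty}}/p$ as a direct sum over the $f_N$-components, each summand being Galois-stable and analysable by a uniform estimate, with the conductor $M$ from Definition \ref{maindef:pseudo_sat} providing a uniform denominator bound that allows the almost-finite generation to pass to the inverse limit in $n$. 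The main obstacle will be to ensure that this decomposition is compatible with the intersection patterns in the global \v{C}ech complex and with the Frobenius on the tilt, so that the uniform almost-isomorphism constants do not degenerate upon globalizing --- this is where a fine combinatorial estimate on minimal decompositions (analogous to the ``small versus large'' element decomposition in \cite{Scholze}, but now calibrated by the conductor $M$) is expected to be the crucial new ingredient.
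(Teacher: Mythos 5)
Your high-level plan --- follow the strategy of \cite[Theorem 6.2.1]{DLLZ}, reduce to affinoids with standard finite models, pass to the pro-Kummer \'etale log affinoid perfectoid tower $\widetilde U_i \to U_i$, and then control a continuous Galois cohomology via the combinatorics of pseudo-saturated maps --- is the right one, and the treatment of part (3) by tilting is sound. However, there is a genuine misdirection in the way you identify the difficulty, and the decomposition you propose does not match what the argument actually requires.

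First, the Galois group of the cover $\widetilde U_i \to U_i$ does \emph{not} have infinite rank. With notation as in Construction~\ref{construction:E_infty_cover}, the cover $\widetilde{\mathbb E} \to \mathbb E_1$ is Galois with group
\[
\Gamma \cong \Hom\bigl(P^{\gp}/N^{\gp}, \widehat\Z(1)\bigr) \cong \widehat\Z(1)^{\,d}, \qquad d = \operatorname{rk}_\Z(P^{\gp}/N^{\gp}),
\]
which is of finite rank: both the numerator $P$ and the denominator $N$ acquire $n$-th roots simultaneously as $n$ grows, so the relative Galois group stabilizes to a finite-rank profinite group. The group $\Hom(P_\infty^{\gp}/P^{\gp}, \widehat\Z'(1))$ you name is not the Galois group (indeed $P_\infty^{\gp}/P^{\gp} \cong N_\Q^{\gp}/N^{\gp}$ is torsion, so that $\Hom$ is frequently trivial). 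Consequently, the obstruction to running the DLLZ argument verbatim is \emph{not} an infinite-rank Galois group; the Koszul/Cartan--Leray mechanism is available essentially unchanged.

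The actual obstruction is that the module $R_\infty^{\square} = \mO_C \widehat\otimes_{\mO_C\gr{N_{\Q\geq 0}}} \mO_C\gr{P_{\Q\geq 0}}$ is built from a non-finitely-generated monoid, and one needs two facts that replace \cite[Lemma 6.1.6]{DLLZ}: (i) the explicit identification of the integral structure $R_1^+ = \mO_C\widehat\otimes_{\mO_C\gr{N}}\mO_C\gr{P}$ and $R^+ = \mO_C\widehat\otimes_{\mO_C\gr{N_{\Q\geq 0}}}\mO_C\gr{P_{\Q\geq 0}}$ (Proposition~\ref{prop: R^+_1}), and (ii) a decomposition of $R^+$ into $\Gamma$-isotypic pieces indexed by the \emph{finite-order characters} $\chi$ of $\Gamma$, with each $\chi$-piece a finite $R_1^+$-module and the trivial piece equal to $R_1^+$ (Proposition~\ref{lemma: Gamma action on R^+} and Corollary~\ref{corollary: Gamma action on R^+}). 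Your proposed ``direct sum over $f_N$-components'' is not the right decomposition: it is the $\Gamma$-character decomposition that is Galois-stable and gives finiteness, and the finiteness of each $\chi$-piece is a separate (and fairly soft) argument using the fact that lifts of $\chi$ are parametrized by $\bigl(\tfrac{1}{m}P^{\gp}\oplus_{\tfrac{1}{m}N^{\gp}}N_\Q^{\gp}\bigr)/P^{\gp}$ for suitable $m$. Where the minimal decomposition $y = f_N(y)+g_N(y)$, Theorem~\ref{thm: classification of pseudo-saturated}, and Lemma~\ref{lemma: Q} (finiteness and facet structure of the set $Q$ of minimal elements) actually enter is in the proof of Proposition~\ref{prop: R^+_1}, to show that $\mO_C\otimes_{\mO_C[N_{\Q\geq 0}]}\mO_C[P_{\Q\geq 0}] \cong \mO_C[Q]$ is \emph{integrally closed} in its generic fiber. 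The conductor $M$ does not show up as a ``uniform denominator bound'' globalizing the almost-isomorphism; its role, via Theorem~\ref{thm: classification of pseudo-saturated}, is to guarantee existence and uniqueness of minimal decompositions in the first place. With these two propositions in hand, the globalization proceeds exactly as in \cite[Theorem 6.2.1, Lemma 6.2.4]{DLLZ} and does not require the additional ``fine combinatorial estimate'' you anticipate.
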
 

The proof of Theorem \ref{thm:primitive_comparison} is similar to the proof of \cite[Theorem 6.2.1]{DLLZ}. We focus on where our proof differs from \emph{loc. cit}. 

Let us consider the following setup. Suppose that we have an affinoid log adic space $V$ which is admissibly smooth over $\spa \ul{C} = \spa(C,\mO_C)_{N_{\infty}}$ and admits a standard finite model in the sense of Definition \ref{defn: standard finite model}. That is, 
\[V=\spa(S_1, S^+_1)\rightarrow \spa \ul C\] admits a finite model 
\[V_0\rightarrow \spa(C,\mO_C)_N\] such that $N\rightarrow N_{\infty}$ is injective and $V_0\rightarrow \spa(C,\mO_C)_N$ admits a chart modeled on an injective saturated homomorphism of fs monoids $u:N\rightarrow P$,  such that
\begin{itemize}
\item $N$ is toric and $P$ is torsion-free;
\item the cokernel of $u^{\mathrm{gp}}: N^{\mathrm{gp}}\rightarrow P^{\mathrm{gp}}$ is torsion-free, \item the induced morphism \[V_0 \lra \mathbb{E}_{10}:=\spa(C,\mO_C)_N\times_{\spa(C\langle N\rangle, \mO_C\langle N\rangle)}\spa(C\langle P\rangle, \mO_C\langle P\rangle)\] is a composition of rational localizations and strictly \'etale morphisms. 
\end{itemize}
In particular, $V_0\rightarrow \spa(C,\mO_C)_N$ coincides with $V\rightarrow \spa \ul{C}$ on the underlying adic spaces. In fact, we also have $\mathbb{E}_{10}$ coincides with \[\mathbb{E}_1:=\spa(C, \mO_C)_{N_{\infty}}\times_{\spa(C\langle N\rangle, \mO_C \langle N\rangle)}\spa(C\langle P\rangle, \mO_C\langle P\rangle)\] on the underlying adic spaces by our assumption on $u: N\rightarrow P$. Consequently, $V\rightarrow\mathbb{E}_{1}$ coincides with $V_0\rightarrow \mathbb{E}_{10}$ on the underlying adic spaces and thus is also a composition of rational localizations and strictly \'etale morphisms.

 For every integer $n\geq 1$, let 
\[\mathbb{E}_{n0}:=\spa(C,\mO_C)_{\frac{1}{n}N}\times_{\spa(C\langle \frac{1}{n}N\rangle, \mO_C \langle \frac{1}{n}N\rangle)}\spa(C\langle \frac{1}{n}P\rangle, \mO_C\langle\frac{1}{n}P\rangle)\]
and
\[\mathbb{E}_n:=\spa(C,\mO_C)_{N_{\infty}}\times_{\spa(C\langle \frac{1}{n}N\rangle, \mO_C\langle \frac{1}{n}N\rangle)}\spa(C\langle \frac{1}{n}P\rangle, \mO_C\langle\frac{1}{n}P\rangle).\]
They fit into the following commutative diagram of saturated log adic spaces with Cartesian squares
\[ 
\begin{tikzcd} 
\mathbb{E}_n \arrow[d] \arrow[r] & \mathbb{E}_{n0} \arrow[d] & \\ 
\mathbb{E}_1 \arrow[d] \arrow[r] & \mathbb{E}_{10}^{(n)} \arrow[d] \arrow[r] &\mathbb{E}_{10} \arrow[d] \\ 
\spa(C, \mO_C)_{N_{\infty}}\arrow[r] & \spa(C, \mO_C)_{\frac{1}{n}N} \arrow[r] & \spa(C, \mO_C)_N
%& \mathbb{E}_{n0}   \arrow[d]  & \mathbb{E}_n \arrow[l] \arrow[d] \\
%\mathbb{E}_{10} \arrow[d] 
%& \mathbb{E}_{10}^{(n)} \arrow[d]  \arrow[l]
%& \mathbb{E}_1 \arrow[d] \arrow[l] \\
% \spa(C, \mO_C)_N  
%& \spa(C, \mO_C)_{\frac{1}{n}N} \arrow[l] 
%& \spa(C, \mO_C)_{N_{\infty}} \arrow[l]
\end{tikzcd}
\]
where 
\[\mathbb{E}^{(n)}_{10}:=\spa(C,\mO_C)_{\frac{1}{n}N}\times_{\spa(C\langle N\rangle, \mO_C\langle N\rangle)}\spa(C\langle P\rangle, \mO_C\langle P\rangle)\]
is the (saturated) base change of $\mathbb E_{10}$ along $\spa(C, \mO_C)_{\frac{1}{n}N} \ra  \spa(C, \mO_C)_N$. 

Notice that $\mathbb{E}_n$ and $\mathbb{E}_{n0}$ are precisely ``$\mathbb{E}_1^{\frac{1}{n}}$'' and ``$\mathbb{E}_{10}^{(n)'}$'' in the notation of Construction \ref{construction: X^{1/n}}.
%%%% the following remark is unnecessary now.
\iffalse
\br \label{remark:def_E_10n}
Note that there is a natural map $\mathbb E_{n0} \ra (\mathbb E_{10})^{\frac{1}{n}}$ (notation from Proposition \ref{lemma: Lemma 4.2.5 DLLZ}), but this is not an isomorphism on the underlying schemes. The issue is that the diagram 
\[ 
\begin{tikzcd} 
\spa (C \gr{\frac{1}{n} N}, \mO_C \gr{\frac{1}{n} N}) \arrow[d] & \spa (C, \mO_C)_{\frac{1}{n} N} \arrow[l] \arrow[d] \\
\spa (C \gr{ N}, \mO_C \gr{N}) & \spa (C, \mO_C)_{ N} \arrow[l]
\end{tikzcd}
\]
is not Cartesian. On the other hand, write 
\[
P_n := P \sqcup^{\textup{sat}}_{N} (\frac{1}{n} N)
\]
for the saturated pushout of monoids, then from construction we have the following commutative diagram with Cartesian squares
\[
\begin{tikzcd}
& \spa (C \gr{\frac{1}{n} P}, \mO_C \gr{\frac{1}{n} P}) \arrow[ld] \arrow[d] &  \mathbb E_{n0} \arrow[d] \arrow[l] \\
\spa (C \gr{P}, \mO_C \gr{P}) \arrow[d] & \spa (C \gr{P_n}, \mO_C \gr{P_n}) \arrow[d] \arrow[l]
& \mathbb E_{10}^{(n)} \arrow[d] \arrow[l] \\
\spa (C \gr{N}, \mO_C \gr{N})   & \spa (C \gr{\frac{1}{n} N}, \mO_C \gr{\frac{1}{n} N})  \arrow[l] & \spa (C, \mO_C)_{\frac{1}{n} N } \arrow[l]
\end{tikzcd}
\]
\er 
\fi
%%%%
In particular, we have
\begin{enumerate}
\item $\mathbb{E}_{n0}\rightarrow \mathbb{E}_{10}^{(n)}$ is a finite model for $\mathbb{E}_n\rightarrow \mathbb{E}_1$. The two morphisms coincide on the underlying adic spaces.
\item $\mathbb{E}_{n0}\rightarrow \mathbb{E}_{10}^{(n)}$ is a standard Kummer \'etale morphism of locally noetherian fs adic spaces, modeled on the chart 
\[ P_n := P\sqcup_N \frac{1}{n}N=P\sqcup^{\textup{sat}}_N \frac{1}{n}N\hookrightarrow \frac{1}{n}P\] (cf. Remark \ref{remark:induced_Galois_cover_dominant}). Hence, $\mathbb{E}_n\rightarrow \mathbb{E}_1$ is a standard Kummer \'etale morphism in the sense of Definition \ref{definition: standard Kummer etale}. 
\end{enumerate}
By \cite[Proposition 4.1.6]{DLLZ} and Remark \ref{remark: n-fold fiber product of finite model}, we know that $\mathbb{E}_n\rightarrow \mathbb{E}_1$ is a finite Kummer \'etale Galois cover with Galois group 
\begin{align*}
&\Gamma_{/n}:=\Hom\Big((\frac{1}{n}P)^{\mathrm{gp}}/(P\sqcup_N^{\textup{sat}}\frac{1}{n}N)^{\mathrm{gp}}, \mu_{\infty}\Big)\\
= & \Hom\Big(\frac{1}{n}P^{\mathrm{gp}}/(P^{\mathrm{gp}}\oplus_{N^{\mathrm{gp}}}\frac{1}{n}N^{\mathrm{gp}}), \mu_{\infty}\Big)\\
\cong & \Hom\Big(\frac{1}{n}(P^{\mathrm{gp}}/N^{\mathrm{gp}})/(P^{\mathrm{gp}}/N^{\mathrm{gp}}), \mu_{\infty}\Big)\\
\cong & \Hom\Big(\frac{1}{n}(P^{\mathrm{gp}}/N^{\mathrm{gp}})/(P^{\mathrm{gp}}/N^{\mathrm{gp}}), \mu_n\Big)\\
\cong & \Hom\Big(P^{\mathrm{gp}}/N^{\mathrm{gp}}, \mu_n\Big)
\end{align*}

\begin{construction} \label{construction:E_infty_cover}
For the underlying adic spaces, we write \[\mathbb{E}_{10}=\mathbb{E}_1 =\spa(R_1, R^+_1)\] to ease notations (resp.  $\mathbb{E}_{n0} = \mathbb{E}_n=\spa(R_n, R^+_n)$). Now consider the inverse system \[\widetilde{\mathbb{E}}:=\varprojlim_n \mathbb{E}_n\in (\mathbb{E}_1)_{\proket}.
\]
Then $\widetilde{\mathbb{E}}$ is a log affinoid perfectoid perfectoid object with associated perfectoid space \[\widehat{\widetilde{\mathbb{E}}}=\spa(R, R^+)=\spa(C,\mO_C)_{N_{\infty}}\times_{\spa(C\langle N_{\mathbb{Q}_{\geq 0}}\rangle, \mO_C\langle N_{\mathbb{Q}_{\geq 0}}\rangle)}\spa(C\langle P_{\mathbb{Q}_{\geq 0}}\rangle, \mO_C\langle P_{\mathbb{Q}_{\geq 0}}\rangle)\]
(cf. the proof of Proposition \ref{prop: log affinoid perfectoid objects form a basis}). The morphism $\widetilde{\mathbb{E}}\rightarrow \mathbb{E}_1$ is a pro-fiinte Kummer \'etale Galois cover (in the sense of \cite[Definition 6.1.2]{DLLZ}) with Galois group
\begin{equation} \label{eq:def_Gamma}
\Gamma:=\varprojlim_n \Gamma_{/n}\cong \Hom(P^{\mathrm{gp}}/N^{\mathrm{gp}}, \varprojlim_n \mu_n)\cong \Hom(P^{\mathrm{gp}}/N^{\mathrm{gp}}, \widehat{\mathbb{Z}}(1)).
\end{equation}
On the other hand, we also have
\begin{align*}
\Gamma=\varprojlim_n \Gamma_{/n} & \cong \varprojlim_n \Hom\Big(\frac{1}{n}(P^{\mathrm{gp}}/N^{\mathrm{gp}})/(P^{\mathrm{gp}}/N^{\mathrm{gp}}), \mu_{\infty}\Big) \\ & \cong \Hom\Big((P_{\mathbb{Q}}^{\mathrm{gp}}/P^{\mathrm{gp}})/(N_{\mathbb{Q}}^{\mathrm{gp}}/N^{\mathrm{gp}}), \mu_{\infty}\Big)
\end{align*}
where $\mu_\infty = \varprojlim_n \mu_n$ denotes the roots of unity in $\mO_C$. 
In particular, a finite order character of $\Gamma$ can be identified with an element in \[\big(P_{\mathbb{Q}}^{\mathrm{gp}}/P^{\mathrm{gp}}\big)/\big(N_{\mathbb{Q}}^{\mathrm{gp}}/N^{\mathrm{gp}}\big).\] This fact will be used later in the section.
Furthermore, observe that $\Gamma$ fits into an exact sequence 
\[0\rightarrow \Gamma\rightarrow \Gamma_P\rightarrow \Gamma_N\rightarrow 0\] of Galois groups where \begin{align} \label{eq:def_Gamma_P}
\Gamma_P & :=\Hom(P^{\mathrm{gp}}, \widehat{\mathbb{Z}}(1))\cong \Hom(P^{\mathrm{gp}}_{\mathbb{Q}}, \mu_{\infty}) \\
\Gamma_N & :=\Hom(N^{\mathrm{gp}}, \widehat{\mathbb{Z}}(1))\cong \Hom(N^{\mathrm{gp}}_{\mathbb{Q}}, \mu_{\infty}). \label{eq:def_Gamma_N}
\end{align}
\end{construction}

We would like to understand the $\Gamma$-action on $(R, R^+)$. Firstly, we give explicit descriptions of $(R, R^+)$ and $(R_1, R^+_1)$.

\begin{proposition}\label{prop: R^+_1}
\begin{enumerate}
\item We have
\[(R_1, R^+_1)=\Big(C\widehat{\otimes}_{C\langle N\rangle}C\langle P\rangle,  \mO_C\widehat{\otimes}_{\mO_C\langle N\rangle} \mO_C\langle P\rangle\Big).\]
In particular, $\mO_C\widehat{\otimes}_{\mO_C\langle N\rangle}\mO_C\langle P\rangle$ is an integrally closed subring of $C\widehat{\otimes}_{C\langle N\rangle}C\langle P\rangle$.
\item We have 
\[(R, R^+)=\Big(C\widehat{\otimes}_{C\langle N_{\mathbb{Q}_{\geq 0}}\rangle}C\langle P_{\mathbb{Q}_{\geq 0}}\rangle, \mO_C\widehat{\otimes}_{\mO_C\langle N_{\mathbb{Q}_{\geq 0}}\rangle}\mO_C\langle P_{\mathbb{Q}_{\geq 0}}\rangle\Big).\]
In particular, \[\mO_C\widehat{\otimes}_{\mO_C\langle N_{\mathbb{Q}_{\geq 0}}\rangle}\mO_C\langle P_{\mathbb{Q}_{\geq 0}}\rangle\] is an integrally closed subring of $C\widehat{\otimes}_{C\langle N_{\mathbb{Q}_{\geq 0}}\rangle}C\langle P_{\mathbb{Q}_{\geq 0}}\rangle$.
\end{enumerate}
\end{proposition}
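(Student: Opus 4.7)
The plan is to compute $R_1$ directly, identify $R_1^+$ by establishing integral-closedness of the candidate ring, and finally deduce part (2) by a limit argument.

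For part (1), the identity $R_1 = C \widehat{\otimes}_{C\langle N\rangle} C\langle P\rangle$ is immediate from the construction of the fiber product of complete Tate Huber pairs (cf.\ \cite[Proposition 5.1.5 (2)]{SW}), and the same reference identifies $R_1^+$ with the $p$-adic completion of the integral closure in $R_1$ of the image of $\mO_C \otimes_{\mO_C\langle N\rangle} \mO_C\langle P\rangle$. Since $u: N \to P$ is saturated, it is in particular integral, so by Proposition \ref{prop:integral_Kato} the ring map $\mO_C\langle N\rangle \to \mO_C\langle P\rangle$ is flat; combined with the $p$-torsion-freeness of $\mO_C$, this yields a canonical injection
\[\mO_C \widehat{\otimes}_{\mO_C\langle N\rangle} \mO_C\langle P\rangle \hookrightarrow R_1\]
whose image is an open, power-bounded subring of $R_1$. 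Granted that this image is already integrally closed in $R_1$, the definition of $R_1^+$ collapses to give the asserted formula, and the ``in particular'' clause follows at once.

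The strategy for the integral-closedness assertion is toric-geometric. The formal scheme $\spf\bigl(\mO_C \widehat{\otimes}_{\mO_C\langle N\rangle} \mO_C\langle P\rangle\bigr)$ is the scheme-theoretic fiber of the toric formal morphism $\spf \mO_C\langle P\rangle \to \spf \mO_C\langle N\rangle$ over the distinguished torus-fixed formal closed subscheme $\spf \mO_C$. Saturatedness of $u$ guarantees, via the classical theory of toric degenerations (cf.\ \cite{KKMSD}), that this fiber is reduced and decomposes as the union of those closed torus-invariant formal subschemes associated to faces $F \subset P$ with $F \cap u(N) = \{0\}$; each such piece is of the form $\spf \mO_C\langle P_F\rangle$ for an appropriate toric quotient $P_F$ of $P$, hence is normal. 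The candidate ring is therefore realized as a finite fiber product of normal Tate algebras $\mO_C\langle P_F\rangle$ along the rings attached to iterated face intersections, with each factor integrally closed in its own rationalization. An element of $R_1$ integral over the candidate ring has integral image in each factor, landing in $\mO_C\langle P_F\rangle$ by normality; the compatibility of these images on intersections (inherited from the global element) then lets one reassemble it inside the fiber product, which is the candidate ring.

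For part (2), applying part (1) to the scaled inclusion $\tfrac{1}{n}N \hookrightarrow \tfrac{1}{n}P$ (still saturated and integral, with torsion-free cokernel on group envelopes) yields $R_n^+ = \mO_C \widehat{\otimes}_{\mO_C\langle \frac{1}{n}N\rangle} \mO_C\langle \frac{1}{n}P\rangle$. Since $(R, R^+)$ is the perfectoid completion of $\varinjlim_n (R_n, R_n^+)$ (cf.\ the proof of Proposition \ref{prop: log affinoid perfectoid objects form a basis}), the $p$-adic completion of the filtered colimit of these finite-level expressions gives the claimed formulas for $R$ and $R^+$; the integral-closedness in $R$ is inherited from the finite levels, either by taking colimits in the perfectoid setting or by replaying the toric argument of part (1) verbatim with the colimit monoids in place of $N$ and $P$. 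The principal technical obstacle throughout is the integral-closedness in part (1): the candidate ring is generically a non-normal ``face ring'' glued from normal toric pieces along lower-dimensional strata, so one cannot invoke normality directly, and one must track the combinatorics of the face lattice of $P$ modulo $u(N)$, using saturatedness of $u$ essentially to ensure reducedness at every iterated intersection.
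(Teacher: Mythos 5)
Your proposal has a genuine gap in the argument for part (1), and the gap is serious enough that the whole approach does not work as stated.

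The toric-geometric decomposition you invoke requires that the structural map $\mO_C\langle N\rangle \to \mO_C$ be the ``zero section'': $e^n \mapsto 0$ for all nonzero $n \in N$. Only in that case does $\mO_C \widehat{\otimes}_{\mO_C\langle N\rangle} \mO_C\langle P\rangle$ become the special fiber over the torus-fixed point, a reduced union of normal toric strata indexed by faces of $P$ disjoint from $u(N)$. But the map in question is $e^n \mapsto \alpha(n)$, where $\alpha: N_\infty \to \mO_C$ is the ambient pre-log structure, and one of the two primary examples the paper considers is $\alpha(a) = p^a$. In that case the fiber is nothing like a union of normal pieces: already for $N = \N \xrightarrow{1 \mapsto (1,1)} P = \N^2$ one gets $\mO_C[x,y]/(xy - p)$, which is irreducible — there is no face decomposition, no Stanley--Reisner structure, and your reassembly argument has nothing to reassemble. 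So your argument, as written, proves the result only for the trivial pre-log structure and silently excludes the main case of interest (semistable reduction).

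The paper handles exactly this generality by a different method. It first proves part (2) directly, writing the candidate ring as $\mO_C[Q]$ (where $Q$ is the set of $f_N(x)$ for $x \in P_{\Q_{\geq 0}}$) with the twisted multiplication $e^q \cdot e^{q'} = \alpha(g_N(q+q'))\, e^{f_N(q+q')}$, and then runs a valuation-theoretic argument: given an element $T$ with negative-valuation coefficient and an integral equation for $T$, one chooses a coefficient of minimal valuation and maximal ``length'' (with respect to a fixed basis of $P^{\gp}_{\Q}$), and shows that the coefficient of $e^{dq_0}$ in $T^d$ cannot cancel against anything else — crucially using that $v(\alpha(g_N(x))) > 0$ whenever $g_N(x) \neq 0$, which is precisely where the nontrivial $\alpha$ is controlled. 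Part (1) is then deduced from part (2) by showing $\mO_C[P \sqcup_N N_{\Q_{\geq 0}}]$ is a direct summand of $\mO_C[P_{\Q_{\geq 0}}]$ as $\mO_C[N_{\Q_{\geq 0}}]$-modules, which boils down to a monoid-theoretic direct-summand statement using Remark \ref{remark:induced_Galois_cover_dominant} and saturatedness. Your order of deduction (first (1), then colimit to (2)) is also problematic: the colimit monoids $N_{\Q_{\geq 0}}$ and $P_{\Q_{\geq 0}}$ are not finitely generated, so ``replaying the toric argument verbatim'' at the infinite level is not available, and the assertion that integral-closedness is inherited through the filtered colimit and $p$-adic completion is exactly what requires justification in the perfectoid setting.
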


To prove Proposition \ref{prop: R^+_1}, we need some preparation. We start with an explicit description of the $\mO_C$-algebra \[\mO_C\otimes_{\mO_C[N_{\mathbb{Q}_{\geq 0}}]}\mO_C[P_{\mathbb{Q}_{\geq 0}}].\] According to Theorem \ref{thm: classification of pseudo-saturated}, every element $x\in P$ admits a unique  decomposition \[x=f_N(x)+g_N(x)\] with $f_N(x)\in P_{\mathbb{Q}_{\geq 0}}$ and $g_N(x)\in N_{\mathbb{Q}_{\geq 0}}$ such that $f_N(x)$ is ``minimal'' in the sense that for any $q\in N_{\mathbb{Q}_{\geq 0}}-\{0\}$, we have $f_N(x)-q\not\in P_{\mathbb{Q}_{\geq 0}}$. Consider the subset 
\[Q=Q(N, P, u):=\{f_N(x)\,|\, x\in P_{\mathbb{Q}_{\geq 0}}\}\]
of the rational polyhedral cone $P_{\mathbb{Q}_{\geq 0}}$. Notice that $f_N(\beta x)=\beta f_N(x)$ for all $x\in P$ and $\beta\in \mathbb{Q}_{\geq 0}$. Hence, if $q\in Q$, so is $\beta q$ for all $\beta\in \mathbb{Q}_{\geq 0}$.

\begin{lemma}\label{lemma: Q}
Let $u:N\rightarrow P$ be an injection such that $N$ is toric, $P$ is fs and torsion-free, $P\cap (-N)=\{0\}$, $u$ is pseudo-saturated, and the cokernel of $u^{\mathrm{gp}}$ is torsion-free. (Here we do not assume that $u$ is integral or quasi-saturated.)
\begin{enumerate}
\item Let $\partial P_{\mathbb{Q}_{\geq 0}}$ be the boundary of the rational polyhedral cone $P_{\mathbb{Q}_{\geq 0}}$. Then $Q\subset \partial P_{\mathbb{Q}_{\geq 0}}$.
\item $Q$ is the union of finitely many faces of $P_{\mathbb{Q}_{\geq 0}}$.
\end{enumerate}
\end{lemma}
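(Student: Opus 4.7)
The plan is to prove both parts via an analysis of $Q$ in terms of the face structure of the rational polyhedral cone $P_{\mathbb{Q}_{\geq 0}}$ inside the ambient $\mathbb{Q}$-vector space $V := P^{\mathrm{gp}}_{\mathbb{Q}}$. The key observation is the intrinsic description \[ Q = \{z \in P_{\mathbb{Q}_{\geq 0}} : z - q \notin P_{\mathbb{Q}_{\geq 0}} \text{ for all } q \in N_{\mathbb{Q}_{\geq 0}} \setminus \{0\}\}, \] which makes no reference to uniqueness of minimal decompositions. Pseudo-saturation of $u$ (via Theorem \ref{thm: classification of pseudo-saturated}) only guarantees that $x = f_N(x) + g_N(x)$ is single-valued, which we use to identify $Q$ with the image of $f_N$.

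For each face $F$ of $P_{\mathbb{Q}_{\geq 0}}$ I will introduce the tangent cone \[ C_F := \{v \in V : z + tv \in P_{\mathbb{Q}_{\geq 0}} \text{ for some } t > 0\},\] defined using any $z \in \operatorname{relint}(F)$. Writing $P_{\mathbb{Q}_{\geq 0}} = \{x \in V : \lambda_i(x) \geq 0,\, i = 1, \ldots, k\}$ for rational linear forms $\lambda_i$, one obtains the explicit description $C_F = \{v \in V : \lambda_i(v) \geq 0 \text{ for every } i \in I_F\}$, where $I_F$ is the set of indices with $\lambda_i|_F \equiv 0$. In particular $C_F$ is a rational polyhedral cone depending only on the face $F$. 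The main claim I would then establish is that for any $z \in \operatorname{relint}(F)$, membership $z \in Q$ is equivalent to $C_F \cap (-N_{\mathbb{Q}_{\geq 0}}) = \{0\}$. The reverse direction is immediate from the intrinsic description of $Q$. For the forward direction, if $-q \in C_F$ for some nonzero $q \in N_{\mathbb{Q}_{\geq 0}}$, then by definition there exists $t > 0$ with $z - tq \in P_{\mathbb{Q}_{\geq 0}}$, and $tq \in N_{\mathbb{Q}_{\geq 0}} \setminus \{0\}$ witnesses $z \notin Q$.

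Granting this equivalence, both statements follow cleanly. For (1), assuming $N \neq \{0\}$ (the case $N = \{0\}$ is degenerate), for the top face $F_{\mathrm{top}}$ all defining inequalities are inactive so $C_{F_{\mathrm{top}}} = V$, hence $C_{F_{\mathrm{top}}} \cap (-N_{\mathbb{Q}_{\geq 0}}) = -N_{\mathbb{Q}_{\geq 0}} \neq \{0\}$; thus the relative interior of $P_{\mathbb{Q}_{\geq 0}}$ is disjoint from $Q$, giving $Q \subset \partial P_{\mathbb{Q}_{\geq 0}}$. For (2), note that if $F' \subseteq \overline{F}$ then $I_{F'} \supseteq I_F$, so $C_{F'} \subseteq C_F$; consequently $C_F \cap (-N_{\mathbb{Q}_{\geq 0}}) = \{0\}$ implies $C_{F'} \cap (-N_{\mathbb{Q}_{\geq 0}}) = \{0\}$. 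Therefore $Q$ is precisely the union of those closed faces $\overline{F}$ for which $C_F \cap (-N_{\mathbb{Q}_{\geq 0}}) = \{0\}$, which is a finite union since $P_{\mathbb{Q}_{\geq 0}}$ has only finitely many faces. The main obstacle I anticipate is largely bookkeeping: carefully establishing the tangent cone formula $C_F = \{v : \lambda_i(v) \geq 0, \, i \in I_F\}$ and the monotonicity $C_{F'} \subseteq C_F$ for $F' \subseteq \overline{F}$. Both are standard facts about rational polyhedral cones (cf.\ \cite[Section I.1]{BG}), and everything else in the proof follows formally from the tangent cone characterization.
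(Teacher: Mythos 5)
Your proof is correct and takes a genuinely different route from the paper. The paper proves Part (1) with a direct linear-form argument (same spirit as your observation that $C_{F_{\mathrm{top}}} = V$), but proves Part (2) by induction on $\mathrm{rk}_\mathbb{Z}(P^{\mathrm{gp}})$: it decomposes $\partial P_{\mathbb{Q}_{\geq 0}}$ into facets $P_i$, restricts the monoid data to each facet via $P'_i := P_i \cap P$, $N'_i := P_i \cap N$, verifies that the restricted map $u_i: N'_i \hookrightarrow P'_i$ again satisfies all the hypotheses of the lemma (pseudo-saturatedness, torsion-free cokernel, etc.), and that $Q \cap P_i = Q(N'_i, P'_i, u_i)$, then applies the inductive hypothesis. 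Your approach replaces this induction with a single structural observation: after rewriting $Q$ intrinsically as $\{z : z - q \notin P_{\mathbb{Q}_{\geq 0}}, \forall q \in N_{\mathbb{Q}_{\geq 0}} \setminus \{0\}\}$ (which does use uniqueness of minimal decompositions, hence pseudo-saturatedness, to show $Q' \subseteq Q$ via $z = z + 0$ being the unique minimal decomposition), membership of $\operatorname{relint}(F)$ in $Q$ is controlled by $C_F \cap (-N_{\mathbb{Q}_{\geq 0}})$, and the downward monotonicity $C_{F'} \subseteq C_F$ for $F' \subseteq F$ immediately promotes "union of relative interiors" to "union of closed faces." This is cleaner: it avoids re-verifying the lemma's hypotheses on each facet (items (a)--(e) in the paper's proof, which occupy most of the argument), gives Part (1) as the $F = P_{\mathbb{Q}_{\geq 0}}$ special case rather than a separate argument, and incidentally never uses the torsion-free-cokernel hypothesis, which the paper's proof carries through its induction only in order to keep applying the induction hypothesis. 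Your remark about the degenerate case $N = \{0\}$ (where Part (1) genuinely fails) is also correct and is an implicit assumption in the paper's proof of Part (1) as well, since it chooses $q \in N_{\mathbb{Q}_{\geq 0}} - \{0\}$.
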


\begin{proof}[Proof of Part (1)]
Choose an expression
\[P_{\mathbb{Q}_{\geq 0}}=H_{\lambda_1}\cap\cdots\cap H_{\lambda_n}\]
where each $\lambda_j: P_{\mathbb{Q}}^{\mathrm{gp}}\rightarrow \mathbb{Q}$ is a linear form and \[H_{\lambda_j}=\{x\in P_{\mathbb{Q}}^{\mathrm{gp}}\,|\, \lambda_j(x)\geq 0\}.\] 
If $x\in P_{\mathbb{Q}_{\geq 0}}-\partial P_{\mathbb{Q}_{\geq 0}}$, then $\lambda_j(x)>0$ for all $j=1,\ldots, n$. Choose any $q\in N_{\mathbb{Q}_{\geq 0}}-\{0\}$ and choose $\beta\in \mathbb{Q}_{>0}$ so that $\beta\lambda_j(q)\leq \lambda_j(x)$ for all $j$. Then $x-\beta q\in P_{\mathbb{Q}_{\geq 0}}$. This means $x\not\in Q$, as desired.
\end{proof}

\begin{proof}[Proof of Part (2)]
For Part (2), we use induction on $\mathrm{rk}_{\mathbb{Z}}(P^{\mathrm{gp}})$. The case for $\mathrm{rk}_{\mathbb{Z}}(P^{\mathrm{gp}})=1$ is clear. For general $P$, express $\partial P_{\mathbb{Q}_{\geq 0}}$ as a union of (finitely many) facets of $P_{\mathbb{Q}_{\geq 0}}$:
\[\partial P_{\mathbb{Q}_{\geq 0}}=\cup_{i\in I} P_i\]
where each $P_i$ is itself a rational polyhedral cone of dimension equal to $\mathrm{rk}_{\mathbb{Z}} (P^{\mathrm{gp}})-1$. For each $i\in I$, let $Q_i:= Q\cap P_i$. Then 
\[Q=\cup_{i\in I}Q_i\] by part (1). It suffices to show that each $Q_i$ is a union of faces of $P_i$.

To this end, let 
\[P'_i:=P_i\cap P, N'_i:=P_i\cap N=P'_i\cap N,\] and let $u_i:N'_i\hookrightarrow P'_i$ be the injection. To apply the induction hypothesis to $u_i$, we need to check that:
\begin{enumerate}
\item[(a)] $P'_i$ is a torsion-free fs  monoid and $(P'_i)_{\mathbb{Q}_{\geq 0}}=P_i$;
\item[(b)] $N'_i$ is a toric monoid and $(N'_i)_{\mathbb{Q}_{\geq 0}}=P_i\cap N_{\mathbb{Q}_{\geq 0}}$;
\item[(c)] The cokernel of $u_i^{\mathrm{gp}}$ is torsion-free;
\item[(d)] $u_i$ is pseudo-saturated;
\item[(e)] $Q_i=Q(N'_i, P'_i, u_i)$.
\end{enumerate}

For (a), notice that $P_i=P_{\mathbb{Q}_{\geq 0}}\cap H$ for some hyperplane $H$ in the $\mathbb{Q}$-vector space $P^{\mathrm{gp}}_{\mathbb{Q}}$. Since $P'_i=P\cap H$, we know that $P'_i$ is an exact submonoid of the torsion-free fs  monoid $P$ by \cite[Proposition I.2.1.16 (5)]{Ogus}, hence also fs and torsion-free by \cite[Theorem I.2.1.17 (2)]{Ogus}. The identity $(P'_i)_{\mathbb{Q}_{\geq 0}}=P_i$ is clear.

The same argument applies to (b). We have $N'_i=N\cap H$ and hence $N'_i$ is an exact submonoid of a toric monoid. Therefore, $N'_i$ is toric as well. The identity $(N'_i)_{\mathbb{Q}_{\geq 0}}=P_i\cap N_{\mathbb{Q}_{\geq 0}}$ is clear.

Part (c) is also clear as the cokernel of $u^{\mathrm{gp}}$ is torsion-free, and $(N'_i)^{\mathrm{gp}}=N^{\mathrm{gp}}\cap H$, $(P'_i)^{\mathrm{gp}}=P^{\mathrm{gp}}\cap H$ for some hyperplane $H$ in $P^{\mathrm{gp}}_{\mathbb{Q}}$. 

For (d), notice that $P'_i\cap (-N'_i)\subset P_{\mathbb{Q}_{\geq 0}}\cap (-N_{\mathbb{Q}_{\geq 0}})=\{0\}$. By Theorem \ref{thm: classification of pseudo-saturated}, we only need to check the following: every $p\in (P'_i)_{\mathbb{Q}_{\geq 0}}=P_i\cap P_{\mathbb{Q}_{\geq 0}}$ admits a unique minimal decomposition relative to the pair $(P'_i, N'_i)$ (cf. Definition \ref{defn: minimal decomposition}). For this, it suffices to show that $p=p'+q'$ with $p'\in P_i\cap P_{\mathbb{Q}_{\geq 0}}$, $q'\in P_i\cap N_{\mathbb{Q}_{\geq 0}}$ is minimal relative to $(P'_i, N'_i)$ if and only if it is minimal relative to $(N, P)$. Namely, we need to show that
\begin{multline*} 
\qquad  p'-q\not\in P_i\cap P_{\mathbb{Q}_{\geq 0}},\,\,\forall q\in P_i\cap N_{\mathbb{Q}_{\geq 0}}-\{0\}\,\, \\ \Longleftrightarrow\,\, p'-q\not\in P_{\mathbb{Q}_{\geq 0}},\,\,\forall q\in N_{\mathbb{Q}_{\geq 0}}-\{0\}. \quad 
\end{multline*}
The direction $\Leftarrow$ is clear. For the direction $\Rightarrow$, suppose $p'':=p'-q\in P_{\mathbb{Q}_{\geq 0}}$ for some $q\in N_{\mathbb{Q}_{\geq 0}}-\{0\}$. Since $q+p''\in P_i$, we must have both $q, p''\in P_i$ using the basic property of facet. This means $q\in P_i\cap N_{\mathbb{Q}_{\geq 0}}-\{0\}$ and $p'-q\in P_i\cap P_{\mathbb{Q}_{\geq 0}}$, a contradiction. This argument also proves (e).

Consequently, we can apply induction hypothesis to conclude that $Q_i$ is a finite union of faces of $P_i$, which are faces of $P$ as well. Therefore, $Q=\cup_{i\in I}Q_i$ is also a finite union of faces of $P$.
\end{proof}

Consider an $\mO_C$-algebra $\mO_C[Q]$ whose underlying $\mO_C$-module is the free module with basis $e^q$ with $q\in Q$, and the multiplication is given by 
\begin{equation} \label{eq:group_ring_mult}
e^q\cdot e^{q'}:=\alpha(g_N(q+q'))e^{f_N(q+q')},\end{equation} where $\alpha:N_{\mathbb{Q}_{\geq 0}}\rightarrow \mO_C$ is the homomorphism fixed at the beginning of the section. One checks that \[(e^q\cdot e^{q'})\cdot e^{q''}=e^q\cdot (e^{q'}\cdot e^{q''})=\alpha(g_N(q+q'+q''))e^{f_N(q+q'+q'')}\] using Remark \ref{remark: f_N}. By checking the universal property of tensor product, we obtain an identification
\[\mO_C\otimes_{\mO_C[N_{\mathbb{Q}_{\geq 0}}]}\mO_C[P_{\mathbb{Q}_{\geq 0}}] =\mO_C[Q].\]
(There is a similar identification for $C$ in place of $\mO_C$.) As a byproduct, we know that  $\mO_C\otimes_{\mO_C[N_{\mathbb{Q}_{\geq 0}}]}\mO_C[P_{\mathbb{Q}_{\geq 0}}]$ is $p$-torsion free and hence the map
\[\mO_C\otimes_{\mO_C[N_{\mathbb{Q}_{\geq 0}}]}\mO_C[ P_{\mathbb{Q}_{\geq 0}}]\rightarrow C\otimes_{C[ N_{\mathbb{Q}_{\geq 0}}]}C[ P_{\mathbb{Q}_{\geq 0}}]\]
is injective.

There is a $p$-adically completed analogue. Consider an $\mO_C$-algebra $\mO_C\langle Q\rangle$ whose underlying $\mO_C$-module is the $p$-adic completion of $\mO_C[Q]$. It consists of $p$-adically convergent (possibly infinite) formal sums $\sum_{q\in Q} a_q e^q$. The multiplication on $\mO_C\langle Q\rangle$ is still given by the formula in (\ref{eq:group_ring_mult}). % $e^q\cdot e^{q'}:=\alpha(g_N(q+q'))e^{f_N(q+q')}$. 
One checks that this multiplication is well-defined and we obtain an identification
\[\mO_C\widehat{\otimes}_{\mO_C\langle N_{\mathbb{Q}_{\geq 0}}\rangle}\mO_C\langle P_{\mathbb{Q}_{\geq 0}}\rangle =\mO_C\langle Q\rangle.\]

Now we are ready to prove Proposition \ref{prop: R^+_1}. We start with Part (2). 
\begin{proof}[Proof of part (2) of Proposition \ref{prop: R^+_1}]
It follows from the definition that \[R=C\widehat{\otimes}_{C\langle N_{\mathbb{Q}_{\geq 0}}\rangle}C\langle P_{\mathbb{Q}_{\geq 0}}\rangle.\] From the discussion above, we only need to prove that $\mO_C\otimes_{\mO_C[ N_{\mathbb{Q}_{\geq 0}}]}\mO_C[ P_{\mathbb{Q}_{\geq 0}}]$ is integrally closed inside $C\otimes_{C[ N_{\mathbb{Q}_{\geq 0}}]}C[ P_{\mathbb{Q}_{\geq 0}}]$. For this, we have to show that: for any $T\in C[Q]$, integer $d\geq 1$, and any $b_0, \ldots, b_{d-1}\in \mO_C[Q]$ such that 
\[T^d+b_{d-1}T^{d-1}+\ldots+ b_1T+b_0=0\]
in $C[Q]$, we must have $T\in \mO_C[Q]$.

Write $T=\sum_{q\in Q} a_q e^q$ with $a_q\in C$. Let $v$ denote the valuation on the non-archimedean field $C$. Suppose $T\not\in \mO_C[Q]$. Then there exists $q\in Q$ such that $v(a_q)<0$. Let 
\[c:=\min_{q\in Q}v(a_q).
\] 

Choose a basis $e_1, \ldots$, $e_n$ of the $\mathbb{Q}$-vector space $P^{\mathrm{gp}}_{\mathbb{Q}}$. We define the length of an element $x\in P_{\mathbb{Q}_{\geq 0}}$ (using this fixed basis) to be 
\[||x||:=\sqrt{(c_1^2+\ldots+c_n^2)}\] where $x=c_1e_1+\ldots+c_ne_n$. Among all $q\in Q$ such that $v(a_q)=c$, choose one that has the maximal length, say $q_0$. Expanding the term $T^h$, we obtain
\[T^h=\sum_{\substack{s\geq 1, h_1, \ldots, h_s \geq 1\\ h_1+\cdots+h_s=h\\ q_1, \ldots, q_s\in Q}}\binom{h}{h_1, \ldots, h_s}\,\,a^{h_1}_{q_1}\cdots a_{q_s}^{h_s}\,\alpha\big(g_N(h_1q_1+\cdots h_sq_s)\big)e^{f_N(h_1q_1+\cdots h_sq_s)}.\]
Observe that the term with $h=d$, $s=1$, $h_1=d$, and $q_1=q_0$ is $a_{q_0}^d\,e^{dq_0}$ with $v(a_{q_0}^d)=dc$, and every other coefficient in the expansion of 
\[T^d+b_{d-1}T^{d-1}+\ldots+b_1d+b_0\] has valuation greater than or equal to $dc$. Also observe that every coefficient in \[b_{d-1}T^{d-1}+\cdots b_1T+b_0\] has valuation greater than or equal to $(d-1)c$. There must exist another term in the expansion of $T^d$ which also has valuation $dc$. Notice that $v(g_N(x))>0$ for all $x\in N_{\mathbb{Q}_{\geq 0}}-\{0\}$. Therefore, there must exist $s\geq 2$, distinct elements $q_1, \ldots, q_s\in Q$ and $h_1, \ldots, h_s\geq 1$ such that 
\bi 
\item $h_1+\ldots+h_s=d$, \item $h_1v(a_{q_1})+\ldots+h_sv(a_{q_s})=dc$, and 
\item $f_N(h_1q_1+\ldots+h_sq_s)=h_1q_1+\ldots+h_sq_s=dq_0.$
\ei 
The identity \[h_1v(a_{q_1})+\ldots+h_sv(a_{q_s})=dc\] forces $v(a_{q_1})=\cdots=v(q_s)=c$. By assumption, the length of $q_0$ is larger or equal to the length of all of $q_1, \ldots, q_s$. In this case, $h_1q_1+\ldots+h_sq_s=dq_0$ is impossible, a contradiction.
\end{proof}

Now we explain how part (2) of Proposition \ref{prop: R^+_1} implies (1).

\begin{proof}[Proof of part (1) of Proposition \ref{prop: R^+_1}]
It follows from the definition that $R_1=C\widehat{\otimes}_{C\langle N\rangle}C\langle P\rangle$. Since $\mO_C\otimes_{\mO_C\langle N\rangle}\mO_C\langle P\rangle$ (resp. $C\otimes_{C\langle N\rangle}C\langle P\rangle$) is the $p$-adic completion of $\mO_C\otimes_{\mO_C[N]}\mO_C[P]$ (resp. $C\otimes_{C[N]}C[P]$), we reduce to prove that the canonical map \[\mO_C\otimes_{\mO_C[N]}\mO_C[P]\rightarrow C\otimes_{C[N]}C[ P]\]
is injective and that $\mO_C\otimes_{\mO_C[N]}\mO_C[P]$ is integrally closed inside $C\otimes_{C[N]}C[P]$.

Since $N\rightarrow \mO_C$ factors as $N\rightarrow N_{\mathbb{Q}_{\geq 0}}\xrightarrow[]{\alpha} \mO_C$, we have
\[\mO_C\otimes_{\mO_C[N]}\mO_C[P]=\mO_C\otimes_{\mO_C[N_{\mathbb{Q}_{\geq 0}}]} \mO_C[P\sqcup_N N_{\mathbb{Q}_{\geq 0}}]\]
(and similarly for $C$ in place of $\mO_C$). Given (2), it suffices to know that the canonical maps
\[\mO_C\otimes_{\mO_C[N_{\mathbb{Q}_{\geq 0}}]} \mO_C[P\sqcup_N N_{\mathbb{Q}_{\geq 0}}]\rightarrow \mO_C\otimes_{\mO_C[N_{\mathbb{Q}_{\geq 0}}]} \mO_C[P_{\mathbb{Q}_{\geq 0}}]\]
and 
\[C\otimes_{C[N_{\mathbb{Q}_{\geq 0}}]} C[P\sqcup_N N_{\mathbb{Q}_{\geq 0}}]\rightarrow C\otimes_{C[N_{\mathbb{Q}_{\geq 0}}]} C[P_{\mathbb{Q}_{\geq 0}}]\]
are injective, and that
\begin{multline*}
  \quad  \Big(C\otimes_{C[N_{\mathbb{Q}_{\geq 0}}]} C[P\sqcup_N N_{\mathbb{Q}_{\geq 0}}]\Big)\cap \Big(\mO_C\otimes_{\mO_C[N_{\mathbb{Q}_{\geq 0}}]} \mO_C[P_{\mathbb{Q}_{\geq 0}}]\Big) \\    =\mO_C\otimes_{\mO_C[N_{\mathbb{Q}_{\geq 0}}]} \mO_C[P\sqcup_N N_{\mathbb{Q}_{\geq 0}}]. \quad
\end{multline*}

For all of these statements, it is enough to prove that $\mO_C[P\sqcup_N N_{\mathbb{Q}_{\geq 0}}]$ is a direct summand of $\mO_C[P_{\mathbb{Q}_{\geq 0}}]$ as $\mO_C[P\sqcup_N N_{\mathbb{Q}_{\geq 0}}]$-modules, and hence also a direct summand as $\mO_C[N_{\mathbb{Q}_{\geq 0}}]$-modules. To see this, we only need to show that $P\sqcup_N N_{\mathbb{Q}_{\geq 0}}$ is a direct summand of $P_{\mathbb{Q}_{\geq 0}}$ as $P\sqcup_N N_{\mathbb{Q}_{\geq 0}}$-sets in the sense of \cite[Section I.1.2]{Ogus}. By Remark \ref{remark:induced_Galois_cover_dominant}, the homomorphism $P\sqcup_N N_{\mathbb{Q}_{\geq 0}}\rightarrow P_{\mathbb{Q}_{\geq 0}}$ is indeed injective. 

It remains to show: for $x\in P_{\mathbb{Q}_{\geq 0}}$, $y\in P\sqcup_N N_{\mathbb{Q}_{\geq 0}}$, if $x+y\in P\sqcup_N N_{\mathbb{Q}_{\geq 0}}$, then in fact \[x\in P\sqcup_N N_{\mathbb{Q}_{\geq 0}}.\] Note that $P\sqcup_N N_{\mathbb{Q}_{\geq 0}}\rightarrow P_{\mathbb{Q}_{\geq 0}}$ is a direct limit of injective homomorphisms 
\[P\sqcup_N \frac{1}{n}N\rightarrow \frac{1}{n}P\] (cf. Remark \ref{remark:induced_Galois_cover_dominant}). We may assume that $x\in \frac{1}{n}P$ and $y\in P\sqcup_N \frac{1}{n}N$ for some $n$ such that $x+y\in P\sqcup_N \frac{1}{n}N$. In particular, $x\in (P\sqcup_N \frac{1}{n}N)^{\mathrm{gp}}$. Since $nx\in P\subset P\sqcup_N\frac{1}{n}N$, we obtain $x\in P\sqcup_N\frac{1}{n}N$ by the saturated-ness of $P\sqcup_N\frac{1}{n}N$. This finishes the proof.
\end{proof}

Given these preparations, to prove Theorem \ref{thm:primitive_comparison}, the main step is to understand the $\Gamma$-action on \[\mO_C\otimes_{\mO_C[N_{\mathbb{Q}_{\geq 0}}]}\mO_C[P_{\mathbb{Q}_{\geq 0}}].\] 

\bp \label{lemma: Gamma action on R^+}
There is a decomposition of $\mO_C\otimes_{\mO_C[N]}\mO_C[P]$-modules
\[\mO_C\otimes_{\mO_C[N_{\mathbb{Q}_{\geq 0}}]}\mO_C[P_{\mathbb{Q}_{\geq 0}}]=\bigoplus_{\chi}\Big(\mO_C\otimes_{\mO_C[N_{\mathbb{Q}_{\geq 0}}]}\mO_C[P_{\mathbb{Q}_{\geq 0}}]\Big)_{\chi}\]
where the direct sum runs through all finite order characters $\chi$ of $\Gamma$, and the direct summand  $\Big(\mO_C\otimes_{\mO_C[N_{\mathbb{Q}_{\geq 0}}]}\mO_C[P_{\mathbb{Q}_{\geq 0}}]\Big)_{\chi}$ stands for the submodule
\[\Big(\mO_C\otimes_{\mO_C[N_{\mathbb{Q}_{\geq 0}}]}\mO_C[P_{\mathbb{Q}_{\geq 0}}]\Big)_{\chi} :=\Big\{a\in \mO_C\otimes_{\mO_C[N_{\mathbb{Q}_{\geq 0}}]}\mO_C[P_{\mathbb{Q}_{\geq 0}}]\,|\, \gamma\cdot a=\chi(\gamma)a\Big\}.\]
Moreover, we have
\begin{enumerate}
\item $\Big(\mO_C\otimes_{\mO_C[N_{\mathbb{Q}_{\geq 0}}]}\mO_C[P_{\mathbb{Q}_{\geq 0}}]\Big)_{1}=\mO_C\otimes_{\mO_C[N]}\mO_C[P]$.
\item For every $\chi$, $\Big(\mO_C\otimes_{\mO_C[N_{\mathbb{Q}_{\geq 0}}]}\mO_C[P_{\mathbb{Q}_{\geq 0}}]\Big)_{\chi}$ is a finite $\mO_C\otimes_{\mO_C[N]}\mO_C[P]$-module.
\end{enumerate}
\ep

\begin{proof}
Consider the commutative diagram
\[
\begin{tikzcd} 
\bigoplus_{\tilde{\chi}} \mO_C[P_{\mathbb{Q}_{\geq 0}}]_{\tilde{\chi}} \arrow[r, equal] \arrow[d]
& \mO_C[P_{\mathbb{Q}_{\geq 0}}] \arrow[d, two heads] \\
\bigoplus_{\chi} \Big(\mO_C\otimes_{\mO_C[N_{\mathbb{Q}_{\geq 0}}]}\mO_C[P_{\mathbb{Q}_{\geq 0}}]\Big)_{\chi} \arrow[r, hook] & \mO_C\otimes_{\mO_C[N_{\mathbb{Q}_{\geq 0}}]}\mO_C[P_{\mathbb{Q}_{\geq 0}}]
\end{tikzcd}
\]
where the direct sum at the top left corner runs through all finite order character $\tilde{\chi}$ of $\Gamma_P$ (cf. (\ref{eq:def_Gamma_P})). The left vertical arrow is given as follows: notice that the canonical map 
\[ \mO_C[P_{\mathbb{Q}_{\geq 0}}]_{\tilde{\chi}}\rightarrow \mO_C\otimes_{\mO_C[N_{\mathbb{Q}_{\geq 0}}]}\mO_C[P_{\mathbb{Q}_{\geq 0}}]
\] is $\Gamma$-equivariant which factors as
\[\mO_C[P_{\mathbb{Q}_{\geq 0}}]_{\tilde{\chi}}\rightarrow \Big( \mO_C\otimes_{\mO_C[N_{\mathbb{Q}_{\geq 0}}]}\mO_C[P_{\mathbb{Q}_{\geq 0}}]\Big)_{\tilde{\chi}|_{\Gamma}}\hookrightarrow \mO_C\otimes_{\mO_C[N_{\mathbb{Q}_{\geq 0}}]}\mO_C[P_{\mathbb{Q}_{\geq 0}}].\]
Consequently, the injection on the bottom is an isomorphism.

To prove (1), we first claim that 
\[\bigoplus_{\tilde{\chi}|_{\Gamma}=1}\mO_C[P_{\mathbb{Q}_{\geq 0}}]_{\tilde{\chi}}=\mO_C[P\sqcup_N N_{\mathbb{Q}_{\geq 0}}]\]
where the direct sum runs over all finite order characters $\tilde{\chi}$ of $\Gamma_P$ such that $\tilde{\chi}|_{\Gamma}$ is the trivial character. Recall that 
\[\Gamma_P=\Hom (P^{\mathrm{gp}}_{\mathbb{Q}}/P^{\mathrm{gp}}, \mu_{\infty})\]
and hence we can identify a finite order character of $\Gamma_P$ with an element in $P^{\mathrm{gp}}_{\mathbb{Q}}/P^{\mathrm{gp}}$. Consider the natural map $\pi: P_{\mathbb{Q}_{\geq 0}}\rightarrow P^{\mathrm{gp}}_{\mathbb{Q}}/P^{\mathrm{gp}}$. Then we have
\[\mO_C[P_{\mathbb{Q}_{\geq 0}}]_{\tilde{\chi}}=\bigoplus_{a\in P_{\mathbb{Q}_{\geq 0}}, \pi(a)=\tilde{\chi}} \mO_C e^a.\]
Consider the natural map $N^{\mathrm{gp}}_{\mathbb{Q}}\rightarrow P^{\mathrm{gp}}_{\mathbb{Q}}/P^{\mathrm{gp}}$ whose kernel equals to $N^{\mathrm{gp}}_{\mathbb{Q}}\cap P^{\mathrm{gp}}=N^{\mathrm{gp}}$ by our assumption. It induces an injection $N^{\mathrm{gp}}_{\mathbb{Q}}/N^{\mathrm{gp}}\hookrightarrow P^{\mathrm{gp}}_{\mathbb{Q}}/P^{\mathrm{gp}}$.
Notice that $\tilde{\chi}|_{\Gamma}=1$ if and only if $\tilde{\chi}$ corresponds to an element in $P^{\mathrm{gp}}_{\mathbb{Q}}/P^{\mathrm{gp}}$ that lies in 
\[\mathrm{image}(N^{\mathrm{gp}}_{\mathbb{Q}}/N^{\mathrm{gp}}\hookrightarrow P^{\mathrm{gp}}_{\mathbb{Q}}/P^{\mathrm{gp}})=\big(P^{\mathrm{gp}}\oplus_{N^{\mathrm{gp}}}N^{\mathrm{gp}}_{\mathbb{Q}}\big)/P^{\mathrm{gp}}.\]
We arrive at
\[\bigoplus_{\tilde{\chi}|_{\Gamma}=1}\mO_C[P_{\mathbb{Q}_{\geq 0}}]_{\tilde{\chi}}=\bigoplus_{\substack{a\in P_{\mathbb{Q}_{\geq 0}}\\ \pi(a)\in \big(P^{\mathrm{gp}}\oplus_{N^{\mathrm{gp}}}N^{\mathrm{gp}}_{\mathbb{Q}}\big)/P^{\mathrm{gp}}}}\mO_C e^a.\]
Hence, we have to check
\[\big(P^{\mathrm{gp}}\oplus_{N^{\mathrm{gp}}}N^{\mathrm{gp}}_{\mathbb{Q}}\big)\cap P_{\mathbb{Q}_{\geq 0}}=P\sqcup_N N_{\mathbb{Q}_{\geq 0}}\]
where the intersection is taken in $P^{\mathrm{gp}}_{\mathbb{Q}}$. Indeed, we have 
\[P\sqcup_N N_{\mathbb{Q}_{\geq 0}}=P\sqcup^{\mathrm{sat}}_N N_{\mathbb{Q}_{\geq 0}}\subset P^{\mathrm{gp}}\oplus_{N^{\mathrm{gp}}}N^{\mathrm{gp}}_{\mathbb{Q}}\] by definition, and $P\sqcup_N N_{\mathbb{Q}_{\geq 0}}\subset P_{\mathbb{Q}_{\geq 0}}$ by Remark \ref{remark:induced_Galois_cover_dominant}. For the other direction, consider an element $x\in \big(P^{\mathrm{gp}}\oplus_{N^{\mathrm{gp}}}N^{\mathrm{gp}}_{\mathbb{Q}}\big)\cap P_{\mathbb{Q}_{\geq 0}}$. Then 
\[ mx\in P\subset P\sqcup^{\mathrm{sat}}_N N_{\mathbb{Q}_{\geq 0}}\]  for some positive integer $m$. By saturated-ness of $P\sqcup^{\mathrm{sat}}_N N_{\mathbb{Q}_{\geq 0}}$, we conclude that 
\[x\in P\sqcup^{\mathrm{sat}}_N N_{\mathbb{Q}_{\geq 0}}=P\sqcup_N N_{\mathbb{Q}_{\geq 0}}.\] This finishes the proof of the claim.

To finish the proof of (1), it remains to check that the image of $\mO_C[P\sqcup_N N_{\mathbb{Q}_{\geq 0}}]$ under the surjection 
\begin{equation} \label{eq:image_of_surj_lv} \mO_C[P_{\mathbb{Q}_{\geq 0}}]\rightarrow \mO_C\otimes_{\mO_C[N_{\mathbb{Q}_{\geq 0}}]}\mO_C[P_{\mathbb{Q}_{\geq 0}}]
\end{equation} is precisely $\mO_C\otimes_{\mO_C[N]}\mO_C[P]$. Recall the fact that $\mO_C[P\sqcup_N N_{\mathbb{Q}_{\geq 0}}]$ is a direct summand of $\mO_C[P_{\mathbb{Q}_{\geq 0}}]$ as $\mO_C[N_{\mathbb{Q}_{\geq 0}}]$-modules (cf. the second last paragraph of the proof of part (2) of Proposition \ref{prop: R^+_1}). Therefore, the image of $\mO_C[P\sqcup_N N_{\mathbb{Q}_{\geq 0}}]$ under the surjective map  
\[\mO_C[P_{\mathbb{Q}_{\geq 0}}]\rightarrow \mO_C\otimes_{\mO_C[N_{\mathbb{Q}_{\geq 0}}]}\mO_C[P_{\mathbb{Q}_{\geq 0}}]\] in (\ref{eq:image_of_surj_lv}) is 
\[\mO_C\otimes_{\mO_C[N_{\mathbb{Q}_{\geq 0}}]}\mO_C[P\sqcup_N N_{\mathbb{Q}_{\geq 0}}]=\mO_C\otimes_{\mO_C[N]}\mO_C[P],\]
as desired.

Now, we prove (2). Let $\chi$ be a finite order character of $\Gamma$. It suffices to show that 
\[\bigoplus_{\tilde{\chi}|_{\Gamma}=\chi}\mO_C[P_{\mathbb{Q}_{\geq 0}}]_{\tilde{\chi}}=\bigoplus_{\substack{a\in P_{\mathbb{Q}_{\geq 0}}\\ \pi(a)=\tilde{\chi} \textrm{ for some } \tilde{\chi} \textrm{ such that }\tilde{\chi}|_{\Gamma}=\chi}}\mO_C e^a\] is a finite $\mO_C[P\sqcup_N N_{\mathbb{Q}_{\geq 0}}]$-module. Recall that $\chi$ can be identified with an element of the group $\Big(P^{\mathrm{gp}}_{\mathbb{Q}}/P^{\mathrm{gp}}\Big)/\Big(N^{\mathrm{gp}}_{\mathbb{Q}}/N^{\mathrm{gp}}\Big)$. Since
\[\Big(P^{\mathrm{gp}}_{\mathbb{Q}}/P^{\mathrm{gp}}\Big)/\Big(N^{\mathrm{gp}}_{\mathbb{Q}}/N^{\mathrm{gp}}\Big)=\bigcup_{m\geq 1} \Big(\big(\frac{1}{m}P^{\mathrm{gp}}\oplus_{\frac{1}{m}N^{\mathrm{gp}}}N^{\mathrm{gp}}_{\mathbb{Q}}\big)/P^{\mathrm{gp}}\Big)/\Big(N^{\mathrm{gp}}_{\mathbb{Q}}/N^{\mathrm{gp}}\Big),\]
we may assume that 
\[\chi\in \Big(\big(\frac{1}{m}P^{\mathrm{gp}}\oplus_{\frac{1}{m}N^{\mathrm{gp}}}N^{\mathrm{gp}}_{\mathbb{Q}}\big)/P^{\mathrm{gp}}\Big)/\Big(N^{\mathrm{gp}}_{\mathbb{Q}}/N^{\mathrm{gp}}\Big)\] for some $m\geq 1$. Then we have 
\begin{align*}
&\{a\in P_{\mathbb{Q}_{\geq 0}}\,|\, \pi(a)=\tilde{\chi} \textrm{ for some }\tilde{\chi} \textrm{ such that } \tilde{\chi}|_{\Gamma}=\chi\}\\
\subset \,& \pi^{-1}\Big(\big(\frac{1}{m}P^{\mathrm{gp}}\oplus_{\frac{1}{m}N^{\mathrm{gp}}}N^{\mathrm{gp}}_{\mathbb{Q}}\big)/P^{\mathrm{gp}}\Big)\\
= \,& P_{\mathbb{Q}_{\geq 0}}\cap \big( \frac{1}{m}P^{\mathrm{gp}}\oplus_{\frac{1}{m}N^{\mathrm{gp}}}N^{\mathrm{gp}}_{\mathbb{Q}}\big)\\
=\, & \frac{1}{m}P\sqcup_{\frac{1}{m}N}N_{\mathbb{Q}_{\geq 0}}
\end{align*}
Choose a finite set of generators $x_1, \ldots, x_r$ of $P$. Then 
\[\frac{1}{m}P\sqcup_{\frac{1}{m}N}N_{\mathbb{Q}_{\geq 0}}= S_{\chi}+(P\sqcup_N N_{\mathbb{Q}_{\geq 0}})\] where 
\[S_{\chi}=\{\big(\sum_{i=1}^r\frac{a_i}{m}x_i, 0\big)\in \frac{1}{m}P\sqcup_{\frac{1}{m}N}N_{\mathbb{Q}_{\geq 0}}\,|\, 0\leq a_i\leq m-1 \textrm{ for all } i=1, \ldots, r\}\]
is a finite set. This is enough to conclude.
\end{proof}

Taking $p$-adic completions, we obtain:

\begin{corollary}\label{corollary: Gamma action on R^+}
There is a decomposition of $\mO_C\widehat{\otimes}_{\mO_C\langle N\rangle}\mO_C\langle P\rangle$-modules
\[\mO_C\widehat{\otimes}_{\mO_C\langle N_{\mathbb{Q}_{\geq 0}}\rangle}\mO_C\langle P_{\mathbb{Q}_{\geq 0}}\rangle
=\widehat{\bigoplus}_{\chi}\Big(\mO_C\widehat{\otimes}_{\mO_C\langle N_{\mathbb{Q}_{\geq 0}}\rangle}\mO_C\langle P_{\mathbb{Q}_{\geq 0}}\rangle\Big)_{\chi}\]
where the completed direct sum runs through all finite order characters $\chi$ of $\Gamma$, and the term $\Big(\mO_C\widehat{\otimes}_{\mO_C\langle N_{\mathbb{Q}_{\geq 0}}\rangle}\mO_C\langle P_{\mathbb{Q}_{\geq 0}}\rangle\Big)_{\chi}$ stands for the $p$-adic completion of the submodule
\[\Big(\mO_C\widehat{\otimes}_{\mO_C\langle N_{\mathbb{Q}_{\geq 0}}\rangle}\mO_C\langle P_{\mathbb{Q}_{\geq 0}}\rangle\Big)_{\chi}' =\Big\{a\in \mO_C\widehat{\otimes}_{\mO_C\langle N_{\mathbb{Q}_{\geq 0}}\rangle}\mO_C\langle P_{\mathbb{Q}_{\geq 0}}\rangle\,|\, \gamma\cdot a=\chi(\gamma)a\Big\}.\]
Moreover, we have
\begin{enumerate}
\item $\Big(\mO_C\widehat{\otimes}_{\mO_C\langle N_{\mathbb{Q}_{\geq 0}}\rangle }\mO_C\langle P_{\mathbb{Q}_{\geq 0}}\rangle\Big)_{1}=\mO_C\widehat{\otimes}_{\mO_C\langle N\rangle}\mO_C\langle P\rangle$.
\item For every $\chi$, $\Big(\mO_C\widehat{\otimes}_{\mO_C\langle N_{\mathbb{Q}_{\geq 0}}\rangle}\mO_C\langle P_{\mathbb{Q}_{\geq 0}}\rangle \Big)_{\chi}$ is a finite $\mO_C\widehat{\otimes}_{\mO_C\langle N\rangle}\mO_C\langle P\rangle$-module.
\end{enumerate}
\end{corollary}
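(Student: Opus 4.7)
The plan is simply to apply $p$-adic completion to the decomposition established in Proposition \ref{lemma: Gamma action on R^+} and check that completion interacts well with each piece of the statement.

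First, I would observe that $M := \mO_C \otimes_{\mO_C[N_{\Q_{\geq 0}}]} \mO_C[P_{\Q_{\geq 0}}]$ is $p$-torsion free. Indeed, by the explicit description $M \cong \mO_C[Q]$ given before Lemma \ref{lemma: Q}, where $Q = Q(N,P,u)$, the underlying $\mO_C$-module of $M$ is free on the basis $\{e^q\}_{q \in Q}$; in particular, the $p$-adic completion $\widehat{M}$ coincides with $\mO_C\widehat{\otimes}_{\mO_C\langle N_{\Q_{\geq 0}}\rangle}\mO_C\langle P_{\Q_{\geq 0}}\rangle$. Similarly, $\mO_C\otimes_{\mO_C[N]}\mO_C[P]$ is $p$-torsion free (it is a direct summand of $M$ by Proposition \ref{lemma: Gamma action on R^+} (1)), and its $p$-adic completion is $\mO_C\widehat{\otimes}_{\mO_C\langle N\rangle}\mO_C\langle P\rangle$.

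Next, I would apply $p$-adic completion to the decomposition
\[
M = \bigoplus_{\chi} M_{\chi}, \qquad M_{\chi} := \Big(\mO_C\otimes_{\mO_C[N_{\Q_{\geq 0}}]}\mO_C[P_{\Q_{\geq 0}}]\Big)_{\chi}
\]
from Proposition \ref{lemma: Gamma action on R^+}. Since $p$-adic completion of an infinite direct sum of $p$-torsion free modules is, by definition, the completed direct sum (= elements whose nonzero components tend to $0$ $p$-adically), we obtain
\[
\widehat{M} = \widehat{\bigoplus}_{\chi}\, \widehat{M}_{\chi},
\]
where each $\widehat{M}_{\chi}$ is the $p$-adic completion of $M_{\chi}$. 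Because $M_{\chi}$ is the $\chi$-isotypic direct summand, the $\Gamma$-action extends continuously to $\widehat{M}_{\chi}$, and it is immediate from the definition that $\widehat{M}_{\chi}$ is contained in the topological $\chi$-isotypic submodule. The reverse inclusion follows from the decomposition just displayed: any element of the topological $\chi$-isotypic part must have all its components in the other $\chi'$-summands equal to $0$.

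Finally, parts (1) and (2) follow by completing the corresponding statements in Proposition \ref{lemma: Gamma action on R^+}. For (1), $p$-adic completion of $M_1 = \mO_C\otimes_{\mO_C[N]}\mO_C[P]$ is by the first paragraph equal to $\mO_C\widehat{\otimes}_{\mO_C\langle N\rangle}\mO_C\langle P\rangle$. For (2), since each $M_{\chi}$ is a finitely generated module over the $p$-torsion free ring $\mO_C\otimes_{\mO_C[N]}\mO_C[P]$, its $p$-adic completion $\widehat{M}_{\chi}$ is finitely generated over $\mO_C\widehat{\otimes}_{\mO_C\langle N\rangle}\mO_C\langle P\rangle$ (generated by the same set of generators). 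I do not expect any serious obstacle here: the only mildly delicate point is the identification of the topological isotypic component with the completion of the algebraic one, and this is handled by the direct summand property noted above.
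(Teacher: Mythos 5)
The paper's proof of this corollary consists of the single line ``Taking $p$-adic completions, we obtain:'', so your proposal is precisely an elaboration of the argument the authors have in mind, not a different route. Your identification of the topological $\chi$-isotypic component with $\widehat{M}_{\chi}$ is handled correctly (the key points being $p$-torsion-freeness of each $M_{\chi}$ so that $(\chi(\gamma)-\chi_0(\gamma))a_{\chi}=0$ forces $a_{\chi}=0$, and continuity of the $\Gamma$-action), and the finiteness in (2) can be justified by the standard Mittag–Leffler argument: completing a presentation $A^r\twoheadrightarrow M_{\chi}$ gives a surjection $\widehat{A}^r\twoheadrightarrow \widehat{M}_{\chi}$ because the kernels of $A^r/p^n\to M_{\chi}/p^n$ form a system with surjective transition maps. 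So the proposal is correct and takes the same approach as the paper.
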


The rest of the proof of Theorem \ref{thm:primitive_comparison} is essentially the same as \cite[Theorem 6.2.1]{DLLZ}.

\begin{proof}[Proof of Theorem \ref{thm:primitive_comparison}]
Given Proposition \ref{lemma: Gamma action on R^+}, and the fact that \cite[Lemma 6.1.7, Remark 6.1.8, Lemma 6.1.9, Lemma 6.1.11]{DLLZ} remain true in our context, one concludes that the statements of \cite[Proposition 6.1.1]{DLLZ} hold for the morphism $V\rightarrow \mathbb{E}_1$.

To globalize, notice that \cite[Lemma 6.2.4]{DLLZ} generalizes to our situation. Namely, for every integer $M\geq 2$, there exist $M$ affinoid \'etale coverings of $X$
\[\{V_h^{(M)}\}_{h=1}^m, \ldots, \{V_h^{(1)}\}_{h=1}^m\]
such that
\begin{itemize}
\item $V_h^{(N)}\subset\cdots\subset V_h^{(1)}$ is a chain of rational subsets, for every $h=1, \ldots, m$.
\item $V_h^{(j+1)}\subset \overline{V_h}^{(j+1)}\subset V_h^{(j)}$ for all $h=1, \ldots, m$ and $j=1, \ldots, M-1$, where $\overline{V_h}^{(j+1)}$ stands for the closure of $V_h^{(j+1)}$.
\item $V_{h_1}^{(1)}\times_X V_{h_2}^{(1)}\rightarrow V_{h_1}^{(1)}$ is a composition of rational localizations and finite \'etale morphisms, for all $1\leq h_1, h_2\leq m$.
\item Each $V_h^{(1)}$ admits a standard finite model. 
\end{itemize}
Given this, and replacing \cite[Lemma 6.1.6]{DLLZ} with Proposition \ref{lemma: Gamma action on R^+}, the proof of \cite[Theorem 6.2.1]{DLLZ} applies here verbatim.
\end{proof}

\subsection{\'Etale comparison for log $\Ainf$-cohomology} \noindent 

\noindent 

\bp \label{prop:almost_etale_specialization}
Suppose $C$ is algebraically closed and let $X$ be a proper log adic space which is admissibly smooth over $\eta = \spa \ul C = \spa (C, \mO_C)_{N_\infty}$. Then the canonical map 
\[ R \Gamma_{\ket} (X, \Z_p) \otimes_{\Z_p} \Ainf \lra R \Gamma_{\proket} (X, \widehat{\Ainfx}) \] 
is an almost quasi-isomorphism. 
\ep 

\bproof 
First, we claim that the natural maps 
$$ R \Gamma_{\ket} (X, \Z/p^n) \otimes_{\Z/p^n} W_n (\mO^\flat)  \lra R \Gamma_{\proket} (X, W_n (\widehat \mO^{\flat+}_X))  $$
are all almost quasi-isomorphisms. The claim follows by induction on $n$, while the case $n = 1$ follows from the primitive comparison theorem (Theorem \ref{thm:primitive_comparison}). %As taking cohomology $R \Gamma(-)$ commutes with 
Taking derived inverse limits, we have an almost quasi-isomorphism
$$ R \Gamma(X_{\ket}, \Z_p) \otimes \Ainf \aeq R \Gamma (X_{\proket}, R \varprojlim_n W_n (\widehat \mO_X^{\flat+})) =R \Gamma (X_{\proket}, \widehat{\Ainfx}).$$
%Here the second almost quasi-isomorphism follows from Remark \ref{remark:proetale_limit_behaves_well}. \Zijian{This last isomorphism still needs an argument}\Hansheng{the last isomorphism is just the definition of $\Ainfx$.}
\eproof 

As a corollary, the cohomology $R \Gamma (\fX_{\ett}, A \Omega^{\log}_{\fX})$ enjoys the following \'etale comparison.  
\bt[\'etale comparison] \label{thm:etale_comparison} Suppose $C$ is algebraically closed and let $\fX$ be a saturated proper log $p$-adic formal scheme that is admissibly smooth over $\spf (\ul{\mO_C})^a$. Let $X = \fX_{\eta}^{\text{ad}}$ be its log adic generic fiber over $\eta = \spa \ul C$, then there is a canonical quasi-isomorphism 
$$ R \Gamma_{\ket} (X, \Z_p) \otimes_{ \Ainf} \Ainf[\frac{1}{\mu}] \isom  R \Gamma (\fX_{\ett}, A \Omega^{\log}_{\fX}) \otimes_{ \Ainf} \Ainf[\frac{1}{\mu}] $$ 
compatible with Frobenius on both sides. 
\et  

\bproof   
By construction, we have a quasi-isomorphism 
$$ A \Omega_{\fX}^{\log} \otimes_{\Ainf} \Ainf [\frac{1}{\mu}] \cong R \nu_* \widehat{\Ainfx} \otimes_{\Ainf} \Ainf[\frac{1}{\mu}] $$
of sheaves on $\fX_{\ett}$. The almost quasi-isomorphism in Proposition \ref{prop:almost_etale_specialization} becomes a quasi-isomorphism after inverting $\mu$, thus we have 
$$  R \Gamma_{\ket} (X, \Z_p) \otimes \Ainf[\frac{1}{\mu}] \isom R \Gamma  (\fX_{\ett}, R \nu_* \widehat{\Ainfx} [\frac{1}{\mu}]) \cong R \Gamma (\fX_{\ett}, A \Omega^{\log}_{\fX}) \otimes \Ainf [\frac{1}{\mu}].$$
The claim about Frobenius compatibility follows from functoriality. 
\eproof

%%%%%%%%%%%%%%%%%%
%%%%%%%%%%%%%%%%%%
%%%%%%%%%%%%%%%%%%
%%%%%%%%%%%%%%%%%%
%%%%%%%%%%%%%%%%%%
%%%%%%%%%%%%%%%%%%
%%%%%%%%%%%%%%%%%%
%%%%%%%%%%%%%%%%%%
%%%%%%%%%%%%%%%%%%
%%%%%%%%%%%%%%%%%%
%%%%%%%%%%%%%%%%%%
%%%%%%%%%%%%%%%%%%
%%%%%%%%%%%%%%%%%%
%%%%%%%%%%%%%%%%%%  Hodge--Tate 
%%%%%%%%%%%%%%%%%%
%%%%%%%%%%%%%%%%%%
%%%%%%%%%%%%%%%%%%
%%%%%%%%%%%%%%%%%%
%%%%%%%%%%%%%%%%%%
%%%%%%%%%%%%%%%%%%
%%%%%%%%%%%%%%%%%%
%%%%%%%%%%%%%%%%%%
%%%%%%%%%%%%%%%%%%
%%%%%%%%%%%%%%%%%%
%%%%%%%%%%%%%%%%%%
%%%%%%%%%%%%%%%%%%
%%%%%%%%%%%%%%%%%%
%%%%%%%%%%%%%%%%%%
%%%%%%%%%%%%%%%%%%
%%%%%%%%%%%%%%%%%%
\newpage 

\section{The complex $\sq \Omega^{\log}_{\fX}$ and a primitive Hodge--Tate comparison}  \label{section:HT_primitive}

In this section we address the primitive form of the Hodge--Tate comparison theorem (Theorem \ref{mainthm:primitive_HT} in the introduction), which can be viewed as a form of $p$-adic Cartier isomorphism that lifts the usual Cartier isomorphism from characteristic $p$. %For this we will need the log formal scheme $\fX$ to be ``sufficiently smooth'' over $\ul \mO$ --- at least its mod $p$ fiber needs to be log smooth of Cartier type \Zijian{I am not sure we actually need this condition}.  
%For this we introduce a notion which is related to (but \textit{a priori} weaker than) the notion of saturated morphism introduced by Kato and studied by Tsuij \cite{Tsuji}. 

\subsection{Small charts of admissibly smooth (formal) log schemes} \label{ss:small_charts} %\noindent 

%\noindent 
As in the previous section, let $\ul{\mO_C} = (\alpha: N_\infty \ra \mO_C)$ be a split divisible perfectoid  pre-log ring over $\mO_C$. Recall that this in particular implies that $N_\infty$ is uniquely $n$-divisible for each $n \in \Z_{\ge 1}$. To ease notation, we often  write $\mO$ for $\mO_C$ in the computation that follows.  

Now let $\fX$ be a quasi-fs log $p$-adic formal scheme that is admissibly smooth over $\ul{\mO_C}$ (cf. Definition \ref{definition:admissible_smooth_formal}). In other words, $\fX$ is of the form of the following base change
\[ 
\begin{tikzcd} 
  \fX  \arrow[d] \arrow[r]  &  \fX_0   \arrow[d]  
\\ 
\spf (\mO_C, N_\infty)^a \arrow[r] & \spf (\mO_C, N)^a   
\end{tikzcd}
\]
in the category of saturated log $p$-adic formal schemes, where 
\bi
\item the bottom map is induced by an injective map $N \ra N_\infty$ where $N$ is a toric monoid and the pre-log structure $\alpha_0: N \ra \mO_C$ is split, 
\item $\fX_0 \ra \spf (\mO_C, N)^a$ is a map of fs log $p$-adic formal schemes that is ($p$-completely) log smooth and pseudo-saturated.  
\ei 
%In other words, \'etale locally $\ul \fX$ admits a chart of the form $(\fX, M_{\fX})$ where $M$ is the saturated pushout of a map $N \ra P$ of fs monoids along $N \ra N_\infty$, such that $(\fX, P_{\fX})$ is log smooth over $(\mO_C, N)$. 

By definition, \'etale locally, $\fX_0 \ra \spf (\mO_C, N)^a$ admits a chart modeled on an injective pseudo-saturated and integral map of fs monoids $u: N \ra P$. But according to Remark \ref{remark:nice_finite_model_formal}, we can do much better: \'etale locally on $\fX$, there exists a finite model $\fX_0 \ra \spf (\mO_C, N)^a$ that admits a chart modeled on an injective quasi-saturated and integral (i.e. saturated) map of fs monoids $u: N \ra P$ satisfying conditions (1) and (2) in Remark  \ref{remark:nice_finite_model_formal}. We refer to such a chart as a \emph{small chart}.\footnote{The choice of the terminology here is inspired by Faltings's notion of small affine formal schemes (cf. \cite[Definition 8.5]{BMS1}). This is different from Kato's notion of small maps of monoids (meaning that the cokernel of the associated map on the group envelops is torsion), which does not appear in this article.} More precisely,

\bd  \label{definition:locally_small}
%Let $\ul \fX = (\fX, \mM)$ be a quasi-fs log formal scheme over $\ul{\mO_C}$. 
Let $\fX$ be admissbly smooth over $\ul{\mO_C}$. We say that an affine \'etale neighborhood  \[\psi: \fU = \spf R \ra \fX\] is \emph{small} if the log affine formal scheme $(\fU, \psi^* \mM_{\fX})$ is strictly $\etale$ over a log affine formal scheme $\fU^{\square}=\spf (\ul{R^\square})^a$ where $R^{\square}=\mO \widehat{\otimes}_{\mO \gr{N}} \mO\gr{P}$ and $\ul{R^{\square}}$ is a pre-log ring of the form 
$$ \ul{R^{\square}} = ( P \sqcup_{N} N_\infty \lra \mO \widehat{\otimes}_{\mO \gr{N}} \mO\gr{P}),$$
which is obtained as the following base change 
%\begin{tikzcd}[column sep = 1em, row sep = 2.5em] (\mO_C \gr{P}, P) \arrow[r]  & (\mO_C \gr{P} \underset{{\empty}_{\mO_C \gr{N}}}{\otimes} \mO_C, P)  \arrow[r] & \ul R  \\
% (\mO_C \gr{N}, N) \arrow[u] \arrow[r]  \arrow[ur, phantom, very near end, "\square"] & (\mO_C, N) \arrow[r] \arrow[u]  & \ul{\mO_C} \arrow[u]\end{tikzcd}
\[
\begin{tikzcd}[column sep = 1em, row sep = 2.5em]
\mO  \gr{\ul P} \arrow[r] 
& (\mO \widehat \otimes_{\mO \gr{N}} \mO\gr{P}, P)  \arrow[r] 
& \ul{R^{\square}} 
\\
\mO \gr{\ul N}  \arrow[u, "u"] \arrow[r, "\iota"]  \arrow[ur, phantom, very near end, "\square"]
& (\mO, N) \arrow[r, "\kappa"] \arrow[u]   \arrow[ur, phantom,   near end, "\square"]
&  (\mO, N_\infty) \arrow[u]  
\end{tikzcd}
\]
in the category of saturated $p$-complete pre-log rings. 
Here  in the diagram, 
\be
\item The map $u$ is induced by an injective saturated homomorphism $u: N \hookrightarrow P$ of fs monoids such that $N$ is sharp, $P$ is torsion-free, $\textup{coker} (u^{\gp}: N^{\gp} \ra P^{\gp})$ is torsion-free and that $P\cap (-N)=\{0\}$.
\item The map $\kappa$ is induced by an injective homomorphism of monoids $N \rightarrow N_\infty$ (which gives rise to an injection $N_{\Q_{\ge 0}} := \varinjlim  \frac{1}{m} N  \hookrightarrow N_\infty$) and is identity on the underlying rings. % Moreover, $(\mO, N) = (\alpha_0:  N \ra \mO)$ is  compatible with the pre-log structure $\alpha: N_\infty \ra \mO$. 
The map $\iota$ is given by $e^n \mapsto \alpha_0(n)$ for all $n \in N$. 
\item The map $\fU\rightarrow \fU^{\square}:=\spf (\ul{R^\square})^a$ is a composition of rational localizations and strictly finite \'etale morphisms.
\ee
In the situation above, we say that $\ul{R^{\square}}$ and the \'etale neighborhood $\fU$ is modeled on the \emph{small chart} $u:N \ra P$ and refer to the map $R^{\square} \ra R$ as a \emph{small coordinate}. 
%We say that $\fX$ is locally small if \'etale locally $\fX$ is small. 
\ed 

\br 
\be
\item In Definition \ref{definition:locally_small}, since $u$ is injective and the cokernel of $u^{\gp}$ is torsion-free, the map 
$$ \mO \gr{\ul N} = (\mO \gr{N}, N) \lra \mO \gr{\ul P} = (\mO \gr{P}, P)$$ is ($p$-completely) log smooth.
\item In Definition \ref{definition:locally_small}, since $u:N\rightarrow P$ is saturated, the pushout $P_{\infty}:=P \sqcup_{N} N_\infty$ in the category of monoids is already saturated. 
\item By \cite[Theorem II.3.1]{Tsuji}, an injective pseudo-saturated and integral homomorphism $u: N \ra P$ of torsion-free fs monoids is (quasi-)saturated if and only if the mod $p$ map 
\[
u_{\mO/p}: (\mO/p [N], N) \ra ( \mO/p [P], P)
\]
is of Cartier type. 
%\item After possibly replacing $N$ by $\frac{1}{m} N$ for some $m$, we may and will assume that the map $u$ is saturated (see Corollary \ref{corollary: pseudo-sat becomes quasi-sat after finite base change}). 
%The chart $u$ may not be (quasi-)saturated, as Example \ref{example: pseudo-saturated} (4) shows. 
\ee 
\er 

\br 
Suppose that  $\ul{R^{\square}}$ is modeled on a small chart $N \ra P$ as above, then %$ \mO \gr{\ul N} \ra  \mO \gr{\ul P}$ is of Cartier type, and
the underlying ring $R^{\square}$ is $p$-torsion free, since 
\[R^\square \cong \mO \widehat \otimes_{\mO \gr{N}} \mO\gr{P},\] %by Condition (4) above, 
which is flat over $\mO$ by \cite[Proposition 4.1]{Kato}.  
\er 

By Remark \ref{remark:base_change_N_Q_to_N_infty}, we make the simplifying assumption that $N_{\Q \ge 0} \isom N_\infty$ without loss of generality in the rest of the section. 
 
\subsection{The complex $\sq \Omega_{\fX}^{\log}$} \noindent 

\noindent 
Let us define the complex $\sq \Omega_{\fX}^{\log} \in D(\fX_{\ett})$ discussed in the introduction. Let $X = \fX_{\eta}^{\textup{ad}}$ be the log adic space over $\eta = \spa (C, \mO_C)_{N_\infty}$, obtained as the adic generic fiber of the log $p$-adic formal scheme. Let 
$$\nu: X_{\proket} \lra \fX_{\ett}$$
be the natural projection from the pro-Kummer \'etale site of $X$ to the \'etale site of $\fX$ 
discussed in (\ref{eq:the_site_projection})  in Section \ref{ss:def_Ainf_sheaf}. 

\bd Let $\fX$ be as above. Define 
$$ \sq \Omega^{\log}_{\fX} := L \eta_{(\zeta_p - 1)} R \nu_* \OXplus $$
which is a complex in $D(\fX_{\ett}).$
\ed 

The main result we prove in this section is the following primitive form of the Hodge--Tate comparison. 

\bt \label{thm:primitive_HT}
Let $\fX$ be a quasi-fs log $p$-adic formal scheme that is admissibly smooth over $\ul{\mO_C}$. Then there are canonical isomorphisms of sheaves 
$$ {\Omega}^{i, \textup{ct}}_{\fX/\ul{\mO_C}} \isom \mH^i  (\sq \Omega^{\log}_{\fX}) \{i\}$$
for each $i \ge 0$. Here ${\Omega}^{i, \textup{ct}}_{\fX/\ul{\mO_C}} $ denotes the sheaf of continuous log differentials, i.e., the $p$-adic completion of ${\Omega}^{i}_{\fX/\ul{\mO_C}}$, and $\{i\}$ denotes the Breuil--Kisin--Fargues twist.  
\et 

To prove the theorem, we will construct canonical maps  
\begin{align*}
 \mO_{\fX} \cong \: & {\Omega}^{0, \textup{ct}}_{\fX/\ul{\mO_C}}   \lra  \mH^0 (\sq \Omega^{\log}_{\fX}) \\ 
&{\Omega}^{1, \textup{ct}}_{\fX/\ul{\mO_C}}   \lra  \mH^1 (\sq \Omega^{\log}_{\fX}) \{1\}.
\end{align*}
We shall first construct these maps \'etale locally on $\fX$. The functoriality of our construction will allow us to obtain the desired global maps. To check that these maps are  isomorphisms, we use local coordinates (supplied by the admissibly smooth assumption) and interpret these morphisms via certain continuous group cohomology, following the strategy of \cite{BMS1}. 
The theorem then follows from mulitplicativity.

\subsection{Construction of the map} \label{ss:construction_HT} 
%\noindent 

% Consider $$ (\Z_p, 0) \ra (\mO_C, \Q_{\ge 0}) \ra ( \OXplus, P_X) $$ 
%$$ \L_{\ul{\mO_C}/\Z_p} \otimes_{\mO_C} \OXplus \lra \L_{\ul{\OXplus}/\Z_p} \lra \L_{\ul{\OXplus}/\ul{\mO_C}} $$
%The hope is that  $$ \L_{\ul{\OXplus}/\ul{\mO_C}} = 0 $$ and  $$ \L_{\ul \mO_C/\Z_p} \cong \L_{\mO_C/\Z_p} $$

%\noindent 
%Now suppose that $\fX$ is admissibly smooth over $\ul{\mO_C}$.
%Keep the assumptions from the previous subsection. 
Let us work locally on a small \'etale neighbourhood $\fU=\spf R$ of $\fX$. It follows from the definition that $\fU$ admits a chart given by the pre-log ring $\ul R = (P_{\infty} \ra R)$ where $P_{\infty}$ is the saturated monoid $P \sqcup_{N} N_\infty$.
%\footnote{We may choose the monoid to be $\Gamma (\fU, \mM_{\fX})$ where $\mM_{\fX} \ra \mO_{\fX}$ is the log structure of $\fX$.}

By Proposition \ref{prop: R^+_1} and the fact that \'etale base change commutes with taking integral closures (cf. \cite[Tag 03GE]{SP}), the log adic generic fiber $U:=\fU^{\textrm{ad}}_{\eta}=\spa (R[\frac{1}{p}], R)$ admits a chart $P_{\infty}\rightarrow R[\frac{1}{p}]$. Consider a log affinoid perfectoid object 
$$ V = \varprojlim_{i \in I} V_i = \varprojlim_{i \in I} (\spa(S_i[\frac{1}{p}], S_i), \mM_i)$$
in the pro-Kummer \'etale site $U_{\proket}$ (cf. Definition \ref{defn: log affinoid perfectoid object}). Let $\widehat{V}=\spa(S[\frac{1}{p}], S)$ be the associated log affinoid perfectoid space where $S$ is the $p$-adic completion of $\varinjlim S_i$. Recall from Definition \ref{defn: log affinoid perfectoid object} that $\widehat{V}$ admits a chart $M_S\rightarrow S$ such that $M_S$ is uniquely $n$-divisible for all $n\geq 1$. We write $\ul S := (S, M_S)$ for the corresponding perfectoid pre-log ring. It follows from Corollary \ref{lemma:perfectoid_monoid}\footnote{Or, by considering the homotopy fiber sequence
\[  \widehat  \L_{\ul{\mO_C}/\Z_p} \otimes_{\mO_C} S \lra  \widehat  \L_{\ul{S}/\Z_p} \lra  \widehat  \L_{\ul{S}/\ul{\mO_C}}\]
associated to the maps of pre-log rings  $ (\Z_p, 0) \ra (\mO_C, N_\infty) \ra (S, M_S),$ and observe that $\widehat \L_{\ul S/\ul{\mO_C}} = 0$ by Corollary \ref{cor:cotangent_for_perfectoid}.} that we have a canonical isomorphism
$$ \widehat  \L_{\ul S/\Z_p} [-1] \isom  \widehat  \L_{\ul{\mO_C}/\Z_p} [-1] \otimes_{\mO_C} S \cong S \{1\}.$$ Note that $\widehat \L_{\ul S/\Z_p}$ is independent of the choice of monoid which gives rise to the log structure of $V$. %For a functorial choice, we may take $M_S = \Gamma (V_{\ett}, \mM_S)$. 

\begin{construction} \label{construction:HT_before_L_eta}
For every log affinoid perfectoid object $V = \varprojlim_{i \in I} V_i$ in $U_{\proket}$ as above, consider a map \footnote{Notice that $\widehat  \L_{\ul{R}/\Z_p}[-1]$ only depends on $\fU$. It does not depend on the log structure on the pre-log ring $\ul{R}$.}
\[  \widehat \L_{\fU/\Z_p} [-1] := \widehat  \L_{\ul{R}/\Z_p}[-1] \lra R \Gamma (V_{\proket}, \OXplus) \{1\}\]
which is functorial in $\fU$ and $V$, given as the composition 
\[  \widehat  \L_{\ul{R}/\Z_p}[-1] \lra  \widehat  \L_{\ul{S}/\Z_p} [-1] \cong \Gamma(V_{\proket}, \OXplus)\{1\} \lra R \Gamma (V_{\proket}, \OXplus) \{1\}.\]
Taking the derived inverse limit among all such log affinoid perfectoid objects $V$ (which form a basis for $U_{\proket}$ by Proposition \ref{prop: log affinoid perfectoid objects form a basis}), we obtain a canonical map 
\begin{equation} \label{map:HT}
  \widehat  \L_{\fU /\Z_p} [-1]  =  \widehat  \L_{\ul{R}/\Z_p} [-1] \lra R \Gamma(U_{\proket}, \OXplus) \{1\}
\end{equation}
which is functorial in $\fU$. 
\end{construction}

\bl \label{lemma:HT_map_factors_via_L_eta} % Let $\ul R = (R, M)$ be as above. 
 $\widehat  \L_{\fU/\Z_p}[-1]  =\widehat  \L_{\ul R/\Z_p}[-1]$ is concentrated in degree $[0, 1]$, with 
\begin{align*}
& H^0 (  \widehat  \L_{\ul R/\Z_p} [-1]) \cong R \{1\} \\
&  H^1 (  \widehat  \L_{\ul R/\Z_p} [-1]) \cong  {\Omega}^{1, \textup{ct}}_{\ul R/\ul{\mO_C}} = %\widehat 
\Gamma(\fU, {\Omega}^{1}_{\fU/\ul{\mO_C}}) 
\end{align*} 
\el  

\bproof 
The maps of pre-log rings $ (\Z_p, 0) \ra (\mO_C, N_\infty) \ra \ul R$ give rise to the homotopy fiber sequence
$$
  \widehat  \L_{\ul{\mO_C}/\Z_p}[-1] \otimes^\L_{\mO_C} R \lra 
    \widehat  \L_{\ul R/\Z_p} [-1] \lra 
      \widehat  \L_{\ul R/\ul{\mO_C}}[-1].
$$ 
By assumption, the map $\ul{\mO_C} \ra \ul R$ is an integral map of pre-log rings since $N_\infty \ra P_{\infty}$ is the pushout of the saturated homomorphism of monoids $N \ra P$.  Therefore we have 
$$  \widehat  \L_{\ul R/\ul{\mO_C}} [-1]  \cong {\Omega}^{1, \textup{ct}}_{\ul R/\ul{\mO_C}}[-1] $$ by Lemma \ref{lemma:compare_Gabber_with_Olsson}.  
As $  \widehat  \L_{\ul{\mO_C}/\Z_p}[-1] \cong \mO_C\{1\}$, we conclude that the complex $  \widehat \L_{\ul R/\Z_p} [-1]$ lives in $D^{\ge 0} (\mO_C)$ and satisfies 
$$ H^0 (  \widehat  \L_{\ul R/\Z_p} [-1]) \cong R \{1\}, \qquad H^1 (  \widehat  \L_{\ul R/\Z_p} [-1]) \cong  {\Omega}^{1, \textup{ct}}_{\ul R/\ul{\mO_C}}.$$
This proves the lemma. 
%For part (2), observe that, in addition, $H^0 (  \widehat  \L_{\ul R/\Z_p} [-1])$ is $(\zeta_p-1)$-torsion free, we thus get a canonical map  \begin{equation} \label{map:L_eta_for_cotangent} L \eta_{\zeta_p - 1} (  \widehat  \L_{\ul R/\Z_p} [-1] ) \lra   \widehat  \L_{\ul R/\Z_p} [-1]. \end{equation}
%This turns out to be a quasi-isomorphism as $H^1 (  \widehat  \L_{\ul R/\Z_p} [-1]) \cong  {\Omega}^{1, \textup{ct}}_{\ul R/\ul{\mO_C}}$ is also $(\zeta_p-1)$-torsion free. The lemma thus follows (by applying $L\eta_{(\zeta_p - 1)}$ to the map in (\ref{map:HT}) and inverting the quasi-isomorphism in (\ref{map:L_eta_for_cotangent})). 
\eproof 

To proceed, let us state the following proposition. 
\bp \label{prop:HT_map_factors_via_L_eta} 
The map constructed in (\ref{map:HT}) factors canonically through 
\begin{equation} \label{map:HT2} 
\widehat \L_{\ul R/\Z_p} [-1] \lra L\eta_{(\zeta_p - 1)} R \Gamma (U_{\proket}, \OXplus) \{1\} 
\end{equation} 
\ep 

We will postpone the proof of Proposition \ref{prop:HT_map_factors_via_L_eta} to the later parts of the section (which makes use of the coordinates). Once Proposition \ref{prop:HT_map_factors_via_L_eta} is proven, in order to formulate Theorem \ref{thm:primitive_HT},  it remains to relate the right hand side of (\ref{map:HT2}) to $R \Gamma (\fU_{\ett}, \sq \Omega^{\log}_{\fX})$. 

\begin{construction}  \label{construction:map_iota}
There is a canonical ``global-to-local'' map 
\begin{equation} \label{eq:switching_L_eta_for_sq_Omega}
 c: L \eta_{(\zeta_p -1)} R \Gamma (U_{\proket}, \OXplus) \lra  R \Gamma (\fU_{\ett}, L \eta_{(\zeta_p -1)} R \nu_* \OXplus) \cong R \Gamma (\fU_{\ett}, \sq \Omega^{\log}_{\fX}).
 \end{equation}
Compose this with the morphism (\ref{map:HT2}) from Proposition \ref{prop:HT_map_factors_via_L_eta}, we obtain a functorial map 
\begin{equation} \label{map:iota}  
\iota:  \widehat \L_{\ul R/\Z_p}[-1] \lra R \Gamma(\fU_{\ett}, \sq \Omega_{\fX}^{\log})\{1\}.
 \end{equation}
Taking cohomology and observing that, by functoriality, this map is compatible under \'etale localization,  we obtain the following natural Hodge--Tate comparison maps 
\begin{align} 
\iota^0: \: &  \mO_{\fX} \lra \mH^0 (\sq \Omega^{\log}_{\fX})  \label{map:iota_0} \\
\iota^1: \: &  {\Omega}^{1, \textup{ct}}_{\ul{\fX}/\ul{\mO_C}} \lra \mH^1 (\sq \Omega^{\log}_{\fX}) \{1\} \label{map:iota_1}
\end{align}
\end{construction}

Our next goal is to show that both of these maps are isomorphisms. 

\bp \label{thm:primitive_HT_01}
Let $\fX$ be a quasi-fs log $p$-adic formal scheme that is  admissibly smooth over $\ul{\mO_C}$, then the maps constructed in \textup{(\ref{map:iota_0})} and \textup{(\ref{map:iota_1})} above are isomorphisms. 
\ep 

Let us give an outline of the proof of Theorem Proposition \ref{thm:primitive_HT_01} for the convenience of the reader. The proof follows a similar strategy as in \cite[Section 8]{BMS1}. 
\begin{outline} \label{outline:HT} 
Let $\ul R$ be as above and let $\square: R^\square \ra R$ be the small coordinate supplied by the admissibly smooth assumption.
\bi 
\item Using the small coordinate, we factor the map in (\ref{map:HT2}) through a certain complex of group cohomology $R \Gamma_{\gp}$ as 
$$ \widehat \L_{\ul R/\Z_p} [-1] \xrightarrow{\: \beta^{\square} \:} L \eta_{(\zeta_p - 1)} R \Gamma_{\gp}\{1\} \xrightarrow{\: \gamma^{\square} \:} L\eta_{(\zeta_p - 1)} R \Gamma (U_{\proket}, \OXplus) \{1\}.$$
\item We then show that $\gamma^{\square}$ (resp. $H^i (\beta^{\square})$ for $i = 0, 1$) are quasi-isomorphism (resp. isomorphisms) by carefully computing $R \Gamma_{\gp}$ using the coordinate $\square$. 
\item The local computations will also imply that the map $c$ in (\ref{eq:switching_L_eta_for_sq_Omega}) is a quasi-isomorphism.
\ei
 This will finish the proof of Proposition \ref{thm:primitive_HT_01} that $\iota^0$ and $\iota^1$ are isomorphisms. Finally we deduce Theorem \ref{thm:primitive_HT} by considering the wedge product. 
 \end{outline}

%%%
%%%
%%%
\subsection{Coordinates for locally small log schemes}    \label{ss:small_coordinates}
 
Recall that $\fU $ is a small affine \'etale neighbourhood of $\fX$ modeled on a small chart $N \ra P$ as in Definition \ref{definition:locally_small}. Let $P_{\infty}:=P \sqcup_N N_\infty$. There is a \emph{coordinate map}  
\begin{equation} \label{eq:coordinate_square}
\square: \ul R^{\square} = (R^{\square}=\mO\widehat{\otimes}_{\mO \gr{N}} \mO \gr{P} , P_{\infty}) \lra \ul R=(R,P_{\infty})
\end{equation}
which induces a strictly \'etale morphism 
\[\fU=(\spf R, P_{\infty})^a \lra \fU^{\square}=(\spf  R^\square, P_{\infty})^a .\]
Let $U = \fU_{\eta}^{\textup{ad}}$ (resp. $U^{\square} = (\fU^{\square})_{\eta}^{\textup{ad}}$) be the log adic generic fiber of $\fU$ (resp. $\fU^{\square}$). The pullback log structure on $U$ (resp. $U^{\square}$) is associated to the pre-log structure 
\[P_{\infty}\lra R^{\square} \lra R\lra R[\frac{1}{p}]\,\,\,\,\,\,\,\,\,\textrm{(resp. }P_{\infty}\lra R^{\square}\lra R^{\square}[\frac{1}{p}]\,\textrm{).}\]

\begin{construction} \label{construction:pro_Kummer_for_small}
Following Construction \ref{construction:E_infty_cover}, we obtain a log affinoid perfectoid object \footnote{$U^{\square}_{\infty}$, $U^{\square}_n$, $U^{\square}$ correspond to $\sq{\mathbb{E}}$, $\mathbb{E}_n$, $\mathbb{E}_1$ in Construction \ref{construction:E_infty_cover}, respectively.}
\[U^{\square}_{\infty}=\varprojlim U^{\square}_n\in (U^{\square})_{\proket}.\]
In particular, $U^{\square}_{\infty}\rightarrow U^{\square}$ is pro-finite Kummer \'etale Galois cover with Galois group
\[\Gamma\cong \Hom(P^{\gp}/N^{\gp}, \widehat{\mathbb{Z}}(1)).\]
By Proposition \ref{prop: R^+_1}, the associated affinoid perfectoid space is \footnote{$\widehat{U^{\square}_{\infty}}$ corresponds to $\widehat{\sq{\mathbb{E}}}$ in Construction \ref{construction:E_infty_cover}.}
\[\widehat{U^{\square}_{\infty}}=\spa(R^{\square}_{\infty}[\frac{1}{p}], R^{\square}_{\infty})\]
where \[R^{\square}_{\infty}=\mO\widehat{\otimes}_{\mO \gr{N_{\mathbb{Q}\geq 0}}} \mO \gr{P_{\mathbb{Q}\geq 0}}.\]

Base change along the coordinate map $\square$ in (\ref{eq:coordinate_square}), we obtain a log affinoid perfectoid object
\[U_{\infty}:=U^{\square}\times_{U^{\square}}U\in U_{\proket}\]
such that $U_{\infty}\rightarrow U$ remains a pro-finite Kummer \'etale Galois covering with Galois group $\Gamma$. Note that this pro-Kummer \'etale cover depends on the coordinate $\square$. The associated affinoid perfectoid space is 
\[\widehat{U_{\infty}}=\spa(R_{\infty}[\frac{1}{p}], R_{\infty})\]
where \[R_{\infty}=R^{\square}_{\infty}\widehat{\otimes}_{R^{\square}}R.\]
Since every Kummer \'etale cover of a log perfectoid object in the pro-Kummer \'etale site $U_{\proket}$ is strictly \'etale by Proposition \ref{prop: ket over log affinoid perfectoid is et}, the almost purity theorem of Faltings and Scholze still applies in this setup. In particular, we obtain an almost quasi-isomorphism (which depends on $\square$): 
\begin{equation} \label{eq:HT_gamma}
\gamma^{\square}:  R \Gamma_{\text{ct}} (\Gamma, R_\infty) \aeq R \Gamma (U_{\proket}, \OXplus).
 \end{equation}

\end{construction}

In the next subsection (Subsection \ref{ss:examples_of_Kummer_covers}) we give some examples of the construction above.  

Now we continue with our roadmap to proving Theorem \ref{thm:primitive_HT} following Outline \ref{outline:HT}. 

\begin{construction}  \label{construction:map_to_group_cohomology}
To utilize the local coordinates further, we apply the constructions in Section \ref{ss:construction_HT} to the (Cech complex of the) pro-Kummer \'etale Galois cover $U_{\infty}\rightarrow U$ supplied by the coordinate $\square$. This gives rise to a map 
\begin{equation} \label{eq:beta_square}
 \widehat \L_{\ul R/\Z_p} [-1] \lra R \Gamma_{\textup{ct}} (\Gamma, R_\infty )\{1\}
\end{equation}
such that its composition with $\gamma^{\square}$ in (\ref{eq:HT_gamma}) is precisely the map obtained in (\ref{map:HT}) in Construction \ref{construction:HT_before_L_eta}. An equivalent way to obtain the map in (\ref{eq:beta_square}) is via 
$$ \widehat \L_{\ul R/\Z_p} \lra R \Gamma_{\text{ct}} (\Gamma, \widehat \L_{\ul{R_\infty}/\Z_p}) \isom R \Gamma_{\text{ct}} (\Gamma, R_\infty [1]) \{1\} $$
where the first map is induced from the $\Gamma$-equivariant map $\widehat \L_{\ul R/\Z_p} \ra \widehat \L_{\ul R_\infty /\Z_p}.$

Note that both $H^0_{\textup{ct}}(\Gamma, R_\infty)$ and $H^0(U_{\proket}, \OXplus)$ are $(\zeta_p - 1)$-torsion free (as they both embed into $R_\infty$), hence there are canonical maps 
\begin{align*}
&  L\eta_{(\zeta_p - 1)} R \Gamma_{\textup{ct}} (\Gamma, R_\infty )   \lra R \Gamma_{\textup{ct}} (\Gamma, R_\infty ) \\
& L \eta_{(\zeta_p - 1)} R \Gamma (U_{\proket}, \OXplus)  \lra 
R \Gamma (U_{\proket}, \OXplus)
\end{align*} 
by \cite[Lemma 6.10]{BMS1}. Following the notation from \cite{BMS1} , we write 
$$ \sq \Omega_{\ul R}^{\square, \gp} := L \eta_{(\zeta_p - 1)} R \Gamma_{\text{ct}} (\Gamma, R_\infty), \qquad \sq \Omega_{\ul R}^{\proket}: =   L \eta_{(\zeta_p - 1)} R \Gamma (U_{\proket}, \OXplus)  $$ 
for simplicity. As notation suggests, $ \sq \Omega_{\ul R}^{\square, \gp}$ depends on the choice of coordinate $\square$ (since our choice of the Kummer pro-\'etale cover $R_\infty$ depends on $\square$). 

\end{construction}

Proposition \ref{prop:HT_map_factors_via_L_eta} will follow from the following.
 
\bp \label{prop:HT_factors_group}
Suppose that $\ul R$ is as above and fix a coordinate $\square: \ul{R^\square} \ra \ul R$ as in (\ref{eq:coordinate_square}). 
\be 
\item The map in (\ref{eq:beta_square}) factors canonically as the composition 
$$ \widehat  \L_{\ul R/\Z_p} [-1] \xrightarrow{ \: \beta^{\square} \: }  \sq \Omega_{\ul R}^{\square, \gp} \{1\} =  L\eta_{(\zeta_p - 1)} R \Gamma_{\textup{ct}} (\Gamma, R_\infty )\{1\}  \lra R \Gamma_{\textup{ct}} (\Gamma, R_\infty )\{1\}.$$
As a result, we obtain the desired map in (\ref{map:HT2}) as the composition
\begin{equation} \label{equation:local_alpha_beta}
\widehat \L_{\ul R/\Z_p} [-1] \xrightarrow{ \: \beta^{\square} \: } L\eta_{(\zeta_p - 1)} R \Gamma_{\textup{ct}} (\Gamma, R_\infty)\{1\} \xrightarrow{\: \gamma^{\square}  \:} L \eta_{(\zeta_p - 1)} R \Gamma (U_{\proket}, \OXplus) \{1\} = \sq \Omega_{\ul R}^{\proket} \{1\}. 
\end{equation}
This factors the canonical map in (\ref{map:HT}). 
\item The map $\beta^{\square}$ in (\ref{equation:local_alpha_beta}) induces isomorphisms on $H^0$ and $H^1$. 
\ee
\ep

\br
In the proposition above, the maps $\beta^{\square}$ and $\gamma^{\square}$ both depend on  $\square$ (to even make sense of them), but the composition $\gamma^{\square} \circ \beta^{\square}$ does not. %In fact, the composition $\gamma^{\square} \circ \beta^{\square}$ provides the canonical map (\ref{map:HT2}), which factors the canonical map in (\ref{map:HT}). 
\er 

The primitive Hodge--Tate comparison also relies on the following.

\bp \label{prop:primitive_HT_local} Let $\ul R$ and $\square: \ul{R^{\square}}\lra \ul{R}$ be as above. 
\be
\item The map  $\gamma^{\square}$ in (\ref{equation:local_alpha_beta}) is a quasi-isomorphism (not just in the almost sense). 
\item The map 
$$c : \sq \Omega_{\ul R}^{\proket}  \lra \sq \Omega^{\log}_{\ul R} :=  R \Gamma (\fU_{\ett}, \sq \Omega_{\fX}^{\log})  $$
in (\ref{eq:switching_L_eta_for_sq_Omega}) is a quasi-isomorphism.  
\ee 
\ep 

The proofs of both Proposition \ref{prop:HT_factors_group} and \ref{prop:primitive_HT_local} rely on local computations and fully utilize the choice of the coordinate $\square$, which we finish in the end of this section (Subsection \ref{ss:end_of_prop_local_HT}). This would finish the proof of Proposition \ref{thm:primitive_HT_01}.

\subsection{Examples of pro-Kummer \'etale covers} \label{ss:examples_of_Kummer_covers}
We give some examples before proceeding to compute the continuous group cohomology $R \Gamma_{\textup{ct}} (\Gamma, R_\infty)$.

\begin{example}[Log free algebras]\label{example: log free algebras} \noindent 
%\bt \label{thm:HT_for_log_free}
%Suppose that $\fX$ is locally strictly \'etale over the formal spectrum of log free algebras over $\ul{\mO_C}$, then we have canonical isomorphisms 
%$$  {\Omega}^{i, \textup{ct}}_{\ul{\fX}/\ul{\mO_C}} \lra \mH^i (\sq \Omega^{\log}_{\fX}) \{i\} $$ for each $i \ge 0$. \et 

\noindent 
Suppose that $\ul \mO_C = (\mO_C, \Q_{\ge 0})$ with pre-log structure  $N_{\infty}=\Q_{\ge_0} \xrightarrow{\alpha \mapsto p^\alpha} \mO_C$. (In particular, this requires the existence of an embedding $p^{\mathbb{Q}_{\geq 0}}\hookrightarrow \mO_C$.) %\footnote{More generally, we may assume that $\mO_C$ is equipped with an arbitrary divisible pre-log structure $N_\infty$ and take $N$ to be a sharp fs submonoid of $N_\infty$.} %satisfying $N_{\Q} \cong N_\infty$.} 

Consider the pre-log ring 
$$ \ul{R^{\square}}:=\big( R^\square =  \mO_C \gr{T_1, ..., T_d}, P' = \Q_{\ge 0} \oplus \N^d   \big),$$
where the pre-log structure is given by $\Q_{\ge_0} \xrightarrow{\alpha \mapsto p^\alpha} \mO_C$ and $\N \xrightarrow{1 \mapsto T_i} R^{\square}$ for each copy of $\N$. Suppose there is a strictly \'etale morphism
\[\fU=\spf (\ul{R})^{a}\rightarrow \fU^{\square}=\spf(\ul{R^{\square}})^{a}\]
which is a composition of rational localizations and finite \'etale morphisms. Then $\fU$ is admissibly smooth over $\ul{\mO_C}$ with small chart 
$$N = \N \xrightarrow{1 \mapsto (1, 0, ..., 0)}  P= \N^{d+1}$$ and small coordinate $\square: \ul{R^{\square}}\rightarrow \ul{R}$. Note that in this case, $P_{\infty}=N_\infty \sqcup_{N} P$ is precisely $P' = \Q_{\ge 0} \oplus \N^{d}$. Let $U$ and $U^{\square}$ be the log adic generic fibers of $\fU$ and $\fU^{\square}$, respectively.

For each $m\in \mathbb{Z}_{\geq 1}$, consider the pre-log ring
$$ \ul{R_m^{\square}}=(R^{\square}_m, P'_m):=\big(\mO_C \gr{T_1^{\frac{1}{m}}, ..., T_d^{\frac{1}{m}}}, \,\, \Q_{\ge 0} \oplus \Big(\frac{1}{m} \N \Big)^d \big)$$
where the $P'_m\rightarrow R^{\square}_m$ is induced from $\frac{1}{m} \mapsto T_i^{\frac{1}{m}}$ on the $i$-th copy of $\frac{1}{m}\N$. Pullback along $\square$, we consider 
\[\ul{R_m}=(R_m:=R_m^{\square}\widehat{\otimes}_{R^{\square}}R, P'_m).\]
Let $U^{\square}_m$ and $U_m$ be the log adic generic fibers of $\spf(\ul{R_m^{\square}})^a$ and $\spf(\ul{R_m})^a$, respectively. Then 
\[U^{\square}_{\infty}:=\varprojlim U_m^{\square}\rightarrow U^{\square}\]
and 
\[U_{\infty}:=\varprojlim U_m\rightarrow U\]
are both pro-finite Kummer \'etale Galois covering with Galois group $\Gamma\cong \big(\widehat{\mathbb{Z}}(1)\big)^d$. The associated affinoid perfectoid spaces are
\[\widehat{U_{\infty}^{\square}}=\spa(R_{\infty}^{\square}[\frac{1}{p}], R^{\square}_{\infty})\]
where
\[R_{\infty}^{\square}=\mO_C \gr{T_1^{\frac{1}{\infty}}, ..., T_d^{\frac{1}{\infty}}},\]
and 
\[\widehat{U_{\infty}}=\spa(R_{\infty}[\frac{1}{p}], R_{\infty})\]
where
\[R_{\infty}=R\widehat{\otimes}_{\mO_C\gr{T_1, ..., T_d}}\mO_C \gr{T_1^{\frac{1}{\infty}}, ..., T_d^{\frac{1}{\infty}}}.\]
The natural generators $\gamma_i$ of $\Gamma\cong \big(\widehat{\mathbb{Z}}(1)\big)^d$ acts on $R_{\infty}^{\square}$ and $R_{\infty}$ by 
$$ T_i^{\frac{l}{m}} \mapsto \zeta_{m}^l T_i^{\frac{l}{m}}, \qquad T_j^{\frac{l}{m}} \mapsto T_j^{\frac{l}{m}} \text{ for } j \ne i $$
for each $i = 1, ..., d$. (Recall from the introduction that we have fixed a compatible choice of roots of unity $\zeta_m$ for all $m \in \Z_{\ge 1}$ in $\mO_C$). 
%By \cite[Lemma 5.3.8]{DLLZ}, as $X_\infty^{\square}$ is a log afifnoid perfectoid object in $X_{\proket},$ for any Kummer \'etale map $X_{\infty}^{\square} \ra U$ in $X_{\proket}$, the map of the underlying map adic spaces is in fact \'etale. Therefore we may still apply the usual almost purity theorem and obtain an almost quasi-isomorphism  \begin{equation}   \alpha^{\square}:  R \Gamma_{\textup{ct}} (\widehat \Delta, R_\infty^{\square}) \aeq R \Gamma (X_{\proket}, \OXplus).   \end{equation} This local almost quasi-isomorphism will be helpful to understand the map (\ref{map:HT2}) constructed in Section \ref{ss:construction_HT}, hence the Hodge--Tate comparison. 
\end{example}

\beg[Standard semistable reduction] \noindent 

\noindent 
Suppose that $\ul \mO_C = (\mO_C, \Q_{\ge 0})$ with pre-log structure  $N_{\infty}=\Q_{\ge_0} \xrightarrow{\alpha \mapsto p^\alpha} \mO_C$. Consider the pre-log ring
\[\ul{R^{\square}}:=(R^{\square}, P')=\big(\mO_C \gr{X_1, ..., X_d}/(X_1 \cdots X_d - p^a,\,\,\,\Q_{\ge 0}\sqcup_{\N} \N^d  \big)\]
where
\bi
\item $a \in \Q_{>0 }$.
\item $P'=\Q_{\ge 0}\sqcup_{\N} \N^d $ is the pushout of the diagonal map $\N \xrightarrow{\Delta} \N^{d}$ along $\N \xrightarrow{1 \mapsto a} \Q_{\ge 0}$.
\item $P'\rightarrow R^{\square}$ is induced from $\Q_{\ge 0} \xrightarrow{\alpha \mapsto p^\alpha} \mO_C$ and $\N^{d} \xrightarrow{1_j \mapsto X_j} R^{\square}$ for each $j$.
\ei
Suppose there is a strictly \'etale morphism
\[\fU=\spf (\ul{R})^{a}\rightarrow \fU^{\square}=\spf(\ul{R^{\square}})^{a}\]
which is a composition of rational localizations and finite \'etale morphisms. Then $\fU$ is admissibly smooth over $\ul{\mO_C}$ with small chart given by the diagonal map
$$N = \N \hookrightarrow P= \N^d$$ and small coordinate $\square: \ul{R^{\square}}\rightarrow \ul{R}$. Note that $P_{\infty}=N_\infty \sqcup_{N} P$ is precisely $P'$. Let $U$ and $U^{\square}$ be the log adic generic fibers of $\fU$ and $\fU^{\square}$, respectively.

For each $m\in \mathbb{Z}_{\geq 1}$, consider the pre-log ring $\ul{R^{\square}_m}=(R^{\square}_m, P'_m)$ where
\bi
\item $R_m^{\square}= \mO_C\gr{X_1^{\frac{1}{m}}, ..., X_d^{\frac{1}{m}}}/(X_1^{\frac{1}{m}} \cdots X_d^{\frac{1}{m}} - p^{\frac{a}{m}}) $
\item $P_m' = (\frac{1}{m} \N)^d \sqcup_{\frac{1}{m}\N} \Q_{\ge 0}$.
\ei 
Let $U_m^{\square}$ be the log adic generic fiber of $\spf(\ul{R^{\square}_m})^a$. Then 
\[U^{\square}_{\infty}:=\varprojlim U^{\square}_m\rightarrow U^{\square}\]
is a pro-finite Kummer \'etale Galois cover with Galois group
$$\Gamma := \ker (s: \widehat \Z(1)^d \ra \widehat \Z(1)) \cong \widehat \Z(1)^{d-1}$$
where $s$ denotes the summation map. Let $e_1, ..., e_d$ be topological generators of $\widehat \Z(1)^d$ that corresponds to the basis for $(P_{\mathbb{Q}_{\geq 0}})^{\gp} \cong \Q^d$, then we may choose 
$$\{\gamma_i := e_i - e_d\}_{i = 1, ..., d-1}$$ as a set of topological generators of $\Gamma$.
Pullback along $\square$, we may define $U$, $U_m$, $U_{\infty}$ as in Example \ref{example: log free algebras}. Then 
\[\widehat{U_{\infty}^{\square}}=\spa(R_{\infty}^{\square}[\frac{1}{p}], R^{\square}_{\infty})\]
where
\[R_{\infty}^{\square}=\mO_C\gr{X_1^{\frac{1}{\infty}}, ..., X_d^{\frac{1}{\infty}}}/(X_1^{\frac{1}{\infty}} \cdots X_d^{\frac{1}{\infty}} - (p^a)^{\frac{1}{\infty}}) ,\]
and 
\[\widehat{U_{\infty}}=\spa(R_{\infty}[\frac{1}{p}], R_{\infty})\]
where
\[R_{\infty}=R\widehat{\otimes}_{R^{\square}}\mO_C\gr{X_1^{\frac{1}{\infty}}, ..., X_d^{\frac{1}{\infty}}}/(X_1^{\frac{1}{\infty}} \cdots X_d^{\frac{1}{\infty}} - (p^a)^{\frac{1}{\infty}}).\]
The generator $\gamma_i$ acts on $R_\infty^{\square}$ (and thus on $R_\infty$) via 
$$ X_i^{\frac{l}{m}} \mapsto \zeta_{m}^{l} X_i^{\frac{l}{m}}, \qquad X_d^{\frac{l}{m}} \mapsto \zeta_{m}^{\;-l} X_d^{\frac{l}{m}},  \qquad X_j^{\frac{l}{m}} \mapsto X_j^{\frac{l}{m}} \text{ for }  j \ne i, d. $$
\eeg

\beg[Horizontal normal crossing divisor]
\noindent 

\noindent This example is similar to the one above. This time $\ul \mO_C = (\mO_C, \Q_{\ge 0})$ where the  pre-log structure sends each nonzero $\alpha \in \Q_{> 0} \longmapsto 0$. Consider pre-log ring $\ul{R^{\square}}=(R^{\square}, P')$ where 
\bi
\item $ R^\square =  \mO_C \gr{X_1, ..., X_d}/(X_1 \cdots X_d)$. 
\item $P' = \N^d \sqcup_{\N} \Q_{\ge 0}$ is the pushout of the diagonal $\N \xrightarrow{\Delta} \N^{d}$ along the natural inclusion $\N \hookrightarrow \Q_{\ge 0}$. 
\item $P'\rightarrow R^{\square}$ is induced from $\Q_{\ge 0} \ra \mO_C$ and $\N^d \xrightarrow{1_j \mapsto X_j} R^{\square}$.
\ei 
 %In this case the chart can again be modeled on the diagonal map $\N \hookrightarrow \N^d$. 
For each $m\in \mathbb{Z}_{\geq 1}$, consider
$$ \ul{R_m^{\square}} = (R_m^{\square}, P_m') = \Big( \mO_C\gr{X_1^{\frac{1}{m}}, ..., X_d^{\frac{1}{m}}}/(X_1^{\frac{1}{m}} \cdots X_d^{\frac{1}{m}}),  (\frac{1}{m} \N)^d \sqcup_{\frac{1}{m} \N} \Q_{\ge 0} \Big). $$
%\bi \item $R_m^{\square}= \mO_C\gr{X_1^{\frac{1}{m}}, ..., X_d^{\frac{1}{m}}}/(X_1^{\frac{1}{m}} \cdots X_d^{\frac{1}{m}}) $ \item $P_m' = \frac{1}{m} \N^d \sqcup_{\frac{1}{m} \N} \Q_{\ge 0}$. \ei 
Following the same construction as in the previous examples, we may define $U^{\square}, U^{\square}_m, U^{\square}_{\infty}, \widehat{U^{\square}_{\infty}}, R^{\square}_{\infty}$, as well as $U, U_m, U_{\infty}, \widehat{U_{\infty}}, R_{\infty}$. In particular, 
\[R_{\infty}^{\square}=\mO_C\gr{X_1^{\frac{1}{\infty}}, ..., X_d^{\frac{1}{\infty}}}/(X_1^{\frac{1}{\infty}} \cdots X_d^{\frac{1}{\infty}})\]
and
\[R_{\infty}=R\widehat{\otimes}_{R^{\square}}\mO_C\gr{X_1^{\frac{1}{\infty}}, ..., X_d^{\frac{1}{\infty}}}/(X_1^{\frac{1}{\infty}} \cdots X_d^{\frac{1}{\infty}}).\]
This time 
$$\Gamma = \ker (s: \widehat \Z(1)^d \ra \widehat \Z(1)) \cong \widehat \Z(1)^{d-1}.$$  Let $\{\gamma_i := e_i - e_d\}_{i = 1, ..., d-1}$ be a set of topological generators of the Galois group, with $e_i$ the generators of $\widehat \Z(1)^d$, then again, $\gamma_i$ acts on $R_\infty^{\square}$ (and thus on $R_\infty$) via 
$$ X_i^{\frac{l}{m}} \mapsto \zeta_{m}^{l} X_i^{\frac{l}{m}}, \qquad X_d^{\frac{l}{m}} \mapsto \zeta_{m}^{\;-l} X_d^{\frac{l}{m}},  \qquad X_j^{\frac{l}{m}} \mapsto X_j^{\frac{l}{m}} \text{ for }  j \ne i, d. $$ 
\eeg

\subsection{Local computations}  \label{ss:local_computation_mod_xi} %\noindent 

Retain all notations from the \S \,\ref{ss:small_charts}-\ref{ss:small_coordinates}. Also recall from Section \ref{section:etale_comparison} that $\Gamma$ fits into a short exact sequence
\[0\rightarrow \Gamma\rightarrow \Gamma_P\rightarrow \Gamma_N\rightarrow 0\]
where $\Gamma_P = \Hom (P^{\gp}, \widehat \Z (1))$ and $\Gamma_N = \Hom (N^{\gp}, \widehat \Z (1))$. We start with the computation of the continuous group cohomology $R \Gamma_{\textup{ct}} (\Gamma, R_\infty)$. 

\bl \label{lemma:decomposition}
We have a decomposition 
$$ R_\infty = \widehat \oplus_{\chi} R_{\infty, \chi} $$
where the completed direct sum runs over all finite order characters $\chi$ of $\Gamma$, and $R_{\infty, \chi}$ denotes the $p$-completion of the submodule $R_{\infty, \chi}' = \{a \in R_\infty | \gamma \cdot a = \chi(\gamma) a\}$. In particular, we have a $\Gamma$-equivariant decomposition 
$$ R_\infty = R \oplus \big(\widehat \oplus_{\chi \ne 1} R_{\infty, \chi}\big).$$
\el 

\bproof 
%For the first claim, it suffices to prove the statement for $R_\infty^{\square}$ in place of $R_\infty$, for which we need to show that $ \widehat \oplus_{\chi} R^{\square}_{\infty, \chi} \hookrightarrow R^{\square}_{\infty}$ is surjective. This follows from the following commutative diagram 
%\[
%\begin{tikzcd} \widehat \oplus_{\sq \chi} \mO_C \gr{P_\infty}_{\sq \chi} \arrow[r, equal] \arrow[d]& \mO_C \gr{P_\infty} \arrow[d, two heads] \\\widehat \oplus_{\chi} R^{\square}_{\infty, \chi} \arrow[r, hook] &R_\infty^{\square}
%\end{tikzcd}
%\]
%where the (completed) direct sum $\widehat \oplus_{\sq \chi} \mO_C \gr{P_\infty}_{\sq \chi} $ runs over all finite order characters $\sq \chi$ of $\Gamma_P$. The left vertical map is given as follows: for each $\sq \chi$, the map  $ \mO_C \gr{P_\infty}_{\sq \chi} \ra R_\infty^{\square} $ is $\Gamma$-equivariant, thus factors through 
%$$ \mO_C \gr{P_\infty}_{\sq \chi} \lra R^\square_{\infty, \sq \chi |_\Gamma} \hookrightarrow R_\infty^{\square} $$\Zijian{You are right, I didn't prove $R \isom R_{\infty, \chi = 1} = (R_\infty)^{\Gamma}$. We need the latter statement not the former one in Section 6. You said you had a proof right?}
It suffices to prove the statement for $R_\infty^{\square}$ in place of $R_\infty$, which follows from Proposition \ref{lemma: Gamma action on R^+} and Corollary \ref{corollary: Gamma action on R^+}. 
\eproof

\bl \label{lemma:group_coho_local}
\be
\item The inclusion 
\[R = R_{\infty, \chi = 1} \hookrightarrow R_\infty\] induces a split injection 
$$ H^i_{\textup{ct}} (\Gamma, R)  \injlra H^i_{\textup{ct}} (\Gamma, R_\infty) $$
for each $i \ge 0$, with its cokernel entirely $(\zeta_p - 1)$-torsion. In particular, we have a  quasi-isomorphism 
\[L \eta_{(\zeta_p - 1)} R\Gamma_{\textup{ct}} (\Gamma, R) \isom  L \eta_{(\zeta_p - 1)} R\Gamma_{\textup{ct}} (\Gamma, R_\infty).\]
\item For each $i \ge 0$, the abelian groups $H^i_{\textup{ct}} (\Gamma, R_\infty)$ and 
\[H^i_{\textup{ct}} (\Gamma, R_\infty)/(\zeta_p - 1)\] have no (nontrivial) almost zero elements. Slightly more generally, for any $b \in \mO$, the $\mO$-modules $R_\infty/b$ and $H^i_{\text{ct}}(\Gamma, R_\infty/b)$ have no nontrivial almost zero elements. 
\ee
\el   

\bproof 

The first part of (1) follows from the decomposition in Lemma \ref{lemma:decomposition}. It then suffices to show that for each nontrivial primitive character $\chi: \Gamma \ra \mu_m$, the group cohomology 
\[H^i_{\textup{ct}} (\Gamma, R_{\infty, \chi})\] is $(\zeta_m - 1)$-torsion (note that $(\zeta_m - 1)$ is a unit when $p \nmid m$). For this last claim, the same proof of \cite[Lemma 5.5]{Scholze} applies (also compare to \cite[Lemma 6.1.7]{DLLZ}). Namely, upon choosing topological generators $(\gamma_1, ..., \gamma_d)$ of $\Gamma \cong \widehat \Z(1)^d$ for $d = \text{rk}_\Z P^{\gp} - \text{rk}_\Z N^{\gp}$, the group cohomology $R \Gamma_{\textup{ct}} (\Gamma, R_{\infty,\chi})$ is computed by the Koszul complex 
$$ K_{R_{\infty, \chi}} (\gamma_1 - 1, ..., \gamma_d - 1) $$
which is the tensor product of the complexes $R_{\infty, \chi} \xrightarrow{\gamma_i - 1} R_{\infty, \chi}$. The assertion thus follows.  %\Zijian{Check this again, especially for the non-reduced example} \Zijian{Seems ok to me for that example}

Now let us prove (2). It suffices to show that for each $\chi$, and each $i$,   $H^i_{\textup{ct}} (\Gamma, R_{\infty, \chi})$ and $H^i_{\textup{ct}} (\Gamma, R_{\infty, \chi})/(\zeta_p - 1)$ have no (nontrivial) almost zero elements. If $\chi = 1$ is the trivial character, then we have 
$$H^i_{\text{ct}} (\Gamma, R_{\infty, \chi = 1}) = H^i_{\text{ct}} (\Gamma, R) = H^i_{\text{ct}} (\Gamma, R^{\square}) \otimes_{R^\square} R \cong  \wedge^i R^d.$$
Note that we have 
\[( \wedge^i R^d)[\fm]  = ( \wedge^i R^d / (\zeta_p - 1))[\fm] = 0.\] This follows, for example, from \cite[Lemma 3.5]{CK_semistable} and the fact that $\mO \ra R^{\square}$ can be obtained as the base change of a map $\mathring{\mO} \ra \mathring{R}^{\square}$ along a discrete valuation subring $\mathring{\mO} \hookrightarrow \mO$ (since $N$ is a finitely generated monoid and $R^{\square}$ is the $p$-completed base change of $\mO [N] \ra \mO [P]$ along $\mO[N] \ra \mO$). Now suppose that $\chi \ne 1$, it remains to show that 
\[H^i_{\text{ct}} (\Gamma, R_{\infty, \chi}) [\fm] = 0.\] By flatness of the map $R^\square  \ra R $ we may replace $R_{\infty, \chi}$ by $R^\square_{\infty, \chi}$. 
 For this claim, let us first observe that the map (from the proof of Proposition \ref{lemma: Gamma action on R^+} and Corollary \ref{corollary: Gamma action on R^+}) 
$$ \mO \gr{P_{\mathbb{Q}_{\geq 0}}}_{\chi} : = \bigoplus_{\sq \chi |_{\Gamma} = \chi} \mO_{C} \gr{P_{\mathbb{Q}_{\geq 0}}}_{\sq \chi} \lra R^{\square}_{\infty, \chi} $$ 
admits a $\Gamma$-equivariant section (note that $R_{\infty, \chi}^{\square}$ is topologically free as an $\mO$-module). Therefore, it suffices to show that  
\[H^i_{\text{ct}} (\Gamma, \mO\gr{P_{\mathbb{Q}_{\geq 0}}}_{\chi}) [\fm] = 0.\] This again can be deduced from \cite[Lemma 3.5]{CK_semistable} since in this last statement, we may replace $\mO$ by a  discrete valuation ring (for example, $W(k)$), and observe that  
\[H^i_{\text{ct}} (\Gamma, \mO\gr{P_{\mathbb{Q}_{\geq 0}}}_{\chi}) \cong H^i_{\text{ct}} (\Gamma, W(k)\gr{P_{\mathbb{Q}_{\geq 0}}}_{\chi}) \otimes_{W(k)} \mO.\] The corresponding statements for $R_\infty/b$ follow by similar arguments. 
\eproof

As before, we write 
\[\sq \Omega^{\square, \gp}_{\ul R} : =  L \eta_{(\zeta_p - 1)} R \Gamma_{\textup{ct}} (\Gamma, R_\infty) \quad 
\] and write 
\[\sq \Omega_{\ul R}^{\proket} := L \eta_{(\zeta_p - 1)} R \Gamma(U_{\proket}, \widehat \mO_X^+).\] 

\bc \label{cor:L_eta_turns_into_isom} The almost quasi-isomorphism $\gamma^{\square}$ in (\ref{eq:HT_gamma}) becomes a quasi-isomorphism 
$$ \gamma^{\square}: \sq \Omega^{\square, \gp}_{\ul R} \isom \sq \Omega_{\ul R}^{\proket}  $$
after applying $L\eta_{(\zeta_p - 1)}$. 
\ec  
 
\bproof 
This is a direct consequence of Proposition  \ref{lemma:decalage_turns_almost_into_actual} and Lemma \ref{lemma:group_coho_local}. 
\eproof 

\bc \label{cor:base_change_sq_Omega}
For a (strictly) \'etale morphism $\ul R \ra \ul {R'}$, the natural map 
$$\sq \Omega_{\ul R}^{\proket} \otimes_R^\L R' \lra \sq \Omega_{\ul{R'}}^{\proket}$$
is a quasi-isomorphism. 
\ec 

\bproof 
It suffices to prove the claim assuming $R = R^{\square}$ (in this case $R^{\square} \ra R'$ provides a coordinate for $\ul R'$). By Corollary \ref{cor:L_eta_turns_into_isom} it suffices to show that the natural map 
$$  \sq \Omega^{\square, \gp}_{\ul{R^\square}} \otimes_{R^\square} R' \lra \sq \Omega^{\square, \gp}_{\ul{R'}} $$
is a quasi-isomorphism. By Part (1) of Lemma \ref{lemma:group_coho_local}, it suffices to show that the map 
$$ ( L \eta_{(\zeta_p - 1)} R \Gamma_{\text{ct}} (\Gamma, R^{\square})) \otimes_{R^\square} R' \lra   L \eta_{(\zeta_p - 1)} R \Gamma_{\text{ct}} (\Gamma, R') $$
is a quasi-isomorphism. Since $R^\square \ra R'$ is flat and 
\[H^i (L \eta_{(\zeta_p - 1)} C) = H^i(C)/H^i(C)[\zeta_p - 1],
\]
it suffices to show that the natural of group cohomology 
$$ R \Gamma_{\text{ct}} (\Gamma, R^{\square}) \otimes_{R^\square} R' \isom  R \Gamma_{\text{ct}} (\Gamma, R') $$
is a quasi-isomorphism. But this follows from the description of $H^i (\Gamma, R') \cong \wedge^i ((R')^{d})$ as in the proof of Lemma \ref{lemma:group_coho_local}. 
\eproof

We will need the following lemma, which is an analog of \cite[Proposition 8.17]{BMS1}. For this let us consider the following (slightly simplified) situation (which corresponds to the case where $N= N_\infty = 0$). Let $\mO \rightarrow \mO \gr{\ul P} = (\mO \gr{P}, P)$ be a $p$-complete pre-log ring where $P$ is a sharp fs monoid  %such that $P^{\gp}_{\text{tor}}$ is invertible in $\mO$ 
with $\text{rk}_\Z P^{\gp} = n$. 
The pre-log ring 
\[\mO \gr{\ul{P_{\mathbb{Q}_{\geq 0}}}} = (\mO \gr{P_{\mathbb{Q}_{\geq 0}}}, P_{\mathbb{Q}_{\geq 0}})\] provides a Kummer pro-\'etale cover of Galois group $\Gamma_P \cong \Hom(P^{\gp}, \Z(1))$ on the associated log adic spaces. 
 From Construction \ref{construction:map_to_group_cohomology} we obtain a map 
\begin{equation} \label{eq:image_of_map_on_grp_cohomology_P}
 \widehat \L_{\mO\gr{\ul P} /\Z_p} \lra R \Gamma_{\text{ct}} (\Gamma_P, \mO \gr{P_{\mathbb{Q}_{\geq 0}}}[1] \{1\}).
 \end{equation} 

\bl  \label{lemma:image_lies_in_zeta_minus_1}
Let us fix generators $\delta_1, ..., \delta_n \in P^{\gp} \cong \Z^{n}$, which determines (topological) generators 
\[\gamma_1, ..., \gamma_n \in \Gamma_P \cong \Hom (P^{\gp}, \Z(1)).\]  After taking $H^0$, the map in (\ref{eq:image_of_map_on_grp_cohomology_P}) can be described as follows. Write $\delta_i \in  \widehat {\Omega}^1_{\mO \gr{\ul P}/\mO}$ for the element corresponding to $\delta_i \otimes 1$ via the identification 
$$ \widehat {\Omega}^1_{\mO \gr{\ul P}/\mO} \cong P^{\gp} \otimes_{\Z} \mO \gr{P},$$
then its image in $H^1_{\text{ct}} (\Gamma_P, \mO \gr{P_{\mathbb{Q}_{\geq 0}}} \{1\})$ under the map in (\ref{eq:image_of_map_on_grp_cohomology_P}) is given by the image of 
$$ \text{dlog}_i \otimes 1 \in H^1_{\text{ct}} (\Gamma_P, \mO\{1\}) \otimes \mO \gr{P} \hookrightarrow H^1_{\text{ct}} (\Gamma_P, \mO \gr{P_{\mathbb{Q}_{\geq 0}}} \{1\}), $$
where 
$$\text{dlog}_i \in H^1_{\text{ct}} (\Gamma_P, \mO \{1\}) \cong \Hom_{\text{ct}} (\Gamma_P, T_p (\Omega^1_{\mO/\Z_p})) $$
%\cong  \Hom_{\text{ct}} (\Hom(P^{\gp}, \Z_p (1)), T_p (\Omega^1_{\mO/\Z_p})) $$
is the map that sends $\gamma_i$ to the element $\text{dlog} (\zeta_{p^\infty}) \in T_p (\Omega^1_{\mO/\Z_p}).$ Here the element $\text{dlog} (\zeta_{p^\infty})$ can be viewed as the image of our choice of topological generator (determined by $p$-power roots of unity) of $\Z_p (1) \cong T_p (\mu_{p^\infty} (\mO))$ under Fontaine's dlog map 
$$\text{dlog}: \mu_{p^\infty} \lra \Omega^1_{\mO/\Z_p}.$$
\el 

\bproof 
The proof is analogous to the proof of \cite[Proposition 8.17]{BMS1}. Let us describe the map mod $p^n$. The right hand side is 
\[H^0_{\text{ct}}(\Gamma_P,  \L_{\mO \gr{\ul{P_{\mathbb{Q}_{\geq 0}}}}/\Z_p} \otimes^\L_{\Z_p} \Z/p^n).\] There is a commutative diagram 
\[
\begin{tikzcd} 
\L_{\mO \gr{\ul P}/\Z_p} \arrow[r] \arrow[d]
& \L_{\mO \gr{\ul{P_{\mathbb{Q}_{\geq 0}}}}/\Z_p} \arrow[r]  \arrow[d]
& \L_{\mO \gr{\ul{P_{\mathbb{Q}_{\geq 0}}}}/\Z_p} \otimes^\L_{\Z_p} \Z/p^n \arrow[d, "\rotatebox{90}{$\sim$}"]  \\
{\Omega}^1_{\mO \gr{\ul P}/\Z_p}   \arrow[r] \arrow[d, "g"]
&   {\Omega}^1_{\mO \gr{\ul{P_{\mathbb{Q}_{\geq 0}}}}/\Z_p} \arrow[r] 
& {\Omega}^1_{\mO \gr{\ul{P_{\mathbb{Q}_{\geq 0}}}}/\Z_p} \otimes^\L_{\Z_p} \Z/p^n \arrow[r, "\sim"] 
&  {\Omega}^1_{\mO \gr{\ul{P_{\mathbb{Q}_{\geq 0}}}}/\Z_p} [p^n] \\
{\Omega}^1_{\mO \gr{\ul P}/\mO}  
\end{tikzcd}
\] 
where, after derived $p$-completion,  the top arrow induces the map 
\begin{equation} \label{eq:image_of_map_on_grp_cohomology_P_mod_p}
 \widehat \L_{\mO\gr{\ul P} /\Z_p} \lra R \Gamma_{\text{ct}} (\Gamma_P,  \widehat \L_{\mO \gr{\ul{P_{\mathbb{Q}_{\geq 0}}}}/\Z_p} ) \otimes^\L_{\Z_p} \Z/p^n   
\end{equation}
(which is mod $p^n$ version of \ref{eq:image_of_map_on_grp_cohomology_P}). 
%\cong H^0_{\text{ct}}(\Gamma_P, T_p ({\Omega}^1_{\mO \gr{\ul{P_\infty}}})) $$
Write $\textup{dlog } t_i \in {\Omega}^1_{\mO \gr{\ul P}/\Z_p}$ for the image of the element $(0, \delta_i)$ along the surjection 
$$ \Omega^1_{\mO \gr{P}/\Z_p} \midoplus  P^{\gp}  \otimes_\Z \mO \gr{P} \twoheadrightarrow {\Omega}^1_{\mO \gr{\ul P}/\Z_p}, $$ which maps to $\delta_i$ under the map $g$ in the diagram. It thus suffices to understand where $\text{dlog } t_i$ gets sent to under the map on group cohomology induced by the second (horizontal) row of maps in the diagram.  The rest of the proof is essentially the same proof of \cite[Proposition 8.17]{BMS1}. The group cohomology 
\[R \Gamma_{\text{ct}} (\Gamma_P,  \widehat \L_{\mO \gr{\ul{P_{\mathbb{Q}_{\geq 0}}}}/\Z_p} ) \otimes^\L_{\Z_p} \Z/p^n \] is computed by the double complex 
\[
\begin{tikzcd}[column sep = 1cm]
W  \arrow[rr, "{\gamma_1-1, ..., \gamma_n - 1}"]  && \bigoplus_{1 \le k \le n} W \arrow[r]  &  \bigoplus_{1 \le k_1 < k_2 \le n} W \arrow[r] & \cdots \\ 
W  \arrow[rr, "{\gamma_1-1, ..., \gamma_n - 1}"]  \arrow[u, swap, "p^n"] && \bigoplus_{1 \le k \le n} W \arrow[r] \arrow[u, swap, "p^n"]  &  \bigoplus_{1 \le k_1 < k_2 \le n} W \arrow[r] \arrow[u, swap, "p^n"]  & \cdots
\end{tikzcd}
\]
where we write $W :=  {\Omega}^1_{\mO \gr{\ul{P_{\mathbb{Q}_{\geq 0}}}}/\Z_p}$ and the top (and bottom) row is the Koszul complex \[\text{Kos}(W; \gamma_1 -1, ..., \gamma_n - 1)\] on the action of $\gamma_i - 1$ on $W$. The top left corner is placed in (cohomological) degree $(0, 0)$. To understand the map (\ref{eq:image_of_map_on_grp_cohomology_P_mod_p}), we also need to understand the action of $\Gamma_P$ on $W$. This is better formulated using log structures rather than pre-log structures. For each $n \in \Z_{\ge 1}$, let $P_n := \frac{1}{p^n} P$ and let $\alpha_n: P_n \ra \mO \gr{P_n}$ denote the pre-log structure. Let 
\[P_n^a \ra \mO \gr{P_n}\] be the corresponding log ring, namely, $ P_n^a$ is the pushout of $\alpha_n^{-1} (\mO \gr{P_n} ^\times) \hookrightarrow P_n$ along 
\[\alpha_n: \alpha_n^{-1} (\mO \gr{P_n} ^\times) \ra  \mO \gr{P_n} ^\times\] in the category of monoids. The action of each $\gamma \in \Gamma_P$ on the log algebra $(\mO \gr{P_n}, P_n^a)$ can be described as follows
\[
\begin{tikzcd}%[column sep = 1.5cm]
\alpha \: \in P_n^a \:\: \qquad \arrow[r, mapsto, "\gamma \cdot "] 
\arrow[d, mapsto, shift right=6ex] 
& \quad  \gamma(\alpha) + \alpha \in P_n^a \arrow[d, mapsto, shift right=3ex] \\
e^{\alpha} \in \mO \gr{P_n} \quad   \arrow[r, mapsto, "\gamma \cdot "] & \qquad \gamma(\alpha)  e^\alpha \in \mO \gr{P_n}  
\end{tikzcd}
\]
where $\gamma (\alpha) \in \mu_{p^n} (\mO)$ is the image of $\alpha$ under $\gamma$, viewed as a map on $P_n^{\gp}$ via 
$$\Gamma_P= \Hom(P^{\gp}, \Z(1)) \cong \Hom  (P_{\mathbb{Q}_{\geq 0}}^{\gp}/P^{\gp}, \mu_{\infty} (\mO)) \ra \Hom (P_n^{\gp}/P^{\gp}, \mu_{\infty} (\mO)).$$
In particular, for each $\alpha \in P_n$, if we write $\text{dlog } \alpha$ for the image of $(0, \alpha)$ under the map 
\begin{multline}  \label{eq:describing_omega_1_log}
\quad  \Omega^1_{\mO \gr{P_n}/\Z_p} \midoplus \Big(P_n^{\gp} \Big)  \midotimes_\Z \mO \gr{P_n} \\  \surjlra {\Omega}^1_{\mO \gr{\ul{P_n}}/\Z_p} \cong  {\Omega}^1_{(\mO \gr{P_n}, P_n^a)/\Z_p}  \lra  {\Omega}^1_{\mO \gr{\ul{P_{\mathbb{Q}_{\geq 0}}}}/\Z_p} = W, \quad  
\end{multline}
then $\gamma \in \Gamma_P$ acts on $\textup{dlog } \alpha$ by 
$$ \gamma \cdot \text{dlog } \alpha  = \text{dlog } \gamma(\alpha) + \text{dlog } \alpha.$$
Now, to finish the proof, we observe that $\text{dlog } t_i \in W$ can be written as 
\[p^n \text{dlog } (t_i)^{1/p^n},\] where 
$\text{dlog } (t_i)^{1/p^n}$ denotes the image of $(0, \delta_i/p^n)$ under the map in (\ref{eq:describing_omega_1_log}). 
%$$ \Omega^1_{\mO \gr{\frac{1}{p^n}P}/\Z_p} \midoplus \Big(\frac{1}{p^n} P^{\gp} \Big)  \midotimes_\Z \mO \gr{\frac{1}{p^n}P} \twoheadrightarrow {\Omega}^1_{\mO \gr{\ul{\frac{1}{p^n}P}}/\Z_p} \lra {\Omega}^1_{\mO \gr{\ul{P_\infty}}/\Z_p} = W. $$
This implies that $\text{dlog } t_i$ is cohomologous to 
$$(\gamma_i -1) \cdot \text{dlog } t_i^{1/p^n} = \gamma_i \cdot \text{dlog }( t_i^{1/p^n}) - \text{dlog } t_i^{1/p^n} = \textup{dlog } \zeta_{p^n}.$$
This finishes the proof of the lemma. 
\eproof

\subsection{The end of the proof}  \label{ss:end_of_prop_local_HT}

Now we are ready to prove Proposition \ref{prop:HT_factors_group} (and thus Proposition \ref{prop:HT_map_factors_via_L_eta}). 
 
\bproof[Proof of Proposition \ref{prop:HT_factors_group}] 

By Lemma \ref{lemma:HT_map_factors_via_L_eta} and \cite[Lemma 8.16]{BMS1}, there exists a unique such factorization as required by Part (1) of the proposition if the induced map
\begin{equation} \label{eq:induced_map_on_H1}
H^1( \widehat \L_{\ul R/\Z_p} [-1]) \cong \widehat {\Omega}_{\ul R/\ul{\mO_C}}^1 \lra  H^1_{\textup{ct}} (\Gamma, R_\infty) \{1\} \end{equation}
factors through $(\zeta_p -1) H^1_{\textup{ct}} (\Gamma, R_\infty) \{1\}$.  We claim that the map in  (\ref{eq:beta_square}) in fact induces
\bi
\item an isomorphism $H^0(\widehat \L_{\ul R/\Z_p} [-1]\{-1\}) \cong R \isom   H^0_{\textup{ct}} (\Gamma, R_\infty) \cong R$, and 
\item an isomorphism $H^1(\widehat \L_{\ul R/\Z_p} [-1]\{-1\})  \isom (\zeta_p - 1) H^1_{\textup{ct}} (\Gamma, R_\infty)$.  
\ei
This would finish the proof of both parts of Proposition \ref{prop:HT_factors_group}. The first one is an isomorphism by construction. For the second map, we note that 
\begin{align} \label{eq:identifying_bases_1}
 H^1(\widehat \L_{\ul R/\Z_p} [-1]) \cong \widehat {\Omega}_{\ul R/\ul{\mO_C}}^1 & \cong  \widehat {\Omega}_{ \mO_C \gr{\ul P}/\mO_C \gr{\ul N}}^1 \widehat \otimes_{\mO_C \gr{P}} R \\
\nonumber &  \cong (P^{\gp}/N^{\gp}) \otimes_\Z R  \cong 
\Z^d \otimes_{\Z} R %(P^{\gp}/N^{\gp})_{\textup{tf}} \otimes_\Z R 
\cong R^d
\end{align}
is a free $R$-module of rank $d$ (on basis $\textup{dlog} \gamma_i'$ for a basis $\gamma_i' $ of $P^{\gp}/N^{\gp} \cong \Z^d$). 
%Here the last isomorphism comes from the assumption that the torsion part of the quotient $(P^{\gp}/N^{\gp})_{\textup{tor}}$ is a finite group with order prime to $p$.
Also note that, by Lemma \ref{lemma:group_coho_local}, we have 
\begin{align} \label{eq:identifying_bases_2}
 (\zeta_p - 1) H^1_{\textup{ct}} (\Gamma, R_\infty) & \cong (\zeta_p - 1) H^1_{\textup{ct}} (\Gamma, R) \\
\nonumber & \cong (\zeta_p -1) (P^{\gp}/N^{\gp} \otimes_{\Z} R ) \cong (\zeta_p - 1) R^d 
 \end{align}
is also a free $R$-module of rank $d$. Therefore it suffices to identify basis under the map on $H^1$. Note that the map (\ref{eq:induced_map_on_H1}) is compatible with base change along the \'etale map $R^{\square} \ra R$ given by the coordinate $\square$, so we may assume that $R = R^{\square}$. 

Note that we have an analogous map as in (\ref{eq:induced_map_on_H1}) but with $R = R^{\square}$ replaced by $\mO \gr{P}$: 
\begin{equation} \label{eq:induced_map_on_H1_P}
H^1( \L_{\mO \gr{\ul P}/\Z_p} [-1]) \cong \widehat {\Omega}_{\mO\gr{\ul P}/\mO}^1 \lra  H^1_{\textup{ct}} (\Gamma_P, \mO \gr{P_{\mathbb{Q}_{\geq 0}}}) \{1\}. 
\end{equation}
Base changing along $\mO \gr{P} \ra R^{\square} = \mO \gr{P} \otimes_{\mO \gr{N}} \mO$ and the surjection $P^{\gp} \ra P^{\gp}/N^{\gp}$ identifies the isomorphisms
$$\widehat {\Omega}_{\mO\gr{\ul P}/\mO}^1 \cong P^{\gp} \otimes_\Z \mO \gr{P} \cong \mO \gr{P}^{n} $$
with the isomorphisms in (\ref{eq:identifying_bases_1}), and the isomorphisms 
$$ (\zeta_p - 1)  H^1_{\textup{ct}} (\Gamma_P, \mO \gr{P_{\mathbb{Q}_{\geq 0}}})  \cong  H^1_{\textup{ct}} (\Gamma_P, \mO \gr{P}) \cong (\zeta_p - 1)  (P^{\gp} \otimes_\Z \mO \gr{P})   $$
with the isomorphisms in (\ref{eq:identifying_bases_2}). 
By the construction of the maps  (\ref{eq:induced_map_on_H1})  and  (\ref{eq:induced_map_on_H1_P}), we further reduce to the case of $\mO_C \gr{P}$. Now the assertion follows from Lemma \ref{lemma:image_lies_in_zeta_minus_1}.  
\eproof

\bproof[Proof of Proposition \ref{prop:primitive_HT_local}] 

Part (1) is Corollary \ref{cor:L_eta_turns_into_isom}. For Part (2), we may argue exactly as in \cite[Lemma 8.13 (iv)]{BMS1}. For completeness let us recall their argument. It suffices to show that the natural map 
$$ \sq \Omega^{\proket}_{\ul R} \otimes_{R} \mO_{\fU_{\ett}} \lra \sq \Omega^{\log}_{\fU}  $$
is a quasi-isomorphism in $D(\fU_{\ett})$, for which we check on stalks at points. For any $x \in \fU$, the stalk of the left side is 
$$ \varinjlim_{x \in \fU' = \spf \ul{R'}} (\sq \Omega^{\proket}_{\ul R} \otimes_R R' ) \cong  \varinjlim_{x \in \fU' = \spf \ul{R'}} \sq \Omega^{\proket}_{\ul{R'}}  =    \varinjlim_{x \in \fU' = \spf \ul{R'}} L \eta_{(\zeta_p - 1)} R \Gamma(U'_{\proket}, \OXplus)  $$
where the colimit is taken over all affine \'etale neightborhood $\fU'$ of $\fU = \spf \ul{R}$ and $U$ is the log adic generic fiber of $\fU$. The first isomorphism is given by Corollary \ref{cor:base_change_sq_Omega}. The stalk on the right side is  
$$   \varinjlim_{x \in \fU' = \spf \ul{R'}} R \Gamma(\fU',  L \eta_{(\zeta_p -1)} R \nu_* \OXplus) \cong  L \eta_{(\zeta_p -1)}  \varinjlim_{x \in \fU' = \spf \ul{R'}} R \Gamma(\fU',  R \nu_* \OXplus),$$ 
since taking stalks commute with $L\eta$ (\cite[Lemma 6.14]{BMS1} applied to the $x \ra \fU$). The claim then follows, since $L \eta$ commutes with filtered colimits by \cite[Corollary 6.5]{BMS1}. 
\eproof

\subsection{Multplicativity}  \noindent 

\noindent 
Finally we show that the construction is this section is compatible with taking wedge product, and then prove a Kunneth formula for $\sq \Omega^{\log}_{\fX}$. First let us consider the cup product. Let 
$$\textup{cup}: \: \big( R^1 \nu_* \OXplus \big)^{\otimes i} \lra R^i \nu_* \OXplus $$
be the cup product induced from 
\begin{equation} \label{eq:cup_map}
\cup: \: R^j \nu_* \OXplus \otimes^\L_{\mO_{\fX_{\ett}}} R^k \nu_* \OXplus \lra H^{j+k} ( R \nu_* \OXplus \otimes^\L_{\mO_{\fX_{\ett}}}  R \nu_* \OXplus) 
\end{equation}
(\cite[068G]{SP}, see also \cite[Section 4.7]{CK_semistable}) and the canonical map 
$$R \nu_* \OXplus \otimes^\L_{\mO_{\fX_{\ett}}}  R \nu_* \OXplus \ra R \nu_* \OXplus $$
coming from adjunction. 

\bp \label{prop:wedge_iso}
Let $\fX$ be a locally small quasi-fs log formal scheme over $\ul \mO$ as in the beginning of the section.  The cup product map in (\ref{eq:cup_map}) induces a natural isomorphism 
$$ \wedge^i \mH^1(\sq \Omega^{\log}_{\fX}) \isom \mH^i (\sq \Omega^{\log}_{\fX}). $$
In particular, the map $\iota^1$ in (\ref{map:iota_1}) obtained in Construction \ref{construction:map_iota} (and considered in Proposition \ref{thm:primitive_HT_01}) induces natural isomorphisms 
$$\iota^i: {\Omega}^{i, \text{ct}}_{\fX/\ul \mO} = \widehat {\Omega}^i_{\fX/\ul{\mO}} \isom \mH^i (\sq \Omega^{\log}_{\fX}) \{i\}.$$
\ep

\bproof 
Let us first construct the natural map locally on $\fU= \spf R$ which is small (and modeled on a monoid map $N \ra P$). We need to construct a natural map (functorial on $R$)
\begin{equation} \label{eq:wedge_of_H1}
 \wedge^i H^i (\sq \Omega^{\log}_{\ul R}) \lra H^i (\sq \Omega^{\log}_{\ul R})
\end{equation}
which is an isomorphism. By Proposition \ref{prop:primitive_HT_local} it suffices a natural map 
\begin{multline*}
\:\:\: \bigwedge^i H^i (\sq \Omega^{\proket}_{\ul R}) \cong \bigwedge^i \Big( \frac{H^1(U_{\proket}, \OXplus)}{H^1(U_{\proket}, \OXplus) [\zeta_p - 1]} \Big) \\ \lra H^i (\sq \Omega^{\proket}_{\ul R}) \cong  \frac{H^i(U_{\proket}, \OXplus)}{H^i(U_{\proket}, \OXplus) [\zeta_p - 1]} \:\:\:     
\end{multline*}
where $U = \fU_{\eta}^{\text{ad}}$ is the log adic generic fiber as before. This comes from the cup product map 
\[\text{cup}: (H^1 (U_{\proket}, \OXplus))^{\otimes i} \ra H^i (U_{\proket}, \OXplus)\] above, and the graded-commutativity $x \cup y = (-1)^{jk} y \cup x$ of the map (\ref{eq:cup_map}), which descends to the desired wedge product since each $H^i (\sq \Omega^{\proket}_{\ul R})$ is torsion free (in particular contains no nonzero $2$-torsion) by Lemma \ref{lemma:group_coho_local} and Corollary \ref{cor:L_eta_turns_into_isom}. Finally, to show that the map (\ref{eq:wedge_of_H1}) is an isomorphism, we are allowed to use coordinates and (by Part (1) of Lemma \ref{lemma:group_coho_local}) it suffices to show that the cup product on group cohomology induces an isomorphism 
$$ \wedge^i H^1_{\text{ct}}(\Gamma, R) \isom  H^i_{\text{ct}}(\Gamma, R).$$
But this follows from the description of $R\Gamma_{\text{ct}}(\Gamma, R)$ as the Koszul complex $\text{Kos}(R; 0, ..., 0)$ with trivial operators (which identifies with the $d$-fold tensor product of the complex $R \xrightarrow{0} R$, where $d = \text{rk}_{\Z} (P^{\gp}/N^{\gp})$), and the description of the cup product on the Koszul complex 
(for example, see  \cite[Lemma 7.5]{BMS1}). 
\eproof 

In particular, this finishes the proof of Theorem \ref{thm:primitive_HT}, which we summarize as follows. 
\bproof[Proof of Theorem \ref{thm:primitive_HT}] \indent 
%\noindent 
We obtain the canonical map $\iota^0$ (resp. $\iota^1$) on $H^0$ (resp. $H^1$) in Construction \ref{construction:map_iota}, which makes use of the factorization in Proposition \ref{prop:HT_map_factors_via_L_eta}, which in turn follows from Proposition \ref{prop:HT_factors_group}. The fact that $\iota^0$ and $\iota^1$ are isomorphisms (content of Proposition \ref{thm:primitive_HT_01}) is a consequence of Proposition \ref{prop:HT_factors_group} and \ref{prop:primitive_HT_local}. Finally, Proposition \ref{prop:wedge_iso} allows us to construct isomorphism $\iota^i$ on higher degrees. 
\eproof 

Finally, we end this section with the following 

\br \label{remark:multiplicativity_tilde}
Let $\ul{R_1}, \ul{R_2}$ be two $p$-complete pre-log rings over $\ul \mO$ which give rise to small \'etale neighbourhoods of $\fX$, then the natural maps $\ul{R_1} \ra \ul{R_1}\otimes_{\ul \mO} \ul{R_2}$ and $\ul{R_2} \ra \ul{R_1}\otimes_{\ul \mO} \ul{R_2}$ induce a quasi-isomorphism 
\[ \sq \Omega^{\log}_{\ul{R_1}} \: \widehat \otimes^\L_{\mO} \: \sq \Omega^{\log}_{\ul{R_2}} \isom \sq \Omega^{\log}_{ \ul{R_1}\otimes_{\ul \mO} \ul{R_2} }.
\]
For this it suffices to choose coordinates $\square_i: R_i^{\square} \ra R_i$ for $i = 1, 2$ where $R_i^{\square}$ is modeled on a small chart $u_i: N_i \ra P_i$. Note that $R_1^\square \widehat \otimes_{\mO} R_2^{\square}$ is then modeled on the small chart $N_1 \otimes N_2 \ra P_1 \oplus P_2$. By the same argument of \cite[Proposition 8.14]{BMS1} (using Proposition 6.8 and Lemma 6.20 of \emph{loc.cit.}) it suffices to prove this before applying the operator $L\eta_{(\zeta_p -1)}$ on the level of group cohomology. In other words,  
it suffices to show that 
\[
R \Gamma_{\text{ct}} (\Gamma, R^{\square}_{1, \infty} \widehat \otimes_{\mO} R^{\square}_{2, \infty}) \cong R \Gamma_{\text{ct}} (\Gamma_1 , R^{\square}_{1, \infty})  \widehat \otimes_{\mO} R \Gamma_{\text{ct}} (\Gamma_2 , R^{\square}_{2, \infty}) 
\]
where $\Gamma_i := \ker (\Gamma_{P_i} \ra \Gamma_{N_i})$ and 
\[\Gamma := \ker (\Gamma_{P_1 \oplus P_2} \ra \Gamma_{N_1 \oplus N_2}) \cong \Gamma_1 \times \Gamma_2.\] The desired isomorphism thus follows from the description of the group cohomology using Koszul complexes. 
\er

%%%%%%%%%%%%%%%%%%
%%%%%%%%%%%%%%%%%%
%%%%%%%%%%%%%%%%%%
%%%%%%%%%%%%%%%%%%
%%%%%%%%%%%%%%%%%%
%%%%%%%%%%%%%%%%%%
%%%%%%%%%%%%%%%%%%
%%%%%%%%%%%%%%%%%%
%%%%%%%%%%%%%%%%%%
%%%%%%%%%%%%%%%%%%
%%%%%%%%%%%%%%%%%%
%%%%%%%%%%%%%%%%%%
%%%%%%%%%%%%%%%%%%
%%%%%%%%%%%%%%%%%%  HT and dR comparison 
%%%%%%%%%%%%%%%%%%
%%%%%%%%%%%%%%%%%%
%%%%%%%%%%%%%%%%%%
%%%%%%%%%%%%%%%%%%
%%%%%%%%%%%%%%%%%%
%%%%%%%%%%%%%%%%%%
%%%%%%%%%%%%%%%%%%
%%%%%%%%%%%%%%%%%%
%%%%%%%%%%%%%%%%%%
%%%%%%%%%%%%%%%%%%
%%%%%%%%%%%%%%%%%%
%%%%%%%%%%%%%%%%%%
%%%%%%%%%%%%%%%%%%
%%%%%%%%%%%%%%%%%%
%%%%%%%%%%%%%%%%%%
%%%%%%%%%%%%%%%%%%

\newpage 

\section{The Hodge--Tate and de Rham comparisons} \label{section:Hodge_Tate_and_dR}

The goal of this section is to prove the Hodge--Tate and de Rham comparisons for $A \Omega_{\fX}^{\log}$. To formulate, first note that we have the following map of pro-Kummer \'etale sheaves  
$$\sq \theta = \theta \circ \varphi^{-1}: \Ainfx  \lra \OXplus  $$
(cf. Subsection \ref{section:review_Ainf})
which induces the following Hodge--Tate comparison map (of sheaves on $\fX_{\ett}$): 
\begin{equation} \label{eq:HT_comp} 
A \Omega^{\log}_{\fX} \otimes^\L_{\Ainf} \Ainf/\varphi(\xi) \lra  \sq \Omega_{\fX}^{\log}=  L \eta_{(\zeta_p -1)} R \nu_* \OXplus.
\end{equation}
 
\bt[Hodge--Tate comparison] \label{thm:HT_comp}
Let $\fX$ be a quasi-fs log $p$-adic formal scheme that is admissibly smooth over $\ul \mO = \ul{\mO_C}$. Then the the Hodge--Tate comparison map 
\[A \Omega^{\log}_{\fX} \otimes^\L_{\Ainf} \Ainf/\varphi(\xi) \isom  \sq \Omega_{\fX}^{\log}\]
is a quasi-isomorphism. 
\et 

This will then allow us to prove 

\bt[de Rham comparison] \label{thm:dR_comp}
Let $\fX$ be a quasi-fs log $p$-adic formal scheme that is admissibly smooth over $\ul \mO = \ul{\mO_C}$, then there is a natural quasi-isomorphism 
\[ A \Omega^{\log}_{\fX} \otimes^\L_{\Ainf} \Ainf/\xi \cong   \Omega^{\bullet, (\textup{ct})}_{\fX/\ul \mO}.\]
In particular, we have a quasi-isomorphism 
\[
R \Gamma_{\Ainf} (\fX) \otimes_{\Ainf}^\L \Ainf/\xi \isom R \Gamma_{\textup{logdR}}(\fX/{\ul\mO}). 
\]
\et

The proof of Theorem \ref{thm:HT_comp} ultimately relies on local analysis of $A \Omega_{\fX}^{\log}$, as done in \cite{BMS1}. However, as we wish to avoid the analysis of the analog of $\sq W_r \Omega_{\fX}$ in \textit{loc.cit}, we instead use a strategy that is similar to \cite{CK_semistable} (which is a slight modification of \cite{BMS1}) to reduce to local computations. 

\begin{construction}

Suppose that $\fX$ is admissibly smooth over $\ul{\mO}$ as in the previous section. Let $\fX_{\ett, \text{small}}$ denote the site formed by small $p$-adic formal affine opens $\fU = \spf R \ra \fX$, so that there exists a coordinate map from $\ul{R^{\square}}$ to $\ul R$ in the sense of Subsection \ref{ss:small_coordinates} (where $\ul{R^\square}$ is a small pre-log ring modeled on $N \ra P$ as in Definition \ref{definition:locally_small}).  One checks that $\fX_{\ett, \text{small}}$ indeed forms a site under the \'etale topology. Note that such affine opens form a basis of $\fX_{\ett}$, so we have canonical isomorphism of topoi $\fX_{\ett, \text{small}}^{\sim} \cong \fX_{\ett}^{\sim}$, which we denote by $\fX_{\ett}$ again by an abuse of notation.  
 Let 
$\fX_{\ett}^{\text{psh}}$ denote the presheaf topos on the site formed by the affine opens in $\fX_{\ett, \text{small}}$; i.e., the topos associated to the site with the same objects in $\fX_{\ett, \text{small}}$ but equipped with the trivial topology. We have a natural map of topoi 
$$ j =  (j^{-1}, Rj_*): \fX_{\ett} \lra \fX_{\ett}^{\text{psh}} $$
where $Rj_*$ is the forgetful functor and $j^{-1}$ is given by sheafification. Following \cite{BMS1}, we define the following (pre)sheaves of complexes on $\fX_{\ett}^{\text{psh}}$
\begin{align*} 
 A \Omega_{\fX}^{\log, \text{psh}} & : = L \eta_{\mu} R \nu_*^{\text{psh}} \widehat{\Ainfx} , \qquad \\ 
 (\text{resp. } \sq \Omega^{\log, \text{psh}}_{\fX} & := L \eta_{(\zeta_p - 1)} R \nu_*^{\text{psh}} \OXplus ) 
\end{align*}
where $ \nu^{\text{psh}} $ is the composition 
\[\nu^{\text{psh}} := j \circ \nu:  X_{\proket} \ra \fX_{\ett}^{\text{psh}}.\]  By construction, on each small neighborhood $\fU = \spf (\ul{R})^a \in \fX_{\ett, \text{small}}$ with log adic generic fiber $U$, we have 
\begin{equation}\label{eq:definition_AOmega_proket_R} 
R \Gamma (\fU,  A \Omega_{\fX}^{\log, \text{psh}}) = A \Omega^{\proket}_{\ul R} :=  L \eta_{\mu} R \Gamma (U_{\proket}, \widehat{\Ainfx}).
\end{equation} 
Let us also define the complex 
\begin{equation}\label{eq:definition_of_AOmega_R}
A \Omega^{\log}_{\ul R} := R \Gamma (\fU_{\ett}, A \Omega^{\log}_{\fX}). 
\end{equation}
Note that the construction $\ul R \mapsto A \Omega_{\ul R}^{\proket}$ and $\ul R \mapsto A \Omega_{\ul R}^{\log}$ are both functorial in $\ul R$. 
\end{construction} 

\br \label{remark:derived_complete}
In particular, the complex $ A \Omega^{\proket}_{\ul R} \in \mD(\Ainf)$ is derived $(p, \mu)$-adically complete (and also derived $\varphi(\xi)$-adically complete). This is because $\Ainfx$ is derived $(p, \mu)$-adically complete (and derived $\varphi(\xi)$-complete), and $L\eta_\mu$ preserves derived completeness in $\mD(\Ainf)$. More generally, $L \eta$ preserves derived completeness in any replete topos by \cite[Subsection 6.2]{BMS1}, but not necessarily on $\fX_{\ett}$. Thus it is \textit{not a priori} clear why $A \Omega^{\log}_{\ul R}$ is derived $(p, \mu)$-adically complete. We will prove this as a consequence of Proposition \ref{prop:presheaf_is_sheaf} below. 
\er 

Note that, we have 
\begin{align}  \label{eq:presheaf_sheafifies_to_AOmega}
 j^{-1} A \Omega_{\fX}^{\log, \text{psh}} & \isom  A \Omega_{\fX}^{\log}, \qquad \qquad  \\ (\text{resp. }  j^{-1} \sq \Omega_{\fX}^{\log, \text{psh}} & \isom \sq \Omega_{\fX}^{\log})  \nonumber
 \end{align}
since $L \eta_{\mu}$ commutes with $j^{-1}$ by \cite[Lemma 6.14]{BMS1}. Therefore, Theorem \ref{thm:HT_comp} follows from its following presheaf variant (by applying $j^{-1}$): 

\bp \label{prop:HT_presheaf}
Under the same assumptions of Theorem \ref{thm:HT_comp}, the natural map 
$$ A \Omega_{\fX}^{\log, \text{psh}} \otimes^{\L}_{\Ainf} \Ainf/\varphi(\xi) \lra \sq \Omega_{\fX}^{\log, \text{psh}} $$ 
is a quasi-isomorphism. 
\ep

The proof of Proposition \ref{prop:HT_presheaf} is then reduced to local computations of $A \Omega_{\fX}^{\log, \text{psh}}$ (in Subsection \ref{ss:local_computation_AOmega}) and $\sq \Omega_{\fX}^{\log, \text{psh}}$ (which is already done in Section \ref{section:HT_primitive}). Slightly more precisely, for each small neighbourhood $\fU=\spf(\ul{R})^a$ of $\fX$, we will use coordinates (cf. Subsection  \ref{ss:small_coordinates}) to obtain a quasi-isomorphism 
\begin{equation} \label{eq:qi_group_proket_Ainf}
A \Omega^{\square, \gp}_{\ul R} \isom A \Omega^{\proket}_{\ul R}
\end{equation}
where $A \Omega^{\square, \gp}_{\ul R}$ is certain group cohomology. Proposition  \ref{prop:HT_presheaf}  will then follow from Corollary \ref{cor:L_eta_turns_into_isom} and a group cohomology computation, which asserts that 
\begin{equation} \label{eq:qi_group_Ainf_mod_xi}
A \Omega^{\square, \gp}_{\ul R} \otimes_{\Ainf}^\L \Ainf/\varphi(\xi) \isom \sq \Omega^{\square, \gp}_{\ul R}.  
\end{equation}
Finally, in the end of this section, we prove the following claim and deduce some corollaries that will be used later.  

\bp \label{prop:presheaf_is_sheaf}
Keep the assumptions of Theorem \ref{thm:HT_comp}. The natural map in (\ref{eq:presheaf_sheafifies_to_AOmega}) gives rise to a quasi-isomorphism 
$$ A \Omega_{\fX}^{\log, \text{psh}} \isom Rj_* A \Omega_{\fX}^{\log}. $$ 
In other words, the presheaf $ A \Omega_{\fX}^{\log, \text{psh}}$ is already a sheaf. In particular, on small affine neighbourhoods $\fU = \spf R$ of $\fX$, we have a quasi-isomorphism  
\[
A \Omega^{\proket}_{\ul R} = L \eta_{\mu} R \Gamma (U_{\proket}, \widehat{\Ainfx}) \isom A \Omega^{\log}_{\ul R} = R \Gamma (\fU_{\ett}, A \Omega^{\log}_{\fX}). 
\]
using notations from (\ref{eq:definition_AOmega_proket_R}) and (\ref{eq:definition_of_AOmega_R}). 
\ep 

The proof of Proposition \ref{prop:presheaf_is_sheaf} will be given in Subsection \ref{ss:proof_of_more_on_AOmega}. 

\subsection{Local computation of group cohomology} \label{ss:proket_cover}  

%A (coordinate dependent) Kummer pro-\'etale cover and

%\noindent 
Keep notations from Subsection \ref{ss:small_coordinates} and \ref{ss:local_computation_mod_xi}. Let $\fU = \spf (\ul{R})^a \in \fX_{\ett, \text{small}}$ be a small affine \'etale neighbourhood of $\fX$. It comes with a small coordinate
\[\square: \ul{R^{\square}}=(R^{\square}=\mO \widehat{\otimes}_{\mO \gr{N}} \mO\gr{P}, N_{\infty}\sqcup_{N} P)\rightarrow \ul{R}=(R, N_{\infty}\sqcup_{N} P)\]
such that the induced morphism
\[\fU=\spf(\ul{R})^a\rightarrow \fU^{\square}=\spf(\ul{R^{\square}})^a\]
is a composition of rational localizations and finite \'etale morphisms. By Remark \ref{remark:base_change_N_Q_to_N_infty}, we may assume $N_{\mathbb{Q}_{\geq 0}}\isom N_{\infty}$ from now on.

Also recall that the coordinate provides a perfectoid pre-log ring 
\[(R_\infty = R^{\square}_{\infty} \widehat{\otimes}_{R^\square} R, P_{\mathbb{Q}_{\geq 0}}) \] with $R_\infty^\square = \mO \widehat{\otimes}_{\mO \gr{N_{\mathbb{Q}_{\geq 0}}}} \mO \gr{P_{\mathbb{Q}_{\geq 0}}} $, which comes from a pro-finite Kummer \'etale Galois cover of $\fU$ with Galois group $\Gamma$. Recall that 
$\Gamma$ fits into a short exact sequence
\[0\rightarrow \Gamma\rightarrow \Gamma_P\rightarrow \Gamma_N\rightarrow 0\]
where $\Gamma_P = \Hom (P^{\gp}, \widehat \Z (1))$ and $\Gamma_N = \Hom (N^{\gp}, \widehat \Z (1))$. The tilt of $R_\infty^{\square}$ can be identified with 
$$
(R_\infty^\square)^{\flat} \cong \mO^\flat \widehat \otimes_{\mO^{\flat} \gr{N_{\mathbb{Q}_{\geq 0}}}} \mO^\flat \gr{P_{\mathbb{Q}_{\geq 0}}}
$$
where the map $N_{\mathbb{Q}_{\geq 0}} \ra \mO^\flat$ is induced from $N_{\mathbb{Q}_{\geq 0}} \ra \mO$ (by taking inverse limit perfection) and the completion is $p^{\flat}$-adic.  The tilt $R_\infty^\flat$ of $R_\infty$ can thus be identified with the $(p^{1/p^\infty})$-adic completion of any lift of the \'etale morphism $R_\infty^{\square} /p \ra R_\infty /p$ along the projection 
\[(R_\infty^{\square} )^\flat \ra (R_\infty^{\square})^\flat /p^\flat \cong R_\infty^{\square} /p.\] This allows us to identify  
\begin{equation} 
\Ainf(R_\infty^{\square}) \cong \Ainf \widehat \otimes_{\Ainf \gr{N_{\mathbb{Q}_{\geq 0}}}} \Ainf \gr{P_{\mathbb{Q}_{\geq 0}}}
\end{equation}
where the map $N_{\mathbb{Q}_{\geq 0}} \ra \Ainf(\mO^\flat)$ is obtained from the Teichmuller lift of  $N_{\mathbb{Q}_{\geq 0}} \ra \mO^\flat$ and  the completion is $(p, \mu)$-adic. Let us also consider the following $(p, \mu)$-adically complete $\Ainf$-algebra (as a sub-algebra of $\Ainf (R_\infty^{\square})$): 
\[A (R^{\square}) := \Ainf \widehat \otimes_{\Ainf \gr{N}} \Ainf \gr{P}.\] 
The natural surjection $\Ainf \ra \Ainf/\xi \cong \mO$ allows us to identify $A(R^\square) /\xi \cong R^\square$. Along this pro-nilpotent thickening, there exists a unique formally \'etale map 
$$A(R^\square) \ra A (R)$$ where $A(R)$ is $(p, \mu)$-adically complete which lifts the formally \'etale map $R^\square \ra R$. This then allows us to identify $\Ainf (R_\infty)$ with the $(p, \mu)$-adically completed base change 
$$ \Ainf (R_\infty) \cong \Ainf (R_\infty^{\square})\widehat \otimes_{A (R^\square)} A(R).$$

\addtocontents{toc}{\protect\setcounter{tocdepth}{0}}
\subsection*{The  action of $\Gamma$ on  $\Ainf(R_\infty^\square)$} \noindent 

\noindent 
Next we describe the action of $\Gamma$ on $(R_\infty)^\flat$ and $\Ainf(R_\infty^\square)$. The identification 
$$ (\mO \gr{P_{\mathbb{Q}_{\geq 0}}})^\flat \cong \mO^\flat \gr{(P_{\mathbb{Q}_{\geq 0}})^\flat} \cong \mO^\flat \gr{P_{\mathbb{Q}_{\geq 0}}} $$
is induced from the isomorphism $P_{\mathbb{Q}_{\geq 0}} \cong (P_{\mathbb{Q}_{\geq 0}})^\flat$ of monoids, given by 
$$ \alpha \in P_{\mathbb{Q}_{\geq 0}} \longmapsto (\alpha, \alpha/p, \alpha/p^2, ... ) \in (P_{\mathbb{Q}_{\geq 0}})^\flat. $$
Thus the action of $\Gamma_P$ on $\mO^\flat \gr{P_{\mathbb{Q}_{\geq 0}}}$ is given by 
$$\gamma \cdot e^{\alpha} = \gamma(\alpha)^\flat \cdot e^{\alpha} $$
where $\gamma (\alpha)^\flat \in \mO^\flat$ is the element 
$$\gamma (\alpha)^{\flat} := (\gamma (\alpha), \gamma (\alpha/p), \gamma(\alpha/p^2), ... ).$$
Here $\Gamma_P$ is identified with $\Hom (P_{\mathbb{Q}}^{\gp} /P^{\gp}, \mu_\infty)$. This action in turn induces an action of $\Gamma$ on the tilt
\[(R_\infty^{\square})^\flat \cong \mO^\flat \gr{P_{\mathbb{Q}_{\geq 0}}} \widehat{\otimes}_{\mO^\flat \gr{N_{\mathbb{Q}_{\geq 0}}}} \mO^\flat,\] since by construction, $\Gamma$ acts on $\mO^\flat \gr{N_{\mathbb{Q}_{\geq 0}}}$ trivially. Likewise, $\Gamma_P$ acts on 
\[\Ainf (\mO \gr{P_{\mathbb{Q}_{\geq 0}}}) \cong \Ainf \gr{P_{\mathbb{Q}_{\geq 0}}}\]by the formula 
$$\gamma \cdot e^{\alpha} = [\gamma(\alpha)^\flat] \cdot e^{\alpha}, $$
where $[\gamma (\alpha)^\flat] \in \Ainf$ denotes the Teichmuller lift of $\gamma(\alpha)^\flat$. This then induces a continuous action of $\Gamma$ on $\Ainf (R_\infty^\square)$, which further extends to a continuous action on $\Ainf (R_\infty)$, and is compatible with the action on $R_\infty$ (resp. on $R_\infty^\flat$) modulo $\xi$ (resp, modulo $p$).

In order to prove Proposition \ref{prop:HT_presheaf}, we would like to analyze
$$ A \Omega_{\ul R}^{\square, \gp} :=  L \eta_{\mu} R \Gamma_{\text{ct}} (\Gamma, \Ainf (R_\infty)).$$

Let us fix topological generators $\gamma_1, ..., \gamma_d$ of $\Gamma \cong \widehat \Z(1)^d$.  
For a continuous character $\psi: \Gamma \ra \Ainf^\times$, let us define 
$$ \Ainf (R_\infty)_{\psi}  := \{ x \in \Ainf (R_\infty)  \: | \:  \gamma \cdot x = \psi (\gamma) x \: \text{ for all } \gamma \in \Gamma \}. $$
Note that $\Ainf(R_\infty)$ is  $(p, \mu)$-adically complete, and $(p, \mu)$-adically formally flat over $\Ainf$ by \cite[Lemma 3.13]{CK_semistable}. 
%By analyzing the effect of flat base change on the Koszul complex that computes $R \Gamma_{\text{ct}} (\Gamma, \Ainf (R_\infty))$, it suffices to prove the proposition assuming $R_\infty = R_\infty^\square$. 
The same proof of Proposition \ref{lemma: Gamma action on R^+} (also see Corollary \ref{corollary: Gamma action on R^+}) provides a  decomposition 
%$$ \Ainf (R_\infty^{\square}) = \widehat \midoplus_{\chi} \Ainf (R_{\infty}^{\square})_{\chi} $$
\begin{equation} \label{eq:decompose_by_Ainf_char} 
\Ainf (R_\infty) = \widehat \midoplus_{\psi} \Ainf (R_{\infty})_{\psi} 
\end{equation}
where the completion is $(p, \mu)$-adic, and $\psi$ runs over all continuous characters of $\Gamma$ which send each $\gamma_i$ to an element of the form 
\begin{equation} \label{eq:form_of_char}
[\epsilon]^{\frac{a_i}{p^{r_i}}} = [(\zeta_{p^{r_i}}, \zeta_{p^{r_i+1}}, \zeta_{p^{r_i+2}}, ...)]^{a_i} 
\end{equation}
 where $a_i \in \Z_p^\times, r_i \in \Z$ (for a negative integer $b \le 0$ we set $\zeta_{p^b} = 1$). As in Lemma \ref{lemma:decomposition}, the decomposition in (\ref{eq:decompose_by_Ainf_char}) follows from it $\Gamma_P$-variant: 
$$ \Ainf (\mO\gr{P_\infty}) \cong \Ainf \gr{P_\infty} \cong \widehat \midoplus_{\sq \psi} \Ainf \gr{P_\infty}_{\sq \psi},$$
here $\sq \psi: \Gamma_P \ra \Ainf^\times$ are characters of $\Gamma_P$ which can be described similarly as $\psi$.   Likewise we have %$\Ainf(R_\infty^\square)/\mu = \widehat \oplus_{\chi} \Ainf (R_\infty^\square)_{\chi}/\mu $ 
\[\Ainf(R_\infty)/\mu = \widehat \oplus_{\psi} \Ainf (R_\infty)_{\psi}/\mu \] with the completion being $p$-adic.

\br 
We say that a character $\psi: \Gamma \ra \Ainf^\times$ \emph{lifts} $\chi: \Gamma \ra \mO^\times$ if $\chi = \theta \circ\psi$, where $\theta: \Ainf \ra \mO$ is the natural mod $\xi$ quotient map.  Let us remark that the character $\psi$ of $\Gamma$ which lifts a finite order character $\chi$ is generally of infinite order.  Likewise we say that $\psi^\flat: \Gamma \ra (\mO^\flat)^\times$ lifts $\chi$ if $\chi = \theta^\flat \circ \psi^\flat$ for the natural (monoid) map $\theta^\flat: \mO^\flat \ra \mO$ which sends $(x_0, x_1, ...) \mapsto x_0$. For a finite order character $\chi$, we write 
\begin{equation} \label{eq:component_over_chi} 
\Ainf (R_\infty)_{\chi} = \widehat \oplus_{\psi \text{ lifts } \chi} \Ainf (R_\infty)_{\psi}.
\end{equation} Note that when $\chi = 1$ is trivial, $\Ainf (R_\infty)_{1} = A (R)$.  Note that each $\Ainf (R_\infty)_{\chi}$ is a module over $A(R)$. In fact we have 
\[\Ainf (R_\infty)_{\chi} \cong \Ainf (R^\square_\infty)_{\chi} \widehat \otimes_{A(R^\square)} A(R)\] and (by considering Koszul complexes)
\begin{equation} \label{eq:chi_component}
R \Gamma(\Gamma, \Ainf(R_\infty)_{\chi}) \cong R \Gamma(\Gamma, \Ainf(R^\square_\infty)_{\chi}) \widehat \otimes_{A(R^\square)} A(R) 
\end{equation} 
\er

The following lemma is an analog of \cite[Lemma 9.6]{BMS1}
\bl \label{lemma:killing_junk_torsion}
For a nontrivial finite order character $\chi: \Gamma \ra \mu_\infty$, the group cohomology \[H^i_{\text{ct}} (\Gamma, \Ainf (R_\infty)_\chi)\] is entirely $\varphi^{-1} (\mu)$-torsion. In particular, we have 
\[L \eta_{\mu} R \Gamma_{\text{ct}} (\Gamma,  \widehat \oplus_{\chi \ne 1} \Ainf (R_\infty)_{\chi}) = 0 \] and 
\[ L \eta_\mu R \Gamma_{\text{ct}} (\Gamma, A(R)) \isom L \eta_{\mu} R \Gamma_{\text{ct}} (\Gamma,  \Ainf (R_\infty)). \]
\el 

\bproof 
Fix a nontrivial $\chi: \Gamma \ra \mu_\infty$. As in \cite{BMS1}, we prove the slightly stronger statement that multiplication by $\varphi^{-1} (\mu)$ on $R \Gamma_{\text{ct}}(\Gamma, \Ainf(R_\infty)_{\chi})$ is homotopic to $0$. By the identification in (\ref{eq:chi_component}) it suffices to prove this claim for $\Ainf (R_\infty^\square)_\chi$. Let $\psi: \Gamma \ra \Ainf^\times$ be a character that appears in (\ref{eq:decompose_by_Ainf_char}) and lifts $\chi$. It suffices to show that multiplication by $\varphi^{-1} (\mu)$ on $R \Gamma_{\text{ct}}(\Gamma, \Ainf(R_\infty^\square)_{\psi})$ is homotopic to $0$. 
On the generators  $\gamma_1, ..., \gamma_d$ of $\Gamma$, we have 
\[\psi (\gamma_i) = [\epsilon]^{\frac{a_i}{p^{r_i}}}\] as in (\ref{eq:form_of_char}). Without loss of generality, let us assume that $r_1 \ge r_2 \ge ... \ge r_d$, so in particular $r_1 \ge 1$ since $\chi$ is nontrivial. By modifying the generators $\gamma_i$ if necessary, we may also assume that each $a_i = 1$. Now the complex $R \Gamma_{\text{ct}}(\Gamma, \Ainf(R_\infty^\square)_{\psi})$ can be computed by the completed tensor product of the complexes 
$$ \Ainf (R_\infty^\square)_{\psi} \xrightarrow{ [\epsilon]^{\frac{1}{p^{r_i}}} - 1} \Ainf (R_\infty^\square)_{\psi}. $$
Note that $\big([\epsilon]^{\frac{1}{p^{r_1}}} - 1 \big) \vline \big([\epsilon]^{\frac{1}{p}} - 1 \big) $, it suffices to show that multiplication by $\varphi^{-1}(\mu) = [\epsilon]^{\frac{1}{p}} - 1$ on the complex 
$$ \Ainf (R_\infty^\square)_{\psi} \xrightarrow{ [\epsilon]^{\frac{1}{p}} - 1} \Ainf (R_\infty^\square)_{\psi} $$
is homotopic to $0$. This follows from (the proof of) \cite[Lemma 9.6]{BMS1}. 
\eproof

We will also need the following lemma.  

\bl \label{lemma:each_chi_component_is_tf}
Let $\chi: \Gamma \ra \mu_\infty$ be a nontrivial character as above and let $\psi: \Gamma \ra \Ainf^\times$ be a continuous character lifting $\chi$.  Then \[H^i_{\text{ct}} (\Gamma, \Ainf (R_\infty)_\psi /\mu)\] is $p$-torsion free. 
\el

\bproof 
The proof is similar to the proof  of \cite[Proposition 3.19]{CK_semistable}. Arguing as in the proof of Lemma \ref{lemma:killing_junk_torsion}, the cohomology group $R \Gamma_{\text{ct}}(\Gamma, \Ainf(R_\infty)_{\psi}/\mu)$ can be computed by the $p$-completed tensor product of the complexes 
$$ \Ainf (R_\infty)_{\psi}/\mu \xrightarrow{ [\epsilon]^{\frac{1}{p^{r_i}}} - 1} \Ainf (R_\infty)_{\psi}/\mu $$
with $r_1 \ge r_2 \ge \cdots \ge r_d$ and $r_1 \ge 1$. When $i =1$, write $r = r_1$, then the complex is given by 
$$ \Ainf (R_\infty )_{\psi}/\mu  \otimes_{\Ainf}^\L  \Ainf (R_\infty)_{\psi}/ ([\epsilon]^{\frac{1}{p^r}} - 1)$$
which is quasi-isomorphic to 
$$  \Big( \Ainf (R_\infty)_{\psi}/ ([\epsilon]^{\frac{1}{p^r}} - 1)  \xrightarrow{ \:\: \: 0 \: \: \:  }  \Ainf (R_\infty)_{\psi}/ ([\epsilon]^{\frac{1}{p^r}} - 1)  \Big).$$
Since $r_1 \ge r_2 \ge \cdots$, we know that each $H^i_{\text{ct}} (\Gamma, \Ainf (R_\infty)_\psi /\mu)$ is a direct sum of some copies of 
\[\Ainf (R_\infty)_{\psi}/ ([\epsilon]^{\frac{1}{p^r}} - 1).\] It then suffices to show that $\Ainf (R_\infty)_{\psi}/\varphi^{-r} (\mu)$ is $p$-torsion free. For this we may reduce to $\Ainf (R_\infty)_{\chi}/\varphi^{-r} (\mu)$ (cf.  (\ref{eq:component_over_chi})), and then to $\Ainf(R_\infty^{\square})/\varphi^{-r}(\mu)$. But we know that $\Ainf(R_\infty^\square)$ is $(p, \mu)$-completely flat (in fact free) over $\Ainf$, the claim thus follows.  
\eproof

\addtocontents{toc}{\protect\setcounter{tocdepth}{2}}
\subsection{Local computation of $A \Omega_{\fX}^{\log, \mathrm{psh}}$}
\label{ss:local_computation_AOmega}  %\noindent 

%\noindent 
Our main goal of this subsection is to setup the proofs for the quasi-isomorphisms in (\ref{eq:qi_group_proket_Ainf}) and (\ref{eq:qi_group_Ainf_mod_xi}), from which Proposition \ref{prop:HT_presheaf} would follow. Similar to \cite{CK_semistable} (and slightly different from \cite{BMS1}), our proof relies on the following:

\bl \label{lemma:background_lemma_on_Ainf_gp_computation}
\noindent 
\be
\item Let $\mB \ra \mB'$ be a map in $\mD(\Ainf)$ and let $\mC$ be its cone. Suppose that  
    \be
     \item $W(\fm^\flat)$ kills each $H^i (\mC)$
     \item  $H^i(\mB \otimes^\L_{\Ainf} \Ainf/\mu)$ has no nontrivial $W(\fm^\flat)$-torsion for each $i$.  
     \ee 
Then $L \eta_{\mu}$ turns the map $\mB \ra \mB'$ into a quasi-isomorphism 
$$L \eta_\mu \mB \isom  L \eta_\mu \mB' .$$
\item Let $\mB \in \mD(\Ainf)$ be a complex such that $H^i (\mB \otimes^\L_{\Ainf} \Ainf/\mu)$ has no nontrivial $p$-torsion, then the natural map 
$$ ( L \eta_{(\mu)} \mB ) \otimes^\L_{\Ainf} \Ainf/\varphi(\xi) \lra L \eta_{(\zeta_p - 1)} (\mB \otimes^\L_{\Ainf} \Ainf/\varphi(\xi)) $$ is a quasi-isomorphism. 
\ee 
\el 

\bproof 
The first claim is \cite[Lemma 3.19]{CK_semistable}, the second is a special case of \cite[Lemma 5.16]{Bhatt}
\eproof

From Lemma \ref{lemma:background_lemma_on_Ainf_gp_computation}, it is clear that we need to analyze torsion in (various versions of) group cohomology, let us start with

\bl \label{lemma:no_almost_zero_for_flat}
For any $b \in \mO$, $i \in \N$, $R_\infty^\flat/b$ and $H^i_{\text{ct}}(\Gamma, R_\infty^\flat/b)$ both have no nontrivial almost zero elements (i.e., $\fm^\flat$-torsion).  
\el 

\bproof 
This follows from Lemma \ref{lemma:group_coho_local}, exactly as in \cite[Lemma 3.12]{CK_semistable}.
\eproof

Next we establish the following analog of \cite[Proposition 3.19]{CK_semistable}: 

\bp \label{prop:over_Ainf_no_almost_zero}
Keep notation from above. For each $i$, the cohomology 
\[H^i_{\text{ct}} (\Gamma, \Ainf(R_\infty)/\mu)
\]
\be
\item  is $p$-torsion free, 
\item is $p$-adically complete,
\item %Moreover, each  $H^i_{\text{ct}} (\Gamma, \Ainf(R_\infty)/\mu)$ 
has no nontrivial $W(\fm^\flat)$-torsion. 
\ee
\ep 

\bproof  The proof of Part (1) is entirely analogous to that of \cite[Proposition 3.19]{CK_semistable}.  To show that $H^i_{\text{ct}} (\Gamma, \Ainf (R_\infty)/\mu)$ is $p$-torsion free,  it suffices to show that the group cohomology \[H^i_{\text{ct}} (\Gamma, \Ainf (R_\infty)_\chi/\mu)\] for each summand in the completed direct sum is $p$-torsion free (by \cite[Lemma 3.6]{CK_semistable}). If $\chi = 1$,  then $H^i_{\text{ct}} (\Gamma, A(R)/\mu)$ is a direct sum of copies of $A(R)/\mu$, which is $p$-torsion free. If $\chi \ne 1$ is nontrivial, then this is the content of Lemma \ref{lemma:each_chi_component_is_tf}.

Now we prove Part (2). We know that $H^i_{\text{ct}}(\Gamma, \Ainf(R_\infty)/\mu)$ is $p$-torsion free, we have (by the long exact sequence associated to group cohomology) 
%$$ H^i_{\text{ct}} (\Gamma, \Ainf (R_\infty^\square)/\mu) \otimes_{\Ainf/\mu} \Ainf/(\mu, p^n) \cong H^i_{\text{ct}} (\Gamma, \Ainf (R_\infty^\square)/(\mu, p^n)),$$
\begin{equation}  \label{eq:mod_mu_is_torsion_free}
 H^i_{\text{ct}} (\Gamma, \Ainf (R_\infty)/\mu) \otimes_{\Ainf/\mu} \Ainf/(\mu, p^n) \cong H^i_{\text{ct}} (\Gamma, \Ainf (R_\infty)/(\mu, p^n)),
 \end{equation}
since $\Ainf(R_\infty)/\mu$ is $p$-torsion free. This further implies that the following sequence
\begin{multline} \label{eq:no_R1_lim} \quad 
0 \lra H^i_{\text{ct}} (\Gamma, \Ainf (R_\infty)/(\mu, p^n)) [p] \lra 
H^i_{\text{ct}} (\Gamma, \Ainf (R_\infty)/(\mu, p^n)) \\  \lra 
H^i_{\text{ct}} (\Gamma, \Ainf (R_\infty)/(\mu, p^{n-1})) \lra 0 
\end{multline}
 is short exact for each $i$. In particular, we have 
 $$R^1\text{lim} H^i_{\text{ct}} (\Gamma, \Ainf(R_\infty)/(\mu, p^n)) = 0. $$
Therefore, we have 
$$ H^i_{\text{ct}} (\Gamma, \Ainf(R_\infty)/\mu) \cong \varprojlim H^i_{\text{ct}} (\Gamma, \Ainf(R_\infty)/(\mu, p^n)).$$ 
Combining this with the isomorphism (\ref{eq:mod_mu_is_torsion_free}), we know that $H^i_{\text{ct}} (\Gamma, \Ainf (R_\infty^\square)/\mu)$ is indeed $p$-adically complete. %Thus Part (1) of the proposition is reduced to checking that each $H^i_{\text{ct}} (\Gamma, \Ainf (R_\infty)_\chi/\mu)$ is $p$-torsion free. 

For part (3), it suffices to show that for each $n$, $H^i_{\text{ct}} (\Gamma, \Ainf(R_\infty)/(\mu, p^n))$ has no nontrivial $W(\fm^\flat)$-torsion. Let us consider the short exact sequence 
$$ 0 \ra \Ainf (R_\infty)/(\mu, p) \xrightarrow{ p^{n-1}}  \Ainf (R_\infty)/(\mu, p^n) \ra  \Ainf (R_\infty)/(\mu, p^{n-1}) \ra 0. $$ 
Applying $H^i_{\text{ct}} (\Gamma, -)$ to the sequence remains short exact by (\ref{eq:no_R1_lim}). Thus it suffices to show that 
\[H^i_{\text{ct}} (\Gamma, \Ainf (R_\infty)/(\mu, p) ) = H^i_{\text{ct}} (\Gamma,  R_\infty^\flat/\mu)\] has no nontrivial $W(\fm^\flat) = \fm^\flat$-torsion, % since $H^i$ is a successive extension  
but this is the content of Lemma \ref{lemma:no_almost_zero_for_flat}. 
\eproof

Now we are ready to show that the map in  (\ref{eq:qi_group_proket_Ainf})  is a quasi-isomorphism. 
\bc \label{cor:qi_group_proket_Ainf}
Then the coordinate $\square$ induces a quasi-isomorphism 
$$ \gamma^{\square}:  A \Omega_{\ul R}^{\square, \gp}=L \eta_{\mu} R \Gamma_{\text{ct}} (\Gamma, \Ainf (R_\infty))  \isom A \Omega^{\proket}_{\ul R} = L \eta_{\mu} R \Gamma (U_{\proket}, \widehat{\Ainfx}) $$
in $\mD(\Ainf)$. 
\ec 

\bproof 
The Kummer pro-\'etale cover considered in Subsection \ref{ss:proket_cover} induces a map 
\begin{equation} \label{eq:almost_qis_over_Ainf} 
e: R \Gamma_{\text{ct}}(\Gamma, \Ainf (R_\infty))  \lra R \Gamma (U_{\proket}, \widehat{\Ainfx})
\end{equation}
which is an almost quasi-isomorphism. In other words, let $\mC = \text{Cone}(e)$ be the cone of this map in $\mD(\Ainf)$, then $[\fm^\flat]$ kills each $H^i (\mC)$.  This %considering the sheaf $\Ainf_X/p^m$,
reduces to prove that the natural map 
\[
H^i_{\text{ct}}(\Gamma, R_\infty^\flat)  \lra H^i (U_{\proket}, \widehat \mO_X^{\flat +} )\] is an almost isomorphism  (with respect to $\fm^\flat \subset \mO^\flat$), which follows from almost purity. 

Next, note that, since both $A \Omega_{\ul R}^{\square, \gp}$ and $A \Omega^{\proket}_{\ul R}$are derived $p$-complete (see Remark \ref{remark:derived_complete}), so the complex $\mC = \text{Cone} (e)$, and thus each $H^i (\mC)$ is derived $p$-complete. Therefore, $W(\fm^\flat)$ kills each $H^i (\mC)$ by \cite[Lemma 3.17]{CK_semistable}. This shows that $\mC$ satisfies the first condition of Part (1) of Lemma  \ref{lemma:background_lemma_on_Ainf_gp_computation}. For the second requirement, note that 
$$ R \Gamma_{\text{ct}}(\Gamma, \Ainf (R_\infty)) \otimes^\L \Ainf /\mu \cong R \Gamma_{\text{ct}}(\Gamma, \Ainf (R_\infty)/\mu) $$
so we can apply Proposition \ref{prop:over_Ainf_no_almost_zero}. The corollary now follows from Lemma  \ref{lemma:background_lemma_on_Ainf_gp_computation}. 
\eproof

\subsection{Hodge--Tate and de Rham comparison}

%\noindent 

%\noindent 
Now we put the ingredients together to prove the Hodge--Tate and de Rham comparison. First let us take care of the map (\ref{eq:qi_group_Ainf_mod_xi}). Retain notations from the beginning of this section. 

\bl \label{lemma:qi_group_Ainf_mod_xi}
The $\Gamma$-equivariant surjection 
\[\sq \theta = \theta \circ \varphi^{-1}: \Ainf (R_\infty) \ra R_\infty\] 
 induces a quasi-isomorphism 
$$ L \eta_\mu R \Gamma_{\text{ct}} (\Gamma, \Ainf (R_\infty)) \otimes^\L \Ainf/\varphi (\xi) \isom L \eta_{(\zeta_p-1)} R \Gamma_{\text{ct}} (\Gamma, R_\infty)  $$
\el

\bproof
By Lemma \ref{lemma:group_coho_local} and Lemma \ref{lemma:killing_junk_torsion}, it suffices to show that $\sq \theta$ induces a quasi-isomorphism 
$$  L \eta_\mu R \Gamma_{\text{ct}} (\Gamma, A(R)) \otimes^\L \Ainf/\varphi (\xi) \isom L \eta_{(\zeta_p-1)} R \Gamma_{\text{ct}} (\Gamma, R).$$
Note that 
\[A(R) =\widehat \midoplus_{\psi} \Ainf (R_\infty)_{\psi }\] where the direct sum is taken over the set $\mS$ of all $\psi: \Gamma \ra \Ainf^\times$ which lift the trivial character $\text{triv}: \Gamma \ra \mO^\times$ and are of the form described in Subsection \ref{ss:proket_cover}. Let us partition the set $\mS = \mS_1 \sqcup \mS_2$ as follows. Let $\gamma_1, ..., \gamma_d$ be the topological generators of $\Gamma$ as before, for any $\psi \in \mS$ and each $\gamma_i$, we have 
\begin{equation} \label{eq:defining_S_1}
\psi (\gamma_i) = [\epsilon]^{a_i \cdot p^{s_i}} 
\end{equation}  where $s_i \ge 0$ and $a_i \in \Z_p^\times$ (see (\ref{eq:form_of_char}) and the discussion above it). We say that $\psi \in \mS_1$ if for each $i$, $s_i \ge 1$ in the expression (\ref{eq:defining_S_1}), otherwise it lies in $\mS_2$. We claim that 
$$    L \eta_\mu R \Gamma_{\text{ct}} (\Gamma, \widehat \oplus_{\psi \in \mS_1} \Ainf (R_\infty)_{\psi})   \isom L \eta_\mu R \Gamma_{\text{ct}} (\Gamma, A(R))  $$
is a quasi-isomorphism. The proof of this claim is essentially the same as the proof of Lemma \ref{lemma:killing_junk_torsion}, namely each $H^i_{\text{ct}}(\Gamma, \Ainf(R_\infty)_{\psi})$ is entirely $\mu$-torsion for $\psi \in \mS_2$. Therefore, it suffices to show that 
\begin{equation} \label{eq:S_1_component}    L \eta_\mu R \Gamma_{\text{ct}} (\Gamma, \widehat \oplus_{\psi \in \mS_1} \Ainf (R_\infty)_{\psi})  \otimes_{\Ainf}^\L \Ainf/\varphi (\xi) \isom L \eta_{(\zeta_p-1)} R \Gamma_{\text{ct}} (\Gamma, R).
\end{equation}
Now let us analyze the $\psi$-component of the group cohomology. By the proof of Lemma \ref{lemma:killing_junk_torsion}, we see that (up to modifying the generators $\gamma_i$ by $p$-adic units), for $\psi \in \mS_1$, 
\[R \Gamma_{\text{ct}} (\Gamma,   \Ainf (R_\infty)_{\psi})\] is computed by the Koszul complex 
\[  \text{Kos}(\Ainf(R_\infty)_{\psi}; \gamma_i -1 ) \cong  \text{Kos}(\Ainf(R_\infty)_{\psi}; \: [\epsilon]^{p^{s_1}} -1, \cdots , [\epsilon]^{p^{s_d}} -1 ).\]
Note that $\mu = [\epsilon] - 1$ divides each $[\epsilon]^{p^{s_1} - 1}$, thus by \cite[Lemma 7.9]{BMS1} we have 
$$ L \eta_{\mu} R \Gamma_{\text{ct}} (\Gamma,   \Ainf (R_\infty)_{\psi}) \cong 
 \text{Kos}(\Ainf(R_\infty)_{\psi}; \: \frac{[\epsilon]^{p^{s_1}} -1}{[\epsilon]-1}, \cdots , \frac{[\epsilon]^{p^{s_d}} -1}{[\epsilon]-1})
$$
Note that each $\Ainf (R_\infty)_\psi$ is $(p, \mu)$-adically flat over $\Ainf$, and $\varphi(\xi) =  \frac{[\epsilon]^{p -1}}{[\epsilon]-1}$ divides $ \frac{[\epsilon]^{p^{s_i}} -1}{[\epsilon]-1}$ for each $i$, so the left side of (\ref{eq:S_1_component}) can be identified with 
\begin{multline*}
 \qquad     L \eta_\mu R \Gamma_{\text{ct}} (\Gamma, \widehat \oplus_{\psi \in \mS_1} \Ainf (R_\infty)_{\psi})  \otimes_{\Ainf}^\L \Ainf/\varphi (\xi) \\
     \cong \text{Kos} ( \widehat \oplus_{\psi \in \mS_1} \Ainf (R_\infty)_{\psi}; \: 0, \cdots, 0). \qquad 
\end{multline*}
By the computation of the right side of  (\ref{eq:S_1_component}) using Koszul complexes, it suffices to show that the map 
$$ \widehat \oplus_{\psi \in \mS_1} \Ainf (R_\infty)_{\psi} \hookrightarrow A(R) \hookrightarrow \Ainf(R_\infty) \xrightarrow{\: \sq \theta \: } R_\infty $$
induces  isomorphisms
$$ \widehat \oplus_{\psi \in \mS_1}  \Ainf (R_\infty)_{\psi} /\varphi(\xi) \xrightarrow[\sim]{\varphi^{-1}}  A(R)/\xi \isom R. $$
This reduces to show that the map 
$$\varphi^{-1}:  \widehat \oplus_{\psi \in \mS_1}  \Ainf (R^\square_\infty)_{\psi} /\varphi(\xi) \lra  A(R^\square)/\xi  = \Ainf \gr{P } \otimes_{\Ainf \gr {N}} \Ainf /\xi $$ 
is an isomorphism, which is clear. 
\eproof

Now we finish the proof of Theorem \ref{thm:HT_comp}. 

\bproof[Proof of Proposition \ref{prop:HT_presheaf} (and Theorem \ref{thm:HT_comp})] Theorem \ref{thm:HT_comp} comes from Proposition \ref{prop:HT_presheaf} by applying the sheafification functor $j^{-1}$, while the latter statement follows from combining the isomorphisms (\ref{eq:qi_group_proket_Ainf}), (\ref{eq:qi_group_Ainf_mod_xi}) (Corollary \ref{cor:qi_group_proket_Ainf} and Lemma \ref{lemma:qi_group_Ainf_mod_xi}) and Corollary \ref{cor:L_eta_turns_into_isom}. 
\eproof 

The de Rham comparison is then a consequence of the Hodge--Tate comparison. The proof is exactly the same as in \cite{BMS1}, which we include for completeness. 

\bproof[Proof of Theorem \ref{thm:dR_comp}]
This follows from the following sequence of isomorphisms 
\begin{align*} 
A \Omega^{\log}_{\fX} \otimes^\L_{\Ainf} \Ainf/\xi \: \: & = \: \:  (L \eta_\mu R \nu_* \widehat{\Ainfx}) \otimes^\L_{\Ainf} \Ainf/\xi \\ & \xrightarrow[\raise.5ex\hbox{$\sim$}]{ \: \varphi \: } \: \:   
 (L \eta_{\varphi(\mu)} R \nu_* \widehat{\Ainfx}) \otimes^\L_{\Ainf} \Ainf/\varphi(\xi)  \\
 &  \isom \: (L \eta_{\varphi(\xi)} L \eta_\mu R \nu_*  \widehat{\Ainfx} ) \otimes^\L_{\Ainf} \Ainf/\varphi(\xi) \\ % & \isom \: (L \eta_{\varphi(\xi)} A \Omega_{\fX}^{\log}) \otimes^\L_{\Ainf} \Ainf/\varphi(\xi) \\
 & \isom \mH^{\bullet} (A \Omega_{\fX}^{\log} \otimes_{\Ainf}^\L \Ainf/\varphi(\xi)) \{\bullet\} \\
  & \isom \mH^\bullet  (\sq \Omega^{\log}_{\fX}) \{\bullet\} \\
  & \cong \Omega^{\bullet, (\textup{ct})}_{\fX/\ul \mO}.
\end{align*}
Here the third last isomorphism follows from \cite[Proposition 6.12]{BMS1}, the second last isomorphism is the Hodge--Tate comparison (Theorem \ref{thm:HT_comp}) and the last identification is the primitive Hodge--Tate comparisom (Theorem \ref{thm:primitive_HT}). 
\eproof

\subsection{More on $A \Omega^{\log}_{\fX}$} \label{ss:proof_of_more_on_AOmega}
%\noindent 

%\noindent 
The goal of this subsection is to prove Proposition \ref{prop:presheaf_is_sheaf}, which asserts that the presheaf $A \Omega^{\log, \psh}_{\fX}$ is a sheaf. 
Let us first note that $A \Omega^{\log, \psh}_{\fX}$ is derived $(p, \xi)$-adically complete (thus derived $(p, \varphi(\xi))$-adically complete).  

\bl \label{lemma:presheaf_is_sheaf}
Suppose that $\fX$ is a locally small quasi-fs log formal scheme over $\ul \mO_C$, then the natural map 
$$ \sq \Omega_{\fX}^{\log, \psh} \lra R j_*  \sq \Omega_{\fX}^{\log}$$
is a quasi-isomorphism. 
\el 

\bproof 
Evaluating on an affine open $\fU = \spf R \in \text{Ob}(\fX_{\ett, \text{small}})$, the map becomes 
$$c: L \eta_{(\zeta_p -1)} R \Gamma (U_{\proket}, \OXplus) \lra  R \Gamma (\fU_{\ett}, L \eta_{(\zeta_p - 1)} R \nu_* \OXplus),$$
 namely the map in (\ref{eq:switching_L_eta_for_sq_Omega}), which is a quasi-isomorphism by Proposition  \ref{prop:primitive_HT_local}.   
\eproof 

By the same argument of \cite[Corollary 4.6]{CK_semistable}, this implies that $ A \Omega^{\log, \psh}_{\fX} $ is already a sheaf. For completeness let us include the argument below. 

\bproof[Proof of Proposition \ref{prop:presheaf_is_sheaf}] 
Since $ A \Omega^{\log, \psh}_{\fX} $ is derived $\varphi(\xi)$-complete, by (the \'etale analog of) \cite[Lemma 9.15]{BMS1}, it suffices to show that 
$$ A \Omega^{\log, \psh}_{\fX} \otimes^\L \Ainf/\varphi(\xi)^r \isom R j_*  j^{-1}(A  \Omega^{\log, \psh}_{\fX} \otimes^\L \Ainf/\varphi(\xi)^r)$$
is a quasi-isomorphism for each $r \ge 1$. This further reduces to the case where $r = 1$. The claim thus follows from Proposition \ref{prop:HT_presheaf} and Lemma \ref{lemma:presheaf_is_sheaf}. \eproof

\bc \label{cor:AOmega_derived_complete}
As before, assume that $\fX$ is admissibly smooth over $\ul{\mO_C}$. Then $A \Omega_{\fX}^{\log}$ is derived $(p, \xi)$-adically complete (thus also derived $\varphi(\xi)$-complete). 
\ec 

\bproof Let us prove that $A \Omega_{\fX}^{\log}$ is derived $\xi$-complete (the argument for derived $p$-completeness is similar). We need to show that 
\begin{equation} \label{eq:derived_xi_complete}
A \Omega_{\fX}^{\log} \lra \underset{\longleftarrow}{\text{Rlim}} (A \Omega_{\fX}^{\log} \otimes^\L_{\Ainf} \Ainf/\xi^n) 
\end{equation} 
is a quasi-isomorphism. As $j^{-1} \circ R j_* \cong \textup{id}$,  it suffices to check this after applying $R j_*$ to (\ref{eq:derived_xi_complete}). This indeed becomes an isomorphism, since we have
\begin{align*} 
R j_* A \Omega_{\fX}^{\log} \;\xleftarrow{\: {}_{\sim} \:} \;  A \Omega_{\fX}^{\log, \psh}
& \isom  \underset{\longleftarrow}{\text{Rlim}} (A \Omega_{\fX}^{\log, \psh} \otimes^\L_{\Ainf} \Ainf/\xi^n) \\
& \; \cong \:  \: 
R j_*  \underset{\longleftarrow}{\text{Rlim}} (A \Omega_{\fX}^{\log} \otimes^\L_{\Ainf} \Ainf/\xi^n).
\end{align*}
Here the first isomorphism is Proposition \ref{prop:presheaf_is_sheaf} and the middle isomorphism uses that $A \Omega_{\fX}^{\log, \psh}$ is derived $(p, \mu)$-adically complete (cf. Remark \ref{remark:derived_complete}). 
\eproof 

\bc 
Same assumption as above. The log $\Ainf$-cohomology  
\[R \Gamma_{\Ainf} (\fX) = R \Gamma( \fX,  A \Omega_{\fX}^{\log})\] is a perfect complex in $\mD(\Ainf)$. 
\ec 

\bproof 
This is a consequence of the previous corollary and the de Rham comparison theorem (cf. Theorem \ref{thm:dR_comp}). 
\eproof 

\bc[Multiplicativity] \label{cor:multiplicative} 
Consider the setup of Remark \ref{remark:multiplicativity_tilde}, let $\ul{R_1}, \ul{R_2}$ be two $p$-complete pre-log rings over $\ul \mO$ which give rise to small \'etale neighbourhoods of $\fX$.  %then the natural maps $\ul{R_1} \ra \ul{R_1}\otimes_{\ul \mO} \ul{R_2}$ and $\ul{R_2} \ra \ul{R_1}\otimes_{\ul \mO} \ul{R_2}$ induce a quasi-isomorphism 
There is a natural quasi-isomorphism 
\[ A \Omega^{\log}_{\ul{R_1}} \: \widehat \otimes^\L_{\Ainf} \:  A \Omega^{\log}_{\ul{R_2}} \isom A \Omega^{\log}_{ \ul{R_1}\otimes_{\ul \mO} \ul{R_2} }.
\] 
Here the derived completion is $(p, \xi)$-adic. 
\ec 

\bproof 
The map comes from functoriality of the construction $\ul R \mapsto A \Omega^{\log}_{\ul R}$. By Corollary \ref{cor:AOmega_derived_complete} and derived Nakayama's lemma it suffices to show that the map is a quasi-isomorphism after (derived) mod $\varphi(\xi)$. Using the Hodge--Tate comparison, this follows from Remark \ref{remark:multiplicativity_tilde}. (One may also argue mod $\xi$ and use the de Rham comparison instead).   
\eproof

%%%%%%%%%%%%%%%%%%
%%%%%%%%%%%%%%%%%%
%%%%%%%%%%%%%%%%%%
%%%%%%%%%%%%%%%%%%
%%%%%%%%%%%%%%%%%%
%%%%%%%%%%%%%%%%%%
%%%%%%%%%%%%%%%%%%
%%%%%%%%%%%%%%%%%%
%%%%%%%%%%%%%%%%%%
%%%%%%%%%%%%%%%%%%
%%%%%%%%%%%%%%%%%%   Quasi Syntomic 
%%%%%%%%%%%%%%%%%%
%%%%%%%%%%%%%%%%%%
%%%%%%%%%%%%%%%%%%
%%%%%%%%%%%%%%%%%%
%%%%%%%%%%%%%%%%%%
%%%%%%%%%%%%%%%%%%
%%%%%%%%%%%%%%%%%%
%%%%%%%%%%%%%%%%%%
%%%%%%%%%%%%%%%%%%
%%%%%%%%%%%%%%%%%%
%%%%%%%%%%%%%%%%%%
%%%%%%%%%%%%%%%%%%
%%%%%%%%%%%%%%%%%%

\newpage

\section{Derived log $\Ainf$-cohomology}
\label{section:derived}

In this short section we define a derived version of log $\Ainf$-cohomology (compare with \cite[Construction 7.6]{BS} for the nonlog case and \cite[6.8]{Bhatt_dR} for the derived log de Rham cohomology).  
Note that this is analogous to the procedure of obtaining a derived version of log prismatic cohomology in \cite{logprism}.

\subsection{Derived log $\Ainf$-cohomology}

\begin{construction} \label{construction:derived_Ainf_on_prelog}
Fix a split pre-log ring $\ul{\mO_C} = (\mO_C, N_\infty)$ as in Section \ref{section:HT_primitive}.  The category of derived $p$-complete pre-log rings over $\ul{\mO_C}$ is complete and generated under colimits by pre-log rings of the form of
\[
\Sigma(S, \ul T) :=  (\mO_C \gr{(X_s)_{ s\in S}, \N^T }, N_\infty \oplus \N^T)
\]
for finite sets $S,T$. 
We consider the $\infty$-category of derived $p$-complete simplicial pre-log rings over $\ul{\mO_C}$ (these are ``animated $p$-complete pre-log rings''). 
We have a functor
\begin{equation}\label{eq:functor_on_affine}
 \Sigma(S, \ul T) 
\longmapsto
R \Gamma_{\Ainf} (\spf (\Sigma (S, \ul T))^a) \end{equation} 
to the $\infty$-category $\mathcal{D}(\Ainf)$ of $A$-modules. In fact, each value is a derived $(p, \mu)$-complete commutative algebra in $\mathcal{D}(\Ainf)$ and is equipped with a $\varphi_A$-semilinear map $\varphi$, so we may regard the functor above as a functor to the $\infty$-category of such objects. 

\begin{definition}
Fix $\ul{\mO_C}$ as above. The \emph{derived log $\Ainf$-cohomology} is the functor obtained as the left Kan extension from $p$-complete pre-log rings 
\[\Sigma (S, \ul T) =  (\mO_C \langle (X_s)_{ s\in S}, \N^T \rangle, N_\infty \oplus\N^T)\] to \emph{all}  derived $p$-complete simplicial pre-log rings over  $\ul{\mO_C}$. For a derived $p$-complete (simplicial) pre-log ring $\ul R = (R, M)$ over $\ul{\mO_C}$, we write $A \Omega^{\L,\log}_{\ul R/\ul{\mO_C}}$ (or simply $A \Omega^{\L,\log}_{\ul R}$ if the base pre-log ring  is understood) for the derived log $\Ainf$-cohomology of $\ul R$, equipped with the Frobenius map
\[
\varphi: A \Omega^{\L,\log}_{\ul R} \to \varphi_{\Ainf,*} A \Omega^{\L,\log}_{\ul R}.
\]  
%We sometimes write $A \Omega^{\L, \log}_{\ul R}$ instead to avoid confusion (compare to notations from (\ref{eq:definition_of_AOmega_R}) in Section \ref{section:Hodge_Tate_and_dR}).

We similarly define the functor $\ul R \mapsto \sq \Omega_{\ul R}^{\L, \log}$. 
\end{definition}
\end{construction}

\iffalse
\begin{remark} 
The derived $\Ainf$-cohomology can also be obtained by deriving (or animating) the functor in (\ref{eq:functor_on_affine}) to  derived $p$-complete simplicial  pre-log rings over $\ul{\mO_C}$. 
\end{remark}
\fi

\bc[Derived Hodge--Tate comparison] \label{cor:derived_HT}
For any derived $p$-complete (simplicial) pre-log ring $\ul R$ over $\ul{\mO_C}$, the derived $\Ainf$-cohomology $A \Omega^{\log}_{\ul R}$ satisfies 
\[A \Omega_{\ul R}^{\L, \log} \otimes_{\Ainf}^\L \Ainf/\varphi(\xi) \cong \sq \Omega_{\ul R}^{\L, \log}.\]
Moreover, $\sq \Omega_{\ul R}^{\L, \log}$
admits an $\N$-indexed increasing exhaustive filtration $\tu{Fil}^{\tu{conj}}_\bullet \sq \Omega_{\ul R}^{\L, \log}$ (called the \textit{conjugate filtration} on $\sq \Omega_{\ul R}^{\L, \log}$) such that its $i^{th}$-graded piece is given by sending $\ul R$ to $\widehat \L^i_{\ul R/\ul{\mO_C}} [-1]$,  the derived $p$-completion of $\wedge^i \L_{\ul R/\ul{\mO_C}}[-i]$. 
\ec 

\bproof 
This follows from the Hodge--Tate comparison (Theorem \ref{thm:HT_comp} and Theorem \ref{thm:primitive_HT}). 
\eproof 

\bc[Derived de Rham comparison] \label{cor:derived_dR}
For any derived $p$-complete (simplicial) pre-log ring $\ul R$ over $\ul{\mO_C}$, the derived $\Ainf$-cohomology $A \Omega^{\log}_{\ul R}$ satisfies the derived de Rham comparison 
\[ A \Omega^{\L, \log}_{\ul R}   \otimes^\L_{\Ainf} \mO_C \isom \widehat \L {\Omega}_{\ul R/ \ul \mO_C}.\]
\ec 

\bproof 
This follows from de Rham comparison (Theorem \ref{thm:dR_comp}) for $\ul R = \Sigma(S, \ul{T})$. 
\eproof 

\bc[Derived multiplicativity] \label{cor:derived_multiplicative}
For any derived $p$-complete (simplicial) pre-log rings  $\ul{R_1}, \ul{R_2}$ over $\ul{\mO_C}$, we have a natural  quasi-isomorphism 
\[
A \Omega^{\L, \log}_{\ul{R_1}} \widehat \otimes^\L_{\Ainf} A \Omega^{\L, \log}_{\ul{R_2}} \isom 
A \Omega^{\L, \log}_{\ul{R_1} \widehat \otimes_{\ul{\mO}} \; \ul{R_2} }. 
\]
\ec 

\bproof 
This follows from the statement for $\ul{R_1} = \Sigma(S_1, \ul{T_1})$ and $\ul{R_2} = \Sigma (S_2, \ul{T_2})$, which is Corollary \ref{cor:multiplicative}. 
\eproof 

Let us record some consequences of the derived Hodge--Tate comparison. 

\bc[Change of base] \label{cor:change_of_base}
For any split perfectoid pre-log ring $\ul{\mO_C}$, and any derived $p$-complete simplicial pre-log ring $\ul R$ over $\ul{\mO_C}$, the map $\mO_C \ra \ul{\mO_C}$ of pre-log rings (where $\mO_C$ is viewed as a pre-log ring equipped with the trivial pre-log structure) induces a canonical isomorphism 
\[
A \Omega^{\L, \log}_{\ul R/\mO_C} \isom  A \Omega^{\L, \log}_{\ul R/\ul{\mO_C}}  
\]
\ec 

\bproof 
This follows from Corollary \ref{cor:derived_HT} and Corollary \ref{cor:cotangent_for_perfectoid}. 
\eproof

\begin{proposition}[Quasisyntomic descent]
Fix $\ul{\mO_C}$ as above.  On the log quasisyntomic site $\QSyn_{\ul{\mO_C}}^{\log, \opp}$, the presheaf
\[
\ul R \longmapsto A \Omega_{\ul R}^{\L, \log} 
\]
is a sheaf. 
\end{proposition}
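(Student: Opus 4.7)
The plan is to use the derived Hodge--Tate comparison together with the conjugate filtration to reduce the statement to (flat) descent for wedge powers of the log cotangent complex, a result already established in Proposition \ref{lem-descent-lfpqc-hodge-graded-piece}. First, I would observe that $A\Omega^{\L,\log}_{\ul R}$ is derived $(p, \varphi(\xi))$-adically complete. For pre-log rings of the form $\Sigma(S,\ul T)$ this is Corollary \ref{cor:AOmega_derived_complete} (since $\spf(\Sigma(S,\ul T))^a$ is admissibly smooth over $\spf(\ul{\mO_C})^a$), and derived completeness is preserved under left Kan extension along the inclusion of these pre-log rings into all derived $p$-complete simplicial pre-log rings.

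By derived Nakayama, in order to check that $\ul R \mapsto A\Omega^{\L,\log}_{\ul R}$ satisfies descent along a log quasisyntomic cover $\ul R \to \ul R^0$ (with \v Cech nerve $\ul R^\bullet$), it is enough to check descent after reducing mod $(\varphi(\xi))^n$ for every $n\geq 1$. An easy induction on $n$ using the cofiber sequences induced by multiplication by $\varphi(\xi)$ reduces the problem further to the case $n=1$, i.e.\ to showing that the functor
\[
\ul R \longmapsto A\Omega^{\L,\log}_{\ul R}\otimes^{\L}_{\Ainf}\Ainf/\varphi(\xi)
\]
is a sheaf on $\QSyn_{\ul{\mO_C}}^{\log,\opp}$. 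By the derived Hodge--Tate comparison (Corollary \ref{cor:derived_HT}), this functor is naturally identified with $\ul R \mapsto \sq\Omega^{\L,\log}_{\ul R}$, which is equipped with an $\N$-indexed increasing exhaustive conjugate filtration whose $i$-th associated graded is the functor $\ul R \mapsto \widehat\L^i_{\ul R/\ul{\mO_C}}[-i]$.

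Since totalizations of cosimplicial objects in $\mD(\Ainf/\varphi(\xi)) \simeq \mD(\mO_C)$ commute with filtered colimits of uniformly bounded-below objects, it suffices to prove descent for each graded piece $\ul R \mapsto \widehat\L^i_{\ul R/\ul{\mO_C}}[-i]$. Non-$p$-completely, this is exactly the $\text{hlf}$ descent result of Proposition \ref{lem-descent-lfpqc-hodge-graded-piece}, since every quasisyntomic cover is in particular $p$-completely homologically log faithfully flat. To transfer this to the $p$-completed setting, I would first reduce modulo $p^m$, where the quasisyntomic cover becomes an honest homologically log faithfully flat cover of $\ul R/p^m$ so that Proposition \ref{lem-descent-lfpqc-hodge-graded-piece} applies directly, and then pass to the limit using the derived $p$-completeness of $\widehat\L^i$.

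The main obstacle will be Step~3, the careful passage from ordinary $\text{hlf}$-descent for $\wedge^i \L$ to derived $p$-complete descent for $\widehat\L^i$ along quasisyntomic covers: one has to check that the $p$-completion is compatible with the totalization (equivalently, that the relevant $R^1\varprojlim$ terms vanish), which ultimately rests on the fact that for a quasisyntomic cover $\ul R\to \ul R^0$ the Tor amplitude bound on $\L_{\ul R^0/\ul R}$ ensures that the terms of the \v Cech nerve have bounded amplitude uniformly in the cosimplicial degree. Everything else is a formal bootstrap.
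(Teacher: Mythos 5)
Your proposal follows essentially the same route as the paper: derived Nakayama to reduce modulo $\varphi(\xi)$, the derived Hodge--Tate comparison (Corollary \ref{cor:derived_HT}) to identify the reduction with $\sq\Omega^{\L,\log}_{\ul R}$ carried by its conjugate filtration with graded pieces $\widehat\L^i_{\ul R/\ul{\mO_C}}[-i]\in D^{\geq 0}$, and then Proposition \ref{lem-descent-lfpqc-hodge-graded-piece} for descent of wedge powers of the log cotangent complex. The paper's proof is considerably more terse and implicitly folds the passage from $\text{hlf}$-descent to $p$-completed quasisyntomic descent into the final citation; your write-up usefully makes that step explicit.
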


\begin{proof}
Note that the derived $p$-completion of $\wedge^i \L_{\ul R/\ul{\mO_C}}[-i]$ lies in $D^{\geq 0}(R)$ for every $i$. Therefore, by derived Nakayama and the Hodge--Tate comparison, the problem reduces to the quasisyntomic descent for exterior powers of the cotangent complex, which is Proposition \ref{lem-descent-lfpqc-hodge-graded-piece}. 
\end{proof}

\subsection{Global derived log $\Ainf$-cohomology} \label{ss:global_derived}
\noindent 

\noindent 
Now consider a log $p$-adic formal scheme instead of a pre-log ring. %Let us note that there is a subtle difference between sections of a log structure and its (fixed) chart. 
Let $\fX = (\fX, \mM_{\fX})$ be a(ny) log $p$-adic formal scheme over $\mO_C$ (without restrictions on its singularities). Then we have a functor which assigns an affine object of the \'etale site $\fX_{\ett}$:
\[
\fU=\spf (R) \longmapsto   A \Omega^{\L, \log}_{(R, \Gamma (\fU, \mM_{\fX}))}. 
\]
We denote this functor by $A \Omega_{\fX}^{\L, \log, \tu{pre}}$. Note that, by Corollary \ref{cor:change_of_base}, the value of this functor does not depend on the log structure on the base $\mO_C$ (in other words, for a log $p$-adic formal scheme $\fX$ over a split perfectoid base $\ul{\mO_C}$, we may view it as a log formal scheme over either $\ul{\mO_C}$ or $\mO_C$, the value of the pre-sheaf will not change). We also remark that this presheaf valued on $\mD(\Ainf)$ may not be an \'etale sheaf.  

\begin{definition}[Derived log $\Ainf$-cohomology]
Let $\fX$ be a log $p$-adic formal scheme over $\mO_C$. We define $A \Omega_{\fX}^{\L,\log} \in \mD(\fX_{\ett}, \Ainf)$ to be the \'etale sheafification of the presheaf $A \Omega_{\fX}^{\L, \log, \tu{pre}}$. Its cohomology 
\[R \Gamma_{\Ainf}^{\L} (\fX):=R \Gamma (\fX_{\ett}, A \Omega_{\fX}^{\L, \log})\] is refered to as the \emph{derived log $\Ainf$-cohomology} of $\fX$. 
\end{definition}

Next we compare the derived log $\Ainf$-cohomology to the non-derived version when $\fX$ is reasonably smooth over some base $\ul{\mO_C}$. 

The following is an analog of \cite[Proposition 4.9]{logprism}. 

\begin{proposition} \label{prop:comparing_derived_with_non_derived_on_charts}
Suppose that $\fX$ is admissibly smooth over $\ul{\mO_C}$. 
\be
\item 
For any affine object $\fU=\spf (R)$ of $\fX_{\ett}$, the natural map
\[% \label{eq:compare_derived_with_nonderived}
A \Omega^{\L, \log}_{\fX} (\fU) \lra R \Gamma_{\Ainf} (\fU) =  R\Gamma (\fU_{\ett}, A \Omega_{\fX}^{\log}) 
\]
is an isomorphsim in $\mD(\Ainf)$. In other words, there is a natural isomorphism of \'etale sheaves in $\mD(\fX_{\ett}, \Ainf)$
\[
A \Omega_{\fX}^{\L, \log} \isom A \Omega_{\fX/\ul{\mO_C}}^{\log}.
\]
\item If $P \to \Gamma (\fU, \mM_{\fX})$ is a  chart for the log structure, then the natural map
\[
A \Omega^{\L, \log}_{(R, P)} \lra 
A \Omega^{\L, \log}_{\fX} (\fU) 
\]
is an isomorphism. 
\ee
\end{proposition}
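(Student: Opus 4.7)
The plan is to deduce both statements from the derived Hodge--Tate comparison (Corollary \ref{cor:derived_HT}), the primitive Hodge--Tate comparison (Theorem \ref{thm:primitive_HT}), and the sheaf property of Proposition \ref{prop:presheaf_is_sheaf}. The essential observation is that all four objects in question are derived $(p, \mu)$-complete (hence also derived $\varphi(\xi)$-complete), so by derived Nakayama each comparison map is a quasi-isomorphism if and only if it becomes one after applying $(-)\otimes^{\L}_{\Ainf}\Ainf/\varphi(\xi)$. After this reduction, both sides acquire compatible conjugate/Postnikov filtrations whose graded pieces can be controlled by (derived) log cotangent complexes.

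First I would prove (2). Since $P \to \Gamma(\fU,\mM_{\fX})$ is a chart, the pre-log rings $(R,P)$ and $(R, \Gamma(\fU,\mM_{\fX}))$ give rise to the same associated log ring. By derived Hodge--Tate, the mod $\varphi(\xi)$ reductions of both sides are $\sq\Omega^{\L,\log}_{(R,P)}$ and $\sq\Omega^{\L,\log}_{(R,\Gamma(\fU,\mM_{\fX}))}$, each equipped with a conjugate filtration whose $i^{th}$ graded piece is the $p$-completed $\wedge^{i}\L_{(-)/\ul{\mO_C}}[-i]$. By Proposition \ref{prop:invariance_log} (invariance of the log cotangent complex under passing to the associated log structure), the functorial map induces an isomorphism on each graded piece, and hence on the filtered complexes themselves by derived Nakayama applied to the filtration.

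Next I would prove (1) using (2). The question is \'etale local on $\fU$, and since $\fX$ is admissibly smooth, \'etale locally one has an affine neighbourhood $\fV = \spf(R')$ with a small chart $(R', P_\infty)$ with $P_\infty = P\sqcup_{N} N_\infty$ in the sense of Definition \ref{definition:locally_small}. By (2), the left-hand presheaf takes the value $A\Omega^{\L,\log}_{(R',P_\infty)}$ on such $\fV$, and the right-hand side is a sheaf on $\fX_{\ett}$ (so its value on $\fV$ equals $R\Gamma(\fV_{\ett}, A\Omega^{\log}_{\fX})$, which by Proposition \ref{prop:presheaf_is_sheaf} identifies with $A\Omega^{\proket}_{\ul{R'}}$). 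After reducing modulo $\varphi(\xi)$, the derived side is $\sq\Omega^{\L,\log}_{(R', P_\infty)}$ whose conjugate graded pieces are $\widehat{\L}^{i}_{(R',P_\infty)/\ul{\mO_C}}[-i]$. Using Theorem \ref{thm:functorial_cotangent_complex} (base change), Corollary \ref{cor:cotangent_for_perfectoid} (vanishing of $\widehat{\L}_{\ul{\mO_C}/\mO_C}$-contributions arising from $N\to N_\infty$), and Lemma \ref{lemma:compare_Gabber_with_Olsson} applied to the log smooth integral finite model $(\mO_C,N)\to (R^{\square},P)$, this log cotangent complex is discrete and identifies with $\widehat{\Omega}^{1,\tu{ct}}_{\fX/\ul{\mO_C}}(\fV)$. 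On the global side, Theorem \ref{thm:primitive_HT} (combined with Proposition \ref{prop:presheaf_is_sheaf}) identifies the cohomology sheaves of $\sq\Omega^{\proket}_{\ul{R'}}$ with $\Omega^{i,\tu{ct}}_{\fX/\ul{\mO_C}}\{-i\}$. The functorially induced natural map matches the two descriptions graded piece by graded piece, and the proof concludes by assembling the filtered pieces.

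The main obstacle is the compatibility check at the level of graded pieces: one must verify that the natural map induced by Kan extension on the derived side, composed with the derived Hodge--Tate isomorphism, agrees on the $i^{th}$ conjugate graded piece with the map induced by $\iota^{i}$ of Construction \ref{construction:map_iota} and the primitive Hodge--Tate identification. This reduces to functoriality of both constructions and their agreement on the generating family $\Sigma(S,\ul T)$, which is built into the definitions; nevertheless, tracking the Breuil--Kisin--Fargues twists and the shift $[-i]$ carefully is the most delicate part. Once this compatibility is established, the remainder of the argument is the filtered derived Nakayama argument sketched above.
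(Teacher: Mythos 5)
The logical structure here is inverted relative to the paper, and this inversion creates a gap in the proof of (2). The paper proves (1) first and then derives (2) from it; your proposal tries to prove (2) first, but what you actually prove is only \emph{chart-independence of the presheaf}, not statement (2). Recall that $A\Omega^{\L,\log}_{\fX}(\fU)$ is by definition the \emph{sheafification} of the presheaf $\fV \mapsto A\Omega^{\L,\log}_{(\mO(\fV),\Gamma(\fV,\mM_{\fX}))}$ evaluated at $\fU$, not the presheaf value itself. Your first-paragraph argument, at best, shows $A\Omega^{\L,\log}_{(R,P)} \cong A\Omega^{\L,\log}_{(R,\Gamma(\fU,\mM_{\fX}))}$; it does not address why the sheafification unit $A\Omega^{\L,\log}_{(R,\Gamma(\fU,\mM_{\fX}))} \to A\Omega^{\L,\log}_{\fX}(\fU)$ is a quasi-isomorphism, which is what (2) is really asserting. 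The paper closes this by first establishing (1), then using the de Rham/Hodge--Tate comparisons on $R\Gamma_{\Ainf}(\fU)$ together with discreteness of $\widehat\L_{(R,P)/\ul{\mO_C}}$ (via Lemma \ref{lemma:compare_Gabber_with_Olsson} and invariance under log-ification) to conclude; this step is essential and is absent from your write-up.

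A second, smaller issue in the (2) argument: you assert that ``$(R,P)$ and $(R,\Gamma(\fU,\mM_{\fX}))$ give rise to the same associated log ring,'' but the associated log ring of a pre-log ring (a naive monoid pushout) need not coincide with the global sections of the \'etale-sheafified log structure --- sheafification in the \'etale topology can enlarge the monoid. What is true is that both induce the same log \emph{structure} as sheaves on $\fU_{\ett}$, but Proposition \ref{prop:invariance_log} is stated for associated log rings, not sheaves, so an extra argument is needed here if you want to go this route. (This is in fact one of the reasons the paper defines $A\Omega^{\L,\log}_{\fX}$ by sheafification rather than by the presheaf.)

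Your argument for (1) is essentially sound and is the same in spirit as the paper's, though more self-contained: you verify the comparison on a basis of small affine opens using Proposition \ref{prop:presheaf_is_sheaf}, the derived Hodge--Tate comparison on the left and the primitive Hodge--Tate comparison on the right, whereas the paper reduces to a statement about sheafification of $\widehat\L$ and invokes \cite[Proposition 4.9]{logprism}. Both routes reduce in the end to discreteness of the log cotangent complex of the small chart via Lemma \ref{lemma:compare_Gabber_with_Olsson}. One cosmetic point: your invocation of Corollary \ref{cor:cotangent_for_perfectoid} for ``vanishing of $\widehat\L_{\ul{\mO_C}/\mO_C}$-contributions'' is not quite the right tool here; the relevant input is that $\ul{\mO_C} \to (R',P_{\infty})$ is log smooth and \emph{integral} (because $N\to P$ is saturated, hence the pushout map $N_{\infty}\to P_{\infty}$ is integral), and Lemma \ref{lemma:compare_Gabber_with_Olsson} then gives discreteness directly, exactly as in the proof of Lemma \ref{lemma:HT_map_factors_via_L_eta}.

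To repair your proof: prove (1) first as you do, and then for (2) use (1) to rewrite the target as $R\Gamma_{\Ainf}(\fU)$, reduce mod $\varphi(\xi)$, and match the conjugate filtration on $\sq\Omega^{\L,\log}_{(R,P)}$ (graded pieces $\widehat\L^i_{(R,P)/\ul{\mO_C}}[-i]$) against the Postnikov filtration on $R\Gamma(\fU_{\ett},\sq\Omega^{\log}_{\fX})$ (graded pieces $\Omega^{i,\tu{ct}}_{\fX/\ul{\mO_C}}(\fU)[-i]$), concluding via $\widehat\L_{(R,P)/\ul{\mO_C}} \cong \widehat\Omega^1_{(R,P)/\ul{\mO_C}}$.
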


% In particular, if $\fX = \spf \ul R$ is quasi-fs and locally small, then we have a canonical isomorphism  $$R \Gamma_{\textup{log\!}\Ainf} (\fX / \ul \mO_C) \isom A \Omega^{\log}_{\ul R} $$ identifying log $\Ainf$-cohomology with derived log $\Ainf$-cohomology of $\fX$. 

\bproof 
Let us first prove (1). The map comes from functoriality (of left Kan extension) and is compatible with Hodge--Tate comparison on both sides, therefore it suffices to show that the \'etale sheafification of the functor that sends 
\[\spf R \in \fX_{\ett} \longmapsto \widehat \L_{(R, \Gamma (\fU, \mM_{\fX}))}\] agrees with the \'etale sheaf $\Omega_{\fX/\ul{\mO_C}}^{\bullet, (\tu{ct})}.$ To this end, the same argument of \cite[Proposition 4.9]{logprism} applies.  For (2), we again apply Hodge--Tate comparison, which reduces us to show that the natural map
\[
\widehat \L_{(R, P)/\ul{\mO_C}} \lra \widehat \Omega^1_{(R, P)/\ul{\mO_C}}
\]
is an isomorphism. This follows from \cite[Lemma 2.10]{logprism}. 
\eproof

\newpage

%%%%%%%%%%%%%%%%%
%%%%%%%%%%%%%%%%%%
%%%%%%%%%%%%%%%%%%
%%%%%%%%%%%%%%%%%%
%%%%%%%%%%%%%%%%%%
%%%%%%%%%%%%%%%%%%
%%%%%%%%%%%%%%%%%%
%%%%%%%%%%%%%%%%%%
%%%%%%%%%%%%%%%%%%
%%%%%%%%%%%%%%%%%%
%%%%%%%%%%%%%%%%%%
%%%%%%%%%%%%%%%%%% 
%%%%%%%%%%%%%%%%%
%%%%%%%%%%%%%%%%%%
%%%%%%%%%%%%%%%%%%
%%%%%%%%%%%%%%%%%%
%%%%%%%%%%%%%%%%%%
%%%%%%%%%%%%%%%%%%
%%%%%%%%%%%%%%%%%%
%%%%%%%%%%%%%%%%%%
%%%%%%%%%%%%%%%%%%
%%%%%%%%%%%%%%%%%%
%%%%%%%%%%%%%%%%%%
%%%%%%%%%%%%%%%%%%
%%%%%%%%%%%%%%%%%%
%%%%%%%%%%%%%%%%%%
\section{Comparison with log prismatic and log crystalline cohomology} 
  \label{section:compare_AOmega}

In this section, we prove a comparison result between log $\Ainf$-cohomology and log prismatic cohomology (developed in \cite{Koshikawa, logprism}) over the base log prism $(\Ainf, (\xi), N_\infty)$ (see below for a brief review of the definition), where $N_\infty \cong N_\infty^\flat \ra \Ainf$ is induced from a split divisible pre-log structure $N_\infty \ra \mO_C$ as before. 
The first main result of this section is 

\bt \label{theorem:logprism_comparison}
 Let $\fX$ be a quasi-fs log $p$-adic formal scheme that is admissbly smooth over $\ul{\mO_C} = (\mO_C, N_\infty)$,   then there exists a functorial Frobenius equivariant isomorphism 
\[ \varphi^* \Prism_{ \fX/(\Ainf, N_\infty)} = \Prism_{ \fX/(\Ainf,   N_\infty)} \widehat \otimes^{\L}_{\Ainf, \varphi} \Ainf \isom  A \Omega^{\log}_{ \fX} \]
between log prismatic cohomology and  log $\Ainf$-cohomology (as sheaves $E_\infty$-$\Ainf$-algebras on ${\fX}_{\text{\'et}}$, where the former is defined below in (\ref{eq:sheaf_of_logprism_cohomology})). 
In particular, %we have $\varphi^* R \Gamma_{\Prism} ( X) \cong R \Gamma_{\Ainf} ( X)$ in $\mD(\Ainf)$.
we have a functorial Frobenius equivariant isomorphism 
\begin{equation}
\varphi^* R \Gamma_{\Prism} (\fX / (\Ainf, (\xi), N_\infty) \isom 
 R \Gamma_{\Ainf} (\fX/\ul{\mO_C}).  
\end{equation}
\et

Similar to \cite{BS}, our proof goes through the log $q$-crystalline site constructed in \cite{Koshikawa}. As a result, we deduce the following comparison theorem over $\Acris$.

\bt  \label{thm:Acris_comparison}
Let $\fX$ be a log $p$-adic formal scheme that is admissibly smooth over $\ul{\mO_C}$, then there exists a functorial quasi-isomorphism 
\begin{equation} \label{eq:Acris_comparison} 
R \Gamma_{\Ainf} (\fX/\ul{\mO_C}) \widehat \otimes^\L_{\Ainf} \Acris 
\isom R \Gamma_{\textup{logcrys}} (\fX_{\mO_C/p}/(\Acris, N_\infty)) 
\end{equation}
which is compatible with the Frobenius on both sides. 
\et

\bproof[Proof of Theorem \ref{thm:Acris_comparison} from Theorem \ref{theorem:logprism_comparison}] Write the base change of $\fX$ from $\ul{\mO_C}$ to $\ul{\mO_C}/p$ (resp. to $(\Acris/p, N_\infty)$) by $\fX_{\mO_C/p}$ (resp. $\fX_{\Acris/p}$). Moreover, write $\sq \fX^{(1)}$ for the Frobenius base change \[
\sq \fX^{(1)} := \fX \times_{(\spf \mO_C, N_\infty)^a, \varphi} (\spec \Acris/p, N_\infty)^a  
\]
where $\varphi$ is induced by the Frobenius map $\mO_C = \Ainf/\xi \ra \Acris/p$ on the rings and multiplication by $p$ on the monoid. Equivalently, 
\[\sq \fX^{(1)} = \fX_{\Acris/p} \times_{(\spec \Acris/p, N_\infty)^a \varphi} (\spec \Acris/p, N_\infty)^a.\]
Using Theorem \ref{theorem:logprism_comparison} and properties of log prismatic cohomology (see \cite[Theorem 1.2]{logprism}), we have functorial quasi-isomorphisms 
\begin{align}
\nonumber R \Gamma_{\Ainf} (\fX/\ul{\mO_C}) \widehat \otimes^\L_{\Ainf} \Acris   
& \isom R \Gamma_{\Prism} (\fX/(\Ainf, (\xi), N_\infty)) \widehat \otimes_{\Ainf, \varphi} \Acris\\ 
\label{eq:Acris_comparison_arrow2} & \isom R \Gamma_{\Prism}(\sq \fX^{(1)}/(\Acris, (p), N_\infty)) \\
\label{eq:Acris_comparison_arrow3} & \isom R \Gamma_{\Prism} (\fX_{\Acris/p}/(\Acris, (p), N_\infty)) \widehat \otimes^\L_{\Acris, \varphi} \Acris \\ 
\label{eq:Acris_comparison_arrow4} & \isom R \Gamma_{\textup{logcrys}} (\fX_{\Acris/p}/(\Acris, N_\infty))  \\
\nonumber & \isom R \Gamma_{\textup{logcrys}} (\fX_{\mO_C/p}/(\Acris, N_\infty)). 
\end{align}
Where the isomorphisms (\ref{eq:Acris_comparison_arrow2}) and (\ref{eq:Acris_comparison_arrow3}) follow from base change of log prismatic cohomology, and the isomorphism (\ref{eq:Acris_comparison_arrow4}) comes from the crystalline comparison of log prismatic cohomology (cf. \cite{Koshikawa, logprism}). 
\eproof

\br 
There are several ways to prove Theorem \ref{thm:Acris_comparison}. One method is to apply an ``all possible coordinate" method similar to \cite{BMS1}. Another possible approach is to follow the strategy of \cite{Yao_Acris}. The key point there is to construct a canonical map locally on pre-log rings using log quasisyntomic descent. However,  there are some additional complications caused by the presence of log strctures. For this approach, one should replace $A \Omega_{S/\mO_C}$ from \cite{Yao_Acris} by a certain twist of the derived log $\Ainf$ cohomology 
\begin{equation} \label{eq:Frob_twisted_over_nonperfect_base}
  A \Omega^{\L, \log}_{\ul S/\ul{\mO_C}} \otimes^\L_{\Z_p \gr{\sq M}, \varphi} \Z_p \gr{\sq M}  
\end{equation}
for  quasiregular semiperfectoid pre-log rings, where $\sq M \ra M$ is the exactification of $M^\flat \ra M$ and $\varphi$ is induced by $\sq M \xrightarrow{p} \sq M$. It is possible to show that for quasiregular semiperfectoid pre-log rings, the base change of the object in (\ref{eq:Frob_twisted_over_nonperfect_base})
to $\Acris$ can be upgraded to a ``universal log PD thickening" of $\ul S/p$ (see Lemma \ref{lemma:universal_PD_thickening} for a more precise statement. The natural object we should consider is actually the left hand side of (\ref{eq:universal_log_PD_compare_to_prism})). 
Another approach (probably the  shortest proof) is to use the comparison with log prismatic cohomology (Theorem \ref{theorem:logprism_comparison}) and apply the crystalline comparison for the latter (see \cite{Koshikawa, logprism}). In this paper, we decide to take the prismatic approach, and study  $A \Omega_{\ul S/\ul{\mO_C}}^{\L, \log} \widehat \otimes^\L_{\Ainf} \Acris$ for  quasiregular semiperfectoid pre-log rings \textit{a posteriori}. 
\er

As a consequence, we immediatly obtain Part (4) of Theorem \ref{mainthm:comparison} from the introduction, which states that for an admissibly smooth log $p$-adic formal scheme $\fX$ over $\ul{\mO_C}$, the log $\Ainf$-cohomology specializes to the log crystalline cohomology of the special fiber. More precisely, 

\bc \label{thm:Acris_for_locally_small} 
Let $\fX$ be a locally small log formal scheme over $\ul{\mO_C}$, then there exists a functorial quasi-isomorphism 
$$  R \Gamma_{\Ainf} (\fX/\ul{\mO_C}) \widehat \otimes^\L_{\Ainf} W(k) \isom R \Gamma_{\textup{logcrys}} (\fX_{k}/W(k)).$$
compatible with Frobenius. 
\ec

\subsection{Log prismatic and derived log prismatic cohomology} \label{ss:log_prism} \noindent 

\noindent 
Let us first briefly review the theory of log prismatic cohomology. Recall that, a $\delta_{\log}$-\textit{ring} is pre-log ring $(A, \alpha: M_A \ra A)$ equipped with additional structures: a map $\delta: A \ra A$ such that $(A, \delta)$ becomes a $\delta$-ring and a map $\delta_{\log}: M \ra A$ which satisfies the following conditions 
\bi
\item $\delta_{\log} (0) = 0,$ 
\item $\delta_{\log}(m+m') = \delta_{\log} (m) + \delta_{\log}(m') + p \delta_{\log} (m) \delta_{\log}(m') $, 
\item $\alpha(m)^p \cdot \delta_{\log}(m) = \delta (\alpha(m))$.
\ei
% (see \cite[Definition 2.2]{Koshikawa}). 
A \textit{pre-log prism} is a triple $(A, I, M_A)$ where $(A, M_A)$ is a $\delta_{\log}$-ring and $(A, I)$ is a prism in the sense of \cite{BS}. A \textit{log prism} is a bounded\footnote{This means that $(A, I)$ is a bounded prism in the sense of \cite[Defintiion 1.6]{BS}, in other words, $A/I$ has bounded $p^{\infty}$-torsion.} pre-log prism $(A, I, M_A)$ considered up to taking the associated log structure.  %A \textit{log prism} is a bounded pre-log prism $(A, I, M_A)$ considered up to taking the associated log structure. To a pre-log prism $(A, I, M_A)$, one naturally associates a log prism $(A, I, M_A)^a=(A, I, M_{\spf A})$ (\cite[Corollary 2.15]{Koshikawa}). 

Let $(A, I, M_A)$ be a pre-log prism and 
 $\fX$ be a log $p$-adic formal scheme   over $(A/I, M_A)$.  
Consider the category $(\fX/(A,M_A))_{\Prism}$, which is defined to be (the opposite of) the category of integral log prisms $(B, IB, \mM_{ B})$ equipped with 
\bi
\item a map $g: (A, I, M_A) \ra (B, IB, M_B)$ of pre-log prisms, where $M_B = \Gamma (\spf B, \mM_{ B}))$; 
\item a map $f\colon \spf (B/I) \ra \fX$ of log $p$-adic formal schemes, and
\item a strict closed immersion $(\spf (B/I), f^* \mM_{\fX}) \ra (\spf B,\mM_{ B})$ of   log formal schemes (here the target is a log  $(p, I)$-adic formal scheme).
\footnote{This is equivalent to saying that the closed immersion is exact.}
\ei 
A morphism 
\[(B, IB, \mM_{ B}) \ra (C, IC, \mM_{ C})\] in $(\fX/(A,M_A))_{\Prism}$ is an \'etale cover if $B \ra C$ is $(p, I)$-completely \'etale and faithfully flat, and the map on the associated log $(p, I)$-adic formal schemes is strict. % (in other words, the pullback of $M_{\spf B}$ (as a log structure) is canonically isomorphic to $M_{\spf C}$ on $(\spf C)_{\ett}$). 
The logarithmic prismatic site of $\fX$ over $(A, I, M_A)$ is $(\fX/(A,M_A))_{\Prism}$ equipped with the \'etale topology. It has a structure sheaf $\mO_{\Prism}$ (resp. reduced structure sheaf $\cl \mO_{\Prism}$) defined by sending the object 
$(B, IB, \mM_{ B}) \mapsto B$ (resp. $(B, IB, \mM_{ B}) \mapsto B/IB$). The \'etale sheaf that appears in the statement of Theorem \ref{theorem:logprism_comparison} is defined as 
\begin{equation} \label{eq:sheaf_of_logprism_cohomology}
\Prism_{\fX/(A, M_A)} := R \mu_{*} \mO_{\Prism} 
\end{equation} 
where $\mu:\tu{Shv} ((\fX/(A, M_A))_{\Prism}) \ra \tu{Shv} (\fX_{\ett})$ is the natural projection of topoi. The logarithmic prismatic cohomology of $\fX$ is defined as  
\[
R \Gamma_{\Prism}(\fX/(A, M_A)) := R \Gamma ((\fX/(A, M))_{\Prism}, \mO_{\Prism}) = R \Gamma (\fX_{\ett}, \fX/(A, M_A))_{\Prism}).
\]
This is an $E_\infty$-$A$-algebra equipped with a $\varphi_A$-semilinear endomorphism $\varphi$. 

\br \label{remark:derived_logprismatic} 
There is a derived theory of log prismatic cohomology which parallels the theory of derived $\Ainf$-cohomology developed in Section \ref{section:derived}. For a derived $p$-complete (simplicial) pre-log ring $(\ul R)$ over $(A/I, M_A)$, we denote its derived log prismatic cohomology by $\Prism^{\L}_{\ul R/(A, M_A)}$. For a log $p$-adic formal scheme $\fX$ over $(A/I, M_A)$, there is also a sheaf $\Prism^{\L}_{\fX/(A, M_A)}$ defined in a similar fashion as in Subsection \ref{ss:global_derived}. We refer the reader to \cite[Section 4]{logprism} for the detail of this construction. 
\er

\br \label{remark:relevant_perfectoid_log_prism}
The relevant pre-log prism for us is the following. Let $\ul{\mO_C} = (\mO_C, N_\infty)$
be a split divisible perfectoid pre-log ring as in previous sections. Let $N_\infty^{\flat} \cong N_\infty \ra \Ainf$ be the pre-log structure induced from $\alpha^{\flat}: N_\infty^{\flat} \ra \mO_C^\flat$ by composing with the Teichmuller lift $\mO_C^\flat \ra \Ainf$. Let $I = (\xi)$. Then $(\Ainf, I, N_\infty^{\flat})$ is a perfect pre-log prism in the sense of \cite[Section 2]{logprism}. Note that, for a derived $p$-complete $\ul R$ over $\ul{\mO_C}$, we have a natural isomorphism 
\[ 
\Prism^\L_{\ul R/\Ainf} \isom \Prism^{\L}_{\ul R/(\Ainf, N_\infty)},
\]
similar to Corollary \ref{cor:change_of_base}.
\er 

\br 
Moreover, $(\Ainf, I, N_\infty)$ is a pre-log $q$-PD triple in the sense of \cite[Definition 7.1]{Koshikawa}, with the map from $\Z_p [\![q-1]\!] \ra \Ainf$ given by $q \mapsto [\epsilon]$. In particular, $[p]_q \mapsto \varphi(\xi) \in \Ainf$ and $(\Ainf, (\varphi(\xi)), N_\infty^{\flat})$ is a bounded pre-log prism over $(\Z_p[\![q-1]\!], [p]_q)$. 
\er

\subsection{The comparison with log prismatic cohomology} 
%\noindent  

%\noindent 
For the proof of Theorem \ref{theorem:logprism_comparison}, it suffices to work \'etale locally on $\fX$. We will make use of the derived theory, and show that for any pre-log ring $\ul R$ over $\ul{\mO_C} = (\mO_C, N_\infty)$, we have a functorial $\varphi$-equivariant isomorphism 
\begin{equation} \label{eq:derived_iso} \varphi^* \Prism^\L_{\ul R/(\Ainf, N_\infty)}  \cong  A \Omega^{\L, \log}_{\ul R} 
\end{equation}
between derived prismatic cohomology and derived $\Ainf$-cohomology, and then deduce Theorem \ref{theorem:logprism_comparison} from this using de Rham comparison. To prove the isomorphism in (\ref{eq:derived_iso}), we will first deal with the log free case. In what follows we write $\mO = \mO_C$.

\bp \label{prop:Ainf_comp_free}
Let $S, J$ be finite sets and let 
\[\ul R_{S, J} := (\mO \gr{\{X_j\}_{j \in J}, \N^S}, N_\infty \oplus \N^S)\] be the pre-log ring which is log-free over $(\mO\gr{X_j}_{j \in J}, N_\infty)$. Then there exists a $\varphi$-equivariant isomorphism 
\[    \varphi^* \Prism_{\ul R_{S, J}/ (\Ainf,  N_\infty)} \isom A \Omega^{\log}_{\ul R_{S, J}} \] 
which is functorial in $S$ and $J$. 
\ep 

\bproof 
By multplicativity on both sides (\cite[Proposition 4.6]{logprism} and Corollary \ref{cor:derived_multiplicative}) and the comparison between the non-log version of prismatic and $\Ainf$-cohomology (\cite[Theorem 17.2]{BS}), we are reduced to the case when $J = \emptyset$. 

Now we write $\ul R_S := (\mO \gr{\N^S}, N_\infty \oplus \N^S)$.  In \cite{Koshikawa}, it is shown that there exists a canonical isomorphism of $E_\infty$-$\Ainf$-algebras 
\[  \varphi^* \Prism_{\ul R_S/ (\Ainf, I, N_\infty)} \isom q \Omega_{\ul R_S/(\Ainf, N_\infty)} \]
between (Frobenius-twisted) log prismatic cohomology and the log $q$-crystalline cohomology of $\ul R_S$, which is in particular functorial in $S$. Thus it suffices to prove Proposition \ref{prop:Ainf_comp_free} with the left side replaced by the log $q$-crystalline cohomology $q \Omega_{\ul R_S/(\Ainf, N_\infty)}$.  

For each $S$, write $D_S = \Ainf \gr{\N^S}$ for the $(p, \mu)$-adically completed polynomial ring over $\Ainf$. We have an exact surjection 
\[ 
\ul D_S  = (\Ainf \gr{\N^S}, N_\infty \oplus \N^S ) \lra \ul R_S = (\mO \gr{\N^S}, N_\infty \oplus \N^S) 
\]
of pre-log rings, which identifies the source $\ul D_S$ as the log $q$-PD envelop of this surjection (see \cite[Lemma 7.4]{Koshikawa}). By \cite[Theorem 7.17]{Koshikawa}, there is a canonical isomorphism 
\[ 
f_S: q\Omega_{\ul R_S/(\Ainf, N_\infty)} \isom q \Omega^\bullet_{\ul D_S /(\Ainf, N_\infty)} 
\]
where the right side is the log $q$-de Rham complex of $\ul D_S$ with respect to $(\Ainf, N_\infty)$ constructed in 7.15 of \textit{loc.cit.} The isomorphism $f_S$ is functorial with respect to the surjection $\ul D_S \ra \ul R_S$, in particular, it is functorial with respect to $S$. Therefore, it suffices to construct an isomorphism 
\[ g_S: q\Omega^\bullet_{\ul D_S/ (\Ainf, N_\infty)} \lra A \Omega^{\log}_{\ul R_S} \]
that is functorial in $S$. To this end, let $R_{S, \infty} = \mO \gr{\Q_{\ge 0}^{\oplus S}}$, then 
\[(R_{S, \infty}, N_\infty \oplus \Q_{\ge 0}^{\oplus S})\] gives a Kummer pro\'etale cover of $\ul R_S$ on the associated log adic generic fibers, with Galois group 
\[ \Gamma_S  \cong \Hom ((\N^S)_{\Q}^{\text{gp}}/(\N^S)^{\text{gp}}, \mu_\infty(\mO)) \cong \Hom ((\Q/\Z)^S, \mu_\infty(\mO)) \cong \widehat \Z(1)^{ S}
\] (see Subsection \ref{ss:small_coordinates}). The continuous action of $\Gamma_S$ on $R_{S, \infty}$ induces a continous action on $\Ainf (R_{S, \infty}) \cong \Ainf \gr{\Q_{\ge 0}^{\oplus S}}$. Note that we have an injection 
\begin{equation} \label{eq:D_S_inside_Ainf}
D_S = \Ainf \gr{\N^S} \lra \Ainf (R_{S, \infty})
\end{equation}
induced by the monoid map $\N^{S} \subset \Q_{\ge 0}^S$, which is stable under the action of $\Gamma_S$. For each $i \in \{1, ...,  s = |S| \}$, let $\gamma_i \in \Gamma_S$ be the element which sends the copy of $\Q/\Z$ indexed by $s_i \in S$ to $\mu_\infty(\mO)$ by $\gamma_i (\frac{m}{n}) = \zeta_{n}^m$, and sends all other copies of $\Q/\Z$ to $1$. Then $\gamma_1, ..., \gamma_s $ is a set of topological generators of $\Gamma_S$, and acts on $\Ainf (R_{S, \infty})$ as follows: for each $i \in \{1, ..., s\}$ and $\alpha = \frac{m}{n} \in \Q_{\ge 0}$, write $e_i^{\alpha}$ for the element in 
\[\Ainf (R_{S, \infty}) \cong \Ainf \gr{\Q_{\ge 0}^{\oplus S}}\] which corresponds to the element $\alpha_i = (0, ..., \alpha, ... , 0) \in \Q_{\ge 0}^{\oplus S}$ in the monoid where $\alpha$ sits in $i^{th}$ position, then 
\[ 
\begin{cases} 
\gamma_i (e_i^\alpha)  = [\epsilon(\alpha)] \cdot e_i^{\alpha} \\ 
\gamma_i (e_j^{\alpha}) = e_j^{\alpha} & \text{ for all } j \ne i 
 \end{cases}
\] 
where $\epsilon (\alpha) \in \mO^\flat$ denotes the element 
\[ 
\epsilon (\alpha) := (\zeta_{n}^m, \zeta_{np}^m, \zeta_{np^2}^m, \cdots) \in \mO^\flat. 
\]
In particular, if we write 
\[D_S = \Ainf \gr{X_j}_{j = 1, ..., s}\] with $X_j = e_j^{1}$, then the action of $\Gamma_S$ on $D_S$ (via the injection in (\ref{eq:D_S_inside_Ainf})) is given by \[ \gamma_i (X_j) = X_j \tu{ if } j \ne i, \tu{  and }\gamma_i (X_i) = [\epsilon] \cdot X_i.\]  By Proposition \ref{prop:presheaf_is_sheaf}, Lemma \ref{lemma:killing_junk_torsion} and Corollary \ref{cor:qi_group_proket_Ainf}, we have quasi-isomorphisms 
\begin{equation} \label{eq:beta_S}
\beta_S: L \eta_{\mu} R \Gamma_{\text{ct}} (\Gamma_S, D_S) \isom L \eta_{\mu} R \Gamma_{\text{ct}} (\Gamma_S, \Ainf (R_{S, \infty})) \isom A \Omega^{\log}_{\ul R_S}.
\end{equation}
The first arrow in (\ref{eq:beta_S}) is induced by the $\Gamma_S$-equivariant map (\ref{eq:D_S_inside_Ainf}) and the second is described in Section \ref{section:Hodge_Tate_and_dR}. The map $\beta_S$ is functorial with respect to $S$ (since both maps are). The left side $L \eta_\mu R \Gamma_{\text{ct}} (\Gamma_S, D_S)$ is computed by the Koszul complex 
\[
\text{Kos}(D_S;  \frac{\gamma_1 -1}{[\epsilon] - 1}, \cdots, \frac{\gamma_s - 1}{[\epsilon] - 1}).
\] 
But this is precisely the $q$-de Rham complex of $D_S$ with respect to $(\Ainf, N_\infty)$ (see  \cite[Construction 7.15]{Koshikawa}). In other words, we have identifications 
\[
h_S: q \Omega^\bullet_{D_S/(\Ainf, N_\infty)} \cong \text{Kos}(D_S;  \frac{\gamma_1 -1}{[\epsilon] - 1}, \cdots, \frac{\gamma_s - 1}{[\epsilon] - 1}) \cong L \eta_\mu R \Gamma_{\text{ct}} (\Gamma_S, D_S)
\]
of $E_\infty$-$\Ainf$-algebras, which is clearly functorial in $S$. This proves the proposition (the $\varphi$-compatibility is clear). 
\eproof

\bc \label{cor:derived_Ainf_comparison}
Retain notations from before. Let $\ul R$ be a derived $p$-complete (simplicial) pre-log ring over $\ul \mO$, then there is a functorial $\varphi$-equivariant isomorphism 
\[ 
 \varphi^* \Prism^\L_{\ul R/(\Ainf, N_\infty)}  \cong  A \Omega^{\L, \log}_{\ul R}, 
\]
which is compatible with the derived Hodge--Tate comparison on both sides. 
\ec

\bproof 
%Left Kan extensions commutes with Frobenius pullback (which is a tensor product), so this follows 
This is immediate from Proposition \ref{prop:Ainf_comp_free} under left Kan extensions.  
\eproof 

\br%[Comparison with non-log prismatic cohomology] 
From the proof of Proposition \ref{prop:Ainf_comp_free} above, we see that the log prismatic cohomology of the log affine space $\A^{n, \log}_{\mO}$ is computed by the log $q$-de Rham complex $q\Omega^\bullet_{D_S}$, which can be identified with the Koszul complex 
\[
\text{Kos}(D_S;  \frac{\gamma_1 -1}{[\epsilon] - 1}, \cdots, \frac{\gamma_s - 1}{[\epsilon] - 1}).
\] 
where $D_S = \Ainf \gr{X_j}_{j = 1, ..., s}$. In other words, the log $q$-differential is given by 
\[
\nabla_{q, i}^{\log} = \frac{\gamma_i - 1}{[\epsilon] - 1}. 
\]
For comparison, we note that the (non-log) prismatic cohomology of $\A^n_{\mO}$ is computed by the non-log q-de Rham complex, which identifies with the Koszul complex 
\[
\text{Kos}(D_S;  \frac{\gamma_1 -1}{[\epsilon]X_1 - X_1}, \cdots, \frac{\gamma_s - 1}{[\epsilon] X_s - X_s}).
\] In other words, the non-log $q$-differential is given by 
\[
\nabla_{q, i}  = \frac{\gamma_i - 1}{([\epsilon] - 1) X_i}. 
\]
\er 

\br \label{remark:Nygaard_on_Ainf}  Let $\ul R$ be as above. 
Using the Nygaard filtration on derived log prismatic cohomology (constructed in \cite[Theorem 5.1]{logprism}), Corollary \ref{cor:derived_Ainf_comparison} allows us to equip the derived log $\Ainf$-cohomology $A \Omega^{\L, \log}_{\ul R}$ with a derived Nygaard filtration $\tu{Fil}_N^\bullet A \Omega^{\L, \log}_{\ul R}$ by derived $(p, \mu)$-completed objects, equipped with an isomorphism 
\[
\tu{gr}^i_{N} A \Omega^{\L, \log}_{\ul R} \isom \tu{Fil}^{\tu{conj}}_\bullet \sq \Omega_{\ul R}^{\L, \log} \{i\}
\]
where the right hand side denotes the conjugate filtration from Corollary \ref{cor:derived_HT}.  
\er

\bproof[Proof of Theorem \ref{theorem:logprism_comparison}]
It suffices to work \'etale locally on $\fX$ and construct a functorial isomorphism 
\[
\varphi^* \Prism_{\ul{R}/(\Ainf, N_\infty)} \isom A \Omega_{\ul{R}}^{\log}
\]
for all pre-log rings $\ul{R}$ associated with a small chart as described in \S \ref{ss:construction_HT}. By Corollary 
\ref{cor:derived_Ainf_comparison} it suffices to show that under our assumption, we have natural isomorphisms 
\[
\Prism^\L_{\ul{R}/(\Ainf, N_\infty)} \isom \Prism_{\ul{R}/(\Ainf, N_\infty)}, \qquad A \Omega^{\L, \log}_{\ul{R}} \isom A \Omega^{\log}_{\ul{R}}.
\]
The first isomorphism follows from \cite[Proposition 4.5]{logprism} and the second follows from Proposition \ref{prop:comparing_derived_with_non_derived_on_charts}. 
\eproof

\newpage 
%%%%%%%%%%%%%%%%%%
%%%%%%%%%%%%%%%%%% 
%%%%%%%%%%%%%%%%%
%%%%%%%%%%%%%%%%%
%%%%%%%%%%%%%%%%%%
%%%%%%%%%%%%%%%%%%
%%%%%%%%%%%%%%%%%%
%%%%%%%%%%%%%%%%%%
%%%%%%%%%%%%%%%%%%
%%%%%%%%%%%%%%%%%%
%%%%%%%%%%%%%%%%%%
%% 
%%%%%%%%%%%%%%%%%
%%%%%%%%%%%%%%%%%%
%%%%%%%%%%%%%%%%%%
%%%%%%%%%%%%%%%%%%
%%%%%%%%%%%%%%%%%%
%%%%%%%%%%%%%%%%%%
%%%%%%%%%%%%%%%%%%
%%%%%%%%%%%%%%%%%%
%%%%%%%%%%%%%%%%%%
%%%%%%%%%%%%%%%%%%
%%%%%%%%%%%%%%%%%%
\section{Comparison with prismatic cohomology of the infinite root stack} 
  \label{section:infinite_root_stack}
In this  section, we compare our construction of (derived) log $\Ainf$-cohomology with a variant constructed by considering the infinite root stack of an fs log formal scheme. We also relate the logarithmic $p$-adic Tate twists to the derived Hodge filtration on derived log de Rham cohomology, which can be viewed as a logarithmic extension of the Beilinson fiber square obtained in \cite{Beilinson_fiber}. 

\subsection{The infinite root stack} 

Let us first recall the construction of (a variant of) the infinite root stack by Talpo--Vistoli \cite{Talpo_Vistoli}. Let $(\fX, \mM)$ be a log (formal) scheme, then the structural monoid map $\alpha: \mM \ra \mO_{\fX}$ induces a map (of symmetric monoidal functors):
\[\cl \alpha: \cl{\mM}= \mM/\mO_{\fX}^\times \lra \tu{Div}_{\fX} := [\mO_{\fX}/\mO_{\fX}^\times] \]
where the target can be identified with the fibred category over $\fX_{\ett}$ consisting of pairs $(\mL, s)$ where $\mL$ is a line bundle on $\fX$ and $s$ is a global section of $\mL$. 

\begin{construction}[Talpo--Vistoli] 
For a log (formal) scheme $(\fX, \mM)$, there is a stack $\sqrt[\infty]{(\fX, \mM)}$ fibred over $\tu{Aff}_{\fX}$ in groupoids, called the infinite root stack of $(\fX, \mM)$, such that for an affine (formal) scheme $f: T \ra \fX$,  a lift $T \ra \sqrt[\infty]{(\fX, \mM)}$ of $f$ consists of a pair $(\psi, \iota)$ where 
\[
\psi: (f^* \cl{\mM})_{\infty} \lra \tu{Div}_{T} 
\]
is a symmetric monoidal functor and $\iota$ is an isomorphism between $f^* \cl \alpha$ and the composition 
\[
f^* \cl{\mM} \lra (f^* \cl{\mM})_{\infty} \xrightarrow{\: \psi \: }  \tu{Div}_{T}. 
\]
Here we write $\mP_{\infty} :=\varinjlim_n (\frac{1}{p^n}\mP)$ for a sheaf of monoid $\mP$. 
\end{construction}

Let us describe the simplest example of the infinite root stack (which is essentially the only infinite root stack that we need in this article). 
\beg[The infinite root stack of the logarithmic affine line] \label{example:root_stack} Let $\A^{1,\log}$ denote the log $p$-dic formal scheme associated to the pre-log ring $(\Z_p \gr{t}, \N \xrightarrow{1 \mapsto t} \Z_p \gr{t})$. By \cite[Proposition 3.10]{Talpo_Vistoli}, we may identify
\[
\sqrt[\infty]{\A^{1, \log}} \cong \varprojlim_n \: [\A_{\Z_p}^1/\mu_n] 
\]
where each $[\A^1_{\Z_p}/\mu_n]$ denotes the quotient stack \footnote{which can also be thought of as the formal completion of the algebraic stack $[ \A^1/\mu_n]$.}
\[ \pi_n: [\A_{\Z_p}^1/\mu_n] = [\spf \Z_p \gr{t^{1/p^n}}/\mu_{p^n}] \lra \A^1_{\Z_p} :=  \spf \Z_p \gr{t},
\] 
where $\pi_n$ is naturally induced from the inclusion $\Z_p[t] \ra \Z_p [t^{1/p^n}]$. In fact, the map $\pi_n$ can be upgraded to a map of log (formal) stacks, %(\cite{Olsson_stack} for the notion of)
where the log structure on $[\A^1_{\Z_p}/\mu_{p^n}]$ comes from the pre-log structure $\frac{1}{p^n} \N$. In the limit we have a natural map 
\begin{equation} \label{eq:pi_inf_A1}
\pi_\infty: 
\sqrt[\infty]{\A^{1,\log}} \lra \A^1_{\Z_p} = \spf \Z_p \gr{t}
\end{equation}
of $p$-adic formal stacks, which can be upgraded to a map of log formal stacks, where the target is the log formal affine line $\A^{1, \log}_{\Z_p}$, and the infinite root stack $\sqrt[\infty]{\A^{1,\log}}$ is equipped with the log structure associated to the pre-log structure $\N_\infty = \N[\frac{1}{p}]$. 

This example easily generalizes to $\A^{n,\log} = \spf \Z_p \gr{t_i}_{1 \le i \le n}$ with log structure coming from $\N^n \ra \Z_p \gr{t_i}_{1 \le i \le n}$ sending each $1 \in \N^n$ at position $i$ to $t^i$. 
\eeg

The following observation is due to Bhatt--Clausen--Mathew. We learned this statement from Mathew. For the lack of references we also provide a proof. 

\bl \label{lemma:cohomology_of_infinite_root_stack}
The map (\ref{eq:pi_inf_A1}) induces an isomorphism 
\[
\widehat \Omega^{1}_{(\Z_p \gr{t}, \N)/\Z_p} \isom R \Gamma (
\sqrt[\infty]{\A^{1,\log}}, \widehat \L_{
\sqrt[\infty]{\A^{1,\log}}/\Z_p}). 
\]
\el 

\bproof 
First we note that, in order to obtain the natural map above, we view (\ref{eq:pi_inf_A1}) as a map of log stacks and identify its log cotangent complex over $\Z_p$ with the (nonlog) cotangent complex $ \widehat \L_{
\sqrt[\infty]{\A^{1,\log}}/\Z_p}$ by Corollary \ref{cor:cotangent_for_perfectoid} (see also Lemma \ref{lemma:cotangent_for_perfect_maps}). Since both sides are derived $p$-complete it suffices to check this after mod $p$, in other words, it suffices to show that after reduction mod $p$, (\ref{eq:pi_inf_A1}) induces an isomorphism 
\[
  \Omega^{1}_{(\F_p [t], \N)/\F_p} = \F_p [t] \cdot\frac{dt}{t} \isom R \Gamma (
\sqrt[\infty]{\A_{\F_p}^{1,\log}},  \L_{
\sqrt[\infty]{\A_{\F_p}^{1,\log}}/\F_p}). 
\]
Now let us compute the target. %Write $B \mu_{p^\infty}$ for the quotient stack $[\spec \F_p /\mu_{p^\infty, \F_p}]$ and 
First we note that 
\begin{equation} \label{eq:cotangent_complex_for_B_mu_p_infty}
    \L_{B \mu_{p^\infty}/ \F_p} \cong \F_p,
\end{equation}
which is concentrated in degree $0$. To see this, recall that \[\L_{\mu_{p^n}/\F_p} \cong [(x-1) \F_p [x^{1/p^n}]/(x-1)^2 \xrightarrow{ \:\: 0 \: \: }  \F_p [x^{1/p^n}]/(x-1)  \textup{ d} x^{1/p^n}] \]
is a complex in (cohomological) degree $-1$ and $0$; and for every $n \in \Z_{\ge 1}$, we have 
\begin{equation} \label{eq:cotangent_complex_for_B_mu_p}
\L_{ B\mu_{p^n}/F_p} \cong [\F_p \xrightarrow{\:\: 0 \: \: } \F_p]
\end{equation}
which lives in degree $0$ and $1$. Now observe that, when we take colimit as $n \mapsto \infty$, the maps in degree $1$ of (\ref{eq:cotangent_complex_for_B_mu_p}) is given by $\tu{d} x^{1/p^{n-1}} = \tu{d} (x^{1/p^n})^p = 0 $, therefore the colimit vanishes in degree $1$, thus the desired isomorphism (\ref{eq:cotangent_complex_for_B_mu_p_infty}). Therefore, writing  $\A^1_{\infty} = \spec \F_p [t^{1/p^\infty}]$, we know that $\L_{ [\A^1_{\infty}/\mu_{p^\infty}]/\F_p}$ is concentrated in degree $0$ and is given by  
\[\L_{ [\A^1_{\infty}/\mu_{p^\infty}]/\F_p} \cong \F_p [t^{1/p^\infty}],
\]
with the coaction of $\F_p [x^{1/p^\infty}]/(x-1)$ given by $t^{1/p^n} \mapsto t^{1/p^n} \cdot x^{1/p^n}$. To compute its derived global section (in the fpqc topology, for example), we consider the flat cover 
\[\A^1_{\infty} \lra \sqrt[\infty]{\A_{\F_p}^{1,\log}} = [\A^1_{\infty}/\mu_{p^\infty}]\]
and its Cech nerve $Y_\infty^{\bullet}$. Each term $Y_\infty^{i} \cong \A_\infty^1 \times \mu_{p^\infty}^{i}$ is affine. Let $\iota^i: Y_\infty^i\ra [\A^1_{\infty}/\mu_{p^\infty}]$ denote the canonical projection, then we have 
\[
 R \Gamma ([\A^1_{\infty}/\mu_{p^\infty}], \L_{[\A^1_{\infty}/\mu_{p^\infty}]/\F_p}) \cong \tu{Tot }\Gamma (Y_\infty^\bullet, (\iota^\bullet)^* \L_{[\A^1_{\infty}/\mu_{p^\infty}]/\F_p}) \cong \F_p [t].
\]
Finally, we need to identify this copy of $\F_p [t]$ with $\Omega^1_{(\F_p [t], \N)/\F_p}$ by identifying its generator with $dt /t$. This can be done via a diagram chase on the finite level. In fact, one may argue after the pullback along $\G_m \hookrightarrow \A^{1, \log}$, and consider the resulting map (by a similar consideration as above)  
\begin{equation} \label{eq:identifying_generator}
\F_p [t^{\pm1}] dt   \lra  \F_p [t^{\pm 1}].
\end{equation}
On the finite level, we have the following commutative diagram 
\[
\begin{tikzcd}
   Y =  \spec \F_p [u^{\pm 1}] \arrow[d, "f"] \arrow[dd, dashed, swap, near start,  "\sq f", bend right= 57]  & Y \times \mu_p = \spec \F_p [u^{\pm 1}, v]/(v^p - 1) \arrow[l, "p_1"] \arrow[d, "p_2"] \\
    X = [ \spec \F_p [u^{\pm 1}]/\mu_p] \arrow[d, "\pi"] & Y =  \spec \F_p [u^{\pm 1}] \arrow[l] \\
    \G_m = \spec \F_p [t^{\pm 1}] 
\end{tikzcd}
\]
where $p_1$ sends $u \mapsto uv$, $p_2$ sends $u \mapsto u$, and $\pi$ is an isomorphism. On the level of rings, the vertical arrows further fit into the following commutative diagram 
\[
\begin{tikzcd}[column sep = 2.5em]
\F_p [u^{\pm 1}, t^{\pm 1}]/(\frac{u^p}{t} - 1) \arrow[r, "{\:\: u \mapsto uv \: \:}"] & \F_p [u^{\pm 1}, v]/(v^p - 1)  \\
\F_p [u^{\pm 1}, t^{\pm 1}] \arrow[r, "\:\: {u \mapsto uv, \: t \mapsto u^p}\: \: "] \arrow[u, "g"]   & \F_p [u^{\pm 1}, v]  \arrow[u, "h"]  \\ 
\F_p[t^{\pm 1}]  \arrow[r, " t \mapsto u^p"]  \arrow[u] \arrow[uu, near start, bend left = 60, "\sq f"] & \F_p [u^{\pm 1}] \arrow[u] \arrow[uu, near start, swap, "p_2", bend right = 60]. 
\end{tikzcd}
\]
By functoriality we have a commutative diagram 
\[
\begin{tikzcd}
H^0(\L_{g} [-1] ) \cong  (\frac{u^p}{t}-1) k[t^{\pm}, u^{\pm}]/(\frac{u^p}{t}-1)^2  \arrow[d, shift right = 6em] \arrow[d, shift left = 3em]   \arrow[r] & \sq f^* \L_{\F_p[t^{\pm 1}]/\F_p} \arrow[d] \\ 
H^0(\L_{h} [-1] )  \cong (v^p - 1) k[u^{\pm 1}, v] /(v^p-1)^2 \arrow[r] &  p_1^* f^* \L_{X/\F_p}. 
\end{tikzcd}
\]
Tracing this diagram, we know that starting from $t$ in the top left corner, we get $dt = t \frac{dt}{t}$ in the top right corner, and $u^p $ in the bottom left corner. Now identifying $u = t^{1/p}$, we know that the map (\ref{eq:identifying_generator}) is given by $dt = t \cdot dt /t \mapsto t$ and thus $dt/t \mapsto 1$. This finishes the proof of the lemma.
\eproof

\subsection{Log prismatic cohomology via the infinite root stack}

Let us put ourselves in the following setup, as in  Construction \ref{construction:derived_Ainf_on_prelog}. We %fix a split pre-log ring $\ul{\mO_C} = (\mO_C, N_\infty)$ and 
consider pre-log rings of the form of
\[
\Sigma(S, \ul T) :=  (\mO_C \gr{(X_i)_{ i \in S}, \N^T }, \N^T)
\]
for finite sets $S,T$ with cardinality $s$ and $t$, as in Construction \ref{construction:derived_Ainf_on_prelog} (except that here we take $N_\infty = 0$). 

We write $\fX_{S,T}$ for the associated log affine formal scheme $\spf \Sigma (S, \ul T)$, and write $\sqrt[\infty]{\fX_{S, T}}$ for the corresponding infinite root stack. From Example \ref{example:root_stack}, we have 
\begin{equation} \label{eq:root_stack_lim}
\sqrt[\infty]{\fX_{S, T}} \cong \varprojlim_n \:  \sqrt[n]{\fX_{S, T}} =  \varprojlim_n \: \big(\A^{s} \times [\A^t/\mu_n^t] \big) 
\end{equation}
where we identify $\A^t$ with $\spf \mO_C \gr{T_j^{1/n}}_{j \in T}$. Moreover, as in (\ref{eq:pi_inf_A1}), we have a canonical map 
\[
\pi_\infty: \sqrt[\infty]{\fX_{S, T}}  \lra \spf (\Sigma(S, \ul T))
\]
of stacks. This can be upgraded to a map of log stacks 
\begin{equation} \label{eq:pi_inf}
\pi_\infty: (\sqrt[\infty]{\fX_{S, T}}, \mM_{S, T})  \lra \fX_{S, T},
\end{equation}
where $\sqrt[\infty]{\fX_{S, T}}$ is viewed as a log stack with log structure $\mM_{S,T}$ coming from the monoid $\Q_{\ge 0}^{T}$ (see Example \ref{example:root_stack}).

\begin{construction} %[Bhatt--Clausen--Mathew] 
\label{construction:via_root_stack}
We consider the derived prismatic cohomology $R \Gamma_{\Prism} (\sqrt[\infty]{\fX_{S, T}})$ by the left Kan extension in the category of derived $(p, \mu)$-completed objects in $\mD(\Ainf)$  along the Yoneda embedding $\tu{Aff}_{\mO_C} \hookrightarrow \tu{PStk}_{\mO_C} $ into prestacks over $\mO_C$ (see \cite[Definition 2.2.1]{Kubrak}). From (\ref{eq:root_stack_lim}) we can write $R \Gamma_{\Prism} (\sqrt[\infty]{\fX_{S, T}})$ as the filtered colimit 
\[
R \Gamma_{\Prism} (\sqrt[\infty]{\fX_{S, T}}) \cong \varinjlim_n R \Gamma_{\Prism} (\sqrt[n]{\fX_{S, T}}) 
\]
where each term $\sqrt[n]{\fX_{S, T}} =  \A^{s} \times [\A^t/\mu_n^t]$ is a (formally) smooth Artin stack over $\mO_C$. Equivalently, we may consider consider the Cech nerve $\mC^\bullet$ of the quasisyntomic cover 
\[\A^s \times \spf \mO_C \gr{T_j^{1/\infty}}_{j \in T} \ra \sqrt[\infty]{\fX_{S, T}} \] 
and observe that the prismatic cohomology of $\sqrt[\infty]{\fX_{S, T}}$ can be computed by  
\[
R \Gamma_{\Prism} (\sqrt[\infty]{\fX_{S, T}}) \cong \lim \:  R \Gamma_{\Prism} (\mC^\bullet)
\]
as in Subsection \ref{construction:derived_Ainf_on_prelog} by quasisyntomic descent. This construction is  functorial with respect to $S$ and $T$, so we can further left Kan extend to all derived $p$-complete simplicial pre-log rings over $\mO_C$, as in Construction \ref{construction:derived_Ainf_on_prelog}, thus resulting in a functor 
\[
\ul R \longmapsto \Prism^{\L, \infty}_{\ul R/\Ainf}.
\]
taking values in  derived $(p, \mu)$-complete commutative algebra objects in $\mathcal{D}(\Ainf)$ that are equipped with a $\varphi_{\Ainf}$-semilinear Frobenius map $\varphi$.  
\end{construction}

Our main result in this section is that this construction agrees with the derived log prismatic cohomology constructed in \cite{logprism} (see Section \ref{section:compare_AOmega}). 

\bt \label{thm:compare_with_infinite_root}
Let $\ul{\mO_C} = (\mO_C, N_\infty)$ be a split pre-log ring (see Section \ref{section:HT_primitive}) and let $\ul R$ be a $p$-complete pre-log $\ul{\mO_C}$-algebra. Then there is a natural $\varphi$-equivariant isomorphism 
\[
\Prism^{\L}_{\ul{R}/\Ainf} \isom  \Prism^{\L, \infty}_{\ul R/\Ainf}
\]
compatible with the (derived) Hodge--Tate and de Rham comparisons, and preserving the Nygaard filtrations on both sides. In other words, the construction of derived prismatic cohomology via the infinite root stack  agrees with the derived log prismatic cohomology of $\ul R$. 
\et 

\bproof 
By (the prismatic version of) Corollary \ref{cor:change_of_base}, we may assume that $N_\infty = 0$. Moreover, it suffices to check the claim on $\Sigma (S, \ul T)$. For this, we first observe that using the map (\ref{eq:pi_inf}), the construction of derived log prismatic cohomology (as a left Kan extension) then supplies a natural map 
\[
\Prism^{\L}_{\Sigma(S, \ul T)/\Ainf} \lra R \Gamma_{\Prism}^{\log} (\sqrt[\infty]{\fX_{S,T}}, \mM_{S, T})/\Ainf).
\]
Note that the right hand side can be identified with 
\[
 \lim \:  R \Gamma_{\Prism} (\mC^{\log, \bullet})
\]
where $\mC^{\log, \bullet}$ denotes the ($p$-completed) Cech nerve of the log quasisyntomic cover 
\[\A^s \times (\spf \mO_C \gr{T_j^{1/\infty}}_{j \in T}, \Q_{\ge 0}^T)^a \lra (\sqrt[\infty]{\fX_{S, T}}, \mM_{S, T}).\] 
Note that each term in $\mC^{\log, \bullet}$ is the formal spectrum of a log quasiregular semiperfectoid $\mO_C$-algebra. Also note that, forgetting log structures induces a map 
\[
\Prism^{\L, \infty}_{\Sigma(S, \ul T)/\Ainf} \lra R \Gamma_{\Prism}^{\log} (\sqrt[\infty]{\fX_{S,T}}, \mM_{S, T})/\Ainf)
\]
which is an equivalence by the derived Hodge--Tate comparison (Corollary \ref{cor:derived_dR} and \cite[Proposition 2.4.3, Corollary 2.4.4 and its proof]{Kubrak}) and Corollary \ref{cor:cotangent_for_perfectoid}. Thus, inverting this equivalence, we obtain a map 
\[
\Prism^{\L}_{\Sigma(S, \ul T)/\Ainf} \lra\Prism^{\L, \infty}_{\Sigma(S, \ul T)/\Ainf} 
\]
which is $\varphi$-equivariant and is functorial in $S$ and $T$. To show that this is an equivalence, by the Kunneth formula (Corollary \ref{cor:derived_multiplicative} %\footnote{here we identify derived log prismatic cohomology and derived log $\Ainf$-cohomology up to a Frobenius twist, which is an isomorphism}
and \cite[Proposition 2.2.16]{Kubrak}), it suffices to show that, when $S = \O$ and $T = \{*\}$ is a singleton, the corresponding map 
\begin{equation}\label{eq:inf_root_A1}
\Prism^{\L}_{(\mO_C \gr{T}, T^{\N})/\Ainf} \lra R \Gamma_{\Prism} (\sqrt[\infty]{\A^{1, \log}_{\mO_C}}/\Ainf)
\end{equation}
is an equivalence. Applying the Hodge--Tate comparison once again (using \cite[Proposition 2.4.3, Corollary 2.4.4 and its proof]{Kubrak} for the right hand side), it suffices to show that (\ref{eq:inf_root_A1}) induces an isomorphism
\[
\widehat \L^1_{(\mO_C \gr{T}, T^{\N})/\mO_C} \isom R \Gamma (\sqrt[\infty]{\A_{\mO_C}^{1, \log}}, \widehat \L^1_{\sqrt[\infty]{\A_{\mO_C}^{1, \log}}/\mO_C}).
\]
But this follows from Lemma \ref{lemma:cohomology_of_infinite_root_stack}. 
\eproof

\bc 
Let $\ul R$ be a $p$-complete (simplicial) pre-log $\ul{\mO_C}$-algebra, then we have functorial $\varphi$-equivariant isomorphisms 
 
\[
  A \Omega^{\L, \log}_{(R, P)} \isom  \varphi_{\Ainf}^* \Prism^\L_{(R, P)/ \Ainf} \isom \varphi_{\Ainf}^* \Prism^{\L, \infty}_{(R, P)/\Ainf}
\] 
compatible with the (derived) Hodge--Tate and de Rham comparisons. 
\ec 

 \br 
It is not hard to see Construction \ref{construction:via_root_stack} works equally well over any base log prism. In particular, by inspecting the proof, Theorem \ref{thm:compare_with_infinite_root} holds over an arbitrary base pre-log prism (instead of just $(\Ainf, (\xi))$). The same remark applies to the dicussion in Subsection \ref{ss:log_Beilinson} below. 
\er

\subsection{A logarithmic Beilinson fiber sequence} \label{ss:log_Beilinson}

We deduce a logarithmic version of the Beilinson fiber square (on graded terms) obtained in \cite{Beilinson_fiber}. Let  $\ul S$ be a (simplicial) pre-log $\mO_C$-algebra. 
Recall that 
\[
\Z_p (n) (\ul S ):= \tu{Fib} \big(\tu{Fil}_{N}^n \Prism^{\L}_{\ul S} \{n\} \xrightarrow{\varphi - \tu{can}} \Prism^{\L}_{\ul S} \{n\}\big). 
\]
and that $\Q_p (n) = \Z_p (n)[1/p].$

\bt Let $\ul S = (S, M)$ be a pre-log ring that is log quasisyntomic over $\Z_p$.  
    \be  
      \item There exists a natural pullback square 
     \[ \begin{tikzcd}[row sep = 2em]
          \Q_p (n) (\ul S) \arrow[d] \arrow[r] & \Q_p (n) (\ul S/p) \arrow[d] \\ 
           \tu{Fil}_{H}^n \widehat \L \Omega_{\ul S/\Z_p} \{n\}_{\Q_p} \arrow[r]  &  \widehat \L \Omega_{\ul S/\Z_p} \{n\}_{\Q_p}.
       \end{tikzcd} \]
       in the derived $\infty$-category $\mD(\Q_p)$, where $  \tu{Fil}_{H}^n$ denotes the ($p$-completed) Hodge filtration on the derived log de Rham cohomology, and $()_{\Q_p}$ denotes the base change $\otimes_{\Z_p} \Q_p.$
      \item There is a functorial isomorphism
      \[ 
      \Q_p (n) (\ul S) \cong \textup{Fib} \Big(
           \tu{Fil}_{H}^n \widehat \L \Omega_{\ul S/\Z_p} \{n\}  \xrightarrow{\varphi - p^n} 
           \widehat \L \Omega_{\ul S/\Z_p} \{n\} \Big)_{\Q_p}.
      \]
    \ee 
\et 

\bproof 
It suffices to check the assertions for $\spf (\Sigma(S, \ul T))^a$ and we can further reduce to the case of the log affine line (where $S = \O$ and $T = \{*\}$ is a singleton). By Theorem \ref{section:infinite_root_stack}, we may 
replace $\Prism_{\ul S}^{\L}$ by $\Prism_{\ul S}^{\L, \infty}$.  Consider the flat cover 
\[\A_\infty
^1 = \spf \mO_C \gr{t^{1/p^\infty}} \ra \sqrt[\infty]{\A^{1,\log}}\] as in the proof of Lemma \ref{lemma:cohomology_of_infinite_root_stack} and takes the $p$-completed Cech nerve $\mC^\bullet$, then each $\mC^i = \spf S^i$ where $S^i$ is a quasisyntomic $\mO_C$-algebra. From the Hodge--Tate comparison and the fact that 
\[
\tu{gr}_N^i \Prism_{S^i}^\L \isom \tu{Fil}_\bullet^{\tu{conj}} ({\Prism}_{S^i}^\L \otimes^\L_{\Ainf, \theta} \mO_C) \{i\}
\]
(for example, see Remark \ref{remark:Nygaard_on_Ainf}), we know that $\Q_p (n) (\ul S) \cong \tu{Tot } \Q_p (n) (S^\bullet)$, and similarly for $\Q_p (n) (\ul S/p)$, $ \tu{Fil}_{H}^n \widehat \L \Omega_{\ul S/\Z_p}$, and $\widehat \L \Omega_{\ul S/\Z_p}$. The assertions thus follow from the corresponding assertions for $S^i$ and functoriality, which are \cite[Theorem 6.17, Theorem 6.22]{Beilinson_fiber}. 
\eproof

\newpage 
\section{Derived log $\Ainf$-cohomology on qrsp objects}

In this section, we describe the derived log $\Ainf$-cohomology for quasiregular semiperfectoid pre-log $\mO_C$-algebras and compares this construction with certain universal log PD algebras, first considered in \cite{BMS2}.  

\subsection{A universal log prism} \label{ss:universal_log_prism} %\noindent 

%\noindent 
Let us record the following notion of a   ``universal  log prism'' associated to a quasiregular semiperfectoid pre-log ring from \cite{logprism}, which will be used later.

\begin{proposition}\label{initial object}
Let $\ul S = (S, N)$ be a  quasiregular semiperfectoid integral pre-log ring. % and fix a choice of a map $\ul R \ra \ul S$ from a perfectoid integral pre-log ring $\ul R = (R, M) $ that is surjective on rings and surjective mod ivertible elements ob monoids.
Then there exists a ``universal bounded pre-log prism" $(\Prism^{\init}_{\ul S}, (\xi), M^{\init}_{\ul S})$ equipped with a map 
\[ S \ra \Prism^{\init}_{\ul S}/\xi
\] and an exact surjection
\[
%(\Prism^{\init}_{(S, N)/(R, M)}, M^{\init}_{(S,N)/(R, M)})^a \to (\Prism^{\init}_{(S, N)/(R, M)}/\xi, N\to \Prism^{\init}_{(S, N)/(R, M)}/\xi)^a, 
(\Prism^{\init}_{\ul S}, M^{\init}_{\ul S} ) \to (\Prism^{\init}_{\ul S}/\xi, N\to \Prism^{\init}_{\ul S}/\xi)^a, 
\] 
where the underlying pre-log ring $(\Prism^{\init}_{\ul S}, M^{\init}_{\ul S})$ of the pre-log prism  $(\Prism^{\init}_{\ul S}, (\xi), M^{\init}_{\ul S})$ is a log ring, which satisfies the following ``universal property": for any bounded integral pre-log prism $(A, I, M_A)$ such that $(A, M_A)$ is a log ring, equipped with a map $S\to A/I$ and an exact surjection
\[
(A, M_A) \to (A/I, N\to S \to A/I)^a,
\]
there is a unique map of pre-log prisms
\[
%(\Prism^{\init}_{(S, N)/(R, M)}, (\xi), M^{\init}_{(S,N)/(R, M)}) \to  (A, I, M_A)
(\Prism^{\init}_{\ul S}, (\xi), M^{\init}_{\ul S}) \to  (A, I, M_A)
\] 
compatible with the exact surjections above.  %The same universality with an additional condition that $A$ being perfect (resp. $(A, I, M)$ being perfect) holds for the perfection  \[ (\Prism^{\init}_{\ul S/ \ul R, \tu{perf}}, (\xi), M^{\init}_{\ul S/ \ul R}) \quad (\textnormal{resp.}~ (\Prism^{\init}_{\ul S/ \ul R, \tu{perf}}, (\xi), M^{\init,\perf}_{\ul S/ \ul R})).\] 
\end{proposition}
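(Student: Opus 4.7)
The plan is to construct $\Prism^{\init}_{\ul S}$ through the derived log prismatic cohomology and then verify the universal property by functoriality, in close parallel with the non-logarithmic construction of Bhatt--Scholze. The starting observation is that for a quasiregular semiperfectoid pre-log ring $\ul S = (S, N)$, the derived log prismatic cohomology $\Prism^{\L}_{\ul S/\Ainf}$ should be concentrated in cohomological degree $0$. To see this, one applies the derived Hodge--Tate comparison (Corollary \ref{cor:derived_HT}) to the Frobenius twist: the complex $\Prism^{\L}_{\ul S/\Ainf} \otimes^{\L}_{\Ainf} \Ainf/\varphi(\xi)$ carries a conjugate filtration whose $i$-th graded piece is $\widehat{\L}^i_{\ul S/\mO_C}[-i]\{i\}$. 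By Lemma \ref{lemma:log_BMS_4.25}, for $\ul S \in \QRSP^{\log}$, the complex $\widehat{\L}_{\ul S/\mO_C}[-1]$ is concentrated in degree $0$ and is $p$-completely flat, whence each $\widehat{\L}^i_{\ul S/\mO_C}[-i]$ is discrete. Derived Nakayama with respect to the ideal $(p, \varphi(\xi))$ then forces $\Prism^{\L}_{\ul S/\Ainf}$ to be discrete.

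Second, I would define $\Prism^{\init}_{\ul S} := \Prism^{\L}_{\ul S/\Ainf}$, equipped with the $\delta$-structure inherited from the Frobenius endomorphism on derived log prismatic cohomology. The element $\xi \in \Ainf$ stays distinguished and $(\Prism^{\init}_{\ul S}, (\xi))$ is a bounded prism; the bounded $p^{\infty}$-torsion on $\Prism^{\init}_{\ul S}/\xi$ follows from the derived Hodge--Tate description together with the log quasisyntomic hypothesis on $\ul S$. The natural map $S \to \Prism^{\init}_{\ul S}/\xi$ comes from functoriality of the animation procedure.

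Third, I would equip $\Prism^{\init}_{\ul S}$ with the log structure $M^{\init}_{\ul S}$ and the required exact surjection. On the log free objects $\Sigma(S, \ul T)$ the pre-log structure (and its associated log structure) admits an explicit description via the Koszul-type models that appear in the proof of Proposition \ref{prop:Ainf_comp_free}, and left Kan extension propagates it to all derived $p$-complete pre-log $\mO_C$-algebras. An exactification step — passing from the pre-log structure induced by $N$ along $N \to S \to \Prism^{\init}_{\ul S}/\xi$ to an honest log structure — then produces $M^{\init}_{\ul S}$ together with the exact surjection $(\Prism^{\init}_{\ul S}, M^{\init}_{\ul S}) \to (\Prism^{\init}_{\ul S}/\xi, N \to \Prism^{\init}_{\ul S}/\xi)^a$. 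The universal property is then a formal consequence: given any bounded integral pre-log prism $(A, I, M_A)$ equipped with $S \to A/I$ and the stated exact surjection, functoriality of derived log prismatic cohomology produces a unique map of pre-log prisms $(\Prism^{\init}_{\ul S}, (\xi), M^{\init}_{\ul S}) \to (A, I, M_A)$ compatible with all structures, exactly as in the non-log setting of Bhatt--Scholze.

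The main technical obstacle will be carrying out the log-theoretic exactification cleanly, i.e.\ building $M^{\init}_{\ul S}$ from the animated pre-log data in such a way that the resulting surjection onto the special fiber is \emph{exact} and that the functorial map to any other bounded pre-log prism automatically respects both the log structures and the exact surjections. This step requires careful tracking of Teichm\"uller-type lifts of elements of $N$ through the exactification, and an appeal to the universal property of the log structure associated to a pre-log structure to dispense with pre-log versus log distinctions on the target side.
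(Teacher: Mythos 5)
Your discreteness argument in the first paragraph is sound: the QRSP hypothesis (via Lemma~\ref{lemma:log_BMS_4.25}) makes the conjugate-graded pieces discrete, the filtration being increasing and exhaustive with discrete graded pieces places the Hodge--Tate reduction in degree~$0$, and derived $(p,\varphi(\xi))$-completeness then forces $\Prism^{\L}_{\ul S/\Ainf}$ to be discrete. The object you propose is also the right one in the end (the paper records in Proposition~\ref{initial object}+3 that $\Prism^{\L}_{\ul S/\Ainf}\cong \Prism^{\init}_{\ul S}$ as $\delta$-rings). The problem is that this identification is a \emph{consequence} of Proposition~\ref{initial object}, not a route to proving it.

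The gap is in your final step. You assert that given a bounded integral pre-log prism $(A,I,M_A)$ with $S\to A/I$ and the exact surjection, ``functoriality of derived log prismatic cohomology produces a unique map of pre-log prisms $(\Prism^{\init}_{\ul S},(\xi),M^{\init}_{\ul S})\to (A,I,M_A)$.'' Functoriality of the left Kan extension only gives maps $\Prism^{\L}_{\ul S}\to \Prism^{\L}_{\ul{S'}}$ for maps of (animated) pre-log rings $\ul S\to\ul{S'}$; it does not produce a map to an arbitrary $A$ given only data on the special fiber $A/I$. Producing such a map \emph{is} the content of the universal property, and proving it requires an actual construction of the log prismatic envelope, not an appeal to formal functoriality. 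Concretely, the paper (following the cited \cite[Prop.~4.10, 4.12]{logprism} and spelled out in Construction~\ref{construction:initial_logprism}) picks a perfectoid pre-log surjection $\ul R = (R,M)\to \ul S$, exactifies $M^{\flat}\to M\to N$ to get $\widetilde M$, forms the bounded pre-log prism $(\Ainf(R,\widetilde M),(\xi),\widetilde M)$ with $\Ainf(R,\widetilde M)=\Ainf(R)\widehat\otimes_{\Z_p}\Z_p\langle\widetilde M\rangle$, applies the \emph{non-log} prismatic envelope to the resulting surjection onto $S$, and takes the associated log ring. The universal property comes from chaining the universal properties of exactification and of the non-log prismatic envelope, and the independence of $\ul R$ is a separate (and nontrivial) check. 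Relatedly, your description of $M^{\init}_{\ul S}$ as arising from exactifying $N\to S\to\Prism^{\init}_{\ul S}/\xi$ is not what happens: in the actual construction the chart $\widetilde M$ is obtained by exactifying $M^{\flat}\to N$ at the level of the perfectoid lift, and the log structure on $\Prism^{\init}_{\ul S}$ is then the associated log structure of $\widetilde M\to\Prism^{\init}_{\ul S/\ul R}$; this chart must already live over the prism $\Ainf(R,\widetilde M)$ in order to invoke the non-log prismatic envelope.
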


\bproof 
This follows \cite[Proposition 4.10 and Proposition 4.12]{logprism}. 
\eproof

\br \label{remark:initial_object_independent_of_R}
Our notation is slightly different from \cite{logprism}. There one fixes a ``surjective" map $\ul R \ra \ul S$ from a  perfectoid pre-log ring $\ul R$ and our $(\Prism^{\init}_{\ul S}, (\xi), M^{\init}_{\ul S})$ is denoted by $(\Prism^{\init}_{\ul S/\ul R}, (\xi), M^{\init}_{\ul S/\ul R})^a$ there, which \emph{a priori} depends on the map $\ul R \ra \ul S$. In Proposition 4.12 of \emph{loc.cit.,} it is shown that (after taking associated log rings) this construction is independent of the choice of $\ul R \ra \ul S.$
\er

\begin{proposition}
Assume that $\ul S = (S, N)$ is a quasiregular semiperfectoid integral pre-log ring over $\mO_C$. Then $\Prism^{\L}_{\ul S/ \Ainf}$ is discrete and naturally has the structure of a $\delta$-ring. Moreover, there is an isomorphism of $\delta$-rings
\[
%\Prism_{(S, N)/ (\Ainf(R), M^{\flat})}
\Prism^\L_{\ul S/\Ainf}  \cong \Prism^{\init}_{\ul S}. 
\]
%Moreover, $\Prism^{\init}_{\ul S/ \ul R}$ is bounded and independent of the choice of $\ul R \ra \ul S$. 
\end{proposition}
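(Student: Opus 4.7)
The plan is to mimic the non-logarithmic argument of Bhatt--Scholze, with the logarithmic subtleties handled via the derived Hodge--Tate comparison and the universal property established in Proposition \ref{initial object}. I break it into three steps: discreteness, construction of the $\delta$-structure and pre-log prism data, and matching of universal properties.

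First I would prove that $\Prism^{\L}_{\ul S/\Ainf}$ is discrete. By Corollary \ref{cor:derived_Ainf_comparison} it suffices to show that $A\Omega^{\L,\log}_{\ul S}$ is discrete, and by derived Nakayama applied in the derived $(p,\xi)$-complete setting (using Corollary \ref{cor:AOmega_derived_complete} after left Kan extension) it suffices to check discreteness of the mod-$\varphi(\xi)$ reduction $\sq\Omega^{\L,\log}_{\ul S}$. The latter carries the conjugate filtration of Corollary \ref{cor:derived_HT} with graded pieces $\widehat{\L}^i_{\ul S/\ul{\mO_C}}[-i]$. By Lemma \ref{lemma:log_BMS_4.25} and Corollary \ref{cor:cotangent_for_perfectoid}, the log cotangent complex $\widehat{\L}_{\ul S/\ul{\mO_C}}[-1]$ is a flat $S$-module concentrated in degree zero; hence so are all of its derived wedge powers (shifted appropriately), giving the desired discreteness of each conjugate graded piece and therefore of $\sq\Omega^{\L,\log}_{\ul S}$ and of $\Prism^{\L}_{\ul S/\Ainf}$ itself.

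Next I would equip the discrete, derived $(p,\xi)$-complete ring $\Prism^{\L}_{\ul S/\Ainf}$ with the structure of a bounded pre-log prism. The Frobenius on derived log prismatic cohomology (which exists by functoriality of the left Kan extension from log free pre-log rings, where it is built in) gives a Frobenius lift on the now-discrete ring; combined with the fact that this ring is $p$-torsion free after mod $\xi$ (as it becomes a flat $\mO_C$-algebra by the Hodge--Tate argument above), one obtains a canonical $\delta$-structure exactly as in \cite[Lemma 2.18]{BS}. To produce the log structure, I would observe that the left Kan extension from log free pre-log $\mO_C$-algebras $\Sigma(S',\ul{T}')$ canonically supplies a pre-log structure $M^{\L}_{\ul S}\to \Prism^{\L}_{\ul S/\Ainf}$ together with a map $N\to M^{\L}_{\ul S}$ and an exact surjection onto the log structure associated to $N\to S$, since this data exists functorially on the log free level and is preserved by (derived) colimits of integral monoids. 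One then replaces the result by its associated log ring to land in a genuine log prism.

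Finally I would compare with $\Prism^{\init}_{\ul S}$ using the universal property of Proposition \ref{initial object}. In one direction, the pre-log prism $(\Prism^{\L}_{\ul S/\Ainf}, (\xi), M^{\L}_{\ul S})$ together with its exact surjection onto $(S,N)^a$ constructed above yields a unique map $\Prism^{\init}_{\ul S} \to \Prism^{\L}_{\ul S/\Ainf}$ of pre-log prisms. For the reverse direction, I would use quasisyntomic descent (Proposition \ref{prop:QRSP_forms_basis_for_QSyn}): since $\Prism^{\init}_{(-)}$ itself satisfies the appropriate log quasisyntomic sheaf property (derivable from Proposition \ref{initial object} applied to covers) and agrees with $\Prism^{\L}_{(-)/\Ainf}$ on log free $\mO_C$-algebras by Theorem \ref{theorem:logprism_comparison} combined with Proposition \ref{prop:Ainf_comp_free} (where both sides compute the honest log prismatic cohomology of a log smooth chart), the universal left Kan extension of the log-free comparison produces a map in the other direction, and the two are inverse by checking on log free objects.

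The main obstacle I anticipate is the correct construction and identification of the log structure $M^{\L}_{\ul S}$ on $\Prism^{\L}_{\ul S/\Ainf}$, in particular verifying that the tautological exact surjection $(\Prism^{\L}_{\ul S/\Ainf}, M^{\L}_{\ul S}) \twoheadrightarrow (S,N)^a$ required by Proposition \ref{initial object} matches the one arising from the left Kan extension; this requires showing that the (integral, saturated) pushouts of monoids used in forming $\Prism^{\init}$ are compatible with the simplicial colimits used in the left Kan extension, which in turn relies on the quasiregular semiperfectoid assumption on $\ul S$ and the concrete description of log structures on log free algebras.
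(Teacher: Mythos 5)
The paper's own ``proof'' of this proposition is simply a citation to \cite[Proposition~4.12]{logprism}, so there is no detailed argument in this text to compare against directly; the actual argument in that reference proceeds by fixing a surjection $\ul R \to \ul S$ from a perfectoid pre-log ring, passing to the exactification $M^{\flat}\to\sq M\to N$ as in Construction~\ref{construction:initial_logprism}, identifying $\Prism^{\L}_{\ul S/\Ainf}$ with the (non-log) prismatic envelope of $\Ainf(R,\sq M)\to S$ via Hodge--Tate comparison and quasisyntomic descent along the \v{C}ech nerve of $\ul R\to\ul S$, and then checking that this envelope is initial in the log prismatic site. Your approach---matching abstract universal properties and using two left Kan extensions---is a genuinely different route, and it contains gaps that need to be closed.

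Two issues in particular stand out. First, in your reverse-direction argument you claim that ``$\Prism^{\init}_{(-)}$ agrees with $\Prism^{\L}_{(-)/\Ainf}$ on log free $\mO_C$-algebras'' and then left Kan extend. But the log free pre-log rings $\Sigma(S,\ul T)=(\mO_C\gr{(X_s),\N^T}, N_\infty\oplus\N^T)$ are log smooth over $\ul{\mO_C}$ and \emph{not} quasiregular semiperfectoid (their log cotangent complex over $\ul{\mO_C}$ has a nontrivial degree-$0$ part), so $\Prism^{\init}_{\Sigma(S,\ul T)}$ is not defined: Proposition~\ref{initial object} and Construction~\ref{construction:initial_logprism} only make sense for qrsp inputs. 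The two functors therefore live on incompatible ``test'' categories, and the Kan extension comparison you describe does not parse. The correct replacement is to compare both sides directly on the $\QRSP^{\log}$ site, using Proposition~\ref{prop:QRSP_forms_basis_for_QSyn}, and to prove the identification by a mod-$(p,\xi)$ reduction and the derived Hodge--Tate comparison---not by Kan extending from log free objects.

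Second, and more fundamentally, your forward direction (producing $\Prism^{\init}_{\ul S}\to \Prism^{\L}_{\ul S/\Ainf}$ from the universal property of Proposition~\ref{initial object}) requires you to actually exhibit $\Prism^{\L}_{\ul S/\Ainf}$ as a \emph{bounded integral pre-log prism with a log-ring log structure} together with an exact surjection onto $(S,N\to S)^a$. Your proposed construction of the monoid $M^{\L}_{\ul S}$ by left Kan extension of ``the log structure on the log free level'' does not produce a discrete integral monoid in general (colimits of integral monoids need not be integral, and simplicial colimits would land in animated monoids), and no mechanism is given for verifying exactness of the surjection. You acknowledge this as the main obstacle, but without it the universal property cannot be invoked at all, so this is a genuine missing piece rather than a deferred detail. (As a smaller point: to get the $\delta$-structure from the Frobenius lift you need $\Prism^{\L}_{\ul S/\Ainf}$ itself to be $p$-torsion free---which does hold, being $(p,\xi)$-completely flat over $\Ainf$---rather than ``$p$-torsion free after mod $\xi$'' as you wrote.)
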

 
\bproof 
This is (a special case of) \cite[Proposition 4.12]{logprism}. 
\eproof 

\begin{remark} 
The pre-log prism  $(\Prism^{\init}_{\ul S}, (\xi), M^{\init}_{\ul S})$ is the initial object in the category   $\mC_{/(S, N)}^{\tu{str}}$ whose objects are diagrams of the form 
\[
 \begin{tikzcd}
& (A, I , M_A)  \arrow[d, "i"] \\
(S, N) \arrow[r] & (A/I, N)^a,
\end{tikzcd}
\]
where $(A, I, M_A)$ is a bounded  integral pre-log prism such that $(A, M_A)$ is a log ring and $i$ is an exact surjection of log rings. This is again slightly different from \cite{logprism}. There the construction is made for a larger class of pre-log rings (where $\ul S$ is only assumed to be semiperfectoid) and the resulting pre-log prism might not be bounded.  
\end{remark}
 
In the end of this section, let us briefly recall the construction of $(\Prism_{\ul S}^{\init},(\xi), M_{\ul S}^{\init})$ for the convenience of the reader. 

\begin{construction} \label{construction:initial_logprism}
Pick any map $\ul R \to \ul S$ where  $\ul R = (R, M)$ is a perfectoid integral pre-log ring, such that the map is surjective on rings and surjective modulo invertible elements on monoids.  Take the exactification  \[M^{\flat} \to \widetilde{M} \to N\] of the composition $M^{\flat}\to M \to N$ and set
\bi
\item $\widetilde{R}\coloneqq R\widehat{\otimes}_{\Z_p} \Z_p \langle \widetilde{M} \rangle,$
\item $\Ainf(R, M^{\flat})\coloneqq \Ainf(R)\widehat{\otimes}_{\Z_p} \Z_p \langle M^\flat \rangle,$ 
\item $\Ainf(R, \widetilde{M}) \coloneqq \Ainf(R)\widehat{\otimes}_{\Z_p} \Z_p \langle \widetilde{M} \rangle. $
\ei 
The ring $\Ainf(R, M^{\flat})$ is perfectoid and $(\Ainf(R, M^{\flat}), (\xi), M^{\flat})$ is a perfect pre-log prism.  We also have a bounded pre-log prism $(\Ainf(R, \widetilde{M}), (\xi), \widetilde{M})$ with a map \[ (\Ainf(R, M^{\flat}), (\xi), M^{\flat}) \lra (\Ainf(R, \widetilde{M}), (\xi), \widetilde{M}).  \] The map of pre-log rings \[ (\Ainf(R, \widetilde{M}), \widetilde{M}) \to (S, N) \]  induces an exact surjection on the associated log rings.  Applying the existence of (nonlog) prismatic envelopes to the surjection $\Ainf(R, \widetilde{M})\to S$, we obtain a pre-log prism  
\[(\Prism^{\init}_{\ul S/ \ul R}, (\xi), \sq M^{\init}_{\ul S/ \ul R}\coloneqq \widetilde{M})\] 
over $(\Ainf(R, \widetilde{M}), (\xi), \widetilde{M})$ equipped with a map $S\to \Prism^{\init}_{\ul S/ \ul R}/\xi$ and an exact surjection 
\[ (\Prism^{\init}_{\ul S/ \ul R}, \sq M^{\init}_{\ul S/ \ul R})^a \lra (\Prism^{\init}_{\ul S/ \ul R}/\xi, N\to \Prism^{\init}_{\ul S/ \ul R}/\xi)^a. \]  %Similarly, we have a map\[ S \to \Prism^{\init}_{\ul S/ \ul R, \tu{perf}}/\xi\coloneqq(\varinjlim_{\varphi} \Prism^{\init}_{\ul S/ \ul R})^{\wedge}_{(p,\xi)}/\xi \] with an exact surjection \[ (\Prism^{\init}_{\ul S/ \ul R, \tu{perf}}, \SQ M^{\init}_{(S,N)/(R, M)})^a \to (\Prism^{\init}_{\ul S/ \ul R, \tu{perf}}/\xi, N\to \Prism^{\init}_{\ul S/ \ul R, \tu{perf}}/\xi)^a. \]
Finally, we take the associated log ring and define 
\[
(\Prism_{\ul S}^{\init}, M_{\ul S}^{\init}) := (\Prism^{\init}_{\ul S/ \ul R}, \sq M^{\init}_{\ul S/ \ul R})^a.
\]
By \cite[Proposition 4.12]{logprism} (also see Remark \ref{remark:initial_object_independent_of_R}), $(\Prism_{\ul S}^{\init}, M_{\ul S}^{\init})$ is independent of the choice of the map $\ul R \ra \ul S$.  
\end{construction}

\br[Initial object in ``relative log prismatic site"] \label{remark:initial_object_in_relative_site}
Suppose that $\ul S = (S, N)$ is a quasiregular semiperfectoid algebra over $\ul{\mO_C} = (\mO_C, N_\infty)$, then there is a natural map 
\begin{equation} \label{eq:initial_object_in_relative_site}
(\Ainf, (\xi), N_\infty = N_\infty^\flat) \lra (\Prism^{\init}_{\ul S}, (\xi), M_{\ul S}^{
\init})
\end{equation}
of pre-log prisms and is functorial in $\ul S$. For this, we may take the following canonical choice of $\ul R$ in Construction \ref{construction:initial_logprism}: let
\[\sq R = \mO_C \widehat \otimes_{\Z_p} \Ainf(S^\flat)\] and $M = N^\flat := \varprojlim_{[p]} N$ with the monoid structure given by the Teichmuller lift of $N^\flat \ra S^\flat$. From the construction, we have the following canonical map 
\[
N_\infty \cong N_\infty^\flat \ra N^\flat \cong M^\flat \ra \sq M \ra M_{\ul S}^{\init}
\]
which gives rise to the desired map in (\ref{eq:initial_object_in_relative_site}).
\er

%The shortest proof is probably to use the comparison with log prismatic cohomology (Theorem \ref{theorem:logprism_comparison}) and apply the crystalline comparison for the latter (see \cite{Koshikawa, logprism}). Here we give a different argument, which follows the strategy of \cite{Yao_Acris} and will be helpful for the study of the comparison with the $\Bdrp$-cohomology later. The key point is to construct the canonical map locally on pre-log rings using the log quasisyntomic site, for which we need to further study the derived log $\Ainf$ cohomology $A \Omega^{\L, \log}_{\ul S/\ul{\mO_C}}$ for log quasiregular semiperfectoid rings. In fact, we will show that for such pre-log rings, the base change $A \Omega_{\ul S/\ul{\mO_C}}^{\L, \log} \widehat \otimes^\L_{\Ainf} \Acris$ of the derived $\Ainf$-cohomology to $\Acris$ is a ``universal log PD thickening" of $\ul S/p$ (see Proposition \ref{prop:universal_PD_thickening}). Since everything is derived in this section and the base ring $\mO_C$ is fixed, to ease notations we simply write $A \Omega^{\log}_{\ul S}$ (resp. $\Prism_{\ul S}$) for $A\Omega^{\L, \log}_{\ul S/\mO_C} = A\Omega^{\L, \log}_{\ul S/\ul{\mO_C}}$ (resp. $\Prism^\L_{\ul S/\Ainf} = \Prism^\L_{\ul S/(\Ainf, N_\infty)}$). 

\subsection{$A \Omega_{\ul S}^{\log} \widehat \otimes^\L A_{\mathrm{cris}}$ for log quasiregular semiperfectoid rings} \noindent 

\noindent In this subsection, we further analyse 
$A \Omega_{\ul S}^{\log} \widehat \otimes^\L \Acris$ for log quasiregular semiperfectoid rings, %For a log quasiregular semiperfectoid ring $\ul S$ over $\ul{\mO_C}$,  $A \Omega^{\log}_{\ul S}$ 
which is a relatively simple object. 

\begin{lemma} \label{lemma:top_free}
For $\ul S \in \qrsP_{\ul{\mO_C}/}$ such that $S$ is $p$-torsion free, $A \Omega^{\log}_{\ul S} \widehat \otimes^\L \Acris$ is a topologically free discrete $\Acris$-module concentrated in degree $0$, in particular $p$-torsion free. 
\end{lemma}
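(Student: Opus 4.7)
The plan is to first use the comparison with log prismatic cohomology to show that $A\Omega^{\log}_{\ul S}$ is discrete, then analyze its Hodge--Tate reduction via the log cotangent complex, and finally pass to $\Acris$ using derived $(p,\varphi(\xi))$-completeness.

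First, by Corollary \ref{cor:derived_Ainf_comparison} the derived log $\Ainf$-cohomology is $\varphi$-equivariantly isomorphic to $\varphi^*\Prism^{\L}_{\ul S/(\Ainf,N_\infty)}$. For a quasiregular semiperfectoid pre-log ring $\ul S$ over $\ul{\mO_C}$, the right-hand side is discrete and agrees with the Frobenius twist of the universal log prism $\Prism^{\init}_{\ul S}$ from Proposition \ref{initial object} (via Remark \ref{remark:initial_object_in_relative_site}). In particular, the derived and non-derived log $\Ainf$-cohomology coincide, $A\Omega^{\log}_{\ul S}=A\Omega^{\L,\log}_{\ul S}$, and it is a discrete derived $(p,\xi)$-complete $\Ainf$-algebra by Corollary \ref{cor:AOmega_derived_complete}.

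Next, the derived Hodge--Tate comparison (Corollary \ref{cor:derived_HT}) gives
\[A\Omega^{\log}_{\ul S}\otimes^\L_{\Ainf}\Ainf/\varphi(\xi)\simeq\sq\Omega^{\L,\log}_{\ul S},\]
and the conjugate filtration on the right has graded pieces $\widehat{\L}^i_{\ul S/\ul{\mO_C}}[-i]\{i\}$. Since $\ul S$ is qrsp and $S$ is $p$-torsion free, Lemma \ref{lemma:log_BMS_4.25} shows that $\widehat{\L}_{\ul S/\ul{\mO_C}}[-1]$ is a $p$-completely flat discrete $S$-module. Applying Illusie's derived exterior power formula to a flat presentation, each graded piece is again a discrete $p$-completely flat $S$-module, and is topologically free over $\mO_C$ (being a completed symmetric/PD construction on a topologically free module). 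Hence $\sq\Omega^{\L,\log}_{\ul S}$ is concentrated in degree $0$ and topologically free over $\mO_C=\Ainf/\varphi(\xi)$.

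To lift this to $A\Omega^{\log}_{\ul S}$ itself, I would use that it is a discrete derived $(p,\xi)$-complete (hence derived $(p,\varphi(\xi))$-complete) $\Ainf$-module whose reduction modulo $\varphi(\xi)$ is topologically free over $\mO_C$. A standard Nakayama/completion argument, compatible with the Nygaard filtration of Remark \ref{remark:Nygaard_on_Ainf} and matching with the conjugate filtration on the Hodge--Tate side, then produces a topological basis over $\Ainf$, showing that $A\Omega^{\log}_{\ul S}$ is $(p,\varphi(\xi))$-adically topologically free over $\Ainf$. Finally, since $\theta(\varphi(\xi))=p$ and $\xi^p\in p\Acris$ via the divided power structure, the ring $\Acris$ is derived $(p,\varphi(\xi))$-complete, so the derived base change
\[A\Omega^{\log}_{\ul S}\widehat{\otimes}^\L_{\Ainf}\Acris\]
is computed as the $p$-adic completion of the naive tensor product and is therefore discrete and topologically free over $\Acris$, in particular $p$-torsion free.

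The main obstacle will be the lifting step: upgrading topological freeness of the Hodge--Tate reduction to a compatible topological basis of $A\Omega^{\log}_{\ul S}$ over $\Ainf$, which requires careful bookkeeping of a filtration compatible with both the Nygaard filtration on $A\Omega^{\log}_{\ul S}$ and the conjugate filtration on $\sq\Omega^{\L,\log}_{\ul S}$, together with exhibiting a canonical topological basis coming from a presentation of $\ul S$ as an exact surjection from a log free perfectoid pre-log ring.
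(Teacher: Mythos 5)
Your proposal is in the same general spirit — it ultimately rests on the conjugate filtration and Lemma \ref{lemma:log_BMS_4.25}, combined with the $p$-torsion-freeness of $S$ — but it takes a longer detour than the paper does and the crucial lifting step is a genuine gap, not just bookkeeping.

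\textbf{What the paper actually does.} The paper works directly with $A\Omega^{\log}_{\ul S}\widehat\otimes^{\L}_{\Ainf}\Acris$, identifies it (via the $\Acris$-comparison, as in Corollary \ref{cor:AOmega_Acris_QRSP}) with the absolute derived log de Rham complex $\widehat{\L}\Omega_{\ul S/\Z_p}$, and then reduces mod $p$: since the object is already derived $p$-complete, it suffices to know that its derived mod-$p$ reduction $\L\Omega_{(\ul S/p)/(\ul{\mO_C}/p)}$ is a free $\mO_C/p$-module in degree $0$. The conjugate filtration has graded pieces $\wedge^i\L_{(\ul S/p)/(\ul{\mO_C}/p)}[-i]$; Lemma \ref{lemma:log_BMS_4.25} gives flatness of $\L[-1]$ over $S/p$, and the assumption that $S$ is $p$-torsion free (so $S/p$ is flat over $\mO_C/p$, hence so is $\L[-1]$, and flat implies free over the local ring $\mO_C/p$ whose maximal ideal is nil) finishes it. The $\Ainf$-module $A\Omega^{\log}_{\ul S}$ itself is never touched.

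\textbf{Where your route has a gap.} You instead try to establish that $A\Omega^{\log}_{\ul S}$ itself is topologically free over $\Ainf$, by analyzing the Hodge--Tate reduction mod $\varphi(\xi)$ and then ``lifting.'' The issue is that you cannot get from ``$\sq\Omega^{\L,\log}_{\ul S}$ is topologically free over $\mO_C$'' to ``$A\Omega^{\log}_{\ul S}$ is topologically free over $\Ainf$'' by a ``standard Nakayama/completion argument'': $\mO_C=\Ainf/\varphi(\xi)$ is not the residue field of $\Ainf$, the module is not finitely generated, and knowing only the derived mod-$\varphi(\xi)$ behavior does not control the derived mod-$p$ behavior, which is also required for $(p,\varphi(\xi))$-complete flatness. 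Your appeal to compatibility with the Nygaard filtration of Remark \ref{remark:Nygaard_on_Ainf} does not help here — the Nygaard filtration has Hodge--Tate-type graded pieces and is not an $(p,\varphi(\xi))$-adic filtration, so it is not the tool for establishing this kind of flatness. The correct precise criterion is to check that the derived mod-$(p,\varphi(\xi))$ reduction is discrete and flat over $\mO_C/p$; once you phrase the step that way, you are essentially back to the mod-$p$ computation the paper performs directly on the $\Acris$-side, and the intermediate claim about $\Ainf$ (together with the preliminary discreteness argument via the universal log prism) is unnecessary extra work.
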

 
\begin{proof} 
It suffices to show that the derived mod $p$ reduction  $\L {\Omega}_{(\ul{S}/p)/(\ul{\mO_C}/p)}$ is a free $\mO_C/p$-module concentrated in degree $0$. For this we consider the graded pieces \[\wedge^i \L_{(\ul S/p)/(\ul{\mO_C}/p) } [-i]\] for the conjugate filtration on $\L {\Omega}_{(\ul{S}/p)/(\ul{\mO_C}/p)}$. Observe that by Lemma \ref{lemma:log_BMS_4.25}, we know that  
\[\L_{(\ul S/p)/(\ul{\mO_C}/p) } [-1]\] is flat over $S/p$, thus free over $\mO_C/p$ by the assumption on $S$. 
\end{proof} 

Similar to Construction \ref{construction:derived_Ainf_on_prelog}, we can derive both sides of the isomorphism (\ref{eq:Acris_comparison}).

\begin{construction}

Fix a split pre-log ring $\ul{\mO_C} = (\mO_C, N_\infty)$ as in Section \ref{section:HT_primitive}. Recall from Construction \ref{construction:derived_Ainf_on_prelog} that we have 
\begin{equation} \label{eq:Sigma_S_T_2}
\Sigma(S, \ul T)  =  (\mO_C \gr{(X_s)_{ s\in S}, \N^T}, N_\infty \oplus \N^T)
\end{equation}
for finite sets $S,T$.  The \emph{derived absolute log crystalline cohomology} $\L R \Gamma_{\crys} (-/\Z_p)$ is the functor on the $\infty$-category of  derived $p$-complete simplicial pre-log rings over $\ul{\mO_C}/p$ (in other words, animated $p$-complete pre-log rings), obtained as the derived $p$-completed left Kan extension of the functor 
\begin{align*}
\Sigma(S, \ul T) \:  \longmapsto \: & \: \:  R \Gamma_{\logcrys} ((\spec \Sigma(S, \ul T)/p)^a /(\Acris, N_\infty)) \\ & \qquad \qquad \qquad   \cong 
R \Gamma_{\logcrys} ((\spec \Sigma(S, \ul T)/p)^a /\Z_p)
\end{align*}
from the $p$-complete pre-log rings $\Sigma (S, \ul T)$ 
%$=  (\mO_C \langle (X_s)_{ s\in S}, \N^T \rangle, N_\infty \oplus\N^T)$ 
to the category of \emph{all} derived $p$-complete simplicial pre-log rings over  $\ul{\mO_C}$. \footnote{From the construction, this functor only depends on $\ul S \otimes^\L_{\Z} \Z/p$.}
\end{construction}

\bc Let $\ul S$ be a derived $p$-completed (simplicial) pre-log ring. There exists functorial isomorphisms 
\begin{equation} \label{eq:AOmega_Acris_QRSP}
\widehat \L \Omega_{\ul S/\Z_p} \isom \L R \Gamma_{\crys} (\ul S/\Z_p) \isom A \Omega^{\log}_{\ul S} \widehat \otimes^\L_{\Ainf} \Acris. 
\end{equation}
In particular, when $\ul S$ is quasiregular semiperfectoid, all the objects in (\ref{eq:AOmega_Acris_QRSP}) are discrete. 
\ec 

\bproof 
The second isomorphism follows from left Kan extending the (inverse of the) isomorphism in (\ref{eq:Acris_comparison}). For the first arrow, note that from \cite[Proposition 7.18]{Bhatt_dR}, we have a natural map 
\[ 
\widehat \L \Omega_{\Sigma (S, \ul T)/\Z_p} \lra 
R \Gamma_{\logcrys} ((\Sigma(S, \ul T)/p)/\Z_p)
\]
which is an isomorphism by \cite[Theorem 7.22]{Bhatt_dR}. %To see this, without loss of generality we may assume $\ul{\mO_C} = (\mO_C, \mO_C -\{0\})$. By taking a filtered colimit, it suffices to show that the map \[ \widehat \L \Omega_{\Sigma_{K} (S, \ul T)/\Z_p} \lra R \Gamma_{\logcrys} ((\Sigma_K(S, \ul T)/p)/\Z_p) \] is an isomorphism for every finite extension $K$ of $\Z_p$ inside $C$, where $\Sigma_K = (\mO_K \gr{\N^S, \N^T}, (\mO_K - \{0\})\oplus \N^T)$. This follows from \cite[Theorem 7.22]{Bhatt_dR}, using Example 7.23. 
Now let $\widehat \L\Omega'_{-/\Z_p}$ denote the derived $p$-completed left Kan extension of the functor 
\[
\Sigma(S, \ul T) \:  \longmapsto \: \widehat \L\Omega_{\Sigma(S, \ul T)/\Z_p} 
\]
to all derived $p$-complete simplicial pre-log $\ul{\mO_C}$-algebras. It suffices to show that for all such pre-log rings $\ul S$ over $\ul{\mO_C}$, we have a natural isomorphism 
\begin{equation} \label{eq:left_Kan_extend_twice}
\widehat \L\Omega'_{\ul S/\Z_p} \lra \widehat \L\Omega_{\ul S/\Z_p}.
\end{equation}
Note that the left hand side is the derived $p$-completion of the homotopy colimit 
\[
\varinjlim_{\Sigma \ra \ul{S}} \varinjlim_{\Sigma_0 \ra \Sigma } \Omega^\bullet_{\Sigma_0/\Z_p} \cong \varinjlim_{\Sigma_0 \ra \Sigma \ra \ul{S}} \Omega^\bullet_{\Sigma_0/\Z_p}  
\]
where the colimit is taken over all $\Sigma$ of the form of $\Sigma(S, \ul T)$ in (\ref{eq:Sigma_S_T_2}) and all $\Sigma_0$ of the form 
\[\Sigma_{0}(S, T):= (\Z_p \gr{(X_s)_{s \in S}, \N^{T}}, \N^T)\] where $S, T$ are finite sets. The natural functor from the category of diagrams of the form $\Sigma_0  \ra \Sigma \ra \ul S$ to the category of diagrams of the form $\Sigma_0 \ra S$ sending 
\[(\Sigma_0 \ra \Sigma \ra \ul S) \mapsto (\Sigma_0 \ra \ul S)\] is cofinal, thus (\ref{eq:left_Kan_extend_twice}) is indeed an isomorphism. The final claim can be checked after taking derived reduction mod $p$, and follows from the (derived) Hodge--Tate comparison. 
\eproof

\subsection{A digression on log PD envelops} %{for log quasiregular semiperfectoid rings} 
\label{ss:remark_on_Acris}
\noindent

\noindent 
It is possible to relate the objects in (\ref{eq:AOmega_Acris_QRSP}) with a certain universal log PD evenlop of $\ul S$ when $\ul S$ is quasiregular semiperfectoid. 
\bd 
Let $\ul S = (S, M) \in \QRSP $ be a log quasiregular semiperfectoid ring. We define $\Acris (\ul S)$ to be the $p$-completed log PD envelop of 
\[
W(\ul S^\flat)= (W(S^\flat), M^\flat) \lra \ul S/p = (S/p, M).
\]
More precisely, it is given by the $p$-completed (non-log) PD envelop of the surjection 
\begin{equation} \label{eq:exactification_A_tilde}
\sq A:= W(S^\flat) \widehat \otimes_{\Z_p} \Z_p \gr{\sq M} \lra S
\end{equation}
where $\sq M$ is the exactification of the map $M^\flat \ra M$. 
\ed 

\br Equivalent, $\Acris(\ul S)$ can be defined as the $p$-completed log PD envelop of $\ul S/p$ with respect to the PD thickening $\Z_p \ra \F_p$. 
Moreover, $\Acris(\ul S)/p^n$ 
%In particular, $\Acris(\ul S)/p$ 
is the log PD envelop of $W_n(\ul S^\flat) \ra \ul S/p$. %In particular,  
\er 

\br 
We warn the reader that, unlike in the non-log case, in general it is not true that the natural map 
\[ 
%\widehat \L\Omega_{\ul S/\Z_p} \isom
A \Omega^{\log}_{\ul S} \widehat  \otimes_{\Ainf}^\L \Acris \cong \L R \Gamma_{\tu{crys}} (\ul S/\Z_p) \lra   \Acris(\ul S). 
\]
is an isomorphism, even though both sides are PD thickenings of $S/p$ (at least when $S$ has no $p$-torsion, see Remark \ref{remark:log_PD_thickening}).  The issue is that $\sq M$ is typically not $p$-divisible. This is related to the problem that the naive Nygaard filtration in \cite[Example 5.2]{logprism} does not yield the correct definition. 
\er 

Nevertheless, we have the following relation between $A \Omega_{\ul S}^{\log}$ and $\Acris(\ul S)$. Keep notations from above, in particular, let $\sq A$ be as in (\ref{eq:exactification_A_tilde}). Recall from
Subsection \ref{ss:universal_log_prism} that $\Prism_{\ul S}^\L \cong \Prism_{\ul S}^{\init}$ is naturally an $\sq A$-algebra. Write 
\[
\sq A_{\tu{cris}} := \sq A \widehat \otimes_{\Ainf} \Acris \cong \Acris (S^\flat) \widehat \otimes_{\Z_p} \Z_p \gr{\sq M}.
\]

\bl \label{lemma:universal_PD_thickening} Let $\ul S = (S, M) \in \QRSP $ be a log quasiregular semiperfectoid ring.
There is a natural Frobenius equivariant isomorphism
\begin{equation} \label{eq:universal_log_PD_compare_to_prism}
%\widehat \L\Omega_{\ul S/\Z_p} \isom
\Prism^\L_{\ul S} \widehat \otimes_{\sq A, \varphi_{\sq A}}^\L \sq A_{\tu{cris}}  \isom  \Acris(\ul S).     
\end{equation} 
\el 

\bproof 
From Subsection \ref{ss:universal_log_prism}, $\Prism^{\L}_{\ul S}$ is canonically isomorphic to the prismatic envelop 
\[
W(S^\flat) \widehat \otimes_{\Z_p} \Z_p \gr{\sq M}\{\frac{J}{\xi}\}^{\delta} = \sq A  \{\frac{J}{\xi}\}^{\delta},
\]
where $J := \ker (\sq A \twoheadrightarrow S)$. The right hand side is PD envelop $D_{\sq A} (J)$, which is isomorphic to 
\[
\sq A \{\frac{\varphi_{\sq A}(J)}{p}\} \cong \sq A_{\tu{cris}} \{\frac{\varphi_{\sq A}(J)}{\varphi(\xi)}\} \cong  \sq A  \{\frac{J}{\xi}\}^{\delta} \widehat \otimes_{\sq A, \varphi_{\sq A}} \sq A_{\tu{cris}}
\]
by $p$-completely flat base change of prismatic envelops. 
\eproof 

\br 
The object $\Prism^\L_{\ul S} \widehat \otimes_{\sq A, \varphi_{\sq A}}^\L \sq A_{\tu{cris}}$ on the left hand side of the isomorphism (\ref{eq:universal_log_PD_compare_to_prism}) is isomorphic $A \Omega_{\ul S}^{\log} \widehat \otimes^\L_{\Ainf \gr{\sq M}, \tu{Fr}} \Acris \gr{\sq M}$ where $\tu{Fr}$ is the ``relative Frobenius'' induced by $\sq M \xrightarrow{p} \sq M$. 
It also appears in the definition of the Nygaard filtration on derived log prismatic cohomology in \cite[Definition 5.14]{logprism}. In fact, Lemma \ref{lemma:universal_PD_thickening} \emph{explains} why the somewhat strange definition in \textit{loc.cit.} is in fact natural (once we extend scalars to $\Acris$). However, in the logarithmic context, it is not entirely clear to us whether there is a more canonical and satisfying interpretation of   $\Acris (\ul S)$. In particular, unlike in \cite{BMS2},  it is not clear to us how to relate $\Acris (\ul S)$ to logarithmic topology Hochschild homology. 
\er 

\br 
The various objects discussed above fit into the following commutative diagram. 
\[
\begin{tikzcd}[column sep = 0.5cm, row sep = 0.3cm]
\sq M \arrow[dd] \arrow[rd, "\cdot p"] \arrow[rrr] & &  & \Prism_{\ul{S}}^\L \cong \Prism_{\ul S}^{\init} \arrow[rd] \arrow[dd] \\
& \sq M \arrow[dd]  \arrow[rrr] & & & \Acris (\ul S) \arrow[r, dash, "\sim"] \arrow[dd] &  \Prism^\L_{\ul S} \widehat \otimes_{\sq A, \varphi_{\sq A}}^\L \sq A_{\tu{cris}}   \\
M \arrow[rd, "\cdot p"]  \arrow[rr, "\alpha \qquad "] & & S \arrow[r] \arrow[rrd, swap,  "\tu{Fr}"] &  \Prism_{\ul S}^\L /\xi \arrow[rd, "\beta_{\varphi}"] \\
& M \arrow[rrr, "\alpha \qquad "] & & & S/p
\end{tikzcd}
\]
Here the map $\beta_{\varphi}$ is the composition 
\[\Prism_{\ul S}^\L/\xi \isom \varphi_{A}^* \Prism_{\ul S}^\L/\varphi(\xi) \cong A\Omega_{\ul S}^{\log}/\varphi(\xi) \lra A \Omega_{\ul{S}}^{\log}/(p, \xi) \cong \L \Omega_{(\ul S/p)/\mO_C} \lra S/p. 
\]
\er

%Moreover, under the latter (derived mod $p$) isomorphism, the conjugate filtration on $\L \Omage_{(\ul S/p)/\F_p}$ identifies with the conjugate filtration on $\Acris(\ul S)/p$.  Under the second isomorphism in (\ref{eq:Acris_agrees_with_LOmega}), the Hodge filtration on $\L \Omega_{(\ul S/p)/\F_p}$ agrees with the PD filtration on $\Acris(\ul S)/p$, which agrees with the image of the Nygaard filtration on $\Acris (\ul S)$ which can defined similarly as in \cite[Definition ???]{THH}. These filtrations are not used in the paper so we leave the proof to the reader. 

\br \label{remark:log_PD_thickening}  
As remarked earlier, for $\ul S = (S, M) \in \qrsP_{\ul{\mO_C}/}$ such that $S$ is $p$-torsion free and $M$ is integral, the natural projection map $\beta$ in the diagram
\[
\begin{tikzcd} 
A \Omega^{\log}_{\ul S}  \widehat \otimes^\L \Acris \arrow[r] \arrow[rd, dashed, "\beta"] &  \widehat \L {\Omega}_{\ul{S}/\ul{\mO_C}} \arrow[d, two heads] \\
& S/p
\end{tikzcd} 
\] 
 is also a PD-thickening. Here the horizontal arrow is induced from the (derived) log de Rham comparison $A \Omega^{\log}_{\ul S}  \otimes^\L \Ainf/\xi \cong \widehat \L {\Omega}_{\ul{S}/\ul{\mO_C}}$. % while the vertical map is the composition of the natural map $\widehat \L {\Omega}_{\ul S/\ul{\mO_C}} \ra \L {\Omega}_{(\ul{S}/p)/(\ul{\mO_C}/p)}$ with the augmentation map   \[\L {\Omega}_{(\ul{S}/p)/(\ul{\mO_C}/p)} \ra {\Omega}^{\bullet}_{(\ul{S}/p)/(\ul{\mO_C}/p)} = S/p.\]
The proof is similar to that of \cite[Lemma 2.7]{Yao_Acris}. For completeness, let us sketch the argument.  Since $S$ is $p$-torsion free, it suffices to show that, for each element
$$x \in I: = \ker (\theta: A \Omega^{\log}_{\ul S}  \ra S),$$  $x^p$ is divisible by $p$ in $A \Omega^{\log}_{\ul S} \widehat \otimes^\L \Acris$.  Let $\sq S = \Ainf(S^\flat)/\xi$ and let  
$$ K := \ker (\sq S \gr{M^\flat} \ra S \gr{M}) $$
be the kernel of the natural map.  
Let $\ul{S_1'} = (S_1', M^\flat)$ be the pre-log ring where  
\[S_1' := \sq S \gr{M^\flat,  X_{j}^{1/p^\infty}}_{j \in J} = \sq S \gr{M^\flat} \gr{X_j^{1/p^\infty}}_{j \in J}\] is the perfectoid ring obtained by adjoining all $p$-power roots of $X_j$ for each $j \in J$ to $\sq S \gr{M^\flat}$
 (in the $p$-adic complete sense).   Let $\ul{S'} = (S', M^\flat)$ be the quotient pre-log ring of $\ul{\sq S_1'}$, where
\[S' :=  \sq S \gr{M^\flat,  X_{j}^{1/p^\infty}}_{j \in J}/(X_j).\]
Note that $\ul{S'}$ is a quasiregular semiperfectoid log $\ul{\mO_C}$-algebra. Now we choose a homomorphism  $\sq S' \lra \sq S \gr{M^\flat} $ which sends $X_j \mapsto j$ for each $ j\in J$. 
%\footnote{This can be achieved as follows. Observe that $(\sq S')^{\flat} \cong \sq S^{\flat} \gr{(X_j^{\flat})^{1/p^\infty}} $, so we may choose a map $(\sq S')^\flat \ra \sq S^\flat$ sending $X_j^\flat \mapsto j^\flat$. Next we apply $\Ainf$ to this morphism and then reduce mod $\xi$ to get the desired homomorphism $\sq S' \ra S'$ sending $X_j \mapsto j$.}  
This determines a map 
$\sq \delta: \ul{\sq S'} \ra \ul{\sq S}$, which then determines a map 
\begin{equation} \label{map:delta}
 \delta:  \ul{S'} = \big( \sq S \gr{M^\flat,  X_{j}^{1/p^\infty}}_{j \in J}/(X_j),  M^\flat \big)  \lra \ul{S} = (S, M). 
\end{equation} 
%These maps fit into the following commutative square of maps of pre-log rings. 
%\[ \begin{tikzcd}[row sep = 1.5em] ({\sq S'}, \sq M') \arrow[d]   \arrow[r, "\sq \delta"] & ({\sq S}, \sq M) \arrow[d]   \\(S', M')  \arrow[r, "\delta"] & (S, M)  \end{tikzcd}\]
The map  $\delta$ induces a surjective map 
$$  A \Omega^{\log}_{\ul S'}    \widehat \otimes^\L \Acris \longrightarrow  A \Omega^{\log}_{\ul S}  \widehat \otimes^\L \Acris$$ of rings. Moreover, the induced map
$$\ker \big(A \Omega^{\log}_{\ul{S'}} \widehat \otimes^\L \Acris \ra S'/p \big) \longrightarrow \ker \big(A \Omega^{\log}_{\ul{S}} \widehat \otimes^\L \Acris \ra S/p \big)$$
is surjective. This allows us to reduce to the case where  $\ul S$ is replaced with 
\[\ul{S'} = ( \sq S \gr{\sq M, X_{j}^{1/p^\infty}}/(X_j), \sq M).\]
Consider the quasiregular semiperfectoid algebra $S_0 =  \mO_C \gr{X_{j}^{1/p^\infty}}_{j \in J}/(X_j)$ (viewed as pre-log ring with the trivial pre-log structure).  Observe that $\ul S' = S_0 \widehat \otimes_{\mO_C} (\sq S \gr{M^\flat}, M^\flat)$ and that 
the natural map 
$$ A \Omega^{\log}_{S_0} \widehat \otimes^\L_{\Ainf} \Ainf(\sq S \gr{ M^\flat}) \lra  A\Omega^{\log}_{\ul{S'}}$$
is an isomorphism by considering reduction mod $\xi$ and using base change for derived log de Rham cohomology. 
%(Reasoning): since after modulo $\xi$ the map becomes $ \L \Omega_{S_0/\mO_C} \widehat \otimes^\L \sq S \isom \L \Omega_{S'/\mO_C},$ here the completion is $p$-adic. 
%The isomorphism can be check on graded pieces by base change of cotangent complexes and the fact that the $p$-completed cotangent complex $\L_{\sq S/\mO_C} = 0$ as $\sq S$ is perfectoid. %Use $\L_{S_0 /\mO_C} \widehat \otimes^\L_{\mO_C} \sq S \cong \L_{(\sq S \gr{X_j^{1/p^\infty}}/(X_j))/\sq S}$ 
Hence we are further reduced to prove the proposition for $S_0$\footnote{
% footnote!
Note that, again, here we use the fact that $S_0$ is $p$-torsion free. %In general, the tensor product in PD-algebras do not agree with tensor product in rings, but here our setup is the following: $B$ and $C$ are $\Ainf$-algebras equipped with ideals $I \subset B, J \subset C$ such that $B/I$ and $C/J$ are $\mO_C$-algebras, we need to show that for any $x \in \ker (B \widehat \otimes_{\Ainf} C \ra B/I \widehat \otimes_{\mO_C} C/J)$ satisfies $x^p \in p \cdot (B \widehat\otimes C) \widehat \otimes_{\Ainf} \Acris$. This follows from the corresponding statement for $B \ra B/I$ and $C \ra C/J$ as $\ker (B \widehat \otimes_{\Ainf} C \ra B/I \widehat \otimes_{\mO_C} C/J)$ is generated by $I \widehat \otimes C + B \widehat \otimes J$. 
}. 
% that the kernel of $A \Omega_{S_0} \widehat \otimes^\L \Acris \ra S_0$ carries a PD-structure, where $S_0 = \mO_C \gr{X_{j}^{1/p^\infty}}_{j \in J}/(X_j)$, .  
For this, we observe that the natural map 
$$ A \Omega_{S_0} \lra A \Omega^{\log}_{S_0} $$ from the derived $\Ainf$-cohomology to the derived log $\Ainf$-cohomology  is an isomorphism. % (since mod $\xi$ it becomes $ \L \Omega_{S_0/\mO_C} \lra \L {\Omega}_{S_0/\mO_C} $).  
But $A \Omega_{S_0} \widehat \otimes_{\Ainf} \Acris \ra S_0/p$ is a PD thickening by  \cite[Lemma 2.7 \& 2.8]{Yao_Acris}. 
\begin{comment}
For this %we first observe that the diagram 
%\[ \begin{tikzcd} A \Omega_{S'} \widehat \otimes^\L \Acris  \arrow[r]  \arrow[d]&  A \Omega_{S} \widehat \otimes^\L \Acris   \arrow[d] \\ \L \Omega_{(S'/p)}  \arrow[r] & \L \Omega_{(S/p)} 
%\end{tikzcd}\] is a push-out diagram of commutative rings. The claim then follows from the surjectivity of 
it suffices to show that 
 $$\ker \big( \L {\Omega}_{(\ul{S'}/p)/(\ul{\mO_C}/p)}  \ra S'/p \big) \longrightarrow \ker \big( \L {\Omega}_{(\ul{S}/p)/(\ul{\mO_C}/p)} \ra S/p \big)$$
is surjective. %This again follows from considering the graded pieces of the Hodge filtration (note that $\ker (\L {\Omega}_{(\ul{S}/p)/(\ul{\mO_C}/p)}\ra S/p)$ is still Hodge complete). 
Therefore, 
\end{comment}
\er

\newpage 
%%%%%%%%%%%%%%%%%
%%%%%%%%%%%%%%%%%%
%%%%%%%%%%%%%%%%%%
%%%%%%%%%%%%%%%%%%
%%%%%%%%%%%%%%%%%%
%%%%%%%%%%%%%%%%%%
%%%%%%%%%%%%%%%%%%
%%%%%%%%%%%%%%%%%%
%%%%%%%%%%%%%%%%%%
%%%%%%%%%%%%%%%%%%
%%%%%%%%%%%%%%%%%%
%%%%%%%%%%%%%%%%%%
%%%%%%%%%%%%%%%%%%
%%%%%%%%%%%%%%%%%%
\section{Breuil--Kisin--Fargues modules} \label{ss:BKF}
In this final section of the paper, we show that when $\fX$ is a proper quasi-fs log $p$-adic formal scheme that is admissibly smooth over $\ul \mO$, its log $\Ainf$-cohomology takes values in the category of Breuil--Kisin--Fargues modules. From this we deduce $p$-torsion discrepancies among the Kummer \'etale cohomology, log de Rham and log crystalline cohomology, generalizing results from \cite{BMS1, CK_semistable}. 

\subsection{Breuil--Kisin--Fargues modules} 
First let us recall the definition. 

\bd \label{definiton:BKF_module}
A \emph{Breuil--Kisin--Fargues module} is a finitely presented $\Ainf$-module $M$ equipped with a $\varphi$-linear map $\varphi_M: M \ra M$, such that $M[\frac{1}{p}]$ is a finite free $\Ainf[1/p]$-module and that $\varphi_M$ induces an isomorphism 
$$\varphi_M: M [\frac{1}{\xi}] \isom M[\frac{1}{\varphi(\xi)}].$$
%The rank of the free module $M[1/p] \cong (\Binf)^m$ is referred to as the rank of $M$.
\ed 

Let us first record a criterion to test finite-freeness over $\Ainf[1/p].$

\bp  Let $M$ be a finitely presented $\Ainf$-module. Assume either of the following
\be \item $M [1/{p \mu}]$ is finite projective over $\Ainf [1/{p \mu}]$ and $M \otimes_{\Ainf} \Bcrisp$ is finite projective over $\Bcrisp$; or 
\item $M [1/{p \mu}]$ is finite free over $\Ainf [1/{p \mu}]$ of the same rank as $\textup{rk}_{W(k)} M \otimes_{\Ainf} W(k)$, and there exists a $\varphi$-linear map $\varphi_M: M \ra M$ that becomes an isomorphism after inverting $\xi$.  \ee Then $M[1/p]$ is finite projective (equivalently, finite free) over $\Binf = \Ainf[1/p]$.  \ep 

\bproof This is \cite[Lemma 4.20]{BMS1} and Lemma A.4 of \cite{Morrow}. 
\eproof 

To apply (either part of) the proposition above, we shall need the following result.  %(see also \cite{logprism}

\bp \label{prop:free_over_B_cris}
Let $\fX$ be a proper quasi-fs log $p$-adic formal scheme that is admissibly smooth over $\ul \mO = (\mO, N_\infty)$. Then  for each $i$, the cohomology group $H^i (R \Gamma_{\Ainf} (\fX) \otimes_{\Ainf}^\L \Bcrisp )$ is a finite free module over $\Bcrisp$. Moreover, we have  
\[\textup{rk}_{W(k)}  H^i_{\tu{logcrys}} (\fX_{k}/W(k))  = \textup{rk}_{\Z_p} H^i_{\ket} (\fX_{\eta}, \Z_p).\]
\ep

\bproof 
Let $\fX_0 \ra \spa(\mO, N)^a$ be a finite model of $\fX \ra \spa(\mO, N_\infty)^a$ as in the beginning of Subsection \ref{ss:small_charts}. Then by the absolute crystalline comparison (Theorem \ref{eq:Acris_comparison}) and base change for log crystalline cohomology, we have 
\begin{align*}   
R \Gamma_{\Ainf} (\fX/\ul{\mO_C})   \otimes^\L_{\Ainf} \Acris 
&\cong R \Gamma_{\textup{logcrys}} (\fX_{\mO_C/p}/(\Acris, N_\infty)) \\
& \cong R \Gamma_{\textup{logcrys}} ({\fX_0}_{,\mO_C/p}/(\Acris, N)).
\end{align*}
Let $\fX_{0, k}$ denote the base change of $\fX_0$ along $(\mO, N) \ra (k, N)$, where $ k$ is the residue field of $\mO$ and is equipped with the pre-log structure induced from the composition. After replacing $N \ra k$ by $\cl N \ra k$ we may assume that $x \mapsto 0 \in k$ for all nonzero $x \in N$. Let $(W(k), N)$ denote the pre-log ring where the pre-log structure is obtained from $(k, N)$ by taking Techmuller representatives. We fix a section $k \ra \mO_C/p$. 
By \cite[Proposition 8.9]{logprism} (and its proof), we know that there exists some large enough $n$ such that the section $k \ra \mO_C/p^{1/p^n}$ can be upgraded to a section $(N, k) \ra (N, \mO_C/p^{1/p^n})$ and that %the log scheme $\fX_{0, \mO_C/p^{1/p^n}}$ is the base change 
\[
\fX_{0, \mO_C/p^{1/p^n}} \cong \fX_{0, k} \times_{\spec (k, N)^a} \spec (\mO_C/p^{1/p^n}, N)^a.
\]
Therefore, after applying iterated powers of Frobenius, we have an isomorphism  
\begin{multline*}
\quad   R \Gamma_{\textup{logcrys}} (\fX_{0, k}/(W\gr{N}^{\small \tu{PD}}, N)) \otimes^\L_{W\gr{N}^{\tu{PD}}} \Acris [\frac{1}{p}] \\ 
\cong  R \Gamma_{\textup{logcrys}} ({\fX_0}_{,\mO_C/p^{1/p^n}}/(\Acris, N)) [\frac{1}{p}]  
 \cong   R \Gamma_{\textup{logcrys}} ({\fX_0}_{,\mO_C/p}/(\Acris, N)) [\frac{1}{p}]  \quad 
\end{multline*}
induced from the base change of pre-log PD algebras 
\[
(W\gr{N}^{\small \tu{PD}}, N) \lra (\Acris, N). 
\]
Here $W\gr{N}^{\small \tu{PD}}$ denotes the $p$-completed PD evenlop of $W(k)[N]$ with respect to the kernel of the natural map  $W(k)[N] \ra W(k)$. 
The first claim then follows from this and the Hyodo--Kato isomorphism (\cite[Proposition 4.13]{HK})
\[
R \Gamma_{\textup{logcrys}} (\fX_{0, k}/ (W, N))  \otimes_{W} W\gr{N}^{\tu{PD}} [\frac{1}{p}] \cong R \Gamma_{\textup{logcrys}} (\fX_{0, k}/(W\gr{N}^{\small \tu{PD}}, N)) [\frac{1}{p}].
\]
The second claim follows from the first one,  Theorem \ref{thm:etale_comparison}, and Corollary \ref{thm:Acris_for_locally_small}. 
\eproof

\bc \label{cor:valuation_in_BKF_modules}  
Let $\fX$ be a proper quasi-fs log $p$-adic formal scheme that is admissibly smooth over $\ul \mO = (\mO, N_\infty)$. Then for each $i$, the log $\Ainf$-cohomology $H^i_{\Ainf} (\fX)$ with the Frobenius $\varphi$ is a Breuil--Kisin--Fargues modules. 
\ec 

\bproof 
This follows from \cite[Lemma 4.20]{BMS1}, using the \'etale comparison (Theorem \ref{thm:etale_comparison}) and Proposition \ref{prop:free_over_B_cris}.
\eproof

\subsection{Torsion discrepancy} \label{ss:torsion}

One of the original motivations of \cite{BMS1} is to study relations between torsions in integral $p$-adic cohomologies. In this direction, let us record the following corollary on relations among the ``sizes'' of $p$-adic cohomology groups with torsion coefficients, generalizing \cite{BMS1, CK_semistable} (see also \cite{logprism}).

\bc \label{theorem:torsion} 
Let $\fX$ be a proper quasi-fs log $p$-adic formal scheme that is admissibly smooth over $\ul \mO = (\mO, N_\infty)$. Let $W(\ul k)$ denote the pre-log ring $(W(k), N_\infty)$ where $k$ is the residue field of $\mO$ and the pre-log structure $N_\infty \ra W(k)$ is obtained from taking Teichmuller representatives. Let $\fX_{k}$ (resp. $\fX_{\mO/p^n}$) denote the base change of $\fX$ to the log point $\spec (k, N_\infty)^a$ (resp. to the log scheme $\spec (\mO/p^n, N_\infty)^a$). Moreover, write $H^i_{\textup{logdR}}(\fX)$ (resp. $H^i_{\textup{logdR}}(\fX_{\mO/p^n})$) for the log de Rham cohomology 
\[H^i_{\textup{logdR}}(\fX/(\mO, N_\infty)) \quad \big(\tu{resp. } H^i_{\textup{logdR}}(\fX_{\mO/p^n}/(\mO/p^n, N_\infty)) \big)\]. Then we have  
\be
\item For each $i$ and $n \ge 0$, we have 
\begin{align*} 
\textup{length}_{\Z_p} (H^i_{{\tu{k\'et}}}(\fX_{\eta}^{\tu{ad}}, \Z_p)_{\textup{tor}}/p^n) & \le \textup{length}_{W(k)} (H^i_{\textup{logcrys}}(\fX_k/W(\ul{k}))_{\textup{tor}}/p^n) \\
\textup{length}_{\Z_p} (H^i_{{\tu{k\'et}}}(\fX_{\eta}^{\tu{ad}}, \Z/p^n)) & \le \textup{length}_{W(k)} (H^i_{\textup{logcrys}}(\fX_k/W_n(\ul{k}))) 
\end{align*}
\item  For each $i$ and $n \ge 0$, we have 
\begin{align*} 
\textup{length}_{\Z_p} (H^i_{{\tu{k\'et}}}(\fX_{\eta}^{\tu{ad}}, \Z_p)_{\textup{tor}}/p^n) & \le \textup{val}_{\mO} (H^i_{\textup{logdR}}(\fX)_{\textup{tor}}/p^n) \quad \\
\textup{length}_{\Z_p} (H^i_{{\tu{k\'et}}}(\fX_{\eta}^{\tu{ad}}, \Z/p^n)) & \le  \textup{val}_{\mO} (H^i_{\textup{logdR}}(\fX_{\mO/p^n}))
\end{align*}
where $\textup{val}_{\mO}$ denotes the normalized length defined in \cite[Subsection 7.10]{CK_semistable}. 
\item Moreover, for each $i$, the cohomology group $H^i_{\textup{logcrys}}(\fX_k/W(\ul{k}))$ is $p$-torsion free if and only if $H^i_{\textup{logdR}}(\fX)$ is $p$-torsion free, and (either) implies that $H^i_{{\tu{k\'et}}}(\fX_{\eta}, \Z_p)$ is $p$-torsion free and that $H^i_{\Ainf}(\fX)$ is a free $\Ainf$-module. 
\ee
\ec 

\bproof This follows from the same proof of \cite[Theorem 7.9, Theorem 7.12 and Theorem 7.7]{CK_semistable}, making use of Corollary \ref{cor:valuation_in_BKF_modules}, as well as the \'etale, de Rham  and   crystalline comparison of log $\Ainf$ cohomology. 
\eproof

\newpage

\addcontentsline{toc}{section}{References}
\bibliographystyle{plain}

\bibliography{ref}

\begin{thebibliography}{10}

\bibitem{Beilinson_fiber}
B.~Antieau, A.~Mathew, M.~Morrow, and T~Nikolaus.
\newblock On the {B}eilinson fiber square.
\newblock {\em Duke Mathematical Journal}, 171(18):3707--3806, 2022.

\bibitem{Dori_Yao}
D.~Bejleri and Z.~Yao.
\newblock Logarithmic submersive topologies, In preparation.

\bibitem{Bhatt_dR}
B.~Bhatt.
\newblock p-adic derived de {R}ham cohomology.
\newblock {\em Preprint.}, 2012.

\bibitem{Bhatt}
B.~Bhatt.
\newblock Specializing varieties and their cohomology from characteristic 0 to
  characteristic {{\(p\)}}.
\newblock In {\em Algebraic geometry: Salt Lake City 2015. 2015 summer research
  institute in algebraic geometry, University of Utah, Salt Lake City, UT, USA,
  July 13--31, 2015. Proceedings. Part 2}, pages 43--88. Providence, RI:
  American Mathematical Society (AMS); Cambridge, MA: Clay Mathematics
  Institute, 2018.

\bibitem{BMS1}
B.~Bhatt, M.~Morrow, and P.~Scholze.
\newblock Integral {{\(p\)}}-adic {Hodge} theory.
\newblock {\em Publ. Math., Inst. Hautes {\'E}tud. Sci.}, 128:219--397, 2018.

\bibitem{BMS2}
B.~Bhatt, M.~Morrow, and P.~Scholze.
\newblock Topological {Hochschild} homology and integral {{\(p\)}}-adic {Hodge}
  theory.
\newblock {\em Publ. Math., Inst. Hautes {\'E}tud. Sci.}, 129:199--310, 2019.

\bibitem{BS}
B.~Bhatt and P.~Scholze.
\newblock Prisms and prismatic cohomology.
\newblock {\em {A}nn. {M}ath.}, 2022.

\bibitem{BG}
W.~Bruns and J.~Gubeladze.
\newblock {\em Polytopes, Rings, and {K}-Theory}.
\newblock Springer Monographs in Mathematics. Springer-Verlag, Berlin,
  Heidelberg, New York, 2009.

\bibitem{CK_semistable}
K.~{\v{C}}esnavi{\v{c}}ius and T.~Koshikawa.
\newblock The {{\(A_{\text{inf}}\)}}-cohomology in the semistable case.
\newblock {\em Compos. Math.}, 155(11):2039--2128, 2019.

\bibitem{DLLZ}
H.~Diao, K.~Lan, R.~Liu, and X.~Zhu.
\newblock {Logarithmic adic spaces: some foundational results}.
\newblock {\em Proceedings of the Simons Symposia on p-adic Hodge Theory},
  2022.

\bibitem{log2}
H.~Diao and Z.~Yao.
\newblock Logarithmic ${A}_{\text{inf}}$-cohomology {II}, In preparation.

\bibitem{Huber}
R.~Huber.
\newblock {\em {\'E}tale cohomology of rigid analytic varieties and adic
  spaces}, volume E30 of {\em Aspects Math.}
\newblock Wiesbaden: Vieweg, 1996.

\bibitem{HK}
O.~Hyodo and K.~Kato.
\newblock {Semi}-stable reduction and cystalline cohomology with logarithmic
  poles.
\newblock {\em Asterisque}, 223, 1994.

\bibitem{Illusie02}
L.~Illusie.
\newblock An overview of the work of k. fujiwara, k. kato, and c. nakayama on
  logarithmic \'etale cohomology.
\newblock {\em Asterisque}, (279):271--322, 2002.

\bibitem{Kato}
K.~Kato.
\newblock Logarithmic structures of {Fontaine}-{Illusie}.
\newblock In {\em Algebraic analysis, geometry, and number theory: proceedings
  of the JAMI inaugural conference, held at Baltimore, MD, USA, May 16-19,
  1988}, pages 191--224. Baltimore: Johns Hopkins University Press, 1989.

\bibitem{Kato2}
K.~Kato.
\newblock Logarithmic structures of {Fontaine}-{Illusie}. {II}: {Logarithmic}
  flat topology.
\newblock {\em Tokyo J. Math.}, 44(1):125--155, 2021.

\bibitem{KKMSD}
G.~Kempf, F.~Knudsen, D.~Mumford, and B.~Saint-Donat.
\newblock {\em Toroidal Embeddings I}, volume 339 of {\em Lecture Notes in
  Mathematics}.
\newblock Springer-Verlag, Berlin, Heidelberg, New York, 1973.

\bibitem{Koshikawa}
T.~Koshikawa.
\newblock Logarithmic prismatic cohomology, {I}.
\newblock arXiv:2007.14037, 2022.

\bibitem{logprism}
T.~Koshikawa and Z.~Yao.
\newblock {Logarithmic Prismatic Cohomology, II}.
\newblock arXiv:22306.00364, 2022.

\bibitem{Kubrak}
D.~Kubrak and A.~Prikhodko.
\newblock $p$-adic {H}odge theory for {A}rtin stacks.
\newblock {\em arXiv:2105.05319}, 2021.

\bibitem{Morrow}
M.~Morrow.
\newblock Notes on the {{\(\mathbb A_{\inf}\)}}-cohomology of integral
  {{\(p\)}}-\emph{adic Hodge theory}.
\newblock In {\em \(p\)-adic Hodge theory. Proceedings of the Simons symposium,
  Schloss Elmau, Germany, May 7--13, 2017}, pages 1--69. Cham: Springer, 2020.

\bibitem{MT}
M.~Morrow and T.~Tsuji.
\newblock {Generalised representations as $q$-connections in integral $p$-adic
  Hodge theory}.
\newblock {\em Preprint}, 2020.

\bibitem{Nakayama1}
C.~Nakayama.
\newblock Logarithmic {\'e}tale cohomology.
\newblock {\em Math. Ann.}, 308(3):365--404, 1997.

\bibitem{Nakayama2}
C.~Nakayama.
\newblock Logarithmic {\'e}tale cohomology. {II}.
\newblock {\em Adv. Math.}, 314:663--725, 2017.

\bibitem{Ogus}
A.~Ogus.
\newblock {\em Lectures on Logarithmic Algebraic Geometry}, volume 178 of {\em
  Cambridge Studies in Advanced Mathematics}.
\newblock Cambridge University Press, Cambridge, 2018.

\bibitem{Olsson_log}
M.~C. Olsson.
\newblock The logarithmic cotangent complex.
\newblock {\em Math. Ann.}, 333(4):859--931, 2005.

\bibitem{Scholze12}
P.~Scholze.
\newblock Perfectoid spaces.
\newblock {\em Publ. Math. Inst. Hautes \'Etud. Sci.}, 116:245--313, 2012.

\bibitem{Scholze}
P.~Scholze.
\newblock $p$-adic hodge theory for rigid analytic varities.
\newblock {\em Forum Math. Pi}, 2013.

\bibitem{SW}
P.~Scholze and J.~Weinstein.
\newblock {\em Berkeley lectures on $p$-adic geometry}, volume 207 of {\em
  Annals of Mathematics Studies}.
\newblock Princeton University Press, Princeton, NJ, 2020.

\bibitem{SP}
The {Stacks Project Authors}.
\newblock \textit{Stacks Project}.
\newblock \url{https://stacks.math.columbia.edu}, 2022.

\bibitem{Talpo_Vistoli}
M.~Talpo and A.~Vistoli.
\newblock Infinite root stacks and quasi-coherent sheaves on logarithmic
  schemes.
\newblock {\em Proceedings of the London Mathematical Society},
  116(5):1187--1243, 2018.

\bibitem{Tsuji}
T.~Tsuji.
\newblock Saturated morphisms of logarithmic schemes.
\newblock {\em Tunisian Journal of Mathematics}, 1(2), 2019.

\bibitem{Yao_Acris}
Z.~Yao.
\newblock The crystalline comparison of {A}inf-cohomology: the case of good
  reduction.
\newblock {\em Preprint.}, 2019.

\end{thebibliography}

\end{document}